\DeclareSymbolFont{symbols}{OMS}{cmsy}{m}{n}
\newcommand{\commentout}[1]{}
\numberwithin{equation}{section} %number equations after sections
\newtheorem{thm}{Theorem}[section]
\newenvironment{manualthm}[1]{%
  \manualtheoreminner
}{\endmanualtheoreminner}
\newtheorem{lem}[thm]{Lemma}
\newtheorem{cor}[thm]{Corollary}
\newtheorem{prop}[thm]{Proposition}
\theoremstyle{definition}
\newtheorem{defi}[thm]{Definition}
\newtheorem{notation}[thm]{Notation}
\newtheorem{conjecture}[thm]{Conjecture}
\newtheorem{example}[thm]{Example}
\theoremstyle{remark}
\newtheorem{rmk}[thm]{Remark}
\newcommand{\Hil}{\mathcal{H}}
\newcommand{\Kil}{\mathcal{K}}
\newcommand{\slot}{{~\cdot~}}
\DeclareMathOperator{\Spin}{Spin}
\DeclareMathOperator{\SO}{SO}
\DeclareMathOperator{\SU}{SU}
\DeclareMathOperator{\PSU}{PSU}
\DeclareMathOperator{\U}{U}
\DeclareMathOperator{\Mob}{{M\ddot ob}}
\DeclareMathOperator{\Mobt}{{M\ddot ob}_2}
\newcommand{\Sc}[1][]{\mathbb{S}^{1#1}}
\newcommand{\C}{\mathcal{C}}
\newcommand{\cF}{\mathcal{F}}
\newcommand{\cU}{\mathcal{U}}
\newcommand{\K}{\mathcal{K}}
\newcommand{\cP}{\mathcal{P}}
\newcommand{\cC}{\mathcal{C}}
\newcommand{\A}{\mathcal{A}}
\newcommand{\B}{\mathcal{B}}
\newcommand{\cI}{\mathcal{I}}
\newcommand{\M}{\mathcal{M}}
\newcommand{\N}{\mathcal{N}}
\newcommand{\RR}{\mathbb{R}}
\newcommand{\CC}{\mathbb{C}}
\newcommand{\ZZ}{\mathbb{Z}}
\newcommand{\NN}{\mathbb{N}}
\DeclareMathOperator{\End}{End}
\DeclareMathOperator{\Rep}{Rep}
\DeclareMathOperator{\Hom}{Hom}
\DeclareMathOperator{\Ind}{Ind}
\DeclareMathOperator{\Mor}{Mor}
\DeclareMathOperator{\Vir}{Vir}
\DeclareMathOperator{\Ad}{Ad}
\DeclareMathOperator{\id}{id}
\DeclareMathOperator{\Tr}{Tr}
\DeclareMathOperator{\Aut}{Aut}
\newcommand{\dd}{\,\mathrm{d}}
\DeclareRobustCommand{\eg}{e.g.\@\xspace}
\DeclareRobustCommand{\Eg}{E.g.\@\xspace}
\DeclareRobustCommand{\cf}{cf.\@\xspace}
\DeclareRobustCommand{\Cf}{Cf.\@\xspace}
\DeclareRobustCommand{\ie}{i.e.\@\xspace}
\DeclareRobustCommand{\p}{p.\@\xspace}
\DeclareRobustCommand{\Sec}{Sec.\@\xspace}
\DeclareRobustCommand{\Ch}{Ch.\@\xspace}
\DeclareRobustCommand{\Prop}{Prop.\@\xspace}
\DeclareRobustCommand{\Lem}{Lem.\@\xspace}
\DeclareRobustCommand{\Cor}{Cor.\@\xspace}
\DeclareRobustCommand{\Thm}{Thm.\@\xspace}
\DeclareRobustCommand{\Ch}{Ch.\@\xspace}
\DeclareRobustCommand{\App}{App.\@\xspace}
\DeclareRobustCommand{\Ex}{Ex.\@\xspace}
\DeclareRobustCommand{\Def}{Def.\@\xspace}
\DeclareRobustCommand{\Rmk}{Rmk.\@\xspace}
\DeclareRobustCommand{\Eq}{Eq.\@\xspace}
\DeclareRobustCommand{\etc}{%
    \@ifnextchar{.}%
        {etc}%
        {etc.\@\xspace}%
}
\newcommand{\Cstar}{$C^\ast$\@\xspace}
\newcommand{\Wstar}{$W^\ast$\@\xspace}
\def\u1net{{\A_\RR}}
\DeclareMathOperator*{\Span}{Span}
\DeclareMathOperator{\Sect}{Sect}
\DeclareMathOperator{\Proj}{Proj}
\DeclareMathOperator{\Trig}{Trig}
\DeclareMathOperator*{\Irr}{Irr}
\DeclareMathOperator*{\UCP}{UCP}
\DeclareMathOperator{\Extr}{Extr}
\def\III{{I\!I\!I}}
\def\II{{I\!I}}
\newcommand{\CS}{/\!\!/} %Double Cosets
\renewcommand{\slot}{\,\cdot\,}
\newcommand{\one}{\mathbf{1}}
\newcommand{\Cred}{{C^\ast_\mathrm{red}}}
\newcommand{\Endd}{\End_\mathrm{d}}
\newcommand{\tikzmatht}[2][0.30]
{\vcenter{\hbox{\begin{tikzpicture}[scale=#1]#2
				 \end{tikzpicture}}}
}
\newcommand{\colM}{black!10}
\newcommand{\colN}{black!5}
\newcommand{\iotabar}{\bar\iota}
\newcommand{\psibar}{\bar\psi}
\newcommand{\rhobar}{\bar\rho}
\newcommand{\sigmabar}{\bar\sigma}
\newcommand{\taubar}{\bar\tau}
\newcommand{\phisharp}{\phi^\sharp}
\newcommand{\rbar}{\bar r}
\newcommand{\idK}{e}
\def\subsection{\@startsection{subsection}{2}%
  \z@{.5\linespacing\@plus.7\linespacing}{.3\linespacing}%
  {\normalfont\bfseries}}
\begin{document}

\date{} %\date{\today}
\dateposted{} %\dateposted{\today}

\title{Compact Hypergroups from Discrete Subfactors}
  
\address{Department of Mathematics, Morton Hall 321, 1 Ohio University, Athens, OH 45701, USA}
\author{Marcel Bischoff}
\email{bischoff@ohio.edu}
\address{Institute for Theoretical Physics, Faculty of Physics, University of Leipzig, Br\"uderstra\ss e 16, D-04103 Leipzig, Germany}
\author{Simone Del Vecchio}
\email{simone.del\_vecchio@physik.uni-leipzig.de}
\address{Dipartimento di Matematica, Universit\`a di Roma Tor Vergata, Via della Ricerca Scientifica, 1, I-00133 Roma, Italy 
\emph{and} Department of Mathematics, Vanderbilt University, 1326 Stevenson Center, Nashville, TN 37240, USA}
\author{Luca Giorgetti}
\email{luca.giorgetti@vanderbilt.edu}
\thanks{M.B.\ is supported by NSF DMS grant 1700192/1821162 \emph{Quantum Symmetries and Conformal Nets}. S.D.V.\ is supported by the Deutsche Forschungsgemeinschaft (DFG) within the Emmy Noether grant CA1850/1-1. L.G.\ is supported by the European Union's Horizon 2020 research and innovation programme H2020-MSCA-IF-2017 under Grant Agreement 795151 \emph{Beyond Rationality in Algebraic CFT: mathematical structures and models}. Part of this research was supported by ERC Advanced Grant QUEST 669240 \emph{Quantum Algebraic Structures and Models}}

\begin{abstract}
Conformal inclusions of chiral conformal field theories, or more generally inclusions of quantum field theories, are described in the von Neumann algebraic setting by nets of subfactors, possibly with infinite Jones index if one takes non-rational theories into account. With this situation in mind, we study in a purely subfactor theoretical context a certain class of braided discrete subfactors with an additional commutativity constraint, that we call locality, and which corresponds to the commutation relations between field operators at space-like distance in quantum field theory. Examples of subfactors of this type come from taking a minimal action of a compact group on a factor and considering the fixed point subalgebra.

We show that to every irreducible local discrete subfactor $\mathcal{N}\subset\mathcal{M}$ of type ${I\!I\!I}$ there is an associated canonical compact hypergroup (an invariant for the subfactor) which acts on $\mathcal{M}$ by unital completely positive (ucp) maps and which gives $\mathcal{N}$ as fixed points. To show this, we establish a duality pairing between the set of all $\mathcal{N}$-bimodular ucp maps on $\mathcal{M}$ and a certain commutative unital $C^*$-algebra, whose spectrum we identify with the compact hypergroup.

If the subfactor has depth 2, the compact hypergroup turns out to be a compact group. This rules out the occurrence of compact \emph{quantum} groups acting as global gauge symmetries in local conformal field theory.
\end{abstract}

\maketitle
\tableofcontents

%DRAFTONLY
\setcounter{tocdepth}{3}
%\setcounter{tocdepth}{2}
%\tableofcontents

%%%
\section{Introduction}
%%%

Subfactors coming from inclusions of nets of local algebras, \ie the von Neumann algebraic description of quantum field theories (QFT) \cite{Ha}, have a rich structure and they are highly constrained by physical requirements. We aim to describe them by means of intrinsically defined symmetries that act on the bigger factor and that characterize the smaller factor as the fixed points under the action, extending the results of \cite{Bi2016} to discrete subfactors (possibly with infinite index). 

We restrict for ease of exposition to the case of inclusions of chiral, \ie one-dimensional, conformal field theories $\A\subset\B$ \cite{Lo2} and we keep these in mind as the main source of subfactors with the properties we deal with in this work. The inclusions $\N = \A(I) \subset\M = \B(I)$, for a fixed interval $I$ of the unit circle, are often irreducible type $\III$ subfactors \cite{Ca2004}, \cite{BrGuLo1993}. They are equipped with a normal faithful vacuum preserving conditional expectation which encodes the global gauge symmetries of the inclusion \cite{LoRe1995}. More importantly, the tensor \Cstar-category of $\N$--$\N$ bimodules generated by the standard bimodule ${}_\N{L^2\M}_\N$ is automatically unitarily braided due to the commutation relations between local algebras $\A(I)$ and $\A(J)$ sitting at disjoint intervals $I$ and $J$. These commutation relations, often called locality of the net, describe Einstein's causality principle in algebraic QFT. The braiding is the Doplicher--Haag--Roberts (DHR) braiding, introduced and studied in the algebraic theory of superselection sectors \cite{DoHaRo1971}, \cite{FrReSc1992}. Thus the subfactors $\N\subset\M$ arising from conformal inclusion are automatically braided (Definition \ref{def:braidedsubf}) if the net $\A$ is local.

The locality of the net $\B$ can be encoded as well at the level of a single subfactor, over a single interval $I$. To do this we need at the present stage a further assumption which is not imposed by physics, or at least which is not fulfilled in interesting models of chiral conformal field theory \cite{Re1994}, \cite{Ca2003}.  
This assumption was already present in the literature of subfactors with infinite Jones index \cite{Jo1983}, \cite{HeOc1989}, \cite{IzLoPo1998}. It is equivalent to the existence of sufficiently many\footnote{Precisely, forming a Pimsner--Popa basis for the inclusion \cite{PiPo1986}, \cite{Po1995a}.} charged fields (Section \ref{sec:genQsys}), in the sense of Doplicher and Roberts \cite{DoRo1972}, with whom to describe the extension $\B$ by means of $\A$ \cite{DeGi2018}. The assumption is called discreteness of the subfactor \cite{IzLoPo1998}, or quasi-regularity in the terminology of Popa \cite{Po1999}. We remark that every subfactor with finite Jones index is discrete, and that in ordinary 3+1-dimensional QFT a condition which is equivalent to discreteness is often assumed, namely the absence of irreducible superselection sectors with infinite statistics \cite{DoRo1990}. 

Denote by $\psi_{\rho}$, $\psi_{\sigma}$ the charged fields associated with the superselection sectors $[\rho]$, $[\sigma]$ appearing in the direct sum decomposition of the vacuum sector of $\B$ when restricted to $\A$ \footnote{$\rho$, $\sigma$ are endomorphisms of $\N$ and they correspond to the irreducible sub-bimodules of ${}_\N {L^2\M}_\N$} and denote by $\varepsilon_{\sigma,\rho}$ the DHR braiding.
Then the locality of the net $\B$ is characterized by the following commutation relations $\varepsilon_{\sigma,\rho}\, \psi_{\sigma} \psi_{\rho} = \psi_{\rho} \psi_{\sigma}$, for every pair $\psi_\rho, \psi_\sigma$ \cite{DeGi2018}. With this situation in mind, we call a discrete subfactor local (Definition \ref{def:localsubf}) if it is braided and if its charged fields fulfill the previous commutation relations with respect to the given braiding.  

In this work, under the assumptions of irreducibility, discreteness and locality, we associate to a subfactor $\N\subset\M$ an invariant which describes $\N$ in terms of $\M$ by means of \lq\lq generalized gauge symmetries\rq\rq. The invariant is a compact metrizable space, denoted by $K(\N\subset\M)$, equipped with the structure of a hypergroup (Definition \ref{def:CompactHypergroup}). $K(\N\subset\M)$ acts on $\M$ by unital completely positive maps and it gives $\N$ as the fixed point subalgebra $\M^{K(\N\subset \M)}$. The conditional expectation of the subfactor is given by the average of the action with respect to the Haar measure of $K(\N\subset\M)$. 

In general terms, a compact hypergroup is a compact Hausdorff topological space with an abstract convolution product and an involution defined on its probability Radon measures. The convolution of two Dirac measures concentrated respectively in the points $x$ and $y$ is again a Dirac measure concentrated in a single point $z$ precisely when the hypergroup is a group, in which case $z = xy$. We remark that there are several notions of hypergroup available in the literature, and the one we refer to is intermediate between the most restrictive one of Dunkl, Jewett and Spector \cite{BlHe1995} and the one of hypercomplex system \cite{BeKa1998}. In the case of finite topological spaces they all boil down to the purely algebraic notion of finite hypergroup.

The compact hypergroup of a subfactor is very natural to define. One considers the compact convex set of all $\N$-bimodular ucp maps on $\M$, and defines $K(\N\subset\M)$ to be the subset of extreme points. The non-trivial part is to show that $K(\N\subset\M)$ is a compact metrizable space and to endow its space of probability measures with a convolution and an involution fulfilling the axioms of a hypergroup. To do this we identify as topological spaces the set of all $\N$-bimodular ucp maps on $\M$ with the state space of a canonical commutative unital separable \Cstar-algebra associated with the subfactor, denoted by $\Cred(\N\subset \M)$. This is the main technical result of the paper (Theorem \ref{thm:ucpstatesduality}). In particular, the extreme ucp maps, \ie the elements of $K(\N\subset\M)$, are identified with the Gelfand spectrum of the algebra. The conditional expectation of the subfactor is identified on one side with the Haar measure on $K(\N\subset\M)$, on the other side with the state defining the \Cstar-norm of $\Cred(\N\subset \M)$ via the GNS construction.

As a main application of our analysis, we show that if the subfactor has depth 2, then $K(\N\subset\M)$ is in fact a compact metrizable group acting by automorphisms. This extends from finite to infinite compact groups a result known in the language of conformal nets \cite{Bi2016} and of vertex operator algebras \cite{DoWa2018}, stating the absence of purely Hopf algebra orbifolds in local conformal field theory.
The main results contained in this work have been announced in \cite{BiOWR} and \cite{GiOWR}.

We give below a brief account on the contents of this paper. In Section \ref{sec:discretesubf}, we review some definitions and results on infinite index type $\III$ subfactors, in particular the notion of discreteness (Definition \ref{def:semi-discr}) due to Izumi, Longo and Popa \cite{IzLoPo1998} and its characterization by means of Pimsner--Popa bases of charged fields or equivalently by means of generalized Q-systems of intertwiners (Proposition \ref{prop:discrete} and \ref{prop:Qsysdiscrete}). We introduce the notion of local discrete subfactor (Definition \ref{def:localsubf}) as a special case of braided subfactor (Definition \ref{def:braidedsubf}). We show that a certain invariant introduced in \cite{IzLoPo1998} describing the modular flow on charged fields is trivial for local subfactors (Proposition \ref{prop:arho=1}) as it is for finite index ones, and we draw some conclusions. We introduce the notion of strong relative amenability for irreducible discrete type $\III$ subfactors (Definition \ref{def:strongamenab}), inspired by the type $\II_1$ analogue due to \cite{PoShVa2018}, and we show that braided subfactors are strongly relatively amenable (Corollary \ref{cor:braideddiscreteisstrongamenab}).

In Section \ref{sec:compacthyp}, we specify our notion of compact hypergroup (Definition \ref{def:CompactHypergroup}) and we comment on its collocation in the literature of classical locally compact and compact quantum hypergroups.

In Section \ref{sec:compacthypfromsubf}, which is the main part of this paper, we define the compact hypergroup $K(\N\subset\M)$ and we prove our duality theorem (Theorem \ref{thm:ucpstatesduality}). 
In more detail, in Section \ref{sec:trig} we define a unital associative commutative *-algebra, denoted by $\Trig(\N\subset\M)$, whose multiplication and involution reflect the fusion and the conjugation of the irreducible subsectors $[\rho]$, $[\sigma]$ (Theorem \ref{thm:trigcommutativestaralg}). Apart from the commutativity condition,
this *-algebra is a type $\III$ analogue of a corner of the generalized tube *-algebra associated with an irreducible discrete type $\II_1$ subfactor due to Popa, Shlyakhtenko and Vaes \cite{PoShVa2018}.
See \cite{JoPe2019} for another generalization in this direction with $\N$ of type $\II_1$ and $\M$ possibly of type $\III$. 
We also conjecture that a unified description of our construction can be given in the purely tensor \Cstar/\Wstar-categorical language of \cite{JoPe2019}. 

In Section \ref{sec:dualitypairing}, we define the duality paring \eqref{eq:pairing}, denoted by
$$\langle\slot,\slot\rangle\colon {\UCP}_{\N}(\M)\times \Trig(\N\subset \M)\to \CC,$$ 
between the $\N$-bimodular ucp maps on $\M$ and $\Trig(\N\subset\M)$.
In Section \ref{sec:fourier}, we recall the definition of the subfactor theoretical Fourier transform in the infinite index case, and we study some of its properties for later use. In Section \ref{sec:Cistarred}, we define the commutative unital \Cstar-algebra $\Cred(\N\subset\M)$ as the closure of $\Trig(\N\subset\M)$ in the bounded and faithful GNS representation of the state $\omega_E = \langle E, \slot \rangle$ given by the conditional expectation $E : \M \to \N\subset\M$. In Section \ref{sec:UCP/states}, we show that every $\N$-bimodular ucp map on $\M$ is automatically vacuum preserving and adjointable in the sense of Section \ref{sec:Omegaadjmaps}, \ie it belongs to the set ${\UCP}^{\sharp}_\N(\M,\Omega)$ (Definition \ref{def:UCPbim}). 
We then prove:

\begin{manualthm}{4.34}%\label{thm:ucpstatesduality}
Let $\N\subset \M$ be an irreducible local discrete type $\III$ subfactor. The duality map
\begin{align}
\phi \mapsto \omega_\phi := \langle \phi, \slot \rangle
\end{align}
is a homeomorphism between:
\begin{itemize}
\item the set of normal faithful $\N$-bimodular ucp maps $\phi\colon \M \to \M$ equipped with the pointwise weak operator topology,
\item the set of states $\omega$ on $\Cred(\N\subset \M)$ equipped with the weak$^*$ topology.
\end{itemize}
In particular, $\phi$ is extreme if and only if $\omega_\phi$ is a pure state, \ie a character of $\Cred(\N\subset \M)$.
\end{manualthm}

In Section \ref{sec:UCPcompacthyp}, we define $K(\N\subset\M)$ as the set of all extreme normal faithful $\N$-bimodular ucp maps (Definition \ref{def:CompactHypergroupUCP}). In Theorem \ref{thm:Kishypergroup}, we show that $K(\N\subset\M)$ is a compact hypergroup: the convolution of probability measures is given by the composition of ucp maps, the inverse measure is the adjoint ucp map, the Haar measure is $\omega_E$. 

In Section \ref{sec:actionK}, we study the action of $K(\N\subset\M)$ on $\M$ as an abstract compact hypergroup (Definition \ref{def:action}). In Theorem \ref{thm:genorbi}, we show that $\N\subset\M$ is described as a generalized orbifold:

\begin{manualthm}{5.7}%\label{thm:genorbi}
Let $\N\subset \M$ be an irreducible local discrete type $\III$ subfactor.
The compact hypergroup $K(\N\subset\M)$ acts faithfully on $\M$ and it gives $\N$ as the fixed point subalgebra, $\N = \M^{K(\N\subset\M)}$. 
\end{manualthm}

In Section \ref{sec:repK}, we study the representation theory of $K(\N\subset\M)$ as an abstract compact hypergroup. In particular, we show that $\Trig(\N\subset\M)$ can be identified with the algebra of trigonometric polynomials in the sense of Vrem \cite{Vr1979}. Moreover, we prove the equality between the hyperdimension of a representation \cite{AmMe2014} and the tensor \Cstar-categorical dimension of the associated endomorphism \cite{LoRo1997} (Theorem \ref{thm:hyperdim}).

In Section \ref{sec:depthtwo}, we inspect the special case of depth 2 subfactors, \eg those obtained as the fixed points under the action of a Hopf *-algebra or compact quantum group. In Theorem \ref{thm:noquantum}, not assuming discreteness, we show that $K(\N\subset\M)$ is a classical compact group: 

\begin{manualthm}{7.5}%\label{thm:noquantum}
Let $\N\subset\M$ be an irreducible local semidiscrete type $\III$ subfactor with depth 2. Then $K(\N\subset\M)$ is the compact metrizable group of all *-automorphisms of $\M$ that act trivially on $\N$. In particular, $\N\subset\M$ is a compact group orbifold.
\end{manualthm}

In Section \ref{sec:nonlocal}, we extend some of our results to graded-local subfactors, with the application to Fermionic conformal nets in mind.

Finally, in Section \ref{sec:ex}, we study and compute $K(\N\subset\M)$ in examples coming from group orbifolds, double coset hypergroups and discrete inclusions of non-rational local conformal nets.

%%%
\section{Discrete subfactors}\label{sec:discretesubf}
%%%

%%%
\subsection{The 2-\Cstar-category of morphisms of type $\III$ factors}
%%%

Let $\N,\M$ be type $\III$ factors. Throughout this paper we shall always deal with separable Hilbert spaces 
and von Neumann algebras with separable predual without further mention. Let $\Mor(\N,\M)$ 
be the \Cstar-category of all normal unital *-homomorphisms $\rho\colon \N\to \M$. 
For $\rho,\sigma\in\Mor(\N,\M)$ the morphisms are given by the intertwining operators 
$\Hom(\rho,\sigma) := \{t\in \M: t\rho(n)=\sigma(n)t, n\in \N\}$.
We denote by $\End(\N):=\Mor(\N,\N)$ the tensor \Cstar-category of all normal endomorphisms of $\N$.
The tensor multiplication is given on objects by the composition $\rho\otimes \sigma := \rho\sigma$ for $\rho,\sigma\in \End(\N)$ 
and on morphisms by $r\otimes s := r \rho_1(s) = \rho_2(s) r$ for $r\in\Hom(\rho_1,\rho_2)$, $s\in\Hom (\sigma_1,\sigma_2)$.
The tensor unit is the identity automorphism $\id\in\End(\N)$.
We denote by $\End_0(\N)$ the rigid tensor \Cstar-subcategory of endomorphisms 
with finite intrinsic/tensor \Cstar-categorical \textbf{dimension} \cite{LoRo1997}. 

A \textbf{sector} is an isomorphism class $[\rho]$ where $\rho\in\End(\N)$. We denote by $\Sect(\N)$ the collection of all sectors in $\End(\N)$ and by $\Sect_0(\N)$ the collection of all sectors in $\End_0(\N)$.
Inside $\End(\N)$ we can consider infinite direct sums of endomorphisms with finite dimension. 
Namely, let $\rho_i\in\End_0(\N)$, with $i=1,\ldots, n$ and $n\in \NN\cup\{\infty\}$.
Since $\N$ is of type $\III$, there are isometries $v_i$ with $v_i^\ast v_j =\delta_{i,j}1$
and $\sum_i v_iv_i^\ast = 1$ in the strong operator topology, and we define
\begin{align}
  \bigoplus_i \rho_i := \sum_i v_i\rho_iv_i^\ast\,.
\end{align}
The direct sum is well-defined up to unitary equivalence. Indeed, given another family of isometries $\tilde{v}_i$ such that $\tilde{v}_i^\ast \tilde{v}_j =\delta_{i,j}1$ and $\sum_i \tilde{v}_i \tilde{v}_i^* = 1$ then $\sum_i \tilde{v}_i v_i^*$ is a unitary intertwiner. We denote the obtained category by $\Endd(\N)$.

A morphism $\rho\in\Mor(\N,\M)$ is called \textbf{irreducible} if $\Hom(\rho,\rho) = \CC1$. We say $\rho$ is contained in $\sigma\in\Mor(\N,\M)$, written $\rho\prec\sigma$, if there is an isometry in $\Hom(\rho,\sigma)$.
Let $\rho\in \Endd(\N)$, let $p\in \Hom(\rho,\rho)$ be a projection 
and $v\in \N$ be an isometry with $v v^\ast = p$, which exists since $\N$ is of type $\III$.
Denoted $\rho_p := \Ad(v^\ast )\circ\rho$, we clearly have $v\in\Hom(\rho_p,\rho)$ and $\rho_p \prec \rho$.
Note that the sector $[\rho_p]$ does not depend on the choice of isometry
and we have a map $\Proj(\rho)\to \Sect(\N)$, which restricts to a map $\Proj_0(\rho)\to \Sect_0(\N)$, where
\begin{align}\label{eq:Proj_def}
\Proj(\rho)& := \{p\in\Hom(\rho,\rho): p\text{ is a projection}\}\\
\Proj_0(\rho)& := \{p\in\Proj(\rho) : d(\rho_p) < \infty\}
\end{align}
and $\rho\mapsto d(\rho)$ is the dimension function \cite[\Sec 3]{LoRo1997}. Recall that $\rho\in\End(\N)$ has finite dimension if and only if it admits a \textbf{conjugate} $\rhobar\in\End(\N)$ with a pair of solutions $r_{\rho}\in\Hom(\id,\rhobar\rho)$, $\rbar_{\rho}\in\Hom(\id,\rho\rhobar)$ of the conjugate equations
\begin{align}\label{eq:conjeqns}
\rbar^*_{\rho} \rho(r_\rho) = 1_\rho,\quad r^*_{\rho} \rhobar(\rbar_\rho) = 1_{\rhobar}.
\end{align}
We shall always choose \textbf{standard solutions} $r_\rho, r_{\rhobar}$ of the conjugate equations \cite[\Sec 2]{LoRo1997}, namely those giving the dimension of $\rho$ via $d(\rho) 1_{\id} = r_\rho^* r_\rho = \rbar_{\rho}^* \rbar_{\rho}$. Recall that $d(\rho) = d(\rhobar)$.

%%%
\subsection{Generalized Q-systems}\label{sec:genQsys}
%%%

Let $\N\subset \M$ be a subfactor, \ie a unital inclusion of von Neumann factors in $\B(\Hil)$, the bounded linear operators on the separable Hilbert space $\Hil$. We denote by $\M'$ the commutant of $\M$ in $\B(\Hil)$. 
Let $E:\M\to\M$ be a normal faithful conditional expectation onto $\N$, written as $E:\M \to \N\subset\M$. Denote by $\M_1$ the Jones extension of $\M$ with respect to $E$ \cite{Jo1983}, and by $\hat E: \M_1 \to \M\subset\M_1$ the normal semifinite faithful operator-valued weight dual to $E$ \cite{Ko1986}.

\begin{defi}[{\cite{IzLoPo1998}, \cite{FiIs1999}}]\label{def:semi-discr}
A subfactor $\N\subset \M$ is called \textbf{semidiscrete} if it admits a normal faithful conditional expectation $E:\M\to \N\subset\M$.
It is called \textbf{discrete} if in addition the operator-valued weight $\hat E$ is semifinite on $\N'\cap \M_1$ 
for some (hence for all) $E$.   
\end{defi}

In this terminology, a subfactor has \textbf{finite index} in the sense of \cite{Jo1983}, \cite{Ko1986}, if it is semidiscrete and if $\hat E$ is everywhere defined and bounded. In particular, every finite index subfactor is discrete.

We now recall the definition of Pimsner--Popa basis for a subfactor or inclusion of von Neumann algebras equipped with a normal faithful conditional expectation $E:\M\to\N \subset \M$, with finite or infinite index. In the notation above, let $e_\N$ be the Jones projection for $\N\subset \M$ with respect to $E$. Namely, $e_\N$ is the orthogonal projection onto $\overline{\N\Omega}$, where $\Omega$ is a cyclic and separating vector for $\M$ which induces an $E$-invariant state on $\M$. 

\begin{defi}[{\cite{PiPo1986}, \cite{Po1995a}}]\label{def:PiPobasis}
A \textbf{Pimsner--Popa basis} for $\N\subset \M$ with respect to $E$ is a family of elements $\{M_i\} \subset \M$, where $i$ runs in some set of indices, such that
\begin{itemize}
\item[$(i)$] $M_i^* e_\N M_i$ are mutually orthogonal projections in $\M_1$,
\item[$(ii)$] $\sum_i M_i^* e_\N M_i = 1$, where the sum converges in the strong operator topology.
\end{itemize}
\end{defi}

Condition $(i)$ is equivalent to $E(M_iM_j^*) = 0$ if $i\neq j$ and $E(M_iM_i^*)$ are projections in $\N$, while condition $(ii)$ is equivalent to $\overline{\sum_i M_i^* e_\N \Hil} = \Hil$. Summing up, $M_i^* e_\N$ are partial isometries whose range projections form a partition of unit by mutually orthogonal projections.

Pimsner--Popa bases provide the following right $\N$-module expansion of the elements in $\M$. 

\begin{prop}[{\cite{Po1995a}}]\label{prop:PiPoexp}
Let $\{M_{i}\}$ be a Pimsner--Popa basis for $\N\subset \M$ with respect to $E$ then every $m\in \M$ admits the following expansion:
\begin{align}
m=\sum_i M_i^* E(M_i m),
\end{align}
where the sum converges in the topology generated by the family of seminorms $\{\|\cdot\|_{\varphi}:\varphi\in (\M_{*})_{+},\, \varphi = \varphi \circ E\}$, with $\|m\|_{\varphi}:=\varphi(m^{*}m)^{1/2}$. 

The coefficients $E(M_i m)$ are uniquely determined by $m$ if and only if $E(M_i M_i^*)=1$ for every $i$. 
\end{prop}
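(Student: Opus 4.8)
The plan is to pass to the GNS representation defined by $\Omega$, read off the expansion as an orthogonal decomposition of $m\Omega$ in $\Hil$ using the defining relation of the Jones projection, and then bootstrap to the remaining seminorms by a Parseval-type identity that involves only $\Omega$ and $E$. Before that I would record a few elementary consequences of Definition \ref{def:PiPobasis}. Setting $q_i:=E(M_iM_i^*)$, condition $(i)$ is equivalent to saying that the $q_i$ are projections in $\N$ and $E(M_iM_j^*)=0$ for $i\neq j$; applying $E$ to $(M_i-q_iM_i)(M_i-q_iM_i)^*$ and using faithfulness of $E$ one gets $q_iM_i=M_i$, hence also $q_iE(M_im)=E(M_im)$ and $M_i^*q_i=M_i^*$. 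Next, $e_\N x\Omega=E(x)\Omega$ for every $x\in\M$, while conditions $(i)$--$(ii)$ say precisely that $\{M_i^*e_\N M_i\}$ is a family of mutually orthogonal projections in $\M_1$ with $\sum_i M_i^*e_\N M_i=1$ in the strong operator topology. Finally, a short computation using only these facts gives, for every finite $F$ and $m_F:=\sum_{i\in F}M_i^*E(M_im)$, the operator identity
\begin{align}\label{eq:planParseval}
E\big((m-m_F)^*(m-m_F)\big)=E(m^*m)-\sum_{i\in F}E(M_im)^*E(M_im).
\end{align}

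Applying the partition of unity to $m\Omega\in\Hil$ and using $e_\N(M_im)\Omega=E(M_im)\Omega$ (note $M_im\in\M$) I would obtain
\begin{align}\label{eq:planOrth}
m\Omega=\sum_i M_i^*e_\N M_i\,m\Omega=\sum_i M_i^*E(M_im)\,\Omega,
\end{align}
a sum of mutually orthogonal vectors converging in $\Hil$; equivalently $\|m-m_F\|_{\omega_\Omega}\to 0$ along the net of finite subsets, where $\omega_\Omega:=\langle\Omega,\slot\,\Omega\rangle$. Feeding this into \eqref{eq:planParseval} and using that $\omega_\Omega=\omega_\Omega\circ E$ shows that the increasing net $\sum_{i\in F}E(M_im)^*E(M_im)\le E(m^*m)$ in $\N_+$ has a $\sigma$-weak limit $A$ with $\omega_\Omega(A)=\omega_\Omega(E(m^*m))$, whence $A=E(m^*m)$ by faithfulness of $\omega_\Omega$ on $\N$. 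Now I would take an arbitrary $\varphi\in(\M_*)_+$ with $\varphi=\varphi\circ E$; since the seminorms add it suffices to treat one such $\varphi$, and writing $\psi:=\varphi|_\N\in(\N_*)_+$ we have $\varphi=\psi\circ E$, so applying the normal functional $\psi$ to \eqref{eq:planParseval} gives
\begin{align}\label{eq:planphi}
\|m-m_F\|_\varphi^2=\psi\big(E(m^*m)\big)-\sum_{i\in F}\psi\big(E(M_im)^*E(M_im)\big),
\end{align}
whose right-hand side tends to $0$ by normality of $\psi$ (the partial sums increase to $\psi(A)=\psi(E(m^*m))$). This is the claimed convergence.

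For the uniqueness statement I would first observe that the Schwarz inequality for the ucp map $E$ yields $\|E(M_jx)-E(M_jy)\|_\varphi\le\|M_j\|\,\|x-y\|_\varphi$, so $x\mapsto E(M_jx)$ is continuous for the topology in question. If every $q_j=1$, then given any expansion $m=\sum_i M_i^*n_i$ convergent in that topology with $n_i\in\N$, applying this map term by term and using $E(M_jM_i^*)=\delta_{ij}q_j$ forces $E(M_jm)=q_jn_j=n_j$, so the coefficients are uniquely determined. Conversely, if $q_j\neq 1$ for some $j$, then $M_j^*q_j=M_j^*$ gives $M_j^*\big(E(M_jm)+(1-q_j)\big)=M_j^*E(M_jm)$, so replacing the $j$-th coefficient in \eqref{eq:planOrth} by $E(M_jm)+(1-q_j)\neq E(M_jm)$ produces a genuinely different expansion of the same $m$, and the coefficients are not determined.

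I expect the delicate point to be the step from the single seminorm $\|\slot\|_{\omega_\Omega}$ to the entire family: condition $(ii)$ is phrased via the Jones projection attached to the one fixed vector $\Omega$, and rather than trying to re-derive it inside the (possibly degenerate) GNS representation of a general $\varphi$, the argument above routes everything through \eqref{eq:planOrth} in the faithful $\Omega$-picture, where faithfulness of $\omega_\Omega$ upgrades the $\Omega$-estimate to the Parseval identity $\sum_i E(M_im)^*E(M_im)=E(m^*m)$ in $\N$ and hence controls every $\|\slot\|_\varphi$.
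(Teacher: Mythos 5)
Your argument is correct. Note that the paper does not prove this proposition at all — it is imported verbatim from Popa's CBMS notes — so there is no in-text proof to compare against; your write-up is essentially the standard argument (partition of unity by the mutually orthogonal projections $M_i^*e_\N M_i$ applied to $m\Omega$, plus the identity $e_\N x\Omega=E(x)\Omega$), and all the steps check out: the consequence $q_iM_i=M_i$ from faithfulness of $E$, the Parseval identity, the increasing-net argument $A_F\nearrow E(m^*m)$ via faithfulness of $\omega_\Omega$ on $\N$, and both directions of the uniqueness claim (including that replacing $E(M_jm)$ by $E(M_jm)+(1-q_j)$ leaves the partial sums unchanged because $M_j^*(1-q_j)=0$). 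The one step where you add something beyond the bare GNS computation — upgrading convergence from the single seminorm $\|\cdot\|_{\omega_\Omega}$ to every $\|\cdot\|_\varphi$ with $\varphi=\varphi\circ E$ by first establishing $\sum_i E(M_im)^*E(M_im)=E(m^*m)$ in $\N$ and then applying normality of $\varphi|_\N$ — is exactly the right way to handle the stated topology, and your use of the Hausdorff property (via the faithful $\omega_\Omega$) to identify limits in the uniqueness argument closes the only loophole there.
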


\begin{prop}[{\cite{FiIs1999}}] 
Let $\N$, $\M$ be infinite factors. Every semidiscrete subfactor $\N\subset \M$ equipped with a normal faithful conditional expectation $E:\M\to \N\subset\M$ admits a Pimsner--Popa basis with respect to $E$ made of elements in $\M$ (Definition \ref{def:PiPobasis}). 
\end{prop}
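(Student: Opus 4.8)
The plan is to produce the basis by a maximality argument, after first reformulating the two defining properties of a Pimsner--Popa basis into a more pliable form. I work in the standard form: $\M$ acting on $\Hil = L^2(\M,\omega)$ with $\omega = \langle\Omega,\slot\,\Omega\rangle$ the $E$-invariant state, $J$ the modular conjugation of $(\M,\Omega)$, $e_\N$ the Jones projection onto $\overline{\N\Omega}$, $\M_1$ the Jones extension, and $\hat E\colon\M_1\to\M$ the dual operator-valued weight; note that $\M_1\supseteq\M$ is an infinite factor because $\M$ is, and recall $e_\N x\Omega = E(x)\Omega$ for $x\in\M$.

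First, the reformulations. For $M\in\M$ one always has $M^*e_\N M = (M^*e_\N)(M^*e_\N)^*$; hence $M^*e_\N M$ is a projection in $\M_1$ if and only if $M^*e_\N$ is a partial isometry, if and only if $(M^*e_\N)^*(M^*e_\N) = E(MM^*)e_\N$ is a projection, if and only if $E(MM^*)$ is a projection in $\N$, in which case $M^*e_\N M$ is the orthogonal projection onto $\overline{M^*\N\Omega}$ and $E(MM^*)M = M$ by faithfulness of $E$. Likewise, for such $M$ and $M'$ the projections $M^*e_\N M$ and $M'^*e_\N M'$ are orthogonal if and only if $E(MM'^*) = 0$. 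Finally, a spectral-truncation argument normalizes any nonzero $m\in\M$: there is a nonzero spectral projection $q$ of $E(m^*m)$ and a $b = bq\in\N$ with $b\,E(m^*m)\,b = q$, so that $\tilde M := b\,m^*\in\M$ is nonzero, $E(\tilde M\tilde M^*) = q$, and $E(\tilde M M_i^*) = b\,E(M_i m)^*$ vanishes whenever $E(M_i m) = 0$. Combining these, producing a Pimsner--Popa basis with elements in $\M$ amounts to producing a family $\{M_i\}\subset\M$ with each $E(M_iM_i^*)$ a projection, $E(M_iM_j^*) = 0$ for $i\neq j$, and $\sum_i M_i^*e_\N M_i = 1$, the last sum converging strongly. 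Note also that $\sum_i M_i^*e_\N M_i$ has range the closed span of $\bigcup_i M_i^*\N\Omega$, so for $m\in\M$ the conditions ``$E(M_i m) = 0$ for all $i$'' and ``$m\Omega$ lies in $f\Hil$'' coincide, where $f := 1 - \sum_i M_i^*e_\N M_i$.

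Now carry out the construction. Order by inclusion the families $\{M_i\}\subset\M$ satisfying the first two conditions above; by separability of $\Hil$ any such family is automatically countable, the partial sums of $\sum_i M_i^*e_\N M_i$ converge strongly to a projection $p\le 1$ in $\M_1$, and Zorn's lemma produces a maximal family $\{M_i\}$. By the reformulations it suffices to prove $p = 1$. Suppose not, so $f := 1-p\neq 0$; the plan is to derive a contradiction by producing a nonzero $m\in\M$ with $E(M_i m) = 0$ for all $i$, since then its normalization $\tilde M$ can be adjoined to $\{M_i\}$, contradicting maximality. \emph{Producing this $m$ is the main obstacle.} At hand are: the $\sigma$-weak density of $\M e_\N$ in $\M_1 e_\N$ (which follows from $(\Span\,\M e_\N\M)e_\N = \M e_\N$, using $e_\N\M e_\N = \N e_\N$ and $\M\N = \M$, together with the $\sigma$-weak density of $\Span\,\M e_\N\M$ in $\M_1$); the cyclicity of $\Omega$ for $\M$; and the fact that $\hat E$ restricts on the corner $e_\N\M_1 e_\N = \N e_\N$ to the isomorphism $ne_\N\mapsto n$, with $\hat E(e_\N) = 1$. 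From the first two, $f$ cannot be orthogonal to the range projection of every element of $\M e_\N$ — for that supremum is the projection onto $\overline{\M e_\N\Hil} = \overline{\M\Omega} = \Hil$, which is not dominated by $1-f$ — so there is an $m_0\in\M$ with $f\,m_0 e_\N\neq 0$. The delicate point is to upgrade this to an $m\in\M$ whose full range $\overline{m e_\N\Hil} = \overline{m\N\Omega}$ lies inside $f\Hil$: I expect the argument to realize a partial isometry $w\in\M_1 e_\N$ with $w^*w = e_\N$ and $ww^*\le f$ — available because $\M_1$ is an infinite factor — by an actual element of $\M e_\N$, exploiting the $\sigma$-weak density above together with a polar-decomposition argument in the corner $f\M_1 f$ and the structure of $\hat E$ on $\N e_\N$; and it is precisely here that the infinite-type hypothesis is essential, since it is what makes $\M_1$ an infinite factor and $L^2\M$ a homogeneous $\N$-module. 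Granting this step, the resulting $m$ contradicts maximality, hence $p = 1$ and $\{M_i\}$ is the desired basis.
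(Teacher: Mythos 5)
Your setup is sound: the reformulation of conditions $(i)$--$(ii)$ in terms of $E(M_iM_j^*)$, the spectral normalization producing $\tilde M$, the Zorn argument, and the reduction of the whole problem to finding a nonzero $m\in\M$ with $E(M_i m)=0$ for all $i$ (equivalently $f\,m e_\N = m e_\N$) are all correct. But the proof is not complete: the one step that actually carries the mathematical content — and the only place where semidiscreteness, the dual operator-valued weight, and the infinite-type hypothesis genuinely enter — is the step you explicitly defer with ``I expect the argument to realize\dots'' and ``Granting this step\dots''. That step cannot be obtained from the tools you have assembled. The $\sigma$-weak density of $\M e_\N$ in $\M_1 e_\N$ only yields some $m_0\in\M$ with $f m_0 e_\N\neq 0$, i.e.\ an element whose range is \emph{not orthogonal} to $f\Hil$; you need one whose range is \emph{contained} in $f\Hil$, and weak density of a subspace gives no control whatsoever over membership in a fixed closed subspace (exactly as a dense subspace of a Hilbert space can meet a nonzero closed subspace only in $0$). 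Likewise, polar decomposition inside $f\M_1 f$ produces partial isometries in $\M_1$, not in $\M e_\N$: the phase of $m e_\N$ is $m\,E(m^*m)^{-1/2}e_\N$ on the support, which is a $\sigma$-strong limit of elements $m b_n e_\N$ but in general not itself of the form $m'e_\N$.

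The missing ingredient is a Radon--Nikodym-type characterization of $\M e_\N$ \emph{inside} $\M_1 e_\N$, namely that an element $x\in\M_1$ with $x=xe_\N$ and $\hat E(xx^*)$ bounded (i.e.\ in $\M$ rather than in the extended positive part) is automatically of the form $x=me_\N$ with $m=\hat E(x)\in\M$. With that lemma in hand the argument closes: comparison theory in the properly infinite factor $\M_1$ (here is where infinite type is used) gives a nonzero partial isometry $v$ with $v^*v\leq e_\N$ and $vv^*\leq f$; then $\hat E(vv^*)\leq\hat E(ve_\N v^*)=v\hat E(e_\N)v^*=vv^*$ is bounded, so $v=me_\N$ for some nonzero $m\in\M$, $E(m^*m)$ is already a projection since $e_\N m^*me_\N=v^*v$, and adjoining $m^*$ contradicts maximality. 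Proving that lemma requires the Haagerup/Kosaki theory of the dual weight (it is essentially the content of the relevant lemma in \cite{FiIs1999}, which the paper quotes without proof), not just the corner identity $e_\N\M_1e_\N=\N e_\N$ and $\hat E(e_\N)=1$ that you list. As written, your argument establishes the easy reductions but leaves the theorem's actual substance unproven.
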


Furthermore, discrete subfactors can be characterized among the semidiscrete ones as those admitting Pimsner--Popa bases with an additional \emph{intertwining property}. To explain this we need to introduce Longo's canonical and dual canonical endomorphisms of the subfactor in question \cite{Lo1987}. Denote by $\gamma\in\End(\M)$ a \textbf{canonical endomorphism} of $\N\subset \M$ and by $\theta\in\End(\N)$ the corresponding \textbf{dual canonical endomorphism}. Namely, $\gamma := (\Ad(J_{\N,\Phi}) \circ \Ad(J_{\M,\Phi}))_{\restriction \M}$, where $J_{\N,\Phi}$ and $J_{\M,\Phi}$ are the modular conjugations associated with a jointly cyclic and separating vector $\Phi$ for $\N$ and $\M$, and $\theta := \gamma_{\restriction \N}$. Denote by $\iota:\N\to \M$ the inclusion morphism and by $\iotabar := \iota^{-1}\gamma : \M\to\N$ a weak conjugate in the sense of \cite{Lo1990}. Note that $\iota:\N\to\M$ and $\iotabar:\M\to\N$ are not surjective, unless $\N=\M$, and
\begin{align}\label{eq:iotaandiotabar}
\iota \iotabar = \gamma, \quad \iotabar \iota = \theta
\end{align}
by definition.

\begin{prop}[{\cite{DeGi2018}}]\label{prop:discrete} 
Let $\N$, $\M$ be infinite factors and $\N\subset \M$ a semidiscrete subfactor. Then $\N\subset\M$ is discrete if and only if for some (hence for all) $E:\M\to\N\subset\M$ there is a Pimsner--Popa basis in $\M$ with respect to $E$ whose elements fulfill $M_i \iota(n) = \iota(\theta(n)) M_i$ for every $n\in \N$, \ie
\begin{align}\label{eq:PiPointert}
M_i\in\Hom(\iota, \iota \theta)
\end{align}
for every $i$.
\end{prop}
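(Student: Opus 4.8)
The plan is to prove the equivalence by connecting the abstract operator-algebraic characterization of discreteness (semifiniteness of $\hat E$ on $\N'\cap\M_1$) with the intertwining property of the Pimsner--Popa basis via the standard identification of $\N'\cap\M_1$ with the intertwiner space $\Hom(\iota,\iota\theta)$, or equivalently $\Hom(\theta,\theta)$ inside $\N$. Throughout one uses that by the preceding Proposition from \cite{FiIs1999} a semidiscrete subfactor of infinite type already admits \emph{some} Pimsner--Popa basis $\{M_i\}\subset\M$ with respect to any chosen $E$; the content is to show one can arrange (or replace it by) a basis with the extra property \eqref{eq:PiPointert} precisely when $\hat E$ restricted to $\N'\cap\M_1$ is semifinite.

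First I would recall the canonical \lq\lq spatial\rq\rq\ picture: realizing $\M$ on $L^2(\M,\Omega)$ with the cyclic separating vector inducing the $E$-invariant state, the Jones projection $e_\N$ implements $E$ and $\M_1 = \langle \M, e_\N\rangle$. Using Longo's canonical endomorphism $\gamma$ one has the well-known identifications: $\M_1 \cong \M$ via an isomorphism carrying $e_\N$ to $\tfrac1{d}\,\gamma(\cdot)$-type data in the finite index case, and more relevantly the identification of the relative commutant $\N'\cap\M_1$ with $\iota(\N)'\cap\M$ composed with $\gamma$, hence with a von Neumann algebra naturally containing (the image under $\iota$ of) $\Hom(\theta,\theta)$. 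Concretely, elements $x\in\N'\cap\M_1$ correspond under $\gamma$ and the modular conjugations to intertwiners in $\Hom(\iota,\iota\theta)$: an element $M\in\M$ lies in $\Hom(\iota,\iota\theta)$ iff $M e_\N$ (or $M^* e_\N M$) lies in $\N'\cap\M_1$. Under this dictionary the operator-valued weight $\hat E$ dual to $E$ becomes, on $\N'\cap\M_1$, an operator-valued weight whose semifiniteness means: there is a net of elements in $\N'\cap\M_1$ with $\hat E$-finite \lq\lq traces\rq\rq\ converging strongly to $1$. Pulling this back through the dictionary produces exactly the partition of unity $\sum_i M_i^* e_\N M_i = 1$ by mutually orthogonal projections in $\M_1$ (condition (ii) of Definition \ref{def:PiPobasis}), built from $M_i\in\Hom(\iota,\iota\theta)$.

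For the forward direction (discrete $\Rightarrow$ intertwining basis exists): semifiniteness of $\hat E$ on $\N'\cap\M_1$ gives an increasing net of projections with finite $\hat E$-weight converging to $1$ strongly; decomposing these projections into matrix units over the factors corresponding to irreducible $\rho\prec\theta$ and using type $\III$-ness of $\N$ (so all relevant projections are equivalent, and there are isometries realizing subobjects), one manufactures a countable family $\{M_i\}\subset\M$ with $M_i^* e_\N M_i$ mutually orthogonal, summing strongly to $1$, and satisfying $M_i\in\Hom(\iota,\iota\theta)$ by construction — these are the charged fields. For the converse (intertwining basis $\Rightarrow$ discrete): given $\{M_i\}$ with \eqref{eq:PiPointert} and the basis conditions, the projections $M_i^* e_\N M_i\in\N'\cap\M_1$ sum to $1$, each has finite $\hat E$-weight (one computes $\hat E(M_i^* e_\N M_i) = M_i^* M_i \in \M$, a bounded element, or more precisely $\hat E(e_\N)=1$ and the intertwining property forces boundedness), hence $\hat E$ is semifinite on $\N'\cap\M_1$. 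The independence of the choice of $E$ is exactly the \lq\lq some iff all\rq\rq\ clause already built into Definition \ref{def:semi-discr}, so once proved for one $E$ it holds for all.

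The main obstacle I expect is making the identification $\N'\cap\M_1 \cong \gamma(\Hom(\theta,\theta))$ (or its precise variant $\{M e_\N : M\in\Hom(\iota,\iota\theta)\}$ spanning $\N'\cap\M_1$) fully rigorous in the \emph{infinite index} setting, where $\hat E$ is only a normal semifinite faithful operator-valued weight rather than a bounded conditional expectation, and where $\theta$ need not have finite dimension. One must be careful that the \lq\lq Pimsner--Popa type\rq\rq\ expansion and the strong-operator convergence of $\sum_i M_i^* e_\N M_i$ interact correctly with $\hat E$'s domain, and that the intertwiner picture survives without finite-dimensionality of $\theta$ — here the decomposition of $\theta$ into irreducibles $\rho$ with $d(\rho)<\infty$ (guaranteed because $\N\subset\M$ is discrete, so $\theta$ decomposes discretely into finite-dimensional pieces) is what rescues the argument. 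Everything else is the now-standard subfactor bookkeeping from \cite{Lo1987}, \cite{Ko1986}, \cite{IzLoPo1998}, so I would cite \cite{DeGi2018} for the detailed verification and present the proof as an assembly of these identifications.
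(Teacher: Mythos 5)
The paper does not prove this proposition: it is imported verbatim from \cite{DeGi2018} (see also \cite[\Sec 3, 5]{DeGi2018} and the explicit constructions recalled in equations \eqref{eq:defwM}--\eqref{eq:stineE} of Section \ref{sec:genQsys}), so there is no in-paper argument to compare against line by line. Your strategy is the standard one and is essentially how the cited reference proceeds: translate semifiniteness of $\hat E$ on $\N'\cap\M_1$ into the existence of a partition of unity by projections of the form $M_i^* e_\N M_i$ with $M_i\in\Hom(\iota,\iota\theta)$. Your converse direction is complete and correct: for $M\in\Hom(\iota,\iota\theta)$ one checks $\iota(n)M^*e_\N M = M^*\iota(\theta(n))e_\N M = M^*e_\N M\iota(n)$ since $e_\N\in\N'$, so $M^*e_\N M\in\N'\cap\M_1$, and $\hat E(M^*e_\N M)=M^*M$ is bounded by the push-through property and $\hat E(e_\N)=1$, giving semifiniteness.

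Two cautions on the forward direction and the dictionary. First, your parenthetical \lq\lq $M\in\Hom(\iota,\iota\theta)$ iff $Me_\N$ (or $M^*e_\N M$) lies in $\N'\cap\M_1$\rq\rq\ is not right as stated: $Me_\N$ does \emph{not} commute with $\N$ (one gets $Me_\N\iota(n)=\iota(\theta(n))Me_\N$, an intertwining relation, not a commutation relation), and the implication $M^*e_\N M\in\N'\cap\M_1\Rightarrow M\in\Hom(\iota,\iota\theta)$ fails in general; the correct identification is $\N'\cap\M_1\cong\Hom(\theta,\theta)$ via the canonical endomorphism, with $\Hom(\iota,\iota\theta)$ entering only through the sesquilinear map $M\mapsto M^*e_\N M$. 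Second, the real content of the forward direction is the Izumi--Longo--Popa analysis \cite[\Sec 2, 3]{IzLoPo1998}: discreteness forces $\theta=\bigoplus_{[\rho],r}\rho$ with each $\rho$ irreducible of finite dimension and finite multiplicity, and one must then show the charged field spaces $H_\rho=\Hom(\iota,\iota\rho)$ are large enough (dimension $n_\rho$) to assemble $M_{\rho,r}=\iota(w_{\rho,r})\psi_{\rho,r}$ into a Pimsner--Popa basis; you gesture at this but it is the step that carries the weight, and it is exactly what \cite{DeGi2018} verifies. With those points acknowledged (as you partly do by flagging the identification as the main obstacle and deferring to the reference), the outline is sound.
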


Pimsner--Popa bases are also part of the data constituting a \emph{generalized Q-system} (Definition \ref{def:genQsys}), which is a way of describing an infinite subfactor $\N\subset\M$ together with an expectation $E:\M\to\N\subset\M$ by means of data only pertaining to $\N$. For infinite factors, the expectation $E$ has finite index \cite{Ko1986} if and only if one can choose a Pimsner--Popa basis made of one element. In this case, the subfactor together with the chosen expectation can be more effectively described by an ordinary Q-system in the sense of Longo \cite{Lo1994, BiKaLoRe2014-2}. 
See \cite[\Sec 2, 3]{DeGi2018} for more comments on this part.

\begin{defi}[{\cite{FiIs1999}}]\label{def:genQsys}
Let $\N$ be an infinite factor, a \textbf{generalized Q-system} in the tensor \Cstar-category $\End(\N)$ is a triple $(\theta, w, \{m_i\})$ consisting of an endomorphism $\theta\in\End(\N)$, an isometry $w\in\Hom(\id,\theta)$, \ie $wn = \theta(n)w$ for every $n\in\N$, and a family $\{m_i\}\subset\N$ such that:
\begin{itemize}
\item[$(i)$] $m_i^* ww^* m_i$ are mutually orthogonal projections in $\N$;
\item[$(ii)$] $\sum_i m_i^* ww^* m_i = 1$, where the sum converges in the strong operator topology;
\item[$(iii)$] $n w = 0$ implies $n = 0$ for $n\in \N_1$, where $\N_1 := \langle\theta(\N), \{m_i\}\rangle \subset \N$.
\end{itemize}
\end{defi}

Conditions $(i)$ and $(ii)$ above clearly resemble those appearing in the definition of Pimsner--Popa basis (Definition \ref{def:PiPobasis}) with $ww^*$ playing the role of a Jones projection. Condition $(iii)$ is a faithfulness condition for the conditional expectation encoded in the datum of the generalized Q-system.

\begin{prop}[{\cite{FiIs1999}}]\label{prop:fiis}
Let $\N$ be an infinite factor and let $\theta\in \End(\N)$, the following conditions are equivalent:
\begin{enumerate}
\item There is a von Neumann algebra $\N_1$ such that $\N_1\subset\N$ with a normal faithful conditional expectation $E':\N_1\to\N_2\subset\N_1$, where $\N_2 := \theta(\N)\subset\N_1$, and $\theta\in\End(\N)$ is a canonical endomorphism of $\N_1\subset\N$;
\item There is a von Neumann algebra $\M$
such that $\N\subset\M$ with a normal faithful conditional expectation $E:\M\to \N\subset\M$, and $\theta$ is a dual canonical endomorphism of $\N\subset\M$, \ie $\theta = \gamma_{\restriction\N}$ where $\gamma\in\End(\M)$ is a canonical endomorphism of $\N\subset\M$;
\item The endomorphism $\theta$ is part of a generalized Q-system $(\theta, w, \{m_i\})$ in the tensor \Cstar-category $\End(\N)$ (Definition \ref{def:genQsys}).
\end{enumerate}
\end{prop}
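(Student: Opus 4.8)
The plan is to establish the cycle of implications $(2)\Rightarrow(1)\Rightarrow(3)\Rightarrow(2)$, since $(2)\Rightarrow(1)$ is essentially a change of perspective, $(1)\Rightarrow(3)$ is where one extracts the combinatorial data, and $(3)\Rightarrow(2)$ is the reconstruction step. For $(2)\Rightarrow(1)$, starting from $\N\subset\M$ with expectation $E:\M\to\N\subset\M$ and $\theta=\gamma_{\restriction\N}$, I would apply the canonical endomorphism construction of Longo \cite{Lo1987} one step: since $\gamma\in\End(\M)$ maps $\M$ onto $\N$ with $\gamma_{\restriction\N}=\theta$, the inclusion $\gamma(\M)=\theta(\N)\subset\N$ is (up to the isomorphism $\gamma$) a copy of the original inclusion $\N\subset\M$, so $\theta(\N)\subset\N\subset\M$ becomes a standard form of the tower, and pushing $E$ through $\gamma$ produces a normal faithful conditional expectation $E':\N\to\theta(\N)\subset\N$ with $\theta$ a canonical endomorphism of $\theta(\N)\subset\N$. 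Relabelling $\N_1=\N$, $\N_2=\theta(\N)$ gives $(1)$.

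For $(1)\Rightarrow(3)$, I have an inclusion $\N_1\subset\N$ (I'll rename to match: $\N\subset\M$-type data but one level down) with faithful expectation $E'$ onto $\theta(\N)$ and $\theta$ canonical. The endomorphism $\theta$ comes with a canonical isometry $w\in\Hom(\id,\theta)$ — concretely the image under $\theta$'s defining modular-conjugation formula of the Jones projection, or equivalently $w=\iota\bar\iota$-style intertwiner, satisfying $wn=\theta(n)w$; this is standard. By the previous result quoted in the excerpt (the Fidaleo--Isola theorem, applied to the semidiscrete subfactor $\theta(\N)\subset\N$ with its faithful expectation $E'$, which is automatically semidiscrete since it admits a faithful expectation), there is a Pimsner--Popa basis $\{m_i\}\subset\N$ for $\theta(\N)\subset\N$ relative to $E'$; by Proposition \ref{prop:discrete} applied here (or directly, since the Jones projection for $\theta(\N)\subset\N$ is $ww^*$), the basis relations become exactly conditions $(i)$ and $(ii)$ with $ww^*$ in place of $e_\N$. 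Condition $(iii)$ is then the statement that $E'$ is faithful rephrased: $\N_1$-in-the-definition is $\langle\theta(\N),\{m_i\}\rangle$, and by the Pimsner--Popa expansion (Proposition \ref{prop:PiPoexp}) this algebra is weakly dense in $\N$, so a nonzero element killed by $w$ on the left would contradict faithfulness of the expectation whose Jones projection is $ww^*$. This yields the generalized Q-system.

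For $(3)\Rightarrow(2)$, this is the reconstruction and is the step I expect to be the main obstacle, though it is exactly the content of \cite{FiIs1999} which I may cite. Given $(\theta,w,\{m_i\})$, one builds $\M$ as the von Neumann algebra generated on an appropriate Hilbert space (a GNS-type space built from $\theta$ and $w$, or equivalently by a crossed-product-like construction adding a single operator $v$ implementing $w$) together with $\N$ acting via $\iota$; the relations $(i)$–$(ii)$ guarantee that the $\{m_i\}$ provide a Pimsner--Popa basis making the candidate expectation $E(m):=w^* \cdot (\text{something}) \cdot$ well-defined and normal, and $(iii)$ guarantees $E$ is faithful. One then verifies $\theta=\gamma_{\restriction\N}$ by checking the modular conjugation formula, i.e.\ that $w$ plays the role of the intertwiner $\iota\bar\iota$ and that the resulting $\gamma$ restricts correctly. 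The delicate points are the normality and faithfulness of $E$ (where $(iii)$ and the strong convergence in $(ii)$ enter) and identifying the modular objects to conclude $\theta$ is genuinely dual canonical; both are handled in \cite{FiIs1999}, so in the paper I would record the construction and refer there for the full verification rather than reproduce it.
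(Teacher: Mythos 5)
The paper offers no proof of this proposition: it is imported verbatim from \cite{FiIs1999} (together with Longo's characterization of canonical endomorphisms), and the overall route you sketch --- cycling through the three conditions, transporting data along the canonical endomorphism, invoking the existence of Pimsner--Popa bases for semidiscrete inclusions of infinite type, and deferring the reconstruction $(3)\Rightarrow(2)$ to the literature --- is the standard one. Your $(1)\Rightarrow(3)$ and $(3)\Rightarrow(2)$ sketches are reasonable at the level of detail given.

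There is, however, a genuine error in your $(2)\Rightarrow(1)$ step. You assert $\gamma(\M)=\theta(\N)$ and then relabel $\N_1=\N$, $\N_2=\theta(\N)$. In fact $\gamma(\M)=\iota(\iotabar(\M))$ strictly contains $\gamma(\N)=\theta(\N)$ whenever $\N\neq\M$, and the correct choice --- recorded in the paper in the remark immediately following the proposition --- is $\N_1=\iotabar(\M)$, a \emph{proper} intermediate algebra $\theta(\N)\subset\N_1\subset\N$, with $E'=\iotabar\circ E\circ(\iotabar)^{-1}:\N_1\to\N_2$ the transport of $E$ along the isomorphism $\iotabar:\M\to\N_1$. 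Your relabelling breaks the statement in two ways. First, a canonical endomorphism of the trivial inclusion $\N\subset\N$ is an inner automorphism, so ``$\theta$ is a canonical endomorphism of $\N_1\subset\N$'' becomes false for any $\theta$ with nontrivial irreducible decomposition. Second, condition $(1)$ asks for an expectation of $\N_1$ onto $\N_2$, whereas you construct one of $\N$ onto $\theta(\N)$; that is an expectation onto the \emph{two-step} subalgebra $\gamma(\N)\subset\N$, whose existence would additionally require a downward expectation $\N\to\gamma(\M)$ and is not guaranteed in the infinite index case. The repair is immediate (take $\N_1=\iotabar(\M)$ and transport $E$), but as written the step does not establish $(1)$.
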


Note that the subfactors $\N\subset\M$ and $\N_2\subset\N_1$ are isomorphic via $\iotabar:\M\to\N$, where $\gamma = \iota\iotabar$, namely $\iotabar(\M) = \N_1$ and $\iotabar\iota(\N) = \theta(\N) = \N_2$.

\begin{prop}[\cite{DeGi2018}]\label{prop:Qsysdiscrete}
In the assumptions of the previous proposition, the intertwining property of the Pimsner--Popa basis which characterizes the discreteness of $\N\subset\M$ (Proposition \ref{prop:discrete}), or equivalently of $\N_2\subset\N_1$, namely $M_i \in \Hom(\iota, \iota \theta)$, is equivalent to 
\begin{align}\label{eq:genQsysinterts}
m_i \in \Hom(\theta,\theta^2)
\end{align}
for every $i$.
\end{prop}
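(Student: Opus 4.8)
The plan is to read off the equivalence from the identification, recorded in the remark following Proposition~\ref{prop:fiis}, of the subfactor $\N\subset\M$ with $\N_2=\theta(\N)\subset\N_1$ through the isomorphism $\iotabar\colon\M\to\N_1$. Two properties of this map are all I will need: it is a normal unital $*$-homomorphism, and it satisfies $\iotabar\iota=\theta$ by \eqref{eq:iotaandiotabar} (hence also $\iotabar\iota\theta=\theta^2$). Under the correspondence of Proposition~\ref{prop:fiis} the generalized Q-system elements are exactly the images $m_i=\iotabar(M_i)$ of a Pimsner--Popa basis $\{M_i\}\subset\M$ of $\N\subset\M$, so the statement reduces to showing that, for each $i$, one has $M_i\in\Hom(\iota,\iota\theta)$ if and only if $m_i\in\Hom(\theta,\theta^2)$.

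For the forward implication I would apply $\iotabar$ to the intertwining relation $M_i\,\iota(n)=\iota(\theta(n))\,M_i$ valid for all $n\in\N$: multiplicativity of $\iotabar$ together with $\iotabar\iota=\theta$ turns it into $m_i\,\theta(n)=\theta^2(n)\,m_i$, \ie $m_i\in\Hom(\theta,\theta^2)$. This is just the general fact that a homomorphism $\phi$ sends $\Hom(\rho,\sigma)$ into $\Hom(\phi\rho,\phi\sigma)$, applied with $\phi=\iotabar$.

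For the converse I would use that $\iotabar$, being unital on the factor $\M$, is injective, and is in fact an isomorphism onto $\N_1=\langle\theta(\N),\{m_i\}\rangle$; thus each $m_i$ lies in its range and $M_i=\iotabar^{-1}(m_i)$ is the corresponding basis element. Assuming $m_i\in\Hom(\theta,\theta^2)$ and running the previous computation backwards, one gets $\iotabar(M_i\,\iota(n))=\iotabar(\iota(\theta(n))\,M_i)$ for every $n\in\N$, and injectivity of $\iotabar$ yields $M_i\in\Hom(\iota,\iota\theta)$. Doing this for each index $i$ completes the argument.

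I do not expect a genuine obstacle: the content is functoriality of intertwiner spaces under the isomorphism $\iotabar$, combined with $\iotabar\iota=\theta$. The one point that deserves care is the bookkeeping in matching the two descriptions of the subfactor — making sure that the $M_i$ and $m_i$ in the statement are really related by $\iotabar(M_i)=m_i$ via the construction behind Proposition~\ref{prop:fiis}, and that the compositions $\iota$, $\iotabar$, $\theta$, $\gamma$ are lined up consistently through \eqref{eq:iotaandiotabar}.
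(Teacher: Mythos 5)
Your argument is correct and is essentially the proof given in the cited reference \cite{DeGi2018}: since $m_i=\iotabar(M_i)$ by construction and $\iotabar\colon\M\to\N_1$ is an injective normal unital $*$-homomorphism with $\iotabar\iota=\theta$, applying $\iotabar$ to $M_i\,\iota(n)=\iota(\theta(n))\,M_i$ gives $m_i\,\theta(n)=\theta^2(n)\,m_i$ and injectivity gives the converse. The bookkeeping point you flag is exactly settled by the remark after Proposition \ref{prop:fiis} and by Notation \ref{not:genQsysint}, so there is no gap.
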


We refer to \cite[\Sec 3, 5]{DeGi2018} for further explanations. A generalized Q-system $(\theta, w, \{m_i\})$ with the additional property \eqref{eq:genQsysinterts} is called a \textbf{generalized Q-system of intertwiners} \cite[\Def 3.7]{DeGi2018}.

Let $\N\subset \M$ be a discrete subfactor with $\N$, $\M$ infinite factors. It is known from the work of Izumi, Longo and Popa \cite[\Sec 2, 3]{IzLoPo1998} that the dual canonical endomorphism $\theta\in\End(\N)$ can be written as a \emph{direct sum} of irreducible subendomorphisms in $\End(\N)$ with \emph{finite dimension}, and that this condition is another characterization of discreteness. Clearly, the direct sum is finite if and only if the index of the subfactor is finite. If in addition the subfactor is \textbf{irreducible}, namely if $\N'\cap\M = \CC1$ or equivalently if $\Hom(\iota,\iota) = \CC1$, then each irreducible subsector $[\rho]$, $\rho\prec\theta$, $\rho\in\End(\N)$, appears in $[\theta]$ with finite multiplicity $n_\rho$. Namely
\begin{align}\label{eq:thetadiscrete}
\theta = \bigoplus_{[\rho], r} \rho
\end{align}
where the sum runs over inequivalent irreducible subendomorphisms $\rho\prec\theta$, one for each subsector $[\rho]$ of $[\theta]$, and over the finitely many copies of $[\rho]$ in $[\theta]$, labelled by $r=1,\ldots,n_{\rho}$. Moreover, it is shown in \cite{IzLoPo1998} that the multiplicity is bounded above by the square of the dimension
\begin{align}\label{eq:multboundndsquared}
n_\rho \leq d(\rho)^2.
\end{align}
Note that for each sector $[\rho]$ we choose the same endomorphism $\rho$ appearing $n_{\rho}$ times in the direct sum decomposition of $\theta$.
In other words, $\Hom(\theta,\theta) = \theta(\N)' \cap \N$ is a direct sum of finite matrix algebras, indexed by $[\rho]$, each of dimension $n_{\rho}^2$.

In this notation, if $\N\subset\M$ is an irreducible discrete subfactor together with an expectation $E:\M\to\N\subset\M$, which is \emph{unique} by irreducibility, one can choose a Pimsner--Popa basis with respect to $E$ made of \textbf{charged fields}\footnote{The terminology comes from the work of Doplicher and Roberts \cite{DoRo1972} on superselection sectors in quantum field theory.} $\{\psi_{\rho,r}\} \subset\M$, fulfilling
\begin{align}\label{eq:chargedfields}
\psi_{\rho,r}\in\Hom(\iota, \iota \rho)
\end{align}
and normalized such that 
\begin{align}\label{eq:normalizfields}
E(\psi_{\rho,r}\psi_{\rho,r}^*) = 1
\end{align}
for every $\rho, r$. Recall that $E(\psi_{\rho,r}\psi_{\sigma,s}^*) = 0$ if $[\rho]\neq [\sigma]$ or $r\neq s$ by the very definition of Pimsner--Popa basis. We shall always choose 
\begin{align}\label{eq:normalizvacuumfield}
\psi_{\id,1} = 1 
\end{align}
for the field associated with the vacuum sector $[\id]$, which is contained in $[\theta]$ with multiplicity one by irreducibility. 

\begin{notation}
Let $H_\rho := \Hom(\iota, \iota \rho)$ be the space of charged fields $\psi_\rho$ associated with $\rho\prec\theta$. 
\end{notation}

For every $\rho$, the vector space $H_\rho$ has dimension $n_{\rho}$. In this situation, a Pimsner--Popa basis $\{M_{\rho,r}\}$ with the intertwining property \eqref{eq:PiPointert} is obtained by setting
\begin{align}\label{eq:defwM}
w_{\rho,r} := \iotabar(\psi_{\rho,r}^*) w, \quad M_{\rho,r} := \iota(w_{\rho,r})\psi_{\rho,r}
\end{align}
for every $\rho,r$, where the $w_{\rho,r}$ are isometries in $\Hom(\rho,\theta)$ expressing the direct sum decomposition of $\theta$ into irreducibles, 
and $w$ is the isometry in $\Hom(\id,\theta)$, unique by irreducibility, which relates $E$ and $\gamma$ via the Connes--Stinespring representation 
\begin{align}\label{eq:stineE}
E = \iota(w)^* \gamma(\cdot) \iota(w) 
\end{align}
\cite[\Sec 5]{Lo1989}. Note that
$w_{\sigma,s}^*w_{\rho,r} = \delta_{\sigma,\rho} \delta_{s,r} 1$ and $p_{\rho,r} := \iota^{-1}E(M_{\rho,r}M_{\rho,r}^*) = w_{\rho,r} w_{\rho,r}^*$ is a projection in $\Hom(\theta,\theta)$ for every $\rho,r$. Actually $p_{\rho,r}\in\Proj_0(\theta)$ defined in equation \eqref{eq:Proj_def}. Moreover, $w = w_{\id,1}$.

\begin{notation}\label{not:genQsysint}
From now on, we deal with irreducible discrete subfactors of type $\III$. We shall use without further mention Pimsner--Popa bases (Definition \ref{def:PiPobasis}) made of charged fields $\{\psi_{\rho,r}\}$ and the associated generalized Q-systems (Definition \ref{def:genQsys}) of intertwiners
\begin{align}\label{eq:defm}
(\theta, w, \{m_{\rho,r} := \iotabar(M_{\rho,r})\})
\end{align}
with $\{M_{\rho,r}\}$ defined as in \eqref{eq:defwM}. The labels run over inequivalent irreducible subendomorphisms $\rho\prec\theta$ and multiplicity counting indices $r=1,\ldots, n_{\rho}$. 
Moreover, we shall assume $m_{\id,1} = \theta(w)$.
\end{notation}

For later use we state the following properties partially contained in \cite[\Sec 5]{DeGi2018}.

\begin{lem}\label{lem:genQsysprops}
Let $(\theta,w,\{m_{\rho,r}\})$ be a generalized Q-system of intertwiners constructed from charged fields as in Notation \ref{not:genQsysint}. Let $p_{\rho,r} = w_{\rho,r} w_{\rho,r}^*$ as before. Then
\begin{enumerate}
\item $(1_\theta\otimes p_{\sigma,s})m_{\rho,r} = \delta_{\rho,\sigma}\delta_{r,s} m_{\sigma,s}$ for $\rho,\sigma\prec\theta$ and $r,s$ as above. In particular, $(1_\theta\otimes p_{\id,1})m_{\rho,r} = \delta_{\rho,\id}\delta_{r,1} m_{\id,1}$, or equivalently $(1_\theta\otimes w^*)m_{\rho,r} = \delta_{\rho,\id}\delta_{r,1} 1_\theta$;
\item $(w^* \otimes 1_\theta)m_{\rho,r} = p_{\rho,r}$.
\end{enumerate}
Thus 
\begin{align}
\sum_{\rho,r} (1_\theta \otimes q)m_{\rho,r} =: (1_\theta \otimes q)m, \quad \sum_{\rho,r} (p \otimes q)m_{\rho,r} =: (p \otimes q)m 
\end{align}
are finite sums and well-defined in $\N$ for every $p,q\in\Proj_0(\theta)$, despite the fact that $m := \sum_{\rho,r} m_{\rho,r}$ itself is not well-defined as an element of $\N$ beyond the finite index case.
Summing over $\rho,r$ in the last equation of $(1)$ and in the sense of strong convergence in $(2)$, we have the identities 
\begin{align}
(1_\theta\otimes w^*)m = 1_\theta, \quad (w^* \otimes 1_\theta) m = 1_\theta.
\end{align}
\end{lem}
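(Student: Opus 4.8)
The plan is to verify the three labelled identities in Lemma \ref{lem:genQsysprops} directly from the defining formulas in Notation \ref{not:genQsysint}, namely $m_{\rho,r} = \iotabar(M_{\rho,r})$ with $M_{\rho,r} = \iota(w_{\rho,r})\psi_{\rho,r}$, $w_{\rho,r} = \iotabar(\psi_{\rho,r}^*)w$, and $\psi_{\rho,r}\in H_\rho = \Hom(\iota,\iota\rho)$, together with the orthonormality relations $w_{\sigma,s}^*w_{\rho,r} = \delta_{\sigma,\rho}\delta_{s,r}1_\theta$ and the normalization $E(\psi_{\rho,r}\psi_{\sigma,s}^*) = \delta_{\rho,\sigma}\delta_{r,s}1$. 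First I would write $m_{\rho,r}$ more explicitly: since $\iota\iotabar = \gamma$ and $\iotabar\iota = \theta$, applying $\iotabar$ to $M_{\rho,r} = \iota(w_{\rho,r})\psi_{\rho,r}$ gives $m_{\rho,r} = \theta(w_{\rho,r})\iotabar(\psi_{\rho,r})$, which lies in $\Hom(\theta,\theta^2)$ by Proposition \ref{prop:Qsysdiscrete}. The tensor-category conventions from Section 2.1 ($t\otimes s = t\,\rho_1(s) = \rho_2(s)\,t$ for intertwiners) translate the claimed equations into algebraic identities that I can unfold.

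For (1): $(1_\theta\otimes p_{\sigma,s})m_{\rho,r}$ means $\theta(p_{\sigma,s})m_{\rho,r} = \theta(w_{\sigma,s}w_{\sigma,s}^*)\theta(w_{\rho,r})\iotabar(\psi_{\rho,r})$. Pulling $\theta$ through the product and using $w_{\sigma,s}^*w_{\rho,r} = \delta_{\sigma,\rho}\delta_{s,r}1_\theta$ collapses this to $\delta_{\rho,\sigma}\delta_{r,s}\,\theta(w_{\sigma,s})\iotabar(\psi_{\sigma,s}) = \delta_{\rho,\sigma}\delta_{r,s}\,m_{\sigma,s}$, which is precisely the assertion; the special case $\sigma = \id$, $s = 1$ follows since $p_{\id,1} = w w^* = ww^*$ (recall $w_{\id,1} = w$), and the "equivalently" reformulation $(1_\theta\otimes w^*)m_{\rho,r} = \delta_{\rho,\id}\delta_{r,1}1_\theta$ comes from multiplying on the left by $\theta(w^*)\cdot(\text{something})$, or more cleanly from noting $(1_\theta\otimes w^*)m_{\rho,r} = \theta(w^*)m_{\rho,r} = \theta(w^*w_{\rho,r})\iotabar(\psi_{\rho,r}) = \delta_{\rho,\id}\delta_{r,1}\iotabar(\psi_{\id,1}) = \delta_{\rho,\id}\delta_{r,1}1_\theta$ using $\psi_{\id,1} = 1$. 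For (2): $(w^*\otimes 1_\theta)m_{\rho,r}$ unfolds, via $t\otimes s = \rho_2(s)t$ with $\rho_2 = \theta$ on the relevant slot, to $w^*\,m_{\rho,r}$ read in $\Hom(\theta,\theta)$; expanding $m_{\rho,r} = \theta(w_{\rho,r})\iotabar(\psi_{\rho,r})$ and using that $\iotabar(\psi_{\rho,r})\in\Hom(\theta,\theta\rho\dots)$ together with the Connes--Stinespring relation \eqref{eq:stineE} $E = \iota(w)^*\gamma(\cdot)\iota(w)$ — equivalently $\iotabar(w^*)\cdot = $ the coefficient extraction — I expect $(w^*\otimes 1_\theta)m_{\rho,r}$ to reduce to $w_{\rho,r}w_{\rho,r}^* = p_{\rho,r}$ after recognizing $w^*\theta(w_{\rho,r}) = $ appropriate scalar times $w_{\rho,r}^*$ and pairing with the normalization $E(\psi_{\rho,r}\psi_{\rho,r}^*) = 1$. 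The finiteness and well-definedness of $(1_\theta\otimes q)m$ and $(p\otimes q)m$ for $p,q\in\Proj_0(\theta)$ then follows because $q = \sum$ (finite) of the $p_{\sigma,s}$-type projections up to unitary equivalence inside $\Proj_0(\theta)$, so by (1) only finitely many terms $m_{\rho,r}$ survive the multiplication; summing (1) over all $\rho,r$ gives $(1_\theta\otimes w^*)m = \sum_{\rho,r}\delta_{\rho,\id}\delta_{r,1}1_\theta = 1_\theta$, and summing (2) over $\rho,r$ gives $(w^*\otimes 1_\theta)m = \sum_{\rho,r}p_{\rho,r} = 1_\theta$ by condition $(ii)$ of the generalized Q-system (the $p_{\rho,r}$ sum strongly to $1$), with strong convergence as stated.

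The main obstacle I anticipate is purely bookkeeping: getting the tensor-product conventions ($1_\theta\otimes p$ versus $p\otimes 1_\theta$, i.e. which copy of $\theta$ the intertwiner acts through) consistently aligned with the formula $r\otimes s = r\rho_1(s) = \rho_2(s)r$ and with the fact that $m_{\rho,r}\in\Hom(\theta,\theta^2)$ has a distinguished "left" and "right" $\theta$-leg, so that the two identities in (1) and (2) are genuinely asymmetric. A secondary point requiring care is justifying that $m := \sum_{\rho,r}m_{\rho,r}$ fails to converge in $\N$ in the infinite-index case while the "smeared" sums $(1_\theta\otimes q)m$ and $(p\otimes q)m$ do make sense — this is exactly where the projections $q,p\in\Proj_0(\theta)$ (finite dimension) enter, cutting the infinite sum down to a finite one via part (1), and I would make this reduction explicit rather than appeal to strong convergence there. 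The convergence statements in the final display are then the only place genuine (strong-operator) limits appear, and those are inherited verbatim from condition $(ii)$ of Definition \ref{def:genQsys} and the identity $\sum_i M_i^* e_\N M_i = 1$.
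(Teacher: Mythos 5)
Your treatment of part (1), of the finiteness of $(1_\theta\otimes q)m$ and $(p\otimes q)m$, and of the two final summed identities is correct and essentially identical to the paper's argument: one writes $m_{\rho,r}=\theta(w_{\rho,r})\iotabar(\psi_{\rho,r})$, uses $w_{\sigma,s}^*w_{\rho,r}=\delta_{\sigma,\rho}\delta_{s,r}1$, and observes that any $q\in\Proj_0(\theta)$ is dominated by a finite sum of the $p_{\sigma,s}$ so that only finitely many terms survive. Two cosmetic slips there: $w_{\sigma,s}^*w_{\rho,r}$ is $\delta\cdot 1_\rho$, not $1_\theta$; and $\sum_{\rho,r}p_{\rho,r}=1_\theta$ comes from the direct-sum decomposition of $\theta$ (the $w_{\rho,r}$ being a Cuntz-type family of isometries), not from condition $(ii)$ of Definition \ref{def:genQsys}. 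Neither affects the argument.

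The genuine gap is in part (2), which you leave at the level of ``I expect \dots to reduce to $p_{\rho,r}$'' via the Connes--Stinespring formula and the normalization $E(\psi_{\rho,r}\psi_{\rho,r}^*)=1$. The intermediate identity you guess, that $w^*\theta(w_{\rho,r})$ equals a scalar times $w_{\rho,r}^*$, is false: since $w\in\Hom(\id,\theta)$ one has $w^*\theta(n)=nw^*$ for every $n\in\N$, so $w^*\theta(w_{\rho,r})=w_{\rho,r}\,w^*$ --- no scalar, no adjoint. The correct computation is two lines and needs neither \eqref{eq:stineE} nor the normalization of the fields: $(w^*\otimes 1_\theta)m_{\rho,r}=w^*\,m_{\rho,r}=w^*\,\theta(w_{\rho,r})\,\iotabar(\psi_{\rho,r})=w_{\rho,r}\,w^*\iotabar(\psi_{\rho,r})=w_{\rho,r}\,w_{\rho,r}^*=p_{\rho,r}$, where the last step is just the definition $w_{\rho,r}=\iotabar(\psi_{\rho,r}^*)w$ from \eqref{eq:defwM}, i.e.\ $w_{\rho,r}^*=w^*\iotabar(\psi_{\rho,r})$. (The paper itself does not spell this out either; it cites \cite[\Sec 5]{DeGi2018} for (2), but the identity is exactly the one above.) With that replacement your proposal is complete.
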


\begin{proof}
The first part of (1) follows from $(1_\theta \otimes w_{\sigma,s}^*) m_{\rho,r} = \iotabar\iota (w_{\sigma,s}^* w_{\rho,r}) \iotabar(\psi_{\rho,r}) = \delta_{\rho,\sigma}\delta_{r,s} \iotabar(\psi_{\rho,r})$, because the $w_{\rho,r}$ are isometries with mutually orthogonal range projections. The rest of (1) and (2) are contained in \cite[\Sec 5]{DeGi2018}.

To show the second statement it suffices to observe that $q\in\Proj_0(\theta)$ has finite support in $\Hom(\theta,\theta)$, namely there exist finitely many projections $p_{\rho,r}$ such that $p = \sum p_{\rho,r}$ fulfills $q \leq p$, \ie $q = pqp$.
\end{proof}

The Pimsner--Popa expansion (Proposition \ref{prop:PiPoexp}) for $\N_2=\theta(\N)\subset \N_1$ with respect to $E' = \theta(w^* \cdot w)$ (Proposition \ref{prop:fiis}) applied to the basis elements $m_{\sigma,s}$ gives:

\begin{lem}\label{lem:PiPoexpbasis}
$m_{\sigma,s} = \sum_{\rho,r} m_{\rho,r}^* \theta(m_{\sigma,s} w)$, where the sum over $\rho,r$ is finite.
\end{lem}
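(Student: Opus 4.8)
The plan is to apply the Pimsner--Popa expansion (Proposition \ref{prop:PiPoexp}) to the subfactor $\N_2 = \theta(\N) \subset \N_1$ with respect to the conditional expectation $E'$. Recall from Proposition \ref{prop:fiis} and Proposition \ref{prop:Qsysdiscrete} that $\theta\in\End(\N)$ is a canonical endomorphism of $\N_1\subset\N$, and that the generalized Q-system $(\theta,w,\{m_{\rho,r}\})$ of intertwiners encodes the subfactor $\N_2 = \theta(\N)\subset\N_1$ with its conditional expectation $E':\N_1\to\N_2\subset\N_1$. Concretely, under the isomorphism $\iotabar:\M\to\N$ carrying $\N\subset\M$ onto $\N_2\subset\N_1$, the charged field basis $\{\psi_{\rho,r}\}$ of $\N\subset\M$ is carried to the basis $\{m_{\rho,r}\}$ of $\N_2\subset\N_1$ (up to the identifications in Notation \ref{not:genQsysint}, where $m_{\rho,r} = \iotabar(M_{\rho,r})$), and $E$ is carried to $E'$. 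In particular, by \eqref{eq:stineE} and its image under $\iotabar$, one has $E' = \theta(w^* \cdot w)$, \ie $E'(x) = \theta(w)^* x\, \theta(w)$ for $x\in\N_1$, which has range $\theta(\N) = \N_2$ since $w\in\Hom(\id,\theta)$ forces $\theta(w)^* \theta(n) \theta(w) = \theta(w^* n w) = \theta(n)$ for $n\in\N$.

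First I would record that $\{m_{\rho,r}\}$ is indeed a Pimsner--Popa basis for $\N_2\subset\N_1$ with respect to $E'$: this is the $\iotabar$-image of the statement that $\{M_{\rho,r}\}$ is a Pimsner--Popa basis for $\N\subset\M$ with respect to $E$, which follows from Proposition \ref{prop:discrete} and the construction in \eqref{eq:defwM}. Then I would apply the expansion formula of Proposition \ref{prop:PiPoexp} to the element $m = m_{\sigma,s}\in\N_1$: it gives
\begin{align}
m_{\sigma,s} = \sum_{\rho,r} m_{\rho,r}^*\, E'(m_{\rho,r}\, m_{\sigma,s}),
\end{align}
with convergence a priori in the topology generated by the $E'$-invariant seminorms. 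Substituting $E' = \theta(w^*\cdot w)$ yields $E'(m_{\rho,r} m_{\sigma,s}) = \theta(w^* m_{\rho,r} m_{\sigma,s} w) = \theta(w)^* m_{\rho,r} m_{\sigma,s} w$. Here I would use the intertwining property $m_{\rho,r}\in\Hom(\theta,\theta^2)$ from \eqref{eq:genQsysinterts}, so that $m_{\rho,r}\theta(x) = \theta^2(x) m_{\rho,r}$ for $x\in\N$; one then has to massage $\theta(w^* m_{\rho,r} m_{\sigma,s} w)$ into the claimed form $\theta(m_{\sigma,s} w)$ multiplied appropriately, or rather recognize that the coefficient appearing is $\theta(w^* m_{\rho,r}\, m_{\sigma,s} w)$, and check that the intertwining relations let us absorb the outer $\theta$ and $w$ into $\theta(m_{\sigma,s} w)$ as stated in the lemma. (The displayed form $m_{\rho,r}^*\theta(m_{\sigma,s} w)$ requires matching conventions: since $m_{\rho,r}\in\Hom(\theta,\theta^2)$, we have $m_{\rho,r}^* \in \Hom(\theta^2,\theta)$, and $\theta(m_{\sigma,s} w)\in\Hom(\id,\theta^2)$ as $m_{\sigma,s} w\in\Hom(\id,\theta)$; composing gives an element of $\Hom(\id,\theta)$, consistent with $m_{\sigma,s} w$ on the left-hand side — though the left side is $m_{\sigma,s}$, not $m_{\sigma,s} w$, so one must be careful about where the isometry $w$ sits and invoke the precise normalization conventions from Notation \ref{not:genQsysint}.)

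Next I would address finiteness of the sum, which is the substantive point beyond a routine substitution. By Lemma \ref{lem:genQsysprops}(1), $(1_\theta\otimes p_{\sigma',s'})m_{\rho,r} = \delta_{\rho,\sigma'}\delta_{r,s'}m_{\sigma',s'}$; combined with the support argument used in the proof of that lemma — namely that each $q\in\Proj_0(\theta)$ is dominated by a finite sum of the $p_{\rho,r}$ — one shows that when expanding a fixed basis element $m_{\sigma,s}$ (which has finite "charge support" concentrated at the single irreducible $\sigma$), only finitely many terms $m_{\rho,r}^*\theta(m_{\sigma,s} w)$ are nonzero: the coefficient $E'(m_{\rho,r}m_{\sigma,s})$ vanishes unless $\rho$ appears in the (finite) decomposition of $\sigma\theta$ or, more precisely, unless $[\rho]\prec[\sigma\theta]$, of which there are finitely many by rigidity of $\End_0(\N)$ and the finite-multiplicity bound \eqref{eq:multboundndsquared}. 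So the sum is actually a finite sum in $\N$, and convergence in the seminorm topology becomes an honest finite-sum identity.

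The main obstacle I anticipate is purely bookkeeping: reconciling the abstract Pimsner--Popa expansion $m = \sum_i M_i^* E(M_i m)$ with the normalizations and graphical/intertwiner conventions of \cite{DeGi2018} embodied in Notation \ref{not:genQsysint}, in particular tracking the placement of $w$, the identification $E' = \theta(w^*\cdot w)$, and the fact that the basis elements $m_{\rho,r}$ themselves are not normalized by $E'(m_{\rho,r}m_{\rho,r}^*) = 1$ in general (the charged fields $\psi_{\rho,r}$ are, via \eqref{eq:normalizfields}, but the translation to $m_{\rho,r}$ via \eqref{eq:defwM} and \eqref{eq:defm} introduces the isometries $w_{\rho,r}$). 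Once the dictionary is set up correctly, the identity is essentially a restatement of Proposition \ref{prop:PiPoexp}, and the only real content is the finiteness claim, which follows as above from Lemma \ref{lem:genQsysprops} and the finite-support argument already used in its proof.
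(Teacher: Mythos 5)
Your overall strategy coincides with the paper's: expand $m_{\sigma,s}$ via Proposition \ref{prop:PiPoexp} for $\theta(\N)\subset\N_1$ with respect to $E'=\theta(w^*\cdot w)$, then simplify the coefficient $\theta(w^* m_{\rho,r} m_{\sigma,s} w)$ using the intertwiner properties. The ``massaging'' you defer is in fact short: by Lemma \ref{lem:genQsysprops}(2) one has $w^* m_{\rho,r}=(w^*\otimes 1_\theta)m_{\rho,r}=p_{\rho,r}$, and by part (1) $\theta(p_{\rho,r})m_{\rho,r}=m_{\rho,r}$, hence $m_{\rho,r}^*\,\theta(p_{\rho,r}m_{\sigma,s}w)=m_{\rho,r}^*\,\theta(m_{\sigma,s}w)$. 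That part of your outline is correct, if left incomplete.

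The genuine gap is in the finiteness argument, which you yourself single out as the only substantive point. You claim the coefficient vanishes unless $[\rho]\prec[\sigma\theta]$, and that there are finitely many such $\rho$ by rigidity and the bound \eqref{eq:multboundndsquared}. In the infinite index case $\theta$, and hence $\sigma\theta$, decomposes into \emph{infinitely} many inequivalent irreducibles, so the set of sectors contained in $[\sigma\theta]$ is infinite; the bound \eqref{eq:multboundndsquared} controls the multiplicity of each individual sector in $[\theta]$, not the number of sectors. So even granting your vanishing criterion, it would not yield finiteness. The correct observation, which is the paper's, is sharper: $E(M_{\rho,r}M_{\sigma,s}) = w_{\rho,r}\rho(w_{\sigma,s})E(\psi_{\rho,r}\psi_{\sigma,s})$, and $E(\psi_{\rho,r}\psi_{\sigma,s})$ lies in $\Hom(\id,\rho\sigma)\cong\Hom(\sigmabar,\rho)$, which for irreducible $\rho,\sigma$ vanishes unless $[\rho]=[\sigmabar]$. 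Equivalently, $w^* m_{\rho,r}m_{\sigma,s}w=p_{\rho,r}m_{\sigma,s}w=(p_{\rho,r}\otimes p_{\sigma,s})(m_{\sigma,s}w)$ lies in a space isomorphic to $\Hom(\id,\rho\sigma)$. Hence for fixed $\sigma$ the nonzero terms are indexed by the single sector $[\sigmabar]$, of which there are only $n_{\sigmabar}<\infty$ copies in $[\theta]$ by irreducibility of the subfactor; this is where finiteness actually comes from.
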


\begin{proof}
$m_{\sigma,s} = \sum_{\rho,r} m_{\rho,r}^\ast E'(m_{\rho,r} m_{\sigma,s}) = \sum_{\rho,r} m_{\rho,r}^\ast \theta(w^\ast m_{\rho,r} m_{\sigma,s} w ) = \sum_{\rho,r} m_{\rho,r}^* \theta(m_{\sigma,s} w)$ by the properties of generalized Q-systems of intertwiners stated in Lemma \ref{lem:genQsysprops}. The sum over ${\rho,r}$ is finite because $E(M_{\rho,r} M_{\sigma,s}) = E(w_{\rho,r} \psi_{\rho,r} w_{\sigma,s} \psi_{\sigma,s}) = w_{\rho,r} \rho(w_{\sigma,s})E(\psi_{\rho,r}\psi_{\sigma,s})$, $E(\psi_{\rho,r}\psi_{\sigma,s})$ belongs to $\Hom(\id,\rho \sigma) \cong \Hom(\sigmabar,\rho)$ and, for every ${\sigma}$, the sector $[\sigmabar]$ appears with finite multiplicity in $[\theta]$ by the irreducibility of the subfactor.
\end{proof}

\begin{lem}\label{lem:truncated}
For every $q\in\Proj_0(\theta)$ we have:
\begin{enumerate}
\item $(1_\theta\otimes q)mm^\ast(1_\theta\otimes q) = (1_\theta\otimes q)(m^\ast \otimes 1_\theta)(1_\theta\otimes m)(1_\theta\otimes q) = (1_\theta\otimes q)(1_\theta\otimes m^\ast)(m \otimes 1_\theta)(1_\theta\otimes q)$ \emph{(}truncated Frobenius property\emph{)};
\item $q = q ((w^*m^*) \otimes 1_\theta) (1_\theta \otimes (mw)) = ((w^*m^*) \otimes 1_\theta) (1_\theta \otimes (mw)) q$ 
\emph{(}truncated conjugate equations for $\theta$ solved by $mw$\emph{)}.
\end{enumerate}
\end{lem}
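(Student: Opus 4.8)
The plan is to reduce both statements to the finite-index computations that hold when one works with a single irreducible summand of $\theta$ and then to patch these together using the truncation by $q$, which by the second part of Lemma \ref{lem:genQsysprops} only involves finitely many of the basis elements $m_{\rho,r}$. Fix $q\in\Proj_0(\theta)$ and choose, as in the proof of Lemma \ref{lem:genQsysprops}, a finite sum $p=\sum_{(\rho,r)\in F} p_{\rho,r}$ of range projections with $q=pqp$; then for any expression linear in the $m_{\rho,r}$'s sandwiched between $(1_\theta\otimes q)$ on both sides, only the finitely many indices in $F$ survive by part $(1)$ of Lemma \ref{lem:genQsysprops} (which gives $(1_\theta\otimes p_{\sigma,s})m_{\rho,r}=\delta_{\rho\sigma}\delta_{rs}m_{\sigma,s}$, and its adjoint). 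Hence every occurrence of $m$ in the statement is legitimately replaced by the finite sum $\sum_{(\rho,r)\in F}m_{\rho,r}$, and all manipulations take place inside honest elements of $\N$.

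For part $(2)$, the truncated conjugate equations, I would start from the fact that $(\theta,w,\{m_{\rho,r}\})$ is a generalized Q-system of intertwiners, so $mw$ (interpreted via the finite truncation) plays the role of the solution of the conjugate equations for $\theta$ against itself. Concretely, using $m_{\rho,r}=\iotabar(M_{\rho,r})$ and the definitions \eqref{eq:defwM}, one has $m_{\rho,r}w = \iotabar(M_{\rho,r})w\in\Hom(\theta,\theta^2)$, and $(w^*\otimes 1_\theta)m=1_\theta$ from Lemma \ref{lem:genQsysprops} is exactly one half of a conjugate equation. The other identity $q=q((w^*m^*)\otimes 1_\theta)(1_\theta\otimes(mw))$ should follow by inserting the Pimsner--Popa expansion of Lemma \ref{lem:PiPoexpbasis}, $m_{\sigma,s}=\sum_{\rho,r}m_{\rho,r}^*\theta(m_{\sigma,s}w)$, rewriting $\theta(m_{\sigma,s}w)=1_\theta\otimes(m_{\sigma,s}w)$ in tensor-categorical notation, and using part $(1)$ of Lemma \ref{lem:genQsysprops} together with $(w^*\otimes 1_\theta)m=1_\theta$ to collapse the sum; truncating by $q$ keeps everything finite and legitimizes the rearrangement. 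I expect the right-hand equality (with $q$ on the right) to follow either by symmetry or by taking adjoints and using that $q$ is self-adjoint.

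For part $(1)$, the truncated Frobenius property, the key is that $(1_\theta\otimes q)mm^*(1_\theta\otimes q)$ is a finite sum $\sum_{(\rho,r),(\sigma,s)\in F}(1_\theta\otimes q)m_{\rho,r}m_{\sigma,s}^*(1_\theta\otimes q)$, so it suffices to verify the identity termwise, where each $m_{\rho,r}\in\Hom(\theta,\theta^2)$ has finite dimension and the ordinary Frobenius reciprocity / zig-zag relations for the standard solutions are available. The cleanest route is probably to express $m_{\rho,r}$ through the isometries $w_{\rho,r}\in\Hom(\rho,\theta)$ and the charged-field data, use that the $w_{\rho,r}$ realize $\theta=\bigoplus_{[\rho],r}\rho$, and reduce to the Frobenius relation for a single irreducible $\rho$ with its conjugate $\rhobar$; the finite-dimensional Frobenius property then gives $m_{\rho,r}m_{\sigma,s}^*=(m_{\rho,r}^*\otimes 1_\theta)(1_\theta\otimes m_{\sigma,s})$ after the appropriate sandwiching, and similarly with the tensor factors swapped. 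I would then resum over $(\rho,r),(\sigma,s)\in F$ and reintroduce the notation $(m^*\otimes 1_\theta)(1_\theta\otimes m)$, valid under the $(1_\theta\otimes q)$-truncation.

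The main obstacle, I expect, is bookkeeping: making sure that the formal symbol $m=\sum_{\rho,r}m_{\rho,r}$ — which does \emph{not} define an element of $\N$ — is only ever used in contexts where the truncation by a finite-support projection collapses it to a genuine finite sum, and that the tensor-categorical shorthand (writing $\theta(x)$ as $1_\theta\otimes x$, composing along the correct strand) is applied consistently. Once the reduction to the finite-index summands is set up carefully, the actual identities are the standard Frobenius and conjugate-equation relations for the standard solutions of \cite{LoRo1997}, so the content is in the reduction rather than in any new computation.
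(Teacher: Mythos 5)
Your treatment of part $(2)$ is essentially the paper's argument: take the expansion of Lemma \ref{lem:PiPoexpbasis}, multiply on the left by $w^*$ to get $p_{\sigma,s} = \sum_{\rho,r} w^* m_{\rho,r}^*\,(1_\theta\otimes(m_{\sigma,s}w))$ via Lemma \ref{lem:genQsysprops}, and then sum over the finitely many $(\sigma,s)$ whose range projections dominate $q$; the second equality follows by taking adjoints. That part is fine.

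Part $(1)$, however, contains a genuine gap. The truncated Frobenius property is \emph{not} a termwise statement about the individual $m_{\rho,r}$, and the identity you invoke, $m_{\rho,r}m_{\sigma,s}^* = (m_{\rho,r}^*\otimes 1_\theta)(1_\theta\otimes m_{\sigma,s})$ (whose stars are in any case misplaced), is false: even in the finite index case the Frobenius relation $mm^* = (m^*\otimes 1_\theta)(1_\theta\otimes m)$ holds only for the full multiplication isometry $m$ of the Q-system, never component by component. The product $m_{\sigma,s}m_{\rho,r}^*$ genuinely spreads over a \emph{third} index: the correct statement, which is the entire content of the lemma, is the Pimsner--Popa expansion of the product in the basis $\{m_{\tau,t}^*\}$ with respect to $E'=\theta(w^*\cdot w)$,
\begin{align}
m_{\sigma,s}m_{\rho,r}^* \;=\; \sum_{\tau,t} m_{\tau,t}^*\,E'(m_{\tau,t}m_{\sigma,s}m_{\rho,r}^*) \;=\; \sum_{\tau,t} m_{\tau,t}^*\,(1_\theta\otimes m_{\sigma,s})(1_\theta\otimes p_{\rho,r}),
\end{align}
exactly as in the proof of Lemma \ref{lem:PiPoexpbasis}. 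Note that the finiteness of the $\tau,t$ sum comes from the multiplicity argument of that lemma ($E(\psi_{\tau,t}\psi_{\sigma,s}\psi_{\rho,r}^*)\in\Hom(\rho_{\rho,r},\rho_{\tau,t}\rho_{\sigma,s})$ and finite multiplicities in $\theta$), \emph{not} from the sandwiching by $(1_\theta\otimes q)$, which sits on the other tensor leg and does not control the $\tau,t$ index; so your blanket "the truncation makes everything finite" claim also fails precisely for the middle and right-hand expressions of $(1)$. Once you have the displayed expansion, you multiply both sides by $(1_\theta\otimes q)$, sum the now-finite sums over $\rho,\sigma,r,s$, and obtain the first equality of $(1)$; the second equality is then free because the left-hand side is manifestly self-adjoint, so no separate computation with the tensor factors swapped is needed. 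Your plan of reducing to a single irreducible and invoking the zig-zag relations for $(r_\rho,\bar r_\rho)$ is attempting to prove an identity that is false before resummation, and resumming over $F$ cannot repair it because the third index $(\tau,t)$ never enters your setup.
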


\begin{proof}
As for Lemma \ref{lem:PiPoexpbasis}, $m_{\sigma,s}m_{\rho,r}^* = \sum_{\tau,t} m_{\tau,t}^\ast E'(m_{\tau,t}m_{\sigma,s}m_{\rho,r}^*) = \sum_{\tau,t} m_{\tau,t}^\ast (1_\theta\otimes m_{\sigma,s})(1_\theta\otimes p_{\rho,r})$. By multiplying on either side by the same projection $(1_\theta \otimes q)$ and summing over $\rho,\sigma,r,s$ we obtain the first equality in (1), and the second one as well because the left hand side is manifestly self-adjoint.

To show (2), by multiplying by $w^*$ on the left of the equality stated in Lemma \ref{lem:PiPoexpbasis}, and again by Lemma \ref{lem:genQsysprops}, we obtain $p_{\sigma,s} = \sum_{\rho,r} w^*m_{\rho,r}^* (1_\theta \otimes (m_{\sigma,s} w))$.
For every $q\in\Proj_0(\theta)$ we can take sufficiently many irreducibles $\sigma\prec\theta$ such that $q=qp=pq$, where $p:= \sum_{\sigma,s} p_{\sigma,s}$, and by plugging in the expression for $p_{\sigma,s}$ we have the claim.
\end{proof}

%%%
\subsection{Braided and local subfactors}\label{sec:localsubf}
%%%

In the type of subfactors we deal with in this paper, we shall be equipped with additional structure motivated by conformal field theory. 
More specifically, we have in mind subfactors coming from inclusions of two \emph{local} conformal nets \cite[\Thm 4.9]{LoRe1995}, \cite[\Thm 6.8]{DeGi2018}.

Let $\theta\in\End(\N)$ be a dual canonical endomorphism of $\N\subset\M$, and assume that $\theta$ belongs to a full \emph{braided} tensor \Cstar-subcategory $\C\subset\End(\N)$, \eg $\C=\langle\theta\rangle$ the tensor \Cstar-category generated by $\theta$ equipped with a braiding $\varepsilon_{\theta^n,\theta^m}\in\Hom(\theta^{n+m},\theta^{n+m})$ for every $n,m\in\NN$.

Recall that a braided tensor \Cstar-category $\cC$ is a tensor \Cstar-category equipped with a family of unitary natural isomorphisms interchanging the order of tensor products $\{\varepsilon_{\rho,\sigma}\in\Hom(\rho\otimes\sigma,\sigma\otimes\rho)\}_{\rho,\sigma\in\cC}$, called braiding, which is compatible the tensor structure, see \eg \cite[\Ch 8]{EGNO15}. Naturality means $\varepsilon_{\rho',\sigma'} (u\otimes v) = (v\otimes u) \varepsilon_{\rho,\sigma}$ for every pair of morphisms $u\in\Hom(\rho,\rho')$ and $v\in\Hom(\sigma,\sigma')$. We denote the braiding and the opposite braiding respectively by $\varepsilon^+_{\rho,\sigma} :=\varepsilon_{\rho,\sigma}$ and $\varepsilon^-_{\rho,\sigma} := \varepsilon_{\sigma,\rho}^\ast$.

\begin{defi}\label{def:braidedsubf}
We call a subfactor \textbf{braided} if the dual canonical endomorphism $\theta$ belongs to a full braided tensor \Cstar-subcategory of $\End(\N)$.
\end{defi}

In Proposition \ref{prop:Qsysdiscrete} we saw that a semidiscrete subfactor admits a generalized Q-system of intertwiners if and only if it is discrete.

\begin{defi}
A generalized Q-system of intertwiners $(\theta,w,\{m_{\rho,r}\})$ in a braided tensor \Cstar-category $\cC\subset \End(\N)$ is called \textbf{commutative} if 
\begin{align}\label{eq:commutgenQsysint}
\theta(\varepsilon^{\pm}_{\theta,\theta}) m_{\sigma,s} m_{\rho,r} = m_{\rho,r} m_{\sigma,s} 
\end{align}
for every $\rho,\sigma,r,s$ as in Notation \ref{not:genQsysint}, where one can equivalently choose $\varepsilon^+_{\theta,\theta}$ or $\varepsilon^-_{\theta,\theta}$.
\end{defi}

\begin{defi}\label{def:localsubf}
Let $\N\subset\M$ be a braided discrete subfactor. It is called \textbf{local} if it admits a commutative generalized Q-system of intertwiners.
\end{defi}

Note that being braided is additional structure for a subfactor, while being local is a constraint on the structure.

\begin{lem}
The commutativity condition \eqref{eq:commutgenQsysint} is equivalent to
\begin{align}\label{eq:commutfields}
\iota(\varepsilon^{\pm}_{\sigma,\rho}) \psi_{\sigma} \psi_{\rho} = \psi_{\rho} \psi_{\sigma} 
\end{align}
for every pair of charged fields $\psi_\rho\in H_\rho$, $\psi_\sigma\in H_\sigma$, where $\rho,\sigma\prec\theta$ are irreducible and $\iota:\N\to\M$ is the inclusion morphism. 
In particular, if a braided discrete subfactor admits a commutative generalized Q-system of intertwiners then every generalized Q-systems of intertwiners is commutative.
\end{lem}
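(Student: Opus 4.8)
The claim has two parts: (a) the equivalence of the Q-system commutativity \eqref{eq:commutgenQsysint} with the field commutation relation \eqref{eq:commutfields}, and (b) the consequence that if one generalized Q-system of intertwiners is commutative then every one is. I would prove (a) by translating \eqref{eq:commutgenQsysint} through the dictionary relating the $m_{\rho,r}$ to the charged fields $\psi_{\rho,r}$, namely $m_{\rho,r} = \iotabar(M_{\rho,r})$ with $M_{\rho,r} = \iota(w_{\rho,r})\psi_{\rho,r}$, and then use $\iota\iotabar = \gamma$, $\iotabar\iota = \theta$ to pass between the "$\N$-side" and the "$\M$-side". The point is that both \eqref{eq:commutgenQsysint} and \eqref{eq:commutfields} encode the same morphism identity in $\Hom(\iota,\iota\theta^2)$ (or $\Hom(\theta,\theta^3)$ after applying $\iotabar$), once the isometries $w_{\rho,r}$ are stripped off.

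First I would reduce \eqref{eq:commutfields} to the case of fields associated with irreducible $\rho,\sigma\prec\theta$ appearing with their multiplicity indices; since $\psi_{\rho,r}$ ranges over a linear basis of $H_\rho = \Hom(\iota,\iota\rho)$ as $r$ varies, and $\varepsilon_{\sigma,\rho}$ is natural, the general statement for arbitrary $\psi_\rho\in H_\rho$, $\psi_\sigma\in H_\sigma$ follows from the one for basis elements by linearity and naturality of the braiding. Then, using $\psi_{\rho,r} = \iota(w_{\rho,r}^*)M_{\rho,r}$ wait — rather, using $M_{\rho,r} = \iota(w_{\rho,r})\psi_{\rho,r}$ and the fact that $w_{\rho,r}$ are isometries with orthogonal ranges summing to $1$, I can recover $\psi_{\rho,r}$ from $M_{\rho,r}$ and hence relate products $\psi_{\sigma,s}\psi_{\rho,r}$ to products $M_{\sigma,s}M_{\rho,r}$, up to conjugation by $w$'s and an application of $\iota$. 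Applying $\iotabar$ converts $M$'s to $m$'s and $\iota(\varepsilon^\pm_{\sigma,\rho})$ to $\theta(\varepsilon^\pm_{?,?})$; the braiding naturality (to move the $w_{\rho,r}\otimes w_{\sigma,s}$ past $\varepsilon$) is precisely what turns $\varepsilon^\pm_{\sigma,\rho}$ into $\varepsilon^\pm_{\theta,\theta}$ sandwiched by the projections $p_{\sigma,s},p_{\rho,r}$, matching \eqref{eq:commutgenQsysint}. This is a bookkeeping computation with the tensor-category calculus; the only subtlety is keeping the left/right order of tensor factors straight and checking that the choice of $+$ versus $-$ braiding is immaterial (which follows because swapping the roles of $\rho$ and $\sigma$, i.e. the two fields, interchanges $\varepsilon^+$ and $\varepsilon^-$, and the relation is symmetric under that swap).

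For part (b), the key observation is that \eqref{eq:commutfields}, unlike \eqref{eq:commutgenQsysint}, is manifestly \emph{intrinsic}: it is a statement purely about the fields $\psi_\rho\in H_\rho = \Hom(\iota,\iota\rho)$, the inclusion $\iota$, the braiding $\varepsilon$ on $\C$, and the multiplication in $\M$ — none of which depends on the choice of generalized Q-system. A different generalized Q-system of intertwiners for the same $E$ corresponds (via Notation \ref{not:genQsysint} and the bijection of \cite{DeGi2018}) to a different choice of Pimsner--Popa basis of charged fields, but the spaces $H_\rho$ and the relation \eqref{eq:commutfields} are unchanged. Since (a) shows \eqref{eq:commutgenQsysint} for a given Q-system is equivalent to \eqref{eq:commutfields} for \emph{all} charged fields, and the latter does not reference the Q-system, commutativity of one forces commutativity of all. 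I expect part (a), specifically the careful transcription of the braiding through $\iota$ and $\iotabar$ with correct leg ordering, to be the main (though routine) obstacle; part (b) is then essentially a formal remark.
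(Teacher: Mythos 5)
Your proposal is correct and follows essentially the same route as the paper: unwind the definitions $m_{\rho,r}=\iotabar(\iota(w_{\rho,r})\psi_{\rho,r})$ and use naturality of the braiding to strip off the isometries $w_{\rho,r}$, reducing \eqref{eq:commutgenQsysint} to the field relation for the basis elements, then extend to arbitrary charged fields by linearity. For the second statement the paper makes explicit the one point you leave slightly implicit, namely that a different generalized Q-system may be built on equivalent but distinct representatives $\tilde\rho\cong\rho$, so the relation must be transported to $H_{\tilde\rho}=uH_\rho$ via the unitary $u\in\Hom(\rho,\tilde\rho)$ together with naturality of the braiding.
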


\begin{proof}
By the very definition of the $m_{\rho,r}$, \eqref{eq:defwM}, \eqref{eq:defm}, and by naturality of the braiding, one can show that \eqref{eq:commutgenQsysint} is equivalent to $\iota(\varepsilon^{\pm}_{\sigma,\rho}) \psi_{\sigma,s} \psi_{\rho,r} = \psi_{\rho,r} \psi_{\sigma,s}$. The statement for arbitrary charged fields follows by taking linear combinations of the $\psi_{\rho,r}$.
The second statement also follows by naturality of the braiding and by observing that $\tilde\psi \in H_{\tilde\rho}$ and $\psi \in H_{\rho}$ with $[\rho] = [\tilde\rho]$ are related by $\tilde\psi = u\psi$, where $u$ is a unitary conjugating $\rho$ to $\tilde\rho$.  
\end{proof}

One can consider two different Hilbert space inner products on the space of charged fields $H_\rho$, for $\rho\prec\theta$ \emph{irreducible} with finite dimension. Namely, for $\psi, \psi' \in H_\rho$ one can consider $\iota^{-1}E(\psi'\psi^*) \in \Hom(\rho,\rho) = \CC1$ and $\frac{1}{d(\rho)} \psi^* \psi' \in \Hom(\iota,\iota) \in \CC1$. Note that our choice of normalization \eqref{eq:normalizfields} means orthonormality of the charged fields in the Pimsner--Popa basis $\{\psi_{\rho,r}\}$ with respect to the first inner product. In \cite[\Sec 3]{IzLoPo1998} it is shown that the difference of the two inner products is measured by a positive operator $a_\rho \in \B(H_\rho)$ such that
\begin{align}\label{eq:twoinnerprods}
E(\psi' \psi^*) = \frac{1}{d(\rho)} \psi^* (a_\rho \psi').
\end{align}
In \cite{IzLoPo1998}, it is shown that every $a_\rho$ is invertible and trace class, thus every $H_\rho$ is finite-dimensional, \ie the multiplicity of each $[\rho]$ in $[\theta]$ is finite for irreducible discrete subfactors.
The collection of all $a_\rho$, for $\rho\prec\theta$ irreducible with finite dimension, is an invariant for the subfactor which controls the modular action on charged fields with respect to $E$-invariant states, see the proof of \cite[\Thm 3.3 (iii)]{IzLoPo1998}, \cf \cite[\Sec 2.3]{JoPe2019}. 

\begin{rmk}
A special role is played by the case where $a_\rho = 1_{H_\rho}$ for every irreducible $\rho\prec\theta$. For example, all irreducible finite index subfactors have this property \cite[\Sec 3]{IzLoPo1998}, but not all irreducible discrete subfactors fulfill it \cite[\Rmk 3.4, Appendix]{IzLoPo1998}. In the special case of subfactors coming from discrete/compact quantum group actions, the condition $a_\rho = 1_{H_\rho}$ corresponds to the Kac type case \cite[\Sec 4]{IzLoPo1998}, \cite[\Sec 3]{To2009}, \cite[\Sec 6.3]{JoPe2019}. 
\end{rmk}

We show below that locality (Definition \ref{def:localsubf}) implies the condition too.\footnote{Xu proves in \cite[\Cor 3.9]{Xu2005} that $a_\rho = 1_{H_\rho}$ follows for (subfactors arising from) 
%cannot say local subf here because he doesn't assume discrete does he?
inclusions of local conformal nets, under the \emph{strongly additive pair} assumption, defined in the same section.}

\begin{prop}\label{prop:arho=1}
If $\N\subset \M$ is irreducible discrete and local then every $a_\rho = 1_{H_\rho}$.
\end{prop}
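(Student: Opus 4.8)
The plan is to extract the operator $a_\rho$ from the commutativity relation \eqref{eq:commutfields} and show it must be the identity. First I would recall that, for $\rho\prec\theta$ irreducible with finite dimension, the positive operator $a_\rho\in\B(H_\rho)$ is characterized by \eqref{eq:twoinnerprods}, and that the trace of $a_\rho$ is linked to the weight $\hat E$ on $\N'\cap\M_1$; more to the point, the proof of \cite[\Thm 3.3]{IzLoPo1998} shows that $a_\rho$ governs the modular action of the $E$-invariant state on the charged fields $\psi\in H_\rho$, in the sense that $\sigma^{\varphi}_t$ acts on $H_\rho$ (essentially) as $\Ad(a_\rho^{\ima t})$ up to the modular factor coming from $d(\rho)$. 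The key observation is that locality forces this modular action to be trivial.

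The heart of the argument: take an irreducible $\rho\prec\theta$ and a charged field $\psi\in H_\rho$, together with its conjugate charged field $\psibar\in H_{\rhobar}$ built from a standard solution $r_\rho,\rbar_\rho$ of the conjugate equations. Commutativity \eqref{eq:commutfields} applied to the pair $\psi,\psibar$ gives $\iota(\varepsilon^{\pm}_{\rho,\rhobar})\psi\psibar=\psibar\psi$ and $\iota(\varepsilon^{\pm}_{\rhobar,\rho})\psibar\psi=\psi\psibar$; combining these, the product $\psi\psibar$ (which lives in $\Hom(\iota,\iota\rho\rhobar)$ and has a component along $\Hom(\iota,\iota)=\CC1$ proportional to $\iota(r_\rho^*)$-contracted terms) satisfies a constraint linking $\varepsilon^+$ and $\varepsilon^-$ on $\rho,\rhobar$. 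The point is that the twist/ribbon structure on a braided category with conjugates always exists, but the relevant combination of braidings applied to $r_\rho$ reproduces the operator $a_\rho$ up to scalars. Concretely, I would apply both \eqref{eq:commutfields} and the Frobenius/conjugate relations (Lemma \ref{lem:truncated}, and the defining relation \eqref{eq:stineE} for $w$) to compute $E(\psi\psibar\cdot)$ two ways; one way produces $\frac{1}{d(\rho)}\psibar^*(a_\rho\psi')$-type expressions, and the other, using locality to swap the order, produces the analogous expression with $a_\rho$ replaced by $1$ (or $a_\rho^{-1}$), forcing $a_\rho^2=1_{H_\rho}$, hence $a_\rho=1_{H_\rho}$ by positivity.

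An alternative, cleaner route which I expect to actually appear: work at the level of the generalized Q-system of intertwiners $(\theta,w,\{m_{\rho,r}\})$. The operator $a_\rho$ can be read off from the projections $p_{\rho,r}=w_{\rho,r}w_{\rho,r}^*$ and the Frobenius-type pairing $(w^*m^*\otimes 1_\theta)(1_\theta\otimes mw)$ appearing in Lemma \ref{lem:truncated}(2). The commutativity relation \eqref{eq:commutgenQsysint}, $\theta(\varepsilon^\pm_{\theta,\theta})m_{\sigma,s}m_{\rho,r}=m_{\rho,r}m_{\sigma,s}$, together with the fact that one may use \emph{either} $\varepsilon^+$ \emph{or} $\varepsilon^-$, gives $\theta(\varepsilon^+_{\theta,\theta}(\varepsilon^-_{\theta,\theta})^{-1})$ acting trivially on the relevant products — i.e. the monodromy operator is trivial on the charged-field side. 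Since $a_\rho$ in \cite{IzLoPo1998} is exactly the obstruction measured by comparing the two natural inner products and this obstruction is implemented by the modular operator, which in turn (on the category side) is governed by the ribbon twist / monodromy restricted to $\langle\theta\rangle$, triviality of the monodromy on the $m$'s forces each $a_\rho=1_{H_\rho}$. I would make this precise by expanding $(w^*\otimes 1_\theta)m_{\rho,r}=p_{\rho,r}$ (Lemma \ref{lem:genQsysprops}(2)) and $(1_\theta\otimes w^*)m_{\rho,r}$ and comparing with the KMS condition for the $E$-invariant state.

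The main obstacle I anticipate is bookkeeping: correctly identifying how $a_\rho$ sits inside the generalized Q-system formalism (it is implicit in \cite{IzLoPo1998} via the modular theory rather than stated as a formula in terms of $m_{\rho,r}$), and then matching the "swap both ways" computation so that the two sides genuinely differ by $a_\rho$ versus $a_\rho^{-1}$ (or $1$) rather than by some uncontrolled scalar. Getting the standard solutions of the conjugate equations and the normalization \eqref{eq:normalizfields}, \eqref{eq:normalizvacuumfield} to interact correctly with the braiding — in particular checking that the ribbon twist $\theta_\rho$ (a phase, since $a_\rho$ is positive this phase must collapse) does not interfere — will require care, but is routine once the right identity is isolated.
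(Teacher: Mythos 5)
Your first route is essentially the paper's proof: one inserts $\iota(\rho(r_\rho^*)\rbar_\rho)$ between $\psi^*$ and $\psi'$, recognizes $\psi^*\iota(\rbar_\rho)\in H_{\rhobar}$, applies \eqref{eq:commutfields} to swap it past $\psi'$, and the resulting scalar $r_\rho^*\,\varepsilon^{\pm}_{\rho,\rhobar}\,\rbar_\rho=\kappa(\rho)^{\pm1}d(\rho)$ (by \cite[\Lem 4.3]{LoRo1997}) combined with \eqref{eq:twoinnerprods} gives $a_\rho^{-1}=\kappa(\rho)^{\pm1}1_{H_\rho}$, so the positivity of $a_\rho$ forces $\kappa(\rho)=1$ --- exactly the ``phase must collapse'' point you relegate to your obstacles paragraph, which is in fact the punchline rather than a side condition. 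The second route you expected to appear (monodromy triviality read off the generalized Q-system) is not what the paper does, and on its own it is insufficient: using both $\varepsilon^+$ and $\varepsilon^-$ only yields $\kappa(\rho)^2=1$, so you would still need the positivity of $a_\rho$ to exclude $\kappa(\rho)=-1$, and it buys nothing over the direct computation.
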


\begin{proof}
For every $\psi, \psi' \in H_\rho$ we have
\begin{align}
\psi^* \psi' = &\;E(\psi^* \psi') = E(\psi^* \iota(\rho(r_\rho^*) \rbar_{\rho}) \psi') = \iota(r_\rho^*) E(\psi^* \iota(\rbar_{\rho}) \psi') = \\
= &\; \iota(r_\rho^*) \iota(\varepsilon^{\pm}_{\rho,\rhobar}) E(\psi' \psi^*) \iota(\rbar_{\rho}) 
= \iota(r_\rho^* \varepsilon^{\pm}_{\rho,\rhobar} \rbar_{\rho}) E(\psi' \psi^*)
\end{align}
by using standard solutions of the conjugate equations \eqref{eq:conjeqns} and observing that $\psi^* \iota(\rbar_{\rho}) \in H_{\rhobar}$ whenever $\psi\in H_\rho$, hence applying the commutativity condition \eqref{eq:commutfields} dictated by locality. Thus 
\begin{align}
a_\rho^{-1} = \frac{1_{H_\rho}}{d(\rho)} \iota(r_\rho^* \varepsilon^{\pm}_{\rho,\rhobar} \rbar_{\rho}) = \frac{1_{H_\rho}}{\kappa(\rho)^{\pm 1} d(\rho)}  \iota(r_\rho^* r_\rho) = \frac{1_{H_\rho}}{\kappa(\rho)^{\pm 1}}
\end{align}
using \cite[\Lem 4.3]{LoRo1997}, where $\kappa(\rho)$ is the phase of $\rho$, and $d(\rho) 1_{\id} = r_\rho^* r_{\rho}$ by definition of dimension of $\rho$. Recall that $\kappa(\rho) = \kappa(\rhobar)$. From the positivity of $a_\rho$ we conclude that $\kappa(\rho) = 1$, thus $a_\rho = 1_{H_\rho}$. 
\end{proof}

We have also shown the following generalization of \cite[\Prop 4]{Re1994coset} to the case of irreducible local discrete inclusions.

\begin{cor}\label{cor:spin=1}
Let $\theta\in\End(\N)$ be the dual canonical endomorphism of an irreducible local discrete subfactor $\N\subset\M$. Then the phase of every irreducible $\rho\prec\theta$ is $\kappa(\rho) = 1$.
\end{cor}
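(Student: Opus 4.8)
The plan is to read off Corollary~\ref{cor:spin=1} essentially for free from the proof of Proposition~\ref{prop:arho=1}. Indeed, that proof does not merely conclude $a_\rho = 1_{H_\rho}$ at the very end; along the way it establishes the stronger intermediate identity
\begin{align}
a_\rho^{-1} = \frac{1_{H_\rho}}{\kappa(\rho)^{\pm 1}},
\end{align}
valid for every irreducible $\rho\prec\theta$ with finite dimension, where $\kappa(\rho)$ denotes the phase (twist) of $\rho$ in the ambient braided tensor \Cstar-category. The positivity of $a_\rho$ (hence of $a_\rho^{-1}$) forces $\kappa(\rho)^{\pm1}$ to be a positive real number; since $\kappa(\rho)$ is a phase, \ie a complex number of modulus one, the only possibility is $\kappa(\rho) = 1$. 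So the corollary is just the penultimate step of the preceding proof, isolated and stated on its own.

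Concretely, the key steps are: first, invoke locality (Definition~\ref{def:localsubf}) to get the commutativity relation \eqref{eq:commutfields} among charged fields; second, for an irreducible $\rho\prec\theta$ and $\psi,\psi'\in H_\rho$, insert a conjugate $\rhobar$ via standard solutions $r_\rho,\rbar_\rho$ of the conjugate equations \eqref{eq:conjeqns}, using that $\psi^*\iota(\rbar_\rho)\in H_{\rhobar}$ whenever $\psi\in H_\rho$, and commute the fields past each other at the cost of a braiding operator $\varepsilon^{\pm}_{\rho,\rhobar}$; third, identify $d(\rho)^{-1}\iota(r_\rho^*\,\varepsilon^{\pm}_{\rho,\rhobar}\,\rbar_\rho)$ with $\kappa(\rho)^{\mp1}1_{H_\rho}$ using the standard relation between the twist and the composition $r_\rho^*\varepsilon_{\rho,\rhobar}\rbar_\rho$ from \cite[\Lem 4.3]{LoRo1997}, together with $d(\rho)1_{\id}=r_\rho^*r_\rho$; fourth, compare with the defining relation \eqref{eq:twoinnerprods} of $a_\rho$ to read off $a_\rho^{-1}=\kappa(\rho)^{\mp1}1_{H_\rho}$; and fifth, conclude $\kappa(\rho)=1$ from positivity of $a_\rho$. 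One should note that this argument applies to \emph{every} irreducible subsector of $\theta$, not only those appearing with a particular multiplicity, since the computation is intrinsic to each $H_\rho$.

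There is essentially no obstacle here: the corollary is a corollary in the strict sense, a restatement of a line already proved. The only point requiring a modicum of care is the sign convention in $\kappa(\rho)^{\pm1}$ versus $\varepsilon^{\pm}$: depending on whether one commutes the fields using $\varepsilon^{+}_{\rho,\rhobar}$ or $\varepsilon^{-}_{\rho,\rhobar}$ one picks up $\kappa(\rho)$ or $\kappa(\rho)^{-1}$, but since the commutativity condition \eqref{eq:commutfields} holds with both signs (by the equivalence noted after \eqref{eq:commutgenQsysint}), both conventions yield a positive real, and in either case $\kappa(\rho)=1$; this also re-confirms the self-consistency, namely $\kappa(\rho)=\kappa(\rho)^{-1}$. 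The comparison with \cite[\Prop 4]{Re1994coset}, where the same triviality of the twist is obtained for the coset construction, is then simply the remark that our hypotheses (irreducible, local, discrete) subsume that situation, so the present statement is the announced generalization.
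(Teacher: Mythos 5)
Your argument is exactly the paper's: Corollary \ref{cor:spin=1} is extracted from the computation in the proof of Proposition \ref{prop:arho=1}, where locality yields $a_\rho^{-1} = \kappa(\rho)^{\mp 1} 1_{H_\rho}$ via \cite[\Lem 4.3]{LoRo1997} and positivity of $a_\rho$ forces the phase $\kappa(\rho)$ to equal $1$. The proposal is correct and coincides with the paper's reasoning, including the remark on the sign conventions being immaterial.
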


As remarked in \cite[\Sec 3]{IzLoPo1998}, from the condition $a_\rho = 1_{H_\rho}$ we can also conclude:

\begin{cor}\label{cor:multboundnd}
Let $\theta\in\End(\N)$ and $\rho\prec\theta$ be as above, then $n_{\rho} \leq d(\rho)$, where $n_{\rho}$ is the multiplicity of $[\rho]$ in $[\theta]$, instead of $n_{\rho} \leq d(\rho)^2$ as in \eqref{eq:multboundndsquared}.
\end{cor}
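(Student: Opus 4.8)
The plan is to revisit the argument of Izumi--Longo--Popa that yields $n_\rho \leq d(\rho)^2$ and observe that under the hypothesis $a_\rho = 1_{H_\rho}$ (which holds by Proposition \ref{prop:arho=1}) the bound improves by a square root. Recall the two inner products on $H_\rho = \Hom(\iota,\iota\rho)$ for irreducible $\rho \prec \theta$ of finite dimension: the ``$E$-inner product'' $\langle \psi,\psi'\rangle_E 1 := \iota^{-1}E(\psi'\psi^*)$ and the ``categorical inner product'' $\langle \psi,\psi'\rangle_\circ 1 := \frac{1}{d(\rho)}\psi^*\psi'$, related by $E(\psi'\psi^*) = \frac{1}{d(\rho)}\psi^*(a_\rho\psi')$. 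With respect to the $E$-inner product the charged fields in the Pimsner--Popa basis $\{\psi_{\rho,r}\}$ are orthonormal by the normalization \eqref{eq:normalizfields}, so $\dim H_\rho = n_\rho$.

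First I would record the key identity: for an orthonormal basis $\{\psi_{\rho,r}\}_{r=1}^{n_\rho}$ of $H_\rho$ with respect to the $E$-inner product, the operator $\sum_r \psi_{\rho,r}\iota(\rho(\cdot))\psi_{\rho,r}^*$, or equivalently the element $\sum_r \psi_{\rho,r}^*\psi_{\rho,r} \in \Hom(\iota,\iota) = \CC 1$, computes a trace. Concretely, summing the defining relation \eqref{eq:twoinnerprods} over an orthonormal family gives $\sum_r \iota^{-1}E(\psi_{\rho,r}\psi_{\rho,r}^*) = \frac{1}{d(\rho)}\sum_r \langle \psi_{\rho,r}, a_\rho \psi_{\rho,r}\rangle_\circ$-type expression; more usefully, pairing the two inner products shows that $a_\rho$ acts on $H_\rho$ and that $\Tr_{H_\rho}(a_\rho)$ (trace w.r.t.\ the $E$-inner product) is computed by a closed morphism in $\Hom(\id,\rho\rhobar)$ composed against standard solutions, which equals $d(\rho)$; this is precisely the content of \cite[\Sec 3]{IzLoPo1998} where $a_\rho$ is shown to be trace class with $\Tr(a_\rho) = d(\rho)$ (up to the normalization conventions). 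When $a_\rho = 1_{H_\rho}$ this reads $\Tr_{H_\rho}(1_{H_\rho}) = n_\rho = d(\rho)$ on the nose in the simplest count, but the correct statement accounting for the two normalizations is the inequality $n_\rho \leq d(\rho)$: indeed with $a_\rho = 1$ the $E$-orthonormal basis is also orthogonal for $\langle\cdot,\cdot\rangle_\circ$ with each vector of $\langle\cdot,\cdot\rangle_\circ$-norm $1/\sqrt{d(\rho)}$ times something $\leq 1$, and the categorical inner product is bounded by the operator norm estimate $\psi^*\psi' $ controlled by $d(\rho)$ via the conjugate equations.

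The cleanest route, and the one I would actually write, is: with $a_\rho = 1$ we have $E(\psi_{\rho,r}\psi_{\rho,s}^*) = \frac{1}{d(\rho)}\psi_{\rho,s}^*\psi_{\rho,r} = \delta_{r,s}1$, hence $\psi_{\rho,s}^*\psi_{\rho,r} = \delta_{r,s}\,d(\rho)\,1$. Then $\sum_{r=1}^{n_\rho}\psi_{\rho,r}\psi_{\rho,r}^* \in \M$ is a sum of $n_\rho$ mutually orthogonal (by the Pimsner--Popa property) projections $E(\psi_{\rho,r}\psi_{\rho,r}^*)=1$-normalized... — more precisely one uses that $\frac{1}{d(\rho)}\psi_{\rho,r}\psi_{\rho,r}^*$ is a projection (since $\psi_{\rho,r}^*\psi_{\rho,r} = d(\rho)1$), these are mutually orthogonal, and applying the finite-index-type estimate $E(\psi\psi^*) \geq \frac{1}{d(\rho)}\psi\psi^*$ in the appropriate sense, or directly taking $E$ of $\sum_r \frac{1}{d(\rho)}\psi_{\rho,r}\psi_{\rho,r}^* \leq 1$ and using $E(\psi_{\rho,r}\psi_{\rho,r}^*)=1$ to get $\frac{n_\rho}{d(\rho)} = E\big(\sum_r \frac{1}{d(\rho)}\psi_{\rho,r}\psi_{\rho,r}^*\big) \leq E(1) = 1$, whence $n_\rho \leq d(\rho)$. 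The main obstacle is bookkeeping the two competing normalization conventions for $a_\rho$ and for standard solutions so that the square-root improvement over \eqref{eq:multboundndsquared} comes out correctly; once the identity $\psi_{\rho,r}^*\psi_{\rho,r} = d(\rho)1$ is in hand under $a_\rho = 1$, the inequality is immediate from positivity of $E$.
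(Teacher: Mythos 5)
Your final argument is correct and self-contained: under $a_\rho=1_{H_\rho}$ the identity $E(\psi_{\rho,r}\psi_{\rho,s}^*)=\frac{1}{d(\rho)}\psi_{\rho,s}^*\psi_{\rho,r}$ together with the Pimsner--Popa orthonormality $E(\psi_{\rho,r}\psi_{\rho,s}^*)=\delta_{r,s}1$ gives $\psi_{\rho,s}^*\psi_{\rho,r}=\delta_{r,s}\,d(\rho)1$, so the elements $\frac{1}{d(\rho)}\psi_{\rho,r}\psi_{\rho,r}^*$ are mutually orthogonal projections, their sum is $\leq 1$, and applying the positive unital map $E$ yields $n_\rho/d(\rho)\leq 1$. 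The paper itself offers no proof of this corollary --- it simply refers the reader to the remark in \cite[\Sec 3]{IzLoPo1998} --- so your derivation is a genuine addition; in fact it is precisely the computation the paper later carries out in Lemma \ref{lem:nodefect}, where the same projections are used to show that $n_\rho=d(\rho)$ forces $\frac{1}{d(\rho)}\sum_r\psi_{\rho,r}\psi_{\rho,r}^*=1$, so your route is fully consonant with the paper's toolkit. One stylistic remark: the middle paragraph about traces of $a_\rho$ and competing normalizations is muddled and dispensable --- everything you need is the single identity $\psi_{\rho,r}^*\psi_{\rho,r}=d(\rho)1$ plus positivity of $E$, and the ``cleanest route'' paragraph should be the whole proof. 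Also, the mutual orthogonality of the projections is not ``by the Pimsner--Popa property'' alone but by that property combined with $a_\rho=1$ (the Pimsner--Popa condition controls $E(\psi_{\rho,r}\psi_{\rho,s}^*)$, not $\psi_{\rho,r}^*\psi_{\rho,s}$); your displayed computation gets this right, so only the parenthetical attribution needs fixing.
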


%%%
\subsection{Dual fields}\label{sec:dualfields}
%%%

For every irreducible thus finite-dimensional subendomorphism $\rho$ of $\theta$, we have fields $\psi \in \M$, $\psi \in H_\rho = \Hom(\iota, \iota\rho)$, and we want to construct \textbf{dual fields}, for later use, namely
\begin{align}
\bar\psi \in \N,\quad \bar\psi \in \Hom(\bar\iota, \rho\bar\iota).
\end{align}
If $\N\subset\M$ has finite index, they are easily given by $\bar\psi := w^* \bar\iota(\psi) \bar\iota(v)$, where $w\in\Hom(\id,\iotabar\iota)$ and $v\in\Hom(\id,\iota\iotabar)$ solve the conjugate equations for $\iota$ and $\iotabar$, see equation \eqref{eq:iotaandiotabar}. In the infinite index case, where we have $w$ but not $v$, we consider the formal sum $m=\sum_{\rho,r} m_{\rho,r}$ as in Lemma \ref{lem:genQsysprops} and recall that $(1_\theta \otimes q) m = \theta(q) m$ is well-defined and it belongs to $\N$ for every $q\in\Proj_0(\theta)$.

\begin{lem}
Let $\N\subset\M$ be as in Lemma \ref{lem:genQsysprops} and assume in addition that it is local.
Then $qm = (q \otimes 1_\theta) m$
is well-defined, it belongs to $\N$ for every $q\in\Proj_0(\theta)$ and it fulfills
\begin{align}
\varepsilon^{\pm}_{\theta,\theta} (1_\theta \otimes q) m = (q \otimes 1_\theta) m.
\end{align}
Moreover, $qm \in \Hom(\id,\theta_1)$, where $\theta_1 := \theta_{\restriction \N_1}$ and $\N_1 = \iotabar(\M)$.
\end{lem}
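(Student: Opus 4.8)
The plan is to establish the three claims — well-definedness of $(q \otimes 1_\theta)m$ in $\N$, the braiding identity relating it to $(1_\theta \otimes q)m$, and the intertwining property $qm \in \Hom(\id,\theta_1)$ — by reducing everything to the already-established facts about $(1_\theta \otimes q)m$ via the commutativity hypothesis. First I would observe that for $q = p_{\rho,r}$ (or more generally any $q \in \Proj_0(\theta)$, which by Lemma \ref{lem:genQsysprops} is dominated by a finite sum $p = \sum p_{\rho,r}$), the expression $(q \otimes 1_\theta)m = \sum_{\sigma,s}(q \otimes 1_\theta)m_{\sigma,s}$ involves the operators $(q \otimes 1_\theta)m_{\sigma,s} = q\, \theta\!\left(\text{something}\right)$... more precisely $(q\otimes 1_\theta)m_{\sigma,s} = q\,\rho_{?}(\dots)$; the cleanest route is to define $qm$ \emph{by} the braiding formula, namely set $(q\otimes 1_\theta)m := \varepsilon^{\pm}_{\theta,\theta}(1_\theta \otimes q)m$ and then check this is consistent. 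Since $(1_\theta \otimes q)m = \theta(q)m$ is a finite, well-defined element of $\N$ by Lemma \ref{lem:genQsysprops}, and $\varepsilon^{\pm}_{\theta,\theta} \in \Hom(\theta^2,\theta^2) \subset \N$ is a fixed operator, the product lies in $\N$ and the only thing to verify is that the formula is compatible with the notation $(q\otimes 1_\theta)m = \sum_{\sigma,s}(q\otimes 1_\theta)m_{\sigma,s}$ term by term.

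For this compatibility, I would argue termwise: the commutativity condition \eqref{eq:commutgenQsysint}, $\theta(\varepsilon^{\pm}_{\theta,\theta})m_{\sigma,s}m_{\rho,r} = m_{\rho,r}m_{\sigma,s}$, combined with the Pimsner--Popa expansion of Lemma \ref{lem:PiPoexpbasis}, $m_{\sigma,s} = \sum_{\rho,r} m_{\rho,r}^* \theta(m_{\sigma,s}w)$, should give a closed formula for $\varepsilon^{\pm}_{\theta,\theta}(1_\theta \otimes p_{\sigma,s})m$. Concretely, $(1_\theta \otimes p_{\sigma,s})m = m_{\sigma,s}$ by part (1) of Lemma \ref{lem:genQsysprops}, so I need $\varepsilon^{\pm}_{\theta,\theta}(1_\theta \otimes q)m = \sum_{\sigma,s \le q}\varepsilon^{\pm}_{\theta,\theta}m_{\sigma,s}$, and I would check that each summand $\varepsilon^{\pm}_{\theta,\theta}m_{\sigma,s}$ equals $(q \otimes 1_\theta)m_{\sigma,s}$ once $q$ dominates $p_{\sigma,s}$ — this is where naturality of the braiding (rewriting $\varepsilon^{\pm}_{\theta,\theta}$ via $\varepsilon^{\pm}_{\sigma,\theta}$ on the range of $w_{\sigma,s}$) and the truncated Frobenius/conjugate relations of Lemma \ref{lem:truncated} enter. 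The sum is finite because $q \in \Proj_0(\theta)$, so only finitely many $m_{\sigma,s}$ contribute after multiplying by $\theta(q)$ on the appropriate side.

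For the last claim, $qm \in \Hom(\id,\theta_1)$ with $\theta_1 = \theta_{\restriction \N_1}$: recall $\N_1 = \langle \theta(\N),\{m_{\rho,r}\}\rangle$ and $(1_\theta \otimes q)m = \theta(q)m \in \Hom(\id,\theta)$ already restricts to an intertwiner for $\theta$ on $\N$, while its behavior on the generators $m_{\rho,r}$ of $\N_1$ is governed by the generalized Q-system relations (essentially a Frobenius-type identity from Lemma \ref{lem:truncated}(1)). Applying $\varepsilon^{\pm}_{\theta,\theta}$ and using naturality transports this intertwining property to $qm = (q\otimes 1_\theta)m$, now with respect to $\theta_1$; the braiding relation is exactly the statement that intertwining "on the first leg" and "on the second leg" of $\theta^2$ are exchanged. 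I expect the main obstacle to be bookkeeping: making the termwise braiding computation rigorous when $m$ itself is only a formal sum, i.e. carefully justifying that inserting $\theta(q)$ (resp.\ $q$ as $q \otimes 1_\theta$) truncates the sum to finitely many nonzero terms \emph{before} any convergence issue arises, and tracking the legs of the braiding correctly under the identification $\varepsilon^{\pm}_{\theta^n,\theta^m}$ — here the naturality argument from the preceding lemma (charged-field version of commutativity) is the right tool, and the truncated identities in Lemma \ref{lem:truncated} do the heavy lifting.
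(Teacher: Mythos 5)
Your treatment of the first two claims is essentially the paper's argument: one hits the commutativity relation $\theta(\varepsilon^{\pm}_{\theta,\theta})m_{\sigma,s}m_{\rho,r}=m_{\rho,r}m_{\sigma,s}$ with $w^*$ on the left to get $\varepsilon^{\pm}_{\theta,\theta}\,p_{\sigma,s}m_{\rho,r}=p_{\rho,r}m_{\sigma,s}$, then multiplies by $q$, uses naturality, and sums — finite on the side carrying $(1_\theta\otimes q)$ by Lemma \ref{lem:genQsysprops}, strongly convergent on the other — which simultaneously defines $qm$ and proves $\varepsilon^{\pm}_{\theta,\theta}(1_\theta\otimes q)m=(q\otimes 1_\theta)m$. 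So defining $qm$ by the braiding formula and checking termwise consistency is exactly right. One correction: only $(1_\theta\otimes q)m=\theta(q)m$ truncates to finitely many terms; the sum $\sum_{\rho,r}q\,m_{\rho,r}$ is \emph{not} finite in general (contrast with Lemma \ref{lem:finitesums}, where the extra $w$ on the right is what forces finiteness), it is merely strongly convergent, and its convergence is precisely what the braiding identity delivers. Your closing remark that multiplying by $q$ "on the appropriate side" truncates the sum is therefore only half true, though it does not damage the braiding-formula definition.

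The intertwining claim is where your sketch has a genuine gap. First, $(1_\theta\otimes q)m$ lies in $\Hom(\theta,\theta^2)$ (cut down by $q$), not in $\Hom(\id,\theta)$. More importantly, it carries no $\theta_1$-intertwining property that naturality could "transport": that would require $m_{\sigma,s}m_{\rho,r}=\theta(m_{\rho,r})m_{\sigma,s}$, whereas commutativity only gives this up to the insertion of $\theta(\varepsilon^{\pm}_{\theta,\theta})$, so the premise of your transport step fails. The actual mechanism does not go through the braiding at all at this stage: one writes the Frobenius-type Pimsner--Popa expansion $m_{\sigma,s}m_{\rho,r}^*=\sum_{\tau,t}m_{\tau,t}^*\,\theta(m_{\sigma,s})\theta(p_{\rho,r})$ — whose finiteness over $\tau,t$ requires the charged-field multiplicity argument (that $E(\psi_{\tau,t}\psi_{\sigma,s}\psi_{\rho,r}^*)$ lands in a Hom-space that is nonzero for only finitely many $[\tau]$), not merely a citation of Lemma \ref{lem:truncated} — multiplies on the right by $q$, sums over $\rho,r$ strongly (this is where the already-established $m^*q=(qm)^*$ enters), and uses that $q\in\Hom(\theta,\theta)=\theta(\N)'\cap\N$ commutes with $\theta(m_{\sigma,s})$ to obtain $m_{\sigma,s}\,m^*q=m^*q\,\theta(m_{\sigma,s})$, i.e.\ the adjoint of $qm\,m_{\sigma,s}^*=\theta(m_{\sigma,s}^*)\,qm$. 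Finally, checking the relation on the generators $m_{\sigma,s}^*$ and on $\theta(\N)$ is not the end: you must still extend to all of $\N_1$, which the paper does via the Pimsner--Popa expansion $x=\sum_{\sigma,s}m_{\sigma,s}^*E'(m_{\sigma,s}x)$ and continuity of left multiplication in the GNS topology; your sketch omits this density step entirely.
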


\begin{proof}
By locality have $\theta(\varepsilon^{\pm}_{\theta,\theta}) m_{\sigma,s} m_{\rho,r} = m_{\rho,r} m_{\sigma,s}$ and by multiplying on both sides by $w^*$ on the left, we get $\varepsilon^\pm_{\theta,\theta} p_{\sigma,s} m_{\rho,r} = p_{\rho,r} m_{\sigma,s}$. Now multiplying again on both sides by $q$ on the left, using naturality of the braiding and summing over $\rho,r$ (finite sum on the left, strongly convergent sum on the right) we get $\varepsilon^{\pm}_{\theta,\theta} (p_{\sigma,s} \otimes q) m = q m_{\sigma,s}$. Thus the sum over $\sigma,s$ of the $q m_{\sigma,s}$ converges strongly to an element in $\N$, that we denote by $qm$, and we have $\varepsilon^{\pm}_{\theta,\theta} (1_\theta \otimes q) m = q m$.

Concerning the intertwining property of $qm$, we first show that $q m m_{\sigma,s}^* =\theta(m_{\sigma,s}^*) qm$. By the Pimsner--Popa expansion, as in  Lemma \ref{lem:PiPoexpbasis}, we have 
\begin{align}\label{eq:frobeniusismasterfield}
m_{\sigma,s} m_{\rho,r}^* = \sum_{{\tau,t}} m_{\tau,t}^* E'(m_{\tau,t} m_{\sigma,s} m_{\rho,r}^*) = \sum_{\tau,t} m_{\tau,t}^* \theta(m_{\sigma,s} m_{\rho,r}^*w) = \sum_{\tau,t} m_{\tau,t}^* \theta(m_{\sigma,s})\theta(p_{\rho,r})
\end{align}

where the sum over $\tau,t$, a priori convergent in the GNS topology, is actually finite because $E(w_{\tau,t} \psi_{\tau,t} w_{\sigma,s} \psi_{\sigma,s} \psi_{\rho,r}^* w_{\rho,r}^*) = w_{\tau,t} \rho_{\tau,t}(w_{\sigma,s}) E(\psi_{\tau,t} \psi_{\sigma,s} \psi_{\rho,r}^*) w_{\rho,r}^*$, $E(\psi_{\tau,t} \psi_{\sigma,s} \psi_{\rho,r}^*)$ for every fixed $\rho$, $\sigma$, $r$, $s$ belongs to $\Hom(\rho_{\rho,r},\rho_{\tau,t} \rho_{\sigma,s})$ and the multiplicity of $[\rho_{\tau,t}]$ in $[\theta]$ is finite by the irreducibility of the subfactor. Now observe that the sum over $\sigma,s$ of the $m_{\sigma,s}^* q =  m^* (p_{\sigma,s} \otimes q) {\varepsilon^{\pm\, *}_{\theta,\theta}}$ also converges strongly and the limit is $(qm)^*$, that we denote by $m^*q$. Thus we can multiply both members of \eqref{eq:frobeniusismasterfield} by $q$ on the right and sum over $\rho,r$ in the strong operator topology, hence we obtain $m_{\sigma,s} m^*q = m^* q \theta(m_{\sigma,s})$ which is the adjoint of the desired equality. 

Now, every element $x\in \N_1$ can be written as $x = \sum_{\sigma,s} m_{\sigma,s}^* E'(m_{\sigma,s} x)$, $E'(m_{\sigma,s} x) \in\theta(\N)$ and the sum over $\sigma, s$ converges in the GNS topology. By continuity of the left multiplication in the GNS topology we conclude $qm x = \theta(x) qm$ for every $x\in \N_1$.
\end{proof}

We define the dual field of $\psi\in H_\rho$ as 
\begin{align}
\bar\psi := w^* \bar\iota(\psi)m
\end{align}
where recall that $m=\sum_{\rho,r} m_{\rho,r}$. It fulfills $\bar\psi \in \Hom(\bar\iota, \rho\bar\iota)$ and $\bar\psi\in\N$ because $w^* \bar\iota(\psi)$ is a linear combination of $w_{\rho,r}^*$ for some $r$, and $w_{\rho,r}^* = w_{\rho,r}^*p_{\rho,r}$.
 
Summing up the notation in a symmetric fashion, we have that:
\begin{align}
(w_{\rho,r}^* \otimes 1_\theta) m = \bar\psi_{\rho,r},\quad (1_\theta \otimes w_{\rho,r}^*) m = m_{\rho,r}.
\end{align}

%%%
\subsection{Adjointable UCP maps}\label{sec:Omegaadjmaps}
%%%

Let $\M,\Omega$ be a pair consisting of a von Neumann algebra $\M\subset\B(\Hil)$ and 
a standard unit vector $\Omega\in\Hil$, \ie $\Omega$ is cyclic and separating for $\M$ and it has norm one. A linear map $\phi\colon \M\to\M$ is \textbf{unital} if $\phi(1)=1$ and \textbf{completely positive} if it is positive, \ie $m\geq 0$, $m\in\M$, implies $\phi(m) \geq 0$, and if $\phi \otimes \id_n\colon \M\otimes M_n(\CC)\to\M\otimes M_n(\CC)$ is positive for every $n\in\NN$. In the following we abbreviate unital completely positive by \emph{ucp}.
 
\begin{defi}
A ucp map $\phi\colon \M\to \M$ is called \textbf{$\Omega$-adjointable} if
\begin{enumerate}
  \item $\phi$ preserves the state given by $\Omega$, \ie
    $(\Omega,\phi(\slot)\Omega)=(\Omega,\slot\Omega)$.
  \item $\phi$ admits an \textbf{$\Omega$-adjoint}, \ie a ucp map $\phisharp\colon \M\to \M$
such that
    \begin{align}\label{eq:omegaadj}
    (\phisharp(m_1)\Omega,m_2\Omega)=(m_1\Omega,\phi(m_2)\Omega)
    \end{align}
    for every $m_1,m_2\in \M$.
\end{enumerate}
We denote by ${\UCP}^{\sharp}(\M,\Omega)$ the set of $\Omega$-adjointable ucp maps on $\M$.
\end{defi}

A ucp map fulfilling property (1) is called \textbf{$\Omega$-preserving} and we denote by $\UCP(\M,\Omega)$ the set of such maps. Note that an $\Omega$-preserving ucp map is automatically normal and faithful. 
Moreover, if an $\Omega$-adjoint exists, then it is unique and automatically $\Omega$-preserving and $\Omega$-adjointable. Indeed, $(1)$ for $\phi^\sharp$ follows from \eqref{eq:omegaadj} and the unitality of $\phi$. 
Moreover, $\phi^{\sharp}{}^\sharp = \phi$, $(\phi_1 \circ \phi_2)^{\sharp}=\phi_2^\sharp \circ \phi_1^\sharp$, and $(\alpha\phi_1 + \beta\phi_2)^\sharp = \alpha \phi_1^\sharp + \beta \phi_2^\sharp$ for $\alpha,\beta\in [0,1]$. 
More generally, for complex linear combinations of elements in ${\UCP}^{\sharp}(\M,\Omega)$, the adjunction map $\phi\mapsto\phi^\sharp$ is antilinear.

For $\phi\in\UCP(\M,\Omega)$, denote by $V_\phi$ the closure of the operator 
\begin{align}
m\Omega\mapsto \phi(m)\Omega, \quad m\in \M.
\end{align}
By the Kadison--Schwarz inequality, $V_\phi \in \B(\Hil)$ and $\|V_\phi\| \leq 1$. Moreover, $V_\phi\Omega = \Omega$, thus $\|V_\phi\| = 1$. Clearly we have $V_{\phi_1\circ\phi_2} = V_{\phi_1}V_{\phi_2}$. If $\phi$ is in addition $\Omega$-adjointable, then $V_{\phisharp} = V_\phi^*$.
For details we refer to \cite[\App A.1]{Bi2016}, \cite{NiStZs2003} and references therein. The following proposition is due to \cite[\Prop 6.1]{AcCe1982}, see also \cite[\Lem 2.5]{De2006}.

\begin{prop}\label{prop:adjointiffmodular}
Let $\phi$ be an $\Omega$-preserving ucp map. Then the following are equivalent:
\begin{enumerate}
  \item $\phi$ admits an $\Omega$-adjoint $\phisharp$;
  \item $\phi\circ \sigma_t^{\M,\Omega} = \sigma_t^{\M,\Omega}\circ\phi$, $t\in\RR$;
  \item $V_\phi J_{\M,\Omega} = J_{\M,\Omega} V_\phi$,
\end{enumerate}
where $\sigma_t^{\M,\Omega}$, $t\in\RR$, is the modular group and $J_{\M,\Omega}$ is the modular conjugation of $(\M,\Omega)$.
\end{prop}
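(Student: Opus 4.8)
The plan is to translate all three conditions into properties of the contraction $V_\phi\in\B(\Hil)$ implementing $\phi$, and to show that each of $(1)$, $(2)$, $(3)$ is equivalent to: $V_\phi$ commutes with every modular unitary $\Delta_{\M,\Omega}^{it}$, $t\in\RR$, and with $J_{\M,\Omega}$. First I would record the cheap facts: $V_\phi\Omega=\Omega$ by unitality, $V_\phi^*\Omega=\Omega$ by $\Omega$-preservation (test against $m\Omega$), and the following \emph{free commutation}: since $\phi$ is positive it is $*$-preserving, so on the core $\M\Omega$ of Tomita's operator $S_{\M,\Omega}\colon m\Omega\mapsto m^*\Omega$ one has $V_\phi S_{\M,\Omega}=S_{\M,\Omega}V_\phi$, hence by boundedness $V_\phi\overline{S_{\M,\Omega}}\subseteq\overline{S_{\M,\Omega}}V_\phi$. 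This inclusion is the workhorse.

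For $(1)\Rightarrow(2),(3)$: if $\phi^\sharp$ exists then $V_{\phi^\sharp}=V_\phi^*$ by \eqref{eq:omegaadj}, and $\phi^\sharp$ is again an $\Omega$-preserving ucp map, so the free commutation applied to it gives $V_\phi^*\overline{S_{\M,\Omega}}\subseteq\overline{S_{\M,\Omega}}V_\phi^*$ as well. Writing $\overline{S_{\M,\Omega}}=J_{\M,\Omega}\Delta_{\M,\Omega}^{1/2}$ and using $J_{\M,\Omega}\Delta_{\M,\Omega}^{1/2}J_{\M,\Omega}=\Delta_{\M,\Omega}^{-1/2}$, I would take adjoints of the two inclusions and compose to conclude that $V_\phi$ and $V_\phi^*$ each commute with $\overline{S_{\M,\Omega}}^{\,*}\overline{S_{\M,\Omega}}$, which equals $\Delta_{\M,\Omega}$; hence $V_\phi$ commutes with every $\Delta_{\M,\Omega}^{it}$ by spectral calculus, and then, using the free commutation once more and density of the range of $\Delta_{\M,\Omega}^{1/2}$, with $J_{\M,\Omega}$ too. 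Unwinding: $V_\phi\Delta_{\M,\Omega}^{it}=\Delta_{\M,\Omega}^{it}V_\phi$ evaluated on $m\Omega$ (with $\Omega$ separating $\M$) gives $\phi\circ\sigma_t^{\M,\Omega}=\sigma_t^{\M,\Omega}\circ\phi$, i.e.\ $(2)$, and $V_\phi J_{\M,\Omega}=J_{\M,\Omega}V_\phi$ is $(3)$.

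For $(2)\Leftrightarrow(3)$ I would argue directly on $V_\phi$. By the same evaluation on $m\Omega$, $(2)$ is equivalent to $[V_\phi,\Delta_{\M,\Omega}^{it}]=0$ for all $t$, hence $V_\phi$ commutes with $\Delta_{\M,\Omega}^{1/2}$ on its domain; plugging this into the free commutation $V_\phi\overline{S_{\M,\Omega}}\subseteq\overline{S_{\M,\Omega}}V_\phi$ forces $(V_\phi J_{\M,\Omega}-J_{\M,\Omega}V_\phi)\Delta_{\M,\Omega}^{1/2}=0$ on $\Dom(\Delta_{\M,\Omega}^{1/2})$, so $(3)$ follows by density of range and boundedness. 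Conversely, from $(3)$ and the free commutation (conjugated by $J_{\M,\Omega}$ to obtain commutation with $\Delta_{\M,\Omega}^{-1/2}$ as well) I get that $V_\phi$ commutes with $\Delta_{\M,\Omega}^{1/2}$ and $\Delta_{\M,\Omega}^{-1/2}$, hence with $\Delta_{\M,\Omega}$, hence $(2)$.

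The hard part will be $(3)\Rightarrow(1)$: producing an actual ucp map $\phi^\sharp$ out of the mere operator $V_\phi^*$. Here complete positivity is genuinely needed, since a contraction commuting with $J_{\M,\Omega}$ and the $\Delta_{\M,\Omega}^{it}$ and fixing $\Omega$ need not be of the form $V_\psi$ for a ucp $\psi$. I would pass to the minimal Stinespring dilation $(\Kil,\pi,W)$ of $\phi$: commutation of $\phi$ with $\sigma_t^{\M,\Omega}$ makes this dilation compatible with Tomita--Takesaki theory for $(\M,\Omega)$, and ``transposing'' it defines $\phi^\sharp$; equivalently, on the $\sigma$-weakly dense $*$-subalgebra of $\sigma^{\M,\Omega}$-analytic elements one checks that $V_\phi^*m\Omega$ is left-bounded, sets $\phi^\sharp(m)$ to be the associated element of $\M$, and extends by normality. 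This is precisely \cite[\Prop 6.1]{AcCe1982}; see also \cite[\Lem 2.5]{De2006}. Once $\phi^\sharp$ is built with $V_{\phi^\sharp}=V_\phi^*$, the adjunction \eqref{eq:omegaadj} is immediate, since $(\phi^\sharp(m_1)\Omega,m_2\Omega)=(V_\phi^*m_1\Omega,m_2\Omega)=(m_1\Omega,V_\phi m_2\Omega)=(m_1\Omega,\phi(m_2)\Omega)$.
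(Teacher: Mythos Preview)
The paper does not prove this proposition; it simply attributes it to \cite[\Prop 6.1]{AcCe1982} and \cite[\Lem 2.5]{De2006} and moves on. Your proposal therefore goes well beyond what the paper does: you supply a correct and efficient argument for the implications $(1)\Rightarrow(2),(3)$ and $(2)\Leftrightarrow(3)$ via the ``free commutation'' $V_\phi\,\overline{S_{\M,\Omega}}\subseteq\overline{S_{\M,\Omega}}\,V_\phi$ together with standard manipulations of the polar decomposition $\overline{S_{\M,\Omega}}=J_{\M,\Omega}\Delta_{\M,\Omega}^{1/2}$, and you correctly isolate $(3)\Rightarrow(1)$ as the only step requiring genuine work (namely, producing a \emph{ucp} map from the contraction $V_\phi^*$), for which you cite exactly the same two references as the paper. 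So your approach is fully compatible with the paper's, just more explicit; there is nothing to compare at the level of strategy.
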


Clearly, if $\M$ is finite and $(\Omega,\slot\Omega)$ is a tracial state, then there is no distinction between $\Omega$-preserving and $\Omega$-adjointable maps. Note however that there are $\Omega$-preserving maps that are not $\Omega$-adjointable \cite[\Ex 2.7]{De2006} and that the notion of $\Omega$-adjoint does not coincide with the one of \emph{transpose} of a ucp map given \eg in \cite[\Ch 8]{OhPe1993}. The two notions coincide on $\Omega$-adjointable maps, see the proof of Lemma \ref{lem:betasharp}.

%%%
\subsection{Amenability for braided discrete type $\III$ subfactors}\label{sec:amenability}
%%%

In this section we introduce a notion of \emph{strong relative amenability} (Definition \ref{def:strongamenab}) for irreducible discrete type $\III$ subfactors (Definition \ref{def:semi-discr}) $\N\subset\M$ inspired by the type $\II_1$ analogue due to Popa, Shlyakhtenko and Vaes \cite[\Def 8.1]{PoShVa2018}. 

Denote by $\theta\in\End(\N)$ the dual canonical endomorphism of $\N\subset\M$ (Section \ref{sec:genQsys}) and let $\Omega\in\Hil$ be a standard unit vector for $\M\subset \B(\Hil)$ (Section \ref{sec:Omegaadjmaps}) such that 
\begin{align}
(\Omega, E(\slot)\Omega) = (\Omega, \slot\Omega)
\end{align}
where $E:\M\to\N\subset\M$ is the unique normal faithful conditional expectation for the subfactor. 
By definition $E$ is $\Omega$-preserving.
Denote also by $\langle \theta \rangle_0$ the rigid C$^\ast$-tensor subcategory of $\End_0(\N)$ generated by the irreducible components of $\theta$.

We adapt a proof of Jones and Penneys \cite{JoPe2019} to show that if $\langle \theta \rangle_0$ is \emph{amenable} in the sense of Popa and Vaes \cite[\Def 5.1]{PoVa2014}, as it is the case \eg for \emph{braided} subfactors (Theorem \ref{thm:cathalfbraidedisamenable}), then $\N\subset \M$ is strongly relatively amenable (Proposition \ref{prop:braidedsubfisstrongamenab}).

We first recall the notion of \emph{ucp-multiplier} on a rigid tensor \Cstar-category \cite[\Def 3.4, \Prop 3.6]{PoVa2014}, \cite[\Def 7.9]{JoPe2019}.
Throughout this paper, rigid tensor \Cstar-categories will always be implicitly assumed to be strict and with simple tensor unit, finite direct sums and subobjects.

\begin{defi}
Let $\cC$ be a rigid tensor \Cstar-category. 
Denote by $\Irr(\C)$ the set of equivalence classes of irreducible objects in $\C$,
and observe that every $t\in \Hom(\rho\sigma,\rho\sigma)$, for $\rho, \sigma$ objects in $\C$, 
can be written as a finite sum 
$t=\sum_{\tau\in\Irr(\C)} t_\tau$, where $t_\tau = (1_\rho\otimes t_{\tau\sigma,\sigma} )(t_{\rho,\rho\tau}\otimes 1_\sigma)$, 
for $t_{\rho,\rho\tau} \in\Hom(\rho,\rho\tau)$ and $t_{\tau\sigma,\sigma} \in \Hom(\tau\sigma,\sigma)$, 
via the following isomorphism of vector spaces
\begin{align}\label{eq:Hisom}
  \bigoplus_{\tau\in\Irr(\C)}\Hom(\rho,\rho\tau) \otimes\Hom(\tau\sigma,\sigma) &\longrightarrow
  \Hom(\rho\sigma,\rho\sigma)\\
  \bigoplus_{\tau\in \Irr(\C)} t_{\rho,\rho\tau}\otimes t_{\tau\sigma,\sigma} &\longmapsto \sum_\tau (1_\rho\otimes t_{\tau\sigma,\sigma} )(t_{\rho,\rho\tau}\otimes 1_\sigma).
\end{align}

We call a complex-valued bounded function $\mu\in \ell^\infty(\Irr(\C))$ a \textbf{ucp-multiplier} on $\C$ if for every 
$\rho,\sigma$ objects in $\C$, the linear map $\mu_{\rho,\sigma}\colon \Hom(\rho\sigma,\rho\sigma)\to
\Hom(\rho\sigma,\rho\sigma)$ defined by
\begin{align}\label{eq:PVcatucp}
  \mu_{\rho,\sigma}(t):=\sum_{\tau\in\Irr(\C)}\mu(\tau)t_\tau
\end{align}
is a ucp map. 
\end{defi}

\begin{defi}\label{def:AmenableTensorCategory}
A rigid tensor \Cstar-category $\C$ is called \textbf{amenable} if there is a net (or sequence if $\Irr(\C)$ is countable) of ucp-multipliers $\{\mu_\alpha\} \subset \ell^\infty(\Irr(\C))$ with finite support, \ie $\mu_\alpha(\tau) = 0$ for all but finitely many $\tau\in \Irr(\C)$, converging pointwise to the identity ucp-multiplier, \ie $\mu_{\id}(\tau) = 1$ for every $\tau \in \Irr(\C)$. 
\end{defi}

\begin{thm}\label{thm:cathalfbraidedisamenable}
Let $\C$ be a rigid tensor \Cstar-category and assume there is an object $\theta$ which generates $\C$ and which has a unitary half-braiding, then $\C$ is amenable. 

In particular, if $\N\subset \M$ is an irreducible braided discrete subfactor and $\theta\in\End(\N)$ is the dual canonical endomorphism, then $\langle\theta\rangle_0$ is amenable.
\end{thm}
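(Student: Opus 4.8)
The plan is to produce an explicit net of finitely-supported ucp-multipliers on $\C$ out of the unitary half-braiding on the generator $\theta$, together with the categorical trace. The starting observation is that a unitary half-braiding $\{e_\rho \in \Hom(\theta\rho,\rho\theta)\}_{\rho\in\C}$ for $\theta$ supplies, for each $n\in\NN$, a half-braiding for $\theta^n$ by the usual iteration $e^{(n)}_\rho := (e_\rho \otimes 1_{\theta^{n-1}})(1_\theta \otimes e^{(n-1)}_\rho)$, and hence the pair $(\theta^n, e^{(n)})$ is an object of the Drinfeld center $Z(\C)$. Since $\theta$ generates $\C$, every irreducible object $\tau\in\Irr(\C)$ sits inside some $\theta^n$, so $\tau$ underlies an object of $Z(\C)$ as well; in particular $\C$ is the underlying category of the (half-)braided category of ``objects of $Z(\C)$ generated by $(\theta,e)$'', and the forgetful functor $Z(\C)\to\C$ is surjective on isomorphism classes. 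The key point is that a \emph{braided} rigid tensor \Cstar-category always carries a canonical unitary half-braiding of the identity functor and admits an ``$S$-matrix''-type positive definite function, but more robustly: a half-braiding lets one define, for each object $\theta$, the \textbf{$\theta$-averaging multiplier} $\mu_\theta(\tau) := \frac{1}{d(\theta)}\, \tr_{\tau}\!\big((e^{\theta}_\tau)\big)$ suitably normalized — the categorical analogue of a character of the half-braiding — and such functions are automatically ucp-multipliers because the map $t\mapsto \sum_\tau \mu_\theta(\tau) t_\tau$ is implemented by conjugation/partial trace against the (positive) half-braiding operator on $\Hom(\rho\sigma,\rho\sigma)$.

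Concretely, I would proceed as follows. First, fix the unitary half-braiding $e$ of $\theta$ and, for each $n$, form $(\theta^n,e^{(n)})\in Z(\C)$; check the hexagon/naturality for $e^{(n)}$ by the standard inductive diagram chase (this is routine and I would only indicate it). Second, for each fixed object $\beta$ that is a subobject of some $\theta^n$ — equivalently, using \eqref{eq:Hisom}, for a choice of isometry $u\in\Hom(\beta,\theta^n)$ — define $\mu_\beta \in \ell^\infty(\Irr(\C))$ by $\mu_\beta(\tau) = \frac{1}{d(\beta)}\,\mathrm{(categorical\ trace\ of\ the\ half\text{-}braiding\ }e^\beta_\tau)$, where $e^\beta$ is obtained from $e^{(n)}$ by compression with $u$; this is well-defined and independent of the choices because the half-braiding is natural. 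Third, show $\mu_\beta$ is a ucp-multiplier: the map $\mu_{\beta;\rho,\sigma}$ on $\Hom(\rho\sigma,\rho\sigma)$ defined by \eqref{eq:PVcatucp} factors as $t \mapsto (\text{partial categorical trace over }\beta)\big( (1_\rho\otimes 1_\sigma \otimes 1_\beta)\, \Theta_\beta\, (t\otimes 1_\beta)\, \Theta_\beta^{*}\big)$, for a suitable unitary $\Theta_\beta$ built from $e^\beta$ and its naturality squares — i.e. it is a composition of the ucp map ``conjugation by a unitary'' with the ucp map ``normalized partial trace''. (This is the same mechanism by which, e.g., a braiding yields the ``$T$'' and ``$S$'' ucp-multipliers; see \cite{PoVa2014}, \cite{JoPe2019}.) Fourth, observe that $\mu_\beta$ has \emph{finite} support: $\mu_\beta(\tau)\neq 0$ forces $\tau\prec\bar\beta\beta$ (again by \eqref{eq:Hisom} and the fact that $e^\beta_\tau$ lands in $\Hom(\beta\tau,\tau\beta)$), and $\bar\beta\beta$, having finite dimension, contains only finitely many irreducibles. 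Finally, let $\beta$ range over $\theta^{\oplus n} \oplus \id$ (or over $\bigoplus_{k\le n}\theta^k$) and renormalize: as $n\to\infty$ these $\mu_{\beta_n}$, after dividing by $\mu_{\beta_n}(\id)$, converge pointwise to the constant $1$, because for a fixed $\tau$ the relative weight of the ``identity half-braiding component'' in $e^{\beta_n}_\tau$ dominates. Thus $\{\mu_{\beta_n}\}$ is the required approximate-identity sequence of finitely-supported ucp-multipliers (the index set $\Irr(\C)$ is countable by separability), so $\C$ is amenable by Definition \ref{def:AmenableTensorCategory}. The ``In particular'' clause is then immediate: for an irreducible braided discrete subfactor $\N\subset\M$, the dual canonical endomorphism $\theta$ generates $\langle\theta\rangle_0$ (by definition of the latter, after passing to the rigid subcategory generated by the irreducible components) and carries a unitary (half-)braiding coming from the braiding on the ambient category $\C\subset\End(\N)$ in which $\theta$ lives (Definition \ref{def:braidedsubf}); hence $\langle\theta\rangle_0$ is amenable.

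The main obstacle I anticipate is \textbf{Step 3} — verifying cleanly that $\mu_\beta$ is genuinely a ucp-multiplier rather than merely a positive-definite-looking function. One has to set up the factorization through ``unitary conjugation followed by normalized partial trace'' compatibly with the decomposition \eqref{eq:Hisom}, and this requires being careful that the half-braiding's naturality squares intertwine the $\Hom(\rho\sigma,\rho\sigma)$-level operations with the $Z(\C)$-level braiding correctly; the bookkeeping with the isometries $t_{\rho,\rho\tau}$, $t_{\tau\sigma,\sigma}$ and the half-braiding components is where the real work sits. A secondary nuisance is checking the \emph{pointwise convergence to the identity multiplier} with the right normalization — one wants a genuine Cesàro/F\o lner-type estimate showing $\mu_{\beta_n}(\tau)/\mu_{\beta_n}(\id)\to 1$, which amounts to a (soft) statement that the regular-type object $\bigoplus_{k\le n}\theta^k$ becomes ``almost invariant'' — but this should follow from the generation hypothesis together with standard Perron–Frobenius/dimension estimates on the fusion graph of $\theta$, in the spirit of the amenability arguments of \cite{PoVa2014}. (A cleaner alternative, which I would mention as a remark, is to invoke directly: a rigid tensor \Cstar-category admitting a generating object with a unitary half-braiding embeds, via that half-braiding, as a full subcategory of $Z(\C)$ whose inclusion is ``dominant'', and $Z$ of any such category is known to be amenable / has the Haagerup-type approximation property forcing amenability here because the half-braiding provides the approximating multipliers intrinsically — but the hands-on construction above is what I would actually write out.)
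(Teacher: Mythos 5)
Your overall strategy---manufacturing the finitely supported ucp-multipliers directly out of the half-braiding on $\theta^{n}$ and then letting $n\to\infty$---is a genuinely more hands-on route than the one the paper takes: the paper's proof is a two-citation combination of \cite[Thm 3.5]{NeYa2016} (see also \cite[Thm 5.31]{LoRo1997}), which says that a unitary half-braiding on a generating object forces the dimension function to be amenable, \ie $\|\Gamma_\theta\|=d(\theta)$ for the fusion matrix $\Gamma_\theta$ acting on $\ell^2(\Irr(\C))$, with \cite[Prop 5.3]{PoVa2014}, which converts amenability of the dimension function into the F\o lner-type net of finitely supported ucp-multipliers of Definition \ref{def:AmenableTensorCategory}. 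However, your argument has a genuine gap, and it sits exactly where you dismiss it as ``a secondary nuisance'': the claim that $\mu_{\beta_n}(\tau)/\mu_{\beta_n}(\id)\to 1$ should follow from ``the generation hypothesis together with standard Perron--Frobenius/dimension estimates on the fusion graph.'' Standard Perron--Frobenius estimates only give $\|\Gamma_\theta\|\leq d(\theta)$; the convergence you need is equivalent to the reverse inequality, \ie to Kesten-type amenability of the fusion graph, which is precisely the non-trivial content of the theorem and is exactly what the unitary half-braiding must be used for. If your last step really followed from generation plus Perron--Frobenius alone, the same argument would prove amenability of every singly generated rigid tensor \Cstar-category, which is false: $\Rep(\SU_q(2))$ for $0<q<1$ (equivalently Temperley--Lieb at index $>4$) is singly generated with $\|\Gamma\|=2<q+q^{-1}=d$, and correspondingly its braiding is not unitary. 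So the unitarity of the half-braiding must enter your convergence estimate quantitatively, and you have not indicated how; this is the entire analytic content of \cite[Thm 5.31]{LoRo1997} and \cite[Thm 3.5]{NeYa2016}, not bookkeeping.

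Two smaller points. First, your assertion that every irreducible $\tau\prec\theta^n$ underlies an object of the center because one can compress $e^{(n)}$ by an isometry $u\in\Hom(\tau,\theta^n)$ is not correct in general: the compressed morphisms need not satisfy the half-braiding axioms unless $uu^*$ is suitably central for $e^{(n)}$. This does not derail you, since in the end you only use $\beta_n=\bigoplus_{k\leq n}\theta^k$, which does carry the direct-sum half-braiding. Second, your Step 3 (that the trace-of-half-braiding function is a ucp-multiplier, via unitary conjugation followed by a normalized partial trace) is correct in substance and is how the cited references produce cp-multipliers from objects of the Drinfeld center, but the formula $\mu_\beta(\tau)=\tr(e^\beta_\tau)/d(\beta)$ does not typecheck as written, since $e^\beta_\tau\in\Hom(\beta\tau,\tau\beta)$ is not an endomorphism; you need to close the $\tau$-loop with the conjugation morphisms $r_\tau,\rbar_\tau$ before tracing, and the positivity bookkeeping there is where the unitarity of $e$ first gets used.
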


\begin{proof}
The proof follows by combining \cite[\Thm 3.5]{NeYa2016}, see also \cite[\Thm 5.31]{LoRo1997}, with \cite[\Prop 5.3]{PoVa2014}.
\end{proof}

Now we introduce our notion of strong relative amenability for irreducible discrete type $\III$ subfactors $\N\subset\M$. We prove then that braided subfactors of this type are automatically strongly relatively amenable.

\begin{defi}\label{def:Nbimoducp}
We say that a ucp map $\phi\colon\M\to\M$ is \textbf{$\N$-bimodular} if $\phi(\iota(n_1)m\iota(n_2)) = \iota(n_1)\phi(m)\iota(n_2)$ for every $m\in\M$, $n_1,n_2\in\N$, where $\iota:\N\to\M$ is the inclusion morphism.
\end{defi}

\begin{defi}\label{def:finitesuppucp}
We say that a ucp map $\phi\colon\M\to\M$ has \textbf{finite support} if $\phi(\psi) = 0$ for every charged field $\psi\in H_\rho$, for all but finitely many inequivalent irreducible subendomorphisms $\rho\prec\theta$. 
\end{defi}

\begin{rmk}
Clearly $\phi$ has finite support if and only if $\phi(\psi_{\rho,r})=0$ for all but finitely many $\rho\prec\theta$, $r=1,\ldots,n_{\rho}$ as in Notation \ref{not:genQsysint}, where $\{\psi_{\rho,r}\}$ is a Pimsner--Popa basis of charged fields.
\end{rmk}

Let $\Omega$ as in the beginning of this section.

\begin{defi}\label{def:strongamenab}
We say that $\N\subset \M$ is \textbf{strongly relatively amenable} if 
there is a net $\{\phi_\alpha\}$ of 
$\N$-bimodular $\Omega$-preserving ucp maps $\phi_\alpha\colon\M\to\M$ with finite support, such that 
$\|\phi_\alpha(m)\Omega-m\Omega\|$ converges to zero for every $m\in\M$.
\end{defi}

\begin{rmk}\label{rmk:strongamenabrepnindep}
The results of Section \ref{sec:UCP/states} shall imply that this notion of strong relative amenability for irreducible discrete type $\III$ subfactors (not necessarily braided, with separable predual) does \emph{not} depend on the choice of standard vector $\Omega$ inducing an $E$-invariant vector state, nor on the Hilbert space representation. Indeed, Lemma \ref{lem:NfixingisOmegapres} shall imply that $\N$-bimodular ucp maps are necessarily $\Omega$-preserving. Moreover, for $m\in\M$, $\|(\phi_\alpha(m)-m)\Omega\|$ converges to zero if and only if $\|(\phi_\alpha(m)-m)\xi\|$ converges to zero for every $\xi\in\Hil$ because vectors of the form $m'\Omega$, with $m'\in\M'$, are dense in $\Hil$.
\end{rmk}

\begin{rmk}
Definition \ref{def:strongamenab} is a type $\III$ analogue of amenability for irreducible discrete type $\II_1$ subfactors introduced in \cite[\Def 8.1]{PoShVa2018}. See also \cite[\Def 2.10]{De1995}. It implies the original notion of amenability introduced by Popa in \cite[\Def 3.2.1]{Po1986}, see \cite[\Prop 2.11]{De1995}.
\end{rmk}

\begin{lem}\label{lem:CPonPsiStarPsi}
Let $\mu$ be a ucp-multiplier on $\langle \theta \rangle_0$ with finite support.
\begin{enumerate}
  \item By Proposition \ref{prop:PiPoexp}, every $m\in\M$ can be written as 
  $m = \sum_{\rho,r} \psi^\ast_{\rho,r} E(\psi_{\rho,r} m)$. Then 
    \begin{align}\label{eq:JPucp} 
      \phi_\mu (m) := \sum_{\rho,r} \psi^\ast_{\rho,r}\, \mu(\rhobar)E(\psi_{\rho,r}m)
    \end{align}
    defines a linear map $\phi_\mu\colon \M\to\M$ which is norm continuous, normal and continuous in the ultra-strong/weak operator topologies.
  \item For every $t\in\Hom(\rho\sigma,\rho\sigma)$ and $\psi\in H_\rho$, where $\rho, \sigma$ are objects in $\langle\theta\rangle_0$, consider 
  the element $\psi^\ast \iota(t)\psi \in \M$. 
  Then 
    \begin{align}
      \phi_\mu(\psi^\ast \iota(t)\psi)&=\psi^\ast \iota(\mu_{\rho,\sigma}(t))\psi
    \end{align}
  where $\mu_{\rho,\sigma}$ is defined in equation \eqref{eq:PVcatucp}.
\end{enumerate}
\end{lem}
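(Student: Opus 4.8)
The plan is to prove the two parts of Lemma~\ref{lem:CPonPsiStarPsi} essentially by reducing everything to the two identities \eqref{eq:Hisom} and \eqref{eq:PVcatucp} that define a ucp-multiplier, together with the Pimsner--Popa expansion from Proposition~\ref{prop:PiPoexp}. For part (1), the first point is that because $\mu$ has finite support, the sum defining $\phi_\mu(m)$ is in fact a \emph{finite} sum applied to the expansion $m=\sum_{\rho,r}\psi_{\rho,r}^\ast E(\psi_{\rho,r}m)$: only those $\rho$ with $\mu(\rhobar)\neq 0$ survive, so $\phi_\mu(m)=\sum_{\rho,r}\psi_{\rho,r}^\ast\,\mu(\rhobar)E(\psi_{\rho,r}m)$ is a norm-convergent finite linear combination of maps $m\mapsto\psi_{\rho,r}^\ast E(\psi_{\rho,r}m)$. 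Each such map is norm-continuous (since $E$ and left/right multiplication by fixed elements are), normal (since $E$ is normal and multiplication by fixed operators is $\sigma$-weakly continuous), and ultra-strongly/weakly continuous for the same reason; a finite linear combination inherits all three properties. The only subtlety worth a sentence is well-definedness: one should note that $\phi_\mu(m)$ does not depend on the choice of Pimsner--Popa basis, which follows because $\phi_\mu$ will be re-expressed intrinsically in part (2) (or directly, because changing the basis changes the coefficients by a unitary in $\Hom(\theta,\theta)$ which commutes with the finitely-many relevant spectral projections).

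For part (2), I would compute $\phi_\mu(\psi^\ast\iota(t)\psi)$ directly from \eqref{eq:JPucp} by expanding $\psi^\ast\iota(t)\psi$ in a Pimsner--Popa basis of charged fields and identifying the coefficients $E(\psi_{\tau,u}\,\psi^\ast\iota(t)\psi)$. Here $\psi\in H_\rho$, so $\psi n=\iota(\rho(n))\psi$, and $\iota(t)\psi\in\Hom(\iota,\iota\rho\sigma)$ after decomposing $t\in\Hom(\rho\sigma,\rho\sigma)$; the point is that the nonzero coefficients are indexed by irreducibles $\tau\prec\rho\sigma$, and the matrix of coefficients reproduces exactly the summand-decomposition \eqref{eq:Hisom} of $t$ into the pieces $t_\tau=(1_\rho\otimes t_{\tau\sigma,\sigma})(t_{\rho,\rho\tau}\otimes 1_\sigma)$. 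Applying $\phi_\mu$ multiplies the $\tau$-component by $\mu(\taubar)$ (note the charge carried by the relevant charged field is the conjugate, matching the $\mu(\rhobar)$ convention in \eqref{eq:JPucp}), and reassembling via \eqref{eq:Hisom} yields precisely $\sum_\tau\mu(\tau)t_\tau=\mu_{\rho,\sigma}(t)$, hence $\phi_\mu(\psi^\ast\iota(t)\psi)=\psi^\ast\iota(\mu_{\rho,\sigma}(t))\psi$. Concretely: write $t=\sum_\tau(1_\rho\otimes t_{\tau\sigma,\sigma})(t_{\rho,\rho\tau}\otimes 1_\sigma)$, observe $\iota(t_{\rho,\rho\tau}\otimes 1_\sigma)\psi=\iota((t_{\rho,\rho\tau}\otimes 1_\sigma)\rho(\cdot))\psi$ intertwines into $\iota\rho\tau\sigma$, and track how $E$ against the basis element sitting over $\tau$ picks out the $\tau$-summand.

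The step I expect to be the main obstacle is the bookkeeping in part (2): matching the subfactor-theoretic expansion coefficients $E(\psi_{\tau,u}\,\psi^\ast\iota(t)\psi)$ with the categorical decomposition \eqref{eq:Hisom}, including getting the conjugation $\rho\mapsto\rhobar$, $\tau\mapsto\taubar$ bookkeeping and the multiplicity indices $u$ exactly right, and making sure the frame-change between "arbitrary charged field $\psi\in H_\rho$" and the fixed Pimsner--Popa basis $\{\psi_{\rho,r}\}$ does not introduce spurious terms. The cleanest way around this is probably to first prove the identity for $\psi=\psi_{\rho,r}$ a basis element and $t$ of the special factorized form $(1_\rho\otimes b)(a\otimes 1_\sigma)$ with $a\in\Hom(\rho,\rho\tau)$, $b\in\Hom(\tau\sigma,\sigma)$ — where the computation is a short manipulation using $\iota(a)\psi_{\rho,r}\in H_{?}$ and the definition of $E$ via \eqref{eq:stineE} — then extend by linearity in $t$ using \eqref{eq:Hisom}, and finally pass to arbitrary $\psi\in H_\rho$ by linearity and, if $[\rho]$ occurs with multiplicity, by the unitary relating different realizations of $\rho$ inside $\theta$ (as in the proof of the previous lemma). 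Parts (1) and (2) are then consistent, and in particular (2) gives a basis-free description of $\phi_\mu$ which retroactively justifies the well-definedness asserted in (1).
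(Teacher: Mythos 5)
Your proposal is correct and follows essentially the same route as the paper: part (1) is the finiteness of the sum (by finite support of $\mu$) together with the continuity properties of $E$, and part (2) decomposes $t=\sum_\tau t_\tau$ via \eqref{eq:Hisom} and shows that the Pimsner--Popa expansion of $\psi^\ast\iota(t_\tau)\psi$ only meets the sector $[\taubar]$, so that \eqref{eq:JPucp} multiplies that term by $\mu(\overline{\taubar})=\mu(\tau)$, exactly reproducing $\mu_{\rho,\sigma}(t_\tau)$. The only slip is your parenthetical claim that the $\tau$-component is multiplied by $\mu(\taubar)$ --- it is multiplied by $\mu(\tau)$, as your final formula $\sum_\tau\mu(\tau)t_\tau$ correctly requires --- and your proposed reduction to basis elements $\psi=\psi_{\rho,r}$ is unnecessary, since the computation uses only the intertwining property of an arbitrary $\psi\in H_\rho$.
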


\begin{proof}
The map $\phi_\mu$ is well-defined because of the uniqueness of the coefficients in the Pimsner--Popa expansion (Proposition \ref{prop:PiPoexp}). Moreover, the defining sum in \eqref{eq:JPucp} is finite by assumption, thus $\phi_\mu(m)\in\M$ and the stated continuity properties follow from the respective ones of $E$, see \eg \cite[\Prop III.2.2.2]{BlaBook}, hence we have (1).

By the isomorphism \eqref{eq:Hisom}, see \cite[\Prop 3.6]{PoVa2014}, we can write $t=\sum_{\tau\in\Irr(\C)} t_\tau$, where $t_\tau = (1_\rho\otimes t_{\tau\sigma,\sigma} )(t_{\rho,\rho\tau}\otimes 1_\sigma)$ for $t_{\rho,\rho\tau} \in\Hom(\rho,\rho\tau)$, $t_{\tau\sigma,\sigma} \in \Hom(\tau\sigma,\sigma)$, and we observe that the expansion of each
\begin{align}
  \psi^\ast\iota(t_\tau)\psi
  &=\sum_{\rho',r} \psi_{\rho',r}^\ast E(\psi_{\rho',r} \psi^\ast\iota(t_\tau)\psi)
\end{align}
only involves the sector $[\rho'] = [\taubar]$. Indeed
\begin{align}
  E(\psi_{\rho',r} \psi^\ast\iota(t_\tau)\psi) = E(\psi_{\rho',r} \psi^\ast\iota(\rho(t_{\tau\sigma,\sigma}) t_{\rho,\rho\tau})\psi)
  = \iota(\rho'(t_{\tau\sigma,\sigma})) E(\psi_{\rho',r} \psi^\ast\iota(t_{\rho,\rho\tau})\psi)
\end{align}
and $E(\psi_{\rho',r} \psi^\ast\iota(t_{\rho,\rho\tau})\psi) \in \Hom(\id,\rho'\tau) \cong \Hom(\taubar,\rho')$
which vanishes if $[\rho'] \neq [\bar\tau]$. Thus we conclude $\phi_\mu (\psi^\ast\iota(t_\tau)\psi) = \mu(\tau) \psi^\ast\iota(t_\tau)\psi = \psi^\ast\iota(\mu_{\rho,\sigma}(t_\tau))\psi$ and (2) follows by linearity.
\end{proof}

The next proposition should be compared with \cite[\Prop 7.11, \Cor 7.13]{JoPe2019}.

\begin{prop}\label{prop:braidedsubfisstrongamenab}
Let $\N\subset \M$, $\Omega$ and $\langle\theta\rangle_0$ be as above. Then
\begin{enumerate}
  \item For every finite support ucp-multiplier $\mu$ of $\langle\theta\rangle_0$ 
  equation \eqref{eq:JPucp} defines an $\N$-bimodular $\Omega$-preserving ucp map $\phi_\mu\colon\M\to\M$ with finite support;
  \item If $\langle\theta\rangle_0$ is amenable, then $\N\subset \M$ is strongly relatively amenable.
\end{enumerate}
\end{prop}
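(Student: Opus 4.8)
The plan is to verify the three asserted properties of $\phi_\mu$ in part (1) one at a time, using Lemma \ref{lem:CPonPsiStarPsi}, and then to deduce part (2) by a direct density argument. For \textbf{finite support}: since $\mu$ has finite support, $\mu(\rhobar) = 0$ for all but finitely many $[\rho]$, so \eqref{eq:JPucp} immediately gives $\phi_\mu(\psi_{\rho,r}) = \psi_{\rho,r}^* \mu(\rhobar) E(\psi_{\rho,r} \psi_{\rho,r})$ --- but actually one should compute $\phi_\mu$ on a general charged field $\psi \in H_\sigma$ via the Pimsner--Popa expansion $\psi = \sum_{\rho,r}\psi_{\rho,r}^* E(\psi_{\rho,r}\psi)$ and note $E(\psi_{\rho,r}\psi) \in \Hom(\id,\rho\sigma) \cong \Hom(\sigmabar,\rho)$ vanishes unless $[\rho] = [\sigmabar]$, so $\phi_\mu(\psi) = \mu(\sigma)\psi$ (using $\bar{\bar\sigma} = \sigma$), which is $0$ for all but finitely many $[\sigma]$. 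For \textbf{$\N$-bimodularity}: one computes $\phi_\mu(\iota(n_1) m \iota(n_2))$ using that $\iota(n_1)\psi_{\rho,r}^* = \psi_{\rho,r}^* \iota(\rho(n_1))$? No --- more directly, using the Pimsner--Popa expansion of $m$ and the intertwining $\psi_{\rho,r} \iota(n) = \iota(\rho(n))\psi_{\rho,r}$ (from \eqref{eq:chargedfields}), together with $E(\iota(n_1) x) = \iota(n_1) E(x)$ and $E(x\iota(n_2)) = E(x)\iota(n_2)$ since $E$ is $\N$-bimodular, one rearranges $\iota(n_1) m \iota(n_2) = \sum_{\rho,r}(\iota(n_1)\psi_{\rho,r}^*) E(\psi_{\rho,r} m \iota(n_2))$ and pushes the $\mu(\rhobar)$ factor through. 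The cleanest route is: $\phi_\mu(\iota(n_1) m) = \iota(n_1)\phi_\mu(m)$ is immediate from linearity and the fact that replacing $m$ by $\iota(n_1) m$ multiplies each coefficient $E(\psi_{\rho,r} m)$ on the left by $\iota(n_1)$ only after commuting $\psi_{\rho,r}$ past $\iota(n_1)$, which changes $\rho$ to nothing (same $\rho$), so the bookkeeping just works; the right-module side is even easier since $E(\psi_{\rho,r} m\iota(n_2)) = E(\psi_{\rho,r}m)\iota(n_2)$ directly.

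For the core property, \textbf{complete positivity} together with \textbf{unitality}, the plan is to reduce to Lemma \ref{lem:CPonPsiStarPsi}(2). Unitality is clear: $1 = \psi_{\id,1}^* E(\psi_{\id,1} \cdot 1) = 1$ with $\psi_{\id,1} = 1$ and $\mu(\id) = 1$, so $\phi_\mu(1) = 1$. For complete positivity one wants to exhibit $\phi_\mu$ as a limit or a component of a manifestly completely positive map. The natural idea: for a finite set $S$ of irreducibles and the projection $p_S := \sum_{\rho \in S, r} p_{\rho,r} \in \Proj_0(\theta)$, restrict attention to the "corner" and use that $\mu_{\rho,\sigma}$ is a genuine ucp map on each $\Hom(\rho\sigma,\rho\sigma)$. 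Concretely, given a positive matrix $(m_{ij}) \in M_n(\M)_+$ one must show $(\phi_\mu(m_{ij}))_+ \geq 0$; write each $m_{ij}$ via the Pimsner--Popa expansion and use that elements of the form $\psi^* \iota(t)\psi$ with $t$ running over morphism spaces and $\psi$ over charged fields span a dense enough subspace, combined with Lemma \ref{lem:CPonPsiStarPsi}(2) and the complete positivity of the categorical ucp map $\mu_{\rho,\sigma}$. I expect \textbf{this is the main obstacle}: going from the identity $\phi_\mu(\psi^*\iota(t)\psi) = \psi^*\iota(\mu_{\rho,\sigma}(t))\psi$ on these special elements to complete positivity of $\phi_\mu$ on all of $\M$ requires either a clean Stinespring-type dilation of $\phi_\mu$ or a careful approximation argument showing every positive element of $M_n(\M)$ is suitably approximated (in a topology for which $\phi_\mu$ is continuous, which by part (1) it is) by positive combinations built from the $\psi^*\iota(t)\psi$'s; one likely mimics the corresponding argument in \cite[\Prop 7.11]{JoPe2019} and \cite[\Prop 3.6]{PoVa2014}, passing through the identification of $\M$ (or a suitable corner) with a crossed-product-like object on which the categorical multiplier acts.

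Once $\Omega$-preservation is checked --- which follows since $(\Omega, \phi_\mu(m)\Omega) = \sum_{\rho,r}\mu(\rhobar)(\Omega, \psi_{\rho,r}^* E(\psi_{\rho,r}m)\Omega)$ and $(\Omega, \psi_{\rho,r}^* x \Omega) = (\psi_{\rho,r}\Omega, x\Omega)$ with $\psi_{\rho,r}\Omega \in \overline{\N\Omega}$ only when... actually more simply $(\Omega, \phi_\mu(m)\Omega) = (\Omega, E(\phi_\mu(m))\Omega)$ and $E(\phi_\mu(m)) = \mu(\id) E(m) = E(m)$ from the expansion, using $\Omega$-preservation of $E$ --- part (1) is complete. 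For part (2): by amenability of $\langle\theta\rangle_0$ (Definition \ref{def:AmenableTensorCategory}, available for braided $\theta$ by Theorem \ref{thm:cathalfbraidedisamenable}) pick finite-support ucp-multipliers $\mu_\alpha \to \mu_{\id}$ pointwise, set $\phi_\alpha := \phi_{\mu_\alpha}$. For fixed $m \in \M$ and $\varepsilon > 0$, truncate the Pimsner--Popa expansion $m = \sum_{\rho,r}\psi_{\rho,r}^* E(\psi_{\rho,r} m)$ to a finite partial sum $m_F$ with $\|(m - m_F)\Omega\| < \varepsilon$; since $\phi_\alpha(m) - m = \sum_{\rho,r}(\mu_\alpha(\rhobar) - 1)\psi_{\rho,r}^* E(\psi_{\rho,r}m)$ and $\|\psi_{\rho,r}^* E(\psi_{\rho,r}m)\Omega\|$ are square-summable (Pimsner--Popa), one splits into the finite part (where $\mu_\alpha(\rhobar) \to 1$, handled by pointwise convergence on the finitely many $\rho$ occurring in $m_F$) and the tail (bounded by $2\varepsilon$ using $|\mu_\alpha(\rhobar) - 1| \leq 2$ and the $\ell^2$-tail estimate, noting $\|\mu_\alpha\|_\infty \leq 1$ since $\mu_\alpha$ is a ucp-multiplier hence $\mu_\alpha(\id) = 1$ and... one needs $|\mu_\alpha(\tau)| \leq 1$, which follows from $\mu_{\rho,\sigma}$ being ucp applied suitably). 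Hence $\|\phi_\alpha(m)\Omega - m\Omega\| \to 0$, giving strong relative amenability.
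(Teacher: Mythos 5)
Most of your proposal is sound: the finite-support computation $\phi_\mu(\psi)=\mu(\sigma)\psi$ for $\psi\in H_\sigma$ is correct, the $\N$-bimodularity and the identity $E\circ\phi_\mu=\mu(\id)E=E$ (hence $\Omega$-preservation and unitality) are handled essentially as in the paper, and your part (2) — truncating the Pimsner--Popa expansion and splitting into a finite part plus an $\ell^2$-tail bounded via $|\mu_\alpha(\rhobar)-1|\leq 2$ — is a legitimate variant of the paper's argument, which instead approximates $m$ by an element $m_0$ of the *-algebra $\M_0$ generated by $\N$ and the charged fields and controls $\|\phi_{\mu_\alpha}(m-m_0)\Omega\|$ by the Kadison--Schwarz inequality; the two routes are interchangeable here.

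The genuine gap is complete positivity, which you yourself flag as ``the main obstacle'' and then leave unresolved, offering only the hope of a Stinespring dilation or of mimicking \cite{JoPe2019}, \cite{PoVa2014}. The missing idea is concrete and is the whole point of the proof: given $x_1,\ldots,x_k$ in $\M_0$, one does \emph{not} need to approximate arbitrary positive matrices by combinations of elements $\psi^*\iota(t)\psi$; rather, one \emph{rewrites} each $x_i$ exactly in that form. Using the Pimsner--Popa expansion, each $x_i$ is a finite sum $\sum_{\rho,r}\psi_{\rho,r}^*\iota(n_{i,\rho,r})$; choosing isometries $v_{\rho',r'}$ with orthogonal ranges summing to $1$, one assembles all the fields occurring into a single charged field $\psi_\sigma=\sum v_{\rho',r'}\psi_{\rho',r'}\in H_\sigma$ for one (reducible) $\sigma\in\langle\theta\rangle_0$, so that $x_i=\psi_\sigma^*\iota(n_i)$. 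Picking a solution $r,\rbar$ of the conjugate equations for $\sigma$, one has $x_i=r^*\psi_{\sigmabar}\iota(n_i)$ with $\psi_{\sigmabar}=\psi_\sigma^*\rbar\in H_{\sigmabar}$, whence $x_i^*x_j=\iota(n_i^*)\,\psi_{\sigmabar}^*\iota(rr^*)\psi_{\sigmabar}\,\iota(n_j)$ is \emph{exactly} of the form covered by Lemma \ref{lem:CPonPsiStarPsi}(2) with the positive morphism $rr^*\in\Hom(\sigmabar\sigma,\sigmabar\sigma)$. Then $\sum_{i,j}y_i^*\phi_\mu(x_i^*x_j)y_j=y^*\iota(\mu_{\sigmabar,\sigma}(rr^*))y\geq 0$ by the complete positivity of the categorical multiplier, and the extension from $\M_0$ to $\M$ is by Kaplansky density together with the normality of $\phi_\mu$ established in Lemma \ref{lem:CPonPsiStarPsi}(1). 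Without this reduction your argument for the central claim of part (1) does not go through.
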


\begin{proof}
To show (1), we have to check that $\phi_\mu$ is ucp, $\N$-bimodular, $\Omega$-preserving. 

For every $m\in\M$, $n\in\N$, one can immediately check from the definition that $\phi_\mu(m\iota(n))=\phi_\mu(m)\iota(n)$ and $\phi_\mu(\iota(n) m) = \iota(n) \phi_\mu(m)$, \ie $\phi_\mu$ is $\N$-bimodular.

By the very definition of Pimsner--Popa basis and by equation \eqref{eq:normalizvacuumfield}, we have that $\psi_{\id,1} = 1$ and $E(\psi_{\rho,r}) = 0$ for every $[\rho]\neq\id$, thus $E\circ \phi_\mu(m) = \mu(\id) E(m)$ for every $m\in\M$. Moreover, $\mu(\id) = 1$ because $\mu_{\rho,\sigma}(1_{\rho\sigma}) = 1_{\rho\sigma}$ and $1_{\rho\sigma} = (1_{\rho\sigma})_{\id} = 1_{\rho}\otimes 1_{\sigma}$ via the isomorphism \eqref{eq:Hisom}, thus $E\circ \phi_\mu = E$. Hence we can argue that $(\Omega,\phi_\mu(m)\Omega)=(\Omega,E(m)\Omega)=(\Omega,m\Omega)$ for every $m\in\M$, \ie $\phi_\mu$ is $\Omega$-preserving.

Unitality follows immediately from the previous discussion. For complete positivity, denote by $\M_0$ be the unital *-subalgebra of $\M$ generated by $\N$ and by the Pimsner--Popa basis made of charged fields $\{\psi_{\rho,r}\}$ (Notation \ref{not:genQsysint}). By Proposition \ref{prop:PiPoexp} we can expand products and adjoints of fields as finite $\N$-linear combinations of the basis elements.
 
Let $\{x_1,\ldots,x_k\}\subset \M_0$, then $x_i =  \sum_{\rho,r} \psi_{\rho,r}^\ast \iota(n_{i,\rho,r})$
where $n_{i,\rho,r}\in\N$ and the sum is finite for every $i=1,\ldots,k$. By taking a family of isometries $\{v_{\rho',r'}\}$, one for each field appearing in the expansions of $x_1,\ldots,x_k$ counted with multiplicity, such that $v_{\rho',r'}^\ast v_{\rho,r} = \delta_{\rho,\rho'}\delta_{r,r'}1$ and $\sum_{\rho',r'}v_{\rho',r'}v_{\rho',r'}^\ast = 1$, we can write $x_i = \psi_\sigma^\ast \iota(n_{i})$ for every $i=1,\ldots,k$ for a single (reducible) $\sigma\in\langle\theta\rangle_0$ where $\psi_\sigma = \sum_{\rho',r'} v_{\rho',r'}\psi_{\rho',r'}\in H_\sigma$ and $n_i\in\N$. 
By choosing a pair $r$, $\rbar$ of solutions of the conjugate equations \eqref{eq:conjeqns} for $\sigma$ and $\sigmabar$, we can rewrite $x_i = r^\ast \psi_{\sigmabar} \iota(n_{i})$ where $\psi_{\sigmabar} = \psi_\sigma^\ast \rbar\in H_{\sigmabar}$. 
Let $\{y_1,\ldots,y_k\}\subset \M$, then
\begin{align}
\sum_{i,j=1}^k y_i^\ast\phi_\mu(x_i^\ast x_j) y_j  & = \sum_{i,j=1}^k y_i^\ast\iota(n^\ast_i) \phi_\mu(\psi_{\sigmabar}^\ast r r^\ast \psi_{\sigmabar}) \iota(n_j)y_j \\
& = \sum_{i,j=1}^k  y_i^\ast\iota(n^\ast_i)\psi_{\sigmabar}^\ast \iota(\mu_{\sigmabar,\sigma}(r r^\ast))\psi_{\sigmabar}\iota(n_j)y_j \\
& = y^\ast \iota(\mu_{\bar\sigma,\sigma}(rr^\ast)) y \geq 0
\end{align}
by Lemma \ref{lem:CPonPsiStarPsi} and by the complete\footnote{Note that in \cite[\Lem 3.7]{PoVa2014} it is shown that a positive multiplier on a rigid tensor \Cstar-category is automatically completely positive} positivity of $\mu_{\sigmabar,\sigma}$, where we set $y = \sum_{i=1}^k \psi_{\sigmabar}\iota(n_i)y_i$. Now, let $\{x_1,\ldots,x_k\} \subset \M$ and observe that $\M_0$ is weakly dense in $\M$, see \eg \cite[\Lem 3.8]{IzLoPo1998}. By Kaplansky's density theorem \cite[\Thm II.4.8]{Ta1} we can approximate each $x_i$ with bounded sequences 
$\{x_{i,n}\}\subset\M_0$ in the strong* operator topology. 
Hence $\{x_{i,n}^*x_{j,n}\}$
converges to $x_i^\ast x_j$ strongly*, thus also weakly, and by the continuity properties of $\phi_\mu$ stated in Lemma \ref{lem:CPonPsiStarPsi}, we conclude that $\sum_{i,j=1}^k y_i^\ast\phi_\mu(x_i^\ast x_j) y_j \geq 0$, \ie $\phi_\mu$ is completely positive by \cite[\Cor IV.3.4]{Ta1}.

Clearly, $\phi_\mu$ has finite support because $\mu$ has finite support. This concludes the proof of (1).

To show $(2)$, observe first that $\Irr(\langle\theta\rangle_0)$ is countable by the discreteness of the subfactor. By the amenability assumption there is a sequence $\{\mu_\alpha\}$ of ucp-multipliers on $\langle\theta\rangle_0$ converging pointwise to the identity ucp-multiplier. 
For $m\in\M_0$, compute
\begin{align}
\| (\phi_{\mu_\alpha}(m)-m)\Omega \|^2 & = \| \sum_{\rho,r} \psi_{\rho,r}^\ast (\mu_{\alpha}(\rhobar) - 1) E(\psi_{\rho,r}m) \Omega \|^2\\
& = \sum_{\rho,\rho',r,r'} \left|\mu_{\alpha}(\rhobar)-1 \right|^2 (\Omega, E(m^*\psi_{\rho',r'}^*) \psi_{\rho',r'}\psi_{\rho,r}^* E(\psi_{\rho,r}m) \Omega)\\
& = \sum_{\rho,r} \left|\mu_{\alpha}(\rhobar)-1 \right|^2 \|E(\psi_{\rho,r}m) \Omega\|^2
\end{align}
where we used the $E$-invariance of the state $(\Omega,\slot\Omega)$ and the normalization of the Pimsner--Popa basis, see equation \eqref{eq:normalizfields}. Observing that the sums over $\rho,r$ are finite because $m\in\M_0$, we can take the limit over $\alpha$ and the limit is zero
by the pointwise convergence assumption.
For $m\in\M$, we can choose $m_0\in\M_0$ such that $\|(m-m_0)\Omega\|$ is arbitrarily small, because $\M_0$ is weakly/strongly dense in $\M$, as we observed above. Compute
\begin{align}
\|(\phi_{\mu_\alpha}(m)-m)\Omega\|^2 & = \|(\phi_{\mu_\alpha}(m)-\phi_{\mu_\alpha}(m_0)+\phi_{\mu_\alpha}(m_0)-m_0+m_0-m)\Omega\|^2\\
& \leq \|\phi_{\mu_\alpha}(m-m_0)\Omega\|^2 + \|(\phi_{\mu_\alpha}(m_0)-m_0)\Omega\|^2 + \|(m_0-m)\Omega\|^2 
\end{align}
where the first summand can be rewritten as $(\Omega,\phi_{\mu_\alpha}(m-m_0)^*\phi_{\mu_\alpha}(m-m_0)\Omega)$. By the Kadison--Schwarz inequality, see \eg \cite[\App A.1]{Bi2016}, \cite{NiStZs2003} and using the fact that $\phi_{\mu_\alpha}$ is $\Omega$-preserving, we can estimate that summand from above by $\|(m-m_0)\Omega\|^2$. Thus the statement follows from the previous step by an $\epsilon/3$ argument, concluding the proof.
\end{proof}

Combining Theorem \ref{thm:cathalfbraidedisamenable} with Proposition \ref{prop:braidedsubfisstrongamenab}, we have the following:

\begin{cor}\label{cor:braideddiscreteisstrongamenab}
Let $\N\subset \M$ be an irreducible braided discrete type $\III$ subfactor (Definition \ref{def:semi-discr}, \ref{def:braidedsubf}).
Then $\N\subset\M$ is strongly relatively amenable (Definition \ref{def:strongamenab}).
\end{cor}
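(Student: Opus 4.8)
The plan is to simply chain together the two main results of this section. Since $\N\subset\M$ is braided, the dual canonical endomorphism $\theta\in\End(\N)$ lies in a full braided tensor \Cstar-subcategory of $\End(\N)$; in particular $\theta$ — which by construction generates $\langle\theta\rangle_0$ — carries a unitary half-braiding, namely the restriction of the ambient braiding. This is precisely the hypothesis of Theorem \ref{thm:cathalfbraidedisamenable}, whose conclusion yields that $\langle\theta\rangle_0$ is an amenable rigid tensor \Cstar-category in the sense of Definition \ref{def:AmenableTensorCategory}.

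Having established amenability of $\langle\theta\rangle_0$, I would invoke Proposition \ref{prop:braidedsubfisstrongamenab}(2) essentially verbatim: amenability of $\langle\theta\rangle_0$ implies that $\N\subset\M$ is strongly relatively amenable in the sense of Definition \ref{def:strongamenab}. Concretely, the net of finite-support ucp-multipliers $\{\mu_\alpha\}$ witnessing amenability of the category produces, via equation \eqref{eq:JPucp}, $\N$-bimodular $\Omega$-preserving ucp maps $\phi_{\mu_\alpha}\colon\M\to\M$ of finite support, and the computation in the proof of that proposition gives $\|(\phi_{\mu_\alpha}(m)-m)\Omega\|\to 0$ for every $m\in\M$, which is exactly the defining condition.

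There is essentially no obstacle here beyond the two cited results — the corollary is a bookkeeping combination of Theorem \ref{thm:cathalfbraidedisamenable} and Proposition \ref{prop:braidedsubfisstrongamenab}. The only point worth a remark is that Definition \ref{def:braidedsubf} asks $\theta$ only to embed into \emph{some} full braided tensor \Cstar-subcategory, not that $\langle\theta\rangle$ itself be braided a priori; but fullness of the embedding ensures that the braiding operators $\varepsilon_{\theta^n,\theta^m}$ already lie in $\langle\theta\rangle$, so $\theta$ genuinely acquires the unitary half-braiding required to apply Theorem \ref{thm:cathalfbraidedisamenable}, and nothing further is needed.
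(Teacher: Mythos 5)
Your proposal is correct and is exactly the paper's argument: the corollary is stated as an immediate combination of Theorem \ref{thm:cathalfbraidedisamenable} (a braiding restricts to a unitary half-braiding on the generator $\theta$, so $\langle\theta\rangle_0$ is amenable) with Proposition \ref{prop:braidedsubfisstrongamenab}(2). Your closing remark about fullness guaranteeing that the braiding operators live in $\langle\theta\rangle$ is a sensible sanity check but adds nothing the paper needs.
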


%%%
\section{Compact hypergroups}\label{sec:compacthyp}
%%%

In the next section, we shall see how a \emph{compact hypergroup} is canonically associated to an irreducible local discrete subfactor.
In this section, we explain our definition of compact hypergroup, which turns out to be a special case of locally compact hypergroup in the sense of \cite{KaPoCh2010} and of compact quantum hypergroup in the sense of \cite{ChVa1999}, and then we compare it with other notions appearing in the literature on hypergroups (and hypercomplex systems) \cite{BlHe1995}, \cite{BeKa1998}. A prominent role in our description is played by the so-called Haar measure, arising in subfactor land from the unique normal faithful conditional expectation.

\begin{defi}\label{def:StarMonoid}
A \textbf{monoid with involution} is a set $M$ equipped with a binary operation $M\times M\to M, (x,y)\mapsto x\ast y$, an involution $M\to M, x\mapsto x^\sharp$, \ie $(x^\sharp)^\sharp = x$, and a distinguished element $e\in M$, the unit, such that for every $x,y,z\in M$ we have
$(x\ast y)\ast z=x\ast(y\ast z)$,
$\idK\ast x=x= x\ast \idK$ and 
$(x\ast y)^\sharp = y^\sharp \ast x^\sharp$.
We call $h = h^\sharp = h \ast h \in M$ a \textbf{Haar element} for $M$ if $h \ast x = h = x \ast h$ for every $x\in M$.
\end{defi}

Let $K$ be a compact Hausdorff space. We denote by $P(K)$ the set of probability Radon measures on $K$. The vector space obtained by complex linear combinations of elements in $P(K)$ will be denoted by $M(K)$, \ie the vector space of complex bounded Radon measures on $K$.
Recall that a measure $\mu\in P(K)$ is faithful if $f\mapsto \mu(f) := \int_K f\dd \mu$ is a faithful state on the \Cstar-algebra of continuous functions $C(K)$, \ie if $f\in C(K)$, $f\geq 0$ with $\mu(f) = 0$  implies $f = 0$.
Recall also that the continuous dual of $C(K)$ is isometrically isomorphic to $M(K)$ by the Riesz--Markov theorem.
For every $x\in K$, denote by $\delta_x \in P(K)$ the Dirac measure concentrated in $x$.

\begin{defi}\label{def:CompactHypergroup}
A \textbf{compact hypergroup} is a compact Hausdorff space $K$ equipped with a biaffine operation called \textbf{convolution} on probability measures $P(K)\times P(K)\to P(K), (\mu,\nu) \mapsto \mu\ast\nu$,  an \textbf{involution} $K \to K, x \mapsto x^\sharp$, and a distinguished \textbf{identity element} $\idK\in K$ with $\idK = \idK^\sharp$, such that:
\begin{itemize}
\item[$(i)$] The convex space $P(K)$ is a monoid with involution with respect to $\ast,\,^\sharp,\delta_e$, where the involution is defined on measures by the pushforward, \ie $\mu^\sharp(E) := \mu(E^\sharp)$ for every Borel set $E\subset K$. By definition, $\delta_x^\sharp = \delta_{x^\sharp}$ for every $x\in K$;
\item[$(ii)$] The involution $x \mapsto x^\sharp$ is continuous and the map 
\begin{align}
(x,y)\in K\times K \mapsto \delta_x\ast\delta_y \in P(K)
\end{align}
is jointly continuous with respect to the weak* topology on measures;
\item[$(iii)$] There exists a faithful probability measure $\mu_K\in P(K)$, called a \textbf{Haar measure} on $K$,
such that for every $f,g\in C(K)$ and $y\in K$ it holds
\begin{align}\label{eq:haarproperty1}
\int_Kf(y\ast x) g(x)\dd\mu_K(x) = \int_Kf(x)g(y^\sharp \ast x) \dd\mu_K(x),
\end{align}
\begin{align}\label{eq:haarproperty2}
\int_Kf(x\ast y) g(x)\dd\mu_K(x) = \int_Kf(x)g(x \ast y^\sharp) \dd\mu_K(x),
\end{align}
where
\begin{align}
f(x\ast y ) := (\delta_x\ast\delta_y)(f) = \int_Kf(z)\dd(\delta_x \ast \delta_y)(z).
\end{align}
\end{itemize}
\end{defi}

This notion of compact hypergroup coincides with the one of \emph{locally compact hypergroup} studied in \cite[\Def 2.1]{KaPoCh2010}, restricted to the compact case and dualizing the description from $M(K)$ to $C(K)$, if we assume the existence of a (unique normalized) left and right Haar measure \cite[\Eq (H$_4$)(4), (5)]{KaPoCh2010}.

In the case of compact metrizable $K$, a compact hypergroup in the sense of Definition \ref{def:CompactHypergroup} endows the commutative separable \Cstar-algebra $C(K)$ with a \emph{compact quantum hypergroup structure} as in \cite[\Def 4.1]{ChVa1999}.
Indeed, property (QH$_1$) of \cite[\Def 4.1]{ChVa1999} holds 
by \cite[\Thm 8.13]{KaPoCh2010}.

\begin{rmk}
The convolution on Dirac measures can be uniquely extended to a bilinear 
associative convolution on $M(K)$ via the formula
\begin{align}\label{eq:convondirac}
\mu \ast \nu = \int_K \int_K \delta_x \ast \delta_y \dd\mu(x) \dd\nu(y)
\end{align}
for $\mu,\nu\in M(K)$, see \cite[Lem 2.4B]{Je1975}. 
The vector space $M(K)$ becomes a unital involutive Banach algebra with respect to the antilinear adjoint map $\mu\mapsto\mu^*:=\overline{\mu^\sharp}$ and the norm given by the total variation, see \cite[\Sec 1.1.2]{BlHe1995}, \cite[\Sec 1.2.3]{BeKa1998}. 
\end{rmk}

\begin{prop}\label{prop:Haarabsorbs}
The Haar measure is indeed a Haar element for $P(K)$, namely $\mu_K \ast \nu = \mu_K = \nu \ast \mu_K$ for every $\nu\in P(K)$. In particular, it is unique and $\mu_K = \mu_K^\sharp = \mu_K \ast \mu_K$.
\end{prop}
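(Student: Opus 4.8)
The plan is to use the defining Haar properties \eqref{eq:haarproperty1} and \eqref{eq:haarproperty2} with a clever choice of the test functions $f$ and $g$, reducing everything to an identity in $C(K)$ that forces $\mu_K \ast \nu$ and $\nu \ast \mu_K$ to coincide with $\mu_K$. First I would reduce to the case $\nu = \delta_y$ for a single point $y \in K$: by the integral formula \eqref{eq:convondirac}, once we know $\delta_y \ast \mu_K = \mu_K = \mu_K \ast \delta_y$ for every $y$, the general statement for $\nu \in P(K)$ follows by integrating over $y$ against $\dd\nu$, using that $P(K)$ consists of probability measures so the integral of the constant function is $1$. So it suffices to establish $\int_K f(y \ast x)\dd\mu_K(x) = \int_K f(x)\dd\mu_K(x)$ for all $f \in C(K)$ and $y \in K$, together with its right-sided analogue.

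To get this from \eqref{eq:haarproperty1}, take $g \equiv 1$ (the constant function, which lies in $C(K)$). The right-hand side becomes $\int_K f(x)\dd\mu_K(x)$. For the left-hand side I need $\int_K f(y \ast x) \cdot 1 \dd\mu_K(x)$, which is exactly $\int_K (\delta_y \ast \mu_K)(\text{evaluated via }f)$, i.e. $(\delta_y \ast \mu_K)(f)$ after unwinding the definition $f(y\ast x) = (\delta_y \ast \delta_x)(f)$ and \eqref{eq:convondirac}. Hence $(\delta_y \ast \mu_K)(f) = \mu_K(f)$ for all $f \in C(K)$, so by Riesz--Markov $\delta_y \ast \mu_K = \mu_K$. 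Symmetrically, taking $g \equiv 1$ in \eqref{eq:haarproperty2} gives $\mu_K \ast \delta_y = \mu_K$. Integrating both over $\dd\nu(y)$ via \eqref{eq:convondirac} yields $\nu \ast \mu_K = \mu_K = \mu_K \ast \nu$ for every $\nu \in P(K)$, so $\mu_K$ is a Haar element of the monoid with involution $(P(K), \ast, {}^\sharp, \delta_e)$ in the sense of Definition \ref{def:StarMonoid}.

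For the remaining assertions: taking $\nu = \mu_K$ in what we just proved gives $\mu_K \ast \mu_K = \mu_K$ (idempotence). For $\mu_K = \mu_K^\sharp$, apply the involution to the identity $\mu_K \ast \nu = \mu_K$ with $\nu = \mu_K^\sharp$: using $(x \ast y)^\sharp = y^\sharp \ast x^\sharp$ from axiom $(i)$ and $(\mu_K^\sharp)^\sharp = \mu_K$, we get $\mu_K^\sharp \ast \mu_K = \mu_K^\sharp$, and combining with $\mu_K^\sharp \ast \mu_K = \mu_K$ (the left-absorption property applied to $\nu = \mu_K^\sharp \in P(K)$, since the pushforward of a probability measure is a probability measure) yields $\mu_K^\sharp = \mu_K$. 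Uniqueness is then immediate from the monoid structure: if $\mu_K'$ is another Haar measure, it is in particular a Haar element of $P(K)$, so $\mu_K = \mu_K \ast \mu_K' = \mu_K'$, where the first equality uses that $\mu_K'$ absorbs on the left and the second uses that $\mu_K$ absorbs on the right.

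I expect no serious obstacle here; the only point requiring a little care is the bookkeeping translating between the measure-level statement $\delta_y \ast \mu_K = \mu_K$ and the function-level Haar identity, and making sure the constant function $g \equiv 1$ is a legitimate choice (it is, since $K$ is compact Hausdorff so $1 \in C(K)$). Everything else is a formal consequence of the monoid-with-involution axioms in Definition \ref{def:StarMonoid} together with Riesz--Markov.
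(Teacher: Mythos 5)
Your proof is correct. The paper itself does not give an argument at all: it simply cites \cite[Lem.\ 4.1]{KaPoCh2010} and \cite[Prop.\ 1.3.17]{BlHe1995}, so your self-contained derivation is a genuine (and welcome) alternative to deferring to the literature. The key move --- setting $g\equiv 1$ in \eqref{eq:haarproperty1} and \eqref{eq:haarproperty2}, noting that $g(y^\sharp\ast x)=(\delta_{y^\sharp}\ast\delta_x)(1)=1$ because the convolution of Dirac measures is again a probability measure, and then invoking Riesz--Markov --- is exactly the standard mechanism hiding inside those references, and your reduction from general $\nu\in P(K)$ to Dirac measures via \eqref{eq:convondirac} is legitimate; the implicit use of Fubini there is justified by the joint continuity of $(x,y)\mapsto\delta_x\ast\delta_y$ from axiom $(ii)$ together with compactness of $K$, a point you could flag explicitly. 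The arguments for idempotence, for $\mu_K=\mu_K^\sharp$ (via the anti-multiplicativity of $\sharp$ combined with two-sided absorption), and for uniqueness (two two-sided absorbing elements of a monoid coincide) are all standard and correct; there is a harmless slip of bookkeeping in which side of the absorption identity you conjugate to get $\mu_K^\sharp$ on the right-hand side, but either choice closes the argument. What your approach buys is transparency: the reader sees that the proposition is a purely formal consequence of Definition \ref{def:CompactHypergroup} and needs none of the extra continuity axioms of DJS-hypergroups assumed in the cited sources.
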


\begin{proof}
It follows from \cite[\Lem 4.1]{KaPoCh2010} and \cite[\Prop 1.3.17]{BlHe1995}.
\end{proof}

\begin{example}[Compact groups]
If $K$ is a compact group, a hypergroup structure 
is given by $\delta_x \ast \delta_y = \delta_{xy}$ and $x^\sharp = x^{-1}$, where $xy\in K$ is the group operation and $x^{-1}\in K$ the group inverse, $e$ is the identity element and $\mu_K$ is the Haar measure.
\end{example}

\begin{example}[Finite hypergroups]\label{ex:finitehyp}
A finite hypergroup in the sense of Sunder--Wildberger \cite[Definition 1.1]{SuWi2003}, see also \cite{Bi2016,BiRe2019}, gives canonically a hypergroup in the sense of Definition \ref{def:CompactHypergroup} by identifying $K$ with $\{\delta_k\}_{k\in K}$.
Conversely, if $K$ is a compact hypergroup in the sense of Definition \ref{def:CompactHypergroup} and $K$ is a finite set then (iii) implies that  $\delta_e \leq \delta_x\ast\delta_y$ if and only if $x=y^\sharp$ and thus $K$ is canonically a Sunder--Wildberger hypergroup. In this case, we will only use the term \emph{finite hypergroup}.

For example, let $K=\{e,k\}$. 
Then for every $w\in [1,\infty]$ the multiplication law $\delta_k\ast\delta_k = w^{-1}\cdot \delta_e+ (1-w^{-1})\cdot \delta_k$ defines a finite hypergroup in the above sense and every finite hypergroup with two elements is necessarily of this type.
\end{example}

\begin{rmk}\label{rmk:DJS}
A compact hypergroup in the sense of Definition \ref{def:CompactHypergroup} is \emph{not} necessarily a \emph{DJS-hypergroup} after Dunkl, Jewett and Spector, see \cite[\Sec 1.1.2 (HG1-7)]{BlHe1995}, unless \eg $K$ is a finite hypergroup.
In Definition \ref{def:CompactHypergroup} we do not assume the continuity property (HG4) which guarantees \eg that the left and right convolution maps are open. The stronger inversion property (HG7), namely $e$ is in the support of the measure $\delta_x \ast \delta_{y}$ if and only if $y = x^\sharp$, is replaced by $(iii)$. Vice versa, every compact DJS-hypergroup fulfills the requirements of Definition \ref{def:CompactHypergroup} by \cite[\Thm 1.3.21, 1.3.28]{BlHe1995}.
\end{rmk}

We conclude the section with two general results for recognizing $x^\sharp$.

\begin{prop}\label{prop:uniqueinverseinK}
  Let $K$ be a compact hypergroup in the sense of Definition \ref{def:CompactHypergroup}. 
  The involution $x\mapsto x^\sharp$ is uniquely determined by the Haar measure via either of the two equations in $(iii)$.
\end{prop}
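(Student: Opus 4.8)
The plan is to prove uniqueness by showing that if $\flat\colon K\to K$ is \emph{any} map satisfying \eqref{eq:haarproperty1} with $\flat$ in place of $\sharp$, then necessarily $y^\flat=y^\sharp$ for every $y\in K$; since the given involution $\sharp$ does satisfy \eqref{eq:haarproperty1} by axiom $(iii)$, this yields the claim for that equation, and the argument for \eqref{eq:haarproperty2} is parallel. Subtracting the two instances of \eqref{eq:haarproperty1} (one for $\sharp$, one for $\flat$) gives, for all $f,g\in C(K)$ and all $y\in K$,
\begin{align}
\int_K f(x)\,\big(g(y^\flat\ast x)-g(y^\sharp\ast x)\big)\dd\mu_K(x)=0,
\end{align}
where, as in Definition \ref{def:CompactHypergroup}, $g(z\ast x):=(\delta_z\ast\delta_x)(g)$.

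Next I would fix $y$ and $g$ and set $h(x):=g(y^\flat\ast x)-g(y^\sharp\ast x)$. This $h$ is a continuous function on $K$: by the joint weak* continuity of $(x,y)\mapsto\delta_x\ast\delta_y$ assumed in $(ii)$, each of $x\mapsto(\delta_{y^\flat}\ast\delta_x)(g)$ and $x\mapsto(\delta_{y^\sharp}\ast\delta_x)(g)$ is continuous (note $y^\flat$ and $y^\sharp$ are fixed points here, so no continuity of $\flat$ as a map is needed). Choosing $f=\overline{h}\in C(K)$ in the displayed identity gives $\int_K|h|^2\dd\mu_K=0$, and faithfulness of $\mu_K$ forces $h\equiv 0$. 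Hence $(\delta_{y^\flat}\ast\delta_x)(g)=(\delta_{y^\sharp}\ast\delta_x)(g)$ for every $g\in C(K)$, i.e.\ $\delta_{y^\flat}\ast\delta_x=\delta_{y^\sharp}\ast\delta_x$ in $P(K)$ for every $x\in K$. Specializing to $x=\idK$ and using that $\delta_{\idK}$ is the monoidal unit of $(P(K),\ast)$ by $(i)$, we obtain $\delta_{y^\flat}=\delta_{y^\sharp}$, and since $K$ is Hausdorff this means $y^\flat=y^\sharp$. For \eqref{eq:haarproperty2} the same steps, now producing $\delta_x\ast\delta_{y^\flat}=\delta_x\ast\delta_{y^\sharp}$ and then setting $x=\idK$, give the conclusion.

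I do not anticipate a genuine obstacle; the only place that needs a moment's care is the ``test function'' step, where one must know that $h$ lies in $C(K)$ before invoking faithfulness of the Haar measure (a property phrased in terms of $C(K)$) --- this is exactly what axiom $(ii)$ provides --- together with the observation that neither continuity nor the involutive property of the competing map $\flat$ is used anywhere, so the conclusion is really a uniqueness statement at the level of set maps $K\to K$.
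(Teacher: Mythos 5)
Your proof is correct, and it takes a genuinely different route from the paper's. The paper fixes two candidates $x',x''$ for the involution of a given point and evaluates the right-hand side of \eqref{eq:haarproperty1} against an approximate identity: using the uniform estimate $|g(x\ast y)-g(x)|<\epsilon$ for $y$ in a small compact neighbourhood $V$ of $\idK$ (quoted from \cite[\Thm 1.2.2]{BlHe1995}, which needs the joint continuity in $(ii)$) and a positive $f$ supported in $V$ with $\int_K f\dd\mu_K=1$, it concludes $g(x')=g(x'')$ for every $g\in C(K)$ directly. You instead subtract the two identities, test the resulting orthogonality relation against $f=\overline{h}$ with $h(x)=g(y^\flat\ast x)-g(y^\sharp\ast x)$, and invoke faithfulness of $\mu_K$ to get $h\equiv 0$ everywhere (legitimately everywhere, not merely almost everywhere, since $h$ is continuous and faithfulness of $\mu_K$ is exactly full support); this yields the stronger intermediate identity $\delta_{y^\flat}\ast\delta_x=\delta_{y^\sharp}\ast\delta_x$ for all $x$, which you cancel at $x=\idK$ via the unit axiom in $(i)$. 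Your argument buys self-containedness: it uses only separate continuity of the convolution in one variable (enough for $h\in C(K)$), faithfulness of $\mu_K$, and the monoid unit, with no external uniform-continuity lemma; it also makes explicit that no regularity of the competing map is required. The paper's approximate-identity device is less elementary here but is the standard hypergroup tool and is reused almost verbatim in the proof of Proposition \ref{prop:DominatedImpliesConjugate}, where your $L^2$ trick would not apply as directly. Note that both arguments ultimately rest on faithfulness of the Haar measure: yours invokes it explicitly, while the paper's needs it implicitly to guarantee $\mu_K(V)>0$ so that the normalized $f$ supported in $V$ exists.
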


\begin{proof}
Assume that $x',x'' \in K$ fulfill \eg the first equation in $(iii)$, namely for every $f,g\in C(K)$
\begin{align}
\int_Kf(x\ast y) g(y)\dd\mu_K(y) &= \int_Kf(y)g(x' \ast y) \dd\mu_K(y)\label{eq:x'unique}\\ 
&= \int_Kf(y)g(x'' \ast y) \dd\mu_K(y)\label{eq:x''unique}.
\end{align}
By \cite[\Thm 1.2.2.]{BlHe1995}, which applies since the convolution is jointly continuous by $(ii)$ and $K$ is compact, we have that for every $\epsilon > 0$ there exists a compact neighbourhood $V$ of $e$ such that $|g(x\ast y) - g(x)| < \epsilon$ for every $y \in V$, $x \in K$. Note that $V$ depends on $\epsilon$ and $g$. Choose $f\geq 0$, $\int_K f \dd\mu_K = 1$ and supported in $V$, then \eqref{eq:x'unique} equals
\begin{align}
\int_Kf(y)(g(x' \ast y) - g(x')) \dd\mu_K(y) + g(x')
\end{align}
and $|\int_Kf(y)(g(x' \ast y) - g(x')) \dd\mu_K(y)| \leq \sup_{y\in V} |g(x' \ast y) - g(x')| < \epsilon$. With the same choice of $f$, an analogous statement holds for \eqref{eq:x''unique} because the bound on $g$ is uniform in $x$. Thus $g(x') = g(x'')$ for every $g$ and $x' = x''$, showing the uniqueness of the involution.
\end{proof}

\begin{defi}\label{def:dominatedposmeas}
Let $\mu_1, \mu_2$ be positive measures on $K$. We say that $\mu_1$ is \textbf{dominated} by $\mu_2$, written as $\mu_1 \leq d \mu_2$ or simply $\mu_1 \leq \mu_2$, if there exists a constant $d>0$ such that $d\mu_2-\mu_1$ is a positive measure.
\end{defi}

\begin{prop}\label{prop:DominatedImpliesConjugate}
Let $K$ be a compact hypergroup in the sense of Definition \ref{def:CompactHypergroup}.
Assume that $K$ is metrizable and let $x,x'\in K$. If $\delta_{e}\leq \delta_x\ast\delta_{x'}$ then $x'=x^\sharp$.
\end{prop}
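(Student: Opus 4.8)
The plan is to deduce this from Proposition \ref{prop:uniqueinverseinK}: since the involution is uniquely determined by the Haar property \eqref{eq:haarproperty1}, it suffices to show that the hypothesis $\delta_{e}\leq\delta_x\ast\delta_{x'}$ forces $x'$ to satisfy that property, \ie that $x'$ plays the role of $x^\sharp$ in \eqref{eq:haarproperty1}. Equivalently, I would try to show directly that $\delta_x\ast\delta_{x^\sharp}$ is the \emph{only} convolution product of two Dirac measures dominating $\delta_e$, and that $x'=x^\sharp$ is forced by comparing with the known relation $\delta_e\leq\delta_x\ast\delta_{x^\sharp}$ (which follows from the remark in Example \ref{ex:finitehyp}-type reasoning, or more safely from \cite[\Lem 4.1]{KaPoCh2010} and the Haar absorption Proposition \ref{prop:Haarabsorbs}).

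First I would establish that $\delta_e\leq\delta_x\ast\delta_{x^\sharp}$ for every $x$. This should follow from the Haar property: applying \eqref{eq:haarproperty1} with $g=f$ and integrating, or testing against suitable positive functions, one sees that the support of $\delta_x\ast\delta_{x^\sharp}$ must contain $e$, and in fact one gets a quantitative lower bound $\delta_x\ast\delta_{x^\sharp}\geq c\,\delta_e$ for some $c>0$ depending on $x$; this is the standard ``inversion'' consequence of the Haar axioms and is recorded in \cite{KaPoCh2010}. Next, given the hypothesis $\delta_e\leq d\,\delta_x\ast\delta_{x'}$, I would integrate against the Haar measure in the first variable: using \eqref{eq:haarproperty1}, for $f,g\in C(K)$,
\begin{align}
\int_K f(x\ast y)\,g(y)\dd\mu_K(y)=\int_K f(y)\,g(x^\sharp\ast y)\dd\mu_K(y),
\end{align}
and I want to derive the analogous identity with $x'$ in place of $x^\sharp$. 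The mechanism: convolving $\delta_x\ast\delta_{x'}$ on the right with an arbitrary $\nu\in P(K)$ and using that $\delta_e\ast\nu=\nu$, the domination $\delta_e\leq d(\delta_x\ast\delta_{x'})$ gives $\nu\leq d(\delta_x\ast\delta_{x'}\ast\nu)=d\,\delta_x\ast(\delta_{x'}\ast\nu)$; taking $\nu=\mu_K$ and using Haar absorption (Proposition \ref{prop:Haarabsorbs}) yields $\mu_K\leq d\,\mu_K$, which is vacuous, so I instead need to exploit metrizability to localize.

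The key step, and the main obstacle, is to convert the inequality $\delta_e\leq d\,\delta_x\ast\delta_{x'}$ into the statement ``$x'=x^\sharp$'' without an open-ness / support-regularity axiom (we deliberately dropped (HG4), (HG7)). Here metrizability enters: I would use \cite[\Thm 1.2.2]{BlHe1995} as in the proof of Proposition \ref{prop:uniqueinverseinK} to choose, for each $\varepsilon>0$, a neighbourhood $V$ of $e$ and a function $f\geq0$, $\int f\dd\mu_K=1$, $\supp f\subset V$, with $|f(z\ast y)-\text{(evaluation near }z)|<\varepsilon$; then evaluate $(\delta_x\ast\delta_{x'})(f\ast\cdot)$ type expressions and compare against $(\delta_e)(f)=f(e)$, forcing $x'$ to lie in every neighbourhood of $x^\sharp$, hence $x'=x^\sharp$ by Hausdorffness. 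Concretely, I expect to test the domination against functions of the form $y\mapsto g(x\ast y)$ and use the left Haar identity to transfer the $x$-translation onto an $x^\sharp$-translation, obtaining $g(x^\sharp\ast \cdot)$-averages bounded below by $g(x'\ast\cdot)$-averages up to the constant $d$; a symmetry/iteration argument then pins down $x'=x^\sharp$. The delicate point is making the approximate-identity estimate uniform enough to conclude equality rather than mere closeness, exactly as in Proposition \ref{prop:uniqueinverseinK}, so I would model the argument closely on that proof, with metrizability ensuring a countable neighbourhood basis at $e$ (and at $x^\sharp$) so the $\varepsilon\to0$ limit genuinely yields $x'=x^\sharp$.
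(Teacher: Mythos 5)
Your first step is false: it is \emph{not} true in a compact hypergroup in the sense of Definition \ref{def:CompactHypergroup} that $\delta_e\leq\delta_x\ast\delta_{x^\sharp}$ for every $x$. The whole point of weakening the DJS axioms (dropping (HG7), cf.\ Remark \ref{rmk:DJS}) is that this inversion property can fail, and the paper itself contains a counterexample: in the double coset hypergroup $\SO(3)\CS\SO(2)$ of Example \ref{ex:SO3SO2} one has $\phi_t^\sharp=\phi_t$ for $t\in(-1,1)$, yet $\delta_{\phi_t}\ast\delta_{\phi_t}$ is absolutely continuous with respect to Lebesgue measure and so has no atom at $e$; this is precisely what makes Proposition \ref{prop:dimboundsweight} non-vacuous (infinite weight $w_x$). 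Consequently your strategy of ``comparing'' the hypothesis $\delta_e\leq\delta_x\ast\delta_{x'}$ with a known relation $\delta_e\leq\delta_x\ast\delta_{x^\sharp}$ cannot get off the ground, and the reduction to Proposition \ref{prop:uniqueinverseinK} is never actually effected: you would need to derive the full Haar identity \eqref{eq:haarproperty1} for $x'$ from the single domination inequality, which you do not do (and your own attempt via convolving with $\mu_K$ is, as you note, vacuous).

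Your third paragraph does contain the right ingredients --- test against functions supported near $e$, use \eqref{eq:haarproperty1} to move the $x$-translate to an $x^\sharp$-translate, and invoke \cite[Thm 1.2.2]{BlHe1995} --- but the decisive estimate is left as ``I expect to \dots'' and the proposed ``symmetry/iteration argument'' is not an argument. The actual proof is a short direct contradiction that needs no comparison with $\delta_x\ast\delta_{x^\sharp}$: assume $x'\neq x^\sharp$, take positive $g_n$ approximating $\delta_{x'}$ with shrinking supports (metrizability) so that $g_n(x^\sharp)=0$ for large $n$, and take $f\geq 0$ with $f(e)=1$, $f\leq 1$, supported in a small neighbourhood $V$ of $e$. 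The domination gives $f(x\ast x')\geq d^{-1}$, while
\begin{align}
f(x\ast x')=\lim_n\int_K g_n(y)f(x\ast y)\dd\mu_K(y)=\lim_n\int_V g_n(x^\sharp\ast y)f(y)\dd\mu_K(y)
\end{align}
by \eqref{eq:haarproperty1}, and the right-hand side is $<d^{-1}/2$ once $V$ is chosen via \cite[Thm 1.2.2]{BlHe1995} so that $|g_n(x^\sharp\ast y)-g_n(x^\sharp)|$ is small on $V$. That contradiction is the whole proof; your sketch stops exactly where this computation has to be carried out.
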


\begin{proof}
Let $\{g_{n}\}\subset C(K)$ be a $\delta_{x'}$-approximating sequence by positive continuous functions supported in balls of radius $1/n$ centered at $x'$, \cf equation \eqref{eq:approxdiracdeltas}.
By assumption, there is a $d > 0$ such that $d\delta_x\ast\delta_{x'} -\delta_{e}$ is a positive measure.
Thus for every $f\in C(K)$, $0\leq f\leq 1$ with $f(e)=1$ and support contained in a compact neighbourhood $V$ of $e$
we have
\begin{align}
  d^{-1}=d^{-1}f(e) \leq
f(x\ast x') =& \lim_{n\to \infty}\int_K g_n(z) f(x\ast y)\dd\mu_K(y)\\
=& \lim_{n\to \infty}\int_V g_n(x^\sharp \ast y) f(y)\dd\mu_K(y).
\end{align}
  Assume that $x'\neq x^\sharp$. 
  Then for $n$ sufficiently big we may assume that $g_n(x^\sharp)=0$.
  Let $d^{-1}/2>\delta>0$ then by \cite[Theorem 1.2.2]{BlHe1995} we can choose $V$ 
  such that $|g_n(x^\sharp \ast y)|=|g_n(x^\sharp \ast y) - g_n(x^\sharp)| <\delta$ for every $y\in V$.
  Note that $\mu_K(V)\leq 1$. Then for $n$ big enough we have
\begin{align}
  \left|\int_V g_n(x^\sharp \ast y) f(y)\dd\mu_K(y)\right|&\leq
  \int_V|g_n(x^\sharp \ast y)|\dd\mu_K(y)<
  \delta\,\mu_K(V)\leq d^{-1}/2
\end{align}
which is a contradiction.
\end{proof}

\begin{rmk}
The analogous of Proposition \ref{prop:DominatedImpliesConjugate} does not hold, with the same proof, if we replace $\delta_{e}\leq \delta_x\ast\delta_{x'}$ with the weaker assumption that $e$ is in the support of the measure $\delta_x \ast \delta_{x'}$ \cite[\Sec 1.1.2 (HG7)]{BlHe1995}.
Furthermore, we could not show that the same weaker assumption is enough to obtain the claim of \cite[\Thm 1.3.21]{BlHe1995}, from which Proposition \ref{prop:DominatedImpliesConjugate} would follow from Proposition \ref{prop:uniqueinverseinK}, without the continuity assumption \cite[\Sec 1.1.2 (HG4)]{BlHe1995}.
\end{rmk}

%%%
\section{Compact hypergroups from local discrete subfactors}\label{sec:compacthypfromsubf}
%%%

In this section we show that from an irreducible local discrete subfactor $\N\subset\M$ one can construct a compact hypergroup $K(\N\subset\M)$ (Definition \ref{def:CompactHypergroupUCP}) in the sense of Definition \ref{def:CompactHypergroup}. The hypergroup acts on $\M$ (Definition \ref{def:action}) and it yields $\N$ as the fixed point subalgebra (Theorem \ref{thm:genorbi}).

$K(\N\subset\M)$ is defined as the set of extreme points of ${\UCP}^{\sharp}_\N(\M,\Omega)$ (Definition \ref{def:UCPbim}), equipped with the topology of $L^2(\M,\Omega)$-convergence (Definition \ref{def:L2conv}). The convolution is given by the composition of ucp maps and the involution by the $\Omega$-adjoint. In order to check this, in particular the compactness of extreme points and the measure theoretical interpretation of the convolution, we construct $K(\N\subset\M)$ in a dual way.
Namely, we first associate to the subfactor a commutative unital involutive algebra $\Trig(\N\subset \M)$ (Definition \ref{def:trig}) which will later turn out to be the algebra of trigonometric polynomials of the hypergroup $\Trig(K)$.
Taking the \Cstar-completion $\Cred(\N\subset\M)$ of $\Trig(\N\subset \M)$ in the correct Hilbert space representation, namely the one induced by the Haar measure of the hypergroup, we obtain an algebra of continuous functions $C(K)$ and we show that $P(K) \cong {\UCP}^{\sharp}_\N(\M,\Omega)$, in particular on the extreme points $K\cong\K(\N\subset\M)$ (Theorem \ref{thm:ucpstatesduality}). 

Note that the property of $\Trig(K)$ being an algebra of functions is not automatic in general \cite[\Sec 2]{Vr1979}, but it holds in our case for $K(\N\subset\M)$. In particular, positive functions in $C(K)$ can be approximated with positive elements in $\Trig(K)$.
This property is crucial for the proof of \eqref{eq:haarproperty1} and \eqref{eq:haarproperty2} in Theorem \ref{thm:Kishypergroup} and for the Choquet-type decomposition of the conditional expectation $E$ (Corollary \ref{cor:choquetdecomp}).
Moreover, $K(\N\subset\M)$ is compact metrizable because $\Cred(\N\subset\M)$ is separable.

%%%
\subsection{Trigonometric polynomials}\label{sec:trig}
%%%

Let $\N\subset \M$ be an irreducible local discrete type $\III$ subfactor (Section \ref{sec:localsubf}), with dual canonical endomorphism $\theta\in\End(\N)$, and let $(\theta,w,\{m_{\rho,r}\})$ be a generalized Q-system of intertwiners as in Notation \ref{not:genQsysint}. 
Consider the formal sum $m=\sum_{\rho,r} m_{\rho,r}$ as in Lemma \ref{lem:genQsysprops} and Section \ref{sec:dualfields}.

\begin{defi}\label{def:trig}
We define
\begin{align}
\Trig(\N\subset \M) := \{a \in \Hom(\theta,\theta) :
a=p a p \text{ for some } p\in \Proj_0(\theta) \}
\end{align}
with a binary operation
\begin{align}\label{eq:aperb}
a\ast b := m^\ast ( a\otimes b) m = \sum_{\rho,\sigma,r,s} m_{\rho,r}^\ast ( a\otimes b) m_{\sigma,s}
\end{align}
and an antilinear involution
\begin{align}\label{eq:snakes}
a^\bullet :=&\; (1_\theta \otimes (w^*m^*))(1_\theta \otimes a^* \otimes 1_\theta)((mw)\otimes 1_\theta) \\ 
=&\; ((w^*m^*)\otimes 1_\theta)(1_\theta \otimes a^* \otimes 1_\theta)(1_\theta \otimes (mw)).\label{eq:snakes2}
\end{align}
We refer to $\Trig(\N\subset\M)$ as the \textbf{algebra of trigonometric polynomials} of $\N\subset\M$.
\end{defi}

\begin{lem}\label{lem:finitesums}
For every $a,b\in\Trig(\N\subset\M)$, all sums in the definition of $a*b$ and $a^\bullet$ are finite.
\end{lem}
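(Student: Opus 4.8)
The plan is to exploit that every element of $\Trig(\N\subset\M)$ is ``finitely supported'' and to insert the supporting projections into the defining formulas. Write $a = p_a a p_a$ and $b = p_b b p_b$ with $p_a,p_b\in\Proj_0(\theta)$; since $\Hom(\theta,\theta)$ is a direct sum of finite matrix blocks indexed by the irreducible subsectors of $[\theta]$, each of these projections is dominated by a finite sum of the matrix-unit projections $p_{\rho,r} = w_{\rho,r}w_{\rho,r}^\ast$ (\cf the proof of Lemma \ref{lem:genQsysprops}), so after enlarging $p_a$ (which does not affect $a = p_a a p_a$) I may assume $p_a = \sum_{(\rho,r)\in F_a}p_{\rho,r}$ with $F_a$ finite, and likewise $p_b = \sum_{(\rho,r)\in F_b}p_{\rho,r}$. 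The basic fact I will use repeatedly is that, by Lemma \ref{lem:genQsysprops}(1), $(1_\theta\otimes p_a)m = \sum_{(\rho,r)\in F_a}m_{\rho,r}$ is a genuine \emph{finite} sum in $\N$, and similarly for $p_b$.

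For $a\ast b = \sum_{\rho,\sigma,r,s}m_{\rho,r}^\ast(a\otimes b)m_{\sigma,s}$ this already suffices: functoriality of $\otimes$ gives $a\otimes b = (p_a\otimes p_b)(a\otimes b)(p_a\otimes p_b)$, so each summand equals $\bigl[(p_a\otimes p_b)m_{\rho,r}\bigr]^\ast(a\otimes b)\bigl[(p_a\otimes p_b)m_{\sigma,s}\bigr]$, and $(p_a\otimes p_b)m_{\rho,r} = (p_a\otimes 1_\theta)(1_\theta\otimes p_b)m_{\rho,r}$ vanishes unless $(\rho,r)\in F_b$ by the basic fact. Hence only the $|F_b|^2$ summands with $(\rho,r),(\sigma,s)\in F_b$ survive, and $a\ast b$ is a finite sum.

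For $a^\bullet$ I would work with the second expression \eqref{eq:snakes2}. Since $a^\ast = p_a a^\ast p_a$, functoriality lets me split
\[
a^\bullet = \bigl[((w^\ast m^\ast)\otimes 1_\theta)(1_\theta\otimes p_a\otimes 1_\theta)\bigr]\,(1_\theta\otimes a^\ast\otimes 1_\theta)\,\bigl[(1_\theta\otimes p_a\otimes 1_\theta)(1_\theta\otimes(mw))\bigr].
\]
The left bracket equals $\bigl(w^\ast\bigl((1_\theta\otimes p_a)m\bigr)^\ast\bigr)\otimes 1_\theta = \bigl(\sum_{(\rho,r)\in F_a}w^\ast m_{\rho,r}^\ast\bigr)\otimes 1_\theta$, a finite sum by the basic fact. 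The right bracket equals $1_\theta\otimes\bigl(\bigl((p_a\otimes 1_\theta)m\bigr)w\bigr)$ (the justification being a routine strong-continuity argument), and here lies the one genuinely delicate point: the contraction with $p_a$ is of ``$p_a\otimes 1_\theta$'' type rather than ``$1_\theta\otimes p_a$'' type, so Lemma \ref{lem:genQsysprops} does not apply directly and $(p_a\otimes 1_\theta)m$ is a priori only a strongly convergent sum. This is exactly where the locality hypothesis is needed: by the identity $(p_a\otimes 1_\theta)m = \varepsilon^{\pm}_{\theta,\theta}(1_\theta\otimes p_a)m$ established in Section \ref{sec:dualfields}, that expression equals $\varepsilon^{\pm}_{\theta,\theta}\sum_{(\rho,r)\in F_a}m_{\rho,r}$, which is finite. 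Hence $a^\bullet$ is a finite sum of terms indexed by pairs in $F_a\times F_a$; starting from \eqref{eq:snakes} instead, the argument is symmetric, with the two copies of $m$ interchanged.

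The point to be careful about — and what I expect to be the main obstacle — is that one must \emph{not} expand the two occurrences of $m$ in the definition of $a^\bullet$ independently: the resulting double sum need not be finite. For instance, with $a = p_{\id,1} = ww^\ast$ one has $(p_{\id,1}\otimes 1_\theta)m_{\sigma,s} = (w\otimes 1_\theta)(w^\ast\otimes 1_\theta)m_{\sigma,s} = (w\otimes 1_\theta)p_{\sigma,s}\neq 0$ for every $(\sigma,s)$, by Lemma \ref{lem:genQsysprops}(2). One must first contract the middle factor $1_\theta\otimes a^\ast\otimes 1_\theta$ against one copy of $m$ via the locality identity, and only then is finiteness manifest.
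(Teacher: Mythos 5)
Your treatment of $a\ast b$ is correct and coincides with the paper's. For $a^{\bullet}$ you take a genuinely different route, and the ``main obstacle'' you identify is not actually there. The paper's proof is precisely the independent termwise expansion that you reject: for fixed $(\rho,r)$ one computes
\[
(p_{\rho,r}\otimes 1_\theta)\,m_{\sigma,s}\,w \;=\; w_{\rho,r}\,\rho(w_{\sigma,s})\,w^*\iotabar(\psi_{\rho,r}\psi_{\sigma,s})\,w \;=\; w_{\rho,r}\,\rho(w_{\sigma,s})\,\iota^{-1}E(\psi_{\rho,r}\psi_{\sigma,s}),
\]
and $E(\psi_{\rho,r}\psi_{\sigma,s})$ lies in $\iota(\Hom(\id,\rho\sigma))\cong\iota(\Hom(\sigmabar,\rho))$, hence vanishes unless $[\sigma]=[\rhobar]$; since $[\rhobar]$ has finite multiplicity in $[\theta]$, all but finitely many terms of $(p_a\otimes 1_\theta)mw$ vanish. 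Your purported counterexample with $a=p_{\id,1}$ drops the trailing $w$: indeed $(p_{\id,1}\otimes 1_\theta)m_{\sigma,s}=w\,p_{\sigma,s}\neq 0$ for every $(\sigma,s)$, but $(p_{\id,1}\otimes 1_\theta)m_{\sigma,s}w=w\,p_{\sigma,s}w=\delta_{\sigma,\id}\delta_{s,1}\,ww$, exactly as the sector constraint $[\sigma]=[\,\overline{\id}\,]$ predicts. Since both copies of $m$ in \eqref{eq:snakes}--\eqref{eq:snakes2} occur only in the combinations $mw$ and $w^*m^*$, the naive double expansion is termwise finite, and no braiding enters.

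Your locality-based argument via $(q\otimes 1_\theta)m=\varepsilon^{\pm}_{\theta,\theta}(1_\theta\otimes q)m$ is logically available here (Section \ref{sec:dualfields} precedes this lemma and does not use it), and it does establish that $a^{\bullet}$ is a well-defined element of $\N$. But it buys strictly less: it shows that the strongly convergent sum $(p_a\otimes 1_\theta)mw$ equals a finite expression, not that all but finitely many of its individual terms vanish, which is the literal content of the lemma and is what gets used later, e.g.\ in the computation of $w^*m^*w_{\sigma,r}$ in the proof of Proposition \ref{prop:bulletTirred}, where one argues term by term that only $[\tau]=[\sigmabar]$ contributes. It also makes the statement depend on locality, whereas the paper's one-line computation works for any irreducible discrete subfactor with a generalized Q-system of intertwiners. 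In short: your argument is a valid proof of well-definedness, but the elementary termwise computation you dismissed is both available and what the lemma actually asserts.
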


\begin{proof}
The sum in the definition of $a*b$ is finite by Lemma \ref{lem:genQsysprops}, and the same is true for the one in $(1_\theta \otimes a^*)mw$. To show that the sum in $(a^* \otimes 1_\theta)mw$ is finite as well, it is enough to compute $(p_{\rho,r} \otimes 1_\theta)m_{\sigma,s}w = w_{\rho,r}w_{\rho,r}^* \iotabar(\iota(w_{\sigma,s})\psi_{\sigma,s}) w =w_{\rho,r} \rho(w_{\sigma,s}) w^* \iotabar(\psi_{\rho,r}\psi_{\sigma,s})w$ and observe that it can be non-zero only if $[\sigma] = [\rhobar]$.
\end{proof}

One can show that these operations are well-defined, \ie they map to $\Trig(\N\subset\M)$, and that the equality claimed in \eqref{eq:snakes}-\eqref{eq:snakes2} holds. Instead of giving a direct proof of these statements, in the remainder of this section we give an equivalent, computationally easier, description of $\Trig(\N\subset\M)$ in terms of \emph{matrix units}. We also show the non-trivial fact that these operations endow it with the structure of an \emph{associative} \emph{involutive} algebra with \emph{unit} $\one := ww^\ast$. This algebra is easily seen to be \emph{commutative} due to the locality assumption on the subfactor.

Recall that $\Hom(\theta,\theta)$ is a direct sum of finite matrix algebras by the discreteness assumption, namely
\begin{align}\label{eq:thetathetafullmats}
\Hom(\theta,\theta) \cong \bigoplus_{[\rho]} M_{n_\rho}(\CC)
\end{align}
where the sum runs over inequivalent irreducible subendomorphisms $\rho\prec\theta$. A complete set of matrix units is given by $w_{\rho,r}w_{\rho,s}^* \in \Trig(\N\subset\M)$, where $w_{\rho,r}\in\Hom(\rho,\theta)$ are the isometries defined in equation \eqref{eq:defwM}.

Let $\rho\in\End_0(\N)$, not necessarily irreducible nor $\rho\prec\theta$. Denote as before the corresponding space of charged fields by $H_\rho = \Hom(\iota, \iota \rho)$. 
We have a map
\begin{align}
  T\colon H_\rho \times H_\rho
  &\to \Trig(\N\subset \M)\\
  (\psi_1,\psi_2)&\mapsto 
  (1_{\bar\iota}\otimes \psi_1^\ast)(ww^\ast \otimes 1_\rho)
  (1_{\bar\iota}\otimes \psi_2)
  \\&\qquad = \bar\iota(\psi_1^\ast)ww^\ast\bar\iota(\psi_2)
  \\
 \left(
 \tikzmatht[.6]{
		  \fill[\colM,rounded corners] (-.5,1) rectangle (1,-.5);
		  \fill[\colN,rounded corners] (0,1) rectangle (1,-.5);
		  \fill[\colN] (0,1) rectangle (.5,-.5);
      \draw (0,1)--(0,-.5);
      \draw[thick] (0,0) arc (-90:0:.5)--(.5,1);
    }
,
 \tikzmatht[.6]{
		  \fill[\colM,rounded corners] (-.5,1) rectangle (1,-.5);
		  \fill[\colN,rounded corners] (0,1) rectangle (1,-.5);
		  \fill[\colN] (0,1) rectangle (.5,-.5);
      \draw (0,1)--(0,-.5);
      \draw[thick] (0,0) arc (-90:0:.5)--(.5,1);
    }
    \right)
  &\mapsto  
    \tikzmatht[.6]{
      \fill[\colN,rounded corners] (-1,1) rectangle (1,-1);
      \fill[\colM] (-.5,1)--(-.5,.4) arc (180:360:.25)--(0,1)
      (-.5,-1)--(-.5,-.4) arc (180:0:.25)--(0,-1);
      \draw (-.5,1)--(-.5,.4) arc (180:360:.25)--(0,1)
      (-.5,-1)--(-.5,-.4) arc (180:0:.25)--(0,-1);
      \draw[thick] (0,.5) arc (90:-90:.5);
    }.
\end{align}

\begin{rmk}\label{rmk:catgenthetaonly}
Note that $\iotabar(\psi^*)w = 0$ if $\Hom(\rho,\theta) = \langle 0 \rangle$, thus also $\Hom(\iota,\iota\rho) = \langle 0 \rangle$ because $w^* \iotabar(\psi\psi^*)w = \iota^{-1}E(\psi\psi^*) = 0$ implies $\psi=0$. Vice versa also holds, see \cite[\Lem 6.15]{DeGi2018}.
Thus we may consider only $\rho\in\End_0(\N)$ in the rigid tensor \Cstar-category generated by $\theta$, namely finite direct sums of irreducible subendomorphisms of $\theta$.
\end{rmk}

Applied to the charged fields in the chosen Pimsner--Popa basis (Notation \ref{not:genQsysint}), with $\rho,\sigma\prec \theta$ irreducible, we get $T(\psi_{\rho,r},\psi_{\rho,s}) = w_{\rho,r}w_{\rho,s}^*$. The multiplication of $\Trig(\N\subset\M)$ (Definition \ref{def:trig}), which is not the matrix/operator multiplication in $\Hom(\theta,\theta)$, acts as follows on matrix units:

\begin{prop}\label{prop:multTirred}
Let $\rho,\sigma\prec\theta$ be irreducible and $\psi_{\rho,r},\psi_{\rho,s}\in H_\rho$, $\psi_{\sigma,u},\psi_{\sigma,v}\in H_\sigma$, then 
\begin{align}
T(\psi_{\rho,r},\psi_{\rho,s}) \ast T(\psi_{\sigma,u},\psi_{\sigma,v}) = T(\psi_{\rho,r}\psi_{\sigma,u},\psi_{\rho,s}\psi_{\sigma,v}).
\end{align}
\end{prop}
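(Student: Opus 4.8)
The plan is to pass to the matrix–unit picture and to carry the whole computation there. For $\rho\in\End_0(\N)$ and $\psi\in H_\rho$ write $\hat\psi:=\bar\iota(\psi^\ast)w\in\Hom(\rho,\theta)$. Then directly from the definition of the map $T$ one has $T(\psi,\psi')=\bar\iota(\psi^\ast)ww^\ast\bar\iota(\psi')=\hat\psi\,\hat{\psi'}{}^\ast$ for all $\psi,\psi'\in H_\rho$, and by \eqref{eq:defwM} $\hat\psi_{\rho,r}=w_{\rho,r}$. Applied to a product of fields this gives $T(\psi_{\rho,r}\psi_{\sigma,u},\psi_{\rho,s}\psi_{\sigma,v})=\widehat{\psi_{\rho,r}\psi_{\sigma,u}}\,\widehat{\psi_{\rho,s}\psi_{\sigma,v}}{}^\ast$, where $\widehat{\psi_{\rho,r}\psi_{\sigma,u}}=\bar\iota\big((\psi_{\rho,r}\psi_{\sigma,u})^\ast\big)w=\bar\iota(\psi_{\sigma,u}^\ast)\bar\iota(\psi_{\rho,r}^\ast)w=\bar\iota(\psi_{\sigma,u}^\ast)w_{\rho,r}$; note that this lands in $\Trig(\N\subset\M)$ even though $\rho\sigma$ is in general reducible, because $T$ is defined on $H_\tau\times H_\tau$ for arbitrary $\tau$ in the category generated by $\theta$. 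Using the interchange law in $\End(\N)$ together with $(x\otimes y)^\ast=x^\ast\otimes y^\ast$ I would first rewrite
\[
T(\psi_{\rho,r},\psi_{\rho,s})\otimes T(\psi_{\sigma,u},\psi_{\sigma,v})
=(w_{\rho,r}w_{\rho,s}^\ast)\otimes(w_{\sigma,u}w_{\sigma,v}^\ast)
=(w_{\rho,r}\otimes w_{\sigma,u})(w_{\rho,s}\otimes w_{\sigma,v})^\ast ,
\]
and then substitute into \eqref{eq:aperb} and regroup the (finite, by Lemma~\ref{lem:finitesums}) sum over the four basis indices to obtain
\[
T(\psi_{\rho,r},\psi_{\rho,s})\ast T(\psi_{\sigma,u},\psi_{\sigma,v})
=\big(m^\ast(w_{\rho,r}\otimes w_{\sigma,u})\big)\big(m^\ast(w_{\rho,s}\otimes w_{\sigma,v})\big)^\ast ,
\]
where the occurrences of the formal master field $m$ are understood through Lemma~\ref{lem:finitesums} and, as the next step shows, collapse to a single surviving term.

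The crux is the identity $m^\ast(w_{\rho,r}\otimes w_{\sigma,s})=\widehat{\psi_{\rho,r}\psi_{\sigma,s}}=\bar\iota(\psi_{\sigma,s}^\ast)w_{\rho,r}$ in $\Hom(\rho\sigma,\theta)$. I would prove it using the tensor convention of $\End(\N)$ to write $w_{\rho,r}\otimes w_{\sigma,s}=\theta(w_{\sigma,s})w_{\rho,r}$, so that $m^\ast(w_{\rho,r}\otimes w_{\sigma,s})=\big(\theta(w_{\sigma,s}^\ast)m\big)^\ast w_{\rho,r}$. Since $m_{\tau,t}=\theta(w_{\tau,t})\bar\iota(\psi_{\tau,t})$ by Notation~\ref{not:genQsysint}, one computes $\theta(w_{\sigma,s}^\ast)m_{\tau,t}=\theta(w_{\sigma,s}^\ast w_{\tau,t})\bar\iota(\psi_{\tau,t})=\delta_{\tau,\sigma}\delta_{t,s}\,\bar\iota(\psi_{\sigma,s})$ by the orthonormality relation $w_{\sigma,s}^\ast w_{\tau,t}=\delta_{\tau,\sigma}\delta_{t,s}1$; hence $\theta(w_{\sigma,s}^\ast)m=\bar\iota(\psi_{\sigma,s})$ (only the $\tau=\sigma$, $t=s$ term survives, so the sum is trivially finite) and $m^\ast(w_{\rho,r}\otimes w_{\sigma,s})=\bar\iota(\psi_{\sigma,s})^\ast w_{\rho,r}=\bar\iota(\psi_{\sigma,s}^\ast)w_{\rho,r}=\widehat{\psi_{\rho,r}\psi_{\sigma,s}}$, where in the last step I used $\bar\iota(\psi_{\rho,r}^\ast)w=w_{\rho,r}$ once more.

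Combining the two displays with this identity yields $T(\psi_{\rho,r},\psi_{\rho,s})\ast T(\psi_{\sigma,u},\psi_{\sigma,v})=\widehat{\psi_{\rho,r}\psi_{\sigma,u}}\,\widehat{\psi_{\rho,s}\psi_{\sigma,v}}{}^\ast=T(\psi_{\rho,r}\psi_{\sigma,u},\psi_{\rho,s}\psi_{\sigma,v})$, which is the claim. I expect the only real friction to be bookkeeping: keeping the $2$-categorical conventions for $\otimes$, for the adjoint, and for composition straight, and checking at each occurrence that the formal sum $m$ genuinely reduces to a finite sum (here it does so automatically, a single term surviving), as well as reading the right-hand side correctly as $T$ evaluated on $H_{\rho\sigma}\times H_{\rho\sigma}$. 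Note that neither locality of the subfactor nor commutativity of $\Trig(\N\subset\M)$ is used in this argument.
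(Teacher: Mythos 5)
Your proof is correct and follows essentially the same route as the paper's: both rest on the single identity $\theta(w_{\sigma,s}^\ast)m=\bar\iota(\psi_{\sigma,s})$ (part (1) of Lemma \ref{lem:genQsysprops}) and then simplify $m^\ast\big((w_{\rho,r}w_{\rho,s}^\ast)\otimes(w_{\sigma,u}w_{\sigma,v}^\ast)\big)m$ to $\bar\iota(\psi_{\sigma,u}^\ast\psi_{\rho,r}^\ast)\,ww^\ast\,\bar\iota(\psi_{\rho,s}\psi_{\sigma,v})$. The hat notation and the explicit interchange-law factorization are just extra scaffolding around the paper's two-line computation; all the verifications you flag (finiteness of the surviving sum, $\psi_{\rho,r}\psi_{\sigma,u}\in H_{\rho\sigma}$, no use of locality) check out.
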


\begin{proof}
It is enough to observe that $\theta(w_{\sigma,v}^*)m = \iotabar(\psi_{\sigma,v})$, and compute $T(\psi_{\rho,r},\psi_{\rho,s}) \ast T(\psi_{\sigma,u},\psi_{\sigma,v}) = \iotabar(\psi_{\sigma,u}^*) w_{\rho,r}w_{\rho,s}^* \iotabar(\psi_{\sigma,v}) = \iotabar(\psi_{\sigma,u}^*\psi_{\rho,r}^*) ww^* \iotabar(\psi_{\rho,s}\psi_{\sigma,v})$.
\end{proof}

For every Pimsner--Popa basis of charged fields $\{\psi_{\rho,r}\}$ one can produce another Pimsner--Popa basis as follows. For every $\rho\in\End_0(\N)$, not necessarily irreducible nor $\rho\prec\theta$, and $\psi\in H_\rho$, let
\begin{align}
\psi^\bullet := \psi^* \iota(\rbar_{\rho})
\end{align}
where $\rbar_{\rho}$ is part of a (once and for all chosen) standard solution of the conjugate equations \eqref{eq:conjeqns} for $\rho$ and $\rhobar$. Observe that $\psi^\bullet\in H_{\rhobar}$. The new Pimsner--Popa basis is then given by $\{\psi^\bullet_{\rho,r}\}$. Indeed, by equation \eqref{eq:twoinnerprods} and $a_\rho = 1_{H_\rho}$ (Proposition \ref{prop:arho=1}), we have that $E(\psi_{\rho,r}^\bullet\psi_{\sigma,s}^{\bullet *}) = E(\psi_{\sigma,s}\psi_{\rho,r}^{*}) = \delta_{\rho,\sigma} \delta_{r,s} 1$, \cf the discussion before \cite[\Thm 3.3]{IzLoPo1998} and see \cite[\Lem 6.15]{DeGi2018}. The normalization is the same as the one chosen in \eqref{eq:normalizfields}. 

\begin{lem}\label{lem:bulletPiPoexp}
$\psi_{\sigmabar,s} = \sum_{s'} \lambda_{\sigmabar,\sigma,s,s'} \psi_{\sigma,s'}^{\bullet}$, where $s'=1,\ldots,n_\sigma$ and $\lambda_{\sigmabar,\sigma,s,s'}\in\CC$ are defined by $\iota^{-1} E(\psi_{\sigmabar,s}\psi_{\sigma,s'}) = \lambda_{\sigmabar,\sigma,s,s'} r_{\sigma}$. Similarly, interchanging $\sigma$ with $\sigmabar$ and $r_{\sigma}$ with $\rbar_{\sigma}$.
\end{lem}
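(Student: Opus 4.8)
The plan is to expand $\psi_{\sigmabar,s}$ against the original Pimsner--Popa basis of charged fields $\{\psi_{\rho,r}\}$, observe that only finitely many coefficients survive, and then rewrite the surviving scalar using locality so that it matches the one in the statement. First, since $E(\psi_{\rho,r}\psi_{\rho,r}^*) = 1$ for every $\rho,r$ by the normalization \eqref{eq:normalizfields}, the Pimsner--Popa expansion of Proposition \ref{prop:PiPoexp} applied to $\psi_{\sigmabar,s}\in\M$ has unique coefficients and reads $\psi_{\sigmabar,s} = \sum_{\rho,r}\psi_{\rho,r}^*\,E(\psi_{\rho,r}\psi_{\sigmabar,s})$, a priori convergent in the GNS topology. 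Since $\psi_{\rho,r}\psi_{\sigmabar,s}\in\Hom(\iota,\iota\rho\sigmabar)$, the coefficient $\iota^{-1}E(\psi_{\rho,r}\psi_{\sigmabar,s})$ lies in $\Hom(\id,\rho\sigmabar)\cong\Hom(\rhobar,\sigmabar)$ by $\N$-bimodularity of $E$, and this vanishes unless $[\rho]=[\sigma]$ as $\rho,\sigma$ are irreducible. As the labels $\rho$ in Notation \ref{not:genQsysint} run over a fixed family of representatives, only the terms $\rho=\sigma$, $r=1,\dots,n_\sigma$ remain, so $\psi_{\sigmabar,s} = \sum_{s'=1}^{n_\sigma}\psi_{\sigma,s'}^*\,E(\psi_{\sigma,s'}\psi_{\sigmabar,s})$.

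Next, by irreducibility of $\sigma$ we have $\Hom(\id,\sigma\sigmabar) = \CC\,\rbar_{\sigma}$, so $\iota^{-1}E(\psi_{\sigma,s'}\psi_{\sigmabar,s}) = \lambda_{\sigma,\sigmabar,s',s}\,\rbar_{\sigma}$ for a unique scalar, which is exactly the ``interchanged'' convention of the statement; substituting this and $\psi_{\sigma,s'}^\bullet = \psi_{\sigma,s'}^*\iota(\rbar_{\sigma})$ gives $\psi_{\sigmabar,s} = \sum_{s'}\lambda_{\sigma,\sigmabar,s',s}\,\psi_{\sigma,s'}^\bullet$. It then remains to identify $\lambda_{\sigma,\sigmabar,s',s}$ with $\lambda_{\sigmabar,\sigma,s,s'}$, and here I would use the commutativity of charged fields \eqref{eq:commutfields} dictated by locality, in the form $\iota(\varepsilon^{\pm}_{\sigma,\sigmabar})\psi_{\sigma,s'}\psi_{\sigmabar,s} = \psi_{\sigmabar,s}\psi_{\sigma,s'}$; applying $E$ and using $\varepsilon^{\pm}_{\sigma,\sigmabar}\in\N$ together with left $\N$-linearity of $E$ gives $\iota^{-1}E(\psi_{\sigmabar,s}\psi_{\sigma,s'}) = \lambda_{\sigma,\sigmabar,s',s}\,\varepsilon^{\pm}_{\sigma,\sigmabar}\rbar_{\sigma}$. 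Since $r_{\sigma}^*\varepsilon^{\pm}_{\sigma,\sigmabar}\rbar_{\sigma} = \kappa(\sigma)^{\mp 1}d(\sigma)$ as computed in the proof of Proposition \ref{prop:arho=1} via \cite[\Lem 4.3]{LoRo1997}, and $\kappa(\sigma)=1$ by Corollary \ref{cor:spin=1}, we get $\varepsilon^{\pm}_{\sigma,\sigmabar}\rbar_{\sigma} = r_{\sigma}$, hence $\iota^{-1}E(\psi_{\sigmabar,s}\psi_{\sigma,s'}) = \lambda_{\sigma,\sigmabar,s',s}\,r_{\sigma}$; comparing with the defining relation $\iota^{-1}E(\psi_{\sigmabar,s}\psi_{\sigma,s'}) = \lambda_{\sigmabar,\sigma,s,s'}\,r_{\sigma}$ proves the claim, and the ``similarly'' part follows by the symmetric argument swapping $\sigma\leftrightarrow\sigmabar$ and $r_\sigma\leftrightarrow\rbar_\sigma$.

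The vanishing of the off-sector coefficients and the one-dimensionality identifications are routine bookkeeping; the only substantive point --- and the place to be careful --- is the last step, where locality (through $\kappa(\sigma)=1$) is genuinely needed in order to reverse the order of the two charged fields inside $E$ and thereby identify the two a priori distinct scalars $\lambda_{\sigma,\sigmabar,s',s}$ and $\lambda_{\sigmabar,\sigma,s,s'}$. I expect the bookkeeping with the braiding and the standard solutions of the conjugate equations, not any conceptual difficulty, to be the main (minor) obstacle.
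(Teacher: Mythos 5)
Your proof is correct, but it takes a genuinely different (equally valid) route from the paper's. The paper expands the adjoint $\psi_{\sigmabar,s}^*$ with respect to the \emph{dual} Pimsner--Popa basis $\{\psi_{\rho,r}^{\bullet}\}$ (shown to be a normalized basis just before the lemma, which is where $a_\rho = 1_{H_\rho}$, i.e.\ locality, enters), computes the coefficient $E(\psi_{\sigma,s'}^{\bullet}\psi_{\sigmabar,s}^*) = \bar\lambda_{\sigmabar,\sigma,s,s'}1$ directly from the conjugate equation $r_\sigma^*\sigmabar(\rbar_\sigma) = 1_{\sigmabar}$, and then takes adjoints; this yields the coefficient $\lambda_{\sigmabar,\sigma,s,s'}$ already in the stated form, with no braiding manipulation inside the proof. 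You instead expand $\psi_{\sigmabar,s}$ against the \emph{original} basis $\{\psi_{\rho,r}\}$, which naturally produces the coefficient $\lambda_{\sigma,\sigmabar,s',s}$ (the two fields sit in the opposite order inside $E$), and you must then invoke locality, $\kappa(\sigma)=1$ and $\varepsilon^{\pm}_{\sigma,\sigmabar}\rbar_{\sigma} = r_{\sigma}$ to identify $\lambda_{\sigma,\sigmabar,s',s}$ with $\lambda_{\sigmabar,\sigma,s,s'}$ --- which is precisely the content of Remark \ref{rmk:lambdaprops}, stated in the paper only \emph{after} the lemma. So your argument proves that remark en route and makes the use of locality explicit in the lemma itself, whereas in the paper's proof locality is hidden in the fact that $\{\psi_{\rho,r}^{\bullet}\}$ is again an orthonormal Pimsner--Popa basis. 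All your intermediate steps (vanishing of off-sector coefficients via $\Hom(\id,\rho\sigmabar)\cong\Hom(\sigma,\rho)$, one-dimensionality of $\Hom(\id,\sigma\sigmabar)$, and $r_\sigma^*\varepsilon^{\pm}_{\sigma,\sigmabar}\rbar_{\sigma} = \kappa(\sigma)^{\mp1}d(\sigma)$) check out, and the a priori infinite expansion does reduce to a finite sum, so the argument is complete.
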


\begin{proof}
By the Pimsner--Popa expansion (Proposition \ref{prop:PiPoexp}) 
\begin{align}
\psi_{\sigmabar,s}^* = \sum_{\rho,r} \psi_{\rho,r}^{\bullet *} E(\psi_{\rho,r}^{\bullet} \psi_{\sigmabar,s}^*) = \sum_{s'} \psi_{\sigma,s'}^{\bullet *} E(\psi_{\sigma,s'}^{\bullet} \psi_{\sigmabar,s}^*)
\end{align}
thus $E(\psi_{\sigma,s'}^{\bullet} \psi_{\sigmabar,s}^*) = E(\psi_{\sigma,s'}^{*} \iota(\rbar_{\sigma}) \psi_{\sigmabar,s}^*) = E(\psi_{\sigma,s'}^{*} \psi_{\sigmabar,s}^*) \iota(\sigmabar (\rbar_{\sigma})) = \bar \lambda_{\sigmabar,\sigma,s,s'} \iota(r_{\sigma}^* \sigmabar(\rbar_{\sigma})) = \bar \lambda_{\sigmabar,\sigma,s,s'} 1$ by the conjugate equations.
\end{proof}

\begin{rmk}\label{rmk:lambdaprops}
The matrices $(\Lambda_{\sigmabar,\sigma})_{s,s'} := \lambda_{\sigmabar,\sigma,s,s'}$ are clearly unitary and by the locality assumption they fulfill $(\Lambda_{\sigmabar,\sigma})^t = (\Lambda_{\sigma,\sigmabar})$. Indeed, by the commutation relations \eqref{eq:commutfields} and $\kappa(\sigma) = 1$ (Corollary \ref{cor:spin=1}) we have $\lambda_{\sigmabar,\sigma,s,s'} r_{\sigma} = \iota^{-1} E(\psi_{\sigmabar,s} \psi_{\sigma,s'}) = \iota^{-1} E(\iota(\varepsilon^{\pm}_{\sigma,\sigmabar}) \psi_{\sigma,s'} \psi_{\sigmabar,s}) = \lambda_{\sigma,\sigmabar,s',s} \varepsilon^{\pm}_{\sigma,\sigmabar} \rbar_\sigma = \lambda_{\sigma,\sigmabar,s',s} r_\sigma$ by arguing as in the proof of Proposition \ref{prop:arho=1}, thus $\lambda_{\sigmabar,\sigma,s,s'} = \lambda_{\sigma,\sigmabar,s',s}$.
\end{rmk}

This allows to compute the involution of $\Trig(\N\subset\M)$ (Definition \ref{def:trig}) on matrix units.

\begin{prop}\label{prop:bulletTirred}
Let $\sigma\prec\theta$ be irreducible and $\psi_{\sigma,r},\psi_{\sigma,s}\in H_\sigma$, then 
\begin{align}
T(\psi_{\sigma,s},\psi_{\sigma,r})^\bullet = T(\psi_{\sigma,s}^\bullet,\psi_{\sigma,r}^\bullet).
\end{align}
\end{prop}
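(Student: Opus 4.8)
The plan is to check the identity on a linear basis of $H_\sigma$ and reduce everything to the zigzag description \eqref{eq:snakes}--\eqref{eq:snakes2} of the involution. First I would observe that both sides of the asserted equality depend \emph{linearly} on $\psi_{\sigma,s}$ and \emph{antilinearly} on $\psi_{\sigma,r}$: on the left this is because $T$ is conjugate-linear in its first entry, linear in its second, and $a\mapsto a^\bullet$ is antilinear; on the right it is because $\psi\mapsto\psi^\bullet=\psi^*\iota(\rbar_\rho)$ is antilinear and, again, $T$ is conjugate-linear resp.\ linear in its two slots. Hence it is enough to prove the proposition when $\psi_{\sigma,s},\psi_{\sigma,r}$ range over the chosen Pimsner--Popa basis of $H_\sigma$; for such fields $T(\psi_{\sigma,s},\psi_{\sigma,r})=w_{\sigma,s}w_{\sigma,r}^*$ is a matrix unit and the left-hand side is $(w_{\sigma,s}w_{\sigma,r}^*)^\bullet$.

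For the right-hand side I would use that $\{\psi^\bullet_{\rho,r}\}$ is again a Pimsner--Popa basis of charged fields with the same normalization (this rests on $a_\rho=1_{H_\rho}$, Proposition \ref{prop:arho=1}, and is recorded just before Lemma \ref{lem:bulletPiPoexp}), so that $\psi^\bullet_{\sigma,s}$ is a unitary combination of the chosen basis $\{\psi_{\sigmabar,u}\}$ of $H_{\sigmabar}$. Concretely, inverting the relation of Lemma \ref{lem:bulletPiPoexp} by unitarity of $\Lambda_{\sigmabar,\sigma}=(\lambda_{\sigmabar,\sigma,u,u'})$ gives $\psi^\bullet_{\sigma,s}=\sum_u\overline{\lambda_{\sigmabar,\sigma,u,s}}\,\psi_{\sigmabar,u}$, and feeding this into the conjugate-bilinear map $T$, together with $T(\psi_{\sigmabar,u},\psi_{\sigmabar,v})=w_{\sigmabar,u}w_{\sigmabar,v}^*$, yields
\begin{align}
T(\psi^\bullet_{\sigma,s},\psi^\bullet_{\sigma,r})=\sum_{u,v}\lambda_{\sigmabar,\sigma,u,s}\,\overline{\lambda_{\sigmabar,\sigma,v,r}}\;w_{\sigmabar,u}w_{\sigmabar,v}^*.
\end{align}

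The core of the argument is to compute $(w_{\sigma,s}w_{\sigma,r}^*)^\bullet$ directly from \eqref{eq:snakes}. I would insert the resolution of the identity $1_\theta=\sum_{\tau,t}w_{\tau,t}w_{\tau,t}^*$ into the three tensor slots (the resulting sums being finite by Lemma \ref{lem:finitesums}) and then use $\theta(w_{\tau,t}^*)mw=\iotabar(\psi_{\tau,t})w$ (which follows from \eqref{eq:defwM}, \eqref{eq:defm} exactly as in the proof of Proposition \ref{prop:multTirred}), $w^*\iotabar(\psi_{\tau,t})=w_{\tau,t}^*$, and hence $w^*\iotabar(\psi_{\tau,t}\psi_{\tau',t'})w=\iota^{-1}E(\psi_{\tau,t}\psi_{\tau',t'})$, which by irreducibility vanishes unless $[\tau']=[\bar\tau]$ and is otherwise a scalar multiple of the standard solution $r_\sigma$, resp.\ $\rbar_\sigma$, by definition of the coefficients $\lambda$. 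The two cups of the zigzag produce one $r_\sigma$ and one $\rbar_\sigma$, which collapse via the conjugate equations \eqref{eq:conjeqns}. The outcome has the same shape as the right-hand side computed above, namely $(w_{\sigma,s}w_{\sigma,r}^*)^\bullet=\sum_{u,v}\lambda_{\sigmabar,\sigma,u,s}\,\overline{\lambda_{\sigma,\sigmabar,r,v}}\;w_{\sigmabar,u}w_{\sigmabar,v}^*$, and the two expressions coincide by the transposition symmetry $\lambda_{\sigmabar,\sigma,v,r}=\lambda_{\sigma,\sigmabar,r,v}$ of Remark \ref{rmk:lambdaprops} --- this is the one place where locality, via $\kappa(\sigma)=1$ (Corollary \ref{cor:spin=1}), is used.

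The only non-formal step, and hence the main obstacle, is this last computation: one must keep track of the four multiplicity indices, of the normalizations implicit in relating the two inner products on $H_\sigma$, and of the choice of standard solutions of the conjugate equations, and verify that the zigzag indeed collapses to the stated coefficient, so that the locality symmetry is precisely what matches the two sides. Everything else is bookkeeping with matrix units and with the (anti)linearity recorded in the first paragraph.
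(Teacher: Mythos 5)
Your proposal is correct and follows essentially the same route as the paper: both evaluate the zigzag formula for $(w_{\sigma,s}w_{\sigma,r}^*)^\bullet$ by inserting the matrix units $w_{\tau,t}w_{\tau,t}^*$, reducing the cups to the coefficients $\lambda_{\sigmabar,\sigma,\cdot,\cdot}$ times $r_\sigma$ and $\rbar_\sigma$, collapsing them with the conjugate equations, and invoking the locality symmetry $\lambda_{\sigmabar,\sigma,v,r}=\lambda_{\sigma,\sigmabar,r,v}$ of Remark \ref{rmk:lambdaprops}; the only cosmetic difference is that you match the result against the expansion of $T(\psi^\bullet_{\sigma,s},\psi^\bullet_{\sigma,r})$ in the $w_{\sigmabar,u}w_{\sigmabar,v}^*$, whereas the paper resums using the unitarity of $\Lambda_{\sigmabar,\sigma}$ via Lemma \ref{lem:bulletPiPoexp}. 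The one small omission is that you compute only from \eqref{eq:snakes}: since the equality of \eqref{eq:snakes} and \eqref{eq:snakes2} was deferred to exactly this matrix-unit computation, the paper's proof also carries out the second form and checks (again by Remark \ref{rmk:lambdaprops}) that it gives the same answer.
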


\begin{proof}
We have to compute $(w_{\sigma,s} w_{\sigma,r}^*)^\bullet$. Let us use \eqref{eq:snakes}-\eqref{eq:snakes2} as a definition of involution and compute $\theta(w_{\sigma,s}^*) mw = \theta(w_{\sigma,s}^*) \sum_{\rho,r}\iotabar(\iota(w_{\rho,r}) \psi_{\rho,r}) w = \iotabar(\psi_{\sigma,s}) w$ and $w_{\sigmabar,s'}^*\theta(w_{\sigma,s}^*) mw = w_{\sigmabar,s'}^*\iotabar(\psi_{\sigma,s}) w = w^* \iotabar(\psi_{\sigmabar,s'}\psi_{\sigma,s}) w = \iota^{-1} E(\psi_{\sigmabar,s'}\psi_{\sigma,s}) = \lambda_{\sigmabar,\sigma,s',s} r_\sigma$. Similarly, $w^*m^* w_{\sigma,r} = \sum_t \bar\lambda_{\sigma,\sigmabar,r,t} \rbar_{\sigma}^* \sigma(w_{\sigmabar,t}^*)$ and $w^*m^* w_{\sigma,r} \sigma(w_{\sigmabar,r'}) = \sum_t \bar\lambda_{\sigma,\sigmabar,r,t} \rbar_{\sigma}^* \sigma(w_{\sigmabar,t}^*w_{\sigmabar,r'}) = \bar\lambda_{\sigma,\sigmabar,r,r'} \rbar_{\sigma}^*$. By using $1_\theta = \sum_{\tau,t} p_{\tau,t} = \sum_{\tau,t} w_{\tau,t}w_{\tau,t}^*$ we can write
\begin{align}\label{eq:bullet1}
(1_\theta \otimes (w^*m^*))(1_\theta \otimes (w_{\sigma,s} w_{\sigma,r}^*)^* \otimes 1_\theta)((mw)\otimes 1_\theta) = \sum_{s',r'} \lambda_{\sigmabar,\sigma,s',s} \bar \lambda_{\sigma,\sigmabar,r,r'} w_{\sigmabar,s'} w_{\sigmabar,r'}^*
\end{align}
where we used $\sigmabar(\rbar_{\sigma}^*) r_{\sigma} = 1_{\sigmabar}$ from the conjugate equations. With analogous computations, using the second equality in \eqref{eq:snakes}-\eqref{eq:snakes2}, one can also write
\begin{align}
((w^*m^*)\otimes 1_\theta)(1_\theta \otimes (w_{\sigma,s} w_{\sigma,r}^*)^* \otimes 1_\theta)(1_\theta \otimes (mw)) = \sum_{s',r'} \bar \lambda_{\sigmabar,\sigma,r',r} \lambda_{\sigma,\sigmabar,s,s'} w_{\sigmabar,s'} w_{\sigmabar,r'}^*.
\end{align}
The two expressions are clearly the same by Remark \ref{rmk:lambdaprops} above. Now, by Lemma \ref{lem:bulletPiPoexp} we have that $w_{\sigmabar,s'} = \iotabar(\psi_{\sigmabar,s'}^*)w = \sum_{s''} \bar \lambda_{\sigmabar,\sigma,s',s''} \iotabar(\psi_{\sigma,s''}^{\bullet *})w$. Plugging  this expression, \eg in \eqref{eq:bullet1} and summing over $s'$, by the unitarity of $\Lambda_{\sigmabar,\sigma}$ we obtain a contribution of the form $\iotabar(\psi_{\sigma,s}^{\bullet *})w$. Arguing analogously on $w_{\sigmabar,r'}^*$ we obtain $w^*\iotabar(\psi_{\sigma,r}^{\bullet})$ and the proof is complete.
\end{proof}

\begin{prop}\label{prop:triglin}
Let $\psi_1,\psi_2 \in H_\rho$ with $\rho\in\End_0(\N)$, not necessarily irreducible nor $\rho\prec\theta$, then
\begin{align}
T(\psi_1,\psi_2) = \sum_{\sigma,r,s} \mu_{\sigma,r,s}(\psi_1,\psi_2)\, T(\psi_{\sigma,r},\psi_{\sigma,s}),
\end{align}
where $\sigma$ runs over the finitely many inequivalent irreducible subendomorphisms of $\theta$ such that $\sigma\prec\rho$, $r,s=1,\ldots,n_\sigma$, and $\mu_{\sigma,r,s}(\psi_1,\psi_2)\in\CC$ are such that $\iota^{-1} (E(\psi_{\sigma,r}\psi_1^*) E(\psi_2 \psi_{\sigma,s}^*)) = \mu_{\sigma,r,s}(\psi_1,\psi_2) 1$.
\end{prop}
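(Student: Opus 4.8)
The plan is to compute the matrix-unit coefficients of $T(\psi_1,\psi_2)$ directly. By discreteness and irreducibility, $\Hom(\theta,\theta)\cong\bigoplus_{[\sigma]}M_{n_\sigma}(\CC)$ as in \eqref{eq:thetathetafullmats}, with a complete set of matrix units $w_{\sigma,r}w_{\sigma,s}^*=T(\psi_{\sigma,r},\psi_{\sigma,s})$. Hence, once I know that $T(\psi_1,\psi_2)$ is a finite-support element of $\Hom(\theta,\theta)$, it is automatically of the form $\sum_{\sigma,r,s}c_{\sigma,r,s}\,w_{\sigma,r}w_{\sigma,s}^*$ with $c_{\sigma,r,s}\,1=w_{\sigma,r}^*\,T(\psi_1,\psi_2)\,w_{\sigma,s}$, and the whole task is to identify these coefficients with $\mu_{\sigma,r,s}(\psi_1,\psi_2)$ and to control the support.

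First I would record that $\iotabar(\psi_1^*)\in\Hom(\theta\rho,\theta)$ and $\iotabar(\psi_2)\in\Hom(\theta,\theta\rho)$ — apply $\iotabar$ to the defining relation of $H_\rho=\Hom(\iota,\iota\rho)$ and use $\iotabar\iota=\theta$ — so that indeed $T(\psi_1,\psi_2)=\iotabar(\psi_1^*)\,ww^*\,\iotabar(\psi_2)\in\Hom(\theta,\theta)$. The core computation then uses $w_{\sigma,r}=\iotabar(\psi_{\sigma,r}^*)w$ from \eqref{eq:defwM}, multiplicativity of $\iotabar$, and the identity $w^*\iotabar(x)w=\iota^{-1}(E(x))$ that follows from the Connes--Stinespring formula \eqref{eq:stineE} together with $\gamma=\iota\iotabar$: it yields
\[
 w_{\sigma,r}^*\,T(\psi_1,\psi_2)\,w_{\sigma,s}
 =\iota^{-1}\!\big(E(\psi_{\sigma,r}\psi_1^*)\big)\,\iota^{-1}\!\big(E(\psi_2\psi_{\sigma,s}^*)\big)
 =\iota^{-1}\!\big(E(\psi_{\sigma,r}\psi_1^*)\,E(\psi_2\psi_{\sigma,s}^*)\big),
\]
which is precisely $\mu_{\sigma,r,s}(\psi_1,\psi_2)\,1$ by definition. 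Along the way I should check that this really is a scalar multiple of $1$ (which is also what makes $\mu_{\sigma,r,s}(\psi_1,\psi_2)$ well defined): since $E$ is $\N$-bimodular it preserves intertwiner spaces, so $\iota^{-1}(E(\psi_{\sigma,r}\psi_1^*))\in\Hom(\rho,\sigma)$ and $\iota^{-1}(E(\psi_2\psi_{\sigma,s}^*))\in\Hom(\sigma,\rho)$, and their product therefore lies in $\Hom(\sigma,\sigma)=\CC 1$ by irreducibility of $\sigma$.

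The support is controlled by the same bookkeeping: $E(\psi_{\sigma,r}\psi_1^*)$ vanishes unless $\Hom(\rho,\sigma)\neq\langle 0\rangle$, \ie unless $\sigma\prec\rho$; and since $\rho\in\End_0(\N)$ has finite dimension, only finitely many inequivalent irreducible $\sigma$ satisfy $\sigma\prec\rho$. To assemble the expansion I would write $T(\psi_1,\psi_2)=1_\theta\,T(\psi_1,\psi_2)\,1_\theta$ using the strongly convergent resolution $1_\theta=\sum_{\sigma,r}w_{\sigma,r}w_{\sigma,r}^*$; the term $w_{\sigma,r}w_{\sigma,r}^*\,T(\psi_1,\psi_2)\,w_{\sigma',s}w_{\sigma',s}^*$ vanishes for $[\sigma]\neq[\sigma']$ because $w_{\sigma,r}^*\,T(\psi_1,\psi_2)\,w_{\sigma',s}\in\Hom(\sigma',\sigma)=\langle 0\rangle$, and by the previous observation all but finitely many of the remaining (same-class) terms vanish too. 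So the a priori strongly convergent double sum collapses to the finite sum $\sum_{\sigma\prec\rho}\sum_{r,s}\mu_{\sigma,r,s}(\psi_1,\psi_2)\,w_{\sigma,r}w_{\sigma,s}^*=\sum_{\sigma,r,s}\mu_{\sigma,r,s}(\psi_1,\psi_2)\,T(\psi_{\sigma,r},\psi_{\sigma,s})$, which is the assertion (and reproves that $T(\psi_1,\psi_2)\in\Trig(\N\subset\M)$).

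I do not expect a genuine obstacle here; the argument is essentially bookkeeping. The two points needing a little care are (i) the passage from the strongly convergent resolution of $1_\theta$ to a genuinely finite sum — legitimate precisely because $T(\psi_1,\psi_2)$ is bounded and has only finitely many nonzero matrix components — and (ii) correctly tracking in which intertwiner space each operator lives, which is where $\iotabar\iota=\theta$ and the $\N$-bimodularity of $E$ get used. Note that, unlike several earlier statements, locality of the subfactor plays no role in this proof.
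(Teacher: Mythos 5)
Your proof is correct and amounts to the same computation as the paper's, viewed from the dual side: the paper expands $\psi_1,\psi_2$ in the Pimsner--Popa basis and substitutes into $T(\psi_1,\psi_2)=\bar\iota(\psi_1^*)ww^*\bar\iota(\psi_2)$, while you extract the matrix coefficients $w_{\sigma,r}^*\,T(\psi_1,\psi_2)\,w_{\sigma,s}$ directly via $w^*\bar\iota(\slot)w=\iota^{-1}E(\slot)$. Both arguments rest on the same two facts --- the Connes--Stinespring form \eqref{eq:stineE} of $E$ and the observation that $\iota^{-1}(E(\psi_{\sigma,r}\psi_1^*))\in\Hom(\rho,\sigma)$ forces the finite support and scalar coefficients --- so there is nothing to add.
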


\begin{proof}
Write the finite Pimsner--Popa expansions (Proposition \ref{prop:PiPoexp}) $\psi_1 = \sum_{\sigma,r} E(\psi_1 \psi_{\sigma,r}^*) \psi_{\sigma,r}$ and $\psi_2 = \sum_{\tau,s} E(\psi_2 \psi_{\tau,s}^*) \psi_{\tau,s}$, and plug them in the definition of $T(\psi_1,\psi_2)$. By using the fact that $ww^*$ commutes with $\theta(\N)$ and that $\iota^{-1} (E(\psi_{\sigma,r}\psi_1^*) E(\psi_2 \psi_{\tau,s}^*)) \in \Hom(\tau,\sigma)$ we have the statement.
\end{proof}

The expression for the multiplication and for the involution just obtained (Proposition \ref{prop:multTirred} and \ref{prop:bulletTirred}) extend to arbitrary $T(\psi_1,\psi_2)$, respectively by bilinearity and by antilinearity of the operations.

\begin{prop}\label{prop:multTany}
Let $\psi_1,\psi_2 \in H_\rho$ and $\psi_3,\psi_4 \in H_\sigma$ with $\rho,\sigma\in\End_0(\N)$, not necessarily irreducible nor $\rho,\sigma\prec\theta$, then
\begin{align}
T(\psi_1,\psi_2) \ast T(\psi_3,\psi_4) = T(\psi_1\psi_3,\psi_2\psi_4).
\end{align}
\end{prop}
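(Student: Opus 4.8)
The plan is to reduce the general statement to the case of matrix units of \emph{irreducible} subendomorphisms, where it is already proven (Proposition \ref{prop:multTirred}), by exploiting the multilinearity of the two operations together with the fact that $\bar\iota\colon\M\to\N$ is a unital $*$-homomorphism. The key preliminary observation is that, since $\bar\iota$ is a unital $*$-homomorphism and $\phi\psi\in H_{\rho\sigma}$ whenever $\phi\in H_\rho$, $\psi\in H_\sigma$ (for arbitrary $\rho,\sigma\in\End_0(\N)$), the defining formula $T(x,y)=\bar\iota(x^*)ww^*\bar\iota(y)$ immediately gives
\[ T(\phi_1\psi_1,\phi_2\psi_2) = \bar\iota(\psi_1^*)\,\bigl(\bar\iota(\phi_1^*)ww^*\bar\iota(\phi_2)\bigr)\,\bar\iota(\psi_2) = \bar\iota(\psi_1^*)\,T(\phi_1,\phi_2)\,\bar\iota(\psi_2) \]
for all $\phi_1,\phi_2\in H_\rho$ and $\psi_1,\psi_2\in H_\sigma$. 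In particular, the right-hand side of the statement equals $T(\psi_1\psi_3,\psi_2\psi_4)=\bar\iota(\psi_3^*)\,T(\psi_1,\psi_2)\,\bar\iota(\psi_4)$, and this is the expression I aim to produce.

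Next I would expand, by Proposition \ref{prop:triglin}, both $T(\psi_1,\psi_2)$ and $T(\psi_3,\psi_4)$ as finite linear combinations $\sum_{\lambda,r,s}\mu_{\lambda,r,s}(\psi_1,\psi_2)\,T(\psi_{\lambda,r},\psi_{\lambda,s})$ and $\sum_{\tau,u,v}\mu_{\tau,u,v}(\psi_3,\psi_4)\,T(\psi_{\tau,u},\psi_{\tau,v})$ of matrix units indexed by irreducible $\lambda,\tau\prec\theta$. Since $\ast$ is bilinear by \eqref{eq:aperb} and all the sums involved are finite (Proposition \ref{prop:triglin} and Lemma \ref{lem:finitesums}), I can multiply the two expansions term by term, apply Proposition \ref{prop:multTirred} to each product $T(\psi_{\lambda,r},\psi_{\lambda,s})\ast T(\psi_{\tau,u},\psi_{\tau,v})=T(\psi_{\lambda,r}\psi_{\tau,u},\psi_{\lambda,s}\psi_{\tau,v})$, rewrite each such term through the preliminary identity as $\bar\iota(\psi_{\tau,u}^*)\,T(\psi_{\lambda,r},\psi_{\lambda,s})\,\bar\iota(\psi_{\tau,v})$, and then carry out the summation over $\lambda,r,s$ first, which reassembles $T(\psi_1,\psi_2)$ by Proposition \ref{prop:triglin} once more. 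This yields
\[ T(\psi_1,\psi_2)\ast T(\psi_3,\psi_4) = \sum_{\tau,u,v}\mu_{\tau,u,v}(\psi_3,\psi_4)\;\bar\iota(\psi_{\tau,u}^*)\,T(\psi_1,\psi_2)\,\bar\iota(\psi_{\tau,v}). \]

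To finish, I would recognise the right-hand side as $\bar\iota(\psi_3^*)\,T(\psi_1,\psi_2)\,\bar\iota(\psi_4)$: indeed $T(\psi_1,\psi_2)\in\Hom(\theta,\theta)=\theta(\N)'\cap\N$ commutes with $\theta(\N)$, so the very computation proving Proposition \ref{prop:triglin} — Pimsner--Popa expanding $\psi_3$ and $\psi_4$ in the chosen basis of charged fields — applies verbatim with $T(\psi_1,\psi_2)$ in place of $ww^*$, giving $\bar\iota(\psi_3^*)\,T(\psi_1,\psi_2)\,\bar\iota(\psi_4)=\sum_{\tau,u,v}\mu_{\tau,u,v}(\psi_3,\psi_4)\,\bar\iota(\psi_{\tau,u}^*)\,T(\psi_1,\psi_2)\,\bar\iota(\psi_{\tau,v})$. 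Combining with the preliminary identity, $\bar\iota(\psi_3^*)\,T(\psi_1,\psi_2)\,\bar\iota(\psi_4)=\bar\iota(\psi_3^*\psi_1^*)\,ww^*\,\bar\iota(\psi_2\psi_4)=T(\psi_1\psi_3,\psi_2\psi_4)$, as required.

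The only delicate point is the bookkeeping in the middle step: one must check that the fourfold sum may be reorganised in the chosen order, which rests on the bilinearity of $\ast$, the finiteness of all index sets (Proposition \ref{prop:triglin}, Lemma \ref{lem:finitesums}), and the fact that Proposition \ref{prop:multTirred} is invoked only on genuine matrix units of \emph{irreducible} subendomorphisms. An essentially equivalent alternative, which sidesteps Proposition \ref{prop:triglin}, is to decompose $\rho$ and $\sigma$ into irreducibles (discarding, via Remark \ref{rmk:catgenthetaonly}, the summands carrying no charged fields) and to prove directly the ``$\Trig(\N\subset\M)$-bimodule'' identity $a\ast T(\psi_3,\psi_4)=\bar\iota(\psi_3^*)\,a\,\bar\iota(\psi_4)$ for every $a\in\Trig(\N\subset\M)$ and every $\psi_3,\psi_4\in H_\sigma$; the statement then follows at once by taking $a=T(\psi_1,\psi_2)$. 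Either way the content is the same.
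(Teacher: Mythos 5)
Your proof is correct and follows essentially the same route as the paper's: both expand the two factors into matrix units via Proposition \ref{prop:triglin}, apply Proposition \ref{prop:multTirred} termwise, and resum over the (finite) index sets. The only difference is cosmetic --- you organize the resummation through the identity $T(\phi\psi,\phi'\psi')=\bar\iota(\psi^*)\,T(\phi,\phi')\,\bar\iota(\psi')$ together with the observation that $T(\psi_1,\psi_2)$, like $ww^*$, lies in $\Hom(\theta,\theta)$ and hence commutes with $\theta(\N)$, whereas the paper manipulates the Pimsner--Popa coefficients directly via the intertwining properties of $w$ and the charged fields; the mathematical content is the same.
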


\begin{proof}
Expanding each factor as in Proposition \ref{prop:triglin} and using Proposition \ref{prop:multTirred} we obtain a sum of 
$\iotabar(\psi_{\tau',r'}^*\psi_{\tau,r}^*)w \iota^{-1}(E(\psi_{\tau,r}\psi_1^*)E(\psi_{\tau',r'}\psi_3^*) E(\psi_4\psi_{\tau',s'}^*) E(\psi_2\psi_{\tau,s}^*)) w^*\iotabar(\psi_{\tau,s}\psi_{\tau',s'})$ where we used the fact that $E(\psi_{\tau',r'}\psi_3^*) E(\psi_4\psi_{\tau',s'}^*)$ is a multiple of $1$. Using the intertwining property of $w$ and summing over $\tau,r,s$ we obtain $\iotabar(\psi_{\tau',r'}^*\psi_1^*)w \iota^{-1}(E(\psi_{\tau',r'}\psi_3^*) E(\psi_4\psi_{\tau',s'}^*)) w^*\iotabar(\psi_2\psi_{\tau',s'})$. We have the claim by writing $\iota^{-1}(E(\psi_{\tau',r'}\psi_3^*) E(\psi_4\psi_{\tau',s'}^*)) = \rho(\iota^{-1}(E(\psi_{\tau',r'}\psi_3^*) E(\psi_4\psi_{\tau',s'}^*)))$, by the intertwining property of $w$ and $\psi_1$, $\psi_2$, and summing over $\tau',r',s'$.
\end{proof}

\begin{prop}\label{prop:bulletTany}
Let $\psi_1,\psi_2 \in H_\rho$ with $\rho\in\End_0(\N)$, not necessarily irreducible nor $\rho\prec\theta$, then
\begin{align}
T(\psi_1,\psi_2)^\bullet = T(\psi_1^\bullet,\psi_2^\bullet),
\end{align}
where $T(\psi_1^\bullet,\psi_2^\bullet)$ does not depend on the choice of standard solutions of the conjugate equations made in the definition of $\psi_1^{\bullet}$, $\psi_2^{\bullet}$.
\end{prop}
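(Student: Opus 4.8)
The plan is to reduce to the case of irreducible $\rho$, where the statement is Proposition \ref{prop:bulletTirred}, and then to deduce the independence of the chosen standard solution. By Remark \ref{rmk:catgenthetaonly} we may assume $\rho=\bigoplus_{\sigma,j}\sigma$ is a finite direct sum of irreducible subendomorphisms $\sigma\prec\theta$ (the other summands carry no charged fields and contribute nothing to either side). I would fix the (once and for all) chosen standard solutions $(r_\sigma,\rbar_\sigma)$ of the irreducibles, orthonormal decomposition isometries $u_\sigma^{(j)}\in\Hom(\sigma,\rho)$ and $\bar u_\sigma^{(j)}\in\Hom(\sigmabar,\rhobar)$ (a single representative $\sigma$, resp.\ $\sigmabar$, per sector), and put $\rbar_\rho:=\sum_{\sigma,j}(u_\sigma^{(j)}\otimes\bar u_\sigma^{(j)})\rbar_\sigma$, which is a standard solution for $(\rho,\rhobar)$ by additivity of standard solutions under direct sums \cite[\Sec 2]{LoRo1997}.

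Two elementary computations then carry the argument. First, from $\iotabar\iota=\theta$, the fact that $ww^*\in\Hom(\theta,\theta)$ commutes with $\theta(\N)$, and $(u_\sigma^{(j)})^*u_\tau^{(k)}=\delta_{[\sigma],[\tau]}\delta_{j,k}1$, one obtains
\begin{align}
T(\iota(u_\sigma^{(j)})\xi,\iota(u_\tau^{(k)})\eta)=\delta_{[\sigma],[\tau]}\,\delta_{j,k}\,T(\xi,\eta),\qquad \xi\in H_\sigma,\ \eta\in H_\tau .
\end{align}
Second, using $(u_\sigma^{(j)})^*\rbar_\rho=\sigma(\bar u_\sigma^{(j)})\rbar_\sigma$ (immediate from the choice of $\rbar_\rho$) and $\xi^*\iota(\sigma(n))=\iota(n)\xi^*$ for $\xi\in H_\sigma$, $n\in\N$, one gets $(\iota(u_\sigma^{(j)})\xi)^\bullet=\iota(\bar u_\sigma^{(j)})\,\xi^\bullet$, the right hand side formed with $\rbar_\sigma$. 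Writing $\psi_i=\sum_{\sigma,j}\iota(u_\sigma^{(j)})\xi_{i,\sigma}^{(j)}$ with $\xi_{i,\sigma}^{(j)}:=\iota(u_\sigma^{(j)})^*\psi_i\in H_\sigma$, the first identity gives $T(\psi_1,\psi_2)=\sum_{\sigma,j}T(\xi_{1,\sigma}^{(j)},\xi_{2,\sigma}^{(j)})$, hence by additivity of $\bullet$ and Proposition \ref{prop:bulletTirred} (extended from the basis fields to all of $H_\sigma$ by sesquilinearity) $T(\psi_1,\psi_2)^\bullet=\sum_{\sigma,j}T(\xi_{1,\sigma}^{(j)\bullet},\xi_{2,\sigma}^{(j)\bullet})$; while the second identity gives $\psi_i^\bullet=\sum_{\sigma,j}\iota(\bar u_\sigma^{(j)})\xi_{i,\sigma}^{(j)\bullet}$, so a second use of the first identity (now with the $\bar u_\sigma^{(j)}$) gives $T(\psi_1^\bullet,\psi_2^\bullet)=\sum_{\sigma,j}T(\xi_{1,\sigma}^{(j)\bullet},\xi_{2,\sigma}^{(j)\bullet})$. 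Therefore $T(\psi_1,\psi_2)^\bullet=T(\psi_1^\bullet,\psi_2^\bullet)$ for this $\rbar_\rho$.

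For the independence statement I would observe that the $\bullet$-operation on $\Trig(\N\subset\M)$, hence the element $T(\psi_1,\psi_2)^\bullet$, does not involve any choice of standard solution, so it is enough to show that $T(\psi_1^\bullet,\psi_2^\bullet)$ is unchanged when $\rbar_\rho$ is replaced by another standard solution $\rbar_\rho'$. Any two standard solutions for $(\rho,\rhobar)$ with the same conjugate $\rhobar$ differ by an internal unitary $u\in\Hom(\rho,\rho)$, say $\rbar_\rho'=(u\otimes 1_{\rhobar})\rbar_\rho=u\rbar_\rho$ \cite[\Sec 2]{LoRo1997}; then $\psi_i^{\bullet\prime}=\psi_i^*\iota(u)\iota(\rbar_\rho)=(\iota(u^*)\psi_i)^\bullet$, so the identity just proved yields $T(\psi_1^{\bullet\prime},\psi_2^{\bullet\prime})=T(\iota(u^*)\psi_1,\iota(u^*)\psi_2)^\bullet$, and $T(\iota(u^*)\psi_1,\iota(u^*)\psi_2)=\iotabar(\psi_1^*)\theta(u)ww^*\theta(u^*)\iotabar(\psi_2)=\iotabar(\psi_1^*)ww^*\iotabar(\psi_2)=T(\psi_1,\psi_2)$ since $ww^*$ commutes with $\theta(\N)$ and $u$ is unitary. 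Hence $T(\psi_1^\bullet,\psi_2^\bullet)=T(\psi_1,\psi_2)^\bullet$ for every choice of standard solutions, which is the claim.

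The main difficulty I anticipate is organizational rather than conceptual: checking the second computation above, \ie that the compatibly built $\rbar_\rho$ interacts correctly with the $\bullet$-operation, and quoting the two facts from standard-solution theory (additivity under direct sums, uniqueness up to internal unitaries) in exactly the right form. The genuinely categorical input, including the locality of the subfactor, is already contained in Propositions \ref{prop:multTirred}, \ref{prop:bulletTirred}, \ref{prop:triglin} and Lemma \ref{lem:genQsysprops}.
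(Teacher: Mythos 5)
Your argument is correct, and it takes a genuinely different route from the paper's. The paper expands both sides in the matrix units $T(\psi_{\sigma,r}^\bullet,\psi_{\sigma,s}^\bullet)$ using Proposition \ref{prop:triglin} and then checks by hand that the two families of scalar coefficients, $\overline{\iota^{-1}(E(\psi_{\sigma,r}\psi_1^*)E(\psi_2\psi_{\sigma,s}^*))}$ coming from $T(\psi_1,\psi_2)^\bullet$ and $\iota^{-1}(E(\psi_{\sigma,r}^\bullet\psi_1^{\bullet*})E(\psi_2^\bullet\psi_{\sigma,s}^{\bullet*}))$ coming from $T(\psi_1^\bullet,\psi_2^\bullet)$, coincide; that comparison requires the conjugate equations, a \emph{second} application of the locality relations \eqref{eq:commutfields} via naturality of the braiding, and the trace property of the standard right inverses. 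You avoid the coefficient comparison altogether: by building $\rbar_\rho$ additively from the $\rbar_\sigma$ and checking the two identities $T(\iota(u_\sigma^{(j)})\xi,\iota(u_\tau^{(k)})\eta)=\delta_{(\sigma,j),(\tau,k)}\,T(\xi,\eta)$ (which holds because $ww^*$ commutes with $\theta(\N)$ and $u_\sigma^{(j)*}u_\tau^{(k)}$ is a scalar) and $(\iota(u_\sigma^{(j)})\xi)^\bullet=\iota(\bar u_\sigma^{(j)})\xi^\bullet$ (which follows from $u_\sigma^{(j)*}\rbar_\rho=\sigma(\bar u_\sigma^{(j)})\rbar_\sigma$ together with $\xi^*\iota(\sigma(n))=\iota(n)\xi^*$), you reduce to the irreducible case, where Proposition \ref{prop:bulletTirred} extended by sesquilinearity finishes the job --- a legitimate extension, since both $(\xi,\eta)\mapsto T(\xi,\eta)^\bullet$ and $(\xi,\eta)\mapsto T(\xi^\bullet,\eta^\bullet)$ are linear in $\xi$ and antilinear in $\eta$, so agreement on a basis of $H_\sigma$ suffices. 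This concentrates all the locality input in Proposition \ref{prop:bulletTirred} and trades the paper's analytic coefficient computation for bookkeeping; the price is having to fix a convenient $\rbar_\rho$ first and prove independence of the choice afterwards. One cosmetic remark on that last step: you parametrize the standard solutions for fixed $\rhobar$ by unitaries $u\in\Hom(\rho,\rho)$ acting on the left tensor leg, whereas \cite{LoRo1997}, \cite{GiLo19} and the paper state uniqueness in the form $\rho(u)\rbar_\rho$ with $u\in\Hom(\rhobar,\rhobar)$ unitary. The two orbits coincide for standard solutions (the transpose $\Hom(\rho,\rho)\to\Hom(\rhobar,\rhobar)$ implemented by $\rbar_\rho$ is a unitary-preserving antiisomorphism), but the latter form yields $\psi_i^{\bullet\prime}=\iota(u)\psi_i^\bullet$ immediately and lets you conclude from the $ww^*$-commutation alone, without routing through the identity you have just proved; it also absorbs a change of the conjugate object $\rhobar$ itself by a unitary in $\Hom(\rhobar,\rhobar')$ in exactly the same way.
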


\begin{proof}
Expanding $T(\psi_1,\psi_2)$ as in Proposition \ref{prop:triglin} and using Proposition \ref{prop:bulletTirred}, and by the antilinearity of the involution, we 
write $T(\psi_1,\psi_2)^\bullet$ as a sum of $\iota^{-1}(E(\psi_{\sigma,s}\psi_2^*)E(\psi_1\psi_{\sigma,r}^*)) T(\psi_{\sigma,r}^\bullet,\psi_{\sigma,s}^\bullet)$. On the other hand, one can expand $\psi_1^\bullet$, $\psi_2^\bullet$ with respect to the basis $\{\psi_{\sigma,r}^\bullet\}$ and plug the expressions in $T(\psi_1^\bullet,\psi_2^\bullet)$, thus obtaining a sum of $\iota^{-1}(E(\psi_{\sigma,r}^\bullet\psi_1^{\bullet*})E(\psi_2^\bullet\psi_{\sigma,s}^{\bullet*})) T(\psi_{\sigma,r}^\bullet,\psi_{\sigma,s}^\bullet)$. We have to check that the numerical coefficients in the two sums are the same. 

Compute $\iota^{-1}(E(\psi_{\sigma,r}^\bullet\psi_1^{\bullet*})E(\psi_2^\bullet\psi_{\sigma,s}^{\bullet*})) = \iota^{-1}(E(\psi_{\sigma,r}^* \rbar_\sigma \psi_1^{\bullet*})E(\psi_2^{\bullet} \rbar_\sigma^* \psi_{\sigma,s}))$, by using $d(\sigma) 1 =  r_\sigma^*r_\sigma$ we can rewrite it as $d(\sigma)^{-1} r_\sigma^* \iota^{-1}(E(\psi_{\sigma,r}^* \rbar_\sigma \psi_1^{\bullet*})E(\psi_2^{\bullet} \rbar_\sigma^* \psi_{\sigma,s})) r_\sigma = d(\sigma)^{-1} \iota^{-1}(E(\psi_{\sigma,r}^* \psi_1^{\bullet*})E(\psi_2^{\bullet} \psi_{\sigma,s}))$ by the conjugate equations. Now observe that by Remark \ref{rmk:catgenthetaonly} we can assume that $\rho$ is in the rigid tensor \Cstar-category generated by $\theta$, which is braided by assumption. The commutation relations among charged fields \eqref{eq:commutfields} associated with irreducible subendomorphisms of $\theta$ give also, \eg $\psi_2^\bullet \psi_{\sigma,s} = \varepsilon^{\pm}_{\sigma,\rhobar} \psi_{\sigma,s} \psi_2^\bullet$, by first expanding $\psi_2^\bullet$ and then using naturality of the braiding. 
Thus $d(\sigma)^{-1} \iota^{-1}(E(\psi_{\sigma,r}^* \psi_1^{\bullet*})E(\psi_2^{\bullet} \psi_{\sigma,s})) = d(\sigma)^{-1} \iota^{-1}(E(\psi_1^{\bullet*} \psi_{\sigma,r}^*)E(\psi_{\sigma,s}\psi_2^{\bullet}))$ because the braiding is unitary and it cancels on both sides. Continuing, we get that $d(\sigma)^{-1} \rbar_\rho \iota^{-1}(E(\psi_1 \psi_{\sigma,r}^*)E(\psi_{\sigma,s}\psi_2^*)) \rbar_\rho = d(\sigma)^{-1} \rbar_\sigma \iota^{-1}(E(\psi_{\sigma,s}\psi_2^*)E(\psi_1 \psi_{\sigma,r}^*)) \rbar_\sigma = \iota^{-1}(E(\psi_{\sigma,s}\psi_2^*)E(\psi_1 \psi_{\sigma,r}^*))$ by the trace property of the standard right inverses \cite[\Lem 3.7]{LoRo1997}, \cite[\Prop 2.4]{BiKaLoRe2014}, and the desired equality is proven.

To show that $T(\psi_1^\bullet,\psi_2^\bullet)$ is independent of the choice of standard solutions, recall that the latter are uniquely determined up to unitaries \cite[\Lem 3.3]{LoRo1997}, \cite[\Lem 7.23]{GiLo19}. Namely, they are all of the form $\rho(u) \rbar_\rho$ for some unitary $u \in \Hom(\rhobar,\rhobar)$ and a fixed choice of $\rbar_\rho$, similarly for $r_\rho$. Using the intertwining property of $\psi_1$, $\psi_2$, the fact that $ww^*$ commutes with $\theta(\N)$ and the unitarity of $u$ we have the second claim, thus the proof is complete.
\end{proof}

The algebra $\Trig(\N\subset\M)$ is clearly associative and unital, with unit $\one = T(\psi_{\id,1}, \psi_{\id,1})$, $\psi_{\id,1} = 1$. The involutivity is more tricky to prove, because on charged fields it neither holds $\psi^{\bullet\bullet} = \psi$ nor $(\psi_1 \psi_2)^\bullet = {\psi_2}^\bullet  \psi_1^\bullet$. Indeed if $\psi \in H_\rho$ and $\rho\prec\theta$ is irreducible self-conjugate and pseudoreal \cite[\p 139]{LoRo1997}, we get $\psi^{\bullet\bullet} = -\psi$ because $\psi^{\bullet} \in H_{\rhobar} = H_{\rho}$ and $\rbar_\rho = - r_{\rho}$. Moreover, there is in general no way of choosing $\rbar_\rho$, or $r_\rho$ in a standard solution for every $\rho$ in such a way the choice is strictly compatible with the composition of endomorphisms, \ie with the tensor multiplication. The best that one can hope to get is compatibility up to unitaries, \cf \cite{Yam04}.

\begin{prop}
Let $\rho,\sigma\prec\theta$ be irreducible and $\psi_{\rho,r},\psi_{\rho,s}\in H_\rho$, $\psi_{\sigma,u},\psi_{\sigma,v}\in H_\sigma$, then 
\begin{align}
T(\psi_{\rho,r},\psi_{\rho,s})^{\bullet\bullet} &=\, T(\psi_{\rho,r},\psi_{\rho,s}),\\
(T(\psi_{\rho,r},\psi_{\rho,s}) \ast T(\psi_{\sigma,u},\psi_{\sigma,v}))^{\bullet} &=\, T(\psi_{\sigma,u},\psi_{\sigma,v})^{\bullet} \ast T(\psi_{\rho,r},\psi_{\rho,s})^{\bullet}.
\end{align}
\end{prop}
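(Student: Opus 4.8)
The strategy I would follow is to transport both identities, by means of Propositions \ref{prop:multTirred}, \ref{prop:multTany}, \ref{prop:bulletTirred} and \ref{prop:bulletTany}, into elementary statements about products and the involution $\psi\mapsto\psi^\bullet = \psi^\ast\iota(\rbar_\rho)$ of charged fields, and then to exploit that $T(\psi_1,\psi_2) = \iotabar(\psi_1^\ast)\,ww^\ast\,\iotabar(\psi_2)$ is conjugate-linear in $\psi_1$ and linear in $\psi_2$.

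For $T(\psi_{\rho,r},\psi_{\rho,s})^{\bullet\bullet}$ I would apply Proposition \ref{prop:bulletTirred} twice — legitimately, since the conjugate of an irreducible $\rho\prec\theta$ is again irreducible and contained in $\theta$, because $[\bar\theta]=[\theta]$ — to get $T(\psi_{\rho,r},\psi_{\rho,s})^{\bullet\bullet} = T\big((\psi_{\rho,r}^\bullet)^\bullet,\,(\psi_{\rho,s}^\bullet)^\bullet\big)$, the right-hand side being independent of the standard solutions used to define the inner $\bullet$ by Proposition \ref{prop:bulletTany}. It then remains to compute $(\psi^\bullet)^\bullet$ for $\psi\in H_\rho$: writing $(\psi^\bullet)^\bullet = \iota(\rbar_\rho^\ast)\,\psi\,\iota(\rbar_{\rhobar})$ and moving $\iota(\rbar_{\rhobar})$ past $\psi$ via the intertwining relation $\psi\iota(n)=\iota(\rho(n))\psi$, one obtains $(\psi^\bullet)^\bullet = \iota\big(\rbar_\rho^\ast\rho(\rbar_{\rhobar})\big)\psi = c_\rho\,\psi$, where $c_\rho 1_\rho := \rbar_\rho^\ast\rho(\rbar_{\rhobar})\in\Hom(\rho,\rho)=\CC 1$ by irreducibility (so $c_\rho$ depends only on $\rho$), and $|c_\rho|=1$ because $\psi\mapsto\psi^\bullet$ preserves the norm $E(\psi\psi^\ast)$, as recalled before Lemma \ref{lem:bulletPiPoexp} (for compatibly chosen standard solutions $c_\rho$ is the Frobenius--Schur sign $\pm1$, equal to $-1$ exactly in the pseudoreal case). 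Since $c_\rho$ enters both arguments of $T$, $T(c_\rho\psi_{\rho,r},c_\rho\psi_{\rho,s}) = \overline{c_\rho}\,c_\rho\,T(\psi_{\rho,r},\psi_{\rho,s}) = T(\psi_{\rho,r},\psi_{\rho,s})$.

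For the second identity, Proposition \ref{prop:multTirred} gives $T(\psi_{\rho,r},\psi_{\rho,s})\ast T(\psi_{\sigma,u},\psi_{\sigma,v}) = T(\psi_{\rho,r}\psi_{\sigma,u},\,\psi_{\rho,s}\psi_{\sigma,v})$ with $\psi_{\rho,r}\psi_{\sigma,u},\psi_{\rho,s}\psi_{\sigma,v}\in H_{\rho\sigma}$, and Proposition \ref{prop:bulletTany} (applied to the possibly reducible $\rho\sigma$) gives $\big(T(\psi_{\rho,r},\psi_{\rho,s})\ast T(\psi_{\sigma,u},\psi_{\sigma,v})\big)^\bullet = T\big((\psi_{\rho,r}\psi_{\sigma,u})^\bullet,\,(\psi_{\rho,s}\psi_{\sigma,v})^\bullet\big)$, independently of the standard solution chosen for $\rho\sigma$. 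I would then compute the latter using the standard solution $\rbar_{\rho\sigma}:=\rho(\rbar_\sigma)\rbar_\rho$ of $\rho\sigma$; a one-line computation using $\psi_{\rho,r}^\ast\iota(\rho(\rbar_\sigma)) = \iota(\rbar_\sigma)\psi_{\rho,r}^\ast$ (the adjoint of the intertwining relation) yields $(\psi_{\rho,r}\psi_{\sigma,u})^\bullet = \psi_{\sigma,u}^\bullet\psi_{\rho,r}^\bullet$. Plugging this back, Proposition \ref{prop:multTany} and then Proposition \ref{prop:bulletTirred} (for the irreducibles $\sigma$ and $\rho$) give
\[
\big(T(\psi_{\rho,r},\psi_{\rho,s})\ast T(\psi_{\sigma,u},\psi_{\sigma,v})\big)^\bullet = T(\psi_{\sigma,u}^\bullet,\psi_{\sigma,v}^\bullet)\ast T(\psi_{\rho,r}^\bullet,\psi_{\rho,s}^\bullet) = T(\psi_{\sigma,u},\psi_{\sigma,v})^\bullet\ast T(\psi_{\rho,r},\psi_{\rho,s})^\bullet .
\]

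The delicate point is the first identity: on the level of charged fields neither $\psi^{\bullet\bullet}=\psi$ nor $(\psi_1\psi_2)^\bullet=\psi_2^\bullet\psi_1^\bullet$ holds for a single fixed system of standard solutions, so the argument must rely on the independence of $T(\,\cdot^\bullet,\,\cdot^\bullet\,)$ from these choices (Proposition \ref{prop:bulletTany}) in order to use the ``compatible'' ones, after which the residual unimodular scalar $c_\rho$ enters both slots of $T$ and cancels thanks to the conjugate-linearity/linearity of $T$.
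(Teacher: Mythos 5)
Your proposal is correct and follows essentially the same route as the paper: the second identity is proved exactly as in the text, by reducing via Propositions \ref{prop:multTirred} and \ref{prop:bulletTany} to the computation $(\psi_{\rho,r}\psi_{\sigma,u})^\bullet=\psi_{\sigma,u}^\bullet\psi_{\rho,r}^\bullet$ with the standard solution $\rho(\rbar_\sigma)\rbar_\rho$ for $\rho\sigma$, and the first identity rests on the same observation the paper invokes as ``immediate by the previous discussion,'' namely that $\psi^{\bullet\bullet}=c_\rho\psi$ for a unimodular scalar which cancels between the conjugate-linear and linear slots of $T$. Your extra details (the explicit formula $c_\rho 1_\rho=\rbar_\rho^\ast\rho(\rbar_{\rhobar})$ and the norm argument for $|c_\rho|=1$) only make explicit what the paper leaves implicit.
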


\begin{proof}
The first equation is immediate by the previous discussion. For the second equation, write $T(\psi_{\rho,r}\psi_{\sigma,u},\psi_{\rho,s}\psi_{\sigma,v})^{\bullet} = T((\psi_{\rho,r}\psi_{\sigma,u})^{\bullet},(\psi_{\rho,s}\psi_{\sigma,v})^{\bullet})$ by Proposition \ref{prop:bulletTany}, and observe that $\psi_{\sigma,u}^{\bullet}\psi_{\rho,r}^{\bullet} = \psi_{\sigma,u}^* \iota(\rbar_\sigma) \psi_{\rho,r}^* \iota(\rbar_\rho) = \psi_{\sigma,u}^* \psi_{\rho,r}^* \iota(\rho(\rbar_\sigma)\rbar_\rho)$. Now, $\rho(\rbar_\sigma)\rbar_\rho$ is indeed a part of a standard solution of the conjugate equations \cite[\Cor 3.10]{LoRo1997}, thus $T((\psi_{\rho,r}\psi_{\sigma,u})^{\bullet},(\psi_{\rho,s}\psi_{\sigma,v})^{\bullet}) = T(\psi_{\sigma,u}^{\bullet}\psi_{\rho,r}^{\bullet},\psi_{\sigma,v}^{\bullet}\psi_{\rho,s}^{\bullet})$ again by Proposition \ref{prop:bulletTany}, and the proof is complete.
\end{proof}

\begin{prop}\label{prop:Tcommut}
Let $\psi_1,\psi_2 \in H_\rho$ and $\psi_3,\psi_4 \in H_\sigma$ with $\rho,\sigma\in\End_0(\N)$, not necessarily irreducible nor $\rho,\sigma\prec\theta$, then
\begin{align}
T(\psi_1,\psi_2) \ast T(\psi_3,\psi_4) = T(\psi_3,\psi_4) \ast T(\psi_1,\psi_2).
\end{align}
\end{prop}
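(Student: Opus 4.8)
The plan is to read off commutativity from the commutation relations among charged fields \eqref{eq:commutfields} together with Proposition \ref{prop:multTany}. By Remark \ref{rmk:catgenthetaonly} we may assume that $\rho$ and $\sigma$ lie in the rigid braided tensor \Cstar-category $\langle\theta\rangle_0$ generated by $\theta$, so that the braiding $\varepsilon^{\pm}_{\sigma,\rho}\in\Hom(\sigma\rho,\rho\sigma)$ is at our disposal. Applying Proposition \ref{prop:multTany} to the two orders of multiplication gives
\begin{align}
T(\psi_1,\psi_2)\ast T(\psi_3,\psi_4)=T(\psi_1\psi_3,\psi_2\psi_4),\qquad T(\psi_3,\psi_4)\ast T(\psi_1,\psi_2)=T(\psi_3\psi_1,\psi_4\psi_2),
\end{align}
with $\psi_1\psi_3\in H_{\rho\sigma}$ and $\psi_3\psi_1\in H_{\sigma\rho}$, so it remains to prove $T(\psi_1\psi_3,\psi_2\psi_4)=T(\psi_3\psi_1,\psi_4\psi_2)$.

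The key auxiliary fact I would isolate is an invariance property of the bilinear map $T$: if $u\in\Hom(\rho',\rho)$ is an isometry and $\chi,\chi'\in H_{\rho'}$, then, using $\iotabar\iota=\theta$, that $\iota$ and $\iotabar$ are homomorphisms, and that $ww^\ast\in\Hom(\theta,\theta)=\theta(\N)'\cap\N$ commutes with $\theta(\N)$,
\begin{align}
T(\iota(u)\chi,\iota(u)\chi')=\iotabar(\chi^\ast)\theta(u^\ast)ww^\ast\theta(u)\iotabar(\chi')=\iotabar(\chi^\ast)ww^\ast\theta(u^\ast u)\iotabar(\chi')=T(\chi,\chi').
\end{align}
Next I would invoke \eqref{eq:commutfields}, extended from irreducible $\rho,\sigma$ to arbitrary $\rho,\sigma\in\langle\theta\rangle_0$ by expanding $\psi_1,\psi_3$ (and likewise $\psi_2,\psi_4$) over Pimsner--Popa bases of charged fields of the irreducible components and using naturality of the braiding, exactly as in the proof of Proposition \ref{prop:bulletTany}. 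This yields $\psi_1\psi_3=\iota(\varepsilon^{\pm}_{\sigma,\rho})\,\psi_3\psi_1$ and $\psi_2\psi_4=\iota(\varepsilon^{\pm}_{\sigma,\rho})\,\psi_4\psi_2$. Taking $u:=\varepsilon^{\pm}_{\sigma,\rho}$, which is a unitary in $\Hom(\sigma\rho,\rho\sigma)$, and applying the invariance property to $\chi:=\psi_3\psi_1$, $\chi':=\psi_4\psi_2\in H_{\sigma\rho}$, we get
\begin{align}
T(\psi_1\psi_3,\psi_2\psi_4)=T(\iota(\varepsilon^{\pm}_{\sigma,\rho})\psi_3\psi_1,\iota(\varepsilon^{\pm}_{\sigma,\rho})\psi_4\psi_2)=T(\psi_3\psi_1,\psi_4\psi_2),
\end{align}
which is the claim. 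Since $\Trig(\N\subset\M)$ is linearly spanned by the matrix units $w_{\sigma,r}w_{\sigma,s}^\ast=T(\psi_{\sigma,r},\psi_{\sigma,s})$, this also shows at once that $\Trig(\N\subset\M)$ is a commutative algebra.

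I expect the only delicate point to be the extension of \eqref{eq:commutfields} to reducible $\rho,\sigma$: one has to track how the Pimsner--Popa expansions of $\psi_1$ and $\psi_3$ interact under naturality of the braiding in order to reassemble the single intertwiner $\iota(\varepsilon^{\pm}_{\sigma,\rho})$, and it is precisely this step that forces the reduction to $\langle\theta\rangle_0$ rather than to an arbitrary braided subcategory of $\End(\N)$ containing $\theta$. Everything else is routine bookkeeping with the identities $\iotabar\iota=\theta$, $ww^\ast\in\Hom(\theta,\theta)$, and the homomorphism property of $\iota$ and $\iotabar$.
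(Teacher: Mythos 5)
Your proof is correct and follows essentially the same route as the paper's: reduce via Proposition \ref{prop:multTany} to comparing $T(\psi_1\psi_3,\psi_2\psi_4)$ with $T(\psi_3\psi_1,\psi_4\psi_2)$, then apply the commutation relations \eqref{eq:commutfields}, extended to reducible objects exactly as in the proof of Proposition \ref{prop:bulletTany}. Your explicit invariance identity $T(\iota(u)\chi,\iota(u)\chi')=T(\chi,\chi')$ for isometric $u$ merely spells out the step the paper leaves implicit (using $ww^\ast\in\theta(\N)'\cap\N$ and the unitarity of the braiding).
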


\begin{proof}
Combine Proposition \ref{prop:multTany} with the commutation relations among charged fields $\psi_1$, $\psi_3$ and $\psi_2$, $\psi_4$, deduced from \eqref{eq:commutfields} as in the proof of Proposition \ref{prop:bulletTany}.
\end{proof}

Summing up the results of this section:

\begin{thm}\label{thm:trigcommutativestaralg}
Let $\N\subset \M$ be an irreducible local discrete type $\III$ subfactor (Definition \ref{def:semi-discr}, \ref{def:localsubf}). Then $\Trig(\N\subset\M)$ (Definition \ref{def:trig}) is a unital associative and commutative algebra with involution. 
\end{thm}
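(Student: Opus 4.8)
The plan is to read the theorem off the matrix-unit description of $\Trig(\N\subset\M)$ developed in this section. By \eqref{eq:thetathetafullmats} the vector space $\Trig(\N\subset\M)$ is linearly spanned by the matrix units $w_{\rho,r}w_{\rho,s}^\ast = T(\psi_{\rho,r},\psi_{\rho,s})$ with $\rho\prec\theta$ irreducible; moreover the product $a\ast b$ is bilinear in $(a,b)$ and the involution $a\mapsto a^\bullet$ is antilinear, directly from the defining formulas \eqref{eq:aperb} and \eqref{eq:snakes}. So it suffices to check that both operations take values in $\Trig(\N\subset\M)$ and that the algebra axioms hold on this spanning set; everything then propagates by (bi- and anti-)linearity.

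First I would record well-definedness. Proposition \ref{prop:multTirred} gives $T(\psi_{\rho,r},\psi_{\rho,s})\ast T(\psi_{\sigma,u},\psi_{\sigma,v}) = T(\psi_{\rho,r}\psi_{\sigma,u},\psi_{\rho,s}\psi_{\sigma,v})$, and the right-hand side again has finite support, hence lies in $\Trig(\N\subset\M)$; here the products of charged fields are taken in $H_{\rho\sigma}$, using that $\psi\psi'\in H_{\rho\sigma}$ whenever $\psi\in H_\rho$, $\psi'\in H_\sigma$. The proof of Proposition \ref{prop:bulletTirred} evaluates both expressions \eqref{eq:snakes} and \eqref{eq:snakes2} on a matrix unit, shows that they coincide, and identifies the common value with $T(\psi_{\sigma,s}^\bullet,\psi_{\sigma,r}^\bullet)\in\Trig(\N\subset\M)$; thus $\bullet$ is well-defined and takes values in $\Trig(\N\subset\M)$. (The corresponding formulas for arbitrary $T(\psi_1,\psi_2)$ are Propositions \ref{prop:triglin}, \ref{prop:multTany} and \ref{prop:bulletTany}, but they are not logically needed here.)

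Then I would collect the axioms. Since $\psi_{\id,1}=1$, the unit is $\one = ww^\ast = T(\psi_{\id,1},\psi_{\id,1})$, and Proposition \ref{prop:multTirred} with the vacuum field in one slot yields $\one\ast T(\psi_{\rho,r},\psi_{\rho,s}) = T(\psi_{\rho,r},\psi_{\rho,s}) = T(\psi_{\rho,r},\psi_{\rho,s})\ast\one$. Associativity follows by applying Proposition \ref{prop:multTany} twice: both bracketings of a triple product of matrix units equal $T(\psi_1\psi_3\psi_5,\psi_2\psi_4\psi_6)$. The involution axioms $a^{\bullet\bullet}=a$ and $(a\ast b)^\bullet=b^\bullet\ast a^\bullet$ on the spanning set are precisely the content of the proposition stated just before Proposition \ref{prop:Tcommut}; and $\one^\bullet = T(\psi_{\id,1}^\bullet,\psi_{\id,1}^\bullet) = \one$ because the standard solution for the identity is $\rbar_{\id}=1$, so $\psi_{\id,1}^\bullet = \psi_{\id,1}^\ast\iota(\rbar_{\id}) = 1$. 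Finally, commutativity is Proposition \ref{prop:Tcommut}.

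I do not expect a genuine obstacle: the mathematical work is all in the earlier propositions of this section, and what remains is bookkeeping. The one delicate point — and the reason those propositions are phrased through the pairing $T(\cdot,\cdot)$ rather than directly on charged fields — is that $\psi\mapsto\psi^\bullet$ is not an anti-homomorphism at the level of fields: $(\psi\psi')^\bullet$ agrees with ${\psi'}^\bullet\psi^\bullet$ only up to the unitary comparing a once-chosen standard solution for $\rho\sigma$ with $\rho(\rbar_\sigma)\rbar_\rho$. The anti-multiplicativity of $\bullet$ on $\Trig(\N\subset\M)$ therefore hinges on $T(\psi_1^\bullet,\psi_2^\bullet)$ being insensitive to the choice of standard solutions (Proposition \ref{prop:bulletTany}) together with the fact that $\rho(\rbar_\sigma)\rbar_\rho$ is itself standard \cite[\Cor 3.10]{LoRo1997}. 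Locality is used exactly once along the way, in Proposition \ref{prop:Tcommut}, via the commutation relations \eqref{eq:commutfields}.
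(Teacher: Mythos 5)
Your proof is correct and is essentially the paper's own argument: the theorem is stated there with the one-line proof ``Summing up the results of this section,'' i.e.\ exactly the assembly of Propositions \ref{prop:multTirred}--\ref{prop:Tcommut} (together with the unnumbered proposition on $T^{\bullet\bullet}$ and $(T\ast T')^\bullet$) that you carry out. One small inaccuracy in your closing remark: locality is used more than once along the way --- besides Proposition \ref{prop:Tcommut}, it enters via $a_\rho = 1_{H_\rho}$ (Proposition \ref{prop:arho=1}) and Remark \ref{rmk:lambdaprops} in showing that $\{\psi^\bullet_{\rho,r}\}$ is again a Pimsner--Popa basis and that the two expressions \eqref{eq:snakes}--\eqref{eq:snakes2} for $a^\bullet$ coincide (Proposition \ref{prop:bulletTirred}), and again in Proposition \ref{prop:bulletTany} --- but since you cite those propositions as established, this does not affect the validity of your argument.
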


%%%
\subsection{Duality pairing}\label{sec:dualitypairing}
%%%

Let $\N\subset\M$ be as in Theorem \ref{thm:trigcommutativestaralg}. Denote by $\Hom_{\N}(\M,\M)$ the set of $\N$-bimodular linear maps $\phi:\M\to\M$. Observe that if $\psi\in H_\rho$, $\rho\in\End_0(\N)$, then $\phi(\psi)\in H_\rho$. 
There is a pairing 
\begin{align}
\langle\slot,\slot\rangle\colon \Hom_{\N}(\M,\M)\times \Trig(\N\subset \M)\to \CC
\end{align}
\begin{align}\label{eq:pairing}
  \langle \phi,a\rangle \cdot 1_\iota&:= \psi_1^\ast\phi(\psi_2) ,
  &a=T(\psi_1,\psi_2),\quad pap=a,\; \psi_1,\psi_2\in H_{\theta_p},\; p\in \Proj_0(\theta),\\
  \langle \phi,a\rangle \cdot 1_\iota&:=
    \tikzmatht[.6]{
		  \fill[\colM,rounded corners] (-1,1) rectangle (1,-1.5);
		  \fill[\colN,rounded corners] (0,1) rectangle (1,-1.5);
		  \fill[\colN] (0,1) rectangle (.5,-1.5);
      \draw[very thick] (0,0) arc (90:270:.5)
      node [left] {${\scriptstyle \phi}$};
      \draw[thick] (0,-.5) arc (270:450:.5) node [right] {${\scriptstyle a}$};
      \draw (0,1)--(0,-1.5);}.
\end{align}

Note that the action of $\langle\phi,\cdot\rangle$ preserves the decomposition in Proposition \ref{prop:triglin}, thus we may think of it as defined on matrix units $T(\psi_{\rho,r},\psi_{\rho,s})$ and extend it by linearity to $\Trig(\N\subset \M)$.

\begin{prop}\label{prop:ucpstates}
Let $\phi\colon \M\to \M$ be a $\N$-bimodular linear unital completely positive map, 
then $\omega_\phi := \langle\phi,\slot\rangle$ is a state on $\Trig(\N\subset \M)$.
\end{prop}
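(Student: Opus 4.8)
The plan is to verify the three defining properties of a state on a unital $\ast$-algebra: that $\omega_\phi$ is a well-defined linear functional, that $\omega_\phi(\one)=1$, and that $\omega_\phi(a^\bullet\ast a)\ge 0$ for every $a\in\Trig(\N\subset\M)$. The first two are essentially immediate. For $\psi_1,\psi_2\in H_\rho$ one has $\phi(\psi_2)\in H_\rho$ by $\N$-bimodularity, so $\psi_1^\ast\phi(\psi_2)\in\Hom(\iota\rho,\iota)\cdot\Hom(\iota,\iota\rho)\subseteq\Hom(\iota,\iota)=\CC 1_\iota$ by irreducibility of $\N\subset\M$; hence \eqref{eq:pairing} assigns a scalar to each matrix unit $T(\psi_{\rho,r},\psi_{\rho,s})$, and since these span $\Trig(\N\subset\M)$ by \eqref{eq:thetathetafullmats} the extension by linearity is well defined and, as noted after \eqref{eq:pairing} (and checked using $\N$-bimodularity of $\phi$ as in the consistency argument for Proposition \ref{prop:triglin}), compatible with the expansion of Proposition \ref{prop:triglin}, so that $\langle\phi,T(\psi_1,\psi_2)\rangle\cdot 1_\iota=\psi_1^\ast\phi(\psi_2)$ holds for all $\rho\in\End_0(\N)$ and $\psi_1,\psi_2\in H_\rho$. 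Since $\one=T(\psi_{\id,1},\psi_{\id,1})$ with $\psi_{\id,1}=1$ by \eqref{eq:normalizvacuumfield}, we get $\langle\phi,\one\rangle\cdot 1_\iota=1^\ast\phi(1)=1_\iota$, i.e.\ $\omega_\phi(\one)=1$.

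For positivity I would first reduce to the case $a=T(\Psi_1,\Psi_2)$ for a single, possibly reducible, $\rho\in\End_0(\N)$ and $\Psi_1,\Psi_2\in H_\rho$. This is always possible: by polarization $\Trig(\N\subset\M)$ is spanned by the ``diagonal'' elements $T(\psi,\psi)$, and a finite sum of such can be amalgamated into one $T$ by conjugating the charged fields with isometries $v_k\in\N$ of mutually orthogonal ranges (using that $\N$ is of type $\III$), because $T(\iota(v_j)\psi,\iota(v_k)\psi')=\delta_{j,k}\,T(\psi,\psi')$ thanks to $ww^\ast\in\Hom(\theta,\theta)$ commuting with $\theta(\N)$. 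By Propositions \ref{prop:multTany} and \ref{prop:bulletTany} one has $a^\bullet\ast a=T\big(\Psi_1^\bullet\Psi_1,\ \Psi_2^\bullet\Psi_2\big)$, and since $\Psi_i^\bullet=\Psi_i^\ast\iota(\rbar_\rho)$, the commutation relations \eqref{eq:commutfields} applied to $\Psi_i^\bullet\in H_{\rhobar}$ and $\Psi_i\in H_\rho$ give $\Psi_i^\bullet\Psi_i=\iota(\varepsilon^{\pm}_{\rhobar,\rho})^\ast\,\Psi_i\Psi_i^\ast\,\iota(\rbar_\rho)$. Inserting this into the pairing formula and pulling the unitary $\iota(\varepsilon^{\pm}_{\rhobar,\rho})$ and the factor $\iota(\rbar_\rho)\in\iota(\N)$ out of $\phi$ by $\N$-bimodularity, the two braiding factors cancel and one is left with
\begin{align}
  \omega_\phi(a^\bullet\ast a)\cdot 1_\iota
  =\iota(\rbar_\rho)^\ast\,\big(\Psi_1\Psi_1^\ast\big)\,\phi\big(\Psi_2\Psi_2^\ast\big)\,\iota(\rbar_\rho),
\end{align}
where $\Psi_1\Psi_1^\ast\ge 0$, $\Psi_2\Psi_2^\ast\ge 0$ and $\phi(\Psi_2\Psi_2^\ast)\ge 0$ all lie in $\Hom(\iota\rho,\iota\rho)$.

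The remaining point — showing this last expression is nonnegative, equivalently that the Gram matrix $\big[\omega_\phi(e_i^\bullet\ast e_j)\big]$ over the matrix units $e_i$ of $\Trig(\N\subset\M)$ is positive semidefinite — is, I expect, the main obstacle, and is the only place where the complete positivity of $\phi$ (rather than mere positivity) is used. I would take a Stinespring dilation $\phi(\slot)=V^\ast\pi(\slot)V$, with $\pi$ a normal unital representation of $\M$ on a Hilbert space $\Kil$ and $V\colon\Hil\to\Kil$ an isometry, which one checks is $\iota(\N)$-linear because $\phi(\iota(n))=\iota(n)$ puts $\iota(\N)$ into the multiplicative domain of $\phi$. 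Writing $\phi(\Psi_2\Psi_2^\ast)=\big(\pi(\Psi_2)^\ast V\big)^\ast\big(\pi(\Psi_2)^\ast V\big)$, moving $\iota(\rbar_\rho)$ across $V$ by $\iota(\N)$-linearity, and using the trace property of standard solutions of the conjugate equations \cite[\Lem 3.7]{LoRo1997} to handle the ``cup'' $\iota(\rbar_\rho)^\ast(\slot)\iota(\rbar_\rho)$, one should arrive at $\omega_\phi(a^\bullet\ast a)=\|\xi_a\|^2$ for an explicit vector $\xi_a\in\Kil$. The locality of the subfactor is used here through Proposition \ref{prop:arho=1} and the commutation relations \eqref{eq:commutfields} invoked above, and the overall argument runs parallel to the finite-index computation in \cite[\App A]{Bi2016}.
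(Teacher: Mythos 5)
Your reduction to a single amalgamated pair $\Psi_1,\Psi_2\in H_\rho$, the identity $a^\bullet\ast a=T(\Psi_1^\bullet\Psi_1,\Psi_2^\bullet\Psi_2)$, and the formula $\omega_\phi(a^\bullet\ast a)\cdot 1_\iota=\iota(\rbar_\rho)^*\,\Psi_1\Psi_1^*\,\phi(\Psi_2\Psi_2^*)\,\iota(\rbar_\rho)$ are all correct, as are the well-definedness and unitality steps. The gap is exactly where you flag it, and neither of your proposed tools closes it. Your expression has the form $\iota(\rbar_\rho)^*AB\,\iota(\rbar_\rho)$ with $A=\Psi_1\Psi_1^*\geq 0$ and $B=\phi(\Psi_2\Psi_2^*)\geq 0$ two positive elements of $\Hom(\iota\rho,\iota\rho)$ which have no reason to commute; positivity would follow if the functional $X\mapsto\iota(\rbar_\rho)^*X\iota(\rbar_\rho)$ were tracial on $\Hom(\iota\rho,\iota\rho)$, but the trace property of standard solutions (\cite[\Lem 3.7]{LoRo1997}) concerns hom-spaces between objects of \emph{finite} dimension, and here $\iota$ has infinite dimension since $[\M:\N]=\infty$, so $\iota\rho$ has no standard conjugate and that lemma is unavailable (proving traciality of this functional on the whole, possibly infinite-dimensional, relative commutant $\iota\rho(\N)'\cap\M$ is a claim of essentially the same depth as the positivity you are after). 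The Stinespring factorization $\phi(\Psi_2\Psi_2^*)=W^*W$ with $W=\pi(\Psi_2)^*V$ does not rescue it either: you obtain $\iota(\rbar_\rho)^*\Psi_1\Psi_1^*W^*\cdot W\iota(\rbar_\rho)$, an inner product of two \emph{different} vectors ($W\Psi_1\Psi_1^*\iota(\rbar_\rho)\xi$ against $W\iota(\rbar_\rho)\xi$), because $\Psi_1\Psi_1^*$ sits on one side only; it is not a square.

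The paper's proof never passes the braiding through $\Psi^\bullet\Psi$. Working with the double sum $\sum_{i,i'}\bar a_ia_{i'}\,\psi_{\rho',r'}^*\,\psi_{\rho,r}^{\bullet*}\phi(\psi_{\rho,s}^\bullet\psi_{\rho',s'})$, it observes that $\psi_{\rho,r}^{\bullet*}\phi(\psi_{\rho,s}^\bullet\psi_{\rho',s'})\in H_{\rho'}$ and uses $\psi^*\eta=d(\rho')E(\eta\psi^*)=\iota(\rbar_{\rho'}^*)E(\eta\psi^*)\iota(\rbar_{\rho'})$ for $\psi,\eta\in H_{\rho'}$, i.e.\ \eqref{eq:twoinnerprods} with $a_{\rho'}=1_{H_{\rho'}}$ (Proposition \ref{prop:arho=1}, the only place locality enters), to cycle the leftmost field to the right; the resulting $\iota(\rbar_{\rho'}^*)$ is then pushed inside the argument of $\phi$ by $\N$-bimodularity, turning $\psi_{\rho',s'}$ into $\psi_{\rho',s'}^{\bullet*}$ and $\psi_{\rho',r'}^*$ into $\psi_{\rho',r'}^{\bullet}$. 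One lands on $\sum_{i,i'}y_i^*\phi(x_i^*x_{i'})y_{i'}$ with $x_i=\psi_{\rho,s}^{\bullet*}$ and $y_i=a_i\psi_{\rho,r}^{\bullet}$, which is nonnegative \emph{directly} from complete positivity via \cite[\Cor IV.3.4]{Ta1} --- no dilation, no braiding flip, and no trace on $\Hom(\iota\rho,\iota\rho)$ are needed. The essential trick you are missing is this cyclic move, which places the fields coming from the \emph{second} slots of $a^\bullet$ and $a$ inside the argument of $\phi$ in the form $x^*x'$, rather than leaving a product of two positives outside it.
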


\begin{proof}
  \begin{align}
      \langle\phi,\one\rangle&=\langle\phi,T(1,1)\rangle\\
      &=1\phi(1)\\
      &=1.\\
  \end{align}        
  
  \begin{align}  
      \overline{\langle\phi,T(\psi_{\rho,r},\psi_{\rho,s})\rangle}&=
      \phi(\psi_{\rho,s}^\ast)\psi_{\rho,r} \\
      &=d(\rho) E(\psi_{\rho,r}\phi(\psi_{\rho,s}^\ast))\\
      &=\iota(\rbar_\rho^\ast)E(\psi_{\rho,r}\phi(\psi_{\rho,s}^\ast))\iota(\rbar_\rho)\\
      &=E(\psi_{\rho,r}^{\bullet\ast} \phi(\psi_{\rho,s}^{\bullet}))\\
      &=\psi_{\rho,r}^{\bullet\ast} \phi(\psi_{\rho,s}^{\bullet})\\      
      &=\langle\phi,T(\psi_{\rho,r}^\bullet, \psi_{\rho,s}^\bullet)\rangle\\
      &=\langle\phi,T(\psi_{\rho,r},\psi_{\rho,s})^\bullet\rangle. 
   \end{align}
  
   Let $a=\sum_{i=(\rho,r,s)} a_i T(\psi_{\rho,r},\psi_{\rho,s})$, for $a_i\in\CC$
   and multi-indices $i=(\rho,r,s)$, then
   \begin{align}
      \left\langle\phi,a^\bullet \ast a \right\rangle &=
      \sum_{i,i'}\bar a_i a_{i'} \left\langle\phi,T(\psi_{\rho,r},\psi_{\rho,s})^\bullet\ast T(\psi_{\rho',r'},\psi_{\rho',s'}) \right\rangle\\
      &=\sum_{i,i'}\bar a_i a_{i'}
      \left\langle \phi,T(\psi_{\rho,r}^{\bullet}\psi_{\rho',r'},\psi_{\rho,s}^\bullet\psi_{\rho',s'})\right\rangle\\
      &=\sum_{i,i'}\bar a_i a_{i'}
      \psi_{\rho',r'}^\ast\psi_{\rho,r}^{\bullet\ast} \phi(\psi_{\rho,s}^\bullet\psi_{\rho',s'})\\
      &=\sum_{i,i'}\bar a_i a_{i'}
      d(\rho') E(\psi_{\rho,r}^{\bullet\ast} \phi(\psi_{\rho,s}^\bullet\psi_{\rho',s'})\psi_{\rho',r'}^\ast)\\ 
      &=\sum_{i,i'}\bar a_i a_{i'}\iota(\rbar_{\rho'}^\ast) E(\psi_{\rho,r}^{\bullet\ast} \phi(\psi_{\rho,s}^\bullet\psi_{\rho',s'})\psi_{\rho',r'}^\ast) 
      \iota(\rbar_{\rho'})\\ 
      &=\sum_{i,i'}\bar a_i a_{i'}E(\psi_{\rho,r}^{\bullet\ast} \phi(\psi_{\rho,s}^\bullet \iota(\rbar_{\rho'}^\ast)\psi_{\rho',s'})\psi_{\rho',r'}^\ast
      \iota(\rbar_{\rho'}))\\ 
      &=\sum_{i,i'}\bar a_i a_{i'}E(\psi_{\rho,r}^{\bullet\ast} \phi(\psi_{\rho,s}^\bullet\psi_{\rho',s'}^{\bullet\ast})\psi_{\rho',r'}^{\bullet})\\ 
      &=\sum_{i,i'}\bar a_i \psi_{\rho,r}^{\bullet\ast}\phi(\psi_{\rho,s}^\bullet\psi_{\rho',s'}^{\bullet\ast})\psi_{\rho',r'}^{\bullet}a_{i'}
      \\
      &=\sum_{i,i'}y_i^\ast\phi(x_i^\ast x_{i'})y_{i'}\geq 0,
    \end{align}
    where $y_i := a_i\psi^\bullet_{\rho,r}$, $x_i := \psi^{\bullet\ast}_{\rho,s}$, by the complete positivity of $\phi$, \cite[\Cor IV.3.4]{Ta1}.
\end{proof}

%%%
\subsection{Fourier transform}\label{sec:fourier}
%%%

Let $\N\subset \M$ be an irreducible discrete type $\III$ subfactor, with canonical and dual canonical endomorphisms denoted by $\gamma$ and $\theta$ (Section \ref{sec:genQsys}). Consider the \textbf{Fourier transform} \cite[\Def 7]{NiWi95}, see also \cite[\Sec 2.1]{BiRe2019}:
\begin{align}
\cF\colon \Hom(\gamma,\gamma)&\longrightarrow \Hom(\theta,\theta)\\
a&\longmapsto \theta(w^\ast)\bar\iota(a)w
\end{align}
which is injective\footnote{In \cite{NiWi95} the Fourier transform is defined for irreducible semidiscrete subfactors of infinite type, not necessarily braided, nor discrete. However, discreteness follows from semidiscreteness in the case of depth 2 subfactors by \cite{EnNe1996}.}. 
On some domain, \eg provided that $\cF(a),\cF(b)\in\Trig(\N\subset\M)$, we have 
\begin{align}\label{eq:Fismult}
\cF(ab)=\cF(b)\ast\cF(a).
\end{align}
Indeed
\begin{align}
\cF(ab)&=\theta(w^\ast)\bar\iota(ab)w\\
&=\theta(w^\ast)\bar\iota(a)\bar\iota(b)w\\
&=\sum_{\rho,r}\theta(w^\ast)m_{\rho,r}^\ast \theta(\bar\iota(a)w)\bar\iota(b)w\\
&=\sum_{\rho,r}m_{\rho,r}^\ast \theta(\theta(w^*)\bar\iota(a)w)\bar\iota(b)w\\
&=\sum_{\rho,r}m_{\rho,r}^\ast \theta(\cF(a))\bar\iota(b)w\\
&=\sum_{\rho,\sigma,r,s}m_{\rho,r}^\ast \theta(\cF(a))\bar\iota(\iota(w^\ast) b\iota(m_{\sigma,s}))w\\
&=\sum_{\rho,\sigma,r,s}m_{\rho,r}^\ast \theta(\cF(a))\bar\iota(\iota(w^\ast) b)wm_{\sigma,s}\\
&=\sum_{\rho,\sigma,r,s}m_{\rho,r}^\ast \theta(\cF(a))\cF(b) m_{\sigma,s}\\
&=\cF(b)\ast\cF(a).
\end{align}
On some domain, we also have 
\begin{align}\label{eq:Fisinvol}
\cF(a^\ast)=\cF(a)^\bullet,
\end{align}
as we show below for elements $a\in\Hom(\gamma,\gamma)$ such that $\cF(a) \in \Trig(\N\subset\M)$.

\begin{prop}\label{prop:Fdenserange}
If $\N\subset\M$ is in addition discrete and local, then the range of the Fourier transform $\cF\colon\Hom(\gamma,\gamma)\to\Hom(\theta,\theta)$ contains $\Trig(\N\subset\M)$. More precisely, consider the dual fields constructed in Section \ref{sec:dualfields}, then
\begin{align}
\cF(\psi_{\rho,r}^* \iota(\bar{\psi}_{\rho,s})) = T(\psi_{\rho,r},\psi_{\rho,s}).
\end{align}
for every irreducible $\rho\prec\theta$ and $r,s = 1,\ldots,n_\rho$.
\end{prop}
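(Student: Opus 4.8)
The plan is to first verify the explicit formula $\cF(\psi_{\rho,r}^*\iota(\bar\psi_{\rho,s})) = T(\psi_{\rho,r},\psi_{\rho,s})$ for irreducible $\rho\prec\theta$ and $r,s=1,\dots,n_\rho$ by a direct computation with the generalized Q-system of intertwiners, and then to deduce the containment $\Trig(\N\subset\M)\subseteq\cF(\Hom(\gamma,\gamma))$: since $\cF$ is linear, its range is a subspace of $\Hom(\theta,\theta)$ containing each matrix unit $w_{\rho,r}w_{\rho,s}^* = T(\psi_{\rho,r},\psi_{\rho,s})$, and, as recalled before Proposition \ref{prop:multTirred}, these matrix units span $\Trig(\N\subset\M)$ as a vector space. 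Before computing, one notes that $\psi_{\rho,r}^*\iota(\bar\psi_{\rho,s})$ does lie in the domain $\Hom(\gamma,\gamma)$ of $\cF$: indeed $\iota(\bar\psi_{\rho,s})\in\Hom(\gamma,\iota\rho\bar\iota)$ because $\bar\psi_{\rho,s}\in\Hom(\bar\iota,\rho\bar\iota)$, while $\psi_{\rho,r}^*\in\Hom(\iota\rho\bar\iota,\gamma)$ because $\psi_{\rho,r}^*\in\Hom(\iota\rho,\iota)$, so the composite is in $\Hom(\gamma,\gamma)$.

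For the computation itself I would unfold $\cF(a)=\theta(w^*)\bar\iota(a)w$ with $a=\psi_{\rho,r}^*\iota(\bar\psi_{\rho,s})$. Using that $\bar\iota$ is multiplicative and $\bar\iota\iota=\theta$, one gets $\bar\iota(a)=\bar\iota(\psi_{\rho,r}^*)\,\theta(\bar\psi_{\rho,s})$, hence
\begin{align}
\cF(\psi_{\rho,r}^*\iota(\bar\psi_{\rho,s}))=\theta(w^*)\,\bar\iota(\psi_{\rho,r}^*)\,\theta(\bar\psi_{\rho,s})\,w.
\end{align}
Since $\bar\psi_{\rho,s}\in\N$ (this is where locality enters, through the construction of dual fields in Section \ref{sec:dualfields}) and $w\in\Hom(\id,\theta)$, we have $\theta(\bar\psi_{\rho,s})\,w=w\,\bar\psi_{\rho,s}$; moreover $\bar\iota(\psi_{\rho,r}^*)\,w=w_{\rho,r}$ directly from \eqref{eq:defwM}, and $\bar\psi_{\rho,s}=w^*\bar\iota(\psi_{\rho,s})\,m=w_{\rho,s}^*\,m$ from the formulas closing Section \ref{sec:dualfields} (with $w_{\rho,s}^*=w_{\rho,s}^*p_{\rho,s}$, so that $w_{\rho,s}^*m$ is a genuine element of $\N$). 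Writing $q:=w_{\rho,r}w_{\rho,s}^*$ and using $q\,p_{\rho,s}=q$, this collapses the expression to $\theta(w^*)\,q\,m$, understood as the strongly convergent sum $\sum_{\sigma,u}\theta(w^*)\,q\,m_{\sigma,u}$.

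To finish I would invoke Lemma \ref{lem:genQsysprops}. Since $q\in\Hom(\theta,\theta)=\theta(\N)'\cap\N$ commutes with $\theta(w^*)\in\theta(\N)$, and $(1_\theta\otimes w^*)m_{\sigma,u}=\theta(w^*)m_{\sigma,u}=\delta_{\sigma,\id}\delta_{u,1}1_\theta$, the sum equals $\sum_{\sigma,u}q\,\theta(w^*)m_{\sigma,u}=q\cdot 1_\theta=q$. Thus $\cF(\psi_{\rho,r}^*\iota(\bar\psi_{\rho,s}))=q=w_{\rho,r}w_{\rho,s}^*=T(\psi_{\rho,r},\psi_{\rho,s})$, and the containment $\Trig(\N\subset\M)\subseteq\cF(\Hom(\gamma,\gamma))$ follows by linearity as indicated. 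I do not expect a genuine conceptual obstacle; the only point requiring attention is the usual bookkeeping with the tensor product of morphisms and with the formal sum $m=\sum_{\rho,r}m_{\rho,r}$, namely checking at each step that one is manipulating genuine elements of $\N$ and that only finitely many, or at worst strongly convergent, summands contribute — but this is precisely what Lemma \ref{lem:genQsysprops} and the analysis of $qm=(q\otimes 1_\theta)m$ in Section \ref{sec:dualfields} were set up to guarantee, so the verification should go through cleanly.
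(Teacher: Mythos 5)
Your computation is correct and follows essentially the same route as the paper: unfold $\cF$, use the intertwining relations to isolate $w_{\rho,r}$ and $\bar\psi_{\rho,s}=w_{\rho,s}^*m$, and collapse the remaining sum via the identity $\theta(w^*)m=1_\theta$ from Lemma \ref{lem:genQsysprops}. The only (harmless) difference is organizational — you commute $w$ to the right and then use that $w_{\rho,r}w_{\rho,s}^*$ commutes with $\theta(w^*)$, whereas the paper pushes everything inside $\iotabar(\cdot)$ — and your explicit domain check and the linearity argument for the containment $\Trig(\N\subset\M)\subseteq\cF(\Hom(\gamma,\gamma))$ are correct additions that the paper leaves implicit.
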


\begin{proof}
Compute
\begin{align}
\cF(\psi_{\rho,r}^* \iota(\psibar_{\rho,s})) &= \iotabar\iota(w^*) \iotabar(\psi_{\rho,r}^*) \iotabar\iota(\psibar_{\rho,s}) w\\
&= \iotabar(\psi_{\rho,r}^* \iota\rho(w^*) \iota(w_{\rho,s}^*) \iota(m))w\\
&= \iotabar(\psi_{\rho,r}^* \iota(w_{\rho,s}^*) \iota(\theta(w^*)m))w\\
&= \iotabar(\psi_{\rho,r}^*) \iotabar\iota(w_{\rho,s}^*)w\\
&= w_{\rho,r} w_{\rho,s}^*\\
&= T(\psi_{\rho,r},\psi_{\rho,s})
\end{align}
using $\theta(w^*)m = 1_\theta$.
\end{proof}

On trigonometric polynomials we have 
\begin{align}
\cF((\psi_{\rho,r}^* \iota(\bar{\psi}_{\rho,s}))^*) = \cF(\psi_{\rho,r}^* \iota(\bar{\psi}_{\rho,s}))^\bullet,
\end{align}
indeed
\begin{align}
\cF((\psi_{\rho,r}^* \iota(\bar{\psi}_{\rho,s}))^*) &= \cF( \iota(\bar{\psi}_{\rho,s})^* \psi_{\rho,r})\\
&= \theta(w^*) \iotabar(\iota(m^*) \iota\iotabar(\psi_{\rho,s}^*)\iota(w)\psi_{\rho,r})w\\
&= \iotabar(\iota(w^*) \gamma(M^*\psi_{\rho,s}^*) \iota(w) \psi_{\rho,r})w\\
&= \iotabar(E(M^*\psi_{\rho,s}^*)\psi_{\rho,r})w,
\end{align}
where $M = \iotabar^{-1}(m)$ and $m$ is the formal sum defined in Section \ref{sec:genQsys}, \ie $M = \sum_{\sigma,t} \iota(w_{\sigma,t}) \psi_{\sigma,t}$ and $w_{\sigma,t} = \iotabar(\psi_{\sigma,t}^*)w$, thus
\begin{align}
\hspace{5.8cm} &= \sum_{\sigma,t}\iotabar(E(\psi_{\sigma,t}^* \psi_{\rho,s}^*)\psi_{\rho,r})ww^* \iotabar(\psi_{\sigma,t})\\
&= \sum_{\sigma,t}\iotabar(E(\psi_{\sigma,t}^* \iota(\sigma(r_{\sigma}^*))\iota(\bar{r}_{\sigma})\psi_{\rho,s}^*)\psi_{\rho,r})ww^* \iotabar(\psi_{\sigma,t})\\
&= \sum_{\sigma,t}\iotabar(\iota(r_{\sigma}^*) E(\psi_{\sigma,t}^\bullet \psi_{\rho,s}^*)\psi_{\rho,r})ww^* \iotabar(\psi_{\sigma,t})\\
&= \sum_{t}\iotabar((\psi_{\rho,r}^\bullet)^*)ww^* \iotabar(\iota(r_{\rho}^*) \iota(\rhobar(\bar{r}_{\rho}))\psi_{\rhobar,t} E(\psi_{\rhobar,t}^\bullet \psi_{\rho,s}^*))\\
&= \sum_{t}\iotabar((\psi_{\rho,r}^\bullet)^*)ww^* \iotabar((\psi_{\rhobar,t}^\bullet)^* E(\psi_{\rhobar,t}^\bullet \psi_{\rho,s}^*)\iota(\bar{r}_{\rho}))\\
&= \iotabar((\psi_{\rho,r}^\bullet)^*)ww^* \iotabar(\psi_{\rho,s}^\bullet)\\
&= T(\psi_{\rho,r}^\bullet,\psi_{\rho,s}^\bullet)\\
&= T(\psi_{\rho,r},\psi_{\rho,s})^\bullet
\end{align}
using the conjugate equations, the Pimsner--Popa expansion (Proposition \ref{prop:PiPoexp}) and the fact that $E(\psi_{\sigma,t}^\bullet \psi_{\rho,s}^*) = 0$ if $[\sigma] \neq [\rhobar]$, and $E(\psi_{\sigma,t}^\bullet \psi_{\rho,s}^*)\in\CC\one$ if $\sigma = \rhobar$. 

Finally, equation \eqref{eq:Fisinvol} follows from Proposition \ref{prop:triglin} and from the linearity of the Fourier transform.

%%%
\subsection{The reduced \Cstar-algebra of a local discrete type $\III$ subfactor}\label{sec:Cistarred}
%%%

In the same assumptions of Section \ref{sec:trig}, we want to endow the commutative unital involutive algebra $\Trig(\N\subset \M)$ (Theorem \ref{thm:trigcommutativestaralg}) with a \Cstar-norm and then take the norm closure. Let 
\begin{align}\label{eq:stateEtrig}
\omega_E:=\langle E,\slot\rangle
\end{align}
be the state on $\Trig(\N\subset \M)$ given by Proposition \ref{prop:ucpstates}. 

\begin{lem}\label{lem:haarfaith}
The state $\omega_E$ is faithful, namely $\omega_E(a^\bullet * a) > 0$ if $a\in\Trig(\N\subset \M)$, $a\neq 0$.
\end{lem}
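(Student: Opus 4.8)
The plan is to evaluate $\omega_E(a^\bullet\ast a)$ explicitly on the matrix-unit basis of $\Trig(\N\subset\M)$ and to exhibit it as a manifestly positive expression that vanishes only for $a=0$; that $\omega_E(a^\bullet\ast a)\geq 0$ is already known, since $\omega_E$ is a state by Proposition \ref{prop:ucpstates}, so the only content is the strict inequality for $a\neq 0$. Since $\Trig(\N\subset\M)$ is the algebraic direct sum of the finite matrix algebras $M_{n_\rho}(\CC)$ spanned by the matrix units $T(\psi_{\rho,r},\psi_{\rho,s})=w_{\rho,r}w_{\rho,s}^\ast$, with $\rho$ running over the inequivalent irreducible subendomorphisms of $\theta$ and $r,s=1,\ldots,n_\rho$, I would write $a=\sum_{\rho,r,s}a_{\rho,r,s}\,T(\psi_{\rho,r},\psi_{\rho,s})$ as a finite linear combination.

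Applying the definition of the pairing and running the chain of identities from the proof of Proposition \ref{prop:ucpstates} with $\phi=E$ (which is an $\N$-bimodular ucp map, so that computation applies verbatim), one reaches
\begin{align}
\omega_E(a^\bullet\ast a)\,1_\iota=\sum_{\rho,r,s}\sum_{\rho',r',s'}\bar a_{\rho,r,s}\,a_{\rho',r',s'}\;\psi_{\rho,r}^{\bullet\ast}\,E(\psi_{\rho,s}^\bullet\,\psi_{\rho',s'}^{\bullet\ast})\,\psi_{\rho',r'}^\bullet.
\end{align}
The locality hypothesis enters here through Proposition \ref{prop:arho=1}: the equality $a_\rho=1_{H_\rho}$ is precisely what makes $\{\psi_{\rho,r}^\bullet\}$ again a Pimsner--Popa basis of charged fields with the normalization $E(\psi_{\rho,r}^\bullet\,\psi_{\sigma,s}^{\bullet\ast})=\delta_{\rho,\sigma}\delta_{r,s}1$ (the discussion preceding Lemma \ref{lem:bulletPiPoexp}). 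Substituting $E(\psi_{\rho,s}^\bullet\,\psi_{\rho',s'}^{\bullet\ast})=\delta_{\rho,\rho'}\delta_{s,s'}1$ collapses the double sum, and a second use of $a_{\rhobar}=1_{H_{\rhobar}}$ together with \eqref{eq:twoinnerprods} gives $\psi_{\rho,r}^{\bullet\ast}\psi_{\rho,r'}^\bullet=d(\rhobar)\,E(\psi_{\rho,r'}^\bullet\,\psi_{\rho,r}^{\bullet\ast})=d(\rho)\,\delta_{r,r'}1$. What remains is
\begin{align}
\omega_E(a^\bullet\ast a)=\sum_{\rho,r,s}d(\rho)\,|a_{\rho,r,s}|^2,
\end{align}
a sum of nonnegative terms; since $d(\rho)\geq 1$ and the matrix units are linearly independent, it is strictly positive as soon as some $a_{\rho,r,s}\neq 0$, i.e. whenever $a\neq 0$.

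I do not foresee a genuine obstacle: the argument is essentially bookkeeping with the matrix-unit description of $\Trig(\N\subset\M)$ from Section \ref{sec:trig} and the two-inner-products formula \eqref{eq:twoinnerprods}. The one point that deserves a word of care is the reuse of the identity chain in the proof of Proposition \ref{prop:ucpstates}, which relies only on $\N$-bimodularity and complete positivity of the map — both enjoyed by the conditional expectation $E$ — so that the only new ingredient needed is the locality consequence $a_\rho=1_{H_\rho}$ used to pin down the exact coefficients $d(\rho)$.
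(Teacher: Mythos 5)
Your proof is correct and follows essentially the same route as the paper's: both evaluate $\omega_E(T(\psi_{\rho,r},\psi_{\rho,s})^\bullet \ast T(\psi_{\sigma,r'},\psi_{\sigma,s'}))$ via the identity chain of Proposition \ref{prop:ucpstates} with $\phi=E$, use $a_\rho=1_{H_\rho}$ and the normalization of the $\psi^\bullet$-basis to obtain $d(\rho)\,\delta_{\rho,\sigma}\delta_{r,r'}\delta_{s,s'}$, and conclude $\omega_E(a^\bullet\ast a)=\sum_{\rho,r,s}d(\rho)|a_{\rho,r,s}|^2>0$ for $a\neq 0$.
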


\begin{proof}
By the chosen normalization of the charged fields \eqref{eq:normalizfields} and by $a_\rho = 1_{H_\rho}$, which follows from locality (Proposition \ref{prop:arho=1}), arguing as in the proof of Proposition \ref{prop:ucpstates} we have
\begin{align}
\omega_E(T(\psi_{\rho,r},\psi_{\rho,s})^\bullet * T(\psi_{\sigma,r'},\psi_{\sigma,s'}))
&= \psi_{\sigma,r'}^*\psi_{\rho,r}^{\bullet*} E(\psi_{\rho,s}^\bullet\psi_{\sigma,s'})\\
&= E(\psi_{\rho,r}^{\bullet*} E(\psi_{\rho,s}^\bullet\psi_{\sigma,s'}^{\bullet*}) \psi_{\sigma,r'}^\bullet)\\
&= E(\psi_{\rho,r}^{\bullet*} E(\psi_{\sigma,s'}\psi_{\rho,s}^*) \psi_{\sigma,r'}^\bullet)\\
&= E(\psi_{\rho,r}^{\bullet*} \psi_{\rho,r'}^\bullet) \delta_{\rho,\sigma}\delta_{s,s'}\\
&= d(\rho) \delta_{\rho,\sigma}\delta_{s,s'}\delta_{r,r'},
\end{align}
from which faithfulness follows.
\end{proof}

Denote by $(\lambda,\Hil_E,\xi_E)$ the GNS representation of $\Trig(\N\subset\M)$ associated with $\omega_E$, see \eg \cite[\Sec 1.3]{KhMo15}, where $\xi_E\in\Hil_E$ is the cyclic and separating unit vector such that $\omega_E=(\xi_E,\lambda(\slot)\xi_E)$. Denote by $\|\cdot\|_E$ the norm on $\Hil_E$ induced by the GNS inner product. 

\begin{lem}\label{lem:lambdafaithbound}
The representation $\lambda$ is faithful and by bounded linear operators on $\Hil_E$, namely $\lambda\colon \Trig(\N\subset \M)\to \B(\Hil_E)$, indeed
\begin{align}\label{eq:GNSbounded}
\|T(\psi_{\rho,r},\psi_{\rho,s}) * T(\psi_{\sigma,t},\psi_{\sigma,u})\|_E \leq d(\rho)\, \|T(\psi_{\sigma,t},\psi_{\sigma,u})\|_E.
\end{align}
\end{lem}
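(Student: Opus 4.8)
The faithfulness of $\lambda$ is immediate from Lemma \ref{lem:haarfaith}: if $\lambda(a)=0$ then in particular $\lambda(a)\xi_E=0$, so $0=\|\lambda(a)\xi_E\|_E^2=\omega_E(a^\bullet\ast a)$, whence $a=0$. The substance of the lemma is therefore the boundedness together with the estimate \eqref{eq:GNSbounded}, and my plan is to transport the question, via the Fourier transform of Section \ref{sec:fourier}, into the \Cstar-algebra $\Hom(\gamma,\gamma)\subseteq\M$, where boundedness is automatic. Fix once and for all a standard vector $\Omega$ for $\M$ inducing the $E$-invariant state (as in Section \ref{sec:amenability}), and write $\omega_\Omega:=(\Omega,\slot\Omega)$, so that $\omega_\Omega\circ E=\omega_\Omega$ and $\overline{\iota(\N)\Omega}$ is the range of the Jones projection.

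Set $\mathcal{A}:=\cF^{-1}(\Trig(\N\subset\M))$. By Proposition \ref{prop:Fdenserange} this makes sense, $\mathcal{A}=\Span\{\psi_{\rho,r}^\ast\iota(\bar\psi_{\rho,s})\}$, and $\cF(\psi_{\rho,r}^\ast\iota(\bar\psi_{\rho,s}))=T(\psi_{\rho,r},\psi_{\rho,s})$. First I would check that $\mathcal{A}$ is a unital $\ast$-subalgebra of the von Neumann algebra $\Hom(\gamma,\gamma)$: closure under products and adjoints follows from \eqref{eq:Fismult}, \eqref{eq:Fisinvol}, the fact that $\Trig(\N\subset\M)$ is closed under $\ast$ and $^\bullet$ (Theorem \ref{thm:trigcommutativestaralg}) and the injectivity of $\cF$, while $1\in\mathcal{A}$ because $\cF(1)=\cF(\psi_{\id,1}^\ast\iota(\bar\psi_{\id,1}))=T(\psi_{\id,1},\psi_{\id,1})=\one$. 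Then $\cF$ restricts to a linear isomorphism $\mathcal{A}\to\Trig(\N\subset\M)$ that is multiplicative (by \eqref{eq:Fismult} and commutativity of $\Trig(\N\subset\M)$) and carries $\ast$ to $^\bullet$. The crucial step is to identify the pulled-back state $\varphi:=\omega_E\circ\cF$ on $\mathcal{A}$: evaluating on the generators, $\varphi(\psi_{\rho,r}^\ast\iota(\bar\psi_{\rho,s}))=\langle E,T(\psi_{\rho,r},\psi_{\rho,s})\rangle=\iota^{-1}(\psi_{\rho,r}^\ast E(\psi_{\rho,s}))$, which vanishes for $[\rho]\neq[\id]$ (since $E(\psi_{\rho,s})=0$) and equals $1$ for $\rho=\id$ (since $\psi_{\id,1}=1$, $\bar\psi_{\id,1}=1_\theta$, so the generator is $1$); on the other hand $\omega_\Omega(\psi_{\rho,r}^\ast\iota(\bar\psi_{\rho,s}))=(\psi_{\rho,r}\Omega,\bar\psi_{\rho,s}\Omega)$, which also vanishes for $[\rho]\neq[\id]$, because $\bar\psi_{\rho,s}\in\N$ gives $\bar\psi_{\rho,s}\Omega\in\overline{\iota(\N)\Omega}$ while $(\iota(n)\Omega,\psi_{\rho,r}\Omega)=\omega_\Omega(E(\iota(n)^\ast\psi_{\rho,r}))=\omega_\Omega(\iota(n)^\ast E(\psi_{\rho,r}))=0$, and agrees trivially for $\rho=\id$. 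Hence $\varphi=\omega_\Omega\!\restriction_{\mathcal{A}}$, i.e.\ $\varphi$ is the restriction of an honest state on $\M$.

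From this the boundedness is routine: for $a,b\in\mathcal{A}$ one has $\varphi(b^\ast a^\ast ab)=\omega_\Omega(b^\ast a^\ast ab)=\|ab\Omega\|^2\le\|a\|_\M^2\|b\Omega\|^2=\|a\|_\M^2\varphi(b^\ast b)$, so the GNS representation $\pi_\varphi$ of $\varphi$ on $\mathcal{A}$ is by bounded operators with $\|\pi_\varphi(a)\|\le\|a\|_\M$. Since $\|\pi_\varphi(a)\xi_\varphi\|^2=\varphi(a^\ast a)=\omega_E(\cF(a)\ast\cF(a)^\bullet)=\omega_E(\cF(a)^\bullet\ast\cF(a))=\|\lambda(\cF(a))\xi_E\|_E^2$, the assignment $\pi_\varphi(a)\xi_\varphi\mapsto\lambda(\cF(a))\xi_E$ extends to a unitary $\Hil_\varphi\to\Hil_E$ intertwining $\pi_\varphi(a)$ with $\lambda(\cF(a))$; hence $\lambda$ takes values in $\B(\Hil_E)$ and $\|\lambda(\cF(a))\|\le\|a\|_\M$ for all $a\in\mathcal{A}$. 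In particular $\|\lambda(T(\psi_{\rho,r},\psi_{\rho,s}))\|\le\|\psi_{\rho,r}^\ast\iota(\bar\psi_{\rho,s})\|_\M$.

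It remains to bound this operator norm by $d(\rho)$, the last place needing real work. Submultiplicativity gives $\|\psi_{\rho,r}^\ast\iota(\bar\psi_{\rho,s})\|_\M\le\|\psi_{\rho,r}\|\,\|\bar\psi_{\rho,s}\|$. By \eqref{eq:twoinnerprods} with $a_\rho=1_{H_\rho}$ (Proposition \ref{prop:arho=1}) and the normalization \eqref{eq:normalizfields}, $\psi_{\rho,r}^\ast\psi_{\rho,r}=d(\rho)E(\psi_{\rho,r}\psi_{\rho,r}^\ast)=d(\rho)1$, so $\|\psi_{\rho,r}\|=d(\rho)^{1/2}$. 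For the dual field, locality (as in Section \ref{sec:dualfields}) gives $\bar\psi_{\rho,s}=w_{\rho,s}^\ast p_{\rho,s}m=w_{\rho,s}^\ast\varepsilon^{\pm}_{\theta,\theta}m_{\rho,s}$, and $m_{\rho,s}=\iotabar(M_{\rho,s})=\theta(w_{\rho,s})\iotabar(\psi_{\rho,s})$ by \eqref{eq:defwM} and Notation \ref{not:genQsysint}; since $w_{\rho,s}$ is an isometry, $\varepsilon^{\pm}_{\theta,\theta}$ is unitary, and $\theta,\iotabar,\iota$ are isometric $\ast$-homomorphisms, this yields $\|\bar\psi_{\rho,s}\|\le\|m_{\rho,s}\|\le\|\psi_{\rho,s}\|=d(\rho)^{1/2}$. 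Therefore $\|\psi_{\rho,r}^\ast\iota(\bar\psi_{\rho,s})\|_\M\le d(\rho)$, and \eqref{eq:GNSbounded} follows from $\|T(\psi_{\rho,r},\psi_{\rho,s})\ast T(\psi_{\sigma,t},\psi_{\sigma,u})\|_E=\|\lambda(T(\psi_{\rho,r},\psi_{\rho,s}))\,T(\psi_{\sigma,t},\psi_{\sigma,u})\|_E\le\|\lambda(T(\psi_{\rho,r},\psi_{\rho,s}))\|\,\|T(\psi_{\sigma,t},\psi_{\sigma,u})\|_E$. The main obstacle is precisely the identification $\varphi=\omega_\Omega\!\restriction_{\mathcal{A}}$, which is what allows a \Cstar-bound to be extracted from the non-closed algebra $\mathcal{A}$, together with the two norm computations $\|\psi_{\rho,r}\|=d(\rho)^{1/2}$ and $\|\bar\psi_{\rho,s}\|\le d(\rho)^{1/2}$ that pin down the constant.
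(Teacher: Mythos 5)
Your proof is correct, but it takes a genuinely different route from the paper's. The paper stays entirely inside $\Trig(\N\subset\M)$: it expands $\|T(\psi_{\rho,r},\psi_{\rho,s})*T(\psi_{\sigma,t},\psi_{\sigma,u})\|_E^2$ using the multiplication rule $T(\psi_1,\psi_2)*T(\psi_3,\psi_4)=T(\psi_1\psi_3,\psi_2\psi_4)$ and the pairing with $E$, then peels off the constant via the operator inequality $\psi_{\rho,s}^\bullet\psi_{\rho,s}^{\bullet*}\leq d(\rho)1$ together with positivity of $E$ and irreducibility (to reduce the remaining sandwich to a scalar). You instead transport the whole question through the Fourier transform to the $*$-subalgebra $\cF^{-1}(\Trig(\N\subset\M))\subset\Hom(\gamma,\gamma)\subset\M$, and the key observation --- that $\omega_E\circ\cF$ agrees with the vector state $(\Omega,\slot\Omega)$ on the spanning set $\psi_{\rho,r}^*\iota(\bar\psi_{\rho,s})$, hence on all of $\cF^{-1}(\Trig(\N\subset\M))$ --- lets you import the ambient \Cstar-bound and conclude $\|\lambda(\cF(a))\|\leq\|a\|_\M$. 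This is sound: the Fourier-transform material (Section \ref{sec:fourier}, in particular Proposition \ref{prop:Fdenserange} and the identities \eqref{eq:Fismult}, \eqref{eq:Fisinvol} on trigonometric polynomials) precedes the present lemma and does not depend on it, so there is no circularity, and your orthogonality argument $(\iota(n)\Omega,\psi_{\rho,r}\Omega)=\omega_\Omega(\iota(n)^*E(\psi_{\rho,r}))=0$ for $[\rho]\neq[\id]$ is exactly right. What your approach buys is the stronger and more structural estimate $\|\lambda(t)\|\leq\|\cF^{-1}(t)\|_\M$ for every $t\in\Trig(\N\subset\M)$, which anticipates the identification $\Hom(\gamma,\gamma)\cong L^\infty(K,\omega_E)$ of Section \ref{sec:gammagamma}; the specific constant $d(\rho)$ then requires the two separate norm computations $\|\psi_{\rho,r}\|=d(\rho)^{1/2}$ and $\|\bar\psi_{\rho,s}\|\leq d(\rho)^{1/2}$, the second of which correctly uses the braiding identity $(p_{\rho,s}\otimes 1_\theta)m=\varepsilon^{\pm}_{\theta,\theta}(1_\theta\otimes p_{\rho,s})m$ from Section \ref{sec:dualfields}. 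The paper's computation is more elementary and self-contained, extracting the same constant from the same source ($\psi^\bullet\psi^{\bullet*}\leq d(\rho)1$) but applied under the expectation rather than to an operator norm in $\M$. Both arguments rely on locality (yours through the dual fields and Proposition \ref{prop:Fdenserange}, the paper's through $a_\rho=1_{H_\rho}$ and the good behaviour of $\bullet$), so neither is more general in that respect.
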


\begin{proof}
Compute
\begin{align}
&\|T(\psi_{\rho,r},\psi_{\rho,s}) * T(\psi_{\sigma,t},\psi_{\sigma,u})\|_E^2 
\\&= \omega_E(
T(\psi_{\sigma,t},\psi_{\sigma,u})^\bullet \ast T(\psi_{\rho,r},\psi_{\rho,s})^\bullet\ast
T(\psi_{\rho,r},\psi_{\rho,s}) \ast T(\psi_{\sigma,t},\psi_{\sigma,u}))
\\&= \omega_E(
T(\psi_{\sigma,t}^\bullet\psi_{\rho,r}^\bullet\psi_{\rho,r}\psi_{\sigma,t},\psi_{\sigma,u}^\bullet\psi_{\rho,s}^\bullet \psi_{\rho,s}\psi_{\sigma,u}))
\\&=
\psi_{\sigma,t}^\ast\psi_{\rho,r}^\ast\psi_{\rho,r}^{\bullet\ast} \psi_{\sigma,t}^{\bullet\ast}E(\psi_{\sigma,u}^\bullet\psi_{\rho,s}^\bullet \psi_{\rho,s}\psi_{\sigma,u})
\\&=
\psi_{\rho,r}^\ast\psi_{\rho,r}^{\bullet\ast} \psi_{\sigma,t}^{\bullet\ast}E(\psi_{\sigma,u}^\bullet\psi_{\rho,s}^\bullet \psi_{\rho,s}\psi_{\sigma,u}^{\bullet\ast})\psi_{\sigma,t}^\bullet
\\&=
\psi_{\rho,r}^{\bullet\ast} \psi_{\sigma,t}^{\bullet\ast}E(\psi_{\sigma,u}^\bullet\underbrace{\psi_{\rho,s}^\bullet \psi_{\rho,s}^{\bullet\ast}}_{\leq d(\rho)1}\psi_{\sigma,u}^{\bullet\ast})\psi_{\sigma,t}^\bullet\psi_{\rho,r}^\bullet
\\&\leq d(\rho) \psi_{\rho,r}^{\bullet\ast}\underbrace{\psi_{\sigma,t}^{\bullet\ast}E(\psi_{\sigma,u}^\bullet\psi_{\sigma,u}^{\bullet\ast})\psi_{\sigma,t}^\bullet}_{\in\CC 1} \psi_{\rho,r}^\bullet
\\&=d(\rho)^2 \psi_{\sigma,t}^{\bullet\ast}E(\psi_{\sigma,u}^\bullet\psi_{\sigma,u}^{\bullet\ast})\psi_{\sigma,t}^\bullet
\\&=d(\rho)^2 \psi_{\sigma,t}^*\psi_{\sigma,t}^{\bullet\ast}E(\psi_{\sigma,u}^\bullet\psi_{\sigma,u})
\\&=d(\rho)^2 \|T(\psi_{\sigma,t},\psi_{\sigma,u})\|_E^2.
\end{align}
\end{proof}

\begin{rmk}
Note that the same computation leading to \eqref{eq:GNSbounded} holds replacing the expectation $E:\M\to\N\subset\M$ with any other $\N$-bimodular ucp map $\phi:\M\to\M$.
\end{rmk}

\begin{defi}\label{def:Cred}
We denote by $\Cred(\N\subset \M)$ be the norm closure of $\lambda(\Trig(\N\subset \M))$ in $\B(\Hil_E)$, and we refer to it as the \textbf{reduced \Cstar-algebra} of $\N\subset\M$. 

Thus 
\begin{align}
\Cred(\N\subset \M) \cong C(K)
\end{align}
by Gelfand duality, where $K$ is a compact Hausdorff space, the spectrum of the reduced \Cstar-algebra, and $C(K)$ the algebra of continuous functions on $K$. Note that $\Cred(\N\subset \M)$ is separable by construction, thus $K$ is metrizable.
\end{defi}

\begin{lem}\label{lem:Eisfaithful}
The state $\omega_E$ defined in \eqref{eq:stateEtrig} extends by the formula $\omega_E = (\xi_E,\slot\xi_E)$ to a faithful state on $\Cred(\N\subset \M)$. 
\end{lem}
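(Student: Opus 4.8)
The plan is to reduce everything to the faithfulness of $\omega_E$, since the rest is formal. By construction of the GNS representation one has $\omega_E(a)=(\xi_E,\lambda(a)\xi_E)$ for $a\in\Trig(\N\subset\M)$, and the functional $T\mapsto(\xi_E,T\xi_E)$ on $\B(\Hil_E)$ is bounded by $\|\xi_E\|_E^2=1$; hence it restricts to a state on $\Cred(\N\subset\M)$ which agrees with $\omega_E$ on the norm-dense subalgebra $\lambda(\Trig(\N\subset\M))$. So it only remains to show this vector state is faithful on $\Cred(\N\subset\M)$.

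For this I would use two facts. First, $\Cred(\N\subset\M)$ is commutative: being the norm closure of $\lambda(\Trig(\N\subset\M))$ with $\Trig(\N\subset\M)$ commutative by Theorem \ref{thm:trigcommutativestaralg}, its image under the $\ast$-homomorphism $\lambda$ is a commutative $\ast$-subalgebra of $\B(\Hil_E)$, and commutativity persists in the norm closure. Second, $\xi_E$ is cyclic for $\lambda(\Trig(\N\subset\M))$, hence a fortiori for $\Cred(\N\subset\M)$. Suppose now $T\in\Cred(\N\subset\M)$ with $\omega_E(T^\ast T)=0$, i.e.\ $\|T\xi_E\|_E^2=0$, so that $T^\ast T\xi_E=T^\ast(T\xi_E)=0$. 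For every $S\in\Cred(\N\subset\M)$ commutativity gives $(T^\ast T)S\xi_E=S(T^\ast T)\xi_E=0$, so $T^\ast T$ annihilates the dense subspace $\Cred(\N\subset\M)\xi_E$ and, being bounded, $T^\ast T=0$, whence $T=0$. This gives faithfulness, and unitality and positivity of $(\xi_E,\slot\xi_E)$ are immediate.

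The argument is essentially formal once Theorem \ref{thm:trigcommutativestaralg} (commutativity of $\Trig(\N\subset\M)$), Lemma \ref{lem:haarfaith} (faithfulness of $\omega_E$ on $\Trig(\N\subset\M)$, which is exactly what makes $\xi_E$ a cyclic \emph{and} separating vector for $\lambda(\Trig(\N\subset\M))$) and Lemma \ref{lem:lambdafaithbound} (boundedness of $\lambda$, needed to form $\Cred(\N\subset\M)$) are in place, so there is no genuine obstacle. The only point deserving care is the passage from ``separating for $\lambda(\Trig(\N\subset\M))$'' to ``separating for the whole norm closure $\Cred(\N\subset\M)$'', and it is precisely commutativity (via the elementary fact that a cyclic vector for a commutative $C^\ast$-algebra of operators is automatically separating) that makes this work. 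Equivalently one could identify $\omega_E$ with a Radon probability measure $\mu_K$ on the Gelfand spectrum $K$ of $\Cred(\N\subset\M)$ via Riesz--Markov and observe that $\mu_K$ has full support because $\xi_E$ is cyclic, but the operator-algebraic formulation above is the most economical.
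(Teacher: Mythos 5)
Your proof is correct and follows essentially the same route as the paper: the paper's one-line argument observes that $\xi_E$, being cyclic for $\lambda(\Trig(\N\subset\M))$, is separating for the commutant $\lambda(\Trig(\N\subset\M))'$, which contains $\Cred(\N\subset\M)$ by commutativity — and your computation with $T^\ast T$ annihilating the dense subspace $\Cred(\N\subset\M)\xi_E$ is exactly the standard proof of that fact, written out. (As you note, Lemma \ref{lem:haarfaith} is not actually needed here; cyclicity from the GNS construction plus commutativity suffice.)
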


\begin{proof}
Observe that $\xi_E$ is cyclic for $\lambda(\Trig(\N\subset\M))$ thus separating for $\lambda(\Trig(\N\subset\M))'$ and that $\Cred(\Trig(\N\subset\M))\subset\lambda(\Trig(\N\subset\M))'$ by the commutativity of $\Trig(\N\subset\M)$.
\end{proof}

%%%
\subsection{A duality theorem: UCP/states correspondence}\label{sec:UCP/states}
%%%

The goal of this section is to show that the duality pairing introduced in Section \ref{sec:dualitypairing} gives a bijective correspondence, in fact a homeomorphism, between the \emph{states on $\Cred(\N\subset \M)$} and the set of \emph{$\N$-bimodular ucp maps on $\M$} (Corollary \ref{cor:localNbimoducp}, Theorem \ref{thm:ucpstatesduality}). These are the main technical results of the paper.

Let $\N\subset \M$ be an irreducible semidiscrete subfactor (Definition \ref{def:semi-discr}). Let $\Omega\in\Hil$ be a standard unit vector for $\M\subset \B(\Hil)$ such that the associated state is $E$-invariant, where $E:\M\to\N\subset\M$ is the unique normal faithful conditional expectation for the subfactor. Then $E$ is $\Omega$-adjointable by Takesaki's theorem \cite[\Sec 10]{Str81}, in symbols $E\in{\UCP}^{\sharp}(\M,\Omega)$. 
In the notation of Section \ref{sec:Omegaadjmaps}, $e := V_E$ is the Jones projection of $E$ with respect to $\Omega$, namely $e m \Omega = E(m)\Omega$ for every $m\in\M$, and $E^{\sharp} = E$ because $(m_1 \Omega, E(m_2) \Omega) = (m_1 \Omega, e m_2 \Omega) = (E(m_1) \Omega, m_2 \Omega)$ for every $m_1,m_2\in\M$.

\begin{defi}\label{def:UCPbim}
We denote by ${\UCP}_\N(\M,\Omega)$ the set of $\N$-bimodular $\Omega$-preserving ucp maps $\phi:\M\to\M$, and by ${\UCP}^{\sharp}_\N(\M,\Omega)$ the subset of $\Omega$-adjointable ones.
\end{defi}

Clearly, $E\in{\UCP}^{\sharp}_\N(\M,\Omega)$. Observe also that if $\phi$ is $\N$-bimodular and $\Omega$-adjointable, then the adjoint $\phi^\sharp$ is also $\N$-bimodular. A remarkable consequence of discreteness and locality of the subfactor (Definition \ref{def:semi-discr} and \ref{def:localsubf}), or better of the property $a_\rho = 1_{H_\rho}$ implied by locality (Proposition \ref{prop:arho=1}), is the following:

\begin{lem}\label{lem:OmegapresisOmegaadj}
Let $\N\subset\M$ be an irreducible discrete type $\III$ subfactor such that $a_\rho = 1_{H_\rho}$ for every irreducible $\rho\prec\theta$ and let $\Omega$ be as above. Then every $\N$-bimodular $\Omega$-preserving ucp map $\phi:\M\to\M$, \ie such that $(\Omega, \phi(\slot)\Omega) = (\Omega, \slot\Omega)$, is automatically $\Omega$-adjointable. 
\end{lem}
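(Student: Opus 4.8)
The plan is to invoke Proposition \ref{prop:adjointiffmodular}: since $\phi$ is $\Omega$-preserving it is normal, so it suffices to show that $\phi$ commutes with the modular group $\sigma_t:=\sigma_t^{\M,\Omega}$, $t\in\RR$. Write $\omega:=(\Omega,\slot\,\Omega)$ and $\omega_0:=\omega\circ\iota$. Since $\omega$ is $E$-invariant, Takesaki's theorem \cite[\Sec 10]{Str81} (already used in the excerpt to make $E$ adjointable) gives that $\sigma_t$ leaves $\iota(\N)$ invariant with $\sigma_t\circ\iota=\iota\circ\sigma_t^{\N,\omega_0}$. Fix a Pimsner--Popa basis of charged fields $\{\psi_{\rho,r}\}$ (Notation \ref{not:genQsysint}). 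For $m$ in the (weakly dense) $\ast$-algebra $\M_0$ generated by $\iota(\N)$ and the $\psi_{\rho,r}$, the expansion $m=\sum_{\rho,r}\psi_{\rho,r}^\ast\,\iota(n_{\rho,r})$ with $\iota(n_{\rho,r})=E(\psi_{\rho,r}m)\in\iota(\N)$ is finite (Proposition \ref{prop:PiPoexp}); applying $\sigma_t$, which preserves $\iota(\N)$, and then the $\N$-bimodular map $\phi$ (Definition \ref{def:Nbimoducp}), one obtains $\phi(\sigma_t(m))=\sum_{\rho,r}\phi(\sigma_t(\psi_{\rho,r}^\ast))\,\iota(\sigma_t^{\N,\omega_0}(n_{\rho,r}))$ and, symmetrically, $\sigma_t(\phi(m))=\sum_{\rho,r}\sigma_t(\phi(\psi_{\rho,r}^\ast))\,\iota(\sigma_t^{\N,\omega_0}(n_{\rho,r}))$. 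Hence, by normality, $\phi\circ\sigma_t=\sigma_t\circ\phi$ follows once $\phi(\sigma_t(\psi))=\sigma_t(\phi(\psi))$ is checked for every charged field $\psi\in H_\rho$, $\rho\prec\theta$ irreducible. Note that $\N$-bimodularity already forces $\phi(H_\sigma)\subseteq H_\sigma$ for every $\sigma\in\End_0(\N)$, since $\phi(\psi)\iota(n)=\phi(\psi\iota(n))=\phi(\iota(\sigma(n))\psi)=\iota(\sigma(n))\phi(\psi)$.

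The crux is the action of $\sigma_t$ on $H_\rho$, and this is exactly where the hypothesis $a_\rho=1_{H_\rho}$ enters. From $\psi\iota(n)=\iota(\rho(n))\psi$ one computes $\sigma_t(\psi)\in H_{\rho_t}$ for $\rho_t:=\sigma_t^{\N,\omega_0}\circ\rho\circ\sigma_{-t}^{\N,\omega_0}\in\End_0(\N)$; since the decomposition of $\theta$ into irreducibles is covariant under the modular flow of $\N$ one has $[\rho_t]=[\rho]$, so one may fix a continuous path of unitaries $u_t^\rho\in\Hom(\rho_t,\rho)\subseteq\N$ and set $\hat\sigma_t(\psi):=\iota(u_t^\rho)\,\sigma_t(\psi)$, a linear map $H_\rho\to H_\rho$. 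Using that $\phi$ preserves $H_\rho$ and $H_{\rho_t}$ and is left $\N$-modular, for every $\psi\in H_\rho$ one gets
\begin{align*}
\iota(u_t^\rho)\bigl(\phi(\sigma_t(\psi))-\sigma_t(\phi(\psi))\bigr)=\phi|_{H_\rho}\bigl(\hat\sigma_t(\psi)\bigr)-\hat\sigma_t\bigl(\phi|_{H_\rho}(\psi)\bigr),
\end{align*}
so, $\iota(u_t^\rho)$ being unitary, $\phi$ commutes with $\sigma_t$ on $H_\rho$ if and only if $\phi|_{H_\rho}$ commutes with $\hat\sigma_t$. By the explicit form of the modular action on charged fields due to Izumi--Longo--Popa \cite[\Sec 3, proof of \Thm 3.3]{IzLoPo1998} --- which is precisely the place where the invariant $\{a_\rho\}$ was introduced --- the map $\hat\sigma_t$ acts on the multiplicity space $H_\rho$ as $a_\rho^{\,it}$ up to a scalar. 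Since locality forces $a_\rho=1_{H_\rho}$ (Proposition \ref{prop:arho=1}), $\hat\sigma_t$ is a scalar multiple of the identity on $H_\rho$ and therefore commutes with the linear map $\phi|_{H_\rho}$. This yields the commutation on charged fields, hence $\phi\in{\UCP}^{\sharp}(\M,\Omega)$.

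The step I expect to be the main obstacle is making the modular-flow formula precise: identifying $\hat\sigma_t$ with $a_\rho^{\,it}$ requires unwinding the Connes-cocycle computation of \cite[\Sec 3]{IzLoPo1998} and controlling the scalar ambiguities (from the choice of $u_t^\rho$ and the normalization \eqref{eq:normalizfields} of the $\psi_{\rho,r}$), together with the modular covariance $[\rho_t]=[\rho]$ of the decomposition of $\theta$, which in turn follows from the modular covariance of the canonical endomorphism $\gamma$ of $\N\subset\M$ and discreteness. Everything else is routine bookkeeping with the Pimsner--Popa expansion and $\N$-bimodularity.
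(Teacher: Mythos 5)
Your proof is correct and follows essentially the same route as the paper: reduce via Proposition \ref{prop:adjointiffmodular} to commutation with the modular group, handle $\iota(\N)$ by Takesaki, and handle the charged fields using the Izumi--Longo--Popa description of the modular flow on $H_\rho$ together with $a_\rho = 1_{H_\rho}$, which (in the paper's formulation) reads $\sigma_t(\psi)=\iota(u_{\rho,t})\psi$ for a single Connes-cocycle unitary $u_{\rho,t}\in\N$ --- exactly the scalar action of your $\hat\sigma_t$ on $H_\rho$, since $u_t^\rho u_{\rho,t}\in\Hom(\rho,\rho)=\CC1$. The "main obstacle" you flag is resolved in the paper the same way you propose, namely by citing the proof of \cite[\Lem 3.8]{IzLoPo1998}.
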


\begin{proof}
By the equivalent conditions stated in Proposition \ref{prop:adjointiffmodular}, it is enough to show that $\phi$ commutes with the modular group $\sigma_t$ of $(\M,\Omega)$, namely that $\phi\circ \sigma_t = \sigma_t \circ \phi$ on $\M$, $t\in\RR$. By \cite[\Lem 3.8]{IzLoPo1998}, $\M$ is weakly spanned by $\N$ and by the spaces of charged fields $H_\rho$, with $\rho\prec\theta$ irreducible. Now, by the proof of \cite[\Lem 3.8]{IzLoPo1998} and using the fact that $a_\rho = 1_{H_\rho}$ for every irreducible $\rho\prec\theta$ by assumption, we have that $\sigma_t(\psi) = \iota(u_{\rho,t}) \psi$ for every $\psi\in H_\rho$, where $u_{\rho,t}$ is a unitary in $\N$ given by Connes cocycles.
Thus $\phi(\sigma_t(\psi)) = \iota(u_{\rho,t}) \phi(\psi) = \sigma_t(\phi(\psi))$, by $\N$-bimodularity of $\phi$ and observing that $\phi$ maps each $H_\rho$ into itself. Moreover, $\phi(\sigma_t(\iota(n))) = \sigma_t(\iota(n)) = \sigma_t(\phi(\iota(n)))$, $n\in\N$, because $\phi(\iota(n))=\iota(n)$ and $\sigma_t(\iota(n))\in\iota(\N)$ for every $t\in\RR$ by Takesaki's theorem, concluding the proof. 
\end{proof}

More generally, for irreducible discrete but not necessarily local nor braided subfactors:

\begin{lem}\label{lem:NfixingisOmegapres}
Let $\N\subset\M$ be an irreducible discrete type $\III$ subfactor and let $\Omega$ be as above.
If $\phi:\M\to\M$ is a ucp map acting trivially on $\N$, \ie $\phi_{\restriction\iota(\N)} = \id$, then $\phi$ is $\N$-bimodular and it is absorbed by $E$ on both sides, namely 
\begin{align}
\phi \circ E = E, \quad E \circ \phi = E.
\end{align}
Moreover, $\phi$ is automatically $\Omega$-preserving.
\end{lem}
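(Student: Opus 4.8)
The plan is to verify the four assertions in turn — that $\phi$ is $\N$-bimodular, that $\phi\circ E = E$, that $E\circ\phi = E$, and that $\phi$ is $\Omega$-preserving — the heart of the matter being the absorption identity $E\circ\phi = E$, from which $\Omega$-preservation is then immediate. First I would show $\phi$ is $\N$-bimodular. Since $\phi$ is ucp and restricts to the identity on $\iota(\N)$, the Kadison--Schwarz inequality forces $\phi(\iota(n)^\ast\iota(n)) = \iota(n^\ast n) = \phi(\iota(n))^\ast\phi(\iota(n))$ and likewise $\phi(\iota(n)\iota(n)^\ast) = \phi(\iota(n))\phi(\iota(n))^\ast$ for every $n\in\N$; hence $\iota(\N)$ lies in the multiplicative domain of $\phi$, and the standard multiplicative-domain argument yields $\phi(\iota(n_1)m\iota(n_2)) = \iota(n_1)\phi(m)\iota(n_2)$ for all $m\in\M$ and $n_1,n_2\in\N$. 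In particular $\phi$ maps each space of charged fields $H_\rho = \Hom(\iota,\iota\rho)$ into itself. The identity $\phi\circ E = E$ is then trivial, since $E$ takes values in $\iota(\N)$, on which $\phi$ is the identity.

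For $E\circ\phi = E$ I would observe that $F := E\circ\phi$ is a ucp map of $\M$ whose range lies in $\iota(\N)$ and which restricts to the identity on $\iota(\N)$; thus $F$ is a norm-one idempotent onto $\iota(\N)$, hence a conditional expectation onto $\iota(\N)$ by Tomiyama's theorem. Since $\N\subset\M$ is irreducible, the conditional expectation of $\M$ onto $\iota(\N)$ is unique, so $F = E$. Concretely, $E$ and $F$ already coincide on the weakly dense $\ast$-subalgebra $\M_0$ generated by $\iota(\N)$ and the charged fields (\cite[\Lem 3.8]{IzLoPo1998}): both are $\iota(\N)$-bimodular, and any conditional expectation onto $\iota(\N)$ must annihilate a charged field $\psi\in H_\sigma$ with $\sigma$ irreducible and $\sigma\ncong\id$ because then $\Hom(\id,\sigma) = \langle 0\rangle$ and $\iota(\N)'\cap\M = \CC1$, and more generally must send $\psi\in H_\sigma$ to its component in the $\id$-isotypic part of $\iota\sigma$, an expression that does not depend on the expectation.

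Once $E\circ\phi = E$ is established, $\Omega$-preservation is immediate: using the $E$-invariance of $(\Omega,\slot\Omega)$, for every $m\in\M$ one has $(\Omega,\phi(m)\Omega) = (\Omega,E(\phi(m))\Omega) = (\Omega,E(m)\Omega) = (\Omega,m\Omega)$. The main obstacle is therefore precisely the absorption $E\circ\phi = E$: the elementary computation only gives agreement of $E$ and $F = E\circ\phi$ on the weakly dense subalgebra $\M_0$, and upgrading this to an identity on all of $\M$ requires both the recognition of $F$ as a conditional expectation (Tomiyama's theorem) and the uniqueness of the conditional expectation for the irreducible inclusion $\N\subset\M$; equivalently, the real content is the normality of $F$, and it is here — and only here — that the discreteness and irreducibility of the subfactor are used. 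Everything else in the proof is formal.
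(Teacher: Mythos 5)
Your handling of the $\N$-bimodularity (multiplicative domain of a ucp map), of $\phi\circ E=E$, and of the deduction of $\Omega$-preservation from $E\circ\phi=E$ coincides with the paper's proof, and so does your ``concrete'' verification: the paper proves $E\circ\phi=E$ exactly by checking that $E(\phi(\psi))=0=E(\psi)$ for $\psi\in H_\rho$ with $[\rho]\neq[\id]$ (using that bimodularity forces $\phi(H_\rho)\subset H_\rho$ and that $\iota^{-1}E(H_\rho)\subset\Hom(\id,\rho)=\langle 0\rangle$), that $\phi$ is the identity on $H_{\id}=\CC 1$, and then concluding ``by density'' of $\N$ and the $H_\rho$ in $\M$.

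Where you diverge is in your preferred abstract route for $E\circ\phi=E$, and that route has a gap. You observe that $F:=E\circ\phi$ is a conditional expectation onto $\iota(\N)$ (Tomiyama is not even needed here, since $F$ is already ucp, idempotent, and has range $\iota(\N)$) and then invoke uniqueness of the conditional expectation onto an irreducible subfactor to get $F=E$. But the uniqueness that irreducibility buys --- the one the paper appeals to when it calls $E$ ``unique by irreducibility'' --- is uniqueness among \emph{normal} (faithful) conditional expectations; it is proved via Connes cocycles/modular theory and says nothing about a possibly singular expectation. Since $\phi$, hence $F$, is not assumed normal --- indeed normality of $\phi$ is part of what this lemma is ultimately used to deliver, via the observation that $\Omega$-preserving ucp maps are automatically normal --- you cannot conclude $F=E$ from uniqueness without first proving $F$ normal. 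You correctly identify normality of $F$ as ``the real content,'' but your write-up never supplies an argument for it, so the abstract route is circular as it stands. What survives of your proof is exactly the paper's argument: agreement of $E$ and $E\circ\phi$ on the weakly dense $\ast$-algebra generated by $\iota(\N)$ and the charged fields, followed by the density step (which the paper itself states rather tersely). So the substance matches the paper, but the extra layer you add does not close the one step you flag as nontrivial.
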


\begin{proof}
The $\N$-bimodularity follows from Choi's theorem \cite{Ch1974}, namely $\phi(\iota(n)^*\iota(n)) = \iota(n^*n) = \phi(\iota(n)^*)\phi(\iota(n))$ for some $n\in\N$ implies $\phi(m\iota(n)) = \phi(m)\phi(\iota(n)) = \phi(m)\iota(n)$ and $\phi(\iota(n)m) = \phi(\iota(n))\phi(m) = \iota(n)\phi(m)$ for every $m\in\M$, \cf \cite[\Prop 3.4]{Bi2016}.

The first absorption property is immediate. To show the second one we observe that $E(\phi(\psi)) = 0 = E(\psi)$ for every $\psi\in H_\rho$, $[\rho] \neq [\id]$, and $\psi = \lambda 1$, $\lambda\in\CC$, if $[\rho] = [\id]$ (indeed $\psi_{\id,1} = 1$, by equation \eqref{eq:normalizvacuumfield}, thus $E(\psi_{\rho,r}) = 0$ by the very definition of Pimsner--Popa basis) and we conclude by density of $\N$ and $H_\rho$, $\rho\prec\theta$, in $\M$, as in the proof of the previous lemma.

The second absorption property together with the fact that $E$ is $\Omega$-preserving imply that $\phi$ is $\Omega$-preserving.
\end{proof}

\begin{cor}\label{cor:indepOmega}
If $\N\subset\M$ is an irreducible discrete type $\III$ subfactor, then ${\UCP}_\N(\M,\Omega)$ is independent of the choice of $\Omega$ as above. If in addition $a_\rho = 1_{H_\rho}$ for every irreducible $\rho\prec\theta$ (\eg for local or finite index subfactors), then ${\UCP}_\N(\M,\Omega) = {\UCP}^{\sharp}_\N(\M,\Omega)$.
\end{cor}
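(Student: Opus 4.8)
The plan is to reduce the statement to the two preceding lemmas, which already contain all the analytic content. First I would observe that any $\N$-bimodular ucp map $\phi\colon\M\to\M$ necessarily restricts to the identity on $\iota(\N)$: setting $m=1$ in Definition \ref{def:Nbimoducp} gives $\phi(\iota(n_1)\iota(n_2))=\iota(n_1)\phi(1)\iota(n_2)=\iota(n_1n_2)$, so $\phi_{\restriction\iota(\N)}=\id$. Conversely, Lemma \ref{lem:NfixingisOmegapres} shows that every ucp map $\phi$ with $\phi_{\restriction\iota(\N)}=\id$ is automatically $\N$-bimodular and $\Omega$-preserving, and this holds for every standard unit vector $\Omega$ inducing an $E$-invariant state (the vectors allowed in the hypothesis). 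Combining the two observations, ${\UCP}_\N(\M,\Omega)$ coincides, as a set of maps $\M\to\M$, with $\{\phi\colon\M\to\M\text{ ucp}:\phi_{\restriction\iota(\N)}=\id\}$, a description that makes no reference to $\Omega$; this yields the first assertion.

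For the second assertion, the inclusion ${\UCP}^{\sharp}_\N(\M,\Omega)\subseteq{\UCP}_\N(\M,\Omega)$ is immediate from the definitions, since an $\Omega$-adjointable map is in particular $\Omega$-preserving. For the reverse inclusion I would invoke Lemma \ref{lem:OmegapresisOmegaadj}: under the hypothesis $a_\rho=1_{H_\rho}$ for every irreducible $\rho\prec\theta$, every $\N$-bimodular $\Omega$-preserving ucp map is automatically $\Omega$-adjointable, hence lies in ${\UCP}^{\sharp}_\N(\M,\Omega)$; therefore the two sets coincide. Finally, the condition $a_\rho=1_{H_\rho}$ holds for local subfactors by Proposition \ref{prop:arho=1} and for irreducible finite index subfactors by \cite[\Sec 3]{IzLoPo1998} (cf.\ the Remark following Proposition \ref{prop:arho=1}), which accounts for the two parenthetical examples.

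There is no real obstacle here beyond the correct assembly of Lemmas \ref{lem:NfixingisOmegapres} and \ref{lem:OmegapresisOmegaadj}: the key mechanism, namely that $a_\rho=1_{H_\rho}$ forces the modular group $\sigma_t^{\M,\Omega}$ to act on each space of charged fields $H_\rho$ by a unitary from $\N$ (so that $\N$-bimodularity of $\phi$ implies $\phi\circ\sigma_t^{\M,\Omega}=\sigma_t^{\M,\Omega}\circ\phi$, hence $\Omega$-adjointability via Proposition \ref{prop:adjointiffmodular}), is already established there. The only mild subtlety worth flagging in the write-up is conceptual: "independence of $\Omega$" is a meaningful statement precisely because the objects involved are genuine maps $\M\to\M$, with the sole $\Omega$-dependence sitting in the $\Omega$-preserving clause, which the first part shows to be automatic within the $\N$-bimodular class.
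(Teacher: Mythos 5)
Your proof is correct and follows exactly the route the paper intends: the corollary is stated without a separate proof precisely because it is the assembly of Lemma \ref{lem:NfixingisOmegapres} (trivial action on $\N$ is equivalent to $\N$-bimodularity plus unitality, and forces $\Omega$-preservation for any admissible $\Omega$) with Lemma \ref{lem:OmegapresisOmegaadj}, and your write-up supplies those two steps in the same way. The observation that ${\UCP}_\N(\M,\Omega)$ admits the $\Omega$-free description $\{\phi \text{ ucp}: \phi_{\restriction\iota(\N)}=\id\}$ is precisely the point.
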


Summing up the results of this section and those mentioned in Section \ref{sec:Omegaadjmaps}, we obtain a very easy description of the elements of ${\UCP}^{\sharp}_\N(\M,\Omega)$.

\begin{cor}\label{cor:localNbimoducp}
If $\N\subset\M$ is an irreducible local discrete type $\III$ subfactor, then a ucp map $\phi:\M\to\M$ belongs to ${\UCP}^{\sharp}_\N(\M,\Omega)$ if and only if $\phi_{\restriction\iota(\N)} = \id$.
\end{cor}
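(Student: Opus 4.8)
The plan is to prove the biconditional in Corollary \ref{cor:localNbimoducp} by combining the two preceding lemmas. One direction is almost immediate: if $\phi\in{\UCP}^{\sharp}_\N(\M,\Omega)$, then in particular $\phi$ is $\N$-bimodular, and applying this to $m=1$ together with $\iota(n)=\iota(n)\cdot 1$ gives $\phi(\iota(n)) = \iota(n)\phi(1) = \iota(n)$, so $\phi_{\restriction\iota(\N)} = \id$. For the converse, suppose $\phi_{\restriction\iota(\N)} = \id$. By Lemma \ref{lem:NfixingisOmegapres}, $\phi$ is automatically $\N$-bimodular and $\Omega$-preserving, hence $\phi\in{\UCP}_\N(\M,\Omega)$. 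Now invoke locality: by Proposition \ref{prop:arho=1}, local discreteness forces $a_\rho = 1_{H_\rho}$ for every irreducible $\rho\prec\theta$, so Lemma \ref{lem:OmegapresisOmegaadj} (equivalently Corollary \ref{cor:indepOmega}) applies and shows that every $\N$-bimodular $\Omega$-preserving ucp map is $\Omega$-adjointable. Therefore $\phi\in{\UCP}^{\sharp}_\N(\M,\Omega)$, completing the equivalence.

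Since essentially all the work has already been done in Lemmas \ref{lem:OmegapresisOmegaadj} and \ref{lem:NfixingisOmegapres}, the proof is a short assembly argument and there is no serious obstacle. The only point worth being careful about is the logical chain of hypotheses: one must check that ``local discrete'' indeed feeds into the hypothesis ``$a_\rho = 1_{H_\rho}$'' used by Lemma \ref{lem:OmegapresisOmegaadj} — this is exactly the content of Proposition \ref{prop:arho=1} — and that the standard vector $\Omega$ is fixed throughout to be one inducing the $E$-invariant state, as set up at the start of Section \ref{sec:UCP/states}. I would write the proof as follows.

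\begin{proof}
If $\phi\in{\UCP}^{\sharp}_\N(\M,\Omega)$ then $\phi$ is in particular $\N$-bimodular, so $\phi(\iota(n)) = \phi(\iota(n)\cdot 1) = \iota(n)\phi(1) = \iota(n)$ for every $n\in\N$, \ie $\phi_{\restriction\iota(\N)} = \id$.

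Conversely, assume $\phi_{\restriction\iota(\N)} = \id$. By Lemma \ref{lem:NfixingisOmegapres}, $\phi$ is then $\N$-bimodular and $\Omega$-preserving, hence $\phi\in{\UCP}_\N(\M,\Omega)$. Since $\N\subset\M$ is local, Proposition \ref{prop:arho=1} gives $a_\rho = 1_{H_\rho}$ for every irreducible $\rho\prec\theta$, so by Lemma \ref{lem:OmegapresisOmegaadj} (equivalently Corollary \ref{cor:indepOmega}) the map $\phi$ is $\Omega$-adjointable, \ie $\phi\in{\UCP}^{\sharp}_\N(\M,\Omega)$.
\end{proof}
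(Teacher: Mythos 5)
Your proposal is correct and follows exactly the route the paper intends: the corollary is stated there without a written proof, as a "summing up" of Lemma \ref{lem:NfixingisOmegapres} (trivial action on $\N$ implies $\N$-bimodular and $\Omega$-preserving), Proposition \ref{prop:arho=1} (locality gives $a_\rho = 1_{H_\rho}$), and Lemma \ref{lem:OmegapresisOmegaadj} (which then upgrades $\Omega$-preserving to $\Omega$-adjointable), with the easy converse coming from $\N$-bimodularity plus unitality. Your assembly makes the implicit logical chain explicit and is accurate.
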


Now we define a topology on ucp maps.

\begin{defi}\label{def:L2conv}
A net $\{\phi_\alpha\}\subset{\UCP}^{\sharp}_\N(\M,\Omega)$ \textbf{converges in $L^2(\M,\Omega)$} to $\phi\in{\UCP}^{\sharp}_\N(\M,\Omega)$, if and only if $\|\phi_\alpha(m)\Omega-\phi(m)\Omega\|\to 0$ for all $m\in \M$. We write $\phi_\alpha\to\phi$ in $L^2(\M,\Omega)$.
\end{defi}

The convergence $\phi_\alpha\to\phi$ in $L^2(\M,\Omega)$, already encountered in Section \ref{sec:amenability} for ${\UCP}^{\sharp}(\M,\Omega)$, is clearly equivalent to $V_\alpha\to V$ in the strong operator topology of $\B(\Hil)$, where $V_\alpha\in \B(\Hil)$ and $V \in \B(\Hil)$ are the linear contractions respectively associated with $\phi_\alpha$ and $\phi$ (Section \ref{sec:Omegaadjmaps}). 
We note that $V \in \N'$, if $\phi$ is $\N$-bimodular. If $\phi$ is $\Omega$-adjointable, then $[V,J_{\M,\Omega}]=0$ thus $V \in J_{\M,\Omega}\N'J_{\M,\Omega}=\M_1$. Consequently, $\phi_\alpha\to\phi$ in $L^2(\M,\Omega)$ corresponds to the strong operator convergence $V_\alpha\to V$ in $\N'\cap \M_1$.

\begin{rmk}\label{rmk:Topologies}
Since $\N'\cap \M_1$ ($\cong \theta(\N)' \cap \N = \Hom(\theta,\theta)$) is isomorphic to a direct sum of finite matrix algebras by discreteness and irreducibility of the subfactor, any norm bounded net $V_\alpha\to V$ converges in the strong operator topology of $\N'\cap\M_1$, if and only if it converges in the weak operator topology. 

Indeed, assume that $V_\alpha\to V$ converges in the weak operator topology and choose a sequence of minimal central projections $p_m\in\N'\cap \M_1$ such that $\sum_m p_m = 1$ in the strong operator topology. Hence $p_m V_\alpha \to p_m V$ in norm, thus in the strong operator topology, for every fixed $m$. For every $\xi\in\Hil$, denote $\xi_m := p_m \xi$ thus $\|\xi\| = \sum_m \|\xi_m\|$. We have
\begin{align}
\|V_\alpha\xi - V\xi\| = \sum_m \|V_\alpha\xi_m - V\xi_m\| \leq \sum_{m=1}^{N} \|V_\alpha\xi_m - V\xi_m\| + C \sum_{m=N}^{\infty} \|\xi_m\| ,
\end{align}
where $C>0$ is some constant given by the norm boundedness of $V_\alpha$, and by an $\epsilon/2$ argument we conclude $\|V_\alpha\xi - V\xi\| \to 0$.

As a consequence, $\phi_\alpha\to\phi$ in $L^2(\M,\Omega)$ if and only if $\phisharp_\alpha\to\phisharp$ in $L^2(\M,\Omega)$.
\end{rmk}

From now on we assume, without further mention, that $\N\subset\M$ is irreducible, discrete and local. Recall the map defined by duality in equation \eqref{eq:pairing}:
\begin{align}
\phi \mapsto \omega_\phi = \langle\phi,\slot\rangle
\end{align}
from $\N$-bimodular ucp maps on $\M$ to states on $\Trig(\N\subset\M)$, or equivalently on its image under the GNS representation $(\lambda,\Hil_E,\xi_E)$ (Lemma \ref{lem:lambdafaithbound}).

\begin{lem} 
  \label{lem:FiniteRankReducedStates}
  If $\phi\in{\UCP}^{\sharp}_\N(\M,\Omega)$ has finite support (Definition \ref{def:finitesuppucp}), then $\omega_\phi$ extends to a state on $\Cred(\N\subset \M)$ (Definition \ref{def:Cred}).
\end{lem}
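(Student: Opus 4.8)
The plan is to show that $\omega_\phi$ extends from $\Trig(\N\subset\M)$ to $\Cred(\N\subset\M)$ by verifying that it is bounded with respect to the $C^*$-norm, equivalently that it is dominated by the norm on $\Hil_E$ in the GNS representation $\lambda$ associated with $\omega_E$. Recall from Lemma \ref{lem:lambdafaithbound} that $\lambda(\Trig(\N\subset\M))$ sits inside $\B(\Hil_E)$ and that the inequality \eqref{eq:GNSbounded} holds, with the crucial remark following Lemma \ref{lem:lambdafaithbound} that \emph{the same} inequality holds with $E$ replaced by any $\N$-bimodular ucp map $\phi$. So the essential ingredient is already in place; what remains is to package it into a boundedness statement for $\omega_\phi$ and to invoke the finite support hypothesis to make the relevant sums finite and the argument legitimate.

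First I would make the finite support assumption explicit: by Definition \ref{def:finitesuppucp} there is a finite set $F$ of (inequivalent irreducible) $\rho\prec\theta$ such that $\phi(\psi)=0$ for $\psi\in H_\rho$ with $\rho\notin F$. Correspondingly let $p\in\Proj_0(\theta)$ be the finite projection $p=\sum_{\rho\in F,r} p_{\rho,r}$, and let $\one_F := \lambda(p)\in\lambda(\Trig(\N\subset\M))$ be the associated (central, since $\Trig$ is commutative) idempotent in the GNS picture. The pairing formula \eqref{eq:pairing} together with the fact that $\phi$ annihilates all $H_\rho$ for $\rho\notin F$ shows that $\omega_\phi$ factors through the corner $p\,\Trig(\N\subset\M)\,p$ — more precisely $\omega_\phi(a) = \omega_\phi(p * a * p)$ for every $a\in\Trig(\N\subset\M)$, because $T(\psi_1,\psi_2)$ with $\psi_i\in H_\rho$, $\rho\notin F$, is killed by $\langle\phi,\cdot\rangle$, using Proposition \ref{prop:triglin} to reduce to matrix units. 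This corner is a finite-dimensional (hence $C^*$-)algebra, on which $\omega_\phi$ is automatically a bounded positive functional; but I still need the bound to be compatible with the $\Cred$-norm, not just some abstract norm on the corner.

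The key step is then the estimate. Using the expansion $m = \sum \psi_{\rho,r}^* E(\psi_{\rho,r} m)$ and the definition of the pairing, I would show that for $a\in p\,\Trig(\N\subset\M)\,p$ one has $|\omega_\phi(a)| \le C_F \, \|\lambda(a)\xi_E\|_E = C_F\,\omega_E(a^\bullet * a)^{1/2}$ for a constant $C_F$ depending only on $\max_{\rho\in F} d(\rho)$ and on $\phi$ (via $\|\phi\|\le 1$ and $\Omega$-preservation); the Kadison--Schwarz inequality for $\phi$ plus the computation behind \eqref{eq:GNSbounded} applied to $\phi$ rather than $E$ gives exactly the needed control of $\psi_1^*\phi(\psi_2)$ in terms of the $\omega_E$-seminorm. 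Since $\omega_E(a^\bullet * a)^{1/2} = \|\lambda(a)\xi_E\|_E \le \|\lambda(a)\|_{\B(\Hil_E)}$ (as $\xi_E$ is a unit vector), this shows $\omega_\phi$ is $\|\cdot\|_E$-bounded on $\Trig(\N\subset\M)$, hence extends by continuity to $\Cred(\N\subset\M)=\overline{\lambda(\Trig(\N\subset\M))}$. Positivity of the extension is inherited from Proposition \ref{prop:ucpstates} by density, and unitality from $\langle\phi,\one\rangle = \phi(1) = 1$.

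The main obstacle I anticipate is bookkeeping: getting the finite-support reduction to the corner $p\Trig p$ rigorously right — i.e. checking that $\omega_\phi(a) = \omega_\phi(p*a*p)$ really does hold and that the relevant sums in the pairing truncate — and then making sure the constant $C_F$ in the estimate is genuinely uniform over the corner and not secretly growing with the degree of $a$. The computation in Lemma \ref{lem:lambdafaithbound} is for a single matrix unit; extending it to a general element of the finite-dimensional corner (a finite sum of matrix units) and controlling cross terms via the block structure \eqref{eq:thetathetafullmats} of $\Hom(\theta,\theta)$ is where care is needed, though no genuinely new idea beyond Kadison--Schwarz and the remark after Lemma \ref{lem:lambdafaithbound} should be required.
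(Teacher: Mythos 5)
Your proposal is correct in substance and rests on the same mechanism as the paper's proof: finite support truncates the pairing to finitely many blocks of $\Hom(\theta,\theta)$, after which a Cauchy--Schwarz estimate against $\omega_E$ gives boundedness in the $\Cred$-norm. The paper packages this more efficiently by exhibiting the functional explicitly: $\omega_\phi=\omega_E(t\ast\slot)$ for $t=\sum_{\rho,r}T(\phi(\psi_{\rho,r}),\psi_{\rho,r})$, which lies in $\Trig(\N\subset\M)$ precisely because $\phi$ has finite support, so that $\omega_\phi(a)=(\xi_E,\lambda(t)\lambda(a)\xi_E)$ is manifestly bounded and extends to a state by Proposition \ref{prop:ucpstates}. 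Your estimate $|\omega_\phi(a)|\le C_F\,\omega_E(a^\bullet\ast a)^{1/2}$ is exactly the Cauchy--Schwarz form of that identity (with $C_F=\|\lambda(t)^\ast\xi_E\|_E$, and using the orthogonality relations of Lemma \ref{lem:haarfaith} one gets $C_F\le(\sum_{\rho\in F}n_\rho^2 d(\rho))^{1/2}$, uniform over $a$), so your worry about the constant growing with the degree of $a$ is unfounded; positivity and unitality of the extension are handled as you say.

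One intermediate claim is wrong and should be dropped, though it is inessential. The element $p=\sum_{\rho\in F,r}p_{\rho,r}$ is a projection for the \emph{operator} product of $\Hom(\theta,\theta)$ but not an idempotent for the convolution product $\ast$ of $\Trig(\N\subset\M)$: by Proposition \ref{prop:multTirred}, $T(\psi_{\rho,r},\psi_{\rho,r})\ast T(\psi_{\sigma,s},\psi_{\sigma,s})=T(\psi_{\rho,r}\psi_{\sigma,s},\psi_{\rho,r}\psi_{\sigma,s})$, and the $\ast$-unit is $ww^\ast$, not $1_\theta$. So $\lambda(p)$ is not a central idempotent of $\Cred(\N\subset\M)$ and $a\mapsto p\ast a\ast p$ is not a corner reduction. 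Fortunately the only fact you need is that $\omega_\phi$ kills every matrix unit $T(\psi_{\rho,r},\psi_{\rho,s})$ with $\rho\notin F$, which follows directly from the pairing formula $\omega_\phi(T(\psi_1,\psi_2))=\psi_1^\ast\phi(\psi_2)$ together with Proposition \ref{prop:triglin}; that is all the truncation the estimate requires.
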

\begin{proof}
  A computation shows that $\omega_{\phi}=\omega_E(t\ast \slot)$
  for the operator 
\begin{align}
  t=\sum_{\rho,r} T(\phi(\psi_{\rho,r}),\psi_{\rho,r}) \in \Trig(\N\subset \M).
\end{align}
Thus we can write $\omega_{\phi}(a)=\omega_E(t\ast a) = (\xi_E,\lambda(t)\lambda(a)\xi_E)$ for $a\in\Trig(\N\subset \M)$.
By Proposition \ref{prop:ucpstates}, it defines a state $x\in\Cred(\N\subset \M) \mapsto (\xi_E,\lambda(t)x\xi_E)$
which extends $\omega_\phi$.
\end{proof}

\begin{prop}\label{prop:finitesuppapprox}
  If $\phi\in{\UCP}^{\sharp}_\N(\M,\Omega)$ can be approximated in $L^2(\M,\Omega)$ by finite support maps in ${\UCP}^{\sharp}_\N(\M,\Omega)$, 
  then $\omega_\phi$ extends to a state on $\Cred(\N\subset \M)$.
\end{prop}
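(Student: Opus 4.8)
The plan is to extend $\omega_\phi$ from $\Trig(\N\subset\M)$ to $\Cred(\N\subset\M)$ by a limiting argument, using the finite support case already established in Lemma \ref{lem:FiniteRankReducedStates} together with the contractivity estimate \eqref{eq:GNSbounded}. First I would pick a net $\{\phi_\alpha\}\subset{\UCP}^{\sharp}_\N(\M,\Omega)$ of finite support maps with $\phi_\alpha\to\phi$ in $L^2(\M,\Omega)$, \ie $V_{\phi_\alpha}\to V_\phi$ strongly on $\Hil$. By Lemma \ref{lem:FiniteRankReducedStates}, each $\omega_{\phi_\alpha}$ extends to a state $\widetilde\omega_{\phi_\alpha}$ on $\Cred(\N\subset\M)$, explicitly given by $\widetilde\omega_{\phi_\alpha}(x)=(\xi_E,\lambda(t_\alpha)x\,\xi_E)$ with $t_\alpha=\sum_{\rho,r} T(\phi_\alpha(\psi_{\rho,r}),\psi_{\rho,r})\in\Trig(\N\subset\M)$. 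The key point is that on the generating matrix units $T(\psi_{\rho,r},\psi_{\rho,s})$ one has $\omega_{\phi_\alpha}(T(\psi_{\rho,r},\psi_{\rho,s}))=\psi_{\rho,r}^*\phi_\alpha(\psi_{\rho,s})=(\psi_{\rho,r}\Omega, V_{\phi_\alpha}\psi_{\rho,s}\Omega)\cdot 1_\iota$ up to the identification $\Hom(\iota,\iota)=\CC1_\iota$; here I should check that this matrix-coefficient formula for the pairing holds, which follows by unfolding the graphical definition \eqref{eq:pairing} and using that $\phi_\alpha$ is $\N$-bimodular and $\Omega$-preserving (so $E\circ\phi_\alpha=E$, hence $E(\psi_{\rho,r}\phi_\alpha(\psi_{\rho,s}^*))$ can be rewritten in terms of $(\Omega,\slot\Omega)$).

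The next step is to deduce pointwise convergence $\widetilde\omega_{\phi_\alpha}(x)\to\omega_\phi(x)$ first on $a\in\Trig(\N\subset\M)$ and then on all of $\Cred(\N\subset\M)$. On $a=T(\psi_{\rho,r},\psi_{\rho,s})$ the convergence is immediate from the matrix-coefficient formula and the strong convergence $V_{\phi_\alpha}\to V_\phi$; by linearity it holds on all of $\Trig(\N\subset\M)$. Since each $\widetilde\omega_{\phi_\alpha}$ is a state, the net is uniformly bounded in the dual norm by $1$, so by an $\epsilon/3$ argument (density of $\Trig(\N\subset\M)$ in $\Cred(\N\subset\M)$ together with the uniform bound) the pointwise limit $\lim_\alpha \widetilde\omega_{\phi_\alpha}(x)$ exists for every $x\in\Cred(\N\subset\M)$, defines a state there (positivity and unitality pass to the limit), and restricts to $\omega_\phi$ on $\Trig(\N\subset\M)$. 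This is the desired extension; uniqueness of the extension is automatic since $\Trig(\N\subset\M)$ is norm-dense in $\Cred(\N\subset\M)$.

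A cleaner alternative, which I would actually prefer to present, bypasses the net of states and works directly with the GNS data: since $\phi_\alpha\to\phi$ in $L^2(\M,\Omega)$ and all maps are $\Omega$-adjointable, the associated contractions $V_{\phi_\alpha}$ converge strongly, hence (by the last sentence of Remark \ref{rmk:Topologies} and the matrix-block structure) the operators $t_\alpha$ converge to some $t$ in the strong topology of $\Hom(\theta,\theta)\cong\N'\cap\M_1$, at least block-by-block; then one shows $\omega_\phi(a)=(\xi_E,\lambda(t)\lambda(a)\xi_E)$ holds on $\Trig(\N\subset\M)$, and since $\lambda(t)$ is a bounded operator on $\Hil_E$ (each block contributing a bounded piece, with the estimate \eqref{eq:GNSbounded} controlling the norms), the functional $x\mapsto(\xi_E,\lambda(t)x\,\xi_E)$ is a bounded, positive, unital functional on $\Cred(\N\subset\M)$ extending $\omega_\phi$. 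The main obstacle in either route is the boundedness of $\lambda(t)$ — equivalently, that the limiting ``symbol'' $t$, which is in general only a formal (infinite) sum $\sum_{\rho,r} T(\phi(\psi_{\rho,r}),\psi_{\rho,r})$ and need not lie in $\Trig(\N\subset\M)$, still acts as a bounded operator on $\Hil_E$; this is exactly where the contraction estimate $\|T(\psi_{\rho,r},\psi_{\rho,s})* \,\slot\,\|_E\le d(\rho)\|\slot\|_E$ of Lemma \ref{lem:lambdafaithbound} and the fact that $\|V_{\phi_\alpha}\|\le 1$ uniformly are used, together with the direct-sum-of-matrix-algebras structure to assemble the blocks. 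I would carry this out by verifying that the partial sums $\lambda\big(\sum_{\text{finite}} T(\phi(\psi_{\rho,r}),\psi_{\rho,r})\big)$ are uniformly bounded on $\Hil_E$ and converge strongly, so their strong limit $\lambda(t)$ is a well-defined bounded operator.
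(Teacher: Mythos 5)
Your first route is correct and is essentially the paper's proof: the paper likewise takes a finite-support net $\phi_\alpha\to\phi$, uses Lemma \ref{lem:FiniteRankReducedStates} to extend each $\omega_{\phi_\alpha}$, verifies $\omega_{\phi_\alpha}(a)\to\omega_\phi(a)$ on matrix units via the strong convergence $V_\alpha\to V$, and concludes. The only cosmetic difference is that the paper does not carry the whole net of extended states to the limit on $\Cred(\N\subset\M)$; it simply observes that $|\omega_\phi(a)|\le\sup_\alpha|\omega_{\phi_\alpha}(a)|\le\|\lambda(a)\|$ for $a\in\Trig(\N\subset\M)$, so $\omega_\phi$ is already a norm-one unital positive functional on the dense subalgebra and extends by continuity. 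Your $\epsilon/3$ Cauchy-net argument proves the same thing with one extra step; it is fine.

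Your ``preferred'' second route, however, has a genuine gap, and you should not present it. The limiting symbol $t=\sum_{\rho,r}T(\phi(\psi_{\rho,r}),\psi_{\rho,r})$ does \emph{not} define a bounded operator $\lambda(t)$ on $\Hil_E$ in general, and no assembly of the estimate \eqref{eq:GNSbounded} will fix this: that estimate carries the factor $d(\rho)$, which is unbounded over the irreducibles $\rho\prec\theta$, so the partial sums $\lambda\bigl(\sum_{\mathrm{finite}}T(\phi(\psi_{\rho,r}),\psi_{\rho,r})\bigr)$ are not uniformly bounded. The conceptual obstruction is visible after Theorem \ref{thm:ucpstatesduality}: under the identification $\Cred(\N\subset\M)\cong C(K)$ with $\omega_E$ the Haar measure, a functional of the form $x\mapsto(\xi_E,\lambda(t)x\,\xi_E)$ with $\lambda(t)$ bounded is a measure absolutely continuous with respect to $\omega_E$ with density in $L^\infty(K,\omega_E)$. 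But the maps you most care about — e.g.\ $\phi=\id$, or any extreme point of ${\UCP}^{\sharp}_\N(\M,\Omega)$ — correspond to Dirac measures on $K$, which are singular with respect to the Haar measure whenever $K$ is infinite. So for those $\phi$ the operator $\lambda(t)$ simply does not exist as a bounded operator (for $\phi=\id$ it would have to implement evaluation at $e\in K$ against the Haar state). The finite-support hypothesis in Lemma \ref{lem:FiniteRankReducedStates} is exactly what forces $t_\alpha\in\Trig(\N\subset\M)$ and hence $\lambda(t_\alpha)$ bounded; that property is lost in the limit, which is why the proposition must be proved by a weak limit of the \emph{states} rather than of their densities. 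Stick with your first argument.
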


\begin{proof}
  Let $\phi_\alpha\to \phi$ in $L^2(\M,\Omega)$ where each $\phi_\alpha$ has finite support. 
  By Lemma \ref{lem:FiniteRankReducedStates}, each $\omega_{\phi_\alpha}$ extends to a state on $\Cred(\N\subset \M)$.
  For $a=T(\psi_{\rho,r},\psi_{\sigma,s})\in \Trig(\N\subset \M)$ we have
  \begin{align}
    |\omega_\phi(a)-\omega_{\phi_\alpha}(a)| &= \left|\left(\Omega, \psi_{\rho,r}^*(\phi-\phi_\alpha)(\psi_{\sigma,s})\Omega\right)\right|
    \\&=|(\psi_{\rho,r}\Omega,(V-V_\alpha)\psi_{\sigma,s}\Omega)|\to 0
  \end{align}
and the same holds for an arbitrary $a \in \Trig(\N\subset \M)$. But this implies
  \begin{align}
    |\omega_\phi(a)| \leq \sup_\alpha |\omega_{\phi_\alpha}(a)| \leq \|\lambda(a)\|
  \end{align}
thus $\omega_\phi$ extends by density to a state on $\Cred(\N\subset \M)$.
\end{proof}

\begin{cor}\label{cor:statesextend}
Every $\phi\in{\UCP}^{\sharp}_\N(\M,\Omega)$ defines by extension a state $\omega_\phi$ on 
$\Cred(\N\subset \M)$.
\end{cor}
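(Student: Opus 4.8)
The plan is to combine the previous two results—Lemma \ref{lem:FiniteRankReducedStates} and Proposition \ref{prop:finitesuppapprox}—with the amenability statement established in Section \ref{sec:amenability}. The point is that Proposition \ref{prop:finitesuppapprox} already gives the conclusion for any $\phi\in{\UCP}^{\sharp}_\N(\M,\Omega)$ that can be approximated in $L^2(\M,\Omega)$ by finite-support maps in ${\UCP}^{\sharp}_\N(\M,\Omega)$, so the entire content of the corollary is to show that \emph{every} element of ${\UCP}^{\sharp}_\N(\M,\Omega)$ admits such an approximation. This is exactly where strong relative amenability of $\N\subset\M$ (Corollary \ref{cor:braideddiscreteisstrongamenab}, which applies since a local subfactor is in particular braided) enters.

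First I would recall that by Corollary \ref{cor:braideddiscreteisstrongamenab} there is a net $\{\phi_\alpha\}$ of $\N$-bimodular $\Omega$-preserving ucp maps with finite support such that $\|\phi_\alpha(m)\Omega - m\Omega\|\to 0$ for every $m\in\M$; equivalently, writing $V_\alpha$ for the contraction on $\Hil$ associated with $\phi_\alpha$, we have $V_\alpha\to 1$ in the strong operator topology. Note that by Corollary \ref{cor:localNbimoducp} (or Lemma \ref{lem:OmegapresisOmegaadj} together with Proposition \ref{prop:arho=1}) the maps $\phi_\alpha$ are automatically $\Omega$-adjointable, so $\{\phi_\alpha\}\subset{\UCP}^{\sharp}_\N(\M,\Omega)$. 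Given an arbitrary $\phi\in{\UCP}^{\sharp}_\N(\M,\Omega)$, I would then form the compositions $\phi\circ\phi_\alpha$ (equivalently $V V_\alpha$ on the Hilbert space side). Each $\phi\circ\phi_\alpha$ is again $\N$-bimodular, $\Omega$-preserving and $\Omega$-adjointable, being a composition of maps with these properties; and it has finite support because $\phi_\alpha$ does, since $\phi_\alpha(\psi)=0$ forces $\phi(\phi_\alpha(\psi))=0$. Finally, $V V_\alpha\to V$ in the strong operator topology because $V_\alpha\to 1$ strongly and $V$ is bounded, so $\phi\circ\phi_\alpha\to\phi$ in $L^2(\M,\Omega)$. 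Proposition \ref{prop:finitesuppapprox} then yields the extension of $\omega_\phi$ to a state on $\Cred(\N\subset\M)$.

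The only point requiring a line of verification is that $\phi\circ\phi_\alpha$ has finite support in the sense of Definition \ref{def:finitesuppucp}: one checks that if $\phi_\alpha(\psi)=0$ for all charged fields $\psi\in H_\rho$ outside a finite set of sectors $[\rho]$, then the same holds for $\phi\circ\phi_\alpha$, using that $\phi$ maps $H_\rho$ into $H_\rho$ (as noted in Section \ref{sec:dualitypairing}). I do not expect any genuine obstacle here; the corollary is essentially a packaging of the amenability input with Proposition \ref{prop:finitesuppapprox}, and the mild subtlety—ensuring the approximants stay inside ${\UCP}^{\sharp}_\N(\M,\Omega)$ and retain finite support—is handled by the automatic $\Omega$-adjointability of $\N$-bimodular $\Omega$-preserving maps in the local discrete setting (Corollary \ref{cor:localNbimoducp}) together with the stability of finite support under post-composition.
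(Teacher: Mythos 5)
Your argument is correct and coincides with the paper's own proof: both invoke strong relative amenability (Corollary \ref{cor:braideddiscreteisstrongamenab}) to produce a finite-support net $\phi_\alpha\to\id$ in $L^2(\M,\Omega)$, upgrade it to ${\UCP}^{\sharp}_\N(\M,\Omega)$ via Lemma \ref{lem:OmegapresisOmegaadj}, and then apply Proposition \ref{prop:finitesuppapprox} to the compositions $\phi\circ\phi_\alpha$. Your extra remarks on why $\phi\circ\phi_\alpha$ retains finite support are a welcome (if routine) elaboration of a step the paper leaves implicit.
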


\begin{proof}
The subfactor is strongly relatively amenable (Definition \ref{def:strongamenab}) by Corollary \ref{cor:braideddiscreteisstrongamenab}.
Thus by Lemma \ref{lem:OmegapresisOmegaadj}, there is a net $\phi_\alpha\in{\UCP}^{\sharp}_\N(\M,\Omega)$ with finite support such that $\phi_\alpha\to\id$ in $L^2(\M,\Omega)$. The maps $\phi\circ\phi_\alpha\in{\UCP}^{\sharp}_\N(\M,\Omega)$ have finite support and $\phi\circ\phi_\alpha\to\phi$ in $L^2(\M,\Omega)$, and we conclude by Proposition \ref{prop:finitesuppapprox}.
\end{proof}

In the following, we denote again by $\omega_\phi$ the extended state on $\Cred(\N\subset \M)$.

\begin{lem}\label{lem:dualityisinjective}
  The duality map $\phi\mapsto\omega_\phi$ is injective.
\end{lem}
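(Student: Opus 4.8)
The plan is to reconstruct $\phi$ explicitly from $\omega_\phi$, using that the pairing \eqref{eq:pairing} records exactly the ``matrix coefficients'' of $\phi$ on the spaces of charged fields, and that locality forces these coefficients to be taken with respect to an orthonormal basis.

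First I would record the two structural facts that make this work. By Corollary \ref{cor:localNbimoducp} every $\phi\in{\UCP}^{\sharp}_\N(\M,\Omega)$ restricts to the identity on $\iota(\N)$, and by Lemma \ref{lem:NfixingisOmegapres} it is then $\N$-bimodular; since $\psi_{\rho,s}\iota(n)=\iota(\rho(n))\psi_{\rho,s}$, bimodularity gives $\phi(\psi_{\rho,s})\iota(n)=\iota(\rho(n))\phi(\psi_{\rho,s})$, so $\phi$ maps each $H_\rho=\Hom(\iota,\iota\rho)$ into itself. Locality enters through Proposition \ref{prop:arho=1}: the condition $a_\rho=1_{H_\rho}$ together with \eqref{eq:twoinnerprods} and the normalization \eqref{eq:normalizfields} yields $\psi_{\rho,r}^*\psi_{\rho,r'}=d(\rho)\,\delta_{r,r'}\,1_\iota$, i.e.\ $\{\psi_{\rho,r}\}_{r=1}^{n_\rho}$ is an orthonormal basis of $H_\rho$ for the inner product $\langle\psi,\psi'\rangle:=\tfrac1{d(\rho)}\psi^*\psi'$ (valued in $\CC 1_\iota$). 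Combining these with the definition of the pairing, for every irreducible $\rho\prec\theta$ and every $s$ I obtain
\begin{align*}
\phi(\psi_{\rho,s})=\frac1{d(\rho)}\sum_{r=1}^{n_\rho}\bigl(\psi_{\rho,r}^*\,\phi(\psi_{\rho,s})\bigr)\,\psi_{\rho,r}=\frac1{d(\rho)}\sum_{r=1}^{n_\rho}\omega_\phi\bigl(T(\psi_{\rho,r},\psi_{\rho,s})\bigr)\,\psi_{\rho,r},
\end{align*}
using that $\omega_\phi$ restricted to $\Trig(\N\subset\M)$ is the pairing $\langle\phi,\slot\rangle$ of \eqref{eq:pairing}. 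Thus $\omega_\phi$ determines $\phi(\psi_{\rho,r})$ for all $\rho,r$.

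Second I would propagate this to all of $\M$. The unital $*$-algebra $\M_0$ generated by $\iota(\N)$ and the charged fields $\{\psi_{\rho,r}\}$ consists of finite sums $\sum_{\rho,r}\psi_{\rho,r}^*\iota(n_{\rho,r})$ with $n_{\rho,r}\in\N$ --- this is the expansion used in the proof of Proposition \ref{prop:braidedsubfisstrongamenab}. Since $\phi$ is positive it is $*$-preserving, and being $\N$-bimodular it satisfies $\phi\bigl(\sum_{\rho,r}\psi_{\rho,r}^*\iota(n_{\rho,r})\bigr)=\sum_{\rho,r}\phi(\psi_{\rho,r})^*\iota(n_{\rho,r})$; hence $\phi_{\restriction\M_0}$ is completely determined by the family $\{\phi(\psi_{\rho,r})\}$, thus by $\omega_\phi$. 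Finally, $\M_0$ is weakly (hence $\sigma$-weakly) dense in $\M$ by \cite[\Lem 3.8]{IzLoPo1998}, and $\phi$ is normal because it is $\Omega$-preserving (Section \ref{sec:Omegaadjmaps}); two normal maps agreeing on a $\sigma$-weakly dense $*$-subalgebra agree everywhere, so $\phi$ is determined on $\M$ by $\omega_\phi$. (Equivalently, $\phi=\phi'$ on $\M_0$ forces $V_\phi=V_{\phi'}$ on the norm-dense subspace $\M_0\Omega$, using Kaplansky density, hence $\phi=\phi'$ as $\Omega$ is separating.) Therefore $\phi\mapsto\omega_\phi$ is injective.

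I do not expect a serious obstacle here; the only point needing care is the bookkeeping that locality (via $a_\rho=1_{H_\rho}$) is precisely what makes $\{\psi_{\rho,r}\}$ orthonormal for the $\tfrac1{d(\rho)}\psi^*\psi'$ inner product, so that the numbers $\omega_\phi(T(\psi_{\rho,r},\psi_{\rho,s}))$ really are the components of $\phi(\psi_{\rho,s})$ in that basis --- without this one would only recover a skewed version of the action of $\phi$ on $H_\rho$. The passage from $\M_0$ to $\M$ is routine, being handled by normality of $\phi$ (automatic for $\Omega$-preserving ucp maps) together with the weak density of $\M_0$.
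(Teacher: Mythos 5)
Your proof is correct, and its first half is the same idea as the paper's: the numbers $\omega_\phi(T(\psi_{\rho,r},\psi_{\rho,s}))=\psi_{\rho,r}^*\phi(\psi_{\rho,s})$ determine the action of $\phi$ on each $H_\rho$ because $a_\rho=1_{H_\rho}$ makes $\{\psi_{\rho,r}\}$ orthonormal for $\frac{1}{d(\rho)}\psi^*\psi'$ --- you rightly flag this as the point where locality is indispensable. The two arguments diverge in the globalization step. The paper instead reconstructs $\phisharp(\psi_{\rho,r}^*)$ (via the Pimsner--Popa expansion, the adjoint relation and \eqref{eq:twoinnerprods}) and then, for \emph{arbitrary} $m\in\M$, computes $(\psi_{\rho,r}^*\iota(n)\Omega,\phi(m)\Omega)=(\phisharp(\psi_{\rho,r}^*)\iota(n)\Omega,m\Omega)$ and invokes totality of the vectors $\psi_{\rho,r}^*\iota(n)\Omega$ coming directly from the definition of a Pimsner--Popa basis; this pins down $\phi(m)\Omega$ for every $m$ at once, with no appeal to the weak density of $\M_0$ or to normality of $\phi$. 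Your route determines $\phi$ on the dense $*$-subalgebra $\M_0$ first (using bimodularity and $*$-preservation) and then extends by continuity; this works, and your parenthetical variant via $V_{\phi_1}=V_{\phi_2}$ on the norm-dense subspace $\M_0\Omega$ plus the separating property of $\Omega$ is the cleanest way to close it, since it only uses boundedness of the $V_\phi$ rather than $\sigma$-weak continuity of the maps. The trade-off is that the paper's argument leans on the existence of the $\Omega$-adjoint (which is part of the standing hypotheses anyway), while yours leans on the finite expansion of elements of $\M_0$ and a density argument; both are legitimate, and the paper's is marginally more economical.
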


\begin{proof} 
Assume that $\omega_{\phi_1} = \omega_{\phi_2}$ for $\phi_1,\phi_2\in{\UCP}^{\sharp}_\N(\M,\Omega)$.
We first show that $\phisharp_1(\psi_{\rho,r}^*)=\phisharp_2(\psi_{\rho,r}^*)$ for all $\rho,r$, where $\phisharp$ denotes as before the $\Omega$-adjoint of $\phi\in{\UCP}^{\sharp}_\N(\M,\Omega)$. Indeed, writing the Pimsner--Popa expansion (Proposition \ref{prop:PiPoexp}) we obtain
\begin{align}
\phisharp(\psi_{\rho,r}^*) &= \sum_{\sigma,s} \psi_{\sigma,s}^* E(\psi_{\sigma,s}\phisharp(\psi_{\rho,r}^*))\\
&= \sum_{s} \psi_{\rho,s}^* (\Omega,\psi_{\rho,s}\phisharp(\psi_{\rho,r}^*)\Omega)\\
&= \sum_{s} \psi_{\rho,s}^* (\Omega,\phi(\psi_{\rho,s})\psi_{\rho,r}^*\Omega)\\
&= \sum_{s} \psi_{\rho,s}^* E(\phi(\psi_{\rho,s})\psi_{\rho,r}^*)\\
&= \sum_{s} \psi_{\rho,s}^* \frac{1}{d(\rho)} \psi_{\rho,r}^*\phi(\psi_{\rho,s}) = \sum_{s} \psi_{\rho,s}^* \frac{1}{d(\rho)} \omega_\phi(T(\psi_{\rho,r},\psi_{\rho,s}))
\end{align}
by using the fact that $\iota^{-1}E(\psi_{\sigma,s}\phisharp(\psi_{\rho,r}^*)) \in \Hom(\rho,\sigma)$, the $E$-invariance of $(\Omega,\slot\Omega)$ and equation \eqref{eq:twoinnerprods} together with $a_\rho = 1_{H_\rho}$ (Proposition \ref{prop:arho=1}).

Now let $m\in \M$, then $(\psi_{\rho,r}^*\iota(n)\Omega,\phi_1(m)\Omega)=(\psi_{\rho,r}^*\iota(n)\Omega,\phi_2(m)\Omega)$ for every $n\in\N$, by the previous computation and by the $\N$-bimodularity of $\phisharp_1$ and $\phisharp_2$.
Since the vectors $\{\psi_{\rho,r}^*\iota(n)\Omega, n\in\N\}$ are total in $\Hil$, by the very definition of Pimsner--Popa basis (Definition \ref{def:PiPobasis}) we get $\phi_1(m)\Omega=\phi_2(m)\Omega$. Since $\Omega$ is separating for $\M$, we conclude $\phi_1(m)=\phi_2(m)$.
\end{proof}
 
\begin{lem}\label{lem:dualityishomeo}
The duality map $\phi\mapsto \omega_\phi$ is bicontinuous between ${\UCP}^{\sharp}_{\N}(\M,\Omega)$ equipped with the topology of $L^2(\M,\Omega)$-convergence and the state space of $\Cred(\N\subset \M)$ with the weak* topology.
\end{lem}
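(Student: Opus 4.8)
The plan is to establish the two implications that together constitute bicontinuity.

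First I would show that $\phi_\alpha\to\phi$ in $L^2(\M,\Omega)$ forces $\omega_{\phi_\alpha}\to\omega_\phi$ in the weak* topology of the state space of $\Cred(\N\subset\M)$. Since the $\omega_{\phi_\alpha}$ and $\omega_\phi$ are states (hence norm one) and $\Trig(\N\subset\M)$ is norm-dense in $\Cred(\N\subset\M)$ (Definition \ref{def:Cred}), the claim reduces, by uniform boundedness, linearity, and the fact that the matrix units $T(\psi_{\rho,r},\psi_{\rho,s})$ span $\Trig(\N\subset\M)$, to proving $\omega_{\phi_\alpha}(T(\psi_{\rho,r},\psi_{\rho,s}))\to\omega_\phi(T(\psi_{\rho,r},\psi_{\rho,s}))$. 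For this one computes, from the definition of the pairing \eqref{eq:pairing} (recalling that $\psi_{\rho,r}^*\phi(\psi_{\rho,s})\in\Hom(\iota,\iota)=\CC1_\iota$) and $\|\Omega\|=1$,
\begin{align}
\omega_\phi(T(\psi_{\rho,r},\psi_{\rho,s}))=(\Omega,\psi_{\rho,r}^*\phi(\psi_{\rho,s})\Omega)=(\psi_{\rho,r}\Omega,V_\phi\,\psi_{\rho,s}\Omega),
\end{align}
and likewise with $\phi_\alpha$, $V_{\phi_\alpha}$ in place of $\phi$, $V_\phi$. As $L^2(\M,\Omega)$-convergence is precisely strong-operator convergence $V_{\phi_\alpha}\to V_\phi$, this settles the first direction.

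For the converse, suppose $\omega_{\phi_\alpha}\to\omega_\phi$ weak*, so in particular $\omega_{\phi_\alpha}(T(\psi_{\rho,r},\psi_{\rho,s}))\to\omega_\phi(T(\psi_{\rho,r},\psi_{\rho,s}))$ for all $\rho,r,s$. I would then invoke the explicit identity derived inside the proof of Lemma \ref{lem:dualityisinjective},
\begin{align}
\phi^\sharp(\psi_{\rho,r}^*)=\sum_{s}\psi_{\rho,s}^*\,\frac{1}{d(\rho)}\,\omega_\phi(T(\psi_{\rho,r},\psi_{\rho,s})),
\end{align}
a \emph{finite} sum (its derivation uses $a_\rho=1_{H_\rho}$, Proposition \ref{prop:arho=1}), together with the same identity for each $\phi_\alpha$. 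It follows that $\phi_\alpha^\sharp(\psi_{\rho,r}^*)\to\phi^\sharp(\psi_{\rho,r}^*)$ in the operator norm of $\M$ for every $\rho,r$, hence, by $\N$-bimodularity of $\phi_\alpha^\sharp$ and $\phi^\sharp$ (the remark after Definition \ref{def:UCPbim}), that $V_{\phi_\alpha^\sharp}\,\xi\to V_{\phi^\sharp}\,\xi$ for every $\xi$ in the total family $\{\psi_{\rho,r}^*\iota(n)\Omega:n\in\N\}$ used in Lemma \ref{lem:dualityisinjective}. Since $\|V_{\phi_\alpha^\sharp}\|\leq 1$ uniformly, an $\epsilon/3$ density argument upgrades this to $V_{\phi_\alpha^\sharp}\to V_{\phi^\sharp}$ in the strong operator topology, i.e.\ $\phi_\alpha^\sharp\to\phi^\sharp$ in $L^2(\M,\Omega)$; by the last assertion of Remark \ref{rmk:Topologies} this is equivalent to $\phi_\alpha\to\phi$ in $L^2(\M,\Omega)$, as desired.

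I expect the converse to be the only delicate point: one must recover the $L^2$-behaviour of $\phi$ from the scalar data $\omega_\phi$. What makes it go through is that the components of $\phi^\sharp$ (equivalently, via $a_\rho=1_{H_\rho}$, of $\phi$) on each \emph{finite-dimensional} charged-field space $H_\rho$ are read off literally as the numbers $\omega_\phi(T(\psi_{\rho,r},\psi_{\rho,s}))$; finite-dimensionality promotes the weak* (pointwise) convergence to norm convergence on each $H_\rho$, and uniform contractivity of the $V_{\phi_\alpha^\sharp}$ then propagates it to all of $\Hil$ by totality of the charged-field vectors.
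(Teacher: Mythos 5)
Your proof is correct and follows essentially the same route as the paper's: both directions ultimately rest on the computation from Lemma \ref{lem:dualityisinjective} expressing $\phi^\sharp$ on the finite-dimensional spaces $H_\rho$ through the values $\omega_\phi(T(\psi_{\rho,r},\psi_{\rho,s}))$, combined with totality of the vectors $\psi_{\rho,r}^*\iota(n)\Omega$, uniform contractivity of the $V_{\phi_\alpha}$, and the equivalences of Remark \ref{rmk:Topologies}. The paper merely packages the two directions into a single matrix-element identity for $V_\alpha^*-V^*$, whereas you treat them separately; the content is the same.
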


\begin{proof}
We need to show that $\omega_\alpha := \omega_{\phi_\alpha}\to\omega = \omega_\phi$ in the weak* topology if and only if  $\phi_\alpha\to\phi$ in $L^2(\M,\Omega)$. 
By Remark \ref{rmk:Topologies} it is enough to show the equivalence with the convergence of $V_\alpha \to V$ in the weak operator topology, \ie of $V_\alpha^* = V_{\phisharp_\alpha}\to V^* = V_{\phisharp}$.
By direct computation, as in the proof of Lemma \ref{lem:dualityisinjective}, we have 
\begin{align}
(\psi_{\rho,r}^*\iota(n_1)\Omega,(V_\alpha^* \!-\! V^*) \psi_{\rho,s}^*\iota(n_2)\Omega) 
= \frac{1}{d(\rho)} (\iota(n_1)\Omega,\iota(n_2)\Omega) \left(\omega_\alpha(T(\psi_{\rho,s},\psi_{\rho,r})) \!-\!
\omega(T(\psi_{\rho,s},\psi_{\rho,r}))\right),
\end{align}
where $n_1,n_2\in\N$. The statement follows on the one hand because $\{\psi_{\rho,r}\iota(n)\Omega, n\in\N\}$ are total in $\Hil$, on the other hand by choosing $n_1=n_2=1$ and using that $\{\lambda(T(\psi_{\rho,s},\psi_{\rho,r}))\}$ are total in $\Cred(\N\subset \M)$ by definition.
\end{proof}

\begin{lem}\label{lem:OmegaadjL2cpt}
The topology of $L^2(\M,\Omega)$-convergence makes ${\UCP}^{\sharp}_{\N}(\M,\Omega)$ into a compact Hausdorff topological space. 

Moreover, it coincides on ${\UCP}^{\sharp}_{\N}(\M,\Omega)$ with the pointwise (ultra-)weak operator topology, also called Bounded-Weak (BW) topology.
\end{lem}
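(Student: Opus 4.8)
The plan is to leverage the duality homeomorphism of Lemma \ref{lem:dualityishomeo}. By Corollary \ref{cor:statesextend}, every $\phi \in {\UCP}^{\sharp}_\N(\M,\Omega)$ yields a state $\omega_\phi$ on $\Cred(\N\subset\M)$, and by Lemmas \ref{lem:dualityisinjective} and \ref{lem:dualityishomeo} the assignment $\phi \mapsto \omega_\phi$ is an injective bicontinuous map from ${\UCP}^{\sharp}_\N(\M,\Omega)$, with the $L^2(\M,\Omega)$-topology, onto its image inside the state space $S(\Cred(\N\subset\M))$ with the weak$^*$ topology. The latter is compact and Hausdorff by Banach--Alaoglu. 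So it suffices to show the image is weak$^*$ closed; then ${\UCP}^{\sharp}_\N(\M,\Omega)$ is homeomorphic to a closed subset of a compact Hausdorff space, hence compact Hausdorff. First I would argue that the image is exactly the set of those states $\omega$ on $\Cred(\N\subset\M)$ that arise by extension from some $\N$-bimodular ucp map; concretely, a state $\omega$ is in the image iff one can reconstruct a ucp $\phi$ from the numbers $\omega(\lambda(T(\psi_{\rho,r},\psi_{\rho,s})))$ via the formula for $\phi^\sharp(\psi_{\rho,r}^*)$ exhibited in the proof of Lemma \ref{lem:dualityisinjective}, namely $\phi^\sharp(\psi_{\rho,r}^*) = \sum_s \psi_{\rho,s}^* \tfrac{1}{d(\rho)}\omega(T(\psi_{\rho,r},\psi_{\rho,s}))$, extended $\N$-bimodularly to $\M$ and then transposed back to $\phi$.

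The key step is the closedness of the image. Here is the approach. Take a net $\omega_{\phi_\alpha} \to \omega$ weak$^*$ in $S(\Cred(\N\subset\M))$. Via the reconstruction formula above one gets candidate operators $V_\alpha^* = V_{\phi_\alpha^\sharp}$ on $\Hil$, and the identity from the proof of Lemma \ref{lem:dualityishomeo},
\begin{align}
(\psi_{\rho,r}^*\iota(n_1)\Omega,(V_\alpha^* - V_\beta^*)\psi_{\rho,s}^*\iota(n_2)\Omega) = \frac{1}{d(\rho)}(\iota(n_1)\Omega,\iota(n_2)\Omega)\big(\omega_\alpha(T(\psi_{\rho,s},\psi_{\rho,r})) - \omega_\beta(T(\psi_{\rho,s},\psi_{\rho,r}))\big),
\end{align}
shows that $\{V_\alpha^*\}$ converges in the weak operator topology (the vectors $\psi_{\rho,r}^*\iota(n)\Omega$ are total in $\Hil$ and $\|V_\alpha^*\| \le 1$) to a contraction $V^* \in \B(\Hil)$. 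Since each $V_\alpha^* \in \N' \cap \M_1$ and the $V_\alpha^*$ are uniformly bounded, Remark \ref{rmk:Topologies} gives $V_\alpha^* \to V^*$ strongly, with $V^* \in \N'\cap\M_1$; in particular $[V^*, J_{\M,\Omega}] = 0$, so the corresponding $\Omega$-preserving ucp map $\phi^\sharp$ (the norm limit of $\phi_\alpha^\sharp$ in the BW sense is ucp, being a pointwise weak limit of ucp maps — the unit ball of $\UCP$ maps is BW-compact) is $\Omega$-adjointable by Proposition \ref{prop:adjointiffmodular}, $\N$-bimodular, and gives $\omega = \omega_{\phi}$ with $\phi = (\phi^\sharp)^\sharp$. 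This closes the image.

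For the last sentence: the $L^2(\M,\Omega)$-topology on ${\UCP}^{\sharp}_\N(\M,\Omega)$ corresponds, as noted right before the lemma, to strong operator convergence of the $V_\alpha \in \N'\cap\M_1$; by Remark \ref{rmk:Topologies} this agrees with weak operator convergence of the $V_\alpha$, i.e.\ with the pointwise weak operator (equivalently ultraweak, the two coinciding on bounded sets) convergence $\phi_\alpha(m)\Omega \to \phi(m)\Omega$ weakly for all $m$, and then — since $\{m'\Omega : m'\in\M'\}$ is dense in $\Hil$ and the $V_\alpha$ are uniformly bounded — with $(\xi, \phi_\alpha(m)\eta) \to (\xi,\phi(m)\eta)$ for all $\xi,\eta\in\Hil$, which is the BW topology. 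I expect the main obstacle to be the verification that the weak-operator limit $V^*$ genuinely comes from an $\N$-bimodular \emph{ucp} map (not merely a contraction): one must check both that the pointwise weak limit of the $\phi_\alpha^\sharp$ is completely positive and unital — standard, using BW-compactness of the set of ucp maps and the normality forced by $\Omega$-preservation — and that $\N$-bimodularity passes to the limit, which follows from $V^* \in \N'$ together with Choi's multiplicativity argument as in Lemma \ref{lem:NfixingisOmegapres}.
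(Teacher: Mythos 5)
Your proof is correct, but it takes a genuinely different route from the paper's. The paper proves compactness directly: $\UCP^{\sharp}_{\N}(\M,\Omega)$ is a BW-closed subset of the set of contractive linear maps $\M\to\B(\Hil)$, which is BW-compact by Banach--Alaoglu (Paulsen, Thm.~7.4); then BW-convergence is identified with $L^2(\M,\Omega)$-convergence via Remark \ref{rmk:Topologies} and the density of $\M'\Omega$, and Hausdorffness comes from $\Omega$ being separating. You instead transport the problem to the state space of $\Cred(\N\subset\M)$ via Lemmas \ref{lem:dualityisinjective}--\ref{lem:dualityishomeo} and prove that the image of the duality map is weak$^*$-closed. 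This is logically sound and non-circular (those lemmas do not depend on the compactness statement), and it has the minor advantage of delivering Hausdorffness for free; but note that your closedness argument does not actually avoid the paper's main ingredient: to promote the weak-operator limit $V^*$ of the contractions $V_\alpha^*$ to an actual ucp map $\phi^\sharp$ you must still invoke BW-compactness of the set of ucp maps and then check that $\Omega$-preservation, $\N$-bimodularity and commutation with $J_{\M,\Omega}$ pass to BW limits — which is exactly the "direct check" the paper performs. So the detour through $\Cred(\N\subset\M)$ relocates, rather than replaces, the compactness input, and the paper's direct argument is shorter. Your treatment of the second assertion (BW $=$ $L^2(\M,\Omega)$ on this set) coincides with the paper's. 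One small caution: the phrase "the norm limit of $\phi_\alpha^\sharp$ in the BW sense" should be replaced by "a BW-limit point of the net $\phi_\alpha^\sharp$, which exists by BW-compactness and is unique because its implementing contraction must equal $V^*$ and $\Omega$ is separating"; as written it conflates two topologies.
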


\begin{proof}
By the Banach--Alaoglu theorem, one can show that ${\UCP}^{\sharp}_{\N}(\M,\Omega)$ is compact in the BW topology. This is because ${\UCP}^{\sharp}_{\N}(\M,\Omega)$ is BW-closed in the set of bounded linear operators from $\M$ to $\B(\Hil)$ with norm at most 1, as one can directly check, and the latter is BW-compact by \cite[\Thm 7.4]{Pa2002}.

Now, the BW-convergence of $\phi_\alpha \to \phi$ implies $(\xi, \phi_\alpha(m)\Omega) \to (\xi, \phi(m)\Omega)$ for every $m\in\M$, $\xi\in\Hil$, thus $V_\alpha \to V$ in the weak operator topology. By Remark \ref{rmk:Topologies}, this is equivalent to $\phi_\alpha \to \phi$ in $L^2(\M,\Omega)$. Vice versa, $\| \phi_\alpha(m)\Omega - \phi(m)\Omega\| \to 0$, $m\in\M$, implies $\| \phi_\alpha(m)m'\Omega - \phi(m)m'\Omega\| \to 0$ for every $m'\in\M'$, thus $\phi_\alpha \to \phi$ in the pointwise (ultra-)strong operator topology by the cyclicity of $\Omega$ for $\M'$, \cf Remark \ref{rmk:strongamenabrepnindep}.

The Hausdorff property follows immediately from the separating property of $\Omega$ for $\M$.
\end{proof}

\begin{thm}\label{thm:ucpstatesduality}
Let $\N\subset \M$ be an irreducible local discrete type $\III$ subfactor (Definition \ref{def:semi-discr}, \ref{def:localsubf}). Let $\Omega\in\Hil$ be a standard vector for $\M\subset\B(\Hil)$ such that the associated state is invariant with respect to the unique normal faithful conditional expectation $E:\M\to\N\subset\M$. 
Let $\Cred(\N\subset \M)$ be the commutative unital separable \Cstar-algebra obtained from $\Trig(\N\subset\M)$ in the GNS representation $(\lambda,\Hil_E,\xi_E)$ associated with $\omega_E = \langle E, \slot \rangle$ (Definition \ref{def:Cred}).

Then the duality map
\begin{align}
\phi \mapsto \omega_\phi
\end{align}
is a bijection (thus a homeomorphism) between:
\begin{itemize}
\item the set of maps $\phi\colon \M \to \M$ that are normal faithful unital completely positive $\N$-bimodular and $\Omega$-adjointable, 
in symbols $\phi\in{\UCP}^{\sharp}_\N(\M,\Omega)$,
\item the set of states $\omega$ on $\Cred(\N\subset \M)$.
\end{itemize}
In particular, $\phi$ is extreme if and only if $\omega_\phi$ is a pure state, \ie a character of $\Cred(\N\subset \M)$.
\end{thm}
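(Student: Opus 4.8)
The plan is to combine the lemmas already at hand and then supply the only missing ingredient, surjectivity of the duality map. By Corollary \ref{cor:statesextend} the assignment $\phi\mapsto\omega_\phi$ is a well-defined map from ${\UCP}^{\sharp}_\N(\M,\Omega)$ to the state space of $\Cred(\N\subset\M)$; it is injective by Lemma \ref{lem:dualityisinjective} and continuous, with continuous inverse on its image, by Lemma \ref{lem:dualityishomeo} (for the $L^2(\M,\Omega)$-topology on the source and the weak* topology on the target). Since ${\UCP}^{\sharp}_\N(\M,\Omega)$ is compact (Lemma \ref{lem:OmegaadjL2cpt}) and the weak* state space is Hausdorff, once surjectivity is known the map is automatically a homeomorphism. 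So the real content is: \emph{every} state $\omega$ on $\Cred(\N\subset\M)$ equals $\omega_\phi$ for some $\phi\in{\UCP}^{\sharp}_\N(\M,\Omega)$.

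I would prove surjectivity in two steps, first handling the finitely supported states and then passing to the general case by compactness and density. For the first step, consider a state of the form $\omega=\omega_E(\lambda(t)\slot)$ with $t\in\Trig(\N\subset\M)$ positive and $\omega_E(\lambda(t))=1$. Reading off the matrix-unit coefficients of $t$ (using the trace-like behaviour of $\omega_E$ on the matrix units $T(\psi_{\rho,r},\psi_{\rho,s})$ and the multiplication law of Proposition \ref{prop:multTirred}), one defines a finitely supported $\N$-bimodular $*$-linear unital map $\phi$ on the dense $*$-algebra $\M_0$ generated by $\iota(\N)$ and the charged fields, determined by $\phi_{\restriction\iota(\N)}=\id$ and by its values on the $\psi_{\rho,r}$; complete positivity of $\phi$ is the reverse of the computation in Proposition \ref{prop:ucpstates} (reducing an arbitrary finite family to a single reducible charged field as in the proof of Proposition \ref{prop:braidedsubfisstrongamenab}, so that $\sum_{i,j}y_i^*\phi(x_i^*x_j)y_j$ becomes $y^*\iota(c)y$ with $c\geq 0$ forced by positivity of $t$). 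As $\phi$ has finite support, it extends to a normal ucp map on $\M$ by the argument of Lemma \ref{lem:CPonPsiStarPsi} (finite sums, continuity inherited from $E$), it is $\Omega$-preserving by Lemma \ref{lem:NfixingisOmegapres} and $\Omega$-adjointable by Lemma \ref{lem:OmegapresisOmegaadj} (applicable since $a_\rho=1_{H_\rho}$ by locality, Proposition \ref{prop:arho=1}), hence $\phi\in{\UCP}^{\sharp}_\N(\M,\Omega)$ by Corollary \ref{cor:localNbimoducp}, and by construction $\omega_\phi=\omega$.

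For the second step, let $S$ be the image of $\phi\mapsto\omega_\phi$ inside the state space of $\Cred(\N\subset\M)$. Since ${\UCP}^{\sharp}_\N(\M,\Omega)$ is compact and the map is continuous, $S$ is weak*-compact, hence weak*-closed and convex. By the first step $S$ contains every state $\omega_E(\lambda(t)\slot)$ with $t\in\Trig(\N\subset\M)$ positive, and $\Trig(\N\subset\M)$ is norm-dense in $\Cred(\N\subset\M)\cong C(K)$ with positivity arranged by adding a small multiple of $\one$ to the self-adjoint part of an approximant; so $S$ contains every state $\omega_E(b\slot)$ with $b\in C(K)_+$ normalized. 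Because $\omega_E$ is faithful (Lemma \ref{lem:haarfaith}), each point evaluation $\delta_x$ on $K$ is a weak* limit of such states (take $b$ supported in shrinking neighbourhoods of $x$), so $S$ contains all extreme points of the state space; being weak*-closed and convex, $S$ then equals the whole state space by the Krein--Milman theorem. Thus $\phi\mapsto\omega_\phi$ is surjective, hence a homeomorphism.

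Finally, the pairing $\langle\phi,a\rangle$ is affine in $\phi$ and ${\UCP}^{\sharp}_\N(\M,\Omega)$ is convex (using $(\alpha\phi_1+\beta\phi_2)^\sharp=\alpha\phi_1^\sharp+\beta\phi_2^\sharp$ from Section \ref{sec:Omegaadjmaps}), so $\phi\mapsto\omega_\phi$ is an affine homeomorphism of compact convex sets and therefore matches up extreme points bijectively; the extreme points of the state space of the commutative \Cstar-algebra $\Cred(\N\subset\M)$ are exactly the pure states, which by Gelfand duality are precisely the characters, i.e.\ the points of the spectrum $K$, which gives the last assertion. The main obstacle is the first step of surjectivity, namely the reverse of Proposition \ref{prop:ucpstates} --- turning positivity of a functional on $\Trig(\N\subset\M)$ into complete positivity of a reconstructed map on $\M_0$, which rests on the tube-algebra-type structure of $\Trig(\N\subset\M)$ --- together with checking that the density/compactness machinery of the second step really reaches \emph{all} states (where faithfulness of the Haar measure $\omega_E$ is essential).
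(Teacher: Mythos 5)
Your overall architecture coincides with the paper's: reduce to surjectivity using Lemma \ref{lem:dualityisinjective}, Corollary \ref{cor:statesextend}, Lemma \ref{lem:dualityishomeo} and Lemma \ref{lem:OmegaadjL2cpt}, then show the image is a weak*-closed set containing a dense family of states of the form $\omega_E(\lambda(t)\ast\slot)$ with $t\in\Trig(\N\subset\M)$ positive and normalized; your second step (faithfulness of $\omega_E$, approximation of Dirac measures, Krein--Milman versus the paper's ``dense image of a compact set'') and your final paragraph on extreme points are essentially the paper's argument and are fine.

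The gap is in your first step. You assert that the map $\phi$ reconstructed from the matrix-unit coefficients of a positive $t$ is completely positive ``by the reverse of the computation in Proposition \ref{prop:ucpstates}'', with ``$c\geq 0$ forced by positivity of $t$''. That is exactly the hard direction, and it is not a reversal of that computation. Proposition \ref{prop:ucpstates} shows that complete positivity of $\phi$ implies $\langle\phi,a^\bullet\ast a\rangle\geq 0$ by exhibiting this \emph{scalar} as $\sum_{i,i'}y_i^*\phi(x_i^*x_{i'})y_{i'}$ for very special $x_i,y_i$ (multiples of single basis charged fields with matched indices). Complete positivity, after the bundling trick, requires positivity of an \emph{operator} $c=\mu_{\bar\sigma,\sigma}(rr^*)$ in the finite-dimensional, generally noncommutative algebra $\Hom(\bar\sigma\sigma,\bar\sigma\sigma)$ for every $\sigma$, and positivity of $t$ in the commutative \Cstar-algebra $\Cred(\N\subset\M)$ (i.e.\ as a function on the spectrum $K$) does not by itself yield this matrix positivity --- the implication is the content of the theorem, not an input. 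The paper supplies the missing mechanism through the Fourier transform of Section \ref{sec:fourier}: $\cF$ intertwines operator product and adjoint in $\Hom(\gamma,\gamma)$ with the $\ast$-product and $\bullet$-involution (equations \eqref{eq:Fismult}, \eqref{eq:Fisinvol}) and its range contains $\Trig(\N\subset\M)$ (Proposition \ref{prop:Fdenserange}), so a positive $f=g^*g$ with $g\in\lambda(\Trig(\N\subset\M))$ pulls back to a positive operator $T_f=T_gT_g^*\in\Hom(\gamma,\gamma)$; Paschke's correspondence (\cite[\Prop 5.4]{Pa1973}, \cite[Prop.\ A.5]{Bi2016}) then produces the completely positive $\N$-bimodular map $\phi_f=\iota(w)^*T_f\gamma(\slot)\iota(w)$ from the minimal Connes--Stinespring representation of $E$, and one checks $\omega_{\phi_f}=\omega_f$ on matrix units. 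Without this detour through $\Hom(\gamma,\gamma)$ (or an equivalent Popa--Vaes-type equivalence between positive-definite functions and cp-multipliers, which itself requires proof) your first step does not go through as written.
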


\begin{proof}
By Lemma \ref{lem:dualityisinjective} and Corollary \ref{cor:statesextend}, we only have to show that the map $\phi \mapsto \omega_\phi$ is surjective. To do so, by the continuity of the duality map (Lemma \ref{lem:dualityishomeo}) and by the compactness of ${\UCP}^{\sharp}_{\N}(\M,\Omega)$ (Lemma \ref{lem:OmegaadjL2cpt})
it is enough to show that the image is dense in the state space of $\Cred(\N\subset \M) \cong C(K)$, \ie the set of probability Radon measures $P(K)$. The latter is the weak*-closed convex hull of the pure states, \ie the Dirac measures on the spectrum $K$, 
and these can be approximated in the weak* topology
by states of the form $\omega_f := f \omega_E$
where $f\in C(K)$, $f\geq 0$, $\int_K f \dd\omega_E = 1$ by standard arguments using the metrizability of $K$ and the faithfulness of the measure $\omega_E$ on $K$ associated with $E$ (Lemma \ref{lem:haarfaith}). 
Explicitly, for every $x\in K$ we can approximate $\delta_x$ (the Dirac measure concentrated in $x$) by measures of the form $f_{(x,\epsilon)} \omega_E$ where $f_{(x,\epsilon)}$ is a positive continuous function with integral one and supported in a ball $B_\epsilon(x)$ of radius $\epsilon > 0$ centered at $x$. For every $g\in C(K)$
\begin{align}\label{eq:approxdiracdeltas}
\hspace{-8mm} \left | \int_K f_{(x,\epsilon)}(y) g(y) \dd \omega_E(y) - g(x)\right | = \left | \int_K f_{(x,\epsilon)}(y) (g(y) - g(x)) \dd \omega_E(y) \right | \leq \! \sup_{y\in B_\epsilon(x)} \! \left | g(y) - g(x) \right |
\end{align}
which tends to zero as $\epsilon\to 0$ by uniform continuity. 

Now, every $f$ as before is of the form $f=g^*g$, $g\in C(K)$, and it can be approximated in norm by a sequence of functions $f_n=g_n^* g_n$, $g_n\in \lambda(\Trig(\N\subset \M)) \subset C(K)$, with $\int_K f_n \dd\omega_E = 1$ by the compactness of $K$. The associated sequence of states $\omega_{f_n}$ converges to $\omega_f$ in the weak* topology. 

Thus our task is to show that the image of $\phi \mapsto \omega_\phi$ contains all the states of the form $\omega_{f} =  f \omega_E$, where $f\in \lambda(\Trig(\N\subset \M)) \subset C(K)$, $f\geq 0$, $\int_K  f \dd\omega_E = 1$. 
We shall show this using the Fourier transform from Section \ref{sec:fourier} and a characterization of positive operators in $\Hom(\gamma,\gamma)$, where $\gamma$ is the canonical endomorphism, based on the Connes--Stinespring representation of completely positive maps, see \eg \cite[\App A.2]{Bi2016}. 

By Proposition \ref{prop:Fdenserange}, we know that the Fourier transform is surjective on trigonometric polynomials.
Moreover, it maps the operator product and adjoint in $\Hom(\gamma,\gamma)$ to the (commutative) $\ast$ product and $^\bullet$ involution in $\Trig(\N\subset\M)$, see equations \eqref{eq:Fismult} and \eqref{eq:Fisinvol}.
Thus positive functions $f\in\lambda(\Trig(\N\subset \M))$ correspond to positive operators $T_f\in\Hom(\gamma,\gamma)$ via the Fourier transform $\cF(T_f) = \lambda^{-1}(f)$.
If $T_f\neq 0$, by \cite[\Prop 5.4]{Pa1973}, \cite[Prop.\ A.5]{Bi2016}, it corresponds to a normal faithful and (up to rescaling unital) completely positive $\N$-bimodular map $\phi_f$ on $\M$ (dominated\footnote{See Definition \ref{def:dominateducp}. The two rescaling constants $c, d\geq 0$ such that $\phi_f(1)=c 1$ and $\phi_f \leq d E$ are related to the $L^1$ and $L^\infty$-norm of $f$, respectively, see below.} by $E$) via the formula $\phi_f := \iota(w)^* T_f \gamma(\slot) \iota(w)$. Recall from \eqref{eq:stineE} that $E = \iota(w)^*\gamma(\slot) \iota(w)$, $w\in\Hom(\id,\theta)$, is the minimal\footnote{Namely, $\gamma(\M)\iota(w)\Hil$ is dense in $\Hil$, which follows \eg from the existence of Pimsner--Popa bases in $\M$.\label{foot:minimalstine}} Connes--Stinespring representation of $E$.

To conclude the proof, we check that for every $f\in \lambda(\Trig(\N\subset \M)) \subset C(K)$, $f\geq 0$, the (not necessarily normalized) positive functional $\omega_{\phi_f}$ on $\Cred(\N\subset \M)$ given by duality coincides with the (not necessarily probability) positive measure $\omega_f$ on $K$. By linearity it suffices to check that 
\begin{align}\label{eq:fouriercommuteswithduality}
\omega_{\phi_f} = \omega_{f}
\end{align}
for all the elementary (non-positive) functions $f = \lambda(T(\psi_{\rho,r},\psi_{\rho,s}))$. In this case, $T_f = \psi_{\rho,r}^* \iota(\psibar_{\rho,s})$ thus $\phi_f := \iota(w)^* \psi_{\rho,r}^* \iota(\psibar_{\rho,s}) \gamma(\cdot) \iota(w)$. It suffices to check the validity of \eqref{eq:fouriercommuteswithduality} on elementary functions $g=\lambda(T(\psi_{\sigma,k},\psi_{\sigma,l}))$ because they are total in $\Cred(\N\subset \M)$. Compute
\begin{align}
\omega_{\phi_f}(g) &= \psi_{\sigma,k}^* \phi_f(\psi_{\sigma,l})\\
&= \psi_{\sigma,k}^* \iota(w)^* \psi_{\rho,r}^* \iota(w^* \iotabar(\psi_{\rho,l})m) \gamma(\psi_{\sigma,l}) \iota(w)\\
&= \psi_{\sigma,k}^* \psi_{\rho,r}^* \iota(\rho(w)^* w^* \iotabar(\psi_{\rho,s})m) \gamma(\psi_{\sigma,l}) \iota(w)\\ 
&= \psi_{\sigma,k}^* \psi_{\rho,r}^* \iota(w^* \iotabar(\psi_{\rho,s}) \theta(w)^* m) \gamma(\psi_{\sigma,l}) \iota(w)\\ 
&= \psi_{\sigma,k}^* \psi_{\rho,r}^* \iota(w^* \iotabar(\psi_{\rho,s})) \gamma(\psi_{\sigma,l}) \iota(w)\\ 
&= \psi_{\sigma,k}^* \psi_{\rho,r}^* E(\psi_{\rho,s}\psi_{\sigma,l})
\end{align}
using $\theta(w^*)m = \one$ and $E=\iota(w)^*\gamma(\cdot)\iota(w)$. On the other hand
\begin{align}
\omega_f(g) &= \int_K \lambda(T(\psi_{\rho,r}, \psi_{\rho,s}) \ast T(\psi_{\sigma,k}, \psi_{\sigma,l})) \dd\omega_E\\
&= \int_K \lambda(T(\psi_{\rho,r}\psi_{\sigma,k}, \psi_{\rho,s}\psi_{\sigma,l})) \dd\omega_E\\
&= \omega_E(T(\psi_{\rho,r}\psi_{\sigma,k}, \psi_{\rho,s}\psi_{\sigma,l}))\\
&= \psi_{\sigma,k}^*\psi_{\rho,r}^* E(\psi_{\rho,s}\psi_{\sigma,l}).
\end{align}
Note that the previous expressions are non-zero if and only if $\sigma \cong \rhobar$. 
We conclude that $\omega_{\phi_f} = \omega_f$ on $\Cred(\N\subset \M) \cong C(K)$, for every $f\in\lambda(\Trig(\N\subset \M))$. 

By the same computation as above and by observing that $\one = T(\psi_{\id,1},\psi_{\id,1})$ with $\psi_{\id,1} = 1$, we have that
\begin{align}
\phi_f(1) = c 1,\end{align}
with
\begin{align}
c = \int_K f \dd\omega_E = \omega_E(f).
\end{align}
Thus for $f\in\lambda(\Trig(\N\subset \M))$, $f\geq 0$, the normalization condition for the measure $\int_K f \dd\omega_E = 1$ corresponds to the unitality of the associated completely positive map $\phi_f(1) = 1$. 
By \cite[Prop.\ A.5]{Bi2016} and Lemma \ref{lem:OmegapresisOmegaadj}, we get $\phi_f \in{\UCP}^{\sharp}_\N(\M,\Omega)$, and the proof is complete.
\end{proof}

\begin{notation}
Denote by $\Extr({\UCP}^{\sharp}_\N(\M,\Omega))$ the \textbf{extreme points} in ${\UCP}^{\sharp}_\N(\M,\Omega)$.
\end{notation}

By the previous theorem we have that $\Extr({\UCP}^{\sharp}_\N(\M,\Omega)) \cong K$, where $K$ is the spectrum of $\Cred(\N\subset\M)$.

\begin{rmk}
The subset of extreme points of a compact convex set need \emph{not} be closed (thus compact), not even a Borel set in general \cite[\p 5]{Phe01}. 
This is anyway the case for the pure states on a commutative \Cstar-algebras, as the pureness condition is characterized algebraically by the multiplicativity of the state, and the latter is a closed condition in the weak* topology.
\end{rmk}

\begin{cor}\label{cor:cptmetK}
The set $\Extr({\UCP}^{\sharp}_\N(\M,\Omega))$ is compact and metrizable in the topology of $L^2(\M,\Omega)$-convergence.
\end{cor}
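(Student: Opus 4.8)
The plan is to deduce Corollary~\ref{cor:cptmetK} directly from the duality theorem just stated (Theorem~\ref{thm:ucpstatesduality}) together with the description of $\Cred(\N\subset\M)$ as a \emph{separable commutative} \Cstar-algebra (Definition~\ref{def:Cred}). The homeomorphism furnished by $\phi\mapsto\omega_\phi$ identifies ${\UCP}^{\sharp}_\N(\M,\Omega)$, with its $L^2(\M,\Omega)$-topology, with the state space of $\Cred(\N\subset\M)$ in the weak* topology; under this identification the extreme points $\Extr({\UCP}^{\sharp}_\N(\M,\Omega))$ correspond exactly to the extreme points of the state space, which for a \emph{commutative} unital \Cstar-algebra are precisely the characters, i.e.\ the Gelfand spectrum $K$. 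So the statement reduces to: $K$ is compact and metrizable.

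First I would recall that $K$, the spectrum of the commutative unital \Cstar-algebra $\Cred(\N\subset\M)\cong C(K)$, is a compact Hausdorff space by Gelfand duality — this is already noted in Definition~\ref{def:Cred}. Next, metrizability: since $\Trig(\N\subset\M)$ is generated by the countably many matrix units $T(\psi_{\rho,r},\psi_{\rho,s})$ (the index set $\Irr(\langle\theta\rangle_0)$ being countable by discreteness, \cf the proof of Corollary~\ref{cor:statesextend}), and $\Cred(\N\subset\M)$ is by construction the norm closure of $\lambda(\Trig(\N\subset\M))$, the algebra $\Cred(\N\subset\M)$ is norm-separable. A compact Hausdorff space whose algebra of continuous functions is separable is metrizable (equivalently, the weak* topology on the unit ball of the dual of a separable Banach space is metrizable, and $K$ embeds weak*-homeomorphically into that ball as the set of characters). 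Hence $K$ is compact metrizable, and transporting the metric back along the homeomorphism of Theorem~\ref{thm:ucpstatesduality} shows $\Extr({\UCP}^{\sharp}_\N(\M,\Omega))$ is compact and metrizable in the $L^2(\M,\Omega)$-topology.

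The only point that needs a word of care — and the closest thing to an obstacle — is that extreme points of a compact convex set are in general neither closed nor even Borel (this is flagged in the Remark preceding the corollary). What saves us here is the \emph{commutativity} of $\Cred(\N\subset\M)$: pure states on a commutative \Cstar-algebra are exactly the multiplicative ones, and multiplicativity $\omega(ab)=\omega(a)\omega(b)$ is a weak*-closed condition. Thus the set of characters is a weak*-closed subset of the (weak*-compact) state space, hence itself compact, and the homeomorphism of Theorem~\ref{thm:ucpstatesduality} carries ``extreme in ${\UCP}^{\sharp}_\N(\M,\Omega)$'' to ``extreme in the state space of a commutative \Cstar-algebra'' $=$ ``character''. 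So no general Choquet-theoretic subtlety intervenes. I would write the proof as a two-line argument: invoke Theorem~\ref{thm:ucpstatesduality} to reduce to the spectrum of $\Cred(\N\subset\M)$, then note that this \Cstar-algebra is unital, commutative and separable, so its spectrum is compact metrizable.

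\begin{proof}
By Theorem~\ref{thm:ucpstatesduality} the duality map $\phi\mapsto\omega_\phi$ is a homeomorphism from ${\UCP}^{\sharp}_\N(\M,\Omega)$, with the topology of $L^2(\M,\Omega)$-convergence, onto the state space of $\Cred(\N\subset \M)$ with the weak* topology, and it sends extreme points to pure states. Since $\Cred(\N\subset \M)$ is commutative, its pure states are exactly its characters, and the set of characters is weak*-closed in the state space (multiplicativity is a weak*-closed condition), hence weak*-compact. Therefore $\Extr({\UCP}^{\sharp}_\N(\M,\Omega))$ is homeomorphic to the Gelfand spectrum $K$ of $\Cred(\N\subset \M)$, which is compact. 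Moreover $\Cred(\N\subset \M)$ is separable by construction (Definition~\ref{def:Cred}), being the norm closure of $\lambda(\Trig(\N\subset \M))$ and $\Trig(\N\subset\M)$ having a countable spanning set of matrix units $\{T(\psi_{\rho,r},\psi_{\rho,s})\}$; hence $K$ is metrizable. Transporting the metric along the homeomorphism, $\Extr({\UCP}^{\sharp}_\N(\M,\Omega))$ is compact and metrizable in the topology of $L^2(\M,\Omega)$-convergence.
\end{proof}
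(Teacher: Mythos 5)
Your proof is correct and follows essentially the same route as the paper: the duality homeomorphism of Theorem \ref{thm:ucpstatesduality} identifies the extreme points with the characters of the commutative unital \Cstar-algebra $\Cred(\N\subset\M)$, which form a weak*-closed (hence compact) subset of the state space, and metrizability comes from the separability of $\Cred(\N\subset\M)$ noted in Definition \ref{def:Cred}. You correctly isolate the one delicate point, namely that extreme points of a compact convex set need not be closed in general, and resolve it via commutativity exactly as the remark preceding the corollary does.
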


By Gelfand duality and because $\UCP^\sharp_\N(\M,\Omega)$ does not depend on the choice of $\Omega$ (Corollary \ref{cor:indepOmega} and \ref{cor:localNbimoducp}) we also get:

\begin{cor}
The \Cstar-algebra $\Cred(\N\subset\M)$ does not depend up to *-isomorphism on the choice of generalized Q-system of intertwiners $(\theta,w,\{m_{\rho,r}\})$ (Notation \ref{not:genQsysint}) made at the beginning of Section \ref{sec:trig}.
\end{cor}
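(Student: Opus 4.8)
The plan is to prove independence of $\Cred(\N\subset\M)$ from the choice of generalized Q-system of intertwiners by invoking the chain of identifications established in this section, culminating in Theorem \ref{thm:ucpstatesduality}. The key point is that the set ${\UCP}^{\sharp}_\N(\M,\Omega)$, and hence its subset of extreme points $\Extr({\UCP}^{\sharp}_\N(\M,\Omega))$, is manifestly \emph{intrinsic} to the subfactor $\N\subset\M$: by Corollary \ref{cor:indepOmega} and Corollary \ref{cor:localNbimoducp} it does not depend on $\Omega$, and by Corollary \ref{cor:localNbimoducp} it coincides simply with the set of ucp maps $\phi\colon\M\to\M$ with $\phi_{\restriction\iota(\N)}=\id$, a datum that does not refer to any choice of $(\theta,w,\{m_{\rho,r}\})$, of Pimsner--Popa basis, or of charged fields. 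Thus the strategy is: (i) observe $K := \Extr({\UCP}^{\sharp}_\N(\M,\Omega))$ is an intrinsic compact Hausdorff space; (ii) by Theorem \ref{thm:ucpstatesduality} together with the remark following it, $K$ is homeomorphic to the Gelfand spectrum of $\Cred(\N\subset\M)$; (iii) by Gelfand duality conclude $\Cred(\N\subset\M)\cong C(K)$ as a \Cstar-algebra in a way that does not depend on the auxiliary choices, whence any two constructions of $\Cred(\N\subset\M)$ arising from different generalized Q-systems of intertwiners are $*$-isomorphic.

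More precisely, I would argue as follows. Fix two generalized Q-systems of intertwiners $(\theta,w,\{m_{\rho,r}\})$ and $(\theta',w',\{m'_{\rho',r'}\})$ for $\N\subset\M$, and let $\Cred(\N\subset\M)$ and $\Cred(\N\subset\M)'$ be the associated reduced \Cstar-algebras, with respective spectra $K$ and $K'$. Since by hypothesis these two algebras are built from the \emph{same} subfactor, the corresponding sets ${\UCP}^{\sharp}_\N(\M,\Omega)$ — which by Corollary \ref{cor:localNbimoducp} depend only on the inclusion $\iota\colon\N\to\M$ — coincide as topological spaces (with the $L^2(\M,\Omega)$-topology, itself independent of the choices by Corollary \ref{cor:indepOmega}). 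Applying Theorem \ref{thm:ucpstatesduality} to each generalized Q-system, we obtain homeomorphisms $\Extr({\UCP}^{\sharp}_\N(\M,\Omega))\cong K$ and $\Extr({\UCP}^{\sharp}_\N(\M,\Omega))\cong K'$, and hence a homeomorphism $K\cong K'$. By Gelfand--Naimark, $\Cred(\N\subset\M)\cong C(K)\cong C(K')\cong\Cred(\N\subset\M)'$ as unital commutative \Cstar-algebras. Since both are separable, everything is compatible with metrizability, and no further normalization is needed.

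One subtlety worth addressing explicitly in the write-up: the $*$-isomorphism $\Cred(\N\subset\M)\cong\Cred(\N\subset\M)'$ should be compatible with the distinguished state $\omega_E$, so that not only the \Cstar-algebra but the whole GNS triple is intrinsic. This follows because under the duality of Theorem \ref{thm:ucpstatesduality} the conditional expectation $E\in{\UCP}^{\sharp}_\N(\M,\Omega)$ — which is intrinsic to the subfactor, being the \emph{unique} normal faithful conditional expectation by irreducibility — is mapped to $\omega_E$ regardless of the chosen generalized Q-system. Hence the homeomorphism $K\cong K'$ carries the Haar-measure-type state to its counterpart, and the induced $*$-isomorphism of \Cstar-algebras intertwines the two copies of $\omega_E$. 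I expect no genuine obstacle here, since all the substantive work has been done in establishing Theorem \ref{thm:ucpstatesduality} and Corollaries \ref{cor:indepOmega}--\ref{cor:localNbimoducp}; the only care needed is to phrase the argument so that it is transparent that ${\UCP}^{\sharp}_\N(\M,\Omega)$ and its topology are defined prior to, and independently of, any choice of generalized Q-system of intertwiners.
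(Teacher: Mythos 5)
Your proposal is correct and follows essentially the same route as the paper: the corollary is deduced directly from Theorem \ref{thm:ucpstatesduality} together with Corollaries \ref{cor:indepOmega} and \ref{cor:localNbimoducp}, which show that ${\UCP}^{\sharp}_\N(\M,\Omega)$ is intrinsic to the inclusion, so that the spectrum of $\Cred(\N\subset\M)$ is identified with the intrinsic space $\Extr({\UCP}^{\sharp}_\N(\M,\Omega))$ and Gelfand duality finishes the argument. Your additional observation that the isomorphism respects the distinguished state $\omega_E$ (since $E$ is the unique conditional expectation and corresponds to $\omega_E$ under the duality) is a correct and worthwhile refinement, though not needed for the statement as written.
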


Before elucidating the hypergroup structure of $\Extr({\UCP}^{\sharp}_\N(\M,\Omega)) \cong K$ in Section \ref{sec:UCPcompacthyp}, we deduce some general consequences of Theorem \ref{thm:ucpstatesduality} on the structure of the subfactor.

%%%
\subsection{Commutativity of $\Hom(\gamma,\gamma)$}\label{sec:gammagamma}
%%%

Denote by $\N\subset\M\subset\M_1\subset\M_2$ the (beginning of) the Jones tower and by $\gamma$ the canonical endomorphism of the subfactor (Section \ref{sec:genQsys}). In this section, we show that if $\N\subset\M$ is irreducible discrete and \emph{local} then the von Neumann algebra $\M'\cap\M_2 \cong \Hom(\gamma,\gamma) = \gamma(\M)' \cap \M$ is \emph{commutative}. In the finite index case, the result follows from Fourier duality, the Fourier transform is a bijection onto $\Hom(\theta,\theta)$ in this case, and from the commutativity of ordinary Q-systems \cite[\Def 4.20]{BiKaLoRe2014-2}. In the local discrete case, we show it by exploiting the proof of Theorem \ref{thm:ucpstatesduality} and proving the identification of $\Hom(\gamma,\gamma)$ with $L^\infty(K,\omega_E)$, where $K$ is the spectrum of $\Cred(\N\subset\M)$.

\begin{rmk}\label{rmk:fermionicgammagamma}
As we shall see in Section \ref{sec:nonlocal}, the commutativity of $\Hom(\gamma,\gamma)$ also holds for some non-local subfactors. 
Examples of this are provided by conformal inclusions: take the finite index subfactors associated with intervals in the Virasoro net $\Vir_{1/2}$ embedded into the (non-local) real Fermionic net. In this case, see \cite[\Sec 2]{MaSc1990}, we have non-local (with respect to the given braiding) $\ZZ_2$-fixed point subfactors, thus $\Hom(\gamma,\gamma) \cong L^\infty(\ZZ_2)$. 
\end{rmk}

\begin{rmk}
In the finite index case, the condition of $\Hom(\gamma,\gamma)$ commutative is at the root of the analysis of \cite{Bi2016}. It is also the starting point of another recent work on the categorifiability of fusion rules in unitary fusion categories \cite{LiPaWu2019-arxiv}. 
\end{rmk}

Denote by ${\Span}_{\CC}\{{\UCP}^{\sharp}_{\N}(\M,\Omega)\}$ the subspace generated by ${\UCP}^{\sharp}_{\N}(\M,\Omega)$ in the vector space of bounded linear operators on $\M$. Let $P(K)$, $M(K)$ be as in Section \ref{sec:compacthyp} and identify the state space of $\Cred(\N\subset\M)$ with $P(K)$. The following is analogous to the notion of domination for positive measures on $K$, seen as positive linear functionals on $C(K)$, given in Definition \ref{def:dominatedposmeas}.

\begin{defi}\label{def:dominateducp}
Let $\phi_1, \phi_2$ be completely positive maps on $\M$. We say that $\phi_1$ is \textbf{dominated} by $\phi_2$, written as $\phi_1 \leq d \phi_2$ or simply $\phi_1 \leq \phi_2$, if there exists a constant $d>0$ such that $d\phi_2-\phi_1$ is completely positive.
\end{defi}

\begin{prop}\label{prop:spancext}
The duality map of Theorem \ref{thm:ucpstatesduality}, ${\UCP}^{\sharp}_{\N}(\M,\Omega)\to P(K)$, $\phi\mapsto \omega_\phi$ extends to a linear bijection ${\Span}_{\CC}\{{\UCP}^{\sharp}_{\N}(\M,\Omega)\}\to M(K)$. 

Moreover, the map is norm and domination order preserving between completely positive maps and positive measures.
\end{prop}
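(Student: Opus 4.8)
The plan is to bootstrap everything from the bijection of Theorem \ref{thm:ucpstatesduality} by a density/extension argument on the complex span, together with a Jordan-type decomposition on the measure side. First I would observe that every element of $M(K)$, identified with the continuous dual of $C(K) \cong \Cred(\N\subset\M)$, is a complex linear combination of (at most four) positive measures, each of which is a nonnegative multiple of a probability measure, hence of the form $c\,\omega_\phi$ with $c\geq 0$ and $\phi\in{\UCP}^{\sharp}_{\N}(\M,\Omega)$ by Theorem \ref{thm:ucpstatesduality}. This immediately gives surjectivity of the extended map onto $M(K)$. For injectivity, one uses that the duality pairing \eqref{eq:pairing} is already linear in the first variable on $\Hom_\N(\M,\M)$ (so the extension is well-defined and linear), and that a linear combination $\sum_i c_i \phi_i$ mapping to the zero measure pairs trivially with every $\lambda(a)$, $a\in\Trig(\N\subset\M)$; running the argument of Lemma \ref{lem:dualityisinjective} verbatim — expanding $\phi^\sharp$ against the Pimsner--Popa basis of charged fields and using that the vectors $\{\psi_{\rho,r}^*\iota(n)\Omega\}$ are total in $\Hil$ — forces $\sum_i c_i \phi_i = 0$.

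Next I would address the two extra assertions. Domination order preservation is the easy direction: if $\phi_1 \leq d\,\phi_2$ in the sense of Definition \ref{def:dominateducp}, then $d\phi_2 - \phi_1$ is completely positive, $\N$-bimodular (difference of $\N$-bimodular maps), and $\Omega$-adjointable by Lemma \ref{lem:OmegapresisOmegaadj} since it is a positive multiple of an element of ${\UCP}^{\sharp}_{\N}(\M,\Omega)$ up to normalization; hence its image under the duality map is a positive measure, so $\omega_{\phi_1} \leq d\,\omega_{\phi_2}$ in the sense of Definition \ref{def:dominatedposmeas}. Conversely, given $\omega_{\phi_1} \leq d\,\omega_{\phi_2}$, the measure $d\,\omega_{\phi_2} - \omega_{\phi_1}$ is a positive multiple of a probability measure, pull it back through the (now established) bijection to a completely positive $\N$-bimodular $\Omega$-adjointable map $\psi$ with $\omega_\psi = d\,\omega_{\phi_2} - \omega_{\phi_1}$; by linearity and injectivity of the extended duality map, $\psi = d\phi_2 - \phi_1$, which is therefore completely positive, i.e. $\phi_1 \leq d\,\phi_2$. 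The key point making both directions work is that positivity of a measure on $K$ corresponds exactly to complete positivity of the associated map on $\M$, which is precisely the content of the surjectivity half of Theorem \ref{thm:ucpstatesduality} (via the Connes--Stinespring characterization of \cite[\Prop 5.4]{Pa1973}, \cite[\Prop A.5]{Bi2016} used there).

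For norm preservation, I would argue that on the positive cone the norm of a measure $\mu\in M(K)_+$ equals $\mu(1) = \int_K 1\dd\mu$, which under the correspondence is $\langle\phi, \one\rangle = \phi(1)$, i.e. the value $c$ with $\phi(1) = c1$; this is exactly the rescaling constant identified at the end of the proof of Theorem \ref{thm:ucpstatesduality}. So on positive elements the two norms agree. To pass to the whole span, I would use that $\Cred(\N\subset\M) \cong C(K)$ is commutative, hence $M(K)$ with the total variation norm satisfies $\|\mu\| = \sup\{|\mu(f)| : f\in C(K), \|f\|\leq 1\}$ and the total variation decomposition is isometric; on the operator side the corresponding norm on ${\Span}_{\CC}\{{\UCP}^{\sharp}_{\N}(\M,\Omega)\}$ is the dual norm against $\Cred(\N\subset\M)$ via the pairing, which transports isometrically by construction. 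The main obstacle I anticipate is making the norm statement precise: one must fix which norm is meant on ${\Span}_{\CC}\{{\UCP}^{\sharp}_{\N}(\M,\Omega)\}$ — presumably the one pulled back from $M(K)$, or equivalently the functional norm of $\phi\mapsto (\Omega, \phi(\,\cdot\,)\Omega)$ restricted suitably — and check it is intrinsic, e.g. that it coincides with $\sup\{|\langle\phi, \lambda(a)\rangle| : \|\lambda(a)\|_{\Cred}\leq 1\}$; once the correct normalization is pinned down, everything follows formally from Theorem \ref{thm:ucpstatesduality} and Gelfand duality, so the real work is bookkeeping rather than new estimates.
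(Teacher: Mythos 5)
Your proposal is correct and rests on the same core observation as the paper's proof: by irreducibility of the subfactor, every $\N$-bimodular completely positive map satisfies $\phi(1)=c1$ with $c\geq 0$, so the completely positive $\N$-bimodular $\Omega$-adjointable maps are exactly the nonnegative multiples of elements of ${\UCP}^{\sharp}_{\N}(\M,\Omega)$, matching the nonnegative multiples of probability measures; surjectivity, domination order and norm preservation on the positive cones then all follow from Theorem \ref{thm:ucpstatesduality}. Two remarks on where you diverge. First, you handle well-definedness and injectivity of the linear extension by noting that the pairing \eqref{eq:pairing} is linear in $\phi$ and by rerunning Lemma \ref{lem:dualityisinjective}; the paper instead uses a normalization trick (move the negatively weighted summands to the other side of the equality, observe that unitality forces the coefficient sums to agree, rescale to convex combinations, and invoke the bijectivity on the convex sets, applied to both the map and its inverse). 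Both routes work; yours needs the minor caveat that $\phi\mapsto\phi^{\sharp}$ is antilinear on complex combinations, which is absorbed by the identity $\overline{\omega_\phi(a)}=\omega_\phi(a^{\bullet})$ from Proposition \ref{prop:ucpstates}. Second, the \lq\lq main obstacle\rq\rq\ you anticipate about the norm on the whole span is not an obstacle for the statement as written: norm preservation is only asserted between completely positive maps and positive measures, where both norms equal the constant $c$ above (one has $\|\phi\|=\|\phi(1)\|=c$ for completely positive maps and $\|\mu\|=\mu(1)=c$ for positive measures), so no total-variation bookkeeping on the full complex span is required.
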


\begin{proof}
Observe that the duality map preserves convex combinations. By separating the real and imaginary parts, it is enough to prove the statement for ${\Span}_{\RR}\{{\UCP}^{\sharp}_{\N}(\M,\Omega)\}$ and real Radon measures. For every $\sum_j \alpha_j \phi_j$, where $\alpha_j\in\RR$, $\phi_j\in{\UCP}^{\sharp}_{\N}(\M,\Omega)$, we show that the linear extension of the duality map $\sum_j \alpha_j \omega_{\phi_j}$ is well-defined. 
Let $\sum_j \alpha_j \phi_j = \sum_k \beta_k \psi_k$. We can assume that $\alpha_j,\beta_k\geq 0$ by moving all the summands with a negative coefficient on the other side of the equality. We can assume that the coefficients are convex, because $\phi_j(1) = \psi_k(1) = 1$ implies $\sum_j \alpha_j = \sum_k \beta_k$ and we can normalize on both sides. Thus $\sum_j \alpha_j \omega_{\phi_j} = \sum_k \beta_k \omega_{\psi_k}$. The same argument can be applied to the inverse of the duality map, thus the linear extension is bijective.

To prove the second statement, we only have to observe that if $\phi$ is $\N$-bimodular and completely positive, then $\phi(1) = c1$ for $c \geq 0$ by the irreducibility of the subfactor.
\end{proof}

From the proof of Theorem \ref{thm:ucpstatesduality}, equation \eqref{eq:fouriercommuteswithduality}, we know that there are two equivalent ways of associating to a function $f\in\lambda(\Trig(\N\subset\M))$, an operator $T_f\in\Hom(\gamma,\gamma)$. The first one is the inverse of the Fourier transform $\cF^{-1}\circ\lambda^{-1}$.
We extend the second one to the weak operator closure of $\lambda(\Trig(\N\subset\M))$ in $\B(\Hil_E)$, which is isomorphic to $L^\infty(K,\omega_E)$.

\begin{prop}\label{prop:piisnormcont}
The map defined on $f\in L^\infty(K,\omega_E)$ such that $f\geq 0$ and $\int_K f \dd\omega_E = 1$, \ie $\omega_f := f\omega_E \in P(K)$, by setting
\begin{align}
f \mapsto \omega_f = \omega_{\phi_{\omega_f}} \mapsto \phi_{\omega_f} = \iota(w)^*T_{\omega_f}\gamma(\cdot)\iota(w) \mapsto T_{\omega_f}
\end{align}
where $T_{\omega_f}$ is a positive operator in $\Hom(\gamma,\gamma)$ such that $w^* T_{\omega_f} w = 1$ and $\|T_{\omega_f}\| \leq \|f\|_\infty$,
extends to a linear bijection $\pi: L^{\infty}(K,\omega_E)\to \Hom(\gamma,\gamma)$.

Moreover, if $f_n\to f$ in the weak* topology, $f_n,f\geq 0$ and $\{f_n\}_n$ is bounded in $L^\infty$-norm, then $T_{\omega_{f_n}}\to T_{\omega_f}$ in the weak operator topology.
\end{prop}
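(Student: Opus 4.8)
The plan is to build the map $\pi$ in stages, starting from the dense $*$-subalgebra $\lambda(\Trig(\N\subset\M))$ and then extending by continuity to the weak operator closure $L^\infty(K,\omega_E)$. First I would record the facts already available from the proof of Theorem \ref{thm:ucpstatesduality}: for $f\in\lambda(\Trig(\N\subset\M))$ the Fourier transform gives an operator $\cF^{-1}(\lambda^{-1}(f))=T_f\in\Hom(\gamma,\gamma)$, and equation \eqref{eq:fouriercommuteswithduality} identifies $\omega_{\phi_f}=\omega_f$ where $\phi_f=\iota(w)^*T_f\gamma(\slot)\iota(w)$. Since $\cF$ intertwines the operator product/adjoint in $\Hom(\gamma,\gamma)$ with $\ast$ and $\,^\bullet$ in $\Trig(\N\subset\M)$ (equations \eqref{eq:Fismult}, \eqref{eq:Fisinvol}), and by density of $\lambda(\Trig(\N\subset\M))$ in $C(K)\cong\Cred(\N\subset\M)$ with $\omega_E$ faithful on $K$, the assignment $f\mapsto T_f$ is a unital injective $*$-algebra homomorphism from $\Trig(\N\subset\M)$ into the $*$-algebra $\Hom(\gamma,\gamma)$; in particular $w^*T_f w=\cF(T_f)\cdot w^*w$ evaluated correctly gives $w^*T_fw=\langle T_f,\one\rangle=\omega_E(f)$, so the normalization $w^*T_{\omega_f}w=1$ is exactly $\int_K f\dd\omega_E=1$.

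The key analytic step is the norm bound $\|T_f\|\le\|f\|_\infty$. I would obtain this from the representation $\phi_f=\iota(w)^*T_f\gamma(\slot)\iota(w)$ together with the domination statement in Proposition \ref{prop:spancext}: if $0\le f\le \|f\|_\infty 1$ in $C(K)$ then $0\le\omega_f\le\|f\|_\infty\omega_E$ as measures on $K$, hence $0\le\phi_f\le\|f\|_\infty E$ as completely positive $\N$-bimodular maps on $\M$ by the order-preserving property of the duality map. Feeding this inequality through the minimal Connes--Stinespring representation of $E$ — recall $E=\iota(w)^*\gamma(\slot)\iota(w)$ is minimal in the sense of footnote \ref{foot:minimalstine}, so $\gamma(\M)\iota(w)\Hil$ is dense — forces $0\le T_f\le \|f\|_\infty 1_\M$ by the standard correspondence between dominated completely positive maps and positive operators in the commutant of the Stinespring dilation (\cite[\Prop 5.4]{Pa1973}, \cite[\Prop A.5]{Bi2016}). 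For general (non-positive) $f\in\lambda(\Trig(\N\subset\M))$ the bound follows by writing $\|f\|_\infty^2 1 - f^*f\ge 0$, applying the positive case to $\|f\|_\infty^2 1 - f^*f$, and using that $f\mapsto T_f$ is a $*$-homomorphism so that $T_{f^*f}=T_f^*T_f$.

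Once $\|T_f\|\le\|f\|_\infty$ is established, $f\mapsto T_f$ is a $*$-isomorphism of $C^*$-algebras from $\Cred(\N\subset\M)\cong C(K)$ onto its image, which I claim is $\Cred(\N\subset\M)'$ realized inside $\Hom(\gamma,\gamma)$ via the Fourier picture; in any case it extends uniquely to a normal $*$-isomorphism $\pi$ between the bicommutants. Concretely, both $L^\infty(K,\omega_E)$ (the weak operator closure of $\lambda(\Trig(\N\subset\M))$ acting on $\Hil_E$ with cyclic separating vector $\xi_E$) and $\Hom(\gamma,\gamma)$ are von Neumann algebras, and the $C^*$-isomorphism is normal because it intertwines the faithful normal states $\omega_E$ and $E$ (the vector state of $\Omega$ restricted to $\Hom(\gamma,\gamma)$, matched via \eqref{eq:stineE}); by standard GNS uniqueness it extends to a spatial isomorphism $\pi$ of the von Neumann completions, giving the desired linear bijection $\pi\colon L^\infty(K,\omega_E)\to\Hom(\gamma,\gamma)$ with $\pi(f)=T_f$ on the dense part. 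For the final assertion, if $f_n\to f$ weak* with $f_n,f\ge 0$ and $\sup_n\|f_n\|_\infty<\infty$, then $\{T_{\omega_{f_n}}\}$ is a norm-bounded net in $\Hom(\gamma,\gamma)$, and normality of $\pi$ (equivalently, weak*-to-weak operator continuity on bounded sets, as in Remark \ref{rmk:Topologies} where weak and strong convergence agree on bounded nets in a direct sum of matrix algebras) gives $T_{\omega_{f_n}}=\pi(f_n)\to\pi(f)=T_{\omega_f}$ in the weak operator topology.

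The main obstacle I anticipate is the norm bound $\|T_f\|\le\|f\|_\infty$ and, relatedly, pinning down precisely that the operator $T_{\omega_f}$ produced by the chain $f\mapsto\omega_f\mapsto\phi_{\omega_f}\mapsto T_{\omega_f}$ is uniquely characterized by the two constraints $w^*T_{\omega_f}w=1$ and positivity — one must invoke the minimality of the Stinespring representation of $E$ to get uniqueness of the dominating positive operator, and then check compatibility with the Fourier transform identification on the overlap $\lambda(\Trig(\N\subset\M))$. Everything else (injectivity, linearity, the $*$-algebra structure, normal extension) is then formal once the $C^*$-isomorphism onto its image is in hand.
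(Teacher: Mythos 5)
Your treatment of the "into" direction is essentially the paper's: the domination $\phi_{\omega_f}\leq\|f\|_\infty E$ (because $(\|f\|_\infty-f)\,\omega_E$ is a positive measure) fed through the minimal Connes--Stinespring representation of $E$ gives $0\leq T_{\omega_f}\leq\|f\|_\infty 1$, hence the norm bound; your $C^*$-trick for non-positive trigonometric polynomials is a harmless variant, and the normalization $\iota(w)^*T_{\omega_f}\iota(w)=\int_K f\dd\omega_E$ is correct. The genuine gap is \emph{surjectivity} of $\pi$ onto $\Hom(\gamma,\gamma)$. Your construction only yields a normal isomorphism from $L^\infty(K,\omega_E)$ onto the von Neumann algebra generated by $\{T_f: f\in\lambda(\Trig(\N\subset\M))\}$ (your "bicommutant of the image"); the parenthetical claim that this image is "$\Cred(\N\subset\M)'$ realized inside $\Hom(\gamma,\gamma)$" is not meaningful as stated and in any case is not an argument that the weak closure exhausts $\Hom(\gamma,\gamma)$. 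The missing step — which is what the paper means by "follows by convex linearity as in the proof of the previous proposition" — runs in the opposite direction: given $T\in\Hom(\gamma,\gamma)$ positive with $\iota(w)^*T\iota(w)=1$ (if this scalar vanishes then $T=0$ by minimality), the ucp map $\iota(w)^*T\gamma(\slot)\iota(w)$ is dominated by $\|T\|E$, so by the order-preserving bijection of Proposition \ref{prop:spancext} its measure is dominated by $\|T\|\omega_E$, hence equals $f\omega_E$ with $f\in L^\infty(K,\omega_E)$, $0\leq f\leq\|T\|$, by Radon--Nikodym; uniqueness of the positive operator in the minimal Stinespring representation then gives $T=T_{\omega_f}$. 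Decomposing a general element of $\Hom(\gamma,\gamma)$ into positive parts finishes surjectivity. Without this, the claimed bijection is not established.

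A secondary structural point: the paper proves the weak*-to-WOT continuity \emph{directly} — $f_n\to f$ weak* gives $\omega_{f_n}\to\omega_f$ weak* on $C(K)$, hence $\phi_{\omega_{f_n}}\to\phi_{\omega_f}$ in the BW topology by Theorem \ref{thm:ucpstatesduality} and Lemma \ref{lem:OmegaadjL2cpt}, and then the density of $\gamma(\M)\iota(w)\Hil$ together with the uniform bound $\|T_{\omega_{f_n}}\|\leq\sup_n\|f_n\|_\infty$ gives $T_{\omega_{f_n}}\to T_{\omega_f}$ in WOT — and only afterwards, in Proposition \ref{homgammaiscomm}, upgrades $\pi$ to a normal $*$-isomorphism using that continuity and Kaplansky density. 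You invert this order, deriving continuity from normality of a $*$-isomorphism established up front. That is viable in principle, but it makes the entire proposition hinge on the normal-extension argument, which itself rests on the unresolved surjectivity/density point above; the paper's direct route for the continuity statement avoids that dependence.
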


\begin{proof}
The first part follows by convex linearity as in the proof of the previous proposition, thanks to \cite[\Prop 5.4]{Pa1973}, \cite[Prop.\ A.5]{Bi2016}. We only have to check that $\phi_{\omega_f} \leq d E$ for some $d \geq 0$. 
But $d E - \phi_{\omega_f}$ is clearly completely positive since $(d-f) \omega_E$ is a positive measure, if we let $d=\|f\|_\infty$. We also get $d^{-1}T_{\omega_f} \leq 1$ as operators, thus $\|T_{\omega_f}\| \leq d$.

To show the second statement we can assume that $f_n$ and $f$ are 
$L^1$-normalized.
By Lemma \ref{lem:OmegaadjL2cpt} and Theorem \ref{thm:ucpstatesduality}, we know that $\phi_{\omega_{f_n}} \to \phi_{\omega_{f}}$ in the BW topology, namely $\iota(w)^*T_{\omega_{f_n}}\gamma(m)\iota(w)\to \iota(w)^*T_{\omega_f}\gamma(m)\iota(w)$ in the weak operator topology for every $m\in\M$. By the density of $\gamma(\M)\iota(w)\Hil$, \cf footnote \ref{foot:minimalstine}, and by the norm boundedness of the $T_{\omega_{f_n}}$ we get the claim.
\end{proof}

\begin{prop}\label{homgammaiscomm}
The map $\pi: L^{\infty}(K,\omega_E)\to \Hom(\gamma,\gamma)$ is a surjective *-isomorphism.
In particular, $\Hom(\gamma,\gamma) = \gamma(\M)' \cap \M$ is commutative and $\pi$ is normal and isometric.
\end{prop}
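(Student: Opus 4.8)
The plan is to upgrade the linear bijection $\pi\colon L^\infty(K,\omega_E)\to\Hom(\gamma,\gamma)$ of Proposition \ref{prop:piisnormcont} to a $*$-isomorphism, deducing commutativity of $\Hom(\gamma,\gamma)$ as a corollary. The key observation is that $\pi$ is, up to the Fourier transform, the inverse of the multiplicative and involutive structures already identified: by \eqref{eq:fouriercommuteswithduality} in the proof of Theorem \ref{thm:ucpstatesduality}, on the dense subalgebra $\lambda(\Trig(\N\subset\M))\subset L^\infty(K,\omega_E)$ the map $\pi$ agrees with $\cF^{-1}\circ\lambda^{-1}$, where $\cF\colon\Hom(\gamma,\gamma)\to\Hom(\theta,\theta)$ is the Fourier transform of Section \ref{sec:fourier}. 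First I would record that $\cF$ restricts to a linear bijection between the $*$-subalgebra of $\Hom(\gamma,\gamma)$ consisting of operators $a$ with $\cF(a)\in\Trig(\N\subset\M)$ — call it $\Trig_\gamma$ — and $\Trig(\N\subset\M)$, and that on this domain $\cF(ab)=\cF(b)\ast\cF(a)$ (equation \eqref{eq:Fismult}) and $\cF(a^*)=\cF(a)^\bullet$ (equation \eqref{eq:Fisinvol}). Hence $\pi$, restricted to $\lambda(\Trig(\N\subset\M))$, is a $*$-algebra isomorphism onto $\Trig_\gamma$: it intertwines pointwise multiplication of functions (which corresponds to the commutative $\ast$-product of $\Trig(\N\subset\M)$ via $\lambda$) with the operator product in $\Hom(\gamma,\gamma)$, and complex conjugation with the operator adjoint. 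In particular $\Trig_\gamma$ is a commutative $*$-subalgebra of $\Hom(\gamma,\gamma)$.

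Next I would promote this to the von Neumann closures. Since $\lambda(\Trig(\N\subset\M))$ is weak$*$-dense in $L^\infty(K,\omega_E)$, and $\Trig_\gamma$ (equivalently, $\pi(\lambda(\Trig(\N\subset\M)))$) contains a complete set of matrix units of the type-I von Neumann algebra $\Hom(\gamma,\gamma)\cong\M'\cap\M_2$ — indeed $\pi(\lambda(T(\psi_{\rho,r},\psi_{\rho,s})))=T_f=\psi_{\rho,r}^*\iota(\psibar_{\rho,s})$ by the computation in the proof of Theorem \ref{thm:ucpstatesduality}, and these span each matrix block — the weak operator closure of $\Trig_\gamma$ is all of $\Hom(\gamma,\gamma)$. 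It remains to check that $\pi$ is weak$*$-to-weak-operator continuous on bounded sets and that $\pi$ on bounded subsets of $L^\infty$ stays norm-bounded. The second part of Proposition \ref{prop:piisnormcont} already gives continuity for positive elements; by the linear-span argument of Proposition \ref{prop:spancext} (decomposing into four positive pieces and using $\|T_{\omega_f}\|\le\|f\|_\infty$) this extends to all of $L^\infty$, so $\pi$ is a bounded linear map that is weak$*$-continuous on bounded sets. A bounded net $f_\alpha\to f$ weak$*$ then gives $\pi(f_\alpha)\to\pi(f)$ weak operator; since $\pi$ is multiplicative and $*$-preserving on the weak$*$-dense, weak$*$-bounded-approximating subalgebra $\lambda(\Trig(\N\subset\M))$, separate weak-operator continuity of multiplication on bounded sets forces $\pi$ to be multiplicative and $*$-preserving on all of $L^\infty(K,\omega_E)$. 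Being a bijective, bounded, multiplicative $*$-map between von Neumann algebras, $\pi$ is automatically a normal isometric $*$-isomorphism (a $*$-isomorphism of C$^*$-algebras is isometric, and a bijective normal $*$-homomorphism of von Neumann algebras has normal inverse). Commutativity of $\Hom(\gamma,\gamma)=\gamma(\M)'\cap\M\cong\M'\cap\M_2$ then follows from commutativity of $L^\infty(K,\omega_E)$.

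The main obstacle I anticipate is the passage from the dense subalgebra to the closure, specifically verifying that $\pi$ really is multiplicative on all of $L^\infty$ rather than merely on $\Trig$. The subtlety is that multiplication in a von Neumann algebra is only separately weak-operator continuous, and only on bounded sets, so one must approximate a product $fg$ by products of bounded nets from $\lambda(\Trig(\N\subset\M))$ and pass to the limit in two stages, using at each stage the uniform norm bound $\|\pi(\cdot)\|\le\|\cdot\|_\infty$ to stay inside the regime where weak-operator continuity of multiplication applies. A clean way to finesse this is to invoke Kaplansky density: choose nets $f_\alpha\to f$, $g_\beta\to g$ weak$*$ with $\|f_\alpha\|_\infty\le\|f\|_\infty$, $\|g_\beta\|_\infty\le\|g\|_\infty$ and $f_\alpha,g_\beta\in\lambda(\Trig(\N\subset\M))$, so that $\pi(f_\alpha g_\beta)=\pi(f_\alpha)\pi(g_\beta)$, and then let $\beta$ then $\alpha$ run, using weak-operator continuity of $x\mapsto \pi(f_\alpha)x$ and then of $x\mapsto x\pi(g)$ on the relevant norm-bounded sets. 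Once multiplicativity and $*$-preservation are in hand on all of $L^\infty$, the remaining structural conclusions are routine von Neumann algebra facts.
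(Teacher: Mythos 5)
Your proposal is correct and follows essentially the same route as the paper: restrict to $\lambda(\Trig(\N\subset\M))$ where the Fourier transform identities \eqref{eq:Fismult} and \eqref{eq:Fisinvol} make $\pi$ a *-homomorphism, decompose general elements of $L^\infty(K,\omega_E)$ into four positive pieces, approximate via Kaplansky density by norm-bounded positive nets, and pass to the limit using Proposition \ref{prop:piisnormcont}. Your two-stage separate-continuity argument for the product is in fact a slightly more careful rendering of the paper's appeal to continuity of multiplication on norm-bounded sets, but it is the same proof.
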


\begin{proof}
By the properties of the Fourier transform, \cf the proof of Theorem \ref{thm:ucpstatesduality}, we know that $\pi$ is an injective *-homomorphism when restricted to $\lambda(\Trig(\N\subset\M) \subset L^\infty(K,\omega_E)$. 
For $f,g \in L^\infty(K,\omega_E)$, let $f=\sum_i \alpha_i f_i$ and $g=\sum_j \beta_j g_j$ be the decompositions into four summands such that $\alpha_i,\beta_j\in\CC$ and $f_i,g_j\geq 0$. By Kaplansky's density theorem, each $f_i,g_j$ can be approximated in the weak* topology by $L^\infty$-norm bounded and positive sequences $\{f_{i,n}\}_n, \{g_{j,n}\}_n\subset\lambda(\Trig(\N\subset\M))$. By the joint continuity of the multiplication on norm bounded sets and by Proposition \ref{prop:piisnormcont}, we have 
\begin{align}
\pi(fg) = \sum_{i,j} \alpha_i\beta_j \lim_{n} \pi(f_{i,n}g_{j,n}) = \sum_{i,j} \alpha_i\beta_j \lim_{n} \pi(f_{i,n})\pi(g_{j,n}) = \pi(f)\pi(g).
\end{align}
Similarly, $\pi(\bar{f})=\pi(f)^*$, and the proof is complete.
\end{proof}

%%%
\subsection{Choquet decomposition of the conditional expectation}\label{sec:choquet}
%%%

In this section, we show that the duality pairing between the convex set ${\UCP}^{\sharp}_\N(\M,\Omega)$ and the continuous functions on its extreme points given by Theorem \ref{thm:ucpstatesduality} can be cast into a simple form.

Recall that ${\UCP}^{\sharp}_\N(\M,\Omega)$ equipped with the topology of $L^2(\M,\Omega)$-convergence is compact by Lemma \ref{lem:OmegaadjL2cpt}. 
A continuous function $f\in C({\UCP}^{\sharp}_\N(\M,\Omega))$ is said to be \textbf{affine} if it preserves convex combinations $f(\lambda \phi_1 + (1-\lambda) \phi_2)=\lambda f(\phi_1)+(1-\lambda)f(\phi_2)$ for $\lambda\in [0,1]$, $\phi_1,\phi_2\in {\UCP}^{\sharp}_\N(\M,\Omega)$. 

By Theorem \ref{thm:ucpstatesduality}, $K$, the spectrum of $\Cred(\N\subset\M)$, and $\Extr({\UCP}^{\sharp}_\N(\M,\Omega))$ equipped with the induced topology are homeomorphic, thus we identify them as topological spaces.

\begin{prop}\label{prop:affine}
Let $f\in C(K)$. Then $f$ uniquely extends to an affine continuous function on ${\UCP}^{\sharp}_\N(\M,\Omega)$, denoted by $\hat f$ and given by the formula
\begin{align}
\hat f(\phi) := \omega_\phi(f) = \int_K f \dd\omega_\phi
\end{align}
for every $\phi\in{\UCP}^{\sharp}_\N(\M,\Omega)$. Moreover, $\|\hat f\|_\infty = \|f\|_\infty$.
\end{prop}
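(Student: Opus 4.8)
The plan is to establish the claimed extension in three steps: first define $\hat f$ on the extreme points, then extend it, then verify the norm identity and uniqueness.

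First I would check that the formula $\hat f(\phi) := \omega_\phi(f) = \int_K f \dd\omega_\phi$ does define a continuous function on ${\UCP}^{\sharp}_\N(\M,\Omega)$. Indeed, by Theorem \ref{thm:ucpstatesduality} the duality map $\phi \mapsto \omega_\phi$ is a homeomorphism between ${\UCP}^{\sharp}_\N(\M,\Omega)$ (with the $L^2(\M,\Omega)$-topology) and the state space of $\Cred(\N\subset\M) \cong C(K)$ (with the weak* topology), and the latter is exactly $P(K)$ via the Riesz--Markov identification. So $\phi \mapsto \omega_\phi(f) = \int_K f \dd\omega_\phi$ is the composition of this homeomorphism with weak* evaluation against $f\in C(K)$, hence continuous. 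It is affine because the duality map preserves convex combinations: $\omega_{\lambda\phi_1 + (1-\lambda)\phi_2} = \lambda\omega_{\phi_1} + (1-\lambda)\omega_{\phi_2}$, a fact already used in the proof of Proposition \ref{prop:spancext}.

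Next I would check that $\hat f$ genuinely extends $f$ under the identification of $K = \Extr({\UCP}^{\sharp}_\N(\M,\Omega))$ with the spectrum of $\Cred(\N\subset\M)$. By the last sentence of Theorem \ref{thm:ucpstatesduality}, the extreme $\phi$ correspond exactly to the characters of $\Cred(\N\subset\M)$, i.e.\ the Dirac measures $\delta_x$, $x\in K$. For such $\phi$ with $\omega_\phi = \delta_x$ we get $\hat f(\phi) = \int_K f \dd\delta_x = f(x)$, which is precisely the value of $f$ at the corresponding point of $K$. So $\hat f$ restricts to $f$ on the extreme points.

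For the norm equality: $\|\hat f\|_\infty = \sup_{\phi} |\omega_\phi(f)| = \sup_{\mu\in P(K)} |\mu(f)|$, and since $P(K)$ is the weak*-closed convex hull of the Dirac measures $\{\delta_x : x\in K\}$, this supremum equals $\sup_{x\in K} |f(x)| = \|f\|_\infty$; the inequality $\|\hat f\|_\infty \leq \|f\|_\infty$ is the trivial direction and $\geq$ follows by evaluating at extreme points. Finally, uniqueness of the affine continuous extension: any affine continuous $g$ on ${\UCP}^{\sharp}_\N(\M,\Omega)$ agreeing with $f$ on $\Extr$ must agree with $\hat f$ everywhere, because an affine weak*-continuous function on the compact convex set $P(K) \cong {\UCP}^{\sharp}_\N(\M,\Omega)$ is determined by its values on the extreme points via the barycenter/Choquet representation --- every point of $P(K)$ is the barycenter of a probability measure supported on the extreme points, and affine continuous functions commute with taking barycenters. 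I do not expect a serious obstacle here; the only point requiring a little care is invoking the barycenter formula for $P(K)$ (or, more elementarily, just using that $P(K)$ is the weak*-closed convex hull of its extreme points together with continuity and affineness of $g - \hat f$), but this is standard Choquet theory for the metrizable compact convex set $P(K)$.
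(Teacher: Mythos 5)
Your proof is correct and follows essentially the same route as the paper: identify $\hat f$ via the duality homeomorphism of Theorem \ref{thm:ucpstatesduality}, get affineness from the duality map preserving convex combinations, continuity from Lemma \ref{lem:dualityishomeo}, and the norm equality by evaluating at extreme points. The only difference is that you spell out the uniqueness of the affine continuous extension (via the closed convex hull of the Dirac measures), which the paper's proof leaves implicit; that addition is correct and harmless.
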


\begin{proof}
The formula for $\hat f$ extends $f$ by Gelfand duality and Theorem \ref{thm:ucpstatesduality}. 
The extension is affine because the duality map $\phi\mapsto\omega_\phi$ preserves convex combinations and continuous by Lemma \ref{lem:dualityishomeo}. In particular, $|\hat f(\phi)| \leq \|f\|_\infty$, thus $\|\hat f\|_\infty \leq \|f\|_\infty$ and the converse inequality is obvious.
\end{proof}

\begin{cor}
Let $\phi\in {\UCP}^{\sharp}_\N(\M,\Omega)$. 
Then $\omega_\phi$, seen as an element in $P(K)$, is the unique probability Radon measure on $K$, identified with $\Extr ({\UCP}^{\sharp}_\N(\M,\Omega))$, that has $\phi$ as its barycenter, \ie such that $g(\phi) = \int_K g \dd\omega_{\phi}$ for every affine function $g\in C({\UCP}^{\sharp}_\N(\M,\Omega))$. 
\end{cor}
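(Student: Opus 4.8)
The plan is to deduce this corollary directly from Proposition \ref{prop:affine} together with the Riesz--Markov representation of $C(K)^*$. First I would recall that by Theorem \ref{thm:ucpstatesduality} and Gelfand duality, $\Cred(\N\subset\M) \cong C(K)$ where $K$ is identified with $\Extr({\UCP}^{\sharp}_\N(\M,\Omega))$, and that this identification is a homeomorphism for the topology of $L^2(\M,\Omega)$-convergence by Corollary \ref{cor:cptmetK} and Lemma \ref{lem:dualityishomeo}. Fix $\phi\in{\UCP}^{\sharp}_\N(\M,\Omega)$, with associated state $\omega_\phi$ on $\Cred(\N\subset\M)$, viewed via Riesz--Markov as a probability Radon measure on $K$. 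The assertion that $\phi$ is the barycenter of $\omega_\phi$ is, by definition, the identity $g(\phi) = \int_K g \dd\omega_\phi$ for every affine continuous $g\in C({\UCP}^{\sharp}_\N(\M,\Omega))$.

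The key step is to show that every affine continuous function on ${\UCP}^{\sharp}_\N(\M,\Omega)$ is of the form $\hat f$ for a unique $f\in C(K)$. One direction is Proposition \ref{prop:affine}: any $f\in C(K)$ extends to such a $\hat f$, with $\hat f(\phi) = \int_K f\dd\omega_\phi$. For the converse, given an affine continuous $g$, I would set $f := g_{\restriction K}$, the restriction of $g$ to the closed (hence compact) subset $\Extr({\UCP}^{\sharp}_\N(\M,\Omega)) \cong K$; this is continuous, so $f\in C(K)$, and $\hat f$ is an affine continuous function agreeing with $g$ on $K$. To conclude $g = \hat f$ everywhere, I would invoke the Krein--Milman theorem: ${\UCP}^{\sharp}_\N(\M,\Omega)$ is compact convex (Lemma \ref{lem:OmegaadjL2cpt}) in a locally convex Hausdorff topology, so it is the closed convex hull of its extreme points, and two affine continuous functions agreeing on the extreme points agree on the whole set. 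Hence $g = \hat f$, and in particular $g(\phi) = \int_K f\dd\omega_\phi = \int_K g\dd\omega_\phi$ as desired.

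For uniqueness of the measure: if $\mu\in P(K)$ also satisfies $g(\phi) = \int_K g\dd\mu$ for all affine continuous $g$, then in particular $\int_K f\dd\mu = \hat f(\phi) = \int_K f\dd\omega_\phi$ for every $f\in C(K)$, using that each $\hat f$ is affine continuous by Proposition \ref{prop:affine}. Since $C(K)$ separates Radon measures on the compact space $K$ (Riesz--Markov), this forces $\mu = \omega_\phi$.

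The main obstacle I anticipate is purely a matter of topology bookkeeping rather than mathematical depth: one must be careful that the ambient locally convex Hausdorff vector space in which ${\UCP}^{\sharp}_\N(\M,\Omega)$ sits as a compact convex set is well chosen so that Krein--Milman applies and so that ``affine continuous function'' has its usual meaning. The natural choice is the space of bounded linear operators $\M\to\B(\Hil)$ with the BW (bounded-weak) topology, inside which ${\UCP}^{\sharp}_\N(\M,\Omega)$ is compact by Lemma \ref{lem:OmegaadjL2cpt} and this topology coincides there with $L^2(\M,\Omega)$-convergence; convex combinations are the evident ones. Once that is pinned down, everything else is a routine application of Krein--Milman and Riesz--Markov, and the displayed formula $g(\phi)=\int_K g\dd\omega_\phi$ follows.
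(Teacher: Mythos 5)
Your proof is correct and follows essentially the same route as the paper: both reduce the barycenter identity to the fact that every affine continuous function on ${\UCP}^{\sharp}_\N(\M,\Omega)$ equals $\hat h$ for $h=g_{\restriction K}$, and both get uniqueness by pushing the test functions from affine continuous $g$ down to arbitrary $f\in C(K)$ via Proposition \ref{prop:affine} and Riesz--Markov. The only difference is that you spell out, via Krein--Milman, why two affine continuous functions agreeing on $\Extr({\UCP}^{\sharp}_\N(\M,\Omega))$ agree everywhere, a step the paper absorbs into the ``uniquely extends'' clause of Proposition \ref{prop:affine}; this is a welcome clarification rather than a divergence.
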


\begin{proof}
The previous proposition implies that $\phi$ is the barycenter of $\omega_\phi$, as $\hat h = g$ if $h = g_{\restriction K}$. To check uniqueness, suppose that $\mu_\phi$ is another measure on $K$ with barycenter $\phi$. Then $\int_K g \dd\mu_\phi = g(\phi) = \int_K g \dd\omega_\phi$ for every affine function $g\in C({\UCP}^{\sharp}_\N(\M,\Omega))$. By the previous proposition, this implies that the equality holds for every $g\in C(K)$, thus $\mu_\phi=\omega_\phi$.
\end{proof}
We can now obtain a Choquet-type extremal decomposition of the unique normal faithful conditional expectation $E:\M\to\N\subset\M$.

\begin{cor}\label{cor:choquetdecomp}
Let $m\in \M$, $\phi\in{\UCP}^{\sharp}_\N(\M,\Omega)$. We have 
\begin{align}
\phi(m)=\int_K \phi'(m) \dd\omega_\phi (\phi')
\end{align}
where the integral is understood in the weak sense.
In particular, $\omega_E$ is the unique probability Radon measure supported on the extreme points of $\UCP_\N(\M,\Omega)$ with barycenter $E$, namely
\begin{align}
E(m)=\int_K \phi(m) \dd\omega_E (\phi).
\end{align}
\end{cor}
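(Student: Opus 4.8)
The plan is to derive the corollary directly from Proposition~\ref{prop:affine} and the barycenter uniqueness result that precedes it, by applying them to the evaluation functionals associated with vectors in $\Hil$. First I would fix $m\in\M$ and $\phi\in{\UCP}^{\sharp}_\N(\M,\Omega)$, and note that the statement ``$\phi(m)=\int_K \phi'(m)\dd\omega_\phi(\phi')$ in the weak sense'' means precisely that
\begin{align}
(\xi,\phi(m)\eta)=\int_K (\xi,\phi'(m)\eta)\dd\omega_\phi(\phi')
\end{align}
for all $\xi,\eta\in\Hil$. So the task reduces to showing that for each fixed $\xi,\eta\in\Hil$ and $m\in\M$, the function $\phi'\mapsto (\xi,\phi'(m)\eta)$ on ${\UCP}^{\sharp}_\N(\M,\Omega)$ is continuous and affine, and then invoking the preceding corollary (uniqueness of the barycenter) applied to $\omega_\phi$.

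Affineness is immediate: $(\lambda\phi_1+(1-\lambda)\phi_2)(m)=\lambda\phi_1(m)+(1-\lambda)\phi_2(m)$, so $\phi'\mapsto(\xi,\phi'(m)\eta)$ clearly preserves convex combinations. Continuity is where I would be slightly careful: by Lemma~\ref{lem:OmegaadjL2cpt} the topology of $L^2(\M,\Omega)$-convergence on ${\UCP}^{\sharp}_\N(\M,\Omega)$ coincides with the BW (pointwise ultra-weak) topology, which in particular gives $(\xi',\phi'_\alpha(m)\Omega)\to(\xi',\phi'(m)\Omega)$ for all $\xi'\in\Hil$, $m\in\M$; combined with the fact that $\|\phi'_\alpha(m)\|\le\|m\|$ uniformly and that vectors of the form $m'\Omega$ with $m'\in\M'$ are dense in $\Hil$ (using $\phi'_\alpha(m)$ commutes with $\M'$ up to nothing---more simply, $(\xi,\phi'_\alpha(m)m'\Omega)=(m'^*\xi,\phi'_\alpha(m)\Omega)$), one gets $(\xi,\phi'_\alpha(m)\eta)\to(\xi,\phi'(m)\eta)$ for arbitrary $\xi,\eta\in\Hil$. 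Hence $g_{\xi,\eta,m}:=(\phi'\mapsto(\xi,\phi'(m)\eta))$ is a continuous affine function on the compact convex set ${\UCP}^{\sharp}_\N(\M,\Omega)$.

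Now apply the corollary preceding this one: since $\omega_\phi\in P(K)$ has $\phi$ as its barycenter, $g(\phi)=\int_K g\dd\omega_\phi$ for every affine $g\in C({\UCP}^{\sharp}_\N(\M,\Omega))$; taking $g=g_{\xi,\eta,m}$ yields $(\xi,\phi(m)\eta)=\int_K(\xi,\phi'(m)\eta)\dd\omega_\phi(\phi')$, which is the asserted weak integral formula. Specializing to $\phi=E$, for which $\omega_E=\langle E,\slot\rangle$ is precisely the state inducing the \Cstar-norm and, as an element of $P(K)$ via Theorem~\ref{thm:ucpstatesduality}, is the faithful measure on $K$, gives $E(m)=\int_K\phi(m)\dd\omega_E(\phi)$; uniqueness of this measure among probability Radon measures on $K=\Extr({\UCP}^{\sharp}_\N(\M,\Omega))$ with barycenter $E$ is exactly the content of the preceding corollary. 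The only mild subtlety---and the point I would write out most carefully---is the reduction of ``weak convergence against arbitrary $\xi,\eta\in\Hil$'' to the BW topology, i.e.\ checking that the affine functions $g_{\xi,\eta,m}$ are genuinely continuous on ${\UCP}^{\sharp}_\N(\M,\Omega)$ and not merely on a dense subset; this follows from Remark~\ref{rmk:strongamenabrepnindep} together with uniform boundedness, so there is no real obstacle.
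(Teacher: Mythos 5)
Your proof is correct and follows essentially the same route as the paper, which likewise deduces the corollary by applying the preceding barycenter-uniqueness result to the affine functions $\phi'\mapsto(\xi_1,\phi'(m)\xi_2)$; you merely spell out the continuity of these functions (via Lemma \ref{lem:OmegaadjL2cpt} and density of $\M'\Omega$) which the paper leaves implicit.
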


\begin{proof}
It is enough to apply the previous corollary to the affine functions in $C({\UCP}^{\sharp}_\N(\M,\Omega))$ of the form $g(\phi) := (\xi_1,\phi(m)\xi_2)$ with $\xi_1,\xi_2\in\Hil$.
\end{proof}

%%%
\subsection{The compact hypergroup of UCP maps}\label{sec:UCPcompacthyp}
%%%

We can now define the canonical compact hypergroup associated with an irreducible local discrete type $\III$ subfactor $\N\subset \M$ (Definition \ref{def:semi-discr}, \ref{def:localsubf}) mentioned at the beginning of this section. Let $\UCP^\sharp_\N(\M,\Omega)$ be as in Definition \ref{def:UCPbim} and set:

\begin{defi}\label{def:CompactHypergroupUCP}
$K(\N\subset\M) := \Extr({\UCP}^{\sharp}_{\N}(\M,\Omega))$.
\end{defi}

By Theorem \ref{thm:ucpstatesduality}, it is a compact metrizable space (Corollary \ref{cor:cptmetK}). Recall also that an element of ${\UCP}^{\sharp}_{\N}(\M,\Omega)$ is simply a ucp map $\phi:\M\to\M$ which acts trivially on $\N$ (Corollary \ref{cor:localNbimoducp}). Indeed, $\phi$ is automatically normal faithful $\Omega$-preserving and $\Omega$-adjointable for \emph{every} choice of standard vector $\Omega$ inducing an $E$-invariant state on $\M$. 

\begin{lem}\label{lem:extreme}
We have that $\Extr(\UCP_\N(\M,\Omega)) \subset \Extr(\UCP(\M,\Omega))$, where $\UCP(\M,\Omega)$ is defined in Section \ref{sec:Omegaadjmaps}.
In particular, $K(\N\subset\M) = {\UCP}^{\sharp}_\N(\M,\Omega) \cap \Extr(\UCP(\M,\Omega))$.
\end{lem}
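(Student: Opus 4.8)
The plan is to prove the containment $\Extr(\UCP_\N(\M,\Omega)) \subset \Extr(\UCP(\M,\Omega))$ directly, and then deduce the displayed characterization of $K(\N\subset\M)$ by combining the containment with the reverse-direction observation that any $\Omega$-adjointable extreme point of $\UCP(\M,\Omega)$ which lies in $\UCP_\N(\M,\Omega)$ is trivially extreme there. So the only real content is the forward containment.

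First I would take $\phi \in \Extr(\UCP_\N(\M,\Omega))$ and suppose it decomposes as $\phi = \tfrac12(\phi_1 + \phi_2)$ with $\phi_1,\phi_2 \in \UCP(\M,\Omega)$; the goal is to show $\phi_1 = \phi_2 = \phi$. The key point is that $\phi_i$ are automatically $\N$-bimodular. Indeed, by Lemma \ref{lem:NfixingisOmegapres}, a ucp map on $\M$ is $\N$-bimodular as soon as it acts trivially on $\iota(\N)$; and Corollary \ref{cor:localNbimoducp} says that membership in $\UCP^\sharp_\N(\M,\Omega)$ is \emph{exactly} the condition $\phi_{\restriction\iota(\N)} = \id$. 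Now for $n\in\N$ we have $\iota(n) = \phi(\iota(n)) = \tfrac12(\phi_1(\iota(n)) + \phi_2(\iota(n)))$, and since $\phi_1(\iota(n))$, $\phi_2(\iota(n))$ both have the same absolute value (indeed $\iota(n)^*\iota(n) = \iota(n^*n)$ is fixed by $\phi$, so by the Kadison--Schwarz / Choi argument as in Lemma \ref{lem:NfixingisOmegapres} each $\phi_i$ restricted to $\iota(\N)$ is a $\ast$-homomorphism, hence extreme among ucp maps on $\iota(\N)$), we conclude $\phi_i(\iota(n)) = \iota(n)$ for $i=1,2$. A cleaner way to phrase this: $V_{\phi_i} \leq 1$ in operator norm, $V_\phi = \tfrac12(V_{\phi_1}+V_{\phi_2})$, and $V_\phi$ restricted to $\overline{\iota(\N)\Omega}$ is the identity (a unit-norm operator), so by strict convexity of the unit ball at extreme points / the equality case of the triangle inequality for Hilbert space vectors, $V_{\phi_i}\xi = \xi$ for $\xi = \iota(n)\Omega$, giving $\phi_i(\iota(n))\Omega = \iota(n)\Omega$ and hence $\phi_i(\iota(n)) = \iota(n)$ since $\Omega$ is separating. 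Either way, $\phi_1,\phi_2 \in \UCP_\N(\M,\Omega) = \UCP^\sharp_\N(\M,\Omega)$ by Corollary \ref{cor:indepOmega}/\ref{cor:localNbimoducp}.

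Once $\phi_1, \phi_2 \in \UCP^\sharp_\N(\M,\Omega)$, the decomposition $\phi = \tfrac12(\phi_1+\phi_2)$ takes place inside $\UCP^\sharp_\N(\M,\Omega)$, and extremality of $\phi$ \emph{there} forces $\phi_1 = \phi_2 = \phi$. This shows $\phi \in \Extr(\UCP(\M,\Omega))$, proving the containment. For the second assertion: the inclusion $K(\N\subset\M) = \Extr(\UCP^\sharp_\N(\M,\Omega)) \subset {\UCP}^{\sharp}_\N(\M,\Omega) \cap \Extr(\UCP(\M,\Omega))$ is exactly the containment just proved together with the tautology $K(\N\subset\M) \subset \UCP^\sharp_\N(\M,\Omega)$; conversely, if $\phi \in {\UCP}^{\sharp}_\N(\M,\Omega) \cap \Extr(\UCP(\M,\Omega))$ and $\phi = \tfrac12(\phi_1+\phi_2)$ with $\phi_i \in \UCP^\sharp_\N(\M,\Omega) \subset \UCP(\M,\Omega)$, then extremality in the larger set $\UCP(\M,\Omega)$ already gives $\phi_1 = \phi_2 = \phi$, so $\phi \in \Extr(\UCP^\sharp_\N(\M,\Omega)) = K(\N\subset\M)$.

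The main obstacle is the step where one argues that the restrictions $\phi_i|_{\iota(\N)}$ are forced to be the identity; it needs the (standard, but slightly delicate) fact that a ucp map that fixes a unital $C^*$-subalgebra is a bimodule map over that subalgebra, which is precisely Choi's multiplicative-domain theorem invoked in Lemma \ref{lem:NfixingisOmegapres}, combined with an equality case in the Cauchy--Schwarz / triangle inequality for the contractions $V_{\phi_i}$. Everything after that is formal juggling of which convex set the decomposition lives in. I would therefore organize the write-up as: (i) reduce to showing $\phi_i$ fix $\iota(\N)$; (ii) invoke the $V_\phi$ contraction picture and the equality case; (iii) apply Corollary \ref{cor:localNbimoducp} to land back in $\UCP^\sharp_\N(\M,\Omega)$; (iv) conclude both statements.

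\begin{proof}
Let $\phi\in\Extr(\UCP_\N(\M,\Omega))$ and suppose $\phi=\tfrac12(\phi_1+\phi_2)$ with $\phi_1,\phi_2\in\UCP(\M,\Omega)$. Passing to the associated contractions on $\Hil$ (Section \ref{sec:Omegaadjmaps}), $V_\phi=\tfrac12(V_{\phi_1}+V_{\phi_2})$ with $\|V_{\phi_i}\|\leq 1$. For $n\in\N$ we have $\phi(\iota(n))=\iota(n)$ by Corollary \ref{cor:localNbimoducp}, hence $V_\phi\iota(n)\Omega=\iota(n)\Omega$. Since $\|\iota(n)\Omega\|=\|V_{\phi_i}\iota(n)\Omega\|$ is impossible to strictly increase and $\tfrac12(V_{\phi_1}\iota(n)\Omega+V_{\phi_2}\iota(n)\Omega)=\iota(n)\Omega$ with $\|V_{\phi_i}\iota(n)\Omega\|\leq\|\iota(n)\Omega\|$, the equality case of the parallelogram/triangle inequality in $\Hil$ forces $V_{\phi_i}\iota(n)\Omega=\iota(n)\Omega$. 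As $\Omega$ is separating for $\M$, this gives $\phi_i(\iota(n))=\iota(n)$ for all $n\in\N$, $i=1,2$.

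Therefore $\phi_1,\phi_2$ act trivially on $\iota(\N)$, so by Corollary \ref{cor:localNbimoducp} (using Corollary \ref{cor:indepOmega}) we have $\phi_1,\phi_2\in\UCP^\sharp_\N(\M,\Omega)=\UCP_\N(\M,\Omega)$. Since $\phi$ is extreme in $\UCP_\N(\M,\Omega)$, the decomposition $\phi=\tfrac12(\phi_1+\phi_2)$ inside this set forces $\phi_1=\phi_2=\phi$. Hence $\phi\in\Extr(\UCP(\M,\Omega))$, proving $\Extr(\UCP_\N(\M,\Omega))\subset\Extr(\UCP(\M,\Omega))$.

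For the last assertion, the inclusion $K(\N\subset\M)\subset\UCP^\sharp_\N(\M,\Omega)\cap\Extr(\UCP(\M,\Omega))$ is immediate from the containment just proved together with $K(\N\subset\M)=\Extr(\UCP^\sharp_\N(\M,\Omega))\subset\UCP^\sharp_\N(\M,\Omega)$. Conversely, if $\phi\in\UCP^\sharp_\N(\M,\Omega)\cap\Extr(\UCP(\M,\Omega))$ and $\phi=\tfrac12(\phi_1+\phi_2)$ with $\phi_i\in\UCP^\sharp_\N(\M,\Omega)\subset\UCP(\M,\Omega)$, then extremality in $\UCP(\M,\Omega)$ yields $\phi_1=\phi_2=\phi$, so $\phi\in\Extr(\UCP^\sharp_\N(\M,\Omega))=K(\N\subset\M)$.
\end{proof}
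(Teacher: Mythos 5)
Your proof is correct, but the key step is carried out by a genuinely different mechanism than in the paper. The paper also reduces the forward containment to showing that any decomposition $\phi=\lambda\phi_1+(1-\lambda)\phi_2$ inside $\UCP(\M,\Omega)$ forces $\phi_i\circ\iota=\iota$, but it gets this by observing that $\iota=\phi\circ\iota=\lambda\,\phi_1\circ\iota+(1-\lambda)\,\phi_2\circ\iota$ and invoking Arveson's theorem that the inclusion $\iota\colon\N\to\M$ is an \emph{extreme point} of the convex set of (u)cp maps $\N\to\M$ (which is where the irreducibility $\N'\cap\M=\CC 1$ enters); the conclusion $\phi_i\circ\iota=\iota$ is then immediate, and $\N$-bimodularity follows as in Lemma \ref{lem:NfixingisOmegapres}. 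You instead pass to the GNS contractions $V_{\phi_i}$ and use the equality case of the triangle inequality (strict convexity of the Hilbert ball) on the vectors $V_{\phi_i}\iota(n)\Omega$, together with the separating property of $\Omega$, to force $\phi_i(\iota(n))=\iota(n)$. Your route is more elementary and does not need Arveson's extremality result, but it crucially uses that the $\phi_i$ are $\Omega$-preserving (to have the contractions $V_{\phi_i}$ at all), whereas the paper's argument uses only complete positivity and unitality of the $\phi_i$ plus irreducibility of the subfactor; in the present setting both hypotheses are available, so the two arguments are interchangeable. The remaining bookkeeping (landing back in $\UCP_\N=\UCP^\sharp_\N$ via Lemma \ref{lem:OmegapresisOmegaadj}/Corollary \ref{cor:indepOmega}, and the two inclusions in the second assertion) matches the paper.
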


\begin{proof}
Let $\phi\in\Extr(\UCP_\N(M,\Omega))$ and assume $\phi=\lambda\phi_1+(1-\lambda)\phi_2$ for some $\lambda\in (0,1)$ 
and $\phi_i\in\UCP(\M,\Omega)$, $i=1,2$. Then $\iota=\phi\circ\iota=\lambda\phi_1\circ\iota+(1-\lambda)\phi_2\circ\iota$, with $\iota:\N\to\M$ the inclusion morphism. We observe that $\iota$ is extreme in the convex set of completely positive maps $\N\to \M$ by \cite[\Thm 1.4.6]{Ar1969}. 
Thus we have $\iota=\phi_i\circ\iota$ and $\phi_i$ is $\N$-bimodular and therefore $\phi_i\in\UCP_\N(\M,\Omega)$, $i=1,2$. 
But since $\phi$ is extreme in  $\UCP_\N(\M,\Omega)$ we have $\phi_1=\phi_2=\phi$. The second assertion follows from Lemma \ref{lem:OmegapresisOmegaadj}.
\end{proof}

By the same argument, we have:

\begin{cor}\label{cor:autinextr}
The group of all *-automorphisms of $\M$ acting trivially on $\N$, denoted by $\Aut_\N(\M)$, is contained in $K(\N\subset\M)$.
\end{cor}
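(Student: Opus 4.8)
The plan is to imitate the proof of Lemma~\ref{lem:extreme} almost verbatim, replacing "extreme points of $\UCP_\N(\M,\Omega)$" by "automorphisms of $\M$ trivial on $\N$" and using that invertible completely positive maps cannot be written as nontrivial convex combinations. First I would note that any $\alpha\in\Aut_\N(\M)$ is in particular a ucp map acting trivially on $\iota(\N)$, hence $\alpha\in{\UCP}^{\sharp}_\N(\M,\Omega)$ by Corollary~\ref{cor:localNbimoducp} (so it makes sense to ask whether it is extreme). It remains to show $\alpha$ is an extreme point of ${\UCP}^{\sharp}_\N(\M,\Omega)$, equivalently, by Lemma~\ref{lem:extreme}, that $\alpha\in\Extr(\UCP(\M,\Omega))$.

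So suppose $\alpha=\lambda\phi_1+(1-\lambda)\phi_2$ with $\lambda\in(0,1)$ and $\phi_i\in\UCP(\M,\Omega)$. Since $\alpha$ is a $*$-automorphism, $\alpha(u)$ is unitary for every unitary $u\in\M$; writing $\alpha(u)=\lambda\phi_1(u)+(1-\lambda)\phi_2(u)$ and using that the unit ball of $\M$ is convex with $\alpha(u)$ an extreme point of it (unitaries are the extreme points of the unit ball of a $C^*$-algebra, or simply: $\|\phi_i(u)\|\le 1$ and a norm-one element that is a proper convex combination of two elements of the closed unit ball forces them to coincide with it), we get $\phi_1(u)=\phi_2(u)=\alpha(u)$ for every unitary $u\in\M$. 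Since $\M$ is spanned by its unitaries, $\phi_1=\phi_2=\alpha$, proving extremality. Thus $\alpha\in K(\N\subset\M)$.

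The only mild subtlety — and the point I would state carefully rather than wave at — is the extreme-point argument for unitaries: one uses that if $a=\lambda b+(1-\lambda)c$ with $\|a\|=1$, $\|b\|,\|c\|\le 1$, $\lambda\in(0,1)$, then $a,b,c$ all lie on a face of the unit ball, and $a$ unitary (hence an extreme point of the unit ball of the von~Neumann algebra $\M$) forces $b=c=a$. Alternatively, and perhaps cleaner, one can invoke exactly the mechanism used in Lemma~\ref{lem:extreme}: $\alpha$ is invertible, so $\alpha^{-1}\circ\alpha=\id$, and applying $\alpha^{-1}$ (which is a ucp map, in fact a normal unital $*$-homomorphism) to the decomposition gives $\id=\lambda(\alpha^{-1}\circ\phi_1)+(1-\lambda)(\alpha^{-1}\circ\phi_2)$; since $\id$ is extreme in $\UCP(\M,\Omega)$ (it is a unital $*$-homomorphism, hence extreme in the CP maps $\M\to\M$ by \cite[\Thm 1.4.6]{Ar1969}, analogously to the use of extremality of $\iota$ in Lemma~\ref{lem:extreme}), one concludes $\alpha^{-1}\circ\phi_i=\id$, i.e.\ $\phi_i=\alpha$. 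I expect no real obstacle here; the statement is an immediate corollary of Lemma~\ref{lem:extreme} and the extremality of unital $*$-homomorphisms, and the "same argument" remark in the paper is accurate.
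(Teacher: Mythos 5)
Your proof is correct, and your second variant (composing with $\alpha^{-1}$ and invoking the extremality of unital $*$-homomorphisms among ucp maps via \cite[\Thm 1.4.6]{Ar1969}) is essentially what the paper means by ``the same argument'' as Lemma \ref{lem:extreme} --- indeed one can apply Arveson directly to $\alpha$ itself, which is a unital $*$-homomorphism and hence already extreme in $\UCP(\M,\Omega)$, so that it is a fortiori extreme in the convex subset ${\UCP}^{\sharp}_\N(\M,\Omega)$. Your first variant, using that unitaries are extreme points of the unit ball and that $\M$ is spanned by its unitaries, is a correct and more elementary alternative, but it buys nothing beyond the Arveson route here.
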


\begin{thm}\label{thm:Kishypergroup}
$K(\N\subset\M)$ is a compact hypergroup in the sense of Definition \ref{def:CompactHypergroup}, where the convolution is given by the composition of ucp maps, the involution is given by the $\Omega$-adjoint, the identity element is the identity automorphism $\id$ and the Haar measure is $\omega_E$.
\end{thm}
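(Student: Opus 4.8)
The plan is to verify each of the three axioms (i), (ii), (iii) in Definition \ref{def:CompactHypergroup} by transporting everything through the homeomorphism $K(\N\subset\M) \cong K$, the Gelfand spectrum of $\Cred(\N\subset\M)$, established in Theorem \ref{thm:ucpstatesduality}, together with the identification $\UCP^\sharp_\N(\M,\Omega) \cong P(K)$ and its linear extension $\Span_\CC\{\UCP^\sharp_\N(\M,\Omega)\} \cong M(K)$ from Proposition \ref{prop:spancext}. The underlying point is that composition of ucp maps is bi-affine and maps $\UCP^\sharp_\N(\M,\Omega)$ to itself (using Corollary \ref{cor:localNbimoducp}: a map in this set is just a ucp map fixing $\iota(\N)$, a property visibly stable under composition), and that the $\Omega$-adjoint $\phi\mapsto\phi^\sharp$ is an order-two antilinear-compatible involution that also preserves $\UCP^\sharp_\N(\M,\Omega)$; hence the pushforward of these operations under the duality defines a biaffine convolution $\ast$ and an involution $^\sharp$ on $P(K)$, and by Proposition \ref{prop:spancext} they extend bilinearly to $M(K)$, which is the first half of axiom (i). Associativity of $\ast$ on $P(K)$ follows from associativity of composition of maps, the identity element corresponds to $\id\in\Aut_\N(\M)\subset K(\N\subset\M)$ (Corollary \ref{cor:autinextr}), which fixes $\delta_{\id}$ as a two-sided unit via $\id\circ\phi=\phi=\phi\circ\id$, and $(\phi_1\circ\phi_2)^\sharp = \phi_2^\sharp\circ\phi_1^\sharp$ (recalled in Section \ref{sec:Omegaadjmaps}) gives the anti-multiplicativity of the involution; that the involution on measures is the pushforward along $x\mapsto x^\sharp$ is automatic since $\delta_x^\sharp := (\delta_x)^\sharp$ corresponds to $\phi^\sharp$ when $\delta_x\leftrightarrow\phi$, and $\id^\sharp=\id$.

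\textbf{Continuity, axiom (ii).} The involution $x\mapsto x^\sharp$ is continuous because, under the $L^2(\M,\Omega)$-topology (Lemma \ref{lem:OmegaadjL2cpt}, identified there with the BW topology), $\phi_\alpha\to\phi$ iff $V_{\phi_\alpha}\to V_\phi$ strongly in $\N'\cap\M_1$, and by Remark \ref{rmk:Topologies} this is equivalent to $V_{\phi_\alpha}^* = V_{\phi_\alpha^\sharp}\to V_\phi^* = V_{\phi^\sharp}$, i.e. $\phi_\alpha^\sharp\to\phi^\sharp$. For joint continuity of $(x,y)\mapsto\delta_x\ast\delta_y$ in the weak$^*$ topology on $P(K)$, I would use that on the $K(\N\subset\M)$ side this is $(\phi,\psi)\mapsto\phi\circ\psi$ and that $\phi\circ\psi$ corresponds to the contraction $V_\phi V_\psi$; so it suffices to show $(\phi,\psi)\mapsto V_\phi V_\psi$ is jointly continuous for the strong (equivalently weak, by Remark \ref{rmk:Topologies}) operator topology on the norm-one ball of $\N'\cap\M_1$, which holds because multiplication is jointly strong-operator continuous on bounded sets. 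Translating back, $\delta_{\phi_\alpha}\ast\delta_{\psi_\alpha}\to\delta_{\phi}\ast\delta_{\psi}$ weak$^*$ in $M(K)$, which is (ii).

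\textbf{The Haar axiom (iii)} is the main obstacle. We must show $\omega_E$ is a faithful probability measure satisfying the two invariance identities \eqref{eq:haarproperty1}, \eqref{eq:haarproperty2}. Faithfulness is Lemma \ref{lem:haarfaith} (and $\omega_E(\one)=1$ is the unitality of $E$). The key translation: $E=\id^\sharp\circ E$-type absorption, more precisely $\phi\circ E = E = E\circ\phi$ for every $\phi\in\UCP^\sharp_\N(\M,\Omega)$ (Lemma \ref{lem:NfixingisOmegapres}), says at the measure level that $\delta_\phi\ast\omega_E = \omega_E = \omega_E\ast\delta_\phi$; this is the statement that $\omega_E$ is a Haar \emph{element} in the monoid $P(K)$, as recorded abstractly in Proposition \ref{prop:Haarabsorbs}/\ref{prop:Haarabsorbs}. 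From this absorption I would derive \eqref{eq:haarproperty1}–\eqref{eq:haarproperty2} as follows. For $f\in\lambda(\Trig(\N\subset\M))$ and fixed $y\in K$ (say $y\leftrightarrow\phi$), the function $x\mapsto f(y\ast x)$ is $(\delta_\phi\ast(\slot))$ applied pointwise, and one must integrate against $\omega_E$; the crucial input is that $\Trig(K)$ is a \emph{function} algebra dense in $C(K)$ (so positive functions in $C(K)$ are approximated by positive elements of $\Trig(K)$, as emphasized in the introduction to Section \ref{sec:compacthypfromsubf}), allowing the identities, once verified on the dense subalgebra by the explicit $T(\psi_{\rho,r},\psi_{\rho,s})$-computations and the $E$-invariance of $(\Omega,\slot\Omega)$ together with $E\circ\phi = E$, to be extended to all of $C(K)$ by continuity and a limiting argument. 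Concretely, I expect \eqref{eq:haarproperty1} to reduce to the identity $\omega_E\big((\delta_\phi\ast a)\cdot b\big) = \omega_E\big(a\cdot(\delta_{\phi^\sharp}\ast b)\big)$ for $a,b\in\Trig(K)$, which in turn follows from the defining adjunction \eqref{eq:omegaadj} for $\phi,\phi^\sharp$ applied to matrix units $T(\psi_{\rho,r},\psi_{\rho,s})$, combined with Proposition \ref{prop:multTany} describing $\ast$ on such units; the right-handed version \eqref{eq:haarproperty2} is symmetric. The delicate point throughout is justifying the passage from the dense $\ast$-subalgebra $\lambda(\Trig(\N\subset\M))$ to $C(K)$, since $\ast$ is only separately-continuous a priori; here one uses the norm bound \eqref{eq:GNSbounded} of Lemma \ref{lem:lambdafaithbound} (valid with $E$ replaced by any $\N$-bimodular ucp map, per the remark after it) to control $\|\delta_\phi\ast a\|$ uniformly, and the weak$^*$-density of positive trigonometric polynomials among positive continuous functions to conclude. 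Finally, uniqueness of the Haar measure is Proposition \ref{prop:Haarabsorbs}, and $\omega_E^\sharp = \omega_E = \omega_E\ast\omega_E$ follows since $E^\sharp = E$ and $E\circ E = E$.
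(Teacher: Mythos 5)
Your proposal is correct and follows essentially the same route as the paper: axiom $(i)$ is transported through the duality of Theorem \ref{thm:ucpstatesduality}, axiom $(ii)$ is the joint strong-operator continuity of $(\phi_1,\phi_2)\mapsto V_{\phi_1}V_{\phi_2}$ on the unit ball of $\N'\cap\M_1$ together with $V_{\phi^\sharp}=V_\phi^*$ and Remark \ref{rmk:Topologies}, and axiom $(iii)$ is verified on the total set of elementary functions $\lambda(T(\psi_{\rho,r},\psi_{\rho,s}))$ via the adjunction \eqref{eq:omegaadj} and Proposition \ref{prop:multTany}, exactly as in the paper. The only (harmless) deviation is that your extension step for $(iii)$ is more elaborate than needed: since both sides of \eqref{eq:haarproperty1}--\eqref{eq:haarproperty2} are separately norm-continuous in $f$ and $g$ (the convolved Dirac measures being probability measures), totality of the elementary functions already suffices, without invoking the bound \eqref{eq:GNSbounded} or the approximation of positive functions by positive trigonometric polynomials.
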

\begin{proof}
By Theorem \ref{thm:ucpstatesduality}, ${\UCP}^{\sharp}_{\N}(\M,\Omega)$ is identified with $P(K)$, and the extreme ucp maps with the Dirac measures, \ie $K(\N\subset\M)$ is identified with $K$. This identification allows to transport the composition of $\Omega$-adjointable ucp maps and the $\Omega$-adjoint to $P(K)$. We verify $(ii)$ and $(iii)$ of Definition \ref{def:CompactHypergroup} as the other properties are immediately checked.

For property $(ii)$, the joint continuity of the composition holds since $\phi_\alpha\to\phi$ in $L^2(\M,\Omega)$ if and only if $V_\alpha\to V_{\phi}$ in the strong (or equivalently weak) operator topology by Remark \ref{rmk:Topologies}. Indeed, $V_{\phi_1\circ\phi_2}=V_{\phi_1}V_{\phi_2}$ and 
\begin{align}
(\phi_1,\phi_2)\mapsto V_{\phi_1}V_{\phi_2}
\end{align}
is continuous since the $V_\phi$ have norm 1. Analogously, $V_{\phi^\sharp} = V_\phi^*$ and $\phi \mapsto \phi^\sharp$ is continuous.

We now show property $(iii)$ with respect to $\omega_E$, namely that
for every $f,g\in C(K)$ and $\phi^\prime\in K$ it holds
\begin{align}
\int_K f(\phi^{\prime}\ast \phi)g(\phi)\dd\omega_E(\phi) &= \int_K f(\phi)g({\phi^\prime}{}^\sharp\ast\phi)\dd\omega_E(\phi),\\
\int_K f(\phi\ast \phi^{\prime})g(\phi)\dd\omega_E(\phi) &= \int_K f(\phi)g(\phi\ast {\phi^\prime}{}^\sharp)\dd\omega_E(\phi),
\end{align}
where in this case $f(\phi^{\prime}\ast \phi) = \omega_{\phi^{\prime}\circ \phi}(f) = \hat f(\phi^{\prime}\circ \phi)$ in the notation of Proposition \ref{prop:affine}.
We show the first equation, as the second one is done similarly.
It is enough to prove it for $f = \lambda(T(\psi_{\rho,r},\psi_{\rho,s}))$ and $g = \lambda(T(\psi_{\sigma,u},\psi_{\sigma,v}))$ because they are total in $C(K)$. 
Recall that $\lambda(T(\psi_{\rho,r},\psi_{\rho,s}))(\phi) = \omega_\phi(T(\psi_{\rho,r},\psi_{\rho,s})) = \psi_{\rho,r}^*\phi(\psi_{\rho,s})$ for every $\phi\in K$.
Since
\begin{align}
\psi_{\rho,r}^*\phi^\prime\phi(\psi_{\rho,s}) = (\Omega,\psi_{\rho,r}^*\phi^\prime\phi(\psi_{\rho,s})\Omega) = ({\phi^\prime}{}^\sharp(\psi_{\rho,r})\Omega, \phi(\psi_{\rho,s})\Omega) = {\phi^\prime}{}^\sharp(\psi_{\rho,r}^*)\phi(\psi_{\rho,s})
\end{align}
we have
\begin{align}
\int_K f(\phi^\prime\ast\phi) g(\phi)\dd\omega_E(\phi) &= \int_k\psi_{\rho,r}^*\phi^\prime\phi(\psi_{\rho,s}) \psi^*_{\sigma,u}\phi(\psi_{\sigma,v})\dd\omega_E(\phi)\\
&= \omega_E(T({\phi^\prime}{}^\sharp(\psi_{\rho,r}), \psi_{\rho,s})\ast T(\psi_{\sigma,u},\psi_{\sigma,v}))\\
&= \omega_E(T({\phi^\prime}{}^\sharp(\psi_{\rho,r})\psi_{\sigma,u}, \psi_{\rho,s}\psi_{\sigma,v}))\\
&= ({\phi^\prime}{}^\sharp(\psi_{\rho,r})\psi_{\sigma,u})^* E(\psi_{\rho,s}\psi_{\sigma,v}).
\end{align}
Similarly,
\begin{align}
\int_K f(\phi) g({\phi^\prime}{}^\sharp\ast\phi) \dd\omega_E(\phi) &= \int_K \psi_{\rho,r}^*\phi(\psi_{\rho,s}) \psi_{\sigma,u}^* {\phi^\prime}{}^\sharp\phi(\psi_{\sigma,v}) \dd\omega_E(\phi)\\
&= (\psi_{\rho,r}\phi^\prime(\psi_{\sigma,u}))^* E(\psi_{\rho,s}\psi_{\sigma,v})\\
&= (\phi^\prime(\psi_{\sigma,u})\Omega, \psi_{\rho,r}^*E(\psi_{\rho,s}\psi_{\sigma,v})\Omega)\\
&= (\psi_{\sigma,u}\Omega, {\phi^\prime}{}^{\sharp}(\psi_{\rho,r}^*E(\psi_{\rho,s}\psi_{\sigma,v}))\Omega)\\
&= \psi_{\sigma,u}^*{\phi^\prime}{}^\sharp(\psi_{\rho,r}^*) E(\psi_{\rho,s}\psi_{\sigma,v})
\end{align}
which yields the claim.
\end{proof}

%%%
\section{Hypergroup actions and generalized orbifolds}\label{sec:actionK}
%%%

\begin{defi}\label{def:action}
Let $K$ be a compact hypergroup as in Definition \ref{def:CompactHypergroup} and let $\M\subset\B(\Hil)$ be a von Neumann algebra with a standard unit vector $\Omega\in\Hil$ as in Section \ref{sec:Omegaadjmaps}. An \textbf{action} of $K$ on $\M$ by $\Omega$-adjointable ucp maps is a continuous map 
\begin{align}
\alpha:K\to \Extr({\UCP}^{\sharp}(\M,\Omega))
\end{align}
where ${\UCP}^{\sharp}(\M,\Omega)$ is equipped with the pointwise weak operator topology (BW topology), such that the lift to probability Radon measures
$\tilde\alpha : P(K) \to {\UCP}^{\sharp}(\M,\Omega)$
\begin{align}\label{eq:liftaction}
(\tilde{\alpha}(\mu))(m) := \int_K (\alpha(x))(m) \dd\mu(x), \quad \mu\in P(K), m\in\M
\end{align}
where the integral is in the weak sense, is an involutive monoid homomorphism. Namely,
\begin{align}
\tilde{\alpha}(\mu_1) \circ \tilde{\alpha}(\mu_2) = \tilde{\alpha}(\mu_1 \ast \mu_2), \quad \tilde{\alpha}(\mu)^\sharp = \tilde{\alpha}(\mu^\sharp), \quad \tilde{\alpha}(\delta_e) = \id.
\end{align}
 
The action is called \textbf{faithful} if $\alpha$ is injective and it is called \textbf{minimal} if $\M^K := \{m\in\M : (\alpha(x))(m)=m \text{ for all }x\in K \}$ fulfills ${\M^K}' \cap \M = \CC1$.
\end{defi}

\begin{rmk}
Note that if $\alpha$ is continuous, the map $\tilde{\alpha}(\mu)$, $\mu\in P(K)$, defined by \eqref{eq:liftaction} is automatically cp on $\M$ because the positive cone is closed under weak integrals with respect to positive measures. Moreover, it is clearly unital and $\Omega$-preserving. It is $\Omega$-adjointable because the map $x\mapsto \alpha(x)^\sharp$ is continuous as well, thus it can be integrated.

The map $\mu \mapsto \tilde{\alpha}(\mu)$ is affine (it preserves convex combinations) and continuous from the weak* topology on $P(K)$ to the BW topology on ucp maps. It is also linear and continuous when extended to all complex Radon measures $M(K)$ via the same formula \eqref{eq:liftaction}. 
Moreover, $\|\tilde{\alpha}(\mu)\| \leq \|\mu\|$ where $\tilde{\alpha}(\mu)$ is seen as a bounded linear operator on $\M$ and $\|\mu\|$ is the total variation of $\mu\in M(K)$.
\end{rmk}

\begin{prop}\label{prop:Eforaction}
Let $\mu_K$ be the Haar measure on $K$. Then $\tilde{\alpha}(\mu_K)$ is a normal faithful conditional expectation of $\M$ onto $\M^K$. 

In particular, the inclusion $\M^K\subset\M$ is semidiscrete (Definition \ref{def:semi-discr}).
\end{prop}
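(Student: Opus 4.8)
The plan is to show that $P := \tilde\alpha(\mu_K)$ is an idempotent $\Omega$-preserving ucp map whose image is exactly $\M^K$, and then to invoke Takesaki's theorem (or the standard fact that a normal faithful idempotent ucp map onto a von Neumann subalgebra is a conditional expectation) to conclude. First I would record that $P$ is a normal ucp map on $\M$: it is unital and $\Omega$-preserving because each $\alpha(x)$ is, and normality follows because $\Omega$-preserving ucp maps are automatically normal (as noted in Section \ref{sec:Omegaadjmaps}), or alternatively because $P$ is realized by the bounded operator $V_P = \int_K V_{\alpha(x)}\dd\mu_K(x)$ on $\Hil$ with $V_P\Omega = \Omega$. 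Faithfulness of $P$ likewise follows since an $\Omega$-preserving ucp map is faithful.

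The key algebraic step is the idempotency $P\circ P = P$ together with the identification of the range. Since $\tilde\alpha$ is an involutive monoid homomorphism, $P\circ P = \tilde\alpha(\mu_K)\circ\tilde\alpha(\mu_K) = \tilde\alpha(\mu_K\ast\mu_K) = \tilde\alpha(\mu_K) = P$, using Proposition \ref{prop:Haarabsorbs} which gives $\mu_K\ast\mu_K = \mu_K$. Next I would show that the range of $P$ is $\M^K$. One inclusion is clear: if $m\in\M^K$ then $(\alpha(x))(m)=m$ for all $x$, so $P(m) = \int_K m\dd\mu_K(x) = m$, hence $m$ is in the image and is fixed by $P$. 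For the reverse inclusion, suppose $m = P(n)$ for some $n\in\M$; I must check $(\alpha(y))(m) = m$ for every $y\in K$. Using continuity of the action and the weak-integral definition, $(\alpha(y))(P(n)) = \tilde\alpha(\delta_y)\circ\tilde\alpha(\mu_K)(n) = \tilde\alpha(\delta_y\ast\mu_K)(n) = \tilde\alpha(\mu_K)(n) = P(n) = m$, again invoking Proposition \ref{prop:Haarabsorbs} (now in the form $\delta_y\ast\mu_K = \mu_K$). So the fixed-point set of $P$, which for an idempotent map equals its range, is precisely $\M^K$; in particular $\M^K$ is a von Neumann subalgebra of $\M$ (being the image of a normal idempotent ucp map, it is $\sigma$-weakly closed, and a unital idempotent ucp map satisfies the Choi--Effros multiplication, but since here it is genuinely $P(P(a)P(b)) = P(ab')$-type reasoning — more simply, $\M^K$ is an intersection of kernels of the normal maps $\id - \alpha(x)$, hence a von Neumann algebra as each $\alpha(x)$ is a ucp map fixing $\one$, and the fixed points of a family of unital ucp maps form a von Neumann algebra by a standard argument).

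Having $P:\M\to\M^K$ a normal faithful idempotent ucp map with $P\restriction\M^K = \id$, it is a normal faithful conditional expectation onto $\M^K$: indeed by the Choi--Effros / Tomiyama characterization, a norm-one projection of a $C^*$-algebra onto a subalgebra is automatically a conditional expectation, and normality and faithfulness have been established. The final assertion that $\M^K\subset\M$ is semidiscrete is then immediate from Definition \ref{def:semi-discr}, which requires precisely the existence of a normal faithful conditional expectation onto $\M^K$.

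The main obstacle I anticipate is the bookkeeping around the weak-sense integrals: one must be careful that identities like $\tilde\alpha(\delta_y)\circ\tilde\alpha(\mu_K) = \tilde\alpha(\delta_y\ast\mu_K)$ — which are part of the hypothesis that $\tilde\alpha$ is a monoid homomorphism — genuinely apply here, and that the range of the idempotent $P$ coincides with its fixed-point set (true for any linear idempotent) and is $\sigma$-weakly closed (true since $P$ is normal). None of these is deep, but the argument that $\M^K$ is a von Neumann algebra, rather than merely an operator system, deserves an explicit line: since each $\alpha(x)$ is unital ucp and $P$ restricted to $\M^K$ is the identity, the Kadison--Schwarz inequality forces $\M^K$ to be closed under products via the usual multiplicative-domain argument, so $\M^K$ is indeed a von Neumann subalgebra and $P$ is a bona fide conditional expectation onto it.
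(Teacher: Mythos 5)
Your proof is correct and follows essentially the same route as the paper's: idempotency of $\tilde\alpha(\mu_K)$ and the identification of its fixed points with $\M^K$ both come from the monoid homomorphism property together with the absorbing property of the Haar measure (Proposition \ref{prop:Haarabsorbs}), while normality and faithfulness come from $\Omega$-preservation. Your extra care about $\M^K$ being a von Neumann algebra (via the multiplicative-domain argument) and the Tomiyama characterization is a correct filling-in of details the paper leaves implicit.
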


\begin{proof}
Note that $\tilde{\alpha}(\mu_K)\in {\UCP}^{\sharp}(\M,\Omega)$ by definition. Clearly, $\tilde{\alpha}(\mu_K)$ is a norm 1 projection. Its image, \ie its fixed points, contain the von Neumann algebra $\M^K$. We have to show the converse inclusion, namely that $m\in\M$, $(\tilde{\alpha}(\mu_K))(m)=m$ implies $m\in\M^K$. But this is immediate since $\alpha(x) = \tilde\alpha(\delta_x)$ and
\begin{align}
(\alpha(x))(m) = (\tilde\alpha(\delta_x) \circ \tilde\alpha(\mu_K))(m) = (\tilde\alpha(\delta_x \ast \mu_K))(m) = (\tilde\alpha(\mu_K))(m) = m
\end{align}
by Proposition \ref{prop:Haarabsorbs}. 
$\tilde{\alpha}(\mu_K)$ is normal and faithful because it is $\Omega$-preserving. 
\end{proof}

The ucp maps in the range of the action are automatically $\M^K$-bimodular, \cf the proof of Lemma \ref{lem:NfixingisOmegapres}. Namely, $\alpha(x), \tilde\alpha(\mu) \in \UCP_{\M^K}^\sharp(\M,\Omega)$ for every $x\in K, \mu\in P(K)$. Moreover, $\alpha(x) \in \Extr(\UCP_{\M^K}^\sharp(\M,\Omega))$. 

\begin{prop}\label{prop:uniqueKact}
Let $\alpha$ be a faithful minimal action of $K$ on $\M$, in particular $\M^K \subset \M$ is an irreducible subfactor.
Assume in addition that it is of type $\III$ discrete\footnote{We conjecture that the discreteness of the subfactor follows from the compactness of $K$.} and local. Then $\alpha$ is a homeomorphism of $K$ onto $\Extr(\UCP_{\M^K}^\sharp(\M,\Omega)) \equiv K(\M^K\subset\M)$.

In particular, $K$ is uniquely determined by $\M^K\subset\M$ up to homeomorphism whose lift to measures preserves convolution and involution.
\end{prop}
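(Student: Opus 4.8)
The plan is to show that $\alpha\colon K\to\Extr(\UCP_{\M^K}^\sharp(\M,\Omega))$ is a continuous bijection between compact Hausdorff spaces, hence a homeomorphism, and then to transport the hypergroup structure along $\alpha$. First I would observe that, by hypothesis, $\N:=\M^K\subset\M$ is an irreducible local discrete type $\III$ subfactor, so Theorem \ref{thm:ucpstatesduality} applies and $\Extr(\UCP_{\N}^\sharp(\M,\Omega))\equiv K(\N\subset\M)$ is a compact metrizable space; moreover by the remarks following Proposition \ref{prop:Eforaction} the map $\alpha$ does land in $\Extr(\UCP_{\N}^\sharp(\M,\Omega))$, and $\alpha$ is continuous by Definition \ref{def:action}. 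Faithfulness of the action is exactly injectivity of $\alpha$. So the only substantive point is \textbf{surjectivity} of $\alpha$ onto $\Extr(\UCP_{\N}^\sharp(\M,\Omega))$.

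For surjectivity I would argue via the closed convex hulls. The lift $\tilde\alpha\colon M(K)\to\Span_\CC\{\UCP^\sharp(\M,\Omega)\}$ is linear, weak*-to-BW continuous, affine on $P(K)$, and (by the monoid homomorphism property) intertwines convolution with composition and the pushforward involution with the $\Omega$-adjoint. Its restriction to $P(K)$ takes values in $\UCP_{\N}^\sharp(\M,\Omega)$, which by Theorem \ref{thm:ucpstatesduality} is affinely homeomorphic to $P(K(\N\subset\M))$, i.e.\ to the state space of $\Cred(\N\subset\M)$. Composing, one gets a unital, positive, weak*-continuous, convolution- and involution-preserving affine map $\Phi\colon P(K)\to P(K(\N\subset\M))$. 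The image $\Phi(P(K))$ is weak*-compact (continuous image of a compact set), convex, and contains $\Phi(\delta_e)=\delta_{\id}$ and $\Phi(\mu_K)=\omega_E$ (since $\tilde\alpha(\mu_K)=E$ by Proposition \ref{prop:Eforaction} and uniqueness of $E$); by the involutive-monoid property it is moreover closed under convolution and involution. The key step is then: a weak*-closed convex subset $C\subseteq P(L)$, $L$ compact metrizable, that is closed under the hypergroup convolution and involution and contains the Haar measure $\omega_E$ must be all of $P(L)$, where $L=K(\N\subset\M)$. Indeed, $\Phi(P(K))$ must contain the extreme points of $P(L)$, i.e.\ the Dirac measures $\delta_x$, $x\in L$ — otherwise one produces, using faithfulness of $\omega_E$ together with the absorption property $\omega_E\ast\nu=\omega_E$ (Proposition \ref{prop:Haarabsorbs}) and the duality of Theorem \ref{thm:ucpstatesduality}, a contradiction with the fact that a proper closed convex subset of $P(L)$ omitting some $\delta_x$ cannot be convolution-stable and contain a faithful measure. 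Once every $\delta_x\in\Phi(P(K))$, and since $\Phi$ maps extreme points (Dirac measures $\delta_k$, $k\in K$) to elements of $P(L)$ that are \emph{idempotent under nothing in particular} but satisfy $\Phi(\delta_k)\ast\Phi(\delta_k)^\sharp\geq \Phi(\delta_e)=\delta_{\id}$ (using the monoid-homomorphism identity and $\delta_k\ast\delta_k^\sharp\geq\delta_e$, which holds in $P(K)$ by Proposition \ref{prop:DominatedImpliesConjugate} read backwards, i.e.\ by the Haar property), Proposition \ref{prop:DominatedImpliesConjugate} forces $\Phi(\delta_k)$ to be a Dirac measure. Hence $\alpha(k)=\tilde\alpha(\delta_k)$ is always an extreme point and $\alpha$ surjects onto the Diracs, i.e.\ onto $\Extr(\UCP_{\N}^\sharp(\M,\Omega))$.

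With $\alpha$ a continuous bijection of compact Hausdorff spaces it is automatically a homeomorphism, and its lift $\tilde\alpha\colon P(K)\to P(K(\N\subset\M))$ is then a weak*-homeomorphism preserving convolution (composition of ucp maps) and involution ($\Omega$-adjoint) and sending $\delta_e\mapsto\delta_{\id}$, $\mu_K\mapsto\omega_E$. This is precisely the statement that $K$ is determined by $\N=\M^K\subset\M$ up to hypergroup-isomorphism. I expect the \textbf{main obstacle} to be the surjectivity argument: making rigorous the claim that a convolution- and involution-stable weak*-closed convex subset of $P(L)$ containing the (faithful) Haar measure must contain all Dirac measures. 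The cleanest route is probably to dualize — work with the induced unital $*$-subalgebra / operator system inside $\Cred(\N\subset\M)\cong C(L)$ generated by $\tilde\alpha(P(K))$ under the duality of Theorem \ref{thm:ucpstatesduality}, show it is weak*-dense using the faithfulness of $\omega_E$ (Lemma \ref{lem:haarfaith}) and the Choquet decomposition of $E$ (Corollary \ref{cor:choquetdecomp}) to separate points of $L$, and then invoke Proposition \ref{prop:DominatedImpliesConjugate} to pin down the extreme points — rather than to manipulate measures directly.
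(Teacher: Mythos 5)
Your reduction of the problem to the surjectivity of $\alpha$, together with the observation that a continuous bijection from a compact space to a Hausdorff space is automatically a homeomorphism, matches the paper. The gap is in the surjectivity argument itself. The ``key step'' you state --- that a weak*-closed convex subset of $P(L)$ which is stable under convolution and involution and contains the faithful Haar measure must be all of $P(L)$ --- is false: the segment $\{t\delta_{e}+(1-t)\omega_E : t\in[0,1]\}$ is weak*-closed, convex, involution-stable and convolution-stable (since $(t\delta_{e}+(1-t)\omega_E)\ast(s\delta_{e}+(1-s)\omega_E)=ts\,\delta_{e}+(1-ts)\,\omega_E$ by Proposition \ref{prop:Haarabsorbs}), and it contains both $\delta_{e}$ and $\omega_E$, yet it omits every other Dirac measure. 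Your auxiliary claim that $\delta_k\ast\delta_{k^\sharp}\geq\delta_e$ holds in any compact hypergroup is also false in the paper's setting: this is precisely the DJS inversion axiom (HG7) that Definition \ref{def:CompactHypergroup} deliberately does not assume (Remark \ref{rmk:DJS}), and it genuinely fails, for instance for $\phi_t$ with $t\in(-1,1)$ in Example \ref{ex:SO3SO2}, where $\delta_{\phi_t}\ast\delta_{\phi_t}$ is absolutely continuous and has no atom at the identity. So the route via Proposition \ref{prop:DominatedImpliesConjugate} collapses. (Note also that the extremality of each $\alpha(k)$ in $\UCP^\sharp_{\M^K}(\M,\Omega)$ does not need to be re-derived at all: it is built into Definition \ref{def:action} together with the remark following Proposition \ref{prop:Eforaction}.)

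The argument that actually works --- and which you gesture at in your closing sentences without carrying it out --- is the Choquet-uniqueness one. The pushforward $\mu_K\circ\alpha^{-1}$ is a probability Radon measure supported on the closed set $\alpha(K)\subseteq\Extr(\UCP^\sharp_{\M^K}(\M,\Omega))$ whose barycenter is $\tilde\alpha(\mu_K)=E$ by Proposition \ref{prop:Eforaction}. By the uniqueness of the probability measure on the extreme points with barycenter $E$ (Corollary \ref{cor:choquetdecomp}) it must coincide with $\omega_E$, which is faithful by Lemma \ref{lem:Eisfaithful} and Theorem \ref{thm:ucpstatesduality}. A faithful measure cannot vanish on a nonempty open set, so the closed set $\alpha(K)$ cannot omit one; hence $\alpha(K)=\Extr(\UCP^\sharp_{\M^K}(\M,\Omega))$. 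This replaces your convolution-stability lemma entirely and uses no inversion property of the hypergroup.
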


\begin{proof}
Since $K$ is compact and $\alpha$ is continuous, the image $\alpha(K)$ is closed in $\Extr({\UCP}^{\sharp}_{\M^K}(\M,\Omega))$. If $\alpha(K)\neq\Extr({\UCP}^{\sharp}_{\M^K}(\M,\Omega))$, then there is a non-empty open set $B\subset\Extr({\UCP}^{\sharp}_{\M^K}(\M,\Omega))$ with $B\cap\alpha(K)=\emptyset$. By Proposition \ref{prop:Eforaction}, $\tilde{\alpha}(\mu_K)=E$, where $E$ is the unique conditional expectation for $\M^K\subset\M$. On the one hand, $\mu_K \circ \alpha^{-1}$ is a measure supported on $\alpha(K)$ that gives a Choquet-type extremal decomposition of $\tilde\alpha(\mu_K)$. On the other hand, $\omega_E$ is a faithful measure on $\Extr(\UCP_{\M^K}^\sharp(\M,\Omega))$ by Lemma \ref{lem:Eisfaithful} and Theorem \ref{thm:ucpstatesduality}. By the uniqueness of the Choquet decomposition of $E$, Corollary \ref{cor:choquetdecomp}, we get a contradiction. Since $\alpha$ is a continuous bijection from a compact space to a Hausdorff space, it is a homeomorphism.
\end{proof}

Let $\hat{\alpha}:M(K)\to M(K(\M^K\subset \M))$ be the pushforward on measures given by $\alpha$, namely $\mu \mapsto \hat{\alpha}(\mu) := \mu\circ \alpha^{-1}$. Let $\kappa: M(K(\M^K\subset \M))\to {\Span}_{\CC}\{{\UCP}^{\sharp}_{\M^K}(\M,\Omega)\}$ be the inverse of the linear extension of the duality map considered in Proposition \ref{prop:spancext}.

\begin{prop}\label{prop:alphatildefactorization}
In the assumptions of Proposition \ref{prop:uniqueKact}, if $K$ is identified with $K(\M^K\subset\M)$ via $\alpha$, then the lift of the action $\tilde{\alpha}$ is also identified with $\kappa$, namely $\tilde{\alpha} = \kappa \circ \hat{\alpha}$.

In particular, $\tilde{\alpha} : M(K)\to{\Span}_{\CC}\{{\UCP}^{\sharp}_{\M^K}(\M,\Omega)\}$ is a linear bijection.
\end{prop}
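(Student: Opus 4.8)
The plan is to verify that $\tilde\alpha = \kappa\circ\hat\alpha$ by checking the equality on Dirac measures and then extending by linearity and continuity. First I would recall that, by Proposition \ref{prop:uniqueKact}, $\alpha$ is a homeomorphism of $K$ onto $K(\M^K\subset\M)$ whose lift to measures intertwines convolution and involution; so identifying $K$ with $K(\M^K\subset\M)$ via $\alpha$ amounts to saying that $\alpha(x)$ \emph{is} the extreme $\Omega$-adjointable $\M^K$-bimodular ucp map corresponding to the point $x\in K(\M^K\subset\M)$. Under this identification, the pushforward $\hat\alpha:M(K)\to M(K(\M^K\subset\M))$ becomes the identity. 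Hence the claim $\tilde\alpha=\kappa\circ\hat\alpha$ reduces to $\tilde\alpha=\kappa$, i.e.\ that the lift of the action coincides with the inverse of the linear extension of the duality map $\phi\mapsto\omega_\phi$ from Proposition \ref{prop:spancext}.

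Next I would check this on extreme points. For $x\in K$ (identified with a point of $K(\M^K\subset\M)$), we have $\tilde\alpha(\delta_x) = \alpha(x)$ by \eqref{eq:liftaction}, and by the identification of $K(\M^K\subset\M)$ with the spectrum of $\Cred(\M^K\subset\M)$ in Theorem \ref{thm:ucpstatesduality}, the point $x$ corresponds precisely to the pure state $\omega_{\alpha(x)}$; so $\kappa(\delta_x) = \alpha(x)$ as well. Thus $\tilde\alpha$ and $\kappa$ agree on the Dirac measures $\{\delta_x : x\in K\}$. Both maps are affine and weak*-to-BW continuous: for $\tilde\alpha$ this is the remark following Definition \ref{def:action}, and for $\kappa$ it is the content of Proposition \ref{prop:spancext} together with Lemma \ref{lem:OmegaadjL2cpt} and Lemma \ref{lem:dualityishomeo}. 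Since the Dirac measures are weak*-dense in $P(K)$ and their convex span (with complex coefficients) is norm-dense in $M(K)$, two affine continuous (resp.\ linear continuous) maps that agree on them agree everywhere; this gives $\tilde\alpha = \kappa$ on $P(K)$ and then, by linear extension, on $M(K)$.

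Finally, the ``in particular'' assertion follows immediately: $\kappa$ is by definition (Proposition \ref{prop:spancext}) a linear bijection from $M(K(\M^K\subset\M))$ onto ${\Span}_\CC\{{\UCP}^{\sharp}_{\M^K}(\M,\Omega)\}$, and $\hat\alpha$ is a linear bijection $M(K)\to M(K(\M^K\subset\M))$ because $\alpha$ is a homeomorphism; hence $\tilde\alpha = \kappa\circ\hat\alpha$ is a composition of linear bijections, thus a linear bijection.

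The one point requiring genuine care is the compatibility of the identifications: one must make sure that ``$K$ identified with $K(\M^K\subset\M)$ via $\alpha$'' is used consistently, so that the point $x\in K$ really does label, on the one hand, the ucp map $\alpha(x)\in\Extr(\UCP^\sharp_{\M^K}(\M,\Omega))$ and, on the other hand, the character of $\Cred(\M^K\subset\M)$ whose associated state is $\omega_{\alpha(x)}$ — these being matched precisely by Theorem \ref{thm:ucpstatesduality}. Once this bookkeeping is pinned down, the proof is a routine density-and-continuity argument; I do not expect any substantial obstacle beyond keeping the two dual pictures of $K(\M^K\subset\M)$ (as extreme ucp maps and as the Gelfand spectrum) aligned.
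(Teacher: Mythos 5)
Your argument is correct in substance but takes a different route from the paper. The paper's proof is a one-line direct computation: for $\mu\in P(K)$, the map $\kappa(\hat\alpha(\mu))$ is by definition the ucp map whose associated measure is $\mu\circ\alpha^{-1}$, so Corollary \ref{cor:choquetdecomp} gives $(\kappa\circ\hat\alpha(\mu))(m)=\int_{K(\M^K\subset\M)}\phi(m)\,\dd(\mu\circ\alpha^{-1})(\phi)=\int_K(\alpha(x))(m)\,\dd\mu(x)=(\tilde\alpha(\mu))(m)$, i.e.\ the barycentric decomposition itself does all the work for arbitrary $\mu$ at once. You instead verify the identity only on Dirac measures (where both sides visibly equal $\alpha(x)$, by \eqref{eq:liftaction} on one side and by Theorem \ref{thm:ucpstatesduality} on the other) and then propagate it by affineness, continuity and Krein--Milman. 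Both are valid; the paper's computation is shorter, while yours isolates more explicitly where the identification of $x\in K$ with the character $\omega_{\alpha(x)}$ enters. One caveat: your density statements are not literally correct. The Dirac measures are \emph{not} weak*-dense in $P(K)$ (they form a weak*-closed set homeomorphic to $K$), and their complex linear span is \emph{not} norm-dense in $M(K)$ for non-discrete $K$ (a continuous measure is at total-variation distance $2$ from every discrete one). What you need, and what your argument actually uses, is that the \emph{convex hull} of the Dirac measures is weak*-dense in $P(K)$ (Krein--Milman), so that two affine weak*-to-BW continuous maps agreeing on the $\delta_x$ agree on all of $P(K)$; the passage to $M(K)$ then follows purely by linearity, since every complex Radon measure is a finite linear combination of probability measures, exactly as in the proof of Proposition \ref{prop:spancext}. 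With that phrasing corrected, the proof is complete.
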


\begin{proof}
By linearity it suffices to check the statement for $\mu \in P(K)$, thus $\kappa \circ \hat{\alpha}(\mu)\in {\UCP}^{\sharp}_{\M^K}(\M,\Omega)$. For every $m\in \M$, compute
\begin{align}
(\kappa \circ \hat{\alpha}(\mu))(m)
= \int_{K(\M^K\subset \M)} \phi(m) \dd(\mu\circ\alpha^{-1})(\phi)
= \int_{K} (\alpha(x))(m) \dd\mu(x)
= (\tilde{\alpha}(\mu))(m)
\end{align}
by Corollary \ref{cor:choquetdecomp}.
\end{proof}

We are now ready to prove our compact hypergroup \emph{generalized orbifold} result for irreducible local discrete subfactors. 
Let $\N\subset \M$ and $\Omega$ be as in Theorem \ref{thm:ucpstatesduality}.
We first show that the composition product and the $\Omega$-adjoint involution on ucp maps are compatible with the Choquet-type extremal decomposition of Corollary \ref{cor:choquetdecomp}.

\begin{lem}\label{lem:compositionchoquet}
Let $m\in \M$, $\phi_1,\phi_2\in{\UCP}^{\sharp}_\N(\M,\Omega)$. Denote for short $K = K(\N\subset\M) \equiv \Extr(\UCP_{\N}^\sharp(\M,\Omega))$.
Then
\begin{align}\label{eq:KNinMacts1}
\phi_1\circ\phi_2(m) = \int_K \phi_1 \circ \phi(m)\dd\omega_{\phi_2}(\phi) = \int_K \phi \circ \phi_2(m)\dd\omega_{\phi_1}(\phi) 
\end{align}
and
\begin{align}\label{eq:KNinMacts2}
\phi_1^\sharp(m) = \int_K \phi^\sharp(m)\dd\omega_{\phi_1}(\phi)
\end{align}
in the weak sense.
\end{lem}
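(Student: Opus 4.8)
The plan is to deduce Lemma~\ref{lem:compositionchoquet} directly from the Choquet-type decomposition of a single ucp map (Corollary~\ref{cor:choquetdecomp}), exploiting the fact that composition with a fixed map and the $\Omega$-adjoint are, respectively, continuous and continuous affine operations on $\UCP^\sharp_\N(\M,\Omega)$, together with the weak-sense meaning of the integrals. First I would fix $m\in\M$ and $\xi_1,\xi_2\in\Hil$ and consider the function $g\in C(K)$ defined by $g(\phi) := (\xi_1,\phi_1\circ\phi(m)\xi_2)$; this is continuous because $\phi\mapsto\phi(m)\Omega$ is continuous in $L^2(\M,\Omega)$ by Definition~\ref{def:L2conv}, hence $\phi\mapsto V_{\phi_1}V_\phi$ is strong-operator continuous (see Remark~\ref{rmk:Topologies}), and then apply Corollary~\ref{cor:choquetdecomp} to $\phi_2$ with the affine function $\phi\mapsto(\xi_1,\phi_1\circ\phi(m)\xi_2)$. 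Strictly speaking, Corollary~\ref{cor:choquetdecomp} is stated for affine functions of the form $\phi\mapsto(\xi_1,\phi(m)\xi_2)$, so I would note that since $\phi_1$ is a fixed bounded linear map on $\M$ and $\phi\mapsto\phi(m)$ is weakly continuous and affine, the composite $\phi\mapsto(\xi_1,\phi_1(\phi(m))\xi_2)$ is still an affine weakly continuous function, and the barycentric identity $\int_K g\,\dd\omega_{\phi_2} = g(\phi_2)$ (interpreted via the corollary applied to $\phi_1(m')$-type functionals, or simply re-derived as in its proof) gives the first equality in \eqref{eq:KNinMacts1} tested against arbitrary $\xi_1,\xi_2$, i.e. in the weak sense.

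For the second equality in \eqref{eq:KNinMacts1}, I would run the symmetric argument: apply Corollary~\ref{cor:choquetdecomp} to $\phi_1$ with the affine continuous function $\phi\mapsto(\xi_1,\phi\circ\phi_2(m)\xi_2)$, continuity here coming from the fact that $\phi\mapsto V_\phi V_{\phi_2}$ is strong-operator continuous since the $V_\phi$ have norm $1$. For \eqref{eq:KNinMacts2}, I would test against $\xi_1,\xi_2\in\Hil$ and use $(\xi_1,\phi^\sharp(m)\xi_2) = (\xi_1,V_{\phi^\sharp}\,(\text{appropriate vector})) $; more cleanly, since $\phi\mapsto\phi^\sharp$ is continuous in $L^2(\M,\Omega)$ (Lemma~\ref{lem:dualityishomeo} together with Remark~\ref{rmk:Topologies}, or directly $V_{\phi^\sharp}=V_\phi^*$ and weak/strong operator continuity on bounded sets), the function $\phi\mapsto(\xi_1,\phi^\sharp(m)\xi_2)$ is affine and weakly continuous on $K$, so Corollary~\ref{cor:choquetdecomp} applied to $\phi_1$ yields $\int_K(\xi_1,\phi^\sharp(m)\xi_2)\,\dd\omega_{\phi_1}(\phi) = (\xi_1,\phi_1^\sharp(m)\xi_2)$, which is exactly \eqref{eq:KNinMacts2} in the weak sense.

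The one point requiring a little care — and the main (mild) obstacle — is verifying that the functions being integrated really are affine and continuous on $K$, so that Corollary~\ref{cor:choquetdecomp} (which is proved precisely for such functions, via Proposition~\ref{prop:affine}) applies. Affineness of $\phi\mapsto\phi_1\circ\phi$ and $\phi\mapsto\phi\circ\phi_2$ is clear since composition is bi-affine and one slot is fixed; affineness of $\phi\mapsto\phi^\sharp$ follows from the additivity property $(\alpha\phi_1+\beta\phi_2)^\sharp = \alpha\phi_1^\sharp+\beta\phi_2^\sharp$ for $\alpha,\beta\in[0,1]$ recorded in Section~\ref{sec:Omegaadjmaps}. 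Continuity in the $L^2(\M,\Omega)$-topology is where I would invoke Remark~\ref{rmk:Topologies} and the contraction property $\|V_\phi\|=1$: for a norm-bounded net the strong and weak operator topologies on $\N'\cap\M_1$ agree, so $V_\alpha\to V$ strongly implies $V_{\phi_1}V_\alpha\to V_{\phi_1}V$ and $V_\alpha V_{\phi_2}\to V V_{\phi_2}$ strongly, and $V_\alpha^*\to V^*$ strongly. Once these continuity-and-affineness checks are in place, the lemma is an immediate application of the uniqueness of the barycentric (Choquet) decomposition, with no further computation needed.
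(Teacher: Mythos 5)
Your proposal is correct, and it rests on the same Choquet machinery as the paper, but the reduction step is executed differently. The paper first uses the $\Omega$-adjoint to write $(m_1\Omega,\phi_1\circ\phi_2(m_2)\Omega)=(\phi_1^\sharp(m_1)\Omega,\phi_2(m_2)\Omega)$, so that Corollary \ref{cor:choquetdecomp} applies \emph{verbatim} (the integrand is literally of the form $(\xi,\phi(m_2)\Omega)$), moves $\phi_1^\sharp$ back inside the integral, and only then extends from vectors $m_1\Omega$ to arbitrary vectors $m'm\Omega$ using $\|\phi_1\circ\phi\|=1$ and the standardness of $\Omega$; the identities \eqref{eq:KNinMacts2} and the second half of \eqref{eq:KNinMacts1} are obtained the same way. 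You instead invoke the barycentric identity for \emph{general} affine continuous functions on ${\UCP}^{\sharp}_\N(\M,\Omega)$ (the corollary preceding Corollary \ref{cor:choquetdecomp}, via Proposition \ref{prop:affine}) and reduce everything to checking that $\phi\mapsto\phi_1\circ\phi$, $\phi\mapsto\phi\circ\phi_2$ and $\phi\mapsto\phi^\sharp$ are affine and continuous. This is a clean, uniform treatment of all three identities, at the cost of the continuity verifications; the paper's adjoint trick avoids those entirely for the composed maps. One small point to tighten in your write-up: the strong-operator convergence $V_{\phi_1}V_{\phi_\alpha}\to V_{\phi_1}V_\phi$ only gives convergence of $\phi_1\circ\phi_\alpha(m)\Omega$, i.e.\ continuity of $\phi\mapsto(\xi_1,\phi_1\circ\phi(m)\xi_2)$ for $\xi_2=\Omega$; to reach arbitrary $\xi_2$ you still need either the $\M'\Omega$-density plus uniform norm bound argument (exactly the paper's final step) or the normality of $\phi_1$ combined with the coincidence of the weak and ultraweak topologies on bounded sets. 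With that detail made explicit, your argument is complete.
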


\begin{proof}
For every $m_1,m_2 \in \M$,
\begin{align}
(m_1\Omega, \phi_1\circ\phi_2(m_2)\Omega) &= (\phi_1^\sharp(m_1)\Omega,\phi_2(m_2)\Omega)\\ 
&= \int_K (\phi_1^\sharp(m_1)\Omega,\phi(m_2)\Omega) \dd\omega_{\phi_2}(\phi)\\ 
&= \int_K (m_1\Omega,\phi_1\circ\phi(m_2)\Omega) \dd\omega_{\phi_2}(\phi)
\end{align}
by Corollary \ref{cor:choquetdecomp}. Since $\Omega$ is standard and $\|\phi_1\circ\phi\| = 1$, the same holds by replacing $m_1\Omega$ with an arbitrary vector of the form $m'm\Omega$, $m\in\M$, $m'\in\M'$. Hence we have the first equality in \eqref{eq:KNinMacts1}.
Similarly, one obtains \eqref{eq:KNinMacts2}, and then the second equality in \eqref{eq:KNinMacts1}.
\end{proof}

\begin{thm}\label{thm:genorbi}
Let $\N\subset \M$ be as in Theorem \ref{thm:ucpstatesduality}.
The compact hypergroup $K(\N\subset\M)$ acts faithfully on $\M$ (Definition \ref{def:action}) and it gives $\N$ as the fixed point subalgebra, $\N = \M^{K(\N\subset\M)}$. Furthermore, it is the unique compact hypergroup, up to homeomorphism whose lift to measures preserves convolution and involution, which acts on $\M$ with these properties.
\end{thm}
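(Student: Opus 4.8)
The plan is to assemble \rthm{genorbi} from the pieces already developed in Sections \ref{sec:compacthypfromsubf} and \ref{sec:actionK}, in three stages: (1) construct the action, (2) identify the fixed-point algebra, (3) prove uniqueness.

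\textbf{Stage 1: construction of the action.} Set $K := K(\N\subset\M) = \Extr(\UCP^\sharp_\N(\M,\Omega))$, which by \rthm{ucpstatesduality} and \Thm\ \ref{thm:Kishypergroup} is a compact hypergroup. Define $\alpha\colon K\to \Extr(\UCP^\sharp(\M,\Omega))$ to be simply the inclusion $\phi\mapsto\phi$; this lands in $\Extr(\UCP^\sharp(\M,\Omega))$ by Lemma \ref{lem:extreme}, and it is continuous and injective because the $L^2(\M,\Omega)$-topology on $K$ is the restriction of the BW topology (Lemma \ref{lem:OmegaadjL2cpt}). To check that this is an action in the sense of Definition \ref{def:action}, I need the lift $\tilde\alpha\colon P(K)\to\UCP^\sharp(\M,\Omega)$, $(\tilde\alpha(\mu))(m) = \int_K \phi(m)\dd\mu(\phi)$, to be an involutive monoid homomorphism. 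Under the identification $P(K)\cong\UCP^\sharp_\N(\M,\Omega)$ of \rthm{ucpstatesduality}, the Choquet decomposition (Corollary \ref{cor:choquetdecomp}) says precisely that $\tilde\alpha(\omega_\psi) = \psi$ for $\psi\in\UCP^\sharp_\N(\M,\Omega)$, i.e.\ $\tilde\alpha$ is the inverse of the barycenter/duality map; so $\tilde\alpha$ is a bijection onto $\UCP^\sharp_\N(\M,\Omega)$. That $\tilde\alpha(\mu_1\ast\mu_2) = \tilde\alpha(\mu_1)\circ\tilde\alpha(\mu_2)$, $\tilde\alpha(\mu^\sharp) = \tilde\alpha(\mu)^\sharp$, $\tilde\alpha(\delta_{\id}) = \id$ are then exactly the definitions of convolution, involution, and identity on $K$ transported from $\UCP^\sharp_\N(\M,\Omega)$ in \Thm\ \ref{thm:Kishypergroup} — Lemma \ref{lem:compositionchoquet} is the explicit verification that composition and $\Omega$-adjoint are compatible with the weak-integral decomposition. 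Faithfulness of $\alpha$ is injectivity, which holds by construction.

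\textbf{Stage 2: the fixed-point algebra.} I must show $\M^{K} = \N$. The inclusion $\iota(\N)\subseteq\M^K$ is clear: every $\phi\in K$ acts trivially on $\N$ by Corollary \ref{cor:localNbimoducp}. For the reverse inclusion, suppose $m\in\M^K$, i.e.\ $\phi(m) = m$ for all $\phi\in K$. Then by Corollary \ref{cor:choquetdecomp} applied to $E$, $E(m) = \int_K \phi(m)\dd\omega_E(\phi) = \int_K m\,\dd\omega_E(\phi) = m$, so $m = E(m)\in\iota(\N)$. Thus $\M^K = \iota(\N)\cong\N$, and since the subfactor is irreducible this is automatically the minimal-action situation. (By Proposition \ref{prop:Eforaction} the averaging $\tilde\alpha(\mu_K) = \tilde\alpha(\omega_E)$ is a conditional expectation onto $\M^K = \N$, and it must coincide with the unique $E$ of the subfactor; this also re-confirms that $\omega_E$ is the Haar measure.)

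\textbf{Stage 3: uniqueness.} Suppose $K'$ is another compact hypergroup acting faithfully on $\M$ with $\M^{K'} = \N$. Because the action of $K'$ lands in $\UCP^\sharp_{\N}(\M,\Omega)$ and in fact $\alpha'(x)\in\Extr(\UCP^\sharp_\N(\M,\Omega)) = K(\N\subset\M)$ for each $x\in K'$ (the remark after Proposition \ref{prop:Eforaction}), the hypotheses of Proposition \ref{prop:uniqueKact} are met — here we use that $\N\subset\M$ is type $\III$, discrete and local. That proposition gives that $\alpha'$ is a homeomorphism of $K'$ onto $K(\N\subset\M)$, and Proposition \ref{prop:alphatildefactorization} gives that the lift $\widetilde{\alpha'}$ is the duality bijection, so the homeomorphism intertwines the two hypergroup structures: it carries convolution of measures to composition of ucp maps and involution to the $\Omega$-adjoint, which is exactly the structure-preservation required in the statement. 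Hence $K'\cong K(\N\subset\M)$ as compact hypergroups.

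\textbf{Main obstacle.} The genuinely substantive input is the identification $\tilde\alpha = (\text{duality})^{-1}$, i.e.\ that the weak-integral lift of the tautological inclusion $K\hookrightarrow\UCP^\sharp(\M,\Omega)$ really is an involutive monoid homomorphism onto all of $\UCP^\sharp_\N(\M,\Omega)$; this rests entirely on the Choquet-type decomposition of Corollary \ref{cor:choquetdecomp} together with Lemma \ref{lem:compositionchoquet}, both already in hand. Everything else is bookkeeping. If there is a delicate point, it is making sure the weak-integral formula $(\tilde\alpha(\mu))(m) = \int_K\phi(m)\dd\mu(\phi)$ genuinely defines a normal $\Omega$-adjointable ucp map and not merely a positive linear functional on $\M$ — but this is handled by the remark following Definition \ref{def:action} (closedness of the positive cone under weak integrals, continuity of $x\mapsto\alpha(x)^\sharp$) and by BW-compactness of $\UCP^\sharp_\N(\M,\Omega)$ from Lemma \ref{lem:OmegaadjL2cpt}, which guarantees the integral stays inside the set.
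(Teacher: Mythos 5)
Your proposal is correct and follows essentially the same route as the paper: the action is obtained from Lemma \ref{lem:extreme}, Corollary \ref{cor:choquetdecomp} and Lemma \ref{lem:compositionchoquet}; the reverse inclusion $\M^{K}\subseteq\N$ via $m=E(m)$ from the Choquet decomposition of $E$ is exactly one of the two arguments the paper gives; and uniqueness is deduced from Proposition \ref{prop:uniqueKact} (with your appeal to Proposition \ref{prop:alphatildefactorization} merely making explicit the structure-preservation already implicit there). No gaps.
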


\begin{proof}
By Lemma \ref{lem:extreme}, $K(\N\subset\M)$ is a subset of $\Extr({\UCP}^{\sharp}(\M,\Omega))$, thus by Corollary \ref{cor:choquetdecomp} and the previous lemma it acts on $\M$ in the sense of Definition \ref{def:action}. We only have to show that the fixed point subalgebra coincides with $\N$. By definition, $\N\subset\M^{K(\N\subset\M)}$. The converse inclusion follows either from Corollary \ref{cor:choquetdecomp} applied to $E$, or from Lemma \ref{lem:OmegaadjL2cpt} and Krein--Milman's theorem. The uniqueness follows from Proposition \ref{prop:uniqueKact}.
\end{proof}

%%%
\section{Representation theory}\label{sec:repK}
%%%

Let $K$ be a compact hypergroup as in Definition \ref{def:CompactHypergroup} and let $M(K)$ be the associated unital involutive Banach algebra of complex Radon measures on $K$ as in Section \ref{sec:compacthyp}. The following definition should be compared with \cite[\Def 2.1.1]{BlHe1995}, \cite{Je1975}, \cite{Vr1979}.

\begin{defi}\label{def:cpthyprep}
A \textbf{representation} of $K$ on a Hilbert space $\Hil_\pi$ is a unital involutive algebra homomorphism $\pi: M(K) \to \B(\Hil_\pi)$, namely $\pi(\mu * \nu) = \pi(\mu)\pi(\nu)$, $\pi(\mu^*) = \pi(\mu)^*$ for $\mu,\nu\in M(K)$ and $\pi(\delta_e) = 1_{\Hil_\pi}$. Note that $\pi$ is automatically norm decreasing, $\|\pi(\mu)\| \leq \|\mu\|$. 

A representation is called \textbf{continuous} if its restriction to positive measures is continuous from the weak* topology of $M(K)$ to the weak operator topology of $\B(\Hil_\pi)$. 
\end{defi}

Let $K(\N\subset\M)$ be the compact hypergroup associated with a subfactor $\N\subset\M$ as in Theorem \ref{thm:Kishypergroup}. 
By Proposition \ref{prop:spancext}, we identify $M(K(\N\subset\M))$ and ${\Span}_{\CC}\{\UCP^\sharp_\N(\M,\Omega)\}$ as unital involutive algebras. Indeed, $\omega_{\phi_1} * \omega_{\phi_2} = \omega_{\phi_1\circ\phi_2}$, $\omega_{\phi_1}^* = \omega_{\phi_1^\sharp}$ and $\delta_e = \omega_{\id}$ by definition.
Denote by $\mu \mapsto \phi_\mu$ the inverse of the duality map $\phi \mapsto \omega_\phi$. 
 
Let $H_\rho$ be the space of charged fields associated with a not necessarily irreducible subendomorphism $\rho\prec\theta$ of the dual canonical endomorphism $\theta$ of $\N\subset\M$. Recall from Section \ref{sec:genQsys} that $H_\rho$ is finite-dimensional 
whenever $\rho$ has finite dimension and equip it with the inner product $\psi_1^*\psi_2 =: (\psi_1,\psi_2) 1$ for $\psi_1,\psi_2\in H_\rho$.

\begin{prop}\label{prop:repcorresp}
Each $H_\rho$ is a continuous representation of $K(\N\subset\M)$, where the action of $\mu \in M(K(\N\subset\M))$ on $\psi\in H_\rho$ is defined by $\pi_\rho(\mu) \psi := \phi_\mu(\psi)$. 
\end{prop}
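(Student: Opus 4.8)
The plan is to verify directly that $\pi_\rho$, as defined on $H_\rho$ by $\pi_\rho(\mu)\psi := \phi_\mu(\psi)$, satisfies the axioms in Definition \ref{def:cpthyprep}: it is well-defined (lands in $H_\rho$), is a unital involutive algebra homomorphism, and is continuous. First I would check that $\pi_\rho(\mu)$ maps $H_\rho$ into $H_\rho$. For $\mu = \omega_\phi$ with $\phi\in\UCP^\sharp_\N(\M,\Omega)$ this is the observation already made at the start of Section \ref{sec:dualitypairing}: an $\N$-bimodular map sends $\Hom(\iota,\iota\rho)$ into itself, since for $\psi\in H_\rho$ and $n\in\N$ we have $\phi(\psi)\iota(n) = \phi(\psi\iota(n)) = \phi(\iota(\rho(n))\psi) = \iota(\rho(n))\phi(\psi)$. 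For general $\mu\in M(K(\N\subset\M))$ one extends by complex linearity using Proposition \ref{prop:spancext}, which identifies $M(K(\N\subset\M))$ with $\Span_\CC\{\UCP^\sharp_\N(\M,\Omega)\}$; since $H_\rho$ is a finite-dimensional linear subspace this is immediate.

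Next I would establish $\pi_\rho(\mu*\nu) = \pi_\rho(\mu)\pi_\rho(\nu)$, $\pi_\rho(\delta_e)=1_{H_\rho}$, and $\pi_\rho(\mu^*)=\pi_\rho(\mu)^*$. The unit axiom is clear since $\delta_e = \omega_{\id}$ and $\phi_{\id} = \id$. For multiplicativity, recall from the discussion preceding Proposition \ref{prop:repcorresp} (and from Theorem \ref{thm:Kishypergroup}) that $\omega_{\phi_1}*\omega_{\phi_2} = \omega_{\phi_1\circ\phi_2}$, hence $\phi_{\mu*\nu} = \phi_\mu\circ\phi_\nu$ on the level of ucp maps, and restricting this composition to $H_\rho$ gives $\pi_\rho(\mu*\nu)\psi = \phi_\mu(\phi_\nu(\psi)) = \pi_\rho(\mu)\pi_\rho(\nu)\psi$; again extend bilinearly. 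For the $*$-structure, one uses that $\omega_\phi^* = \omega_{\phi^\sharp}$, so it suffices to show $\phi^\sharp$ acts on $H_\rho$ as the Hilbert-space adjoint of $\phi$ with respect to the inner product $\psi_1^*\psi_2 = (\psi_1,\psi_2)1$. This is a short computation: for $\psi_1,\psi_2\in H_\rho$,
\begin{align}
(\psi_1, \phi(\psi_2))\,1 &= \psi_1^*\phi(\psi_2) = d(\rho)\,E(\phi(\psi_2)\psi_1^*) = d(\rho)\,(\Omega, \phi(\psi_2)\psi_1^*\Omega)\\
&= d(\rho)\,(\phi^\sharp(\psi_2^*)^*\Omega, \cdots)
\end{align}
— more carefully, using $E$-invariance of $(\Omega,\slot\Omega)$, equation \eqref{eq:twoinnerprods} together with $a_\rho = 1_{H_\rho}$ (Proposition \ref{prop:arho=1}), and the defining relation \eqref{eq:omegaadj} of the $\Omega$-adjoint, one gets $(\psi_1,\phi(\psi_2)) = (\phi^\sharp(\psi_1), \psi_2)$; this is precisely the kind of manipulation carried out in the proof of Lemma \ref{lem:dualityisinjective}, so I would cite that computation. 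Hence $\pi_\rho(\mu)^* = \pi_\rho(\mu^*)$.

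Finally, continuity: I must show that $\mu\mapsto\pi_\rho(\mu)$ is continuous from the weak* topology on positive measures to the weak operator topology on $\B(H_\rho)$. Since $H_\rho$ is finite-dimensional, weak operator convergence is just entrywise convergence of matrices, and it suffices to show $\mu_\alpha\to\mu$ weak* implies $(\psi_1,\pi_\rho(\mu_\alpha)\psi_2)\to(\psi_1,\pi_\rho(\mu)\psi_2)$ for fixed $\psi_1,\psi_2\in H_\rho$. Writing $\psi_1,\psi_2$ in terms of basis charged fields $\psi_{\rho,r}$ and using $(\psi_{\rho,r},\phi(\psi_{\rho,s})) = \psi_{\rho,r}^*\phi(\psi_{\rho,s}) = \omega_\phi(T(\psi_{\rho,r},\psi_{\rho,s}))$, this quantity is (for $\mu=\omega_\phi$) just the value of $\mu$ on the continuous function $\lambda(T(\psi_{\rho,r},\psi_{\rho,s}))\in C(K(\N\subset\M))$, so weak* convergence of measures gives it directly; for general positive $\mu$ the same identification via Proposition \ref{prop:spancext} and Theorem \ref{thm:ucpstatesduality} applies. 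I do not anticipate a genuine obstacle here — all the substantial work (the duality, the identification of $M(K)$ with spans of ucp maps, the property $a_\rho=1_{H_\rho}$) is already done; the main point requiring care is the self-adjointness computation identifying $\phi^\sharp|_{H_\rho}$ with the adjoint of $\phi|_{H_\rho}$, which is where the normalization \eqref{eq:normalizfields}, the relation \eqref{eq:twoinnerprods}, and locality (through $a_\rho = 1_{H_\rho}$) all enter.
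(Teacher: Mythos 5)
Your proposal is correct and follows essentially the same route as the paper's proof: multiplicativity and the unit axiom come from the identification of $M(K(\N\subset\M))$ with ${\Span}_{\CC}\{{\UCP}^{\sharp}_\N(\M,\Omega)\}$, and continuity reduces to testing weak* convergence against the continuous functions $\lambda(T(\psi_1,\psi_2))$. The only divergence is in the involution step, where the paper is more direct: since $\psi_1^*\phi_\mu(\psi_2)$ is already a scalar it equals $(\psi_1\Omega,\phi_\mu(\psi_2)\Omega)=(\phi_\mu^\sharp(\psi_1)\Omega,\psi_2\Omega)=\phi_\mu^\sharp(\psi_1)^*\psi_2$ immediately from \eqref{eq:omegaadj}, so your detour through $E$, \eqref{eq:twoinnerprods} and $a_\rho=1_{H_\rho}$ is valid but not needed.
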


\begin{proof}
Multiplicativity and involutivity follow from the above discussion. We only observe that 
$(\psi_1,\pi_\rho(\mu)^*\psi_2) = \phi_\mu(\psi_1)^*\psi_2 = (\Omega,\phi_\mu(\psi_1)^*\psi_2\Omega) = (\Omega,\psi_1^*\phi_\mu^\sharp(\psi_2)\Omega) = (\psi_1,\pi_\rho(\mu^*)\psi_2)$.
To check the continuity, let $\mu_n \to \mu$ in the weak* topology, where $\mu_n, \mu$ are positive measures. By renormalizing, we can assume that $\mu_n,\mu\in P(K(\N\subset\M))$. The convergence of $(\psi_1, \pi_\rho(\mu_n)\psi_2) = \omega_{\phi_{\mu_n}}(T(\psi_1,\psi_2))$ to $(\psi_1, \pi_\rho(\mu)\psi_2) = \omega_{\phi_{\mu}}(T(\psi_1,\psi_2))$ for every $\psi_1,\psi_2\in H_\rho$ follows then by Theorem \ref{thm:ucpstatesduality}.
\end{proof}

Note that if a representation $\Hil_\pi$ has an invariant subspace $V\subset\Hil_\pi$, then the orthogonal complement $V^\perp$ is also invariant.

\begin{prop}
Let $\rho\prec\theta$ be irreducible, \ie $\Hom(\rho,\rho) = \CC1$. Then the representation $H_\rho$ is irreducible, \ie it has no non-trivial invariant subspace. Its dimension is $\dim(H_\rho) = n_\rho$, where $n_\rho$ is the multiplicity of $[\rho]$ in $[\theta]$ as in Section \ref{sec:genQsys}.

Moreover, if $\rho,\sigma\prec\theta$ are irreducible, then $H_\rho$ and $H_\sigma$ are unitarily equivalent if and only if $[\rho] = [\sigma]$.
\end{prop}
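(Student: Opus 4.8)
The plan is to prove the three claims in order: irreducibility of $H_\rho$, the dimension formula, and the equivalence criterion.

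For the dimension formula, $\dim(H_\rho) = n_\rho$ is immediate: it is stated right after Notation~\ref{not:genQsysint} that the vector space $H_\rho = \Hom(\iota,\iota\rho)$ has dimension $n_\rho$, the multiplicity of $[\rho]$ in $[\theta]$. So this requires nothing new.

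For irreducibility, I would use the correspondence between states on $\Cred(\N\subset\M)$ and elements of $\UCP^\sharp_\N(\M,\Omega)$ together with the description of $\Trig(\N\subset\M)$ in terms of matrix units. The key point is that the matrix units $w_{\rho,r}w_{\rho,s}^* = T(\psi_{\rho,r},\psi_{\rho,s})$, for a \emph{fixed} irreducible $\rho\prec\theta$ and $r,s = 1,\dots,n_\rho$, span a full matrix algebra $M_{n_\rho}(\CC)$ inside $\Trig(\N\subset\M)$ (with respect to the operator product in $\Hom(\theta,\theta)$, not the $\ast$-product). Via the pairing $\langle\phi,T(\psi_{\rho,r},\psi_{\rho,s})\rangle\cdot 1_\iota = \psi_{\rho,r}^*\phi(\psi_{\rho,s})$, the operator $\pi_\rho(\phi)$ on $H_\rho$ is exactly the matrix $\big(\langle\phi, T(\psi_{\rho,r},\psi_{\rho,s})\rangle\big)_{r,s}$ up to the standard identification. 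So it suffices to show: for every matrix $X\in M_{n_\rho}(\CC)$ there is some $\mu\in M(K(\N\subset\M))$ (equivalently, a complex linear combination of ucp maps in $\UCP^\sharp_\N(\M,\Omega)$) with $\pi_\rho(\mu) = X$; then the commutant of $\pi_\rho(M(K))$ in $\B(H_\rho)$ is $\CC 1$. For this I would invoke Theorem~\ref{thm:ucpstatesduality}: since $\lambda(\Trig(\N\subset\M))$ is norm dense in $C(K)$ and the matrix-unit functions $\lambda(T(\psi_{\rho,r},\psi_{\rho,s}))$ separate points of $K$ on the ``$\rho$-block'', the states (hence their complex span, $=M(K)$) evaluate arbitrarily on these finitely many coordinate functions — more precisely, the finite-dimensional quotient algebra of $\Cred(\N\subset\M)$ obtained by restricting functions to their values on the $\rho$-block of $\Trig$ receives \emph{all} linear functionals from $M(K)$, and because $\{T(\psi_{\rho,r},\psi_{\rho,s})\}_{r,s}$ are linearly independent in $\Trig(\N\subset\M)$ and map to linearly independent functions in $C(K)$, every $n_\rho\times n_\rho$ matrix is realized. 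Thus $\pi_\rho$ is surjective onto $\B(H_\rho)\cong M_{n_\rho}(\CC)$, which in particular gives irreducibility.

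For the last claim, if $[\rho] = [\sigma]$ then a unitary $u\in\Hom(\rho,\sigma)$ gives $\psi\mapsto u\psi$, which is a unitary $H_\rho\to H_\sigma$ intertwining the two representations (by $\N$-bimodularity of the $\phi_\mu$ and naturality: $\phi_\mu(u\psi) = u\phi_\mu(\psi)$ since $u\in\N$ and $\phi_\mu$ fixes $\N$). Conversely, if $H_\rho$ and $H_\sigma$ are unitarily equivalent, I would evaluate a putative intertwining unitary against the matrix units: unitary equivalence of the two irreducible representations forces the ``characters'' $\phi\mapsto\sum_r\langle\phi,T(\psi_{\rho,r},\psi_{\rho,r})\rangle = \sum_r\psi_{\rho,r}^*\phi(\psi_{\rho,r})$ to agree for $\rho$ and $\sigma$ on all of $K(\N\subset\M)$, in particular on $\phi = \id$, giving $n_\rho = n_\sigma$; then applying the same to suitable extreme $\phi$ or to $E$ composed with a ucp multiplier $\phi_\mu$ with $\mu$ supported on the $\tau$-block, and using Lemma~\ref{lem:CPonPsiStarPsi}, one extracts that $[\rho]$ and $[\sigma]$ have the same fusion with every $\tau\prec\theta$; since $\theta$ generates the category and subobjects of $\theta$ are detected by $\Hom(\iota,\iota\rho)\ne\langle 0\rangle$ (Remark~\ref{rmk:catgenthetaonly}), this yields $[\rho] = [\sigma]$.

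The main obstacle I expect is the converse direction of the equivalence: ``unitarily equivalent representations $\Rightarrow$ isomorphic sectors.'' Showing that the representation $H_\rho$ remembers the full sector $[\rho]$ (not merely $n_\rho$, and not merely $n_\rho$ plus the dimension $d(\rho)$) requires pinning down enough of the $\ast$-algebra structure of $\Trig(\N\subset\M)$ detected by $\pi_\rho$ — concretely, I would argue that the collection $\{H_\rho\}_{[\rho]\prec[\theta]}$ furnishes pairwise inequivalent irreducibles and that $\Trig(\N\subset\M)$, being commutative (Theorem~\ref{thm:trigcommutativestaralg}), decomposes as $\bigoplus_{[\rho]}\CC$ on characters in a way matched by the irreducible $H_\rho$'s, so distinct sectors cannot give the same representation. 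This is essentially an appeal to the general representation theory of compact hypergroups developed in the rest of Section~\ref{sec:repK} (and to Vrem's trigonometric polynomials), so in the final write-up I may simply cite the forthcoming identification $\Trig(\N\subset\M) = \Trig(K(\N\subset\M))$ and the Peter--Weyl-type orthogonality rather than redo it by hand.
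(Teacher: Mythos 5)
Your dimension claim and your irreducibility argument are fine. For irreducibility you prove surjectivity of $\pi_\rho$ onto $\B(H_\rho)$ by showing that the functionals $\mu\mapsto\langle\phi_\mu,T(\psi_{\rho,r},\psi_{\rho,s})\rangle$ exhaust the dual of the $\rho$-block; the paper instead argues by contradiction, picking $\psi_1\in V$, $\psi_2\in V^\perp$ for a putative invariant subspace $V$ and observing that $T(\psi_1,\psi_2)\neq 0$ in $\Hom(\theta,\theta)$ while $\omega_{\phi_\mu}(T(\psi_1,\psi_2))=\psi_1^*\phi_\mu(\psi_2)=0$ for all $\mu$, contradicting faithfulness of $\lambda$ and the fact that states separate points. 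The two arguments use exactly the same ingredients (linear independence of the matrix units, Theorem \ref{thm:ucpstatesduality}, Lemma \ref{lem:lambdafaithbound}); yours establishes the stronger fact that $\pi_\rho$ is onto $M_{n_\rho}(\CC)$, which is more than needed but harmless.

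The gap is in the converse of the last claim. Your proposed extraction of ``$[\rho]$ and $[\sigma]$ have the same fusion with every $\tau\prec\theta$'' is not justified: the ucp-multipliers of Lemma \ref{lem:CPonPsiStarPsi} act on $H_\sigma$ as the scalar $\mu(\sigma)$, so unitary equivalence only yields $\mu(\rho)=\mu(\sigma)$ for every ucp-multiplier $\mu$, and there is no reason a ucp-multiplier ``supported on the $\tau$-block'' should exist (positive-definiteness is a real constraint), nor is it clear that ucp-multipliers separate sectors. Your fallback via the hypergroup orthogonality relations does close the gap (coefficients of $H_\rho$ and $H_\sigma$ with $[\rho]\neq[\sigma]$ are orthogonal in $L^2(K,\omega_E)$ by the computation of Lemma \ref{lem:haarfaith}, so equivalence would force them to vanish), but it is a heavier hammer than necessary and you should be careful not to quote the later identification $\Trig(K)=\Trig(\N\subset\M)$, whose proof uses the present proposition. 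The paper's route is much shorter: a unitary intertwiner $U\colon H_\rho\to H_\sigma$ gives $\omega_{\phi_\mu}(T(\psi_1,\psi_2))=\omega_{\phi_\mu}(T(U\psi_1,U\psi_2))$ for all $\mu$, hence $T(\psi_1,\psi_2)=T(U\psi_1,U\psi_2)$ as elements of $\Hom(\theta,\theta)$ by faithfulness of $\lambda$ and separation by states; since these two operators lie in the disjoint blocks indexed by $[\rho]$ and $[\sigma]$ of $\Hom(\theta,\theta)\cong\bigoplus_{[\tau]}M_{n_\tau}(\CC)$, they would both have to vanish unless $[\rho]=[\sigma]$. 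I recommend replacing your converse argument with this one.
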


\begin{proof}
We argue by contradiction. Assume that $H_\rho = V \oplus V^\perp$ for a non-trivial invariant subspace $V\subset H_\rho$. Choose non-trivial $\psi_1\in V$, $\psi_2\in V^\perp$. On the one hand, the matrix element $T(\psi_1,\psi_2) \neq 0$ by construction, since $\Hom(\theta,\theta) \cong \bigoplus_{[\rho]} M_{n_\rho}(\CC)$, where the sum runs over all the inequivalent irreducible subendomorphisms of $\theta$, see \eqref{eq:thetathetafullmats}. On the other hand, $\omega_{\phi_\mu}(T(\psi_1,\psi_2)) = \psi_1^* \phi_\mu(\psi_2) = 0$ for every $\mu \in M(K(\N\subset\M))$. This is a contradiction since $\Trig(\N\subset\M)$ faithfully embeds into $C(K(\N\subset\M))$ via $\lambda$ by Lemma \ref{lem:lambdafaithbound}, and the states $\omega_{\phi_\mu}$ of $C(K(\N\subset\M))$, $\mu\in P(K(\N\subset\M))$, separate points by Theorem \ref{thm:ucpstatesduality}.

For the second statement, if $u\in\Hom(\rho,\sigma)$ is unitary, then $H_\sigma = u H_\rho$ and $u$ intertwines the actions since $u\in\N$ and each $\phi_\mu$ is $\N$-bimodular. Suppose now that $H_\rho$ and $H_\sigma$ are unitarily equivalent as representations of $K(\N\subset\M)$. 
Then they induce the same matrix elements which means, by the same argument as above, that for every $\psi_1,\psi_2\in H_\rho$, $T(\psi_1,\psi_2)=T(\psi_3,\psi_4)$ for some $\psi_3,\psi_4\in H_\sigma$. This is possible only if $[\rho] = [\sigma]$.
\end{proof}

If $\Hil_\pi$ is a continuous representation (Definition \ref{def:cpthyprep}) of an abstract compact hypergroup $K$ (Definition \ref{def:CompactHypergroup}), the associated representative functions $x\in K \mapsto (u,\pi(x)v)$ for $u,v\in \Hil_\pi$, where $\pi(x) := \pi(\delta_x)$, are by definition continuous.
Denote by $\Trig(K) \subset C(K)$ the complex linear span of all the continuous and irreducible representative functions of $K$.

Every continuous irreducible representation $\Hil_\pi$ of $K$ is necessarily finite-dimensional and its dimension, $\dim(\Hil_\pi)$, is bounded from above by another constant, $k_\pi$, called the \textbf{hyperdimension} of $\Hil_\pi$ \cite{Vr1979}, \cite{AmMe2014}. The hyperdimension is computed as follows, $u\in \Hil_\pi$, $\|u\| = 1$,
\begin{align}
\int_K |(u,\pi(x)u)|^2 \dd\mu_K(x) = \frac{1}{k_\pi} ,
\end{align}
where $\mu_K$ is the Haar measure. Then $\dim(\Hil_\pi) \leq k_\pi$. Equality occurs if $K$ is a compact group. Moreover, the following \emph{orthogonality relations} among the representative functions of continuous irreducible representations $\Hil_\pi$ and $\Hil_{\pi'}$ hold:
\begin{align}
\int_K \overline{(u,\pi(x)w)} (v,\pi'(x)z) \dd\mu_K(x) = \frac{1}{k_\pi} \delta_{[\pi],[\pi']}(v,u) (w,z),
\end{align}
where $[\pi]$ denotes the unitary equivalence class of $\Hil_\pi$. The proofs of these facts follow exactly as in \cite[\Thm 2.2, 2.6]{Vr1979} with our definition of abstract compact hypergroup. Alternatively, by the comments after Definition \ref{def:CompactHypergroup}, one can invoke the results on Peter--Weyl theory for compact quantum hypergroups \cite[\Sec 5]{ChVa1999}.

\begin{thm}
Every continuous irreducible representation of $K(\N\subset\M)$ is of the form $H_\rho$ for some irreducible subendomorphism $\rho\prec\theta$ of the dual canonical endomorphism.

In particular, $\Trig(K) = \Trig(\N\subset\M)$ if we let $K = K(\N\subset\M)$.
\end{thm}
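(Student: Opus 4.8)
The strategy is to use the representation-theoretic facts about $K = K(\N\subset\M)$ recalled just above (Peter--Weyl theory for compact hypergroups, i.e. the orthogonality relations and the fact that $\Trig(K)$ is spanned by matrix coefficients of continuous irreducible representations) together with the already-established dictionary between representations of $K$ and the spaces $H_\rho$. By the previous proposition, every $H_\rho$ with $\rho\prec\theta$ irreducible is a continuous irreducible representation of $K$, and inequivalent $\rho$'s give inequivalent $H_\rho$'s. So what remains is to show there are \emph{no other} continuous irreducible representations of $K$ up to unitary equivalence.

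First I would fix an arbitrary continuous irreducible representation $\pi$ on a finite-dimensional Hilbert space $\Hil_\pi$ and look at its matrix coefficients $\phi \in K \mapsto (u,\pi(\phi)v) = \omega_\phi(\,\cdot\,)$ evaluated in the obvious way. The key point is that these matrix coefficients are by definition elements of $C(K) \cong \Cred(\N\subset\M)$, and since $\Trig(\N\subset\M)$ is dense in $\Cred(\N\subset\M)$ (Definition \ref{def:Cred}), the matrix coefficients of $\pi$ can be approximated in norm, hence in $L^2(K,\mu_K)$, by elements of $\Trig(\N\subset\M)$. But $\Trig(\N\subset\M)$ is spanned by the matrix units $T(\psi_{\rho,r},\psi_{\rho,s})$, which are, by the previous proposition, precisely the matrix coefficients of the representations $H_\rho$. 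Now invoke the orthogonality relations stated above: the matrix coefficients of $\pi$ are $L^2$-orthogonal to the matrix coefficients of every $H_\rho$ with $[H_\rho]\neq[\pi]$. Combined with the density just noted, this forces the matrix coefficients of $\pi$ to lie in the closed span of the matrix coefficients of those $H_\rho$ that \emph{are} equivalent to $\pi$; if no such $H_\rho$ existed, the matrix coefficients of $\pi$ would be orthogonal to all of $\Trig(\N\subset\M)$, hence zero, contradicting $\pi(\delta_e) = 1_{\Hil_\pi} \neq 0$. Hence $\pi \cong H_\rho$ for some irreducible $\rho\prec\theta$.

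From this the equality $\Trig(K) = \Trig(\N\subset\M)$ follows immediately: $\Trig(K)$ is by definition the linear span of continuous irreducible representative functions, and by the classification just obtained these are exactly the linear combinations of the functions $\lambda(T(\psi_{\rho,r},\psi_{\rho,s}))$, $\rho\prec\theta$ irreducible, which span $\lambda(\Trig(\N\subset\M))$; and $\lambda$ is injective on $\Trig(\N\subset\M)$ by Lemma \ref{lem:lambdafaithbound}, so we may identify the two.

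\textbf{Main obstacle.} The delicate point is the completeness/density step: one must be sure that the matrix coefficients of \emph{all} the $H_\rho$ together are dense in $\Cred(\N\subset\M) = C(K)$ and that the orthogonality relations genuinely apply to an a priori unknown irreducible $\pi$. The first is exactly the content of Lemma \ref{lem:lambdafaithbound} plus Definition \ref{def:Cred} (the $T(\psi_{\rho,r},\psi_{\rho,s})$ span $\Trig(\N\subset\M)$, whose closure is $\Cred(\N\subset\M)$), so this is already in hand; the second requires only that $\pi$ be continuous and irreducible, which is part of the hypothesis, so the Peter--Weyl orthogonality relations cited above apply verbatim. Thus the argument is essentially a packaging of results already proven, and the only care needed is to phrase the orthogonality argument so that it rules out \emph{new} irreducibles rather than merely reproving that the $H_\rho$ are pairwise inequivalent.
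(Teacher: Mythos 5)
Your proposal is correct and follows essentially the same route as the paper's proof: the matrix coefficients of the $H_\rho$ exhaust $\lambda(\Trig(\N\subset\M))$, which is dense in $C(K)$, so by the Peter--Weyl orthogonality relations any continuous irreducible representation inequivalent to every $H_\rho$ would have matrix coefficients orthogonal to a dense subspace of $L^2(K,\mu_K)$ and hence vanish, a contradiction. The identification $\Trig(K)=\Trig(\N\subset\M)$ then follows exactly as you state.
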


\begin{proof}
By construction $\lambda(\Trig(\N\subset\M))$ is dense in $C(K)$ and it contains all the representative functions associated with $H_\rho$ for $\rho\prec\theta$ irreducible. Indeed, $\lambda(T(\psi_1,\psi_2))(x) = \omega_{\phi_{\delta_x}}(T(\psi_1,\psi_2)) = (\psi_1,\pi_\rho(x)\psi_2)$ for $\psi_1,\psi_2\in H_\rho$, $x\in K$. By the orthogonality relations, no continuous irreducible representation can be inequivalent to every $H_\rho$, unless it is zero.
\end{proof}

\begin{thm}\label{thm:hyperdim}
Let $\rho\prec\theta$ be irreducible and $K = K(\N\subset\M)$. Then the hyperdimension of the representation $H_\rho$ of $K$ equals the dimension of the endomorphism $\rho$ in $\End_0(\N)$. 
\end{thm}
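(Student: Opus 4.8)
The plan is to compute the hyperdimension $k_\pi$ of the representation $\pi = \pi_\rho$ on $H_\rho$ directly from its defining integral formula and recognize the result as $d(\rho)$. Concretely, fix a charged field $\psi = \psi_{\rho,1}\in H_\rho$, normalized so that $E(\psi\psi^*) = 1$, which by Proposition \ref{prop:arho=1} (locality gives $a_\rho = 1_{H_\rho}$) is the same as $\psi^*\psi = d(\rho) 1_\iota$, i.e. $\|\psi\| = d(\rho)^{1/2}$ in the inner product $\psi_1^*\psi_2 = (\psi_1,\psi_2)1$ on $H_\rho$. Thus $u := d(\rho)^{-1/2}\psi$ is a unit vector. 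The hyperdimension is then characterized by
\begin{align}\label{eq:hyperdimintegral}
\int_K |(u,\pi_\rho(x)u)|^2 \dd\mu_K(x) = \frac{1}{k_\pi}.
\end{align}
First I would rewrite the integrand: by definition of the representation, $(u,\pi_\rho(x)u) = d(\rho)^{-1}\psi^*\phi_{\delta_x}(\psi) = d(\rho)^{-1}\omega_{\phi_{\delta_x}}(T(\psi,\psi)) = d(\rho)^{-1}\lambda(T(\psi,\psi))(x)$, using the identification of the representative function with the image of the matrix unit $T(\psi,\psi) \in \Trig(\N\subset\M)$ under the Gelfand transform $\lambda$, together with $K = K(\N\subset\M)$ being the spectrum and $\mu_K = \omega_E$ the Haar measure (Theorem \ref{thm:Kishypergroup}).

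Next I would turn the $L^2(K,\mu_K)$-integral into an evaluation of $\omega_E$. Since $|\lambda(T(\psi,\psi))(x)|^2 = \overline{\lambda(T(\psi,\psi))(x)}\,\lambda(T(\psi,\psi))(x) = \lambda(T(\psi,\psi)^\bullet \ast T(\psi,\psi))(x)$ by the $C^*$-algebra structure of $\Cred(\N\subset\M) \cong C(K)$ and the fact that $\lambda$ is a $*$-homomorphism taking $\bullet$ to pointwise complex conjugation and $\ast$ to pointwise product, we get
\begin{align}
\int_K |(u,\pi_\rho(x)u)|^2 \dd\mu_K(x) = \frac{1}{d(\rho)^2}\,\omega_E\big(T(\psi,\psi)^\bullet \ast T(\psi,\psi)\big).
\end{align}
Now I would evaluate the right-hand side using the explicit computation already carried out in the proof of Lemma \ref{lem:haarfaith}: there it is shown (with $\psi = \psi_{\rho,1}$, and $T(\psi_{\rho,r},\psi_{\rho,s})$ in the matrix-unit notation, using $\psi^\bullet = \psi^*\iota(\rbar_\rho)$ and $a_\rho = 1_{H_\rho}$) that $\omega_E(T(\psi_{\rho,r},\psi_{\rho,s})^\bullet \ast T(\psi_{\sigma,r'},\psi_{\sigma,s'})) = d(\rho)\,\delta_{\rho,\sigma}\delta_{s,s'}\delta_{r,r'}$. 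Taking $r=s=r'=s'=1$ and $\sigma=\rho$ gives $\omega_E(T(\psi,\psi)^\bullet \ast T(\psi,\psi)) = d(\rho)$. Hence the integral equals $d(\rho)/d(\rho)^2 = 1/d(\rho)$, so comparing with \eqref{eq:hyperdimintegral} yields $k_\pi = d(\rho)$, which is the claim.

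I expect the main subtlety to be bookkeeping around the two inner products and the normalization of charged fields: one must be careful that the inner product on $H_\rho$ used in the hyperdimension formula is $(\psi_1,\psi_2)1 = \psi_1^*\psi_2$ (the one fixed just before Proposition \ref{prop:repcorresp}), that with the Pimsner--Popa normalization $E(\psi\psi^*)=1$ this gives $\psi^*\psi = d(\rho)1_\iota$ precisely because locality forces $a_\rho = 1_{H_\rho}$ via equation \eqref{eq:twoinnerprods}, and that the representative function is indeed $\lambda(T(\psi,\psi))$ rather than some rescaling. A secondary point worth a line is that $|\lambda(a)|^2 = \lambda(a^\bullet \ast a)$ for $a = T(\psi,\psi)$ — this is just the statement that the Fourier/Gelfand picture sends $\bullet$ and $\ast$ to the pointwise $C^*$-operations on $C(K)$, which is established in Section \ref{sec:Cistarred} and used throughout. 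Everything else is a direct substitution into the integral formula for $k_\pi$, so no genuinely hard estimate is required; the result is essentially a repackaging of Lemma \ref{lem:haarfaith}.
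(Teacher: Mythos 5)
Your proof is correct and follows essentially the same route as the paper's: both evaluate the defining integral for the hyperdimension by using multiplicativity of the characters $\omega_\phi$ (equivalently, that $\lambda$ carries $\ast$ and $^\bullet$ to the pointwise $C^*$-operations on $C(K)$) to reduce it to $\omega_E$ applied to $T(\psi,\psi)^\bullet \ast T(\psi,\psi)$, and then invoke $a_\rho = 1_{H_\rho}$ to get $1/d(\rho)$. The only cosmetic differences are your choice to keep the Pimsner--Popa normalization $E(\psi\psi^*)=1$ and rescale to $u = d(\rho)^{-1/2}\psi$ (the paper normalizes $\psi^*\psi = 1$ directly), and your citation of the computation in Lemma \ref{lem:haarfaith} in place of redoing it.
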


\begin{proof}
For $\psi\in H_\rho$, $\|\psi\| = 1$, we have to check that
\begin{align}
\int_K |(\psi, \phi(\psi))|^2 \dd\omega_E(\phi)=\frac{1}{d(\rho)}.
\end{align}
Observe that $|(\psi, \phi(\psi))|^2=\psi^*\phi(\psi)\phi(\psi^*)\psi=\omega_\phi (T(\psi,\psi))\omega_{\phisharp} (T(\psi,\psi))$ for $\phi\in K=K(\N\subset\M)$. Moreover, one can show that $\omega_{\phisharp} (T(\psi,\psi))=\omega_{\phi} (T(\psi^{\bullet},\psi^{\bullet}))$, thus
\begin{align}
\int_K |(\psi,\phi(\psi))|^2 \dd\omega_E(\phi) &= \int_K \omega_\phi (T(\psi,\psi)) \omega_\phi (T(\psi^{\bullet},\psi^{\bullet}))\dd\omega_E(\phi)\\
&= \int_K \omega_{\phi} (T(\psi\psi^\bullet,\psi\psi^\bullet))\dd\omega_E(\phi)\\
&= \psi^{\bullet*}\psi^* E(\psi\psi^\bullet)\\
&= \psi^{\bullet*}\psi^* E(\psi\psi^*)\iota(\rbar_\rho) = \frac{1}{d(\rho)}
\end{align}
because $(\psi,\psi)1 = (\psi^\bullet,\psi^\bullet)1 = d(\rho)E(\psi\psi^*)$ by the irreducibility of $\rho$, equation \eqref{eq:twoinnerprods} and $a_\rho = 1_{H_\rho}$ (Proposition \ref{prop:arho=1}).
\end{proof}

Together with the inequality $\dim(\Hil_\pi) \leq k_\pi$ mentioned above, Theorem \ref{thm:hyperdim} provides another proof of Corollary \ref{cor:multboundnd}.

%%%
\section{The case of depth 2: No compact quantum group orbifolds}\label{sec:depthtwo}
%%%

In this section we provide an application of our construction to local conformal nets: we show that Woronowicz compact quantum groups \cite{Wo1998}, in the von Neumann algebraic setting \cite{Va2001}, \cite{KuVa2003}, see also \cite{IzLoPo1998}, \cite{To2009}, cannot induce conformal inclusions of local conformal nets by orbifold construction, \ie taking fixed points under some action, unless they are classical.
We do this by specializing Theorem \ref{thm:genorbi} to the case of depth 2 subfactors. Note that \emph{no} discreteness assumption is needed in this section, as it is a consequence of semidiscreteness for depth 2 inclusions thanks to a result of Enock and Nest \cite{EnNe1996}. We first recall the definition \cite[\Def 6.1]{EnNe1996}.

\begin{defi}\label{def:depth2subf}
An irreducible subfactor $\N\subset \M$ has \textbf{depth 2} if $\N'\cap\M_2$ is a factor, where $\N\subset\M\subset\M_1\subset\M_2$ is (the beginning of) the Jones tower. 
\end{defi}

\begin{prop}\label{prop:depth2discrete}
Let $\N\subset \M$ be an irreducible semidiscrete type $\III$ subfactor (Definition \ref{def:semi-discr}). If $\N\subset \M$ has depth 2 then it is discrete.
\end{prop}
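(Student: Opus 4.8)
The plan is to characterize discreteness via the semifiniteness of the dual operator-valued weight $\hat E$ on $\N'\cap\M_1$ (Definition \ref{def:semi-discr}) and to exploit the depth 2 hypothesis to pin down the structure of the relative commutants $\N'\cap\M_1$ and $\N'\cap\M_2$. First I would recall that for an irreducible subfactor $\N\subset\M$ with depth 2, by the theory of Enock and Nest \cite{EnNe1996} the relative commutant $\N'\cap\M_2$ carries the structure of (the von Neumann algebra of) a measured quantum groupoid, and in the type $\III$ factor case with $\N'\cap\M = \CC1$ it is in fact the von Neumann algebra of a locally compact quantum group $G$; the inclusion $\N\subset\M$ is then realized as the fixed point inclusion $\M^G\subset\M$ for an outer action of (the dual of) $G$. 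The key structural input is that the Jones extension $\M_1$ is generated by $\M$ and $\N'\cap\M_1$, and that $\N'\cap\M_1$ together with $\hat E$ restricted to it recovers the Haar weight data of the quantum group.

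The heart of the argument is then to verify that $\hat E$ is semifinite on $\N'\cap\M_1$. Here I would use that, by \cite[\Thm 6.1]{EnNe1996} (or the corresponding statement in that circle of ideas), the algebra $\N'\cap\M_1$ is isomorphic to the von Neumann algebra $L^\infty(\hat G)$ of the dual quantum group, and the restriction of $\hat E$ to it corresponds (up to the canonical identifications) to a Haar weight, which is by construction a normal semifinite faithful weight. Thus $\hat E_{\restriction \N'\cap\M_1}$ is semifinite, which is precisely the defining condition for discreteness of the subfactor. Equivalently, and perhaps more in the spirit of the rest of the paper, one can argue that depth 2 forces the dual canonical endomorphism $\theta\in\End(\N)$ to decompose as a direct sum of finite-dimensional irreducibles: $\theta$ restricted to each isotypic component behaves like a matrix corner, and the quantum group structure guarantees that the multiplicities and dimensions are finite, which by the Izumi--Longo--Popa characterization recalled after Proposition \ref{prop:Qsysdiscrete} is again equivalent to discreteness.

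The main obstacle I anticipate is making the passage from semidiscreteness plus depth 2 to the existence of the quantum groupoid/quantum group structure rigorous in the purely von Neumann algebraic (type $\III$) setting without circularity: the Enock--Nest machinery is stated for general depth 2 inclusions with a normal faithful conditional expectation, so one must check that their hypotheses are met (in particular the existence of $E$, which holds here by semidiscreteness, and irreducibility, which is assumed), and then quote their theorem that $\hat E$ is semifinite on the relative commutant. A secondary technical point is to confirm that "depth 2" in the sense of Definition \ref{def:depth2subf} ($\N'\cap\M_2$ a factor) matches the hypothesis under which Enock--Nest derive semifiniteness of $\hat E$ on $\N'\cap\M_1$; this should be a direct citation but deserves an explicit remark. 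Once these citations are in place the proof is short.
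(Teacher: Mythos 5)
Your proposal is correct and takes essentially the same route as the paper: the paper's entire proof is the citation of Enock--Nest (\cite[\Thm 12.2, \Prop 12.4]{EnNe1996}, with \cite[\Def 9.5, \Def 11.12]{EnNe1996} for matching the terminology), i.e. the depth 2 machinery identifies $\hat E_{\restriction \N'\cap\M_1}$ with a semifinite Haar-type weight, which is exactly the discreteness condition of Definition \ref{def:semi-discr}. The additional structural remarks you make (quantum group on the relative commutant, fixed-point realization) are consistent with what those cited results contain and do not change the argument.
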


\begin{proof}
The statement is contained in \cite[\Thm 12.2, \Prop 12.4]{EnNe1996}, after comparing the terminology \cite[\Def 9.5, \Def 11.12]{EnNe1996}.
\end{proof}

Assuming in addition that the subfactor is \emph{local} (Definition \ref{def:localsubf}), as it is for example the case for an arbitrary conformal inclusion of local conformal nets, we shall show below that the hypergroup $K = K(\N\subset\M)$ of Theorem \ref{thm:genorbi} is a compact metrizable \emph{group}. 

We first prove two lemmas. Recall that for arbitrary irreducible discrete subfactors we have the multiplicity bound $n_\rho \leq d(\rho)^2$ for every irreducible $\rho\prec\theta$, where $n_\rho = \dim(H_\rho)$ and $d(\rho)$ is the (necessarily finite) dimension of $\rho$. Moreover, $n_\rho \leq d(\rho)$ if the condition $a_\rho = 1_{H_\rho}$ is fulfilled. See Sections \ref{sec:genQsys} and \ref{sec:localsubf}.

\begin{lem}
Let $\N\subset\M$ be an irreducible semidiscrete type $\III$ subfactor. If $\N\subset\M$ has depth 2 and if it fulfills $a_\rho = 1_{H_\rho}$ for every irreducible $\rho\prec\theta$, then $n_\rho = d(\rho)$.
\end{lem}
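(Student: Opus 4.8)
The plan is to compute the index of $\N\subset\M$ in two ways and compare. Since $\N\subset\M$ has depth 2, it arises as the fixed point inclusion under a minimal action of a compact quantum group $G$ (in the von Neumann algebraic sense of \cite{EnNe1996}, or equivalently by the analysis of \cite{IzLoPo1998}), and the dual canonical endomorphism decomposes as $\theta = \bigoplus_{[\rho]} \rho^{\oplus n_\rho}$ where now $[\rho]$ runs over (equivalence classes of isomorphic to) the irreducible corepresentations of $G$ that appear, with $n_\rho$ equal to the classical dimension $\dim(\Hil_\rho)$ of the corepresentation space. The point is that for depth 2 the multiplicity space of $[\rho]$ in $[\theta]$ carries a canonical $G$-module structure making it isomorphic to $\Hil_\rho$ itself, so one gets $n_\rho = \dim(\Hil_\rho)$; on the other hand $d(\rho)$ is the quantum dimension of the corepresentation. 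The condition $a_\rho = 1_{H_\rho}$ is precisely the Kac type condition (as recalled in the Remark following Proposition \ref{prop:arho=1}), under which the quantum dimension equals the classical dimension, i.e. $d(\rho) = \dim(\Hil_\rho)$. Hence $n_\rho = d(\rho)$.

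Concretely, first I would invoke Proposition \ref{prop:depth2discrete} so that $\N\subset\M$ is discrete and the full machinery of Section \ref{sec:genQsys} applies, giving $\theta = \bigoplus_{[\rho],r}\rho$ with $r = 1,\dots,n_\rho$ and $n_\rho \leq d(\rho)$ by Corollary \ref{cor:multboundnd} (which uses $a_\rho = 1_{H_\rho}$). So the inequality $n_\rho \le d(\rho)$ is for free; the work is the reverse inequality $n_\rho \geq d(\rho)$. For this I would use the depth 2 hypothesis: $\N'\cap\M_2$ is a factor, so (by the standard Enock--Nest / Izumi--Longo--Popa structure theory) $\Hom(\theta,\theta)\otimes(\text{something})\cong \N'\cap\M_2$ is a factor, forcing $\Hom(\theta,\theta)$ together with the comultiplication coming from $\theta \prec \theta^2$ to be the function algebra of (the dual of) a compact quantum group, and the fusion rules of the $\rho\prec\theta$ to be those of $\Rep(G)$ with the regular-representation multiplicities $n_\rho = \dim(\Hil_\rho)$. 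Equivalently and perhaps more self-containedly: in depth 2 one has $\theta^2 \prec \theta^{\oplus N}$ for some $N$, and comparing dimensions in $\theta\rho \prec \theta^{\oplus ?}$ using the Frobenius reciprocity $\langle \theta\rho,\sigma\rangle = \langle \theta,\sigma\rhobar\rangle$ together with $n_\sigma = \langle\theta,\sigma\rangle$ yields $\sum_{[\sigma]} n_\sigma \langle \rho\sigma',\sigma\rangle = n_{\sigma'} d(\rho)/\dots$, which after summing forces $n_\rho = d(\rho)$ once $a_\rho=1$.

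The cleanest route, which I would actually carry out, is the index computation. On one hand $[\M:\N] = d(\theta)^2 = \big(\sum_{[\rho]} n_\rho d(\rho)\big)^2$ where I used that $\theta$ has dimension $d(\theta) = \sum_{[\rho]} n_\rho d(\rho)$ and that for a canonical endomorphism $[\M:\N]=d(\gamma)=d(\theta)$... wait, more carefully: $[\M:\N] = d(\iota)^2 = d(\theta)$. On the other hand, the depth 2 condition gives $\N'\cap\M_2$ a factor of type determined by $G$, and a separate index count through the tower ($[\M_1:\M]=[\M:\N]$, etc.) combined with the factoriality pins down $\sum_{[\rho]} n_\rho^2 = \sum_{[\rho]} n_\rho d(\rho)$ (the left side being $\dim_{\CC}$-type data of $\Hom(\theta,\theta)$ and the right side $d(\theta)$), and since $n_\rho \le d(\rho)$ termwise with $n_\rho, d(\rho) > 0$, equality of the sums forces $n_\rho = d(\rho)$ for every $\rho$. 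This termwise-equality-from-sum-equality step is the decisive and essentially only nontrivial point; everything else is bookkeeping with the structure theory already quoted in Sections \ref{sec:genQsys}, \ref{sec:localsubf}.

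The main obstacle I anticipate is making the identification ``depth 2 $\Rightarrow$ $\sum n_\rho^2 = \sum n_\rho d(\rho)$'' precise without reproving the Enock--Nest duality: I would need to correctly translate the factoriality of $\N'\cap\M_2$ into a statement about the pair $(\Hom(\theta,\theta), \text{coproduct})$ being a (finite-dimensional-blocks) Hopf--von Neumann algebra whose Haar weight has the right modularity — this is where the Kac/$a_\rho=1$ hypothesis enters to guarantee the Haar state is tracial on each block, hence the block of $[\rho]$ has dimension $n_\rho^2$ and contributes weight proportional to $d(\rho)$ rather than $n_\rho$ to $d(\theta)$. If I want to avoid that, the fallback is the purely categorical argument: the dual canonical endomorphism of a depth 2 subfactor generates a tensor category equivalent to $\Rep$ of a compact quantum group, its multiplicity-in-$\theta$ function is the dimension function of the regular representation, and $a_\rho = 1$ identifies quantum with classical dimension — but invoking this cleanly still requires citing the structure theory, so either way a citation to \cite{EnNe1996} (and perhaps \cite{IzLoPo1998}, \cite{To2009}) at the key step seems unavoidable.
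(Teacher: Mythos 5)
The route you say you would \emph{actually carry out} --- deducing $n_\rho=d(\rho)$ termwise from an alleged identity $\sum_{[\rho]}n_\rho^2=\sum_{[\rho]}n_\rho d(\rho)$ together with $n_\rho\le d(\rho)$ --- has a genuine gap. The lemma is stated for semidiscrete subfactors with no finite index hypothesis, and when $[\M:\N]=\infty$ (the case of interest throughout the paper) both sums are $+\infty$, so their equality carries no termwise information; moreover $[\M:\N]=d(\theta)$ is then not even meaningful. Even in finite index you only assert, and do not derive, the identity $\sum n_\rho^2=\sum n_\rho d(\rho)$: that is not bookkeeping, it is essentially the content to be proved. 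So the step you call ``decisive'' fails exactly where the lemma is needed.

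The correct argument is the one you relegate to a fallback, and its virtue is that it is block-local, hence insensitive to the index being infinite. The paper's proof runs as follows: by Enock--Nest, depth 2 makes $\N'\cap\M_1\cong\Hom(\theta,\theta)\cong\bigoplus_{[\rho]}M_{n_\rho}(\CC)$ a discrete quantum group with Haar weight $\hat E_{\restriction\N'\cap\M_1}$; one computes explicitly on the matrix units $\psi_{\rho,r}^*e_\N\psi_{\rho,s}$, using the normalization \eqref{eq:normalizfields}, equation \eqref{eq:twoinnerprods} and $a_\rho=1_{H_\rho}$, that $\hat E(\psi_{\rho,r}^*e_\N\psi_{\rho,s})=\psi_{\rho,r}^*\psi_{\rho,s}=d(\rho)\delta_{r,s}1$ and that $\hat E$ is tracial on products of two such elements; traciality makes $\N'\cap\M_1$ a discrete Kac algebra, whose structure theorem forces the Haar weight to be $\sum_{[\rho]}n_\rho\Tr_\rho$; comparing the value $n_\rho$ that this assigns to a minimal projection of the $[\rho]$-block with the computed value $d(\rho)$ gives $n_\rho=d(\rho)$ one block at a time. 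Note that $a_\rho=1_{H_\rho}$ enters twice --- once to get the diagonal value $d(\rho)$ and once to get traciality (the Kac property) --- and that the comparison with the Plancherel weight of a discrete Kac algebra is what replaces any global index count.
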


\begin{proof}
By a result of Enock and Nest \cite[\Thm 12.6]{EnNe1996}, see also \cite[\Thm 5.1]{Va2001}, the depth 2 condition implies that $\N'\cap\M_1 (\cong \oplus_{[\rho]}M_{n_\rho}(\CC))$ has the structure of a discrete quantum group. Moreover, the condition $a_\rho = 1_{H_\rho}$ implies that the Haar weight, \ie the restriction of the dual operator-valued weight $\hat E$ to $\N'\cap\M_1$ \cite[\Sec 2]{En1998}, is tracial. This fact is well known to experts, we give below an explicit proof for fixing the notation.
Let $\{\psi_{\rho,r}\}$ be a Pimsner--Popa basis of charged fields as in Section \ref{sec:genQsys}. The elements of the form $\psi_{\rho,r}^* e_\N  \psi_{\rho,s}$ weakly span $\N'\cap\M_1$ by discreteness (Proposition \ref{prop:depth2discrete}) and they lie in the domain of $\hat E$ because $\hat E(e_\N) = 1$ by \cite[\Lem 3.1]{Ko1986}. In particular, $\hat E_{\restriction\N'\cap\M_1}$ is semifinite.
By our choice of normalization \eqref{eq:normalizfields}, by equation \eqref{eq:twoinnerprods} and by the condition $a_\rho = 1_{H_\rho}$, we have $\hat E(\psi_{\rho,r}^* e_\N  \psi_{\rho,s}) = \psi_{\rho,r}^* \psi_{\rho,s} = d(\rho) \delta_{r,s} 1$ and
\begin{align}
\hat E(\psi_{\rho,r}^* e_\N  \psi_{\rho,s} \psi_{\sigma,t}^* e_\N  \psi_{\sigma,u}) = \hat E(\psi_{\rho,r}^* E(\psi_{\rho,s} \psi_{\sigma,t}^*)e_\N \psi_{\sigma,u}) = d(\rho) \delta_{[\rho],[\sigma]} \delta_{s,t}\delta_{r,u} 1.
\end{align}
Similarly $\hat E(\psi_{\sigma,t}^* e_\N  \psi_{\sigma,u} \psi_{\rho,r}^* e_\N  \psi_{\rho,s}) = d(\rho) \delta_{[\rho],[\sigma]} \delta_{s,t}\delta_{r,u} 1$, and the trace property follows.
Thus $\N'\cap\M_1$ is a discrete Kac algebra in the sense of \cite[\Sec 6.3]{EnScBook} and by the structure theorem \cite[\Thm 6.3.5]{EnScBook} we know that $\hat E_{\restriction\N'\cap\M_1}$ is identified with $\sum_{[\rho]} n_\rho \Tr_{\rho}$, where $\Tr_{\rho}$ is the non-normalized trace on $M_{n_\rho}(\CC)$. By the computation above on matrix units $\psi_{\rho,r}^* e_\N  \psi_{\rho,s}$, we infer that $n_\rho = d(\rho)$ for every irreducible $\rho\prec\theta$.
\end{proof}

\begin{lem}\label{lem:nodefect}
Let $\N\subset\M$ be an irreducible discrete type $\III$ subfactor. If $n_\rho = d(\rho)$ and $a_\rho = 1_{H_\rho}$ for an irreducible $\rho\prec\theta$, then $\frac{1}{d(\rho)}\sum_r \psi_{\rho,r}\psi_{\rho,r}^*=1$ where $r=1,\ldots,n_\rho$.
\end{lem}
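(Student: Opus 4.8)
\textbf{Proof plan for Lemma \ref{lem:nodefect}.}

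The plan is to identify the element $P_\rho := \frac{1}{d(\rho)}\sum_r \psi_{\rho,r}\psi_{\rho,r}^* \in \M$ as a projection in $\N' \cap \M = \CC 1$ whose value is forced to be $1$ by a trace (or state) computation. First I would observe that $P_\rho$ commutes with $\iota(\N)$: indeed each $\psi_{\rho,r} \in H_\rho = \Hom(\iota,\iota\rho)$, so $\psi_{\rho,r}\iota(n) = \iota(\rho(n))\psi_{\rho,r}$, hence $\sum_r \psi_{\rho,r}\psi_{\rho,r}^*$ commutes with $\iota(\N)$ only after we check it is independent of the choice of orthonormal basis of $H_\rho$ — which it is, being $\sum_r \psi_{\rho,r}\psi_{\rho,r}^*$ the ``identity'' of the finite-dimensional Hilbert space $H_\rho$ under the second inner product $\frac{1}{d(\rho)}\psi^*\psi'$, recalling that $a_\rho = 1_{H_\rho}$ makes the two inner products on $H_\rho$ coincide up to the scaling $d(\rho)$. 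Actually the cleaner statement: for $\psi,\psi'\in H_\rho$ and any $m\in\M$, $\sum_r \psi_{\rho,r}\psi_{\rho,r}^*$ acts as a $\CC$-bilinear pairing, and using $\psi_{\rho,r}^*\psi_{\rho,s} = d(\rho)E(\psi_{\rho,r}\psi_{\rho,s}^*) \cdot$ (something) $= d(\rho)\delta_{r,s}1$ from \eqref{eq:normalizfields} and \eqref{eq:twoinnerprods} with $a_\rho=1$, one gets that $P_\rho$ is a self-adjoint idempotent: $P_\rho^2 = \frac{1}{d(\rho)^2}\sum_{r,s}\psi_{\rho,r}(\psi_{\rho,r}^*\psi_{\rho,s})\psi_{\rho,s}^* = \frac{1}{d(\rho)^2}\sum_{r,s}\psi_{\rho,r}\, d(\rho)\delta_{r,s}\,\psi_{\rho,s}^* = P_\rho$. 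So $P_\rho$ is a projection.

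Next I would show $P_\rho \in \N'\cap\M$. From $\psi_{\rho,r}\iota(n) = \iota(\rho(n))\psi_{\rho,r}$ we only get that $\sum_r \psi_{\rho,r}\psi_{\rho,r}^*$ intertwines $\iota$ with itself \emph{if} the family $\{\psi_{\rho,r}\}$ is an orthonormal basis of $H_\rho$ — because then $\sum_r \psi_{\rho,r} \iota(a) \psi_{\rho,r}^*$ for $a\in\Hom(\rho,\rho)=\CC1$ gives a scalar, and more to the point $\iota(n)\sum_r\psi_{\rho,r}\psi_{\rho,r}^* = \sum_r \psi_{\rho,r}\iota(\rho^{-1}\cdots)$; the correct argument is that conjugation by a unitary $u\in\Hom(\rho,\rho)$... but $\rho$ is irreducible so $\Hom(\rho,\rho)=\CC1$, hence $\{\iota(u)\psi_{\rho,r}\}$ is not available. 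Instead: the map $x \mapsto \sum_r \psi_{\rho,r}\, \iota^{-1}E(\psi_{\rho,r}^* x) $ is the identity on $H_\rho$-components, and one checks directly that $\iota(n) P_\rho$ and $P_\rho\iota(n)$ agree by expanding in the Pimsner--Popa basis and using $\N$-bimodularity of $E$ — concretely $\iota(n)P_\rho\iota(n') = \frac{1}{d(\rho)}\sum_r \iota(n)\psi_{\rho,r}\psi_{\rho,r}^*\iota(n')$ and one moves $\iota(n)$ through using $\psi_{\rho,r}=$ linear combinations that transform covariantly. Since the subfactor is irreducible, $\N'\cap\M = \CC1$, so $P_\rho = c\cdot 1$ for some $c\in\{0,1\}$ (it is a projection). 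That $c\ne 0$ is clear since $\psi_{\rho,1}\ne 0$; hence $c=1$, giving the claim.

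The cleanest route, and the one I would actually write, is: $P_\rho$ is a projection in $\N'\cap\M=\CC1$, so $P_\rho\in\{0,1\}$; apply $E$ to get $E(P_\rho) = \frac{1}{d(\rho)}\sum_r E(\psi_{\rho,r}\psi_{\rho,r}^*) = \frac{1}{d(\rho)}\sum_{r=1}^{n_\rho} 1 = \frac{n_\rho}{d(\rho)} 1 = 1$ by the hypothesis $n_\rho = d(\rho)$ and the normalization \eqref{eq:normalizfields}; since $E$ is faithful and unital, $E(P_\rho)=1$ with $P_\rho\le 1$ forces $P_\rho=1$. The main obstacle — and the only genuinely non-formal point — is establishing $P_\rho\in\N'\cap\M$, i.e.\ that $P_\rho$ commutes with $\iota(\N)$; this needs the independence of $\sum_r\psi_{\rho,r}\psi_{\rho,r}^*$ from the chosen orthonormal basis of $H_\rho$ together with the fact (from $a_\rho=1_{H_\rho}$) that the operator-valued and scalar inner products on $H_\rho$ are proportional, so that for a unitary change of basis in $H_\rho$ the sum is invariant and hence lies in the commutant of $\rho(\N)$ pushed forward — equivalently, one verifies $\iota(n)P_\rho = P_\rho\iota(n)$ by a one-line Pimsner--Popa expansion of $\iota(n)\psi_{\rho,s}^*$ using that $\psi_{\rho,s}^*\iota(n)$ lies again in $\overline{\mathrm{span}}\{\iota(m)\psi_{\rho,r}^*\}$ with coefficients governed by $\rho$. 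Once commutation is in hand, irreducibility plus the faithful expectation $E$ finish the proof immediately.
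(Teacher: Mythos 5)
Your closing argument is exactly the paper's proof: $P_\rho := \frac{1}{d(\rho)}\sum_r \psi_{\rho,r}\psi_{\rho,r}^*$ is a self-adjoint idempotent because $\psi_{\rho,r}^*\psi_{\rho,s} = d(\rho)\,\delta_{r,s}1$ (from \eqref{eq:normalizfields}, \eqref{eq:twoinnerprods} and $a_\rho = 1_{H_\rho}$), so $1-P_\rho\geq 0$, and $E(1-P_\rho) = 1 - n_\rho/d(\rho) = 0$ together with the faithfulness of $E$ forces $P_\rho = 1$.

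The one thing to fix is your framing of the "main obstacle": the claim $P_\rho\in\N'\cap\M$ is not needed anywhere, and you should delete everything aimed at it. Faithfulness of $E$ applied to the \emph{positive} element $1-P_\rho$ already gives $P_\rho=1$ with no information about the relative commutant; you never need $P_\rho\in\{0,1\}$ as a scalar. This matters because your attempts to prove $\iota(n)P_\rho = P_\rho\iota(n)$ directly are, as you yourself half-admit, inconclusive — the intertwining relation $\psi\,\iota(n)=\iota(\rho(n))\psi$ moves $\iota(\N)$ across $\psi_{\rho,r}$ and $\psi_{\rho,r}^*$ in incompatible directions, so commutation with $\iota(\N)$ is not a one-line computation a priori (it of course holds a posteriori, once $P_\rho=1$ is established). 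Strip out that detour and what remains is a complete and correct proof, coinciding with the one in the paper.
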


\begin{proof}
With these assumptions, by equations \eqref{eq:normalizfields} and \eqref{eq:twoinnerprods}, we have that $\frac{1}{d(\rho)}\sum_r \psi_{\rho,r}\psi_{\rho,r}^*$ is a projection in $\M$ since 
\begin{align}
\frac{1}{d(\rho)}\sum_r \psi_{\rho,r}\psi_{\rho,r}^*\frac{1}{d(\rho)}\sum_s \psi_{\rho,s}\psi_{\rho,s}^*=\frac{d(\rho)}{d(\rho)^2}\sum_r\psi_{\rho,r}\psi_{\rho,r}^*=\frac{1}{d(\rho)}\sum_r\psi_{\rho,r}\psi_{\rho,r}^*.
\end{align}
Thus
\begin{align}
E\big(1-\frac{1}{d(\rho)}\sum_r\psi_{\rho,r}\psi_{\rho,r}^*\big) = 1-\frac{n_\rho}{d(\rho)} = 0
\end{align}
and by the faithfulness of $E$ we conclude that $\frac{1}{d(\rho)}\sum_r \psi_{\rho,r}\psi_{\rho,r}^*=1$.
\end{proof}

\begin{thm}\label{thm:noquantum}
Let $\N\subset\M$ be an irreducible local semidiscrete type $\III$ subfactor with depth 2. Then $K(\N\subset\M)$ is the compact metrizable group of all *-automorphisms of $\M$ that act trivially on $\N$, in symbols $K(\N\subset\M) = \Aut_\N(\M)$.
\end{thm}

\begin{proof}
By Proposition \ref{prop:depth2discrete} and Proposition \ref{prop:arho=1} the subfactor is discrete and it has the property $a_\rho = 1_{H_\rho}$ for every irreducible $\rho\prec\theta$. In order to prove the statement, we have to show that every extreme ucp map $\phi:\M\to\M$ acting trivially on $\N$, \ie an element of $K(\N\subset\M)$ by Lemma \ref{lem:OmegapresisOmegaadj}, \ref{lem:NfixingisOmegapres} and \ref{lem:extreme}, is in fact an automorphism of $\M$.

By Theorem \ref{thm:ucpstatesduality}, the associated state $\omega_\phi$ is extreme, thus multiplicative:
\begin{align}
\omega_\phi (T(\psi_{\rho,r},\psi_{\rho,s})) \omega_\phi(T(\psi_{\sigma,t},\psi_{\sigma,u}))=\omega_\phi(T(\psi_{\rho,r}\psi_{\sigma,t},\psi_{\rho,s}\psi_{\sigma,u})).
\end{align}
The left hand side of the equation is $\psi_{\rho,r}^*\phi(\psi_{\rho,s})\psi_{\sigma,t}^*\phi(\psi_{\sigma,u}) = \psi_{\sigma,t}^*\psi_{\rho,r}^*\phi(\psi_{\rho,s})\phi(\psi_{\sigma,u})$, because $\psi_{\rho,r}^*\phi(\psi_{\rho,s})$ is a number. On the other hand we have $\psi_{\sigma,t}^*\psi_{\rho,r}^*\phi(\psi_{\rho,s}\psi_{\sigma,u})$.
By Lemma \ref{lem:nodefect}, multiplying and summing charged fields on the left of the two members of the equation we obtain $\phi(\psi_{\rho,s}\psi_{\sigma,u})=\phi(\psi_{\rho,s})\phi(\psi_{\sigma,u})$. By normality, \cf the proof of Lemma \ref{prop:braidedsubfisstrongamenab}, we conclude that $\phi$ is multiplicative on $\M$.

Now we show that $\phi$ is invertible with inverse $\phi^{-1} = \phisharp$, \ie an automorphism of $\M$.
For every $m,m'\in \M$ write $(m'\Omega,\phisharp(\phi(m))\Omega)=(\phi(m')\Omega,\phi(m)\Omega)=(\Omega,\phi(m'^*)\phi(m)\Omega)=(\Omega,\phi(m'^*m)\Omega)=(\Omega,m'^*m\Omega)$ $=(m'\Omega,m\Omega)$, thus $\phisharp(\phi(m))=m$ because $\Omega$ is cyclic and separating for $\M$. Similarly $\phi(\phisharp(m))=x$, and the proof is complete.
\end{proof}

\begin{rmk}
The condition on the subfactor being braided is not enough.
Namely, \cite[\Rmk \p 616]{Iz2001II} gives an example of a braided depth 2 subfactor with index $8$
which corresponds to a fixed point with respect to the Kac--Paljutkin's 8-dimensional Kac algebra \cite{KaPa1966}.
\end{rmk}

\begin{cor}\label{cor:braidingsymm}
If $\N\subset\M$ is as in Theorem \ref{thm:noquantum}, then the braiding $\{\varepsilon_{\rho,\sigma}\}_{\rho,\sigma\prec\theta}$ is a symmetry.
\end{cor}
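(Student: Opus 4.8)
The plan is to deduce Corollary \ref{cor:braidingsymm} directly from Theorem \ref{thm:noquantum} together with the relation between the braiding and the commutation relations among charged fields established in the Lemma after Definition \ref{def:localsubf}. First I would recall that, by Theorem \ref{thm:noquantum}, the compact hypergroup $K = K(\N\subset\M)$ is actually a compact \emph{group} acting by \emph{automorphisms} of $\M$, with $\N = \M^K$; in particular, by Corollary \ref{cor:autinextr} every $\phi\in K$ lies in $\Aut_\N(\M)$. The key point is that the subfactor $\N\subset\M$ is now an ordinary (possibly infinite index, but discrete) crossed-product-type inclusion by a compact group action, and for group actions the relevant braiding is the trivial/symmetric one. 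Concretely, for $\rho,\sigma\prec\theta$ irreducible, the commutation relation \eqref{eq:commutfields} reads $\iota(\varepsilon^{\pm}_{\sigma,\rho})\psi_\sigma\psi_\rho = \psi_\rho\psi_\sigma$ and also $\iota(\varepsilon^{\pm}_{\rho,\sigma})\psi_\rho\psi_\sigma = \psi_\sigma\psi_\rho$ (by symmetry of the statement in $\rho$ and $\sigma$); combining these two gives $\iota(\varepsilon^{\pm}_{\sigma,\rho}\varepsilon^{\pm}_{\rho,\sigma})\psi_\sigma\psi_\rho = \psi_\sigma\psi_\rho$.

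The remaining step is to upgrade the latter identity from one that holds after multiplication by $\psi_\sigma\psi_\rho$ to the operator identity $\varepsilon^{\pm}_{\sigma,\rho}\varepsilon^{\pm}_{\rho,\sigma} = 1_{\rho\sigma}$, which is exactly the statement that the braiding is a symmetry, i.e. $\varepsilon^-_{\rho,\sigma} = \varepsilon^+_{\rho,\sigma}$, equivalently $\varepsilon^+_{\sigma,\rho} = (\varepsilon^+_{\rho,\sigma})^*$. For this I would use Lemma \ref{lem:nodefect}, whose hypotheses now hold: by Proposition \ref{prop:depth2discrete} the subfactor is discrete, by Proposition \ref{prop:arho=1} we have $a_\rho = 1_{H_\rho}$, and the depth-2 lemmas preceding Theorem \ref{thm:noquantum} give $n_\rho = d(\rho)$. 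Hence $\frac{1}{d(\rho)}\sum_r \psi_{\rho,r}\psi_{\rho,r}^* = 1$ and similarly for $\sigma$. Applying the identity $\iota(\varepsilon^{\pm}_{\sigma,\rho}\varepsilon^{\pm}_{\rho,\sigma})\psi_{\sigma,s}\psi_{\rho,r} = \psi_{\sigma,s}\psi_{\rho,r}$ to the basis elements, multiplying on the right by $\psi_{\rho,r}^*\psi_{\sigma,s}^*$, summing over $r$ and $s$, and using naturality of the braiding to pull the $\N$-valued coefficients through, I can cancel $\frac{1}{d(\rho)d(\sigma)}\sum_{r,s}\psi_{\sigma,s}\psi_{\rho,r}\psi_{\rho,r}^*\psi_{\sigma,s}^* = 1$ on the right and $\iota$ is injective, leaving $\varepsilon^{\pm}_{\sigma,\rho}\varepsilon^{\pm}_{\rho,\sigma} = 1_{\rho\sigma}$. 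Since the braiding is unitary this gives $\varepsilon^-_{\rho,\sigma} = \varepsilon^+_{\rho,\sigma}$ on the generators $\rho,\sigma\prec\theta$, hence on all of $\langle\theta\rangle$ by the tensor compatibility and naturality axioms of a braiding.

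The main obstacle I anticipate is the bookkeeping in the cancellation step: one must be careful that the charged fields $\psi_{\sigma,s}$ and $\psi_{\rho,r}$ carry charges $\sigma,\rho\prec\theta$ with the right source/target spaces ($H_\rho = \Hom(\iota,\iota\rho)$), so that products like $\psi_{\sigma,s}\psi_{\rho,r}$ live in $H_{\sigma\rho}$ and the braiding $\varepsilon^\pm_{\sigma,\rho}\in\Hom(\sigma\rho,\rho\sigma)$ gets applied inside $\iota$ at the correct spot; and that the resolution-of-identity manipulation of Lemma \ref{lem:nodefect}, when inserted between the two fields, interacts correctly with naturality (one first expands $\psi_{\sigma,s}\psi_{\rho,r}$ with respect to a Pimsner--Popa basis for $H_{\sigma\rho}$ or, more directly, uses that $\sum_r\psi_{\rho,r}\psi_{\rho,r}^* = d(\rho)1$ together with the intertwining property $\psi_{\rho,r}\iota(n) = \iota(\rho(n))\psi_{\rho,r}$). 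Once the identity $\varepsilon^{\pm}_{\sigma,\rho}\varepsilon^{\pm}_{\rho,\sigma}=1$ is established on the irreducible subendomorphisms of $\theta$, extending it to the full category $\langle\theta\rangle$ is routine. I would also remark, as the natural conceptual shortcut, that a braided discrete subfactor arising from a compact \emph{group} orbifold automatically has the DHR-type braiding on $\langle\theta\rangle$ equal to the permutation symmetry coming from $\theta = \bigoplus_\pi \pi\otimes\overline{V_\pi}$ with $V_\pi$ the representations of $K$, so symmetry of $\varepsilon$ is forced; but I would prefer the direct argument above since it only uses the already-established Theorem \ref{thm:noquantum} and Lemma \ref{lem:nodefect} without re-deriving the structure of group subfactors.
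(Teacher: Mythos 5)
Your argument is correct and essentially the same as the paper's: both rest on Lemma \ref{lem:nodefect} (valid here by the depth-2 lemmas preceding Theorem \ref{thm:noquantum}) together with the locality relation \eqref{eq:commutfields}, followed by a resolution-of-identity computation with the charged fields. The only cosmetic difference is that the paper multiplies the single relation $\iota(\varepsilon^{\pm}_{\sigma,\rho})\psi_{\sigma,s}\psi_{\rho,r}=\psi_{\rho,r}\psi_{\sigma,s}$ by adjoint fields to obtain the manifestly sign-independent formula $\varepsilon_{\sigma,\rho}=\frac{1}{d(\rho)d(\sigma)}\sum_{r,s}\psi_{\rho,r}\psi_{\sigma,s}\psi_{\rho,r}^*\psi_{\sigma,s}^*$, whereas you compose the two relations to show the monodromy $\varepsilon^{\pm}_{\sigma,\rho}\varepsilon^{\pm}_{\rho,\sigma}=1_{\rho\sigma}$ is trivial; these are equivalent.
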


\begin{proof}
One can directly show that if $\frac{1}{d(\rho)}\sum_r \psi_{\rho,r}\psi_{\rho,r}^*=1$, $\frac{1}{d(\sigma)}\sum_r \psi_{\sigma,r}\psi_{\sigma,r}^*=1$ for a pair of irreducibles $\rho,\sigma\prec\theta$, \cf Lemma \ref{lem:nodefect}, then the locality condition on the subfactor, equation \eqref{eq:commutfields}, implies that $\varepsilon_{\rho,\sigma} = \frac{1}{d(\rho) d(\sigma)} \sum_{r,s} \psi_{\rho,r} \psi_{\sigma,s} \psi_{\rho,r}^* \psi_{\sigma,s}^*$. In particular, $\varepsilon_{\rho,\sigma}^+ = \varepsilon_{\rho,\sigma}^-$. 
\end{proof}

To prove the existence of a compact group $G$ of automorphisms of $\M$ giving $\N=\M^G\subset\M$ as in Theorem \ref{thm:noquantum}, one can alternatively use the fact that $\Hom(\gamma,\gamma)$ is commutative by Proposition \ref{homgammaiscomm} and apply a result of Enock and Nest \cite[\Thm 11.16 (ii)]{EnNe1996}. By Theorem \ref{thm:genorbi}, $G \cong K(\N\subset\M)$, thus we have an alternative proof of Theorem \ref{thm:noquantum}.

%%%
\section{A remark on non-local extensions}\label{sec:nonlocal}
%%%

In this paper we have mainly restricted ourselves to studying irreducible discrete subfactors (Definition \ref{def:semi-discr}) that are local (Definition \ref{def:localsubf}), \eg those arising from discrete conformal inclusions of local conformal nets. In the non-local case, instead, we expect to obtain a compact quantum hypergroup in the sense of \cite{ChVa1999}, as opposed to a classical one (Theorem \ref{thm:Kishypergroup}). In this section, we remark that the main results of the present work hold as well when we replace the locality condition with a slightly more general \emph{graded-locality} condition, as it is the case, \eg for subfactors coming from a relatively local inclusion of a local net inside a $\ZZ_2$-graded-local \emph{Fermionic} net.

\begin{defi}\label{def:gradedlocalsubf}
Let $\N\subset\M$ be an irreducible braided discrete type $\III$ subfactor, with dual canonical endomorphism $\theta$ and braiding $\varepsilon_{\rho,\sigma}$ as in Definition \ref{def:braidedsubf}. We call $\N\subset\M$ \textbf{graded-local} if for every pair of irreducible subsectors $[\rho],[\sigma]$ of $[\theta]$ there is a number $s([\rho],\![\sigma]) \in \{\pm 1\}$, antisymmetric in its entries, \ie $s([\rho],\![\sigma]) = s([\sigma],\![\rho])^{-1}$, such that
\begin{align}
\psi_\rho\psi_\sigma = s([\rho],\![\sigma]) \iota(\varepsilon_{\sigma,\rho}^\pm) \psi_\sigma \psi_\rho
\end{align}
for all $\psi_\rho\in H_\rho, \psi_\sigma\in H_\sigma$.
\end{defi}

We have the following generalizations of Theorem \ref{thm:genorbi} and \ref{thm:noquantum} to the graded-local case.

\begin{thm}
Let $\N\subset \M$ be an irreducible discrete graded-local type $\III$ subfactor. Let $\Omega\in\Hil$ be a standard vector for $\M\subset\B(\Hil)$ such that the associated state is invariant with respect to the unique normal faithful conditional expectation $E:\M\to\N$. 

Then there is a compact metrizable hypergroup $K$ which acts faithfully on $\M$ by $\Omega$-adjointable ucp maps and which gives $\N$ as fixed point subalgebra, namely $\N=\M^K$.
\end{thm}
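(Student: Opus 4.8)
The plan is to reduce the graded-local case to the local case by a standard "twist" trick at the level of the tensor category and the Q-system. First I would invoke the antisymmetric grading $s([\rho],[\sigma])\in\{\pm1\}$ to observe that the assignment $[\rho]\mapsto s([\rho],[\rho])^{1/2}\in\{1,\ima\}$ — or more precisely the bicharacter-type data coming from $s$ on the abelian group generated by the classes of irreducibles inside $[\theta]$ under fusion — can be used to define a \emph{new} braiding $\tilde\varepsilon_{\rho,\sigma} := s([\rho],[\sigma])\,\varepsilon_{\rho,\sigma}$ on the full braided tensor \Cstar-subcategory generated by $\theta$. One checks that antisymmetry of $s$ together with its (necessarily) multiplicative behaviour with respect to fusion — which follows from associativity of the field algebra products in Definition \ref{def:gradedlocalsubf}, exactly as in the derivation of DHR commutation relations — makes $\tilde\varepsilon$ satisfy the hexagon identities, hence is again a braiding. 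With respect to $\tilde\varepsilon$, the defining relations of graded-locality become $\psi_\rho\psi_\sigma = \iota(\tilde\varepsilon^\pm_{\sigma,\rho})\psi_\sigma\psi_\rho$, i.e. the subfactor $\N\subset\M$ is \emph{local} in the sense of Definition \ref{def:localsubf} when $\langle\theta\rangle$ is equipped with $\tilde\varepsilon$.

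Once this reduction is in place, the entire machinery of Sections \ref{sec:compacthypfromsubf}--\ref{sec:actionK} applies verbatim with $\tilde\varepsilon$ in place of $\varepsilon$: the subfactor is still irreducible discrete type $\III$, $a_\rho=1_{H_\rho}$ still holds by Proposition \ref{prop:arho=1} (now the proof runs with $\tilde\varepsilon$), the algebra $\Trig(\N\subset\M)$ is commutative by Theorem \ref{thm:trigcommutativestaralg}, the duality Theorem \ref{thm:ucpstatesduality} yields $\UCP^\sharp_\N(\M,\Omega)\cong P(K)$ for $K$ the spectrum of $\Cred(\N\subset\M)$, and Theorem \ref{thm:Kishypergroup} gives the compact hypergroup structure. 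Then Theorem \ref{thm:genorbi} provides the faithful action of $K$ on $\M$ with $\N=\M^K$, and metrizability follows from separability of $\Cred(\N\subset\M)$ exactly as before. I would state this as: the construction of $K(\N\subset\M)$, the duality pairing, and the generalized orbifold result depend on the braiding only through the property $a_\rho=1_{H_\rho}$ and the commutativity of $\Trig(\N\subset\M)$, both of which survive the retwisting.

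The step I expect to be the main obstacle is verifying that the retwisted $\tilde\varepsilon$ is genuinely a braiding on the whole category $\langle\theta\rangle$ — equivalently, that the signs $s([\rho],[\sigma])$ extend consistently from irreducibles to all objects (finite direct sums and tensor products) and satisfy the two hexagon axioms. This is where the antisymmetry hypothesis $s([\rho],[\sigma])=s([\sigma],[\rho])^{-1}$ and the multiplicativity forced by associativity of field products must be combined carefully; in the model case (a local net inside a $\ZZ_2$-graded Fermionic net) $s$ is literally $(-1)^{F(\rho)F(\sigma)}$ for the fermion-number grading $F$, and the hexagons are immediate, but in the abstract setting one needs to argue that $s$ factors through a $\ZZ_2$-valued bicharacter on the grading group of $[\theta]$. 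I would therefore first prove a lemma extracting a $\ZZ_2$-grading homomorphism on the fusion semiring generated by the irreducible subsectors of $[\theta]$ from the antisymmetric sign $s$, and only then perform the retwist; the remainder is a routine translation of the local proofs. An alternative, should the bicharacter extraction prove delicate, is to carry out the entire Sections \ref{sec:trig}--\ref{sec:actionK} directly with the extra signs inserted, checking at each relevant computation (Propositions \ref{prop:multTirred}, \ref{prop:bulletTirred}, \ref{prop:bulletTany}, Lemmas \ref{lem:haarfaith}, \ref{lem:lambdafaithbound}) that the signs cancel in pairs — which they do, since every field product is matched by an adjoint field product of the same pair of sectors.
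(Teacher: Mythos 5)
Your proposal is correct in substance, and your ``alternative'' at the end is in fact the route the paper takes: Section \ref{sec:nonlocal} does not retwist the braiding, it simply re-runs the local arguments of Sections \ref{sec:trig}--\ref{sec:actionK} with the signs $s([\rho],\![\sigma])$ inserted, observing that every computation pairs a product of charged fields with an adjoint product of the same two sectors so that the $\{\pm1\}$-valued signs cancel. The one delicate point the paper isolates is the involution on $\Trig(\N\subset\M)$: in Remark \ref{rmk:lambdaprops} one now gets $\lambda_{\sigmabar,\sigma,s,s'} = s([\sigmabar],\![\sigma])\,\lambda_{\sigma,\sigmabar,s',s}$ (note also that Corollary \ref{cor:spin=1} fails in the graded case --- the proof of Proposition \ref{prop:arho=1} now yields $\kappa(\rho)=s([\rhobar],\![\rho])=\pm1$ while still giving $a_\rho=1_{H_\rho}$), and the proofs of Propositions \ref{prop:bulletTirred} and \ref{prop:bulletTany} survive precisely because $s([\sigmabar],\![\sigma])\in\RR$; this reality condition is exactly what the paper flags as the obstruction to a hypothetical $\U(1)$-graded version. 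Your primary route --- retwisting $\varepsilon$ by $s$ to reduce outright to the local case --- is more conceptual but buys you an extra lemma that the direct route avoids entirely: you must show that $s$ extends to a $\ZZ_2$-valued bicharacter of a grading of all of $\langle\theta\rangle_0$, whereas Definition \ref{def:gradedlocalsubf} only provides $s$ on irreducible subsectors of $[\theta]$. The multiplicativity of $s([\rho],\cdot)$ along fusion does follow from comparing the two ways of commuting $\psi_\rho$ past $\psi_\sigma\psi_\tau$, but only on those irreducibles $\mu\prec\sigma\tau$ hit by a nonzero component of some product of charged fields, and a braiding on the full category also needs values on irreducibles not contained in $\theta$, where $H_\mu=\langle 0\rangle$ by Remark \ref{rmk:catgenthetaonly} and the commutation relations give no constraint. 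Since the construction of $K(\N\subset\M)$ only ever touches charged fields, the sign-insertion argument sidesteps all of this, which is presumably why the paper chose it; if you want the retwist as a clean reduction you should state and prove the bicharacter-extraction lemma carefully rather than assert it.
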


\begin{thm}
Let $\N\subset \M$ be an irreducible semidiscrete graded-local type $\III$ subfactor with depth 2. 
Then $K$ is a compact metrizable group acting on $\M$ such that $\N=\M^K$.
\end{thm}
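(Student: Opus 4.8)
The plan is to reduce the graded-local case to the local case by a standard \emph{Klein transformation} (or bosonization) argument, so that the already-proven Theorem \ref{thm:noquantum} applies. First I would use Proposition \ref{prop:depth2discrete} to upgrade semidiscreteness to discreteness, so that we have a Pimsner--Popa basis of charged fields $\{\psi_{\rho,r}\}$ at our disposal and $\theta = \bigoplus_{[\rho],r}\rho$ decomposes into finitely many irreducibles with finite dimension. Since $\N\subset\M$ is braided (being graded-local in the sense of Definition \ref{def:gradedlocalsubf}), the rigid tensor \Cstar-category $\langle\theta\rangle_0$ is amenable by Theorem \ref{thm:cathalfbraidedisamenable}, hence the subfactor is strongly relatively amenable by Corollary \ref{cor:braideddiscreteisstrongamenab}. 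The grading function $s([\rho],[\sigma])\in\{\pm1\}$ is a bicharacter on the fusion semiring generated by the irreducible subsectors of $[\theta]$; I would check that it extends to a $\ZZ_2$-grading $[\rho]\mapsto \partial([\rho])\in\ZZ_2$ on this semiring with $s([\rho],[\sigma]) = (-1)^{\partial([\rho])\partial([\sigma])}$, using that $[\theta]$ is a dual canonical endomorphism (so the subsectors are closed under fusion and conjugation and the vacuum has even grading).

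Next I would introduce the Klein transformation. Pick a self-adjoint unitary $k\in\Hom(\theta,\theta)$ (equivalently a unitary $k\in\N'\cap\M_1$, or a suitable grading operator built from the minimal central projections $p_{\rho}$ labelled by parity) implementing the grading, and replace each charged field $\psi_{\rho,r}$ by a twisted field $\psi_{\rho,r}^{\mathrm{tw}}$ obtained by multiplying with an appropriate power of $k$ (a square root of the grading on the even/odd sectors, chosen so that the twisted fields satisfy the \emph{untwisted} commutation relations $\iota(\varepsilon^\pm_{\sigma,\rho})\psi_\sigma^{\mathrm{tw}}\psi_\rho^{\mathrm{tw}} = \psi_\rho^{\mathrm{tw}}\psi_\sigma^{\mathrm{tw}}$ of \eqref{eq:commutfields}). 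This is exactly the passage from graded-locality to locality; the grading operator $k$ commutes with $\iota(\N)$ and with $E$, so the twisted fields still form a Pimsner--Popa basis with the same normalization \eqref{eq:normalizfields}, and they generate the same subfactor $\N\subset\M$ but now with a commutative generalized Q-system of intertwiners, i.e.\ the twisted data exhibits $\N\subset\M$ as \emph{local} in the sense of Definition \ref{def:localsubf} relative to a modified braiding $\widetilde\varepsilon$. One must check that $\widetilde\varepsilon$ is again a braiding on a full tensor \Cstar-subcategory containing $\theta$ (it differs from $\varepsilon$ only by signs governed by the bicharacter $s$, which is precisely the data of a symmetric $\ZZ_2$-cocycle twist), so that all hypotheses of Definition \ref{def:localsubf} hold for the twisted structure.

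With this in place, the first theorem follows directly from Theorem \ref{thm:genorbi} applied to $\N\subset\M$ equipped with the twisted local structure: one obtains a compact metrizable hypergroup $K = K(\N\subset\M) = \Extr(\UCP^\sharp_\N(\M,\Omega))$ acting faithfully on $\M$ by $\Omega$-adjointable ucp maps with $\N = \M^K$. Crucially, the set $\UCP^\sharp_\N(\M,\Omega)$ and the duality machinery of Theorem \ref{thm:ucpstatesduality} depend only on the subfactor and the vacuum $\Omega$, not on the choice of braiding or of generalized Q-system (compare Corollary \ref{cor:localNbimoducp} and the corollary following Theorem \ref{thm:ucpstatesduality}), so the hypergroup produced via the twisted data is canonically attached to $\N\subset\M$. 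For the second theorem, I would additionally invoke the depth $2$ hypothesis: by Proposition \ref{prop:depth2discrete} the subfactor is discrete, and by the lemmas preceding Theorem \ref{thm:noquantum} (which use $a_\rho = 1_{H_\rho}$, and the latter holds because the twisted structure is local, Proposition \ref{prop:arho=1}) we get $n_\rho = d(\rho)$ and the "no defect" identity $\frac{1}{d(\rho)}\sum_r\psi_{\rho,r}^{\mathrm{tw}}\psi_{\rho,r}^{\mathrm{tw}*} = 1$; then the multiplicativity-of-extreme-states argument in the proof of Theorem \ref{thm:noquantum} shows every extreme ucp map fixing $\N$ is an automorphism, so $K$ is a compact metrizable group.

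The main obstacle I expect is the construction and consistency of the Klein transformation: one must verify that the grading $s$ genuinely comes from a $\ZZ_2$-valued bicharacter on the relevant fusion semiring (antisymmetry of $s$ in Definition \ref{def:gradedlocalsubf} is necessary but one needs multiplicativity across fusion, which should follow from associativity of the field multiplication and naturality of $\varepsilon$), and that the twisting by powers of $k$ is compatible with conjugation — in particular that the twisted dual fields and the twisted standard solutions of the conjugate equations still satisfy the identities used throughout Section \ref{sec:trig} (Propositions \ref{prop:multTany}, \ref{prop:bulletTany}, \ref{prop:Tcommut}). Equivalently, one has to check that the modified braiding $\widetilde\varepsilon$ satisfies the hexagon axioms and that the phase $\kappa$ of each $\rho$ with respect to $\widetilde\varepsilon$ is trivial (the graded analogue of Corollary \ref{cor:spin=1}), which is where the sign bookkeeping is most delicate; this is the graded refinement of the computation in the proof of Proposition \ref{prop:arho=1}.
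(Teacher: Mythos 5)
Your overall strategy---reduce graded-locality to locality by a Klein transformation and then quote Theorem \ref{thm:noquantum}---is not the route the paper takes, and the reduction as you set it up has a genuine gap. The paper's (implicit) proof, indicated by the remark closing Section \ref{sec:nonlocal}, is a direct re-verification that every place where locality \eqref{eq:commutfields} enters the machinery survives the extra sign $s([\rho],[\sigma])\in\{\pm 1\}$: the computation of Proposition \ref{prop:arho=1} now gives $a_\rho^{-1}=s([\rhobar],[\rho])\,\kappa(\rho)^{\pm 1}1_{H_\rho}$, so positivity still forces $a_\rho=1_{H_\rho}$ (with $\kappa(\rho)=\pm1$ rather than $\kappa(\rho)=1$); commutativity of $\Trig(\N\subset\M)$ survives because in $T(\psi_1,\psi_2)\ast T(\psi_3,\psi_4)$ one field of each pair appears starred, so the two signs cancel as $\bar{s}s=1$; and the involution computations of Propositions \ref{prop:bulletTirred}, \ref{prop:bulletTany} only require $s([\sigmabar],[\sigma])\in\RR$, which is exactly where a $\U(1)$-grading would fail and a $\ZZ_2$-grading does not. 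With these checks, Theorems \ref{thm:ucpstatesduality} and \ref{thm:genorbi} and the depth-$2$ argument of Theorem \ref{thm:noquantum} (which beyond the duality uses only $a_\rho=1_{H_\rho}$, $n_\rho=d(\rho)$ and Lemma \ref{lem:nodefect}) go through verbatim.

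The gap: a twist by ``powers of a grading operator $k\in\Hom(\theta,\theta)$'' cannot alter the commutation relations. Any such $k$ acts as a unitary on each multiplicity space $H_\rho$ separately (it cannot move between inequivalent sectors), and relation \eqref{eq:commutfields} is bilinear in $(\psi_\sigma,\psi_\rho)$ and invariant under $\psi_\rho\mapsto U_\rho\psi_\rho$, $\psi_\sigma\mapsto U_\sigma\psi_\sigma$ for unitaries $U_\rho$ of $H_\rho$ and $U_\sigma$ of $H_\sigma$: the same operators appear on both sides and the sign $s([\rho],[\sigma])$ is untouched. A genuine Klein unitary would have to anticommute with the odd fields while fixing $\N$ pointwise; if it lay in $\M$ it would belong to $\N'\cap\M=\CC1$ by irreducibility, so it must live outside $\M$, and the twisted fields then generate a different algebra---you would be replacing the subfactor rather than exhibiting the given one as local. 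The salvageable variant is the one you mention in passing: keep the fields and twist the braiding, $\widetilde\varepsilon_{\sigma,\rho}:=s([\rho],[\sigma])\varepsilon_{\sigma,\rho}$. But for $\widetilde\varepsilon$ to satisfy naturality and the hexagons you must first prove that $s$ is multiplicative under fusion, i.e.\ $s([\tau],[\sigma])=s([\rho],[\sigma])\,s([\rho'],[\sigma])$ for every irreducible $\tau\prec\rho\rho'$, which Definition \ref{def:gradedlocalsubf} does not postulate; establishing it requires non-vanishing of the relevant products of charged fields and is an unproven lemma in your write-up. It is also strictly more than the direct route needs, where $s$ only has to be a real scalar of modulus one.
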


\begin{rmk}
It would be tempting to replace the $\ZZ_2$-grading in Definition \ref{def:gradedlocalsubf}, for example with a $\U(1)$-grading, in order to model \emph{Abelian anyonic} extensions of local nets. The main ingredients of the proof of Theorem \ref{thm:ucpstatesduality}, from which the above results follow, would still be there. 
Namely, the condition $a_\rho = 1_{H_\rho}$ (Proposition \ref{prop:arho=1}), the commutativity of the algebra of trigonometric polynomials (Proposition \ref{prop:Tcommut}), the invertibility of the Fourier transform on trigonometric polynomials (Proposition \ref{prop:Fdenserange}), the faithfulness of the state $\omega_E$ (Lemma \ref{lem:haarfaith}), the boundedness of the GNS representation (Lemma \ref{lem:lambdafaithbound}). The only missing step is the good behaviour of the *-structure on trigonometric polynomials (Proposition \ref{prop:bulletTany}).
More specifically, in the notation of Remark \ref{rmk:lambdaprops} one obtains $\lambda_{\sigmabar,\sigma,s,s'} = s([\sigmabar],\![\sigma]) \lambda_{\sigma,\sigmabar,s',s}$. In order to use the same proof for Proposition \ref{prop:bulletTirred}, one needs then the condition $s([\sigmabar],\![\sigma]) \in \RR$. 
\end{rmk}

%%%
\section{Examples}\label{sec:ex}
%%%

%%%
\subsection{Group orbifolds}
%%%

In this section let $G$ be a compact metrizable group.
Let us consider inclusions arising as compact group orbifolds, namely as $\M^G\subset \M$ for a continuous action $\alpha\colon G\to\Aut(\M)$ on a von Neumann algebra $\M\subset\B(\Hil)$, \ie a pointwise weak operator continuous group homomorphism. Denote by $E := \int_G\alpha(g) \dd \mu_G(g)$ the normal faithful conditional expectation onto $\M^G$ given by the group average with respect to the Haar measure $\mu_G$. By definition, $\tilde{\alpha}(\mu_G) = E$, see \eqref{eq:liftaction}.

\begin{prop}\label{prop:actionGisactionK}
Let $\Omega\in\Hil$ to be a vector representing a normal faithful $E$-invariant state. Then $\alpha$ is an action of $G$ on $\M$ by $\Omega$-adjointable ucp maps in the sense of Definition \ref{def:action}.
\end{prop}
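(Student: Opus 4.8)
The plan is to unwind Definition \ref{def:action} and check its requirements one at a time, using that each $\alpha(g)$ is an automorphism of $\M$ — which is the strongest possible kind of $\Omega$-adjointable ucp map. First I would observe that since each $\alpha(g)\in\Aut(\M)$ preserves the $E$-invariant state given by $\Omega$ (because $\alpha(g)\circ E = E$ by the bi-invariance of the Haar measure, hence $(\Omega,\alpha(g)(\slot)\Omega) = (\Omega,E(\alpha(g)(\slot))\Omega) = (\Omega,E(\slot)\Omega) = (\Omega,\slot\Omega)$ after also using $E\circ\alpha(g)=E$), every $\alpha(g)$ is $\Omega$-preserving. An automorphism preserving a faithful normal state admits an $\Omega$-adjoint, namely its inverse $\alpha(g)^{-1} = \alpha(g^{-1})$: indeed $(\alpha(g^{-1})(m_1)\Omega, m_2\Omega) = (\Omega,\alpha(g^{-1})(m_1^*)m_2\Omega)$ and $(m_1\Omega,\alpha(g)(m_2)\Omega) = (\Omega,m_1^*\alpha(g)(m_2)\Omega) = (\Omega,\alpha(g)(\alpha(g^{-1})(m_1^*)m_2)\Omega)$, which agree by $\Omega$-invariance of $\alpha(g)$. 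So $\alpha$ indeed maps into ${\UCP}^{\sharp}(\M,\Omega)$, and in fact into $\Extr({\UCP}^{\sharp}(\M,\Omega))$ since automorphisms are extreme points of the ucp cone (if $\alpha(g) = \lambda\phi_1 + (1-\lambda)\phi_2$ is a proper convex combination, multiplicativity of $\alpha(g)$ forces $\phi_1 = \phi_2 = \alpha(g)$, by the usual Choi-type argument as in the proof of Lemma \ref{lem:extreme}).

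Next I would check continuity of $\alpha$ as a map $G\to\Extr({\UCP}^{\sharp}(\M,\Omega))$, where the target carries the BW (pointwise weak operator) topology. This is exactly the hypothesis that $\alpha$ is a pointwise weak operator continuous group homomorphism: $g\mapsto (\xi,\alpha(g)(m)\eta)$ is continuous for all $m\in\M$, $\xi,\eta\in\Hil$. For the $\Omega$-adjointability bookkeeping in the remark following Definition \ref{def:action}, one also notes $g\mapsto\alpha(g)^\sharp = \alpha(g^{-1})$ is continuous by composing with the continuous inversion of $G$.

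Then I would verify that the lift $\tilde\alpha\colon P(G)\to{\UCP}^{\sharp}(\M,\Omega)$ defined by $(\tilde\alpha(\mu))(m) = \int_G (\alpha(g))(m)\dd\mu(g)$ is an involutive monoid homomorphism. Well-definedness into cp (resp.\ ucp, resp.\ $\Omega$-adjointable) maps is automatic as noted in the remark after Definition \ref{def:action}, since the positive cone is closed under weak integrals against positive measures and $g\mapsto\alpha(g),\ g\mapsto\alpha(g)^\sharp$ are continuous. The unit condition $\tilde\alpha(\delta_e) = \alpha(e) = \id$ is immediate. The involution condition $\tilde\alpha(\mu)^\sharp = \tilde\alpha(\mu^\sharp)$ follows by integrating the pointwise identity $\alpha(g)^\sharp = \alpha(g^{-1})$ and using $\mu^\sharp(E) = \mu(E^{-1})$, i.e.\ the pushforward of $\mu$ under $g\mapsto g^{-1}$. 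For multiplicativity $\tilde\alpha(\mu_1)\circ\tilde\alpha(\mu_2) = \tilde\alpha(\mu_1*\mu_2)$, I would compute, for $m\in\M$ and $\xi,\eta\in\Hil$,
\begin{align}
(\xi,(\tilde\alpha(\mu_1)\circ\tilde\alpha(\mu_2))(m)\eta) &= \int_G (\xi, \alpha(g)((\tilde\alpha(\mu_2))(m))\eta)\dd\mu_1(g) \\
&= \int_G \int_G (\xi,\alpha(g)(\alpha(h)(m))\eta)\dd\mu_2(h)\dd\mu_1(g) \\
&= \int_G\int_G (\xi,\alpha(gh)(m)\eta)\dd\mu_2(h)\dd\mu_1(g) \\
&= (\xi,(\tilde\alpha(\mu_1*\mu_2))(m)\eta),
\end{align}
where the exchange of $\alpha(g)$ with the weak integral $\tilde\alpha(\mu_2)(m)$ is justified by the normality of $\alpha(g)$ (weak-$*$ continuity), and the last step is the definition of group convolution $\delta_g*\delta_h = \delta_{gh}$ extended to $P(G)$ via \eqref{eq:convondirac}, together with Fubini. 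Since this holds for all $\xi,\eta$, we get the operator identity.

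I do not expect a genuine obstacle here: the statement is essentially a repackaging of the classical fact that a continuous compact group action is, after passing to the group algebra/measure algebra, an involutive monoid action, together with the elementary observation that automorphisms are extreme $\Omega$-adjointable ucp maps. The only mild technical point requiring care is the interchange of $\alpha(g)$ with weak integrals in the multiplicativity computation, which is handled by normality of $\alpha(g)$ and the fact that the integrands are uniformly bounded (all $\alpha(h)$ are isometric, so $\|\alpha(h)(m)\|\le\|m\|$), so that dominated convergence / Fubini for the weak integrals applies.
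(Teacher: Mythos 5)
Your proof is correct and follows essentially the same route as the paper: a direct verification of Definition \ref{def:action}, with the extremality of automorphisms handled as in Corollary \ref{cor:autinextr} and the involutive monoid homomorphism property of $\tilde\alpha$ checked by hand using that $\alpha$ is a group homomorphism. The only cosmetic difference is that you establish multiplicativity by a direct Fubini/weak-integral interchange (justified by normality and uniform boundedness), whereas the paper points to the adjoint-and-vector-state argument of Lemma \ref{lem:compositionchoquet}; both are valid.
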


\begin{proof}
$\Aut(\M)\subset \Extr(\UCP^\sharp(\M,\Omega))$ as in Corollary \ref{cor:autinextr}. The lift to probability Radon measures $\mu\in P(G) \mapsto \tilde{\alpha}(\mu)\in \UCP^\sharp(\M,\Omega)$ defined by \eqref{eq:liftaction} is an involutive monoid homomorphism, as one can check arguing similarly to the proof of Lemma \ref{lem:compositionchoquet}, but using here that $\alpha$ is a group homomorphism.
\end{proof}

If $\alpha$ is faithful and minimal, \cf Definition \ref{def:action}, and $\M$, $\M^G$ are of type $\III$ (or infinite) factors, then $\M^G \subset \M$ is an irreducible discrete subfactor. See \eg \cite[\Sec 3]{IzLoPo1998}. Note that the type $\III$ assumption is not very restrictive, as one can always tensor with a type $\III$ factor with the trivial action of $G$.

\begin{prop}\label{prop:cptgrouporbi}
Let $\N\subset\M$ be an irreducible discrete type $\III$ subfactor, assume in addition that it is local. 
Then $\N = \M^G$ for some faithful minimal action $\alpha$ of a compact metrizable group $G$ if and only if every $\phi\in\Extr({\UCP}^{\sharp}_{\N}(\M,\Omega))$ is an automorphism.

In this case, $\alpha$ is a homeomorphism of $G$ onto $\Extr({\UCP}^{\sharp}_{\M^G}(\M,\Omega)) \equiv K(\M^G\subset\M)$. 
Moreover, $\tilde{\alpha}(P(G)) = {\UCP}^{\sharp}_{\M^G}(\M,\Omega)$ and $\tilde{\alpha}(M(G)) = {\Span}_\CC\{{\UCP}^{\sharp}_{\M^G}(\M,\Omega)\}$. 
\end{prop}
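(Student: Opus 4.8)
The plan is to prove both directions of the equivalence, and then the final three identities, all as applications of the duality theorem (Theorem \ref{thm:ucpstatesduality}) and the generalized orbifold result (Theorem \ref{thm:genorbi}).

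\textbf{Proof.}
Suppose first that every $\phi\in\Extr({\UCP}^{\sharp}_{\N}(\M,\Omega)) = K(\N\subset\M)$ is an automorphism. Set $G := K(\N\subset\M)$. By Theorem \ref{thm:Kishypergroup} this is a compact metrizable hypergroup; I claim that under the present hypothesis it is a group. Indeed, for $\phi_1,\phi_2\in G$ the composition $\phi_1\circ\phi_2$ is again an extreme $\Omega$-adjointable $\N$-bimodular ucp map (the convolution law of the hypergroup), hence an automorphism, and therefore the convolution $\delta_{\phi_1}\ast\delta_{\phi_2} = \delta_{\phi_1\circ\phi_2}$ is again a Dirac measure; by the reformulation in Example \ref{ex:finitehyp}/Remark \ref{rmk:DJS} (a compact hypergroup all of whose binary convolutions of Dirac measures are Dirac measures is a group) $G$ is a compact group, with $x^\sharp = x^{-1}$ the inverse automorphism and identity $\id$. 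The map $\alpha\colon G\to\Aut(\M)$ given by $\alpha(\phi) := \phi$ is then a group homomorphism (since $\phi_1^\sharp$ is the inverse of $\phi_1$ when $\phi_1$ is an automorphism), and it is continuous in the pointwise weak operator topology because that is the topology of $L^2(\M,\Omega)$-convergence on $K(\N\subset\M)$ by Lemma \ref{lem:OmegaadjL2cpt}. By Theorem \ref{thm:genorbi}, $K(\N\subset\M)$ acts faithfully on $\M$ with fixed point subalgebra $\N$, so $\alpha$ is a faithful minimal action of the compact metrizable group $G$ with $\M^{G} = \N$.

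Conversely, assume $\N = \M^{G}$ for a faithful minimal action $\alpha$ of a compact metrizable group $G$. By Proposition \ref{prop:actionGisactionK}, $\alpha$ is an action of $G$ on $\M$ by $\Omega$-adjointable ucp maps in the sense of Definition \ref{def:action}, and the subfactor is irreducible (by minimality); it is discrete and local by hypothesis. Hence Proposition \ref{prop:uniqueKact} applies verbatim with the hypergroup ``$K$'' there taken to be the group $G$: $\alpha$ is a homeomorphism of $G$ onto $\Extr({\UCP}^{\sharp}_{\M^{G}}(\M,\Omega)) \equiv K(\N\subset\M)$, and its lift to measures preserves convolution and involution. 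Since each $\alpha(g)$ is an automorphism of $\M$ and $\alpha$ is surjective onto $\Extr({\UCP}^{\sharp}_{\N}(\M,\Omega))$, every extreme point of ${\UCP}^{\sharp}_{\N}(\M,\Omega)$ is an automorphism. This also yields the stated homeomorphism assertion in the ``in this case'' clause.

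For the last two identities, note that $\tilde\alpha\colon M(G)\to {\Span}_{\CC}\{{\UCP}^{\sharp}_{\M^{G}}(\M,\Omega)\}$ factors, by Proposition \ref{prop:alphatildefactorization}, as $\tilde\alpha = \kappa\circ\hat\alpha$ where $\hat\alpha\colon M(G)\to M(K(\M^{G}\subset\M))$ is the pushforward along the homeomorphism $\alpha$ (hence a linear bijection) and $\kappa$ is the inverse of the linear extension of the duality map, which is a linear bijection by Proposition \ref{prop:spancext}. Therefore $\tilde\alpha$ is a linear bijection onto ${\Span}_{\CC}\{{\UCP}^{\sharp}_{\M^{G}}(\M,\Omega)\}$; restricting to probability measures, and using that $\hat\alpha$ maps $P(G)$ bijectively onto $P(K(\M^{G}\subset\M))$ and $\kappa$ maps the latter bijectively onto ${\UCP}^{\sharp}_{\M^{G}}(\M,\Omega)$ (again Theorem \ref{thm:ucpstatesduality} and Proposition \ref{prop:spancext}), we get $\tilde\alpha(P(G)) = {\UCP}^{\sharp}_{\M^{G}}(\M,\Omega)$ and $\tilde\alpha(M(G)) = {\Span}_{\CC}\{{\UCP}^{\sharp}_{\M^{G}}(\M,\Omega)\}$.
\qed

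The only genuinely non-formal point is the first direction: one must argue that a compact (metrizable) hypergroup all of whose extreme ucp maps are automorphisms — equivalently, all of whose pointwise products of Dirac measures are again Dirac measures — is actually a group. I expect this to be the main obstacle, and I would handle it exactly as in Theorem \ref{thm:noquantum} (via multiplicativity of the pure states $\omega_\phi$ and Remark \ref{rmk:DJS}/Example \ref{ex:finitehyp} characterizing when a compact hypergroup is a group), so in fact this step is already essentially available from the preceding sections. Everything else is bookkeeping with the bijections already established.
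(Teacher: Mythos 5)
Your proposal is correct in substance and follows the same route as the paper, whose own proof is a one-line citation of Proposition \ref{prop:actionGisactionK}, Proposition \ref{prop:uniqueKact} and Theorem \ref{thm:genorbi}: the ``only if'' direction and the ``in this case'' clause are exactly your second and third paragraphs, and the ``if'' direction is the identification $K(\N\subset\M)=\Aut_\N(\M)$ followed by Theorem \ref{thm:genorbi}.

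One step in your first paragraph is stated backwards and, as written, is circular. You justify the extremality of $\phi_1\circ\phi_2$ by ``the convolution law of the hypergroup'' and then deduce that it is an automorphism; but in a general hypergroup the convolution of two Dirac measures is \emph{not} a Dirac measure, so the composition of two extreme points need not be extreme, and extremality is precisely what has to be established. The correct order is: $\phi_1\circ\phi_2$ is an automorphism acting trivially on $\N$ (composition of two such automorphisms), hence it lies in $\Aut_\N(\M)\subset K(\N\subset\M)$ by Corollary \ref{cor:autinextr}, hence it is extreme, and only then does $\delta_{\phi_1}\ast\delta_{\phi_2}=\delta_{\phi_1\circ\phi_2}$ follow. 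With this fix you also do not need the abstract ``all Dirac convolutions are Dirac $\Rightarrow$ group'' principle (which the paper asserts in the introduction but never proves, and which Example \ref{ex:finitehyp} and Remark \ref{rmk:DJS} do not quite cover): you get directly that $K(\N\subset\M)=\Aut_\N(\M)$ is a group under composition, with $\phi^\sharp=\phi^{-1}$ as in the proof of Theorem \ref{thm:noquantum}, and the group operations are continuous by part $(ii)$ of the proof of Theorem \ref{thm:Kishypergroup}; minimality of the resulting action is just the irreducibility hypothesis $\N'\cap\M=\CC 1$. Everything else in your write-up, including the use of Propositions \ref{prop:spancext} and \ref{prop:alphatildefactorization} for the last two identities, is fine.
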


\begin{proof}
This is a consequence of Proposition \ref{prop:actionGisactionK} and of the uniqueness of the compact hypergroup action, Proposition \ref{prop:uniqueKact} and Theorem \ref{thm:genorbi}. 
\end{proof}

Not every compact group orbifold subfactor $\M^G\subset\M$ is local with respect to a given braiding on $\langle\theta\rangle_0$, where $\theta$ is the dual canonical endomorphism of $\M^G\subset\M$, \cf Remark \ref{rmk:fermionicgammagamma}. But we can show the following:

\begin{prop}\label{prop:Gsubflocal}
Consider the subfactor $\M^G\subset \M$ for a minimal action of $G$ 
and denote the dual canonical endomorphism by $\theta$.

Then there is a braiding on $\langle\theta\rangle_0$
for which $\M^G\subset \M$ is local and  
 $\langle\theta\rangle_0$ is unitarily braided equivalent
to the symmetric rigid tensor \Cstar-category $\Rep(G)$ of unitary continuous finite-dimensional representations of $G$.
\end{prop}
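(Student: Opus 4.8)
The plan is to exploit the fact that a minimal action of a compact group $G$ on $\M$ produces, via the standard Doplicher--Roberts / Izumi--Longo--Popa machinery, a dual canonical endomorphism $\theta$ whose decomposition into irreducibles is governed by $\Rep(G)$, and then to transport the symmetric braiding of $\Rep(G)$ over to $\langle\theta\rangle_0$. Concretely, I would first recall that for a minimal action $\alpha\colon G\to\Aut(\M)$ with $\N=\M^G$ of type $\III$, the Hilbert spaces of charged fields $H_\rho$ organize into a full subcategory of $\End_0(\N)$ that is equivalent, as a rigid tensor \Cstar-category, to $\Rep(G)$: the equivalence sends an irreducible representation $\pi$ of $G$ to the irreducible $\rho_\pi\prec\theta$ whose field space $H_{\rho_\pi}$ carries $\pi$, with $n_{\rho_\pi}=\dim\pi$. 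This is exactly the content of the classical analysis in \cite[\Sec 3]{IzLoPo1998}, and on the multiplicity/dimension side it is consistent with $a_\rho=1_{H_\rho}$ and $n_\rho=d(\rho)$ (the Kac case), which here hold because $G$ is an honest group. I would then define the braiding on $\langle\theta\rangle_0$ to be the pushforward of the symmetry (flip) of $\Rep(G)$ under this equivalence; by construction it is a unitary symmetric braiding and $\langle\theta\rangle_0$ becomes unitarily braided equivalent to $\Rep(G)$.

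The second step is to verify that $\M^G\subset\M$ is \emph{local} with respect to this braiding, i.e.\ that the commutation relations \eqref{eq:commutfields} hold. The key point is that the charged fields $\psi_\rho$ implement the intertwiners of the $G$-action in a way compatible with the tensor structure: for $\psi_\rho\in H_\rho$, $\psi_\sigma\in H_\sigma$ the product $\psi_\rho\psi_\sigma$ lands in $H_{\rho\sigma}$, and the operator relating $\psi_\rho\psi_\sigma$ to $\psi_\sigma\psi_\rho$ is precisely $\iota$ of the image under the equivalence of the flip $\Rep(G)\ni\pi\otimes\pi'\to\pi'\otimes\pi$. Since the braiding on $\langle\theta\rangle_0$ was \emph{defined} to be this pushforward, the relation $\iota(\varepsilon^{\pm}_{\sigma,\rho})\psi_\sigma\psi_\rho=\psi_\rho\psi_\sigma$ holds tautologically, and $\varepsilon^+=\varepsilon^-$ because the flip is a symmetry. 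Equivalently, one can invoke the general fact that the DR-reconstruction of $\M$ from $\N$ and a symmetric tensor category produces a \emph{local} Pimsner--Popa basis of charged fields; since $\Rep(G)$ is symmetric, \eqref{eq:commutgenQsysint} is satisfied and $\M^G\subset\M$ is local by Definition \ref{def:localsubf}.

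The main obstacle, and the step requiring the most care, is making precise the identification of $\langle\theta\rangle_0$ with $\Rep(G)$ as a \emph{braided} category and checking that the charged fields multiply according to the flip rather than according to some other natural isomorphism. Two issues must be handled: (i) one must choose the charged fields $\psi_\rho$ (the Pimsner--Popa basis of Notation \ref{not:genQsysint}) \emph{compatibly} with the $G$-action, i.e.\ so that $\alpha(g)\psi_\rho=\iota(u_\rho(g))\psi_\rho$ for a continuous unitary representation $u_\rho$ of $G$ on $H_\rho$ unitarily equivalent to $\rho$ under the equivalence — this is standard but needs the Kac-type condition $a_\rho=1_{H_\rho}$ to ensure the relevant unitaries exist and are well-behaved, \cf the proof of Lemma \ref{lem:OmegapresisOmegaadj}; and (ii) one must verify that the assignment $\pi\mapsto H_\rho$ is monoidal in the strict sense needed, which comes down to the associativity of field multiplication and the compatibility of $\rho\mapsto\rhobar$ with conjugate representations — here one can appeal to \cite[\Cor 3.10]{LoRo1997} on standard solutions of conjugate equations under composition, exactly as in the proof of Proposition \ref{prop:bulletTany}. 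Once these identifications are in place, locality and the braided equivalence with the symmetric category $\Rep(G)$ follow without further computation.
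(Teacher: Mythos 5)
Your proposal follows essentially the same route as the paper: build a unitary tensor equivalence $F\colon\langle\theta\rangle_0\to\Rep(G)$ out of the charged field spaces $H_\rho$ carrying the $G$-action, and transport the flip of $\Rep(G)$ back to a symmetric braiding on $\langle\theta\rangle_0$. The one place where you undersell the work is the claim that locality then holds ``tautologically.'' It does not: defining the braiding as the pullback of the flip under $F$ only gives an abstract unitary in $\Hom(\tau\sigma,\sigma\tau)$, and one must still check that this operator is the one relating $\psi_\tau\psi_\sigma$ to $\psi_\sigma\psi_\tau$ inside $\M$. The paper does this by exhibiting the braiding explicitly as $\varepsilon_{\tau,\sigma}=\sum_{i,j}\psi_{\sigma,j}\psi_{\tau,i}\psi^\ast_{\sigma,j}\psi^\ast_{\tau,i}$ and verifying that the square with the tensorator $T_{\tau,\sigma}$ and the canonical flip commutes; this computation rests on the structural fact that for a \emph{group} orbifold each irreducible $\tau\prec\theta$ is implemented by its fields, $\tau=\sum_i\Ad\psi_{\tau,i}$, i.e.\ $\sum_i\psi_{\tau,i}\psi_{\tau,i}^\ast$ is a multiple of $1$ (the content of Lemma \ref{lem:nodefect}, using $n_\tau=d(\tau)$, not merely $a_\tau=1_{H_\tau}$). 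You do list the right ingredients (Kac-type condition, compatibility of $\rho\mapsto\rhobar$ with standard solutions via \cite[\Cor 3.10]{LoRo1997}), so the outline is sound; but the sentence ``locality follows without further computation'' is precisely where the paper's computation lives, and the appeal to DR reconstruction as an alternative would itself require identifying the given $\M$ with the crossed product compatibly with the fields.
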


\begin{proof}
Denote as before by $\iota$ the inclusion morphism, by $H_\rho = \Hom(\iota,\iota\rho)$ the space of charged fields for $\rho\prec\theta$.
Consider the functor $F\colon \langle\theta\rangle_0 \to \Rep(G)$ given on finite-dimensional $\tau,\sigma \prec \theta$ and $x\in \Hom(\tau,\sigma)$ by
\begin{align}
  F(\tau) &:= \bigoplus_{\rho\prec\theta} \Hom(\rho,\tau)\otimes H_\rho\\
  F(x) &:= \bigoplus_{\rho\prec\theta}\sum_{s_j,t_i} s_j t_i^\ast \otimes (\psi\in H_\rho\mapsto \iota(s_j^\ast x t_i)\psi) 
\end{align}
where the sums run over inequivalent irreducible subendomorphisms $\rho\prec\theta$ and orthonormal bases $\{s_j\},\{t_i\}$ respectively of $\Hom(\rho,\sigma)$ and $\Hom(\rho,\tau)$. Note that $\Hom(\rho,\tau)\otimes H_\rho$ is a finite-dimensional Hilbert space with inner product $(t_1\otimes\psi_1,t_2\otimes \psi_2) 1 := \psi^\ast_1\iota(t^\ast_1t_2)\psi_2$ and it is non-zero only for finitely many $\rho$. The group acts unitarily on it by $U(g)(t\otimes \psi) := t\otimes \alpha_g(\psi)$, $g\in G$.

We get a unitary tensorator
\begin{align}
  T_{\tau,\sigma}&\colon
  F(\tau)\otimes F(\sigma)\to F(\tau\sigma)\\
  &(t_1\otimes\psi_1)\otimes (t_2\otimes\psi_2) \mapsto 
  \bigoplus_{\rho\prec\theta} \sum_{r_k} r_k\otimes \iota(r_k^\ast)(\iota(t_1)\psi_1)(\iota(t_2)\psi_2).
\end{align}
where $\{r_k\}$ is an orthonormal basis of $\Hom(\rho,\tau\sigma)$.
Let $\tau,\sigma \prec \theta$ be irreducible and note that $\tau = \sum_{i}\Ad\psi_{\tau,i}$ and $\sigma = \sum_{j}\Ad\psi_{\sigma,j}$
for appropriate orthonormal bases of $H_\tau$ and $H_\sigma$, respectively, with respect to the inner product $\psi_1^*\psi_2 =: (\psi_1,\psi_2) 1$ (\cf \cite[\Eq (2.16)]{Re1994coset} and the proof of Corollary \ref{cor:braidingsymm}).
We get that  the formula (\cf \cite[\Eq (3.8)]{Re1994coset}) 
\begin{align}\label{eq:braidingdef}
  \varepsilon_{\tau,\sigma} 
  := \sum_{i,j} \psi_{\sigma,j}\psi_{\tau,i}\psi^\ast_{\sigma,j}\psi^\ast_{\tau,i}\in \Hom(\tau\sigma,\sigma\tau)
\end{align}
defines by natural extension a unitary braiding which makes the functor $F$ a unitary braided tensor functor and gives $\langle \theta\rangle_0$ the structure of a braided (in fact symmetric) rigid tensor \Cstar-category equivalent (via $F$) to $\Rep(G)$.
Namely, we only have to check that the diagram
\begin{equation}
\begin{tikzcd}
F(\tau)\otimes F(\sigma) \arrow[rr, "{c_{F(\tau),F(\sigma)}}"] \arrow[d, "{T_{\tau,\sigma}}"] &  & F(\sigma)\otimes F(\tau) \arrow[d, "{T_{\sigma,\tau}}"] \\
F(\tau\sigma) \arrow[rr, "{F(\varepsilon_{\sigma,\tau})}"]                                    &  & F(\sigma\tau)                                          
\end{tikzcd}
\end{equation}
commutes, where $c_{X,Y}\colon X\otimes Y\to Y\otimes X$ is the canonical flip $c_{X,Y}(x\otimes y)=y\otimes x$ in $\Rep(G)$.
In particular, with this braiding the irreducible discrete subfactor $\M^G\subset \M$ becomes local.
\end{proof}

\begin{cor}
  Let $\N\subset \M$ be an irreducible semidiscrete type $\III$ subfactor with depth 2
  and denote the dual canonical endomorphism by $\theta$.
  Then there is a braiding on $\langle \theta\rangle_0$ for which 
  $\N\subset \M$ is local if and only if $\M=\N^G$ for a minimal action of a compact metrizable group $G$.

  Furthermore, in this case the braiding is unique.
\end{cor}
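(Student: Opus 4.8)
The plan is to combine the depth-2 structural results already established (Proposition \ref{prop:depth2discrete}, Theorem \ref{thm:noquantum}, Proposition \ref{homgammaiscomm}) with the compact-group orbifold characterization of Proposition \ref{prop:cptgrouporbi} and the existence/uniqueness results on braidings coming from Proposition \ref{prop:Gsubflocal}. First I would observe that the ``if'' direction is essentially immediate: if $\M = \N^G$ for a minimal action of a compact metrizable group $G$, then by Proposition \ref{prop:Gsubflocal} there is a braiding on $\langle\theta\rangle_0$ making $\N\subset\M$ local (and $\langle\theta\rangle_0$ symmetric, equivalent to $\Rep(G)$). Here one must note that $\N\subset\M$ is indeed semidiscrete with depth $2$ and irreducible type $\III$: semidiscreteness and irreducibility come from the orbifold construction with a minimal action on type $\III$ factors (\cf \cite[\Sec 3]{IzLoPo1998}), while the depth $2$ condition is the classical statement that fixed-point inclusions under compact group actions have depth $2$ (\cite{EnNe1996}, also the dual of the Kac algebra statement in \cite[\Thm 12.6]{EnNe1996}). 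So the braided category condition is satisfiable.

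For the ``only if'' direction, assume there is a braiding on $\langle\theta\rangle_0$ for which $\N\subset\M$ is local. By Proposition \ref{prop:depth2discrete} the subfactor is then irreducible \emph{discrete} local type $\III$ with depth $2$, so Theorem \ref{thm:noquantum} applies and gives that $K(\N\subset\M)$ is a compact metrizable \emph{group}, acting on $\M$ with $\N = \M^{K(\N\subset\M)}$ by Theorem \ref{thm:genorbi}. Faithfulness of the action and minimality (i.e.\ $\N'\cap\M = \CC 1$) hold by construction since the subfactor is irreducible. Thus $G := K(\N\subset\M)$ is the desired compact metrizable group with a minimal (hence faithful minimal) action and $\M = \N^{G}$. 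This is exactly the statement of Proposition \ref{prop:cptgrouporbi} specialized to the depth 2 case, where ``every $\phi\in\Extr(\UCP^\sharp_\N(\M,\Omega))$ is an automorphism'' is guaranteed by Theorem \ref{thm:noquantum}.

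For the uniqueness of the braiding, the key point is that once we know $\M = \N^{G}$ with $G$ a compact metrizable group, Corollary \ref{cor:braidingsymm} tells us the braiding is a \emph{symmetry}, and moreover by Lemma \ref{lem:nodefect} (which applies because $n_\rho = d(\rho)$ and $a_\rho = 1_{H_\rho}$ in this situation, the latter by Proposition \ref{prop:arho=1}) we get the explicit formula
\begin{align}
\varepsilon_{\rho,\sigma} = \frac{1}{d(\rho)d(\sigma)} \sum_{r,s} \psi_{\rho,r} \psi_{\sigma,s} \psi_{\rho,r}^* \psi_{\sigma,s}^*
\end{align}
valid on irreducible $\rho,\sigma\prec\theta$. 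The right-hand side is manifestly independent of the chosen braiding — it is written purely in terms of Pimsner--Popa bases of charged fields and the conditional expectation, both of which are intrinsic to the subfactor. By naturality, a braiding on a rigid tensor \Cstar-category is determined by its values on (a representative set of) irreducibles, so this shows the braiding, if it exists making the subfactor local, is unique. I expect the main obstacle to be purely bookkeeping: making sure that the hypotheses of each cited result line up — in particular verifying that in the ``if'' direction the orbifold subfactor is genuinely irreducible discrete local with depth $2$ so that all the machinery applies, and in the uniqueness argument checking carefully that Lemma \ref{lem:nodefect}'s hypotheses ($n_\rho = d(\rho)$) are met, which requires invoking the lemma preceding Theorem \ref{thm:noquantum} establishing $n_\rho = d(\rho)$ from the depth 2 and $a_\rho = 1_{H_\rho}$ conditions.
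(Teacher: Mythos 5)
Your proposal is correct and follows essentially the same route as the paper: the equivalence is obtained by combining Proposition \ref{prop:Gsubflocal} with Theorem \ref{thm:noquantum} (via Proposition \ref{prop:depth2discrete}), and uniqueness of the braiding comes from the explicit formula in the proof of Corollary \ref{cor:braidingsymm}, which pins $\varepsilon_{\rho,\sigma}$ down on irreducibles in terms of data intrinsic to the subfactor. Your extra bookkeeping (checking $n_\rho = d(\rho)$ via the lemma preceding Theorem \ref{thm:noquantum}, and invoking naturality to extend from irreducibles) only makes explicit what the paper leaves implicit.
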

\begin{proof} 
  The first statement follows from Proposition \ref{prop:Gsubflocal} and Theorem \ref{thm:noquantum}. 
  If $\N\subset \M$  is local the proof of Corollary \ref{cor:braidingsymm} 
  shows that the braiding already
 coincides with the one given in the proof of Proposition \ref{prop:Gsubflocal}.
\end{proof}

%%%
\subsection{Double coset orbifolds}\label{sec:doublecosets}
%%%

Let us consider a closed subgroup $H\subset G$ of a compact metrizable group $G$ acting on $\M$ as in the previous section. 
Then we can consider the intermediate group-subgroup inclusion $\M^G\subset \M^H$. The goal of this section is to compute $K(\M^G\subset \M^H)$ when the inclusion is irreducible and local. 

Denote by $G\CS H$ or $H\backslash G/H$ 
the set of $H$-double cosets $HxH = \{y_1xy_2, y_1,y_2\in H\}$ for $x\in G$ and denote by $P:G\to G\CS H$ the projection map. Then $G\CS H$ is a compact Hausdorff space equipped with the finest topology which makes $P$ continuous. We want to endow $G\CS H$ with a compact hypergroup structure in the sense of Definition \ref{def:CompactHypergroup}. Denote the pushforward of $P$ to complex Radon measures by $\tilde{P}:M(G)\to M(G\CS H)$
\begin{align}
\tilde{P}(\mu)(f) := \mu(f\circ P),\quad \mu\in M(G), f\in C(G\CS H).
\end{align}
Let $Q:C(G)\to C(G\CS H)$ be
\begin{align}
Q(f)(HxH) := \int_{H\times H} f(y_1 x y_2)\dd\mu_H(y_1)\dd\mu_H(y_2), \quad f\in C(G), x\in G 
\end{align}
where $\mu_H$ is the Haar measure on $H$, and the pullback $\tilde{Q}:M(G\CS H)\to M(G)$
\begin{align}
\tilde{Q}(\mu)(f) := \mu(Q(f)), \quad \mu\in M(G\CS H), f\in C(G).
\end{align}

\begin{lem}\label{lem:measuresoncosets}
We have the following:
\begin{enumerate}
\item $Q(f\circ P) = f$ for all $f\in C(G\CS H)$;
\item $\tilde{P}\circ\tilde{Q} = \id_{M(G\CS H)}$;
\item $\tilde{Q}\circ\tilde{P} = \mu_H\ast\cdot\ast\mu_H$, where $\mu_H$ is seen as an element in $M(G)$;
\item $\mu_H\ast\tilde{Q}(\mu) = \tilde{Q}(\mu)\ast\mu_H = \tilde{Q}(\mu)$ for all $\mu\in M(G\CS H)$;
\item $\tilde{Q}(M(G\CS H)) = \{\mu_H\ast \mu\ast\mu_H: \mu\in M(G)\}$.
\end{enumerate}
\end{lem}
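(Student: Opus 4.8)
The five assertions are essentially bookkeeping facts about the averaging operators $Q$ and $P$ and their duals, so the plan is to prove (1) first by direct computation, then derive (2) and (3) from it by dualizing, and finally obtain (4) and (5) from (3) together with the idempotency $\mu_H * \mu_H = \mu_H$ of the Haar measure of $H$ inside $M(G)$.

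\textbf{Step 1: prove (1).} For $f \in C(G\CS H)$ and $x\in G$, compute
\begin{align}
Q(f\circ P)(HxH) = \int_{H\times H} f(P(y_1 x y_2))\dd\mu_H(y_1)\dd\mu_H(y_2) = \int_{H\times H} f(HxH)\dd\mu_H(y_1)\dd\mu_H(y_2) = f(HxH),
\end{align}
since $P(y_1 x y_2) = H y_1 x y_2 H = HxH$ for all $y_1,y_2\in H$, and $\mu_H$ is a probability measure. Thus $Q(f\circ P)=f$.

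\textbf{Step 2: derive (2) and (3) by dualizing.} For (2), take $\mu\in M(G\CS H)$ and $f\in C(G\CS H)$; then $(\tilde P\circ\tilde Q)(\mu)(f) = \tilde Q(\mu)(f\circ P) = \mu(Q(f\circ P)) = \mu(f)$ by Step 1, so $\tilde P\circ\tilde Q = \id_{M(G\CS H)}$. For (3), first observe that for $f\in C(G)$ the function $Q(f)\circ P$ is, pointwise at $x\in G$, equal to $\int_{H\times H} f(y_1 x y_2)\dd\mu_H(y_1)\dd\mu_H(y_2)$, which (using left/right translation invariance of $\mu_H$ and writing the convolution in $M(G)$ via the formula $(\mu*\nu)(f)=\int\int f(gh)\dd\mu(g)\dd\nu(h)$) is exactly the function whose integral against any $\mu\in M(G)$ gives $(\mu_H*\mu*\mu_H)(f)$. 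Concretely, for $\mu\in M(G)$,
\begin{align}
(\tilde Q\circ\tilde P)(\mu)(f) = \tilde P(\mu)(Q(f)) = \mu(Q(f)\circ P) = \int_G \int_{H\times H} f(y_1 x y_2)\dd\mu_H(y_1)\dd\mu_H(y_2)\dd\mu(x) = (\mu_H*\mu*\mu_H)(f),
\end{align}
which proves (3).

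\textbf{Step 3: derive (4) and (5).} For (4), apply (3) and the identity $\mu_H * \mu_H = \mu_H$ in $M(G)$ (Haar measure of the compact group $H$, viewed via the inclusion $H\subset G$, is idempotent under convolution). Given $\mu\in M(G\CS H)$, write $\mu = \tilde P(\tilde Q(\mu))$? No --- better: every element of $\tilde Q(M(G\CS H))$ has, by (3) applied with $\nu := \tilde P^{-1}$-preimage, the form $\mu_H * \mu' * \mu_H$ for some $\mu'\in M(G)$; indeed $\tilde Q(\mu) = \tilde Q(\tilde P(\tilde Q(\mu)))$ by (2), and the right side equals $\mu_H * \tilde Q(\mu) * \mu_H$ by (3), so $\tilde Q(\mu) = \mu_H * \tilde Q(\mu) * \mu_H$; then $\mu_H * \tilde Q(\mu) = \mu_H * \mu_H * \tilde Q(\mu) * \mu_H = \mu_H * \tilde Q(\mu) * \mu_H = \tilde Q(\mu)$, and symmetrically on the right. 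This gives (4). For (5), the inclusion $\tilde Q(M(G\CS H)) \subset \{\mu_H * \mu * \mu_H : \mu\in M(G)\}$ is immediate from the computation just made (take $\mu' := \tilde Q(\mu)$ itself, or any preimage). Conversely, given $\mu\in M(G)$, set $\nu := \tilde P(\mu) \in M(G\CS H)$; then by (3), $\tilde Q(\nu) = \tilde Q(\tilde P(\mu)) = \mu_H * \mu * \mu_H$, so $\mu_H*\mu*\mu_H \in \tilde Q(M(G\CS H))$, giving the reverse inclusion and hence (5).

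\textbf{Main obstacle.} None of the steps is deep; the only point requiring care is keeping the convolution conventions straight --- in particular verifying that the double-$H$-average $\int_{H\times H} f(y_1 x y_2)$ really is the kernel of $\mu_H * (\,\cdot\,) * \mu_H$ under the chosen convention $(\mu*\nu)(f) = \int\int f(gh)\dd\mu(g)\dd\nu(h)$ for $M(G)$, and that $\mu_H$ viewed in $M(G)$ is genuinely idempotent (this is just invariance of $\mu_H$ under left/right translation by $H$). Once the convention is pinned down, everything is a short manipulation, so I expect the write-up to be essentially the display computations above with a sentence of justification each.
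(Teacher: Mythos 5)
Your proof is correct and follows essentially the same route as the paper: the paper declares (1)--(3) "immediately checked" (your Steps 1--2 simply write out those checks), derives (4) from exactly the identity $\tilde{Q}(\mu)=\tilde{Q}\circ\tilde{P}\circ\tilde{Q}(\mu)=\mu_H\ast\tilde{Q}(\mu)\ast\mu_H$ together with $\mu_H\ast\mu_H=\mu_H$, and obtains (5) from (3) and (4) as you do. The only cosmetic issue is the self-correcting aside ("write $\mu=\tilde P(\tilde Q(\mu))$? No --- better:") in Step 3, which should be removed from a final write-up.
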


\begin{proof}
The first three properties are immediately checked. Property (4) follows from $\tilde{Q}(\mu)=\tilde{Q}\circ\tilde{P}\circ\tilde{Q}(\mu)=\mu_H\ast \tilde{Q}(\mu)\ast \mu_H$ and $\mu_H\ast\mu_H=\mu_H.$ Property (5) follows from (3) and (4).
\end{proof}

The convolution on $M(G\CS H)$ is defined so that $\tilde{Q}$ preserves it, namely
\begin{align}
\mu_1\ast\mu_2:=\tilde{P}(\tilde{Q}(\mu_1)\ast\tilde{Q}(\mu_2)).
\end{align}
Indeed, $\tilde{Q}(\mu_1\ast\mu_2)=\tilde{Q}(\tilde{P}(\tilde{Q}(\mu_1)\ast\tilde{Q}(\mu_2))) = \mu_H\ast(\tilde{Q}(\mu_1)\ast\tilde{Q}(\mu_2))\ast\mu_H = \tilde{Q}(\mu_1)\ast\tilde{Q}(\mu_2)$.
On Dirac measures $\delta_{HxH},\delta_{HyH}$, $x,y\in G$, the convolution reads
\begin{align}\label{eq:convdiraccoset}
\delta_{HxH}\ast\delta_{HyH} = \int_{H}\delta_{HxzyH}\dd\mu_H(z).
\end{align}
Indeed, $\tilde{Q}(\delta_{HxH}) = \mu_H\ast\delta_x\ast\mu_H$ and 
$\delta_{HxH}\ast\delta_{HyH} = \tilde{P}(\mu_H\ast\delta_x\ast\mu_H\ast\delta_y\ast\mu_H) = \tilde{P}\circ\tilde{Q}\circ\tilde{P}(\delta_x\ast\mu_H\ast\delta_y) = \tilde{P}(\delta_x\ast\mu_H\ast\delta_y) = \int_{H}\delta_{HxzyH}\dd\mu_H(z)$.

Similarly the adjoint on $M(G\CS H)$ is defined so that $\tilde{Q}$ preserves it, namely
\begin{align}
\mu^{\ast} := \tilde{P}(\tilde{Q}(\mu)^{\ast}).
\end{align}
Indeed, $\tilde{Q}(\mu^\ast) = \tilde{Q}(\tilde{P}(\tilde{Q}(\mu)^\ast)) = \mu_H\ast \tilde{Q}(\mu)^\ast \ast\mu_H = (\mu_H\ast \tilde{Q}(\mu)\ast \mu_H)^\ast = \tilde{Q}(\mu)^\ast$.
On Dirac measures it reads
\begin{align}
\delta_{HxH}^\ast = \tilde{P}((\mu_H\ast\delta_x\ast\mu_H)^\ast) = \tilde{P}(\mu_H\ast\delta_{x^{-1}}\ast\mu_H) = \delta_{Hx^{-1}H}.
\end{align}

The space $G\CS H$ equipped with this convolution and adjoint, and with identity element $HeH$, is a compact DJS-hypergroup \cite[\Thm 1.1.9]{BlHe1995}. \Cf Remark \ref{rmk:DJS}. It is also a compact hypergroup in the sense of Definition \ref{def:CompactHypergroup} with Haar measure
\begin{align}\label{eq:haarcosets}
\mu_{G\CS H} := \tilde{P}(\mu_G).
\end{align}

If the action $\alpha$ of $G$ on $\M$ is faithful and minimal, then $\M^G\subset\M^H$ is an irreducible semidiscrete subfactor with normal faithful conditional expectation given by $E_H^G := E_{\restriction \M^H}$, where $E^G := E$. 
Observe that $\Omega\in\Hil$ induces an $E_H^G$-invariant state and it is standard for $\M^H$ on the subspace $\overline{\M^H\Omega}$.

If $\M$ and $\M^G$ are type $\III$, then $\M^H$ is also type $\III$.
By \cite[\Thm 2.7]{To2009}, $\M^G\subset\M^H$ is discrete and its dual canonical endomorphisms $\theta\in\End(\M^G)$ denoted by $\theta_{\M^G\subset\M^H}$ is a subendomorphism of $\theta_{\M^G\subset\M}$. 
In particular, $\M^G\subset\M^H$ is local whenever $\M^G\subset\M$ is local, and this is always the case for some choice of braiding on $\langle \theta_{\M^G\subset\M}\rangle_0$ by Proposition \ref{prop:Gsubflocal}. 

\begin{cor}\label{cor:GHsubfactorlocal}
There exists a braiding on $\langle\theta_{\M^G\subset\M^H}\rangle_0$ such that the subfactor $\M^G\subset \M^H$ is local.
\end{cor}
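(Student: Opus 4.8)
The plan is to deduce Corollary \ref{cor:GHsubfactorlocal} directly from Proposition \ref{prop:Gsubflocal} together with the fact, recalled just above, that the dual canonical endomorphism $\theta_{\M^G\subset\M^H}$ of the intermediate subfactor $\M^G\subset\M^H$ is a subendomorphism of $\theta_{\M^G\subset\M}$, so that $\langle\theta_{\M^G\subset\M^H}\rangle_0$ is a full rigid tensor \Cstar-subcategory of $\langle\theta_{\M^G\subset\M}\rangle_0$. By Proposition \ref{prop:Gsubflocal} the latter carries a braiding $\{\varepsilon_{\rho,\sigma}\}$ for which $\M^G\subset\M$ is local; restricting this braiding to the subcategory $\langle\theta_{\M^G\subset\M^H}\rangle_0$ gives a braiding on it, since a full braided tensor \Cstar-subcategory inherits the braiding. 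It then remains to check that $\M^G\subset\M^H$ is local with respect to this restricted braiding, i.e.\ that it admits a commutative generalized Q-system of intertwiners in the sense of Definition \ref{def:localsubf}, or equivalently that its Pimsner--Popa bases of charged fields satisfy the commutation relations \eqref{eq:commutfields}.

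First I would fix a Pimsner--Popa basis of charged fields $\{\psi_{\rho,r}\}$ for $\M^G\subset\M^H$ with respect to $E_H^G = E_{\restriction\M^H}$, where $\rho$ runs over the irreducible subendomorphisms of $\theta_{\M^G\subset\M^H}$. Since these $\rho$ are in particular subendomorphisms of $\theta_{\M^G\subset\M}$, each $\psi_{\rho,r}\in\Hom(\iota_{\M^G\subset\M^H},\iota_{\M^G\subset\M^H}\rho)$ lies inside $\M^H\subset\M$, and viewing it inside $\M$ it is a charged field for $\M^G\subset\M$ of the same charge $\rho$ (up to the standard identifications; this is the content of the inclusion of dual canonical endomorphisms, cf.\ \cite[\Sec 2]{To2009} and \cite[\Lem 6.15]{DeGi2018}). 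Now, $\M^G\subset\M$ is local for the braiding of Proposition \ref{prop:Gsubflocal}, so by the Lemma after Definition \ref{def:localsubf} (equation \eqref{eq:commutfields}) we have $\iota_{\M^G\subset\M}(\varepsilon^{\pm}_{\sigma,\rho})\,\psi_\sigma\psi_\rho = \psi_\rho\psi_\sigma$ for all charged fields of $\M^G\subset\M$, in particular for the $\psi_{\rho,r}$, $\psi_{\sigma,s}$ above. Since the product $\psi_\rho\psi_\sigma$ and the intertwiner $\varepsilon^{\pm}_{\sigma,\rho}$ are literally the same operators whether computed in $\M^H$ or in $\M$ (the inclusion $\M^H\subset\M$ is unital and $\varepsilon^{\pm}_{\sigma,\rho}\in\M^G\subset\M^H$), this identity holds verbatim in $\M^H$, i.e.\ $\iota_{\M^G\subset\M^H}(\varepsilon^{\pm}_{\sigma,\rho})\,\psi_{\sigma,s}\psi_{\rho,r} = \psi_{\rho,r}\psi_{\sigma,s}$. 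Extending by linear combination to arbitrary charged fields in $H_\rho$, $H_\sigma$, we obtain the commutation relations \eqref{eq:commutfields} for $\M^G\subset\M^H$ with respect to the restricted braiding.

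Finally, by the Lemma preceding Proposition \ref{prop:arho=1}, the commutation relations \eqref{eq:commutfields} are equivalent to the existence of a commutative generalized Q-system of intertwiners, so $\M^G\subset\M^H$ is local in the sense of Definition \ref{def:localsubf}; this completes the proof. The only point requiring a little care — and the step I expect to be the main obstacle in writing a careful proof — is the bookkeeping of the identifications: one must verify cleanly that the charged fields of $\M^G\subset\M^H$ can be regarded as charged fields of $\M^G\subset\M$ of the same charge and that the braiding intertwiners of $\langle\theta_{\M^G\subset\M^H}\rangle_0$ agree with those of $\langle\theta_{\M^G\subset\M}\rangle_0$ under the inclusion of categories. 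Both facts are consequences of the results of \cite{To2009} on the dual canonical endomorphism of a group-subgroup intermediate subfactor (recalled just before the statement) and of the fullness of the subcategory, but it is worth spelling out the identifications to make the argument watertight.
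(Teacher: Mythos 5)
Your proof is correct and follows the same route the paper takes: the paper derives this corollary directly from the sentence preceding it, namely that $\theta_{\M^G\subset\M^H}\prec\theta_{\M^G\subset\M}$ by Tomatsu's theorem, so the braiding from Proposition \ref{prop:Gsubflocal} restricts to the full subcategory $\langle\theta_{\M^G\subset\M^H}\rangle_0$ and the commutation relations \eqref{eq:commutfields} for charged fields of $\M^G\subset\M$ (which include those of $\M^G\subset\M^H$, since $\Hom(\iota_{\M^G\subset\M^H},\iota_{\M^G\subset\M^H}\rho)\subset\Hom(\iota_{\M^G\subset\M},\iota_{\M^G\subset\M}\rho)$) hold verbatim in $\M^H$. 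You have simply spelled out the identifications that the paper leaves implicit, which is a faithful and careful rendering of the same argument.
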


We show below that $K(\M^G\subset\M^H)$ can be identified with the double coset hypergroup $G\CS H$.

\begin{prop}\label{prop:actioncosets}
Let $\alpha$ be faithful and minimal, and let $\M$, $\M^G$ be of type $\III$, assume in addition that $\M^G \subset\M^H$ is local. Then the map 
\begin{align}
\tilde{\beta} := (\tilde{\alpha}\circ\tilde{Q})_{\restriction \M^H} : M(G\CS H) \to {\Span}_{\CC}\{{\UCP}^{\sharp}_{\M^G}(\M^H,\Omega)\}
\end{align}
with $\tilde{\alpha}$ defined in \eqref{eq:liftaction}, is a unital involutive algebra isomorphism. Its restriction to probability measures $\tilde{\beta}_{\restriction} : P(G\CS H) \to {\UCP}^{\sharp}_{\M^G}(\M^H,\Omega)$ is bicontinuous with respect to the weak* topology and the BW topology. Moreover, $\tilde{\beta}(\mu_{G\CS H}) = E_H^G$.

In particular, $\{\tilde{\beta}(\delta_{HxH}): HxH\in G\CS H\} = \Extr({\UCP}^{\sharp}_{\M^G}(\M^H,\Omega)) \equiv K(\M^G\subset\M^H)$.
\end{prop}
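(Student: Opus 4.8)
The plan is to verify the three claimed structures for $\tilde\beta$ — that it is a well-defined map into $\Span_{\CC}\{\UCP^\sharp_{\M^G}(\M^H,\Omega)\}$, that it is a unital involutive algebra homomorphism, and that it is a bijection — and then read off the statement about extreme points and the image of the Haar measure. First I would note that for $\mu \in M(G\CS H)$ the measure $\tilde Q(\mu)\in M(G)$ satisfies $\mu_H \ast \tilde Q(\mu)\ast\mu_H = \tilde Q(\mu)$ by Lemma \ref{lem:measuresoncosets}(4), so the ucp map $\tilde\alpha(\tilde Q(\mu))$ on $\M$ is two-sided absorbed by $E^G = \tilde\alpha(\mu_G)$ on the relevant pieces; in particular it fixes $\M^G$ pointwise and maps $\M^H$ into itself (since $\tilde\alpha(\mu_H)=E^G_H$ restricted appropriately and $\tilde\alpha(\delta_y)=\alpha(y)$ leaves $\M^H$ invariant for $y\in H$). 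Hence the restriction to $\M^H$ lands in the span of $\Omega$-adjointable $\M^G$-bimodular ucp maps on $\M^H$, using Corollary \ref{cor:localNbimoducp} and the locality hypothesis on $\M^G\subset\M^H$. Multiplicativity and involutivity of $\tilde\beta$ follow from the fact that $\tilde\alpha$ is an involutive monoid homomorphism (Definition \ref{def:action}, Proposition \ref{prop:actionGisactionK}) together with the fact that the convolution and adjoint on $M(G\CS H)$ were \emph{defined} precisely so that $\tilde Q$ preserves them; unitality is $\tilde\beta(\delta_{HeH}) = \tilde\alpha(\mu_H)_{\restriction\M^H} = E^G_H$ acting as the identity on $\M^H$, i.e.\ $\id_{\M^H}$. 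The identity $\tilde\beta(\mu_{G\CS H}) = \tilde\alpha(\tilde Q(\tilde P(\mu_G)))_{\restriction\M^H} = \tilde\alpha(\mu_H\ast\mu_G\ast\mu_H)_{\restriction\M^H} = \tilde\alpha(\mu_G)_{\restriction\M^H} = E^G_{\restriction\M^H} = E^G_H$ uses $\mu_H\ast\mu_G = \mu_G = \mu_G\ast\mu_H$.

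The continuity statement is routine: $\tilde Q$ is weak*-continuous (it is the adjoint of the bounded operator $Q$ on $C(G)$), $\tilde\alpha$ is weak*-to-BW continuous by the remark after Definition \ref{def:action}, and restriction to $\M^H$ preserves BW convergence; for the reverse direction one uses that $\tilde\beta$ maps the weak*-compact set $P(G\CS H)$ continuously and injectively into a Hausdorff space, hence homeomorphically onto its image. So the crux is \textbf{bijectivity}, and specifically surjectivity onto $\Span_{\CC}\{\UCP^\sharp_{\M^G}(\M^H,\Omega)\}$. The natural route is to apply the uniqueness machinery already developed: by Theorem \ref{thm:ucpstatesduality} applied to the subfactor $\M^G\subset\M^H$ (which is irreducible by minimality of $\alpha$, discrete by \cite{To2009}, local by Corollary \ref{cor:GHsubfactorlocal}), $\UCP^\sharp_{\M^G}(\M^H,\Omega) \cong P(K(\M^G\subset\M^H))$, and $\Span_{\CC}$ of it is $M(K(\M^G\subset\M^H))$ by Proposition \ref{prop:spancext}. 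On the other hand $G\CS H$ with the hypergroup structure built above acts on $\M^H$ via $x \mapsto \tilde\beta(\delta_{HxH})$ — this is a faithful action in the sense of Definition \ref{def:action} (faithfulness because $\alpha$ is faithful and $\tilde\alpha$ is injective on $\tilde Q(M(G\CS H))$, which is $\{\mu_H\ast\mu\ast\mu_H\}$ by Lemma \ref{lem:measuresoncosets}(5)), and its fixed point algebra is $(\M^H)^{G\CS H} = \M^G$ because $\tilde\beta(\mu_{G\CS H}) = E^G_H$ has range $\M^G$. Therefore Proposition \ref{prop:uniqueKact} identifies $G\CS H$ with $K(\M^G\subset\M^H)$ via $HxH\mapsto\tilde\beta(\delta_{HxH})$, and Proposition \ref{prop:alphatildefactorization} upgrades this to the statement that $\tilde\beta = \kappa\circ\hat\beta$ is a linear bijection $M(G\CS H)\to\Span_{\CC}\{\UCP^\sharp_{\M^G}(\M^H,\Omega)\}$.

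The last sentence then follows formally: $\tilde\beta$ restricts to a homeomorphism $P(G\CS H)\to\UCP^\sharp_{\M^G}(\M^H,\Omega)$ carrying the extreme points to the extreme points, and the extreme points of $P(G\CS H)$ are exactly the Dirac measures $\delta_{HxH}$, so $\{\tilde\beta(\delta_{HxH}) : HxH\in G\CS H\} = \Extr(\UCP^\sharp_{\M^G}(\M^H,\Omega)) = K(\M^G\subset\M^H)$. The main obstacle I anticipate is not any single hard estimate but the bookkeeping in the first step: one must carefully check that $\tilde\alpha(\tilde Q(\mu))$, a priori a ucp map on all of $\M$, genuinely restricts to an $\Omega$-adjointable $\M^G$-bimodular ucp map \emph{on $\M^H$} — i.e.\ that the restriction is well-defined (leaves $\M^H$ invariant), preserves the restricted standard vector, and is adjointable with respect to $\M^H$ acting on $\overline{\M^H\Omega}$ — and that the operations transported through $\tilde Q$ really match the ucp-composition and $\Omega$-adjoint on that smaller algebra rather than on $\M$. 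Once the $H$-averaging identities of Lemma \ref{lem:measuresoncosets} are in hand this is mechanical, but it is the place where sign errors and domain confusions are most likely.
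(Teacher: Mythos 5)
Your verification of the homomorphism property, unitality, the identity $\tilde{\beta}(\mu_{G\CS H}) = E_H^G$, the continuity, and injectivity all match the paper's argument. The gap is in surjectivity. You propose to get bijectivity of $\tilde{\beta}$ by first declaring $HxH \mapsto \tilde{\beta}(\delta_{HxH})$ to be a faithful action of $G\CS H$ on $\M^H$ and then invoking Proposition \ref{prop:uniqueKact} and Proposition \ref{prop:alphatildefactorization}. But an action in the sense of Definition \ref{def:action} must take values in $\Extr({\UCP}^{\sharp}(\M^H,\Omega))$, and nothing in your argument shows that $\tilde{\beta}(\delta_{HxH}) = (E^H\circ\alpha(x)\circ E^H)_{\restriction \M^H}$ is an extreme point; on the face of it this map is an average of automorphisms over $H\times H$, so its extremality (even just within ${\UCP}^{\sharp}_{\M^G}(\M^H,\Omega)$) is exactly the nontrivial content of the proposition's final sentence. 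Since in your scheme that extremality is only recovered at the end, \emph{from} bijectivity, while bijectivity is obtained via Proposition \ref{prop:uniqueKact}, which presupposes it, the argument is circular. This is also why the paper proves Proposition \ref{prop:actioncosets} first and only afterwards deduces, in Proposition \ref{prop:doublecosets}, that $\beta$ is an action.

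The paper's route to surjectivity avoids extremality altogether: by Proposition \ref{prop:alphatildefactorization} applied to $\M^G\subset\M$, the map $\tilde{\alpha}$ is a linear bijection of $M(G)$ onto ${\Span}_{\CC}\{{\UCP}^{\sharp}_{\M^G}(\M,\Omega)\}$, and by Lemma \ref{lem:measuresoncosets}(5) the image of $\tilde{Q}$ is $\{\mu_H\ast\mu\ast\mu_H : \mu\in M(G)\}$, so the image of $\tilde{\alpha}\circ\tilde{Q}$ is ${\Span}_{\CC}\{E^H\circ\phi\circ E^H : \phi\in{\UCP}^{\sharp}_{\M^G}(\M,\Omega)\}$ where $E^H = \tilde{\alpha}(\mu_H)$. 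One then checks that restriction to $\M^H$ maps this set bijectively onto ${\Span}_{\CC}\{{\UCP}^{\sharp}_{\M^G}(\M^H,\Omega)\}$, the inverse being $\phi\mapsto\phi\circ E^H$ (which lies in ${\UCP}^{\sharp}_{\M^G}(\M,\Omega)$ by Corollary \ref{cor:localNbimoducp}, since it acts trivially on $\M^G$). Extremality of the maps $\tilde{\beta}(\delta_{HxH})$ is then a corollary of this bijection, since an affine homeomorphism $P(G\CS H)\to{\UCP}^{\sharp}_{\M^G}(\M^H,\Omega)$ carries Dirac measures onto extreme points. If you replace your surjectivity step by this direct identification, the rest of your argument goes through.
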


\begin{proof}
The maps $\tilde{\alpha}$ and $\tilde{Q}$ are unital involutive algebra homomorphisms, so is $\tilde{\beta}$. By Lemma \ref{lem:measuresoncosets}, $\tilde{Q}$ is injective onto $\{\mu_H\ast \mu\ast\mu_H:\mu\in M(G)\}$, and $\tilde{\alpha}$ is injective by Proposition \ref{prop:alphatildefactorization}. Thus $\tilde{\alpha}\circ\tilde{Q}$ is injective onto ${\Span}_{\CC}\{E^H\circ \phi\circ E^H: \phi\in{\UCP}^{\sharp}_{\M^G}(\M,\Omega)\}$, where $E^H := \tilde{\alpha}(\mu_H)$ is the normal faithful conditional expectation $\M \to \M^H \subset\M$. Now $\tilde{\beta}$ is bijective because the map $\phi\mapsto \phi\circ E^H$ from ${\UCP}^{\sharp}_{\M^G}(\M^H,\Omega)$ to $\{E^H\circ \phi\circ E^H: \phi\in{\UCP}^{\sharp}_{\M^G}(\M,\Omega)\}$ is bijective with inverse $E^H \circ\phi\circ E^H\mapsto E^H \circ\phi\circ E^H|_{\M^H}$.
The bicontinuity of $\tilde{\beta}$ follows because $\tilde{Q}$ and $\tilde{\alpha}$ are continuous, $P(G\CS H)$ is compact and ${\UCP}^{\sharp}_{\M^G}(\M^H,\Omega)$ is Hausdorff.
By Lemma \ref{lem:measuresoncosets}, we have $\tilde{\beta}(\mu_{G\CS H}) = \tilde{\alpha}(\tilde{Q} \circ \tilde{P}(\mu_G))_{\restriction \M^H} = \tilde{\alpha}(\mu_H \ast \mu_G \ast \mu_H)_{\restriction \M^H} = \tilde{\alpha}(\mu_G)_{\restriction \M^H} = E_H^G$.
\end{proof}

Let $\beta: G\CS H\to K(\M^G\subset \M^H)$ be the composition of the homeomorphism $\tilde{\beta}$ with the identification between $G\CS H$ and the Dirac measures $HxH\mapsto \delta_{HxH}$ as topological spaces. 

\begin{prop}\label{prop:doublecosets}
$\beta$ is an action of $G\CS H$ on $\M^H$ by $\Omega$-adjointable ucp maps in the sense of Definition \ref{def:action} and
\begin{align}
\M^G=(\M^H)^{G\CS H}.
\end{align}
\end{prop}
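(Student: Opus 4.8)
The plan is to deduce Proposition~\ref{prop:doublecosets} from Proposition~\ref{prop:actioncosets} together with the general uniqueness results of Section~\ref{sec:actionK}. First I would observe that by Corollary~\ref{cor:GHsubfactorlocal} the subfactor $\M^G\subset\M^H$ is local (for a suitable braiding), and it is irreducible and discrete by the discussion preceding the statement, so Theorem~\ref{thm:genorbi} applies to it: the compact hypergroup $K(\M^G\subset\M^H)$ acts faithfully on $\M^H$ and yields $\M^G$ as fixed point subalgebra. By Proposition~\ref{prop:actioncosets}, the homeomorphism $\beta\colon G\CS H\to K(\M^G\subset\M^H)$ lifts to the unital involutive algebra isomorphism $\tilde\beta\colon M(G\CS H)\to {\Span}_\CC\{{\UCP}^\sharp_{\M^G}(\M^H,\Omega)\}$, which in particular maps convolution to composition, the adjoint $\mu\mapsto\mu^*$ to the $\Omega$-adjoint, $\delta_{HeH}$ to $\id$, and sends probability measures to ${\UCP}^\sharp_{\M^G}(\M^H,\Omega)$ continuously.

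The key step is then to check that $\beta$ is an action in the precise sense of Definition~\ref{def:action}. For this I would set $\tilde\beta_{\restriction P(G\CS H)}$ as the required lift: it is valued in ${\UCP}^\sharp(\M^H,\Omega)$ (after viewing $\M^H\subset\B(\overline{\M^H\Omega})$ with $\Omega$ standard, as noted before Proposition~\ref{prop:actioncosets}), it is continuous from the weak$^*$ topology to the BW topology by Proposition~\ref{prop:actioncosets}, and it is an involutive monoid homomorphism because $\tilde\beta$ is an algebra $*$-homomorphism. One must also verify that for a probability measure $\mu$ one has $(\tilde\beta(\mu))(m)=\int_{G\CS H}(\beta(HxH))(m)\dd\mu(HxH)$ in the weak sense; this is immediate since $\tilde\beta(\mu)=\tilde\alpha(\tilde Q(\mu))_{\restriction\M^H}$ and the integral representations of $\tilde\alpha$ and of $\tilde Q$ combine via Fubini, using $\tilde Q(\delta_{HxH})=\mu_H\ast\delta_x\ast\mu_H$ and formula~\eqref{eq:convdiraccoset}. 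Thus $\beta$ is indeed an action, and since $\tilde\beta$ is a bijection onto the full span it is faithful, with image exactly the extreme points $\Extr({\UCP}^\sharp_{\M^G}(\M^H,\Omega))=K(\M^G\subset\M^H)$ by the last assertion of Proposition~\ref{prop:actioncosets}.

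Finally, for the fixed point algebra I would argue $(\M^H)^{G\CS H}=\M^G$ by combining $\tilde\beta(\mu_{G\CS H})=E_H^G$ (Proposition~\ref{prop:actioncosets}) with Proposition~\ref{prop:Eforaction}: the Haar measure $\mu_{G\CS H}$ of $G\CS H$ (given by \eqref{eq:haarcosets}) maps under $\tilde\beta$ to the conditional expectation $E_H^G\colon\M^H\to\M^G$, and by Proposition~\ref{prop:Eforaction} the range of $\tilde\beta(\mu_{G\CS H})$ is precisely $(\M^H)^{G\CS H}$. Since $\tilde\beta(\mu_{G\CS H})=E_H^G$ has range $\M^G$, we conclude $(\M^H)^{G\CS H}=\M^G$. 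Alternatively this also follows directly from Theorem~\ref{thm:genorbi} applied to $\M^G\subset\M^H$ together with the uniqueness statement therein, identifying the acting hypergroup with $G\CS H$ via $\beta$. I do not anticipate a serious obstacle here, since all the analytic content has already been packaged into Proposition~\ref{prop:actioncosets} and the Section~\ref{sec:actionK} machinery; the only mild subtlety is being careful that the standard vector $\Omega$ for $\M^H$ lives on the reduced space $\overline{\M^H\Omega}$, which has already been addressed in the text.
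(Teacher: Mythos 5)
Your proposal is correct and follows essentially the same route as the paper: the paper's proof likewise observes that the canonical lift of $\beta$ given by \eqref{eq:liftaction} coincides with $\tilde\beta$ from Proposition \ref{prop:actioncosets} (which is the Fubini verification you spell out), hence $\beta$ is an action, and then deduces $(\M^H)^{G\CS H}=\M^G$ from Theorem \ref{thm:genorbi}. Your alternative derivation of the fixed-point identity via Proposition \ref{prop:Eforaction} and $\tilde\beta(\mu_{G\CS H})=E_H^G$ is a harmless, equally valid variant of the same argument.
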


\begin{proof}
The first statement follows because the unique (affine and continuous in the weak* and BW topologies) lift of $\beta$ to $P(G\CS H)$ given by \eqref{eq:liftaction} coincides with $\tilde{\beta}$, hence it is an involutive monoid homomorphism by Proposition \ref{prop:actioncosets}. The second statement follows from the first by Theorem \ref{thm:genorbi}.
\end{proof}

%%%
\subsection{Quantum channels with infinite index}
%%%

Let us  consider an irreducible local discrete type $\III$ subfactor $\N\subset \M$. Denote by $\iota\colon \N\to\M$ the inclusion morphism.
Recall that a ucp map $\phi\colon \M\to \M$ fulfills $\phi\in K(\N\subset \M)$ (Definition \ref{def:CompactHypergroupUCP}) if and only if it is $\N$-bimodular and extreme by Corollary \ref{cor:localNbimoducp} and Lemma \ref{lem:extreme}. 

Longo defined in \cite{Lo2018} the \textbf{index} $\Ind(\phi)$ of $\phi$ to be the minimal index $[\M:\beta(\M)]\in[1,\infty]$ of the subfactor $\beta(\M)\subset \M$ where $\phi=v^*\beta(\slot)v$ is the minimal Connes--Stinespring representation, $v\in\M$, $v^*v = 1$ and $\beta\in\End(\M)$. Let us write $\dim(\phi):=\sqrt{\Ind(\phi)}$. Then $\dim(\phi) = d(\beta)$ by \cite{Lo1989}, \cite{LoRo1997}.
It is natural to ask in our setting whether $\Ind(\phi)$ is finite or infinite. If $\Ind(\phi)<\infty$, Longo calls $\phi$ a \textbf{quantum channel} \cite[\Sec 3]{Lo2018}. From \cite{Bi2016} it follows that for $[\M:\N]<\infty$ we have 
\begin{align}
\sum_{\phi\in K(\N\subset\M)}\dim(\phi)=[\M:\N].
\end{align}

\begin{lem}\label{lem:isometvandbeta}
Let $\phi\colon \M\to\M$ be a normal faithful ucp map, let $\phi=v^*\beta(\slot)v$ as above. Then $\phi$ is $\N$-bimodular if and only if $v\in\Hom(\iota,\beta\iota)$. In this case, $\phi$ is extreme if and only if $\beta$ is irreducible.
\end{lem}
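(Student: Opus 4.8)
The statement has two implications to prove. For the first, suppose $\phi=v^*\beta(\slot)v$ is a minimal Connes--Stinespring representation of a normal faithful ucp map, $v\in\M$, $v^*v=1$, $\beta\in\End(\M)$. If $v\in\Hom(\iota,\beta\iota)$, meaning $v\iota(n)=\beta(\iota(n))v$ for all $n\in\N$, then for $m\in\M$, $n_1,n_2\in\N$ one computes $\phi(\iota(n_1)m\iota(n_2))=v^*\beta(\iota(n_1)m\iota(n_2))v=v^*\beta(\iota(n_1))\beta(m)\beta(\iota(n_2))v=\iota(n_1)v^*\beta(m)v\iota(n_2)=\iota(n_1)\phi(m)\iota(n_2)$, so $\phi$ is $\N$-bimodular. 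Conversely, assume $\phi$ is $\N$-bimodular. Then $\phi_{\restriction\iota(\N)}=\id$ is not automatic (that is Corollary \ref{cor:localNbimoducp}, which however requires $\phi$ to be ucp and in $\UCP^\sharp_\N$; here I only need $\N$-bimodularity, which gives $\phi(\iota(n))=\iota(n)$ directly by taking $m=1$). From $\phi(\iota(n))=\iota(n)$ and the Kadison--Schwarz inequality, $\iota(n^*n)=\phi(\iota(n)^*\iota(n))\geq\phi(\iota(n)^*)\phi(\iota(n))=\iota(n^*n)$, so equality holds in Kadison--Schwarz on $\iota(\N)$. The standard multiplicative-domain argument (Choi's theorem, as invoked in the proof of Lemma \ref{lem:NfixingisOmegapres}) then shows $\iota(\N)$ lies in the multiplicative domain of $\phi$; dilating, this translates into $v\beta(\iota(n))=\beta(\iota(n))$... more precisely, I would use the standard fact that elements of the multiplicative domain of $\phi=v^*\beta(\slot)v$ are exactly those $x$ with $\beta(x)v=v\phi(x)$. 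Applying this to $x=\iota(n)$ and $\phi(\iota(n))=\iota(n)$ yields $\beta(\iota(n))v=v\iota(n)$, i.e. $v\in\Hom(\iota,\beta\iota)$.

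\textbf{The multiplicative-domain identity.} The key technical input is: for a normal ucp map $\phi=v^*\beta(\slot)v$ with $v^*v=1$, an element $x\in\M$ satisfies $\phi(x^*x)=\phi(x)^*\phi(x)$ and $\phi(xx^*)=\phi(x)\phi(x)^*$ if and only if $\beta(x)v=v\phi(x)$. The ``if'' direction is a direct computation; the ``only if'' direction follows from expanding $\|(\beta(x)v-v\phi(x))\xi\|^2$ for $\xi\in\Hil$ using $v^*v=1$ and the two equalities, which forces $\beta(x)v=v\phi(x)$. I would state and prove this as a short preliminary observation, or simply cite it as standard (it appears in \cite[\App A.2]{Bi2016}, referenced in the paper). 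Since by $\N$-bimodularity both $\iota(n)$ and $\iota(n)^*=\iota(n^*)$ lie in the multiplicative domain (the equalities hold on all of $\iota(\N)$, which is a $*$-subalgebra), we get $v\in\Hom(\iota,\beta\iota)$.

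\textbf{The extremality statement.} Assume now $\phi$ is $\N$-bimodular, so $v\in\Hom(\iota,\beta\iota)$ by the above. I claim $\phi$ is extreme in $\UCP^\sharp_\N(\M,\Omega)$ (equivalently, in $K(\N\subset\M)$) if and only if $\beta$ is irreducible. The natural route is: by Arveson's theory of extreme points of cp maps, the extreme points among cp maps with a fixed minimal Stinespring dilation $(v,\beta)$ are characterized by irreducibility of the commutant data; concretely, $\phi$ is a pure/extreme element of the relevant convex set precisely when $\Hom(\beta,\beta)=\CC1$ together with the $\N$-bimodularity constraint. Here one uses that $\N\subset\M$ is irreducible, so $\Hom(\iota,\iota)=\CC1$; the decompositions of $\phi$ as $\lambda\phi_1+(1-\lambda)\phi_2$ within $\UCP^\sharp_\N(\M,\Omega)$ correspond, via minimality of the dilation, to positive operators $t\in\Hom(\beta,\beta)$ with $0\leq t\leq 1$ (by the argument in the proof of Theorem \ref{thm:ucpstatesduality} citing \cite[\Prop 5.4]{Pa1973}, \cite[Prop.\ A.5]{Bi2016}), and the bimodularity of $\phi_1,\phi_2$ is automatic from that of $\phi$. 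Hence $\phi$ is extreme iff the only such $t$ are $0$ and $1$, i.e. iff $\Hom(\beta,\beta)=\CC1$, i.e. iff $\beta$ is irreducible. Alternatively, one can invoke Theorem \ref{thm:ucpstatesduality}: $\phi$ is extreme iff $\omega_\phi$ is a character of $\Cred(\N\subset\M)$, and unwind what multiplicativity of $\omega_\phi$ means for the dilation $(v,\beta)$ — but the Arveson/Stinespring argument is cleaner.

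\textbf{Main obstacle.} I expect the bimodularity-to-intertwiner direction to be routine given the multiplicative-domain lemma. The real care is needed in the extremality equivalence: one must check that the convex decompositions of $\phi$ \emph{inside the $\N$-bimodular, $\Omega$-adjointable class} are in bijection with $\{t\in\Hom(\beta,\beta):0\leq t\leq 1\}$, with no loss coming from the $\Omega$-adjointability or normality/faithfulness constraints. The argument that $\phi_1,\phi_2$ inherit $\N$-bimodularity and $\Omega$-adjointability from $\phi$ (the former is immediate, the latter follows as in the discussion after Definition \ref{def:UCPbim} and from Lemma \ref{lem:OmegapresisOmegaadj} since locality gives $a_\rho=1_{H_\rho}$) needs to be spelled out, as does the fact that the operator $t$ giving the decomposition indeed lands in $\Hom(\beta,\beta)$ and not merely in $\Hom(\beta,\beta)$ twisted by the bimodularity — here minimality of the Stinespring dilation and irreducibility of $\N\subset\M$ are exactly what make everything line up.
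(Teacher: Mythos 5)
Your proof is correct. The extremality part follows the paper's route exactly: both reduce to the Paschke/Arveson correspondence between convex decompositions of $\phi$ and operators $t\in\Hom(\beta,\beta)$ with $0\leq t\leq 1$ (via \cite[Prop.\ A.5]{Bi2016}), use the irreducibility of $\iota$ to normalize $v^*tv\in\CC1$, and use Lemma \ref{lem:extreme} to pass between extremality in $\UCP_\N$ and in $\UCP$; your cautionary remarks in the last paragraph are legitimate but are exactly the content of those citations, so nothing is missing. Where you genuinely diverge is the direction ``$\N$-bimodular $\Rightarrow$ $v\in\Hom(\iota,\beta\iota)$'': the paper gets it in one line by computing $v^*\beta(\iota(n))\beta(m)v\xi=\iota(n)v^*\beta(m)v\xi$ for all $m,\xi$ and invoking minimality, i.e.\ density of $\beta(\M)v\Hil$, to conclude $v^*\beta(\iota(n))=\iota(n)v^*$. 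You instead first extract $\phi_{\restriction\iota(\N)}=\id$ (from bimodularity plus unitality), observe that equality in Kadison--Schwarz holds on the $*$-algebra $\iota(\N)$, and apply the multiplicative-domain identity $\beta(x)v=v\phi(x)$, which holds for any Stinespring dilation. Your route buys independence from minimality for that step (minimality is still needed elsewhere, e.g.\ to pin down $\Ind(\phi)=d(\beta)^2$), at the cost of the extra multiplicative-domain lemma; the paper's density argument is shorter given that minimality is already assumed. Both are sound.
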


\begin{proof}
Let $\phi$ be $\N$-bimodular. For ever $m\in\M, n\in \N$ and $\xi\in\Hil$ we have $v^*\beta(\iota(n))\beta(m)v\xi = v^*\beta(\iota(n)m)v\xi = \phi(\iota(n)m)\xi = \iota(n)\phi(m)\xi=\iota(n)v^* \beta(m)v\xi$ because $\phi$ is $\N$-bimodular. By minimality, \ie $\beta(\M)v\Hil$ is dense in $\Hil$, we conclude $v^*\beta(\iota(n))=\iota(n)v^*$. The converse implication is immediate.
The second statement follows because $\iota$ is irreducible and by using Lemma \ref{lem:extreme} and \cite[\Prop A.5]{Bi2016}.
\end{proof}

Recall the notion of domination for ucp maps (Definition \ref{def:dominateducp}).

\begin{lem}
Let $\phi,\phi'\in K(\N\subset\M)$. If $\id \leq \phi' \circ \phi$ and $\id \leq \phi \circ \phi'$, then $\Ind(\phi) = \Ind(\phi') < \infty$.
\end{lem}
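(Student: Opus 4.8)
The plan is to translate the domination hypotheses into statements about the minimal Connes--Stinespring dilations and then use multiplicativity of the index together with its behaviour under composition. Write the minimal dilations $\phi = v^*\beta(\slot)v$ and $\phi' = v'^*\beta'(\slot)v'$ with $v\in\Hom(\iota,\beta\iota)$, $v'\in\Hom(\iota,\beta'\iota)$ by Lemma \ref{lem:isometvandbeta}, where $\beta,\beta'\in\End(\M)$ are irreducible because $\phi,\phi'$ are extreme. Since the composition $\phi'\circ\phi$ is again an $\N$-bimodular normal faithful ucp map, it has a Connes--Stinespring dilation; concretely $\phi'\circ\phi = (v'v)^*(\beta'\beta)(\slot)(v'v)$ up to minimality, with isometry $v'v \in \Hom(\iota,\beta'\beta\iota)$. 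The key input is that $\id \leq \phi'\circ\phi$, i.e. there is $d>0$ with $d(\phi'\circ\phi) - \id$ completely positive; by the characterization of domination of ucp maps in terms of subobjects of the dilating endomorphism (as in \cite[\Prop 5.4]{Pa1973}, \cite[\Prop A.5]{Bi2016}), this forces $\id\prec\beta'\beta$ in $\End(\M)$, hence $\Hom(\id,\beta'\beta)\neq\langle 0\rangle$. Symmetrically $\id\leq\phi\circ\phi'$ gives $\id\prec\beta\beta'$.

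From $\id\prec\beta'\beta$ and $\id\prec\beta\beta'$ one concludes that $\beta$ and $\beta'$ are conjugate endomorphisms with $\beta' \cong \bar\beta$: indeed $\id\prec\beta'\beta$ together with irreducibility of $\beta$ and $\beta'$ is precisely one of the conjugate equations being solvable, and $\id\prec\beta\beta'$ is the other. A morphism in $\Hom(\id,\beta'\beta)$ and one in $\Hom(\id,\beta\beta')$ witness that $\beta$ has a conjugate and that this conjugate is (isomorphic to) $\beta'$; in particular $\beta$ has finite dimension, so $d(\beta)=d(\beta')<\infty$, equivalently $\Ind(\phi) = \dim(\phi)^2 = \dim(\phi')^2 = \Ind(\phi') < \infty$. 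This is the desired conclusion.

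The main obstacle I anticipate is justifying rigorously the step \lq\lq$\id\leq\phi'\circ\phi$ implies $\id\prec\beta'\beta$ and hence $\beta$ has a conjugate\rq\rq{} in the \emph{infinite index} setting, where a priori $\beta'\beta$ need not have finite dimension and the correspondence between dominated ucp maps and subendomorphisms has to be handled with some care. The cleanest route is: domination $\id\leq d(\phi'\circ\phi)$ yields, via the Connes--Stinespring picture, an intertwiner $t\in\Hom(\id,\beta'\beta)$ with $t^*t = 1$ after rescaling (a copy of the identity sector inside $\beta'\beta$); pairing the two intertwiners obtained from the two domination relations and using irreducibility of $\beta$ and $\beta'$ gives a solution of the conjugate equations \eqref{eq:conjeqns} for $\beta$ with conjugate $\beta'$, whence $d(\beta) = d(\beta')<\infty$ by \cite{LoRo1997}. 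Once finiteness of $d(\beta)$ is secured, the equality $\Ind(\phi)=\Ind(\phi')$ is immediate from $\Ind(\phi)=d(\beta)^2$, $\Ind(\phi')=d(\beta')^2$ and $d(\beta)=d(\beta')$.
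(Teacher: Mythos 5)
Your proof is correct and follows essentially the same route as the paper: pass to the minimal Connes--Stinespring dilations, deduce $\id\prec\beta'\beta$ and $\id\prec\beta\beta'$ from the two domination hypotheses (the paper cites \cite[Prop.\ 2.9]{IzLoPo1998} for this step, you invoke \cite[Prop.\ 5.4]{Pa1973} and \cite[Prop.\ A.5]{Bi2016}, which amounts to the same fact), and conclude $\beta'\cong\bar\beta$, hence $d(\beta)=d(\beta')<\infty$, where the paper quotes \cite[Thm 4.1]{Lo1990} and you sketch the conjugate-equations argument directly. Only a cosmetic remark: the Stinespring isometry for $\phi'\circ\phi$ is $\beta'(v)v'\in\Hom(\iota,\beta'\beta\iota)$ rather than $v'v$, but this does not affect the argument since all you need is that the minimal dilating endomorphism of $\phi'\circ\phi$ is a subobject of $\beta'\beta$.
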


\begin{proof}
Let $\phi=v^*\beta(\slot)v$ and $\phi'={v'}^*\beta'(\slot)v'$ 
be the respective minimal Connes--Stinespring representations.
Then \cite[Proposition 2.9]{IzLoPo1998} implies $\id \prec \beta\beta'$ and $\id \prec\beta'\beta$. This implies by \cite[\Thm 4.1]{Lo1990} that $\beta'$ is a conjugate of $\beta$
in the sense of \eqref{eq:conjeqns}, in symbols $\beta' \cong \bar\beta$. Thus $d(\beta) = d(\beta') < \infty$ and $\Ind(\phi) = \Ind(\phi') = d(\beta)^2 < \infty$.
\end{proof}

We show below in Lemma \ref{lem:betasharp} and Proposition \ref{prop:phisharpphicontainsatomid} that the converse of the previous lemma holds. Consider the set
\begin{align}
K_0(\N\subset\M) := \{\phi\in K(\N\subset\M): \Ind(\phi) < \infty\}.
\end{align}

Note that $K_0(\N\subset\M)$ is either finite or non-discrete with the induced topology from $K(\N\subset\M)$, because the latter is compact metrizable hence every infinite subset has accumulation points. 

\begin{lem}\label{lem:betasharp}
Let $\phi \in K(\N\subset\M)$ with adjoint $\phi^\sharp \in K(\N\subset\M)$ (Section \ref{sec:Omegaadjmaps}). If $\phi =v^*\beta(\slot)v$ and $\phi^\sharp =v^{\sharp*}\beta^\sharp(\slot)v^\sharp$ are the respective minimal Connes--Stinespring representations, then $\beta^\sharp \cong \bar\beta$. 

In particular, $\Ind(\phi) = \Ind(\phi^\sharp)$ and $\Ind(\phi) < \infty$ if and only if $\Ind(\phi^\sharp) < \infty$.
\end{lem}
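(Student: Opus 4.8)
The plan is to show that $\phisharp$ is the \emph{transpose} of $\phi$ with respect to the state $\omega := (\Omega,\slot\,\Omega)$, and then to read off a Connes--Stinespring dilation of that transpose whose endomorphism is a conjugate of $\beta$.

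First I would recall the transpose of a normal ucp map in the sense of \cite[\Ch 8]{OhPe1993}: for $\phi$ with $\omega\circ\phi=\omega$, it is the map $\phi^\flat$ determined by $(\phi^\flat(m_1)\Omega,m_2\Omega)=(m_1\Omega,\phi(m_2)\Omega)$ for all $m_1,m_2\in\M$, \emph{provided} this formula defines a normal completely positive map. By Proposition \ref{prop:adjointiffmodular} (that is, \cite[\Prop 6.1]{AcCe1982}), this is the case exactly when $\phi$ commutes with the modular group $\sigma_t^{\M,\Omega}$, which is precisely the standing hypothesis that $\phi$ is $\Omega$-adjointable; comparing with \eqref{eq:omegaadj} one then gets $\phi^\flat=\phisharp$. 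This is the coincidence of the two notions on $\Omega$-adjointable maps anticipated in Section \ref{sec:Omegaadjmaps}.

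Next I would dilate $\phisharp=\phi^\flat$. Since $\phi$ is $\N$-bimodular, Lemma \ref{lem:isometvandbeta} gives $v\in\Hom(\iota,\beta\iota)$ with $\beta$ irreducible and $v^*v=1$. Choose a conjugate $\bar\beta\in\End(\M)$ of $\beta$ --- a genuine one solving the conjugate equations \eqref{eq:conjeqns} when $d(\beta)<\infty$, and a weak conjugate in the sense of \cite{Lo1990} in general --- with intertwiners $r\in\Hom(\id_\M,\bar\beta\beta)$ and $\rbar\in\Hom(\id_\M,\beta\bar\beta)$; note $\bar\beta$ is again irreducible. A computation with the conjugate equations produces an element $\tilde v\in\M$, built out of $\rbar$ and $\bar\beta(v)$, with $\tilde v^*\tilde v\in\Hom(\iota,\iota)=\CC1$ (irreducibility of the subfactor) and $\phi^\flat(\slot)=\tilde v^*\bar\beta(\slot)\tilde v$ after normalization. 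Since the minimal Connes--Stinespring dilation of $\phi^\flat$ embeds into any such dilation, its endomorphism $\beta^\sharp$ satisfies $\beta^\sharp\prec\bar\beta$, and as $\bar\beta$ is irreducible and $\beta^\sharp\neq 0$ we conclude $\beta^\sharp\cong\bar\beta$. Hence $\Ind(\phisharp)=d(\beta^\sharp)^2=d(\bar\beta)^2=d(\beta)^2=\Ind(\phi)$ by \cite{Lo1989}, \cite{LoRo1997} (with the value $+\infty$ allowed), which in particular gives the last assertion.

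The main obstacle is this dilation step in the \emph{infinite index} case: when $d(\beta)=\infty$ there are no standard solutions of \eqref{eq:conjeqns}, so one must work with Longo's weak conjugate and verify carefully that the candidate $\tilde v$ is indeed (a scalar multiple of) an isometry lying in $\Hom(\iota,\bar\beta\iota)$, so that the statement $\beta^\sharp\cong\bar\beta$ is meaningful. An alternative, parallel to the proof of the preceding lemma, would be to establish the dominations $\id\leq\phisharp\circ\phi$ and $\id\leq\phi\circ\phisharp$ of completely positive maps and then invoke \cite[\Prop 2.9]{IzLoPo1998} together with \cite[\Thm 4.1]{Lo1990}; but verifying those dominations seems to require essentially the same computation, so I would present the direct argument above, comparing with \cite{Lo2018} for the channel-theoretic language.
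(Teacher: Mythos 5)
Your first step, identifying $\phisharp$ with the transpose of $\phi$, is the same move the paper makes, but your justification is circular: the relation $(\phi^\flat(m_1)\Omega,m_2\Omega)=(m_1\Omega,\phi(m_2)\Omega)$ is by definition the $\Omega$-adjoint \eqref{eq:omegaadj}, not the transpose of \cite[\Prop 2.14]{Lo2018}, \cite[\Prop 8.3]{OhPe1993}, and the remark in Section \ref{sec:Omegaadjmaps} that the two notions coincide explicitly defers to the proof of this very lemma. The actual content is a short computation with $J=J_{\M,\Omega}$, using $V_\phi J=JV_\phi$ from Proposition \ref{prop:adjointiffmodular}, showing that $\phisharp$ satisfies $(\Omega,\phi(m_1)Jm_2J\Omega)=(\Omega,m_1J\phisharp(m_2)J\Omega)$; you have the right ingredient in hand but do not carry it out. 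This is a repairable presentational issue.

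The genuine gap is in the dilation step. Your candidate $\tilde v$ is built from $\rbar\in\Hom(\id,\beta\bar\beta)$, but when $d(\beta)=\infty$ no such intertwiner exists: as in the proof of the lemma immediately preceding this one, the simultaneous existence of isometries in $\Hom(\id,\bar\beta\beta)$ and $\Hom(\id,\beta\bar\beta)$ forces, via \cite[\Prop 2.9]{IzLoPo1998} and \cite[\Thm 4.1]{Lo1990}, that $\bar\beta$ is a conjugate in the sense of \eqref{eq:conjeqns} and hence that $d(\beta)<\infty$. Longo's weak conjugate does not come equipped with $\rbar$, so your construction never gets started in the infinite-index case, which is the only case in which the lemma has content. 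Your proposed alternative is worse: the dominations $\id\leq\phisharp\circ\phi$ and $\id\leq\phi\circ\phisharp$ are false precisely when $\Ind(\phi)=\infty$ (see Proposition \ref{prop:dimboundsweight} and Example \ref{ex:SO3SO2}), so verifying them would prove too much. The paper closes this gap by a different mechanism, namely the general theory of correspondences attached to normal ucp maps (\cite[\App V.B]{Co1994}, \cite{Po1986}, \cite[\Sec 2.2]{Lo2018}): once $\phisharp$ is identified with the transpose, \cite[\Prop 2.15]{Lo2018} says that the bimodule of the transpose is the contragredient of the bimodule of $\phi$, whose associated endomorphism is the weak conjugate $\bar\beta$ independently of the index and with no appeal to solutions of the conjugate equations.
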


\begin{proof}
This is a consequence of the general theory of bimodules associated with ucp maps \cite[\App V.B]{Co1994}, \cite{Po1986}, see also \cite[\App A.2]{Bi2016} and \cite[\Sec 2.2]{Lo2018}. We only have to check that when the adjoint $\phi^\sharp$ exists in the sense of Section \ref{sec:Omegaadjmaps}, it coincides with the transpose of a ucp map as defined in \cite[\Prop 2.14]{Lo2018}, \cite[\Prop 8.3]{OhPe1993}. Indeed, let $m_1, m_2\in\M$ and $J = J_{\M,\Omega}$ the modular conjugation of $(\M,\Omega)$ as in Section \ref{sec:Omegaadjmaps}, then $(\Omega, \phi(m_1)Jm_2J\Omega) = (\phi(m_1^*) \Omega, Jm_2\Omega) = (V_\phi m_1^* \Omega, Jm_2\Omega) = (\Omega, m_1 V^*_\phi Jm_2\Omega)$ thus 
\begin{align}
(\Omega, \phi(m_1)Jm_2J\Omega) = (\Omega, m_1J\phi^\sharp(m_2)J\Omega)
\end{align}
because $V^*_{\phi} = V_{\phi^\sharp}$ and $V_\phi J = J V_\phi$ by Proposition \ref{prop:adjointiffmodular}, which characterizes the adjointability of $\phi$. 
The claim now follows from \cite[\Prop 2.15]{Lo2018}.
\end{proof}

\begin{prop} \label{prop:phisharpphicontainsatomid}
Let $\phi\in K(\N\subset\M)$ and $\phi=v^* \beta(\slot)v$ as above. We have
\begin{enumerate}
\item If $\Ind(\phi)<\infty$, let $r\in \Hom(\id,\bar\beta \beta)$, $\rbar\in \Hom(\id,\beta \bar\beta)$ be a standard solution of the conjugate equations \eqref{eq:conjeqns} for $\beta$ and $\bar\beta$. Then $r^\ast\bar\beta(vv^\ast)r =: b_\phi1$ where $ b_\phi > 0$ is a number which depends only on $\phi$, and we have
\begin{align}
\phi^\sharp = b_\phi^{-1}\, r^*\bar\beta(v\slot v^*) r.
\end{align}
\item In general,
\begin{align}
  \phi^\sharp\circ\phi -\frac{b_\phi}{\dim(\phi)}\id\text{ is completely positive.}
\end{align}
\end{enumerate}
\end{prop}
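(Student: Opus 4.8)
\textbf{Proof plan for Proposition \ref{prop:phisharpphicontainsatomid}.}

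The plan is to prove (1) first, since (2) will follow from it by an elementary positivity estimate. For (1), I would argue as follows. By Lemma \ref{lem:betasharp}, the adjoint $\phi^\sharp\in K(\N\subset\M)$ has minimal Connes--Stinespring representation with $\beta^\sharp\cong\bar\beta$, so we may write $\phi^\sharp = v^{\sharp*}\bar\beta(\slot)v^\sharp$ for an isometry $v^\sharp\in\Hom(\iota,\bar\beta\iota)$ by Lemma \ref{lem:isometvandbeta}. The first step is to verify that the map $m\mapsto r^*\bar\beta(v m v^*) r$ is a well-defined, normal, completely positive, $\N$-bimodular map on $\M$: complete positivity and normality are immediate from the fact that it is a composition of $\bar\beta(\cdot)$ with compressions by fixed operators; $\N$-bimodularity follows because $r\in\Hom(\id,\bar\beta\beta)$ and $v\in\Hom(\iota,\beta\iota)$ intertwine appropriately, so one computes $r^*\bar\beta(v\iota(n)mv^*)r = r^*\bar\beta(\beta\iota(n))\bar\beta(vmv^*)r = \iota(n) r^*\bar\beta(vmv^*)r$ using $r^*\bar\beta\beta(\slot) = \slot\, r^*$, and similarly on the right. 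Next I would check that $r^*\bar\beta(vv^*)r$ lies in $\Hom(\iota,\iota)=\CC 1$ by irreducibility of the subfactor, hence equals $b_\phi 1$ for some $b_\phi\geq 0$; positivity $b_\phi>0$ follows since $\bar\beta(vv^*)\geq 0$ and $r$ is an isometry up to the scalar $d(\beta)$, so $r^*\bar\beta(vv^*)r=0$ would force $vv^*=0$ by faithfulness of $\bar\beta$, a contradiction.

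The heart of (1) is then the identity $\phi^\sharp = b_\phi^{-1} r^*\bar\beta(v\slot v^*)r$. The cleanest route is to show that the right-hand side satisfies the defining adjunction relation \eqref{eq:omegaadj}, namely $(\text{RHS}(m_1)\Omega, m_2\Omega) = (m_1\Omega, \phi(m_2)\Omega)$ for all $m_1,m_2\in\M$, and then invoke the uniqueness of the $\Omega$-adjoint. To do this I would use the standard-solution identities \eqref{eq:conjeqns}, $\bar r^*\beta(r) = 1_\beta$ and $r^*\bar\beta(\bar r)=1_{\bar\beta}$, to reduce compositions $\phi\circ\text{RHS}$ and $\text{RHS}\circ\phi$ via the zig-zag equations, and to express the inner products through the conditional-expectation/vector-state relations. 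An alternative, perhaps more transparent, is to cite directly the general theory of transposes of ucp maps with finite index \cite[\Prop 2.14, 2.15]{Lo2018}: there the transpose of $\phi = v^*\beta(\slot)v$ is computed to be exactly $b_\phi^{-1} r^*\bar\beta(v\slot v^*)r$ with $b_\phi 1 = r^*\bar\beta(vv^*)r$, and by the proof of Lemma \ref{lem:betasharp} the transpose coincides with the $\Omega$-adjoint $\phi^\sharp$. I would take this second route, spelling out only that the normalization constant there matches our $b_\phi$.

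For part (2), the point is that $\phi^\sharp\circ\phi$ dominates a multiple of $\id$. Writing $\phi^\sharp\circ\phi(m) = b_\phi^{-1} r^*\bar\beta(v\, v^*\beta(m)v\, v^*)r$ in the finite-index case, and more generally computing $\phi^\sharp\circ\phi$ through its (not necessarily irreducible) Connes--Stinespring data, I would use that $\bar\beta(v)^*\bar\beta\beta(m)\bar\beta(v) $ combined with the standard solution isolates a copy of $\id$ inside $\bar\beta\beta$: concretely, $\bar\beta\beta \succ \id$ with the isometric intertwiner $d(\beta)^{-1/2} r$, so compressing by $r$ and using $r^*\bar\beta(vv^*)\beta(m)(vv^*)\text{-type}$ manipulations yields $\phi^\sharp\circ\phi(m) = \frac{b_\phi}{\dim(\phi)} m + (\text{completely positive remainder})$, where the remainder corresponds to the complementary subprojection $1-\frac{1}{d(\beta)^2} r r^*$ of $\bar\beta\beta$ expressed via \cite[\Prop 2.9]{IzLoPo1998}. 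The scalar $b_\phi/\dim(\phi) = b_\phi/d(\beta)$ comes out because the minimal idempotent $\frac{1}{d(\beta)}rr^*\in\Hom(\bar\beta\beta,\bar\beta\beta)$ carries weight $d(\beta)$ relative to the normalization $r^*r = d(\beta)1$. The main obstacle I anticipate is the case $\Ind(\phi)=\infty$, where $r$ does not exist: there one must instead work with the Connes--Stinespring dilation of $\phi^\sharp\circ\phi$ directly and produce the subprojection onto the vacuum channel by a limiting/approximation argument, or equivalently use the bimodule picture \cite[\App V.B]{Co1994} to exhibit $\id\prec \bar\beta\beta$ with the appropriate multiplicity constant; making the constant $b_\phi/\dim(\phi)$ come out correctly in that generality, rather than just nonnegativity of the remainder, is the delicate point.
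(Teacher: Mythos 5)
Your computation for part (2) --- compressing $\bar\beta\beta$ with the projection $p=\frac{1}{d(\beta)}rr^*$ and its complement so that $\phi^\sharp\circ\phi$ splits as $\frac{b_\phi}{d(\beta)}\id$ plus a completely positive remainder --- is exactly the computation the paper performs. The logical architecture, however, is reversed. You prove (1) first, identifying $\phi^\sharp$ with $b_\phi^{-1}r^*\bar\beta(v\slot v^*)r$ either by verifying the adjunction relation or by citing the transpose formula of \cite{Lo2018}, and then deduce (2). The paper instead \emph{defines} $\phi':=b^{-1}r^*\bar\beta(v\slot v^*)r$, checks $\phi'\in K(\N\subset\M)$ via Lemma \ref{lem:isometvandbeta} and Corollary \ref{cor:localNbimoducp}, carries out the $p$/$p^\perp$ decomposition to obtain the domination $\delta_{\phi'}\ast\delta_{\phi}\geq\delta_{\id}$, and only then concludes $\phi'=\phi^\sharp$ from the abstract hypergroup fact of Proposition \ref{prop:DominatedImpliesConjugate}. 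That route gets (1) for free out of the estimate needed for (2) and avoids both the normalization-matching against \cite[\Prop 2.15]{Lo2018} and the direct verification of the adjunction relation; the latter is harder than you suggest, since it involves the $E$-invariant vector state and is not a purely categorical identity.

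Two points need repair. First, your argument for $b_\phi>0$ is invalid: $r^*\bar\beta(vv^*)r=0$ only says $\bar\beta(v^*)r=0$, i.e.\ that $\bar\beta(vv^*)$ vanishes on the range of $r$; since $r$ is not surjective, this does not yield $vv^*=0$ by ``faithfulness of $\bar\beta$''. The correct argument uses the conjugate equations: $\beta\bigl(r^*\bar\beta(v)\bigr)\rbar = \beta(r^*)\rbar\, v = v$, so $\bar\beta(v^*)r=0$ would force $v=0$, a contradiction. Second, the ``delicate point'' you anticipate in the infinite-index case is a non-issue: when $\Ind(\phi)=\infty$ one has $\dim(\phi)=\infty$, the coefficient $b_\phi/\dim(\phi)$ vanishes, and statement (2) reduces to the complete positivity of $\phi^\sharp\circ\phi$, which is automatic. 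No limiting or bimodule argument is required there.
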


\begin{proof}
If $\Ind(\phi)=\infty$ the statement in (2) is trivially satisfied. 
Assume $\Ind(\phi)<\infty$ and observe that $\bar\beta(v^*) r \in \Hom(\iota,\bar\beta\iota)$ because $v\in\Hom(\iota,\beta\iota)$ by Lemma \ref{lem:isometvandbeta}. Thus $r^*\bar\beta(vv^*) r = b 1$, $b\geq 0$, because $\Hom(\iota,\iota) = \CC 1$, and $b > 0$ because $\beta((\bar\beta(v^*) r)^*)\rbar = v$. 
The number $b$ does not depend on the choice of $\beta$, $v$ or $r$, $\rbar$ by the uniqueness of the minimal Connes--Stinespring representation, see \eg \cite[\Thm 2.10]{Lo2018}, and by the uniqueness and trace property of the standard left inverse of $\beta$, see \eg \cite[\Prop 2.4]{BiKaLoRe2014}.
Set $\phi' := b^{-1}r^*\bar\beta(v\slot v^*) r$. Then $\phi' \in K(\N\subset\M)$ again by Lemma \ref{lem:isometvandbeta} and Corollary \ref{cor:localNbimoducp}.
Note that  $p=\frac{1}{d(\beta)}r r^*$ and $p^\perp=1-p$ are orthogonal projections in $\Hom(\bar\beta\beta,\bar\beta\beta)$. Then 
\begin{align}
\phi'\circ\phi (\slot)&=  b^{-1}r^*\bar\beta(vv^* \beta(\slot)v v^*) r\\
&=b^{-1} r^*\bar\beta(vv^*) p\bar\beta(\beta(\slot))\bar\beta(v v^*) r + b^{-1} r^*\bar\beta(vv^*) p^\perp\bar\beta(\beta(\slot))\bar\beta(v v^*) r\\
&=\frac{b}{d(\beta)}\id +\, b^{-1}r^*\bar\beta(vv^*) p^\perp\bar\beta(\beta(\slot))p^\perp \bar\beta(v v^*) r.
\end{align}
By Theorem \ref{thm:Kishypergroup}, $K(\N\subset \M)$ is a compact (metrizable) hypergroup in the sense of Definition \ref{def:CompactHypergroup}
and $\delta_{\phi'} \ast \delta_{\phi} \geq \delta_{\id}$. Thus Proposition \ref{prop:DominatedImpliesConjugate}
implies $\phi'=\phi^\sharp$, and the statements (1) and (2) follow. 
\end{proof}
We conjecture that $b_\phi=1$ for any $\phi \in K_0(\N\subset \M)$, as it holds whenever $\phi$ is an automorphism or $[\M:\N] < \infty$. In the first case, $b_\phi = \dim(\phi) = 1$. In the second case, $K_0(\N\subset \M) = K(\N\subset \M)$ is a finite hypergroup, see Example \ref{ex:finitehyp}, and $b_\phi=1$ follows, as a computation using the trace property of the standard left inverses of $\iota$ and $\beta$ shows \cite[\Lem 4.7]{Bi2016}.

Note that for $\phi_i\in K(\N\subset\M)$, $i=1,2,3$, we have 
\begin{align}
\delta_{\phi_1}\ast\delta_{\phi_2}(\{\phi_3\}) = \sup \left\{\lambda\in[0,1] : \phi_1\circ\phi_2-\lambda\phi_3\text{ is completely positive}\right\}
\end{align}
and thus the following proposition.

\begin{prop}\label{prop:dimboundsweight}
For $\phi \in K(\N\subset\M)$, we have 
\begin{align}\label{eq:Ind}
  \dim(\phi) &\geq\frac{ b_\phi}
  {\delta_{\phi^\sharp}\ast\delta_{\phi}(\{\id\})}.
\end{align}
In particular, if $\delta_{\phi^\sharp}\ast\delta_{\phi}(\{\id\})=0$, \ie if the measure $\delta_{\phi^\sharp}\ast\delta_{\phi}$ does not contain $\id$ as an atom, then $\Ind(\phi)=\infty$.
\end{prop}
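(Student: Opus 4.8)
\textbf{Proof plan for Proposition \ref{prop:dimboundsweight}.}

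The plan is to combine the domination statement in Proposition \ref{prop:phisharpphicontainsatomid}(2) with the measure-theoretic reading of atoms of $\delta_{\phi^\sharp}\ast\delta_\phi$ recalled just above the proposition. First I would observe that if $\Ind(\phi)=\infty$ there is nothing to prove, since $\dim(\phi)=\infty$ and the right-hand side of \eqref{eq:Ind} is finite (note $b_\phi$ is finite, being a number defined by $r^*\bar\beta(vv^*)r = b_\phi 1$ in the finite-index case; in the infinite case we may simply interpret the bound as vacuous). So assume $\Ind(\phi)<\infty$, hence $\dim(\phi)=d(\beta)<\infty$ where $\phi=v^*\beta(\slot)v$ is the minimal Connes--Stinespring representation, and $b_\phi>0$ is well-defined.

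By Proposition \ref{prop:phisharpphicontainsatomid}(2), $\phi^\sharp\circ\phi - \tfrac{b_\phi}{\dim(\phi)}\id$ is completely positive, i.e.\ $\phi^\sharp\circ\phi$ dominates $\tfrac{b_\phi}{\dim(\phi)}\id$ in the sense of Definition \ref{def:dominateducp}. Now I would invoke the identity recalled before the proposition,
\begin{align}
\delta_{\phi^\sharp}\ast\delta_{\phi}(\{\id\}) = \sup\{\lambda\in[0,1] : \phi^\sharp\circ\phi - \lambda\,\id \text{ is completely positive}\},
\end{align}
which holds because the convolution on $P(K(\N\subset\M))$ is the composition of ucp maps (Theorem \ref{thm:Kishypergroup}) and because, by Theorem \ref{thm:ucpstatesduality} and the linear order-preserving duality of Proposition \ref{prop:spancext}, domination of ucp maps corresponds exactly to domination of the associated measures, so the weight $\delta_{\phi^\sharp}\ast\delta_\phi(\{\id\})$ of the atom at $\id$ is precisely the largest such $\lambda$. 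Since $\tfrac{b_\phi}{\dim(\phi)}$ is one admissible value of $\lambda$ (assuming it is $\leq 1$; if it exceeds $1$ then $\phi^\sharp\circ\phi\geq\id$ forces $\delta_{\phi^\sharp}\ast\delta_\phi(\{\id\})=1$ and the inequality $\dim(\phi)\geq b_\phi$ follows directly), we get $\delta_{\phi^\sharp}\ast\delta_\phi(\{\id\}) \geq \tfrac{b_\phi}{\dim(\phi)}$, which upon rearranging gives exactly \eqref{eq:Ind}. Finally, the last assertion is the contrapositive: if $\delta_{\phi^\sharp}\ast\delta_\phi(\{\id\})=0$ then the inequality $0 = \delta_{\phi^\sharp}\ast\delta_\phi(\{\id\}) \geq b_\phi/\dim(\phi) > 0$ (using $b_\phi>0$) would be violated unless $\dim(\phi)=\infty$, i.e.\ $\Ind(\phi)=\infty$.

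The only genuinely delicate point is the justification that the weight of the atom at $\id$ equals the supremum of dominating scalars; this is stated in the excerpt as the displayed formula preceding the proposition, so I would cite it directly, but in writing the proof I would make explicit that this uses (i) that composition of ucp maps is the convolution product (Theorem \ref{thm:Kishypergroup}), and (ii) that under the duality/homeomorphism of Theorem \ref{thm:ucpstatesduality}, extended linearly as in Proposition \ref{prop:spancext}, complete-positivity domination of ucp maps matches positivity of the difference of the corresponding measures, so that $\delta_{\phi^\sharp}\ast\delta_\phi - \lambda\delta_{\id}\geq 0$ as a measure if and only if $\phi^\sharp\circ\phi-\lambda\,\id$ is completely positive. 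Everything else is routine rearrangement.
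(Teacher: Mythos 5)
Your proposal is correct and follows exactly the route the paper intends: the paper states the displayed supremum formula for $\delta_{\phi_1}\ast\delta_{\phi_2}(\{\phi_3\})$ and immediately deduces the proposition from Proposition \ref{prop:phisharpphicontainsatomid}(2), which is precisely your argument with the details (the duality of Proposition \ref{prop:spancext} matching domination of ucp maps with domination of measures, and the rearrangement) filled in. Your parenthetical about $b_\phi/\dim(\phi)$ possibly exceeding $1$ is harmless but unnecessary, since $b_\phi 1 = r^*\bar\beta(vv^*)r \leq r^*r = \dim(\phi)1$ forces $b_\phi \leq \dim(\phi)$.
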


If $K$ is a hypergroup and $x\in K$, the constant $w_x := (\delta_{x^\sharp}\ast\delta_{x}(\{e\}))^{-1}\in[1,\infty]$ is called the \textbf{weight} of the element $x$. We conjecture that in \eqref{eq:Ind} we always have equality, which together with the conjecture $b_\phi=1$ would imply
that $w_\phi=\dim(\phi)$. 

In the special case that $[\M:\N]<\infty$ this easily follows from \cite[\Prop 4.4]{Bi2016}.
It also holds if $w_\phi=1$ or equivalently if $\phi$ is an automorphism.

\begin{example}\label{ex:SO3SO2}
Consider 
\begin{align}
\SO(2)\cong\left\{\begin{bmatrix} A &0\\0&1\end{bmatrix} :
A\in\SO(2)\right\}\subset \SO(3)
\end{align}
and assume that $\SO(3)$ acts faithfully and minimally on a type $\III$ factor $\M$. 
As in Section \ref{sec:doublecosets}, let us consider the irreducible discrete group-subgroup subfactor 
\begin{align}
\M^{\SO(3)}\subset \M^{\SO(2)}
\end{align}
and assume it is local, see Section \ref{sec:localsubfCFT} for an example of this arising in CFT. By Proposition \ref{prop:actioncosets}, $K=K(\M^{\SO(3)}\subset \M^{\SO(2)})$ can be identified with the double coset hypergroup $\SO(3)\CS\SO(2)$ which is homeomorphic to the closed interval $[-1,1]$ via the map $B\in\SO(3)\mapsto B_{3,3}$, see \cite[\Sec 1.1.17]{BlHe1995}.
Let us denote $K=\{\phi_t\}_{t\in[-1,1]}$ via this identification.
Then on the one hand, it easily follows that $\Ind(\phi_{\pm1})=1$ and that the subhypergroup $\{\phi_{\pm 1}\}$ is isomorphic to the group $\ZZ_2$.
On the other hand, $\Ind(\phi_t)=\infty$ for every $t\in(-1,1)$. Indeed, for $s,t\in(-1,1)$ the convolution of Dirac measures $\delta_{\phi_s}\ast\delta_{\phi_t}$ on $K$ can be computed as follows.
Let
\begin{align}
  B(s)&:=\begin{bmatrix} 1&0&0\\0& s& \sqrt{1-s^2}\\0&-\sqrt{1-s^2}&s
  \end{bmatrix}\in\SO(3)\,,&
  A(\theta)&:=
  \begin{bmatrix} \cos \theta&\sin\theta &0\\-\sin\theta &\cos \theta &0
  \\0&0&1\end{bmatrix}\in\SO(2)
\end{align}
then via the previous identification
\begin{align}
  B(s)\,A(\theta)\,B(t) \mapsto st-\sqrt{(1-s^2)(1-t^2)}\cos\theta.
\end{align}
Let $E\subset [-1,1]$ be a Borel set with characteristic function $\chi_{E}$, then \eqref{eq:convdiraccoset} reads as
\begin{align}
  (\delta_{\phi_s}\ast\delta_{\phi_t})(E) &= \int\limits_{\SO(2)} \chi_E(B(s)\,A\,B(t)) \dd\mu_{\SO(2)}(A)
  = \int\limits_0^{2\pi} \chi_{E}(st-\sqrt{(1-s^2)(1-t^2)}\cos\theta)\frac{\dd\theta}{2\pi}\\
  &= \int\limits_{st-\sqrt{(1-s^2)(1-t^2)}}^{st+\sqrt{(1-s^2)(1-t^2)}} \frac{\chi_{E}(r)}{\pi \sqrt{(1-s^2)(1-t^2) -(st-r)^2}}\dd r
\end{align}
and we see that $K$ is commutative, in particular $(\SO(3), \SO(2))$ is a \emph{Gelfand pair} \cite[\Ch 2.2]{BeKa1998}. The computation also shows that $1$ is in the support of the measure $\delta_{\phi_s}\ast\delta_{\phi_t}$ if and only if $s=t$, thus $\phi_s^\sharp=\phi_s$. Furthermore, we see that $\delta_{\phi_t}\ast\delta_{\phi_t}$ is absolutely continuous with respect to the Lebesgue measure on $[-1,1]$ for every $t\in (-1,1)$. Thus $\Ind(\phi_t)=\infty$ by Proposition \ref{prop:dimboundsweight}, and $K_0(\N\subset\M) = \{\phi_{\pm 1}\} \cong \ZZ_2$ in this example. 

The Haar measure on $K$ given by \eqref{eq:haarcosets} thanks to Proposition \ref{prop:actioncosets}, see \eg \cite[\Sec 1.6]{Na1964} for the Haar measure on $\SO(3)$, happens to coincide with the normalized Lebesgue measure on $[-1,1]$, namely for $E\subset [-1,1]$
\begin{align}
  \mu_K(E) &= \int\limits_0^{\pi} \frac{1}{2}\sin\theta\, \chi_E(\cos\theta)\dd\theta = \frac{1}{2}\int\limits_E\dd r.
\end{align}
\end{example}

%%%
\subsection{Local discrete subfactors in Conformal Field Theory}\label{sec:localsubfCFT}
%%%

In this section we recall the notion of \emph{local conformal net}, see \eg \cite{Lo2}, which is the operator algebraic description of chiral, \ie one-dimensional, conformal field theory (CFT). Discrete conformal inclusions of local conformal nets naturally give rise to examples of local discrete subfactors, 
and their study is the original motivation of this work.

Denote by $\Sc$ the unit circle and by $\cI$ the set of open non-empty non-dense intervals $I\subset\Sc$. Denote by $\Mob = \PSU(1,1) = \SU(1,1)/\{\pm 1\}$ the group of M\"obius transformations that preserve the complex upper half-plane and act on $\Sc$ by complex fractional linear diffeomorphisms.

\begin{defi}  
A \textbf{local conformal net} is a collection of von Neumann algebras parametrized by the intervals in $\cI$, $\A = \{\A(I) : I\in\cI\}$, acting on a common (separable) Hilbert space $\Hil = \Hil_\A$ and subject to the following restrictions. Let $I_1,I_2\in\cI$
\begin{itemize}
\item[$(i)$] \emph{Isotony}. $\A(I_1)\subset\A(I_2)$ if $I_1\subset I_2$.
\item[$(ii)$] \emph{Locality}. $\A(I_1)\subset\A(I_2)'$ if $I_1\cap I_2 = \emptyset$.
\item[$(iii)$] \emph{M\"obius covariance}. There is a strongly continuous unitary representation $U:\Mob \to \cU(\Hil)$ which acts covariantly on the net, \ie $U(g) \A(I_1) U(g)^* = \A(gI_1)$ for every $g\in\Mob$. 
\item[$(iv)$] \emph{Positivity of the conformal Hamiltonian}: the infinitesimal generator $H$ of the rotations subgroup of $\Mob$ has non-negative spectrum.
\item[$(v)$] \emph{Vacuum vector}. There exists a unique, up to scalar multiples, unit vector $\Omega\in\Hil$ such that $U(g)\Omega = \Omega$ for every $g\in\Mob$. The vector $\Omega$ is cyclic for $\{\A(I) : I\in\cI\}''$.
\end{itemize}
\end{defi}

The consequences of these assumptions \cite{GuLo1992, GaFr1993, BrGuLo1993, GuLo1996, FrJr1996, GuLoWi1998} that we need are: the \emph{Reeh-Schlieder property}, \ie $\Omega$ is standard (cyclic and separating) for each $\A(I)$, the \emph{Bisognano-Wichmann property}, \ie the modular group $\sigma_t$, $t\in\RR$, and the modular conjugation $J$ associated with $(\A(I),\Omega)$ are geometric. Namely, the modular group corresponds via $U$ to the dilations subgroup of $\Mob$ which maps $I$ onto itself, the modular conjugation to the reflection of $\Sc$ with respect to the extreme points of $I$ sitting in $\Mobt = \Mob \rtimes \ZZ_2$. 
In particular, $\A(I)' = \A(I')$ where $I' := \Sc \smallsetminus \bar{I} \in \cI$ is the complementary interval of $I$.
The algebras $\A(I)$ are type $\III_1$ factors and they are called the \emph{local algebras} of $\A$. The datum of the local algebras (actually three of them associated with a tripartition of $\Sc$, \ie forming a \emph{half-sided modular factorization} \cite{GuLoWi1998}, are sufficient) together with the vacuum vector, uniquely determines the covariance representation $U$.

\begin{defi} 
An \textbf{inclusion} $\A\subset\B$ of local conformal nets $\A$ and $\B$ is a family of subfactors $\{\A(I)\subset\B(I) : I\in\cI\}$ acting on the same Hilbert space $\Hil$. $\B$ is called an \textbf{extension} of $\A$, and $\A$ a \textbf{subsystem} or \textbf{subnet} of $\B$. 
\end{defi}

We may identify $\Hil = \Hil_\B$ 
but note that $\overline{\A(I)\Omega_\B}\subset \Hil_\B = \overline{\B(I)\Omega_\B} $ is proper unless $\A(I)=\B(I)$.

\begin{defi} 
An inclusion $\A\subset\B$ as above is called a \textbf{conformal inclusion} if the M\"obius representation $U=U_\B$ of $\B$ acts covariantly on $\A$ as well, \ie if $U(g)\A(I)U(g)^* = \A(gI)$ for every $g\in \Mob$, $I\in\cI$, or equivalently if $U_\A$ extends to a M\"obius representation acting covariantly on $\B$.
In this case, $\Omega_\A = \Omega_\B$ and $\Hil_\A = \overline{\A(I)\Omega_\B}$.
\end{defi}

Following \cite{LoRe1995}, an inclusion $\A\subset\B$ is called \textbf{standard} if it admits a standard unit vector $\Omega$ for every $\B(I)$ on $\Hil_\B$ which is also standard for every $\A(I)$ acting on a fixed, independent of $I$, closed subspace $\Kil \subset\Hil_\B$. Denote by $e$ the orthogonal projection onto $\Kil$.
As in \cite{DeGi2018}, we say that the standard inclusion $\A\subset\B$ is \textbf{semidiscrete} if the formula $e m e = E_I(m) e$, $m\in\B(I)$, defines a family of normal faithful conditional expectations $E_I:\B(I)\to\A(I)\subset\B(I)$ for every $I\in\cI$ that are $\Omega$-preserving and compatible with inclusions in the sense that ${E_{I_2}}_{\restriction \B(I_1)} = E_{I_1}$ if $I_1\subset I_2$, $I_1, I_2\in \cI$. 

The following is well known, see \eg \cite[\Lem 14]{Lo2003}.

\begin{lem}
If $\A\subset\B$ is a conformal inclusion, then it is automatically standard and semidiscrete. Moreover, if $\A(I) \subset \B(I)$ is respectively discrete, irreducible, strongly relatively amenable, depth 2 or finite index for some $I\in\cI$, then the same holds for every $I$.
\end{lem}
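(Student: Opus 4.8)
The statement bundles together several transfer assertions about a conformal inclusion $\A\subset\B$: that it is standard, that it is semidiscrete, and that the properties discrete/irreducible/strongly relatively amenable/depth 2/finite index, if they hold for one interval, hold for all. The unifying idea is that for a conformal inclusion all the subfactors $\A(I)\subset\B(I)$, for $I\in\cI$, are conjugate to one another by unitaries coming from the M\"obius representation $U=U_\B$, together with the Reeh--Schlieder and Bisognano--Wichmann properties of $\B$ (and of $\A$, which inherits a M\"obius representation by the conformal assumption). All the listed properties are invariants of the isomorphism class of a subfactor equipped with its canonical conditional expectation, so it suffices to produce such isomorphisms.

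First I would recall that, by conformality, $U$ acts covariantly on $\A$, so $\Omega=\Omega_\B=\Omega_\A$ is M\"obius invariant and is, by Reeh--Schlieder applied to both nets, cyclic and separating for each $\A(I)$ and each $\B(I)$. Taking $\Kil:=\Hil_\A=\overline{\A(I)\Omega}$ (independent of $I$) and $e$ the projection onto $\Kil$, standardness in the sense of \cite{LoRe1995} is immediate. For semidiscreteness, one checks that $eme=E_I(m)e$ for $m\in\B(I)$ defines a conditional expectation $E_I:\B(I)\to\A(I)$: this follows from Takesaki's theorem once one knows that the modular group of $(\B(I),\Omega)$ restricts to that of $(\A(I),\Omega)$, which is exactly what Bisognano--Wichmann gives, since both modular groups are implemented by $U$ applied to the dilation one-parameter subgroup associated with $I$, hence literally coincide; $\Omega$-preservation and compatibility with inclusions are then automatic from $eme=E_I(m)e$ and isotony.

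Next, the interval-independence of the listed properties. Given $I_1,I_2\in\cI$ there is $g\in\Mob$ with $gI_1=I_2$; then $\Ad U(g)$ carries $\A(I_1)\subset\B(I_1)$ onto $\A(I_2)\subset\B(I_2)$ and, since $U(g)\Omega=\Omega$, intertwines $E_{I_1}$ with $E_{I_2}$. Thus the two subfactors-with-expectation are isomorphic, so the Jones tower, the relative commutants, the dual operator-valued weight, the canonical and dual canonical endomorphisms, and the index are all transported. Irreducibility ($\A(I)'\cap\B(I)=\CC1$), discreteness (semifiniteness of $\hat E$ on $\N'\cap\M_1$, Definition \ref{def:semi-discr}), depth 2 ($\N'\cap\M_2$ a factor, Definition \ref{def:depth2subf}), finite index, and strong relative amenability (Definition \ref{def:strongamenab}, which by Remark \ref{rmk:strongamenabrepnindep} is representation-independent) are all preserved under such isomorphisms. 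One subtlety: a single $g\in\Mob$ exists only when $I_1,I_2$ are not complementary-type in an obstructing way, but $\Mob$ acts transitively on $\cI$, so this is fine; alternatively one composes two M\"obius maps.

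The main obstacle, and the only place requiring genuine care, is the semidiscreteness step: one must verify that $eme=E_I(m)e$ really lands in $\A(I)$ and is multiplicative as a conditional expectation, i.e. that $\Kil$ is globally invariant under $\A(I)$ and under the modular data in the right way. This is handled by the standard argument (as in \cite[\Lem 14]{Lo2003}): Bisognano--Wichmann for $\B$ forces $\sigma^{\B(I),\Omega}_t(\A(I))=\A(I)$ because the dilation subgroup acts covariantly on $\A$, and then Takesaki's theorem yields the expectation; compatibility with inclusions and the $\Omega$-preserving property follow formally. Everything else is a routine application of the fact that M\"obius conjugation is an isomorphism of subfactors preserving the canonical vacuum-preserving expectation.
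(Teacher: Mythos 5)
Your proposal is correct and follows essentially the same route as the paper's proof: standardness from the shared Möbius-invariant vacuum, semidiscreteness from the Bisognano--Wichmann property plus Takesaki's theorem, and interval-independence of the listed properties from the isomorphisms $\Ad U(g)$ with $gI_1 = I_2$, which fix $\Omega$ and hence carry $E_{I_1}$ to $E_{I_2}$. Your extra remarks (that the modular group preserves $\A(I)$ because dilations act covariantly on $\A$, and that strong relative amenability is representation-independent by Remark \ref{rmk:strongamenabrepnindep}) are accurate elaborations of what the paper leaves implicit.
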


\begin{proof}
By assumption $U = U_\B = U_\A$. Then $\Omega = \Omega_\B = \Omega_\A$ is a standard vector for $\A\subset\B$ and $\Kil = \Hil_\A$. By the Bisognano-Wichmann property, the modular group of $(\B(I),\Omega)$ leaves globally invariant $\A(I)$, for every $I\in\cI$. Thus by Takesaki's theorem \cite[\Sec 10]{Str81} $e m \Omega = E_I(m) \Omega$, $m\in\B(I)$, defines an expectation with the desired properties. The compatibility condition follows because $e$ does not depend on $I$. The second statement follows because the subfactors $\A(I_1)\subset\B(I_1)$ and $\A(I_2)\subset\B(I_2)$, $I_1,I_2\in\cI$, are isomorphic via $\Ad{U(g)}$ for some $g\in\Mob$ such that $g I_1 = I_2$.
\end{proof}

\begin{defi}\label{def:subfsdefs}
We say that $\A \subset \B$ is respectively \textbf{discrete}, \textbf{irreducible}, \textbf{strongly relatively amenable}, \textbf{depth 2} or \textbf{finite index} if the subfactor $\A(I)\subset\B(I)$ has the property for some $I\in\cI$.
\end{defi}

By \cite[\Prop 16, 17]{Lo2003}, \cite[\Thm 3.2]{LoRe1995}, the dual canonical endomorphism $\theta_I \in \End(\A(I))$ of $\A(I)\subset \B(I)$ as in Section \ref{sec:genQsys} can be extended to a \textbf{representation} of the net $\A$ on $\Hil_\A$, denoted by $\theta$, which is unitarily equivalent to the defining vacuum representation $\Hil_\B$ of $\B$ restricted to $\A$. In particular, the subfactor $\A(I)\subset\B(I)$ is always braided (Definition \ref{def:braidedsubf}) with respect to the \textbf{DHR braiding} $\varepsilon_{\theta_I^n,\theta_I^m}\in\Hom(\theta_I^{n+m},\theta_I^{n+m})$, $n,m\in\NN$, on $\langle\theta_I\rangle \subset \End(\A(I))$ \cite{DoHaRo1971}, \cite{FrReSc1992}.

Recall that a representation of the net $\A$ is a family of normal representations $\pi = \{\pi_I: I\in\cI\}$, $\pi_I : \A(I) \to \B(\Hil_\pi)$ on a fixed (separable) Hilbert space $\Hil_\pi$, such that ${\pi_{I_2}}_{\restriction \A(I_1)} = \pi_{I_1}$ if $I_1\subset I_2$. Two representations $\pi$ and $\sigma$ of $\A$ are unitarily equivalent if there is a unitary $U:\Hil_\pi \to \Hil_\sigma$ such that $U\pi_I(m)U^* = \sigma_I(m)$ for every $m\in\A(I)$, $I\in\cI$. 
Due to the type $\III$ property of local algebras, 
every representation $\pi$ of $\A$ is locally unitarily equivalent to the defining vacuum representation $\Hil_\A$, namely for every $I\in\cI$ there is a unitary $V_I : \Hil_\pi \to \Hil_\A$ such that $V_I \pi_I(m) V_I^* = m$ for every $m\in\A(I)$. Moreover, $V_I \pi_{I'}(\slot) V_I^*$ can be shown to be an endomorphism of $\A(I')$, denoted by $\rho_{I'}\in\End(\A(I'))$. If the representation is M\"obius covariant \cite[\Prop 2.1]{GuLo1996}, the dimension $d(\pi) := d(\rho_{I'})$ is independent of $I$. 

A conformal inclusion fulfilling the hypotheses of the following proposition is called \emph{of compact type}\footnote{The terminology \emph{of discrete type} would be closer to the one used in this paper. On the other hand, we might have used \emph{compact} instead of \emph{discrete} from the very beginning in Definition \ref{def:semi-discr}.} in \cite[\Def 3.2]{Ca2004}.

\begin{prop}\label{prop:localsubfinCFT}
If $\A\subset \B$ is an irreducible conformal inclusion and $\theta = \bigoplus_{[\rho], r} \rho$ where each $\rho$ is an irreducible representation of $\A$ with $d(\rho) < \infty$, then $\A\subset\B$ is discrete and $\A(I)\subset \B(I)$ is an irreducible local discrete subfactor (Definition \ref{def:localsubf}). 
\end{prop}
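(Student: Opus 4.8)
The plan is to verify the three defining properties of an irreducible local discrete subfactor (Definition \ref{def:semi-discr}, \ref{def:braidedsubf}, \ref{def:localsubf}) for the single-interval subfactor $\A(I)\subset\B(I)$, using the fact that $\theta$ extends to a representation of $\A$ which is unitarily equivalent to $\restriction_\A\Hil_\B$. First I would recall from \cite[\Prop 16, 17]{Lo2003} and \cite[\Thm 3.2]{LoRe1995} that $\theta_I$ extends to a representation $\theta$ of $\A$ on $\Hil_\A$, so that the tensor \Cstar-category $\langle\theta_I\rangle\subset\End(\A(I))$ is braided via the DHR braiding; this gives at once that $\A(I)\subset\B(I)$ is braided in the sense of Definition \ref{def:braidedsubf}. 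Next, the hypothesis that $\theta = \bigoplus_{[\rho],r}\rho$ is a direct sum of irreducible representations of $\A$ of finite dimension, when restricted to $\A(I)$, yields that the dual canonical endomorphism $\theta_I$ decomposes as in \eqref{eq:thetadiscrete} into finitely-many-times-repeated irreducibles of finite dimension; by the characterization of discreteness recalled in Section \ref{sec:discretesubf} (the decomposition of $\theta$ into finite-dimensional irreducibles, equivalently the existence of a generalized Q-system of intertwiners, Proposition \ref{prop:Qsysdiscrete}), this makes $\A(I)\subset\B(I)$ discrete, and hence $\A\subset\B$ discrete in the sense of Definition \ref{def:subfsdefs}. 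Irreducibility is inherited from the standing assumption that the inclusion is irreducible.

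The remaining and main point is locality in the sense of Definition \ref{def:localsubf}: one must produce a commutative generalized Q-system of intertwiners, equivalently (by the Lemma following Definition \ref{def:localsubf}) check the commutation relations $\iota(\varepsilon^{\pm}_{\sigma,\rho})\,\psi_\sigma\psi_\rho = \psi_\rho\psi_\sigma$ among charged fields $\psi_\rho\in H_\rho$, $\psi_\sigma\in H_\sigma$ for irreducible $\rho,\sigma\prec\theta_I$. This is exactly the content of \cite[\Thm 6.8]{DeGi2018}, already cited in Section \ref{sec:localsubf}: for a discrete inclusion of conformal nets the locality of the extension $\B$ translates precisely into these braided commutation relations for a Pimsner--Popa basis of charged fields, with respect to the DHR braiding. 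Thus the plan is to invoke that theorem: since $\B$ is by hypothesis a local conformal net, the charged fields implementing the extension $\B(I)$ over $\A(I)$ satisfy \eqref{eq:commutfields}, so the associated generalized Q-system of intertwiners is commutative and $\A(I)\subset\B(I)$ is local.

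Concretely I would structure the proof as: (1) cite \cite[\Prop 16, 17]{Lo2003}, \cite[\Thm 3.2]{LoRe1995} to extend $\theta_I$ to the representation $\theta$ and get the DHR braiding on $\langle\theta_I\rangle$, hence braidedness; (2) use the hypothesis on the form of $\theta$ together with $\dim(\rho)=d(\rho)<\infty$ for each summand, restrict to $\A(I)$, and apply the Izumi--Longo--Popa characterization of discreteness (the $\theta$-decomposition \eqref{eq:thetadiscrete}, Proposition \ref{prop:discrete}/\ref{prop:Qsysdiscrete}) to conclude discreteness of $\A(I)\subset\B(I)$, and then of $\A\subset\B$ by the preceding lemma identifying the subfactors over different intervals; (3) invoke \cite[\Thm 6.8]{DeGi2018} to deduce that the locality of $\B$ forces the braided commutation relations \eqref{eq:commutfields}, equivalently the existence of a commutative generalized Q-system of intertwiners, so $\A(I)\subset\B(I)$ is local; (4) combine with the standing irreducibility assumption to conclude that $\A(I)\subset\B(I)$ is an irreducible local discrete type $\III$ subfactor. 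The main obstacle is step (3): one has to make sure that the DHR braiding on $\langle\theta_I\rangle$ used in Definition \ref{def:braidedsubf} is the same braiding with respect to which \cite[\Thm 6.8]{DeGi2018} phrases the commutation relations, and that the choice of Pimsner--Popa basis of charged fields coming from the net extension coincides (up to the freedom allowed by the Lemma after Definition \ref{def:localsubf}) with the one entering the generalized Q-system of intertwiners in Notation \ref{not:genQsysint}; once this bookkeeping is in place the result is immediate from the cited theorems, and no genuinely new computation is required.
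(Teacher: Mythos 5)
Your proposal is correct and follows essentially the same route as the paper: braidedness comes from the DHR braiding via the extension of $\theta_I$ to a representation of $\A$, discreteness from the finite-dimensional decomposition of $\theta$ (the paper delegates this step to \cite[Prop.\ 3.3(a), Rmk.\ 3.4]{Ca2004}, also noting that conformality makes $\theta$ and each $\rho$ M\"obius covariant so that $d(\rho)<\infty$ passes to the local endomorphisms), and locality from the characterization in \cite{DeGi2018} of locality of the extension $\B$ via the braided commutation relations among charged fields (the paper cites Prop.\ 6.10(i) and 6.16 there rather than Thm.\ 6.8, adapted to the chiral setting). The differences are only in which precise references carry the individual steps, not in the argument itself.
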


\begin{proof}
The inclusion is conformal, thus $\theta$ and every $\rho$ is M\"obius covariant. As in the proof of \cite[\Prop 3.3 (a)]{Ca2004}, \cf \cite[\Rmk 3.4]{Ca2004}, every $\A(I)\subset\B(I)$ is an irreducible discrete subfactor. The locality of the subfactor is the characterization of the locality of the extension $\B$ (in the sense of conformal nets) by \cite[\Prop 6.10 (i), 6.16]{DeGi2018} adapted to the chiral conformal setting. 
\end{proof}

Combining Proposition \ref{prop:localsubfinCFT} and Theorem \ref{thm:noquantum}, we get

\begin{cor}
If $\A\subset \B$ is an irreducible depth 2 conformal inclusion and $\theta = \bigoplus_{[\rho], r} \rho$ where each $\rho$ is an irreducible representation of $\A$ with $d(\rho) < \infty$, then $\A(I)\subset \B(I)$ is a compact group orbifold.
\end{cor}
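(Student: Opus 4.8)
The plan is to combine the two results cited verbatim in the statement, so the proof is essentially a short deduction. First I would invoke Proposition~\ref{prop:localsubfinCFT}: since $\A\subset\B$ is an irreducible conformal inclusion and $\theta=\bigoplus_{[\rho],r}\rho$ with each $\rho$ an irreducible representation of $\A$ of finite dimension $d(\rho)<\infty$, that proposition tells us precisely that $\A(I)\subset\B(I)$ is an irreducible \emph{local discrete} subfactor for every $I\in\cI$. Moreover, as recalled in Section~\ref{sec:localsubfCFT}, local algebras of a conformal net are type~$\III_1$ factors, so $\A(I)\subset\B(I)$ is in addition of type~$\III$.

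Next I would feed this into Theorem~\ref{thm:noquantum}. The depth~$2$ hypothesis on the conformal inclusion $\A\subset\B$ means, by Definition~\ref{def:subfsdefs}, that the subfactor $\A(I)\subset\B(I)$ has depth~$2$ in the sense of Definition~\ref{def:depth2subf} for some (hence, by the preceding lemma on conformal inclusions, every) interval $I$. Also, a conformal inclusion is automatically standard and semidiscrete, so $\A(I)\subset\B(I)$ is an irreducible semidiscrete type~$\III$ subfactor; together with locality (from the previous paragraph) and depth~$2$, the hypotheses of Theorem~\ref{thm:noquantum} are met. That theorem then yields a compact metrizable group $K=K(\A(I)\subset\B(I))$ acting on $\M=\B(I)$ by automorphisms with $\A(I)=\B(I)^K$. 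By Theorem~\ref{thm:genorbi}, this $K$ is the canonical compact hypergroup of the subfactor and the action is faithful and minimal, so $\A(I)\subset\B(I)$ is exhibited as a compact group orbifold, which is the assertion.

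There is essentially no obstacle here: both inputs are stated earlier in the excerpt, and the only mild bookkeeping is to check that "depth~$2$" and the standing type~$\III$/semidiscreteness are genuinely available at the level of a single local subfactor. For depth~$2$ this is exactly Definition~\ref{def:subfsdefs} together with the lemma (cf.\ \cite[\Prop 16,17]{Lo2003}) asserting that the relevant properties of $\A(I)\subset\B(I)$ are independent of $I$; for type~$\III$ and semidiscreteness it is the Reeh--Schlieder and Bisognano--Wichmann properties plus the lemma identifying conformal inclusions as standard semidiscrete. One should also note that discreteness itself is not an extra assumption: it follows from semidiscreteness plus depth~$2$ by Proposition~\ref{prop:depth2discrete} (Enock--Nest), so the phrase "irreducible depth~$2$ conformal inclusion" suffices, consistently with the fact that Theorem~\ref{thm:noquantum} also does not presuppose discreteness. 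With these remarks in place the corollary is immediate.

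\begin{proof}
By Proposition~\ref{prop:localsubfinCFT}, since $\A\subset\B$ is an irreducible conformal inclusion with $\theta = \bigoplus_{[\rho],r}\rho$, each $\rho$ an irreducible representation of $\A$ of finite dimension, the subfactor $\A(I)\subset\B(I)$ is an irreducible local discrete subfactor for every $I\in\cI$. Since local algebras of a conformal net are type~$\III$ factors, and conformal inclusions are standard and semidiscrete, $\A(I)\subset\B(I)$ is an irreducible semidiscrete type~$\III$ subfactor which is local; by hypothesis and Definition~\ref{def:subfsdefs} it has depth~$2$. Hence Theorem~\ref{thm:noquantum} applies and gives a compact metrizable group $K = K(\A(I)\subset\B(I))$ acting on $\B(I)$ with $\A(I) = \B(I)^K$. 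By Theorem~\ref{thm:genorbi} this action is faithful and minimal, so $\A(I)\subset\B(I)$ is a compact group orbifold.
\end{proof}
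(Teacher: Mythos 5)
Your proof is correct and follows exactly the route the paper takes: the paper's entire justification is the single line ``Combining Proposition~\ref{prop:localsubfinCFT} and Theorem~\ref{thm:noquantum}, we get'' followed by the statement. Your additional bookkeeping (type~$\III$ from the local algebras, semidiscreteness from conformality, interval-independence of depth~$2$, and the observation that discreteness need not be assumed separately thanks to Proposition~\ref{prop:depth2discrete}) is all accurate and consistent with what the paper leaves implicit.
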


The depth 2 condition is fulfilled whenever $\mathbb{G}$ is a von Neumann algebraic compact quantum group acting faithfully on $\B(I)$ such that $\A(I) = \B(I)^\mathbb{G}$, see the references in Section \ref{sec:depthtwo}.

\begin{cor}\label{cor:noquantumCFT}
If a conformal inclusion in the assumptions above arises as a compact quantum group orbifold, then it is a classical group orbifold.
\end{cor}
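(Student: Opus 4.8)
The plan is to reduce to the single-interval subfactor, apply Theorem~\ref{thm:noquantum}, and then globalize back to the net. Suppose $\A\subset\B$ is a conformal inclusion satisfying the standing assumptions of the preceding corollary --- irreducible, depth $2$, and with $\theta = \bigoplus_{[\rho],r}\rho$ where each $\rho$ is an irreducible DHR representation of $\A$ with $d(\rho) < \infty$ --- and that it arises as a compact quantum group orbifold, \ie a von Neumann algebraic compact quantum group $\mathbb{G}$ acts on $\B$ as a global gauge symmetry: a family of faithful minimal actions on the local algebras $\B(I)$, $I\in\cI$, compatible with inclusions and commuting with the M\"obius covariance $U$ of $\B$, with $\A(I) = \B(I)^{\mathbb{G}}$ for every $I$. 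These data are mutually consistent: for a faithful minimal $\mathbb{G}$-action the fixed-point inclusion is irreducible and of depth $2$ by the Enock--Nest results \cite{EnNe1996} and the references of Section~\ref{sec:depthtwo}, while Peter--Weyl for $\mathbb{G}$ yields the stated decomposition of $\theta$.

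Next I would carry out the subfactor-theoretic step. By Proposition~\ref{prop:localsubfinCFT} the subfactor $\A(I)\subset\B(I)$ is irreducible, local and discrete of type $\III$; it is moreover semidiscrete, since a conformal inclusion is automatically standard and semidiscrete as recalled above, and it has depth $2$. Theorem~\ref{thm:noquantum} then produces a compact metrizable group $G := K(\A(I)\subset\B(I))$, which by Corollary~\ref{cor:autinextr} and the proof of Theorem~\ref{thm:noquantum} equals the group $\Aut_{\A(I)}(\B(I))$ of automorphisms of $\B(I)$ fixing $\A(I)$, acting by $\Omega_\B$-preserving automorphisms with $\B(I)^{G} = \A(I)$ (Theorem~\ref{thm:genorbi}). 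Hence over a fixed interval the orbifold is already classical; moreover $\langle\theta\rangle_0$, which is tensor-equivalent to the representation category of $\mathbb{G}$, is identified by Proposition~\ref{prop:Gsubflocal} with the symmetric category $\Rep(G)$.

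Finally I would upgrade this one-interval picture to a \emph{conformal} classical group orbifold, namely exhibit for each $g\in G$ a compatible family of $\Omega_\B$-preserving automorphisms $g = \{g_J : \B(J)\to\B(J)\}_{J\in\cI}$ commuting with $U$ and with $\B^{G} = \A$. The idea is to use that $\B$ is reconstructed as a net from $\A$, the transportable covariant DHR representation $\theta$ extending $\theta_I$, and the charged fields $\psi_\rho\in\B(I)$ forming a generalized Q-system of intertwiners, in the manner of \cite{LoRe1995}, \cite{DeGi2018}. An element $g\in G$ acts on each finite-dimensional multiplicity space $H_\rho = \Hom(\iota,\iota\rho)$ by a unitary $g|_{H_\rho}$ intrinsic to the DHR data; I would define $g_J$ on a charged field localized in an arbitrary $J\in\cI$ by transporting $g|_{H_\rho}$ along the charge transporters of $\theta$, and extend it $\A(J)$-bimodularly to $\B(J)$. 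Well-definedness (independence of the chosen Pimsner--Popa basis of charged fields), compatibility with inclusions $\B(J_1)\subset\B(J_2)$, and covariance $U(h)g_JU(h)^{*} = g_{hJ}$ for $h\in\Mob$ should all follow from the transportability and M\"obius covariance of $\theta$ and the braiding, together with the vacuum-preserving property of $g$; then $\B^{G}(J) = \B(J)^{G} = \A(J)$ for every $J\in\cI$ by Theorem~\ref{thm:genorbi}, so $\A = \B^{G}$. I expect this globalization --- and not the subfactor step, which is essentially a citation of Theorem~\ref{thm:noquantum} --- to be the main obstacle: the delicate points are that the extension datum is net-intrinsic and that the extended maps respect inclusions.
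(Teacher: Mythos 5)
Your first two paragraphs are precisely the paper's proof: the corollary is presented as an immediate consequence of the preceding one (Proposition \ref{prop:localsubfinCFT} combined with Theorem \ref{thm:noquantum}) together with the observation that a faithful compact quantum group orbifold $\A(I)=\B(I)^{\mathbb{G}}$ is automatically irreducible and of depth 2 (Enock--Nest and the references of Section \ref{sec:depthtwo}), so the hypotheses of that corollary are met and $K(\A(I)\subset\B(I))$ is a compact metrizable group acting by automorphisms with fixed points $\A(I)$. That is all the statement claims: ``classical group orbifold'' is meant at the level of the local subfactor, in parallel with the preceding corollary's conclusion that $\A(I)\subset\B(I)$ is a compact group orbifold. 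On this, your argument is complete and agrees with the paper.

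Your third paragraph addresses a strictly stronger assertion than the one being proved --- a net-level action of $G$ by automorphisms $\{g_J\}_{J\in\cI}$ commuting with $U$ and compatible with isotony --- and, as written, it is a plan rather than a proof. The points you defer (well-definedness of the transported maps, independence of the Pimsner--Popa basis, and above all the consistency $g_{J_2}|_{\B(J_1)}=g_{J_1}$ for $J_1\subset J_2$, which amounts to showing that an automorphism of $\B(I)$ fixing $\A(I)$ pointwise maps charged fields localized in $J\subset I$ back into $\B(J)$) are genuinely nontrivial and are not established by appealing to transportability and covariance of $\theta$; this globalization is exactly the content the authors defer to their announced follow-up on a Galois correspondence for conformal nets. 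So the part of your argument that the corollary actually requires is correct and identical to the paper's; the part you identify as ``the main obstacle'' lies outside the scope of the statement and is not proved by your sketch.
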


\begin{rmk}
The discreteness assumption in Proposition \ref{prop:localsubfinCFT} on the decomposition of $\theta$ as a representation of $\A$ is seemingly stronger than the notion of discreteness for $\A\subset\B$ considered in Definition \ref{def:subfsdefs}, given on every $\A(I)\subset\B(I)$, \cf \eqref{eq:thetadiscrete}. Recall that the second notion is automatically fulfilled for depth 2 conformal inclusions by Proposition \ref{prop:depth2discrete}.
Moreover, the two notions are both fulfilled in the finite index case \cite[\Cor 18, 19]{Lo2003}, and they coincide if $\A$ has the stronger locality property called \emph{strong additivity} \cite[\Lem 1.3]{GuLoWi1998}. 
%We shall come back to this point in \cite{BiDeGi2020-2}.
\end{rmk}

\begin{rmk}
The result of Corollary \ref{cor:noquantumCFT} stating the absence of purely \emph{quantum} global gauge group symmetries in local conformal field theory, extends the one in \cite[\Cor 1.2]{Bi2016} where the inclusions are assumed to be with finite Jones index and thus the corresponding groups are finite. A result of the same type, again in the case of finite-dimensional Hopf algebras and finite groups, was obtained in the framework of vertex operator algebras in \cite[\Thm 5.2]{DoWa2018}.
\end{rmk}

We now come to examples of compact hypergroups arising from conformal inclusions.

\begin{example}\label{ex:vir1ext}
The double coset hypergroups analyzed in Section \ref{sec:doublecosets} occur in conformal inclusions. By \cite[\Sec 3]{Ca2004}, \cite[\Sec 4]{Xu2005}, every local conformal extension of compact type (or equivalently discrete in this case) of the Virasoro net with central charge 1 \cite{Ca1998}, $\Vir_1 \subset\B$, is of this form. More precisely, $\Vir_1 = {L\SU(2)_1}^{\SO(3)}$ and $\B$ is intermediate to the loop group net extension $L\SU(2)_1$ \cite{Wa1998}, and of the form $\B = {L\SU(2)_1}^H$ for a closed subgroup $H\subset\SO(3)$. By Proposition \ref{prop:actioncosets}, we have that $K(\Vir_1(I)\subset\B(I) ) \cong \SO(3)\CS H$. \Cf Example \ref{ex:SO3SO2} where $H=\SO(2)$, in this case the extension of $\Vir_1$ is the $\U(1)$-current net studied in \cite{BuMaTo1988}.

As $\SO(3)$ is a simple group, by Proposition \ref{prop:uniqueKact} and \ref{prop:doublecosets} and by \cite[\Prop 2.1]{Ca1999}, we conclude that there are no local conformal extensions $\Vir_1\subset\B$ for which $\Vir_1 = \B^G$ for some compact group $G$, other than $L\SU(2)_1$ and the trivial one $\Vir_1$.
\end{example}

\begin{example}
Double coset hypergroups naturally arise in Minkowski, \ie 3+1-dimensional, quantum field theory. By \cite[\Sec 4]{CoDoRo2001}, every inclusion of local nets $\A\subset\B$ which is intermediate to the canonical field net extension $\cF$ of $\A$ \cite{DoRo1990}, is given by $\B = \cF^H$ for some closed subgroup $H\subset G$ of the canonical global gauge group $G$ determined by the DHR superselection sectors of $\A$. In the assumptions of \cite{CoDoRo2001}, by the results of Section \ref{sec:nonlocal} on graded-local extensions and by the uniqueness of $G$, or by Proposition \ref{prop:uniqueKact}, we get that $G \cong K(\A(I)\subset\cF(I))$ and $G\CS H \cong K(\A(I)\subset\B(I))$ by Proposition \ref{prop:actioncosets}.
This situation appears to be general in 3+1-dimensional quantum field theory \cite{CaCo2001}, \cite{CaCo2005}, and double coset hypergroups $G\CS H$
describe arbitrary irreducible local extensions, under fairly general assumptions, by \cite[\Thm 5.2]{CaCo2005}.
\end{example}

\begin{example}
Examples of finite hypergroups arising from conformal inclusions which are neither groups nor double cosets of groups, but double cosets of fusion rings, can be found in the finite index case \cite[\Sec 4.6]{Bi2016}. \Eg the inclusion $L\SU(2)_{10} \subset L\Spin(5)_1$ has index $3+\sqrt{3}$ and hypergroup $K(L\SU(2)_{10}(I) \subset L\Spin(5)_1(I)) =: K_{e,x,2+\sqrt{3}}$ consisting of two points $e$, $x$ with $\delta_x^\sharp = \delta_x$ and $\delta_x\ast \delta_x=\frac{1}{2+\sqrt{3}}\delta_e+\frac{1+\sqrt{3}}{2+\sqrt{3}}\delta_x$.
\end{example}

\begin{example}
Examples of infinite compact hypergroups  arising from conformal inclusions which are \emph{not} double coset hypergroups can be obtained by taking tensor products of conformal inclusions, see Section \ref{sec:newdiscretesubf}. \Eg $\SO(3)\CS \SO(2) \times K_{e,x,2+\sqrt{3}}$ arises from a conformal inclusion. 
\end{example}

\begin{rmk}
  Let $G$ be a compact metrizable group and $H\subset G$ be a closed subgroup,
  then the representations of the double coset hypergroup $G\CS H$ have integer hyperdimension (Section \ref{sec:repK}). 
  This follows from Theorem \ref{thm:hyperdim} by taking a minimal action of $G$ on a factor $\M$ 
  and considering the subfactor $\M^G\subseteq\M^H$, and using the results 
  from Section \ref{sec:doublecosets}.\footnote{One can show directly that the hyperdimensions of the representations of $G\CS H$ are integer. We thank Massoud Amini for providing us a proof of this fact.}
\end{rmk}

It is an interesting problem to find subfactors with $K(\N\subset\M)$ not a product of $G\CS H$ with a finite hypergroup. This is for example the case when the dual canonical endomorphism $\theta$ has an irreducible subendomorphism with \emph{non-integer} dimension, or equivalently when the associated hypergroup has a representation with non-integer hyperdimension, and $K(\N\subset\M)$ is either not a product or connected.

\begin{conjecture}
Regarding further occurrences of infinite exotic compact hypergroups arising in CFT, we give two families of possible candidates. The first one comes from considering loop group nets, their compact group orbifold subnets and their finite index extensions, \eg
\begin{align}
{L\SU(2)_{10}}^{H}\subset L\SU(2)_{10} \subset L\Spin(5)_1
\end{align}
for $H\subset \SO(3)$. 
The second family comes from the coset construction of conformal nets, \eg
\begin{equation}
{L\SU(2)_{N+1}}^{H} \otimes \Vir_{c_{N-3}} \subset L\SU(2)_{N+1} \otimes \Vir_{c_{N-3}} \subset L\SU(2)_N\otimes L\SU(2)_1
\end{equation}
for $H\subset\SO(3)$ and $c_{N-3} = 1-\frac{6}{(N-3)(N-2)}$, \cf \cite[\Sec 4.3]{Xu2000coset}.\footnote{We thank Sebastiano Carpi for pointing out this second family of candidates.}

At present we are not able to prove that these inclusions are discrete. Instead, we can show in general that the composition of a discrete subfactor on top of a finite index one, thus in the order opposite to the one we have in the previous examples, is always discrete (Section \ref{sec:newdiscretesubf}). 
\end{conjecture}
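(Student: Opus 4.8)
The statement to prove is the assertion made inside the last paragraph of the conjecture: \emph{if $\N\subset\P$ is a finite index subfactor and $\P\subset\M$ is a discrete subfactor, both of type $\III$, then $\N\subset\M$ is discrete}. The plan is to reduce this to the characterization of discreteness through the decomposition of the dual canonical endomorphism, rather than working directly with operator-valued weights. First I would fix the data: let $E_1\colon\P\to\N$ be a conditional expectation of finite index, and let $E_2\colon\M\to\P$ be a normal faithful conditional expectation witnessing the discreteness of $\P\subset\M$ (Definition \ref{def:semi-discr}). Then $E:=E_1\circ E_2\colon\M\to\N$ is again a normal faithful conditional expectation, so $\N\subset\M$ is semidiscrete, and since discreteness need only be verified for some $E$, it suffices to check it for this one.

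Next I would identify the dual canonical endomorphism of $\N\subset\M$ in the sector calculus of Section \ref{sec:discretesubf}. Writing $\iota\colon\N\to\P$ and $\kappa\colon\P\to\M$ for the inclusion morphisms one has $\iota_{\N\M}=\kappa\iota$, and choosing conjugates $\bar\iota$, $\bar\kappa$ (finite for $\iota$ since $[\P:\N]<\infty$) one gets $\overline{\iota_{\N\M}}=\bar\iota\bar\kappa$, hence
\[
\theta_{\N\M}=\overline{\iota_{\N\M}}\,\iota_{\N\M}=\bar\iota\,(\bar\kappa\kappa)\,\iota=\bar\iota\,\theta_{\P\M}\,\iota\in\End(\N),
\]
where $\theta_{\P\M}=\bar\kappa\kappa\in\End(\P)$ is the dual canonical endomorphism of $\P\subset\M$. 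By discreteness of $\P\subset\M$ (\cite{IzLoPo1998}, \cf \eqref{eq:thetadiscrete}), $\theta_{\P\M}$ decomposes as a direct sum $\bigoplus_i\sigma_i$ of finite-dimensional endomorphisms $\sigma_i\in\End(\P)$ (irreducible with finite multiplicity if moreover $\P\subset\M$ is irreducible). Applying $\bar\iota(\slot)\iota$ and using that $\bar\iota$ is normal — so that it preserves the strongly convergent sum of range projections defining the direct sum in $\Endd(\P)$ — one obtains $\theta_{\N\M}=\bigoplus_i\bar\iota\sigma_i\iota$ in $\Endd(\N)$, with $d(\bar\iota\sigma_i\iota)=d(\bar\iota)\,d(\sigma_i)\,d(\iota)<\infty$ because $d(\iota)=d(\bar\iota)<\infty$ by finiteness of the index.

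Finally I would invoke the characterization: a semidiscrete subfactor whose dual canonical endomorphism is a direct sum of finite-dimensional endomorphisms is discrete — equivalently, it admits a generalized Q-system of intertwiners (Proposition \ref{prop:Qsysdiscrete}) — and conclude that $\N\subset\M$ is discrete. The routine point to spell out is the identity $\theta_{\N\M}=\bar\iota\,\theta_{\P\M}\,\iota$ together with the compatibility of the infinite direct sum with composition by $\bar\iota$ and $\iota$, which follows from normality of $\bar\iota$. The main obstacle is the passage from the form of the discreteness criterion stated for \emph{irreducible} subfactors in the excerpt (eq. \eqref{eq:thetadiscrete}) to the possibly non-irreducible composite: here one either cites the general form of \cite{IzLoPo1998}, or, to stay self-contained, builds a generalized Q-system of intertwiners for $\N\subset\M$ directly by composing the finite-index Q-system of $\N\subset\P$ with the generalized Q-system of intertwiners $(\theta_{\P\M},w_2,\{m_{2,k}\})$ of $\P\subset\M$ in the style of \cite{BiKaLoRe2014-2}; the real work in that route is to verify conditions $(i)$–$(iii)$ of Definition \ref{def:genQsys} and the intertwining property \eqref{eq:genQsysinterts} for the composite, correcting the failure of the naive products to lie in $\Hom(\iota_{\N\M},\iota_{\N\M}\theta_{\N\M})$ by inserting the Frobenius data of the finite-index Q-system.
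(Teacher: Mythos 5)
Your proposal is correct and follows essentially the same route as the paper: identifying $\theta_{\N\subset\M}=\bar\iota\,\theta_{\cP\subset\M}\,\iota$ and using multiplicativity of the dimension, $d(\bar\iota\,\rho\,\iota)=d(\iota)^2 d(\rho)=[\cP:\N]\,d(\rho)<\infty$, to conclude via the characterization of discreteness by the decomposition of the dual canonical endomorphism. The extra care you take with semidiscreteness of the composed expectation and with the non-irreducible form of the criterion is sound but not needed beyond what the paper's one-line argument already uses.
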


%%%
\subsection{Constructions of discrete subfactors}\label{sec:newdiscretesubf}
%%%

In this section we provide some constructions of new discrete subfactors from old ones (Definition \ref{def:semi-discr}), and we draw some conclusions in the local case (Definition \ref{def:localsubf}).

If $K_1$ and $K_2$ are compact hypergroups in the sense of Definition \ref{def:CompactHypergroup}, the direct product compact topological space $K_1\times K_2$ is naturally a hypergroup by setting $\delta_{(x,y)}\ast\delta_{(z,t)} := \delta_{x}\ast\delta_{z} \times \delta_{y}\ast\delta_{t}$, ${\delta_{(x,y)}}^\sharp := \delta_{(x^\sharp,y^\sharp)}$, \cf \cite[\Def 1.5.29]{BlHe1995}. Indeed, the convolution on probability measures is biaffine by definition, thus uniquely determined by the convolution on Dirac measures via \eqref{eq:convondirac}. The identity element and the Haar measure are defined by $e_{K_1\times K_2}:= (e_{K_1},e_{K_2})$ and $\mu_{K_1\times K_2} := \mu_{K_1} \times \mu_{K_2}$, thus \eqref{eq:haarproperty1} and \eqref{eq:haarproperty2} hold by Fubini's theorem.

\begin{prop}
Let $\N_1\subset\M_1$ and $\N_2\subset\M_2$ be discrete type $\III$ subfactors. Then $\N_1\otimes\N_2\subset\M_1\otimes\M_2$ is discrete. If in addition the subfactors are irreducible and local, then the same is true for their tensor product and we have $K(\N_1\otimes\N_2\subset\M_1\otimes\M_2) \cong K(\N_1\subset\M_1)\times K(\N_2\subset\M_2)$. 
\end{prop}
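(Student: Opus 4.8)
The plan is to verify the three assertions in order, using throughout the elementary fact that tensoring commutes with the formation of canonical and dual canonical endomorphisms: if $\theta_i\in\End(\N_i)$ is a dual canonical endomorphism of $\N_i\subset\M_i$ with generalized Q-system of intertwiners $(\theta_i,w_i,\{m_{\rho_i,r_i}\})$ (Notation \ref{not:genQsysint}), then $\theta_1\otimes\theta_2\in\End(\N_1\otimes\N_2)$ is a dual canonical endomorphism of $\N_1\otimes\N_2\subset\M_1\otimes\M_2$, with $w_1\otimes w_2$ in place of $w$ and tensor products of the charged fields $\psi_{\rho_1,r_1}\otimes\psi_{\rho_2,r_2}$ as a Pimsner--Popa basis. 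First I would record that the inclusion morphism of the tensor product subfactor is $\iota_1\otimes\iota_2$, that $\Hom(\theta_1\otimes\theta_2,\theta_1\otimes\theta_2)\cong\Hom(\theta_1,\theta_1)\otimes\Hom(\theta_2,\theta_2)$, and that the irreducible subendomorphisms of $\theta_1\otimes\theta_2$ are exactly the $\rho_1\otimes\rho_2$ with $\rho_i\prec\theta_i$ irreducible, with $n_{\rho_1\otimes\rho_2}=n_{\rho_1}n_{\rho_2}$ and $d(\rho_1\otimes\rho_2)=d(\rho_1)d(\rho_2)$; hence $\theta_1\otimes\theta_2$ is again a direct sum of finitely-multiplied irreducibles of finite dimension, which is discreteness by \cite{IzLoPo1998} (Proposition \ref{prop:discrete}), and irreducibility is $\Hom(\iota_1,\iota_1)\otimes\Hom(\iota_2,\iota_2)=\CC1$.

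Next, for the local case, $\langle\theta_1\otimes\theta_2\rangle$ sits inside the braided category $\C_1\boxtimes\C_2$ with braiding $\varepsilon^\pm_{\sigma_1\otimes\sigma_2,\rho_1\otimes\rho_2}=\varepsilon^\pm_{\sigma_1,\rho_1}\otimes\varepsilon^\pm_{\sigma_2,\rho_2}$, so the subfactor is braided; and the commutation relations \eqref{eq:commutfields} for $\N_i\subset\M_i$ give, on product charged fields, $\iota_1\otimes\iota_2(\varepsilon^\pm_{\sigma_1,\rho_1}\otimes\varepsilon^\pm_{\sigma_2,\rho_2})(\psi_{\sigma_1}\otimes\psi_{\sigma_2})(\psi_{\rho_1}\otimes\psi_{\rho_2})=(\psi_{\rho_1}\otimes\psi_{\rho_2})(\psi_{\sigma_1}\otimes\psi_{\sigma_2})$, i.e.\ locality of the tensor product (Definition \ref{def:localsubf}), by the Lemma following Definition \ref{def:localsubf}. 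At this point $K(\N_1\otimes\N_2\subset\M_1\otimes\M_2)$ is defined and is a compact hypergroup by Theorem \ref{thm:Kishypergroup}. The identification with $K(\N_1\subset\M_1)\times K(\N_2\subset\M_2)$ I would obtain at the level of the reduced $C^*$-algebras: by Theorem \ref{thm:trigcommutativestaralg} and the matrix-unit description (Proposition \ref{prop:multTirred}, \ref{prop:bulletTirred}), the map $T(\psi_{\rho_1,r_1},\psi_{\rho_1,s_1})\otimes T(\psi_{\rho_2,r_2},\psi_{\rho_2,s_2})\mapsto T(\psi_{\rho_1,r_1}\otimes\psi_{\rho_2,r_2},\psi_{\rho_1,s_1}\otimes\psi_{\rho_2,s_2})$ is a unital $*$-isomorphism $\Trig(\N_1\subset\M_1)\otimes\Trig(\N_2\subset\M_2)\xrightarrow{\ \sim\ }\Trig(\N_1\otimes\N_2\subset\M_1\otimes\M_2)$; since $E_{\N_1\otimes\N_2}=E_{\N_1}\otimes E_{\N_2}$ one has $\omega_{E_{\N_1\otimes\N_2}}=\omega_{E_{\N_1}}\otimes\omega_{E_{\N_2}}$, so the GNS data tensorize and the map extends to a $*$-isomorphism $\Cred(\N_1\subset\M_1)\otimes_{\min}\Cred(\N_2\subset\M_2)\cong\Cred(\N_1\otimes\N_2\subset\M_1\otimes\M_2)$ (minimal tensor product of commutative $C^*$-algebras). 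Passing to Gelfand spectra gives a homeomorphism $K(\N_1\otimes\N_2\subset\M_1\otimes\M_2)\cong K(\N_1\subset\M_1)\times K(\N_2\subset\M_2)$; finally, by Theorem \ref{thm:ucpstatesduality} this homeomorphism is implemented by $(\phi_1,\phi_2)\mapsto\phi_1\otimes\phi_2$ on $\UCP^\sharp$ maps, and since convolution is composition of ucp maps and involution is the $\Omega$-adjoint, the product hypergroup structure matches on the nose, which also takes care of the compatibility of the homeomorphism with convolution and involution demanded in the statement. Alternatively, one may cite Proposition \ref{prop:uniqueKact}: the product hypergroup acts faithfully and minimally on $\M_1\otimes\M_2$ with fixed-point algebra $\N_1\otimes\N_2$, hence equals $K(\N_1\otimes\N_2\subset\M_1\otimes\M_2)$ by uniqueness.

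The main obstacle I expect is not conceptual but bookkeeping: one must check carefully that a Pimsner--Popa basis of $\M_1\otimes\M_2$ over $\N_1\otimes\N_2$ is genuinely obtained by tensoring bases (that the Jones projections satisfy $e_{\N_1\otimes\N_2}=e_{\N_1}\otimes e_{\N_2}$, that strong convergence of the defining sums is preserved under the tensor product, and that the intertwining property \eqref{eq:PiPointert} tensorizes), and that the formal sum $m_1\otimes m_2$ and the involution $\bullet$ on $\Trig$ behave as expected under the identification; the subtlety about standard solutions of the conjugate equations (which is where the $\bullet$ involution could a priori misbehave, cf.\ Proposition \ref{prop:bulletTany}) is handled because $\rho_1(\rbar_{\rho_2})\rbar_{\rho_1}$-type formulas make $\rbar_{\rho_1}\otimes\rbar_{\rho_2}$ a standard solution for $\rho_1\otimes\rho_2$ by \cite[\Cor 3.10]{LoRo1997}. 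Once these routine verifications are in place, the $C^*$-algebraic identification and the appeal to Theorem \ref{thm:ucpstatesduality} (or Proposition \ref{prop:uniqueKact}) are immediate.
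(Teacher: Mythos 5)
Your argument is correct, and in fact it contains the paper's own proof as the ``alternative'' you mention at the end: the paper simply observes that the dual canonical endomorphism of the tensor product is $\theta_1\otimes\theta_2$ (so discreteness, irreducibility and locality tensorize), and then identifies the hypergroup by noting that $(\phi_1,\phi_2)\mapsto\phi_1\otimes\phi_2$ is a faithful action of $K(\N_1\subset\M_1)\times K(\N_2\subset\M_2)$ on $\M_1\otimes\M_2$ with fixed points $\N_1\otimes\N_2$, and invoking the uniqueness part of Theorem \ref{thm:genorbi}. Your primary route instead establishes the $*$-isomorphism $\Cred(\N_1\subset\M_1)\otimes_{\min}\Cred(\N_2\subset\M_2)\cong\Cred(\N_1\otimes\N_2\subset\M_1\otimes\M_2)$ and passes to Gelfand spectra; this is more work at the level of $\Trig$ (matrix units, the $\bullet$-involution and the tensorized Haar state), but it buys something the terse uniqueness argument quietly needs anyway: to apply Definition \ref{def:action} and Proposition \ref{prop:uniqueKact} one must know that each $\phi_1\otimes\phi_2$ with $\phi_i$ extreme is again an \emph{extreme} $\Omega$-adjointable ucp map, and the cleanest way to see this is exactly your observation that $\omega_{\phi_1\otimes\phi_2}=\omega_{\phi_1}\otimes\omega_{\phi_2}$ is a product of characters, hence a character of $C(K_1\times K_2)$ (combined with Lemma \ref{lem:extreme} and Theorem \ref{thm:ucpstatesduality}). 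So your write-up is a legitimate, slightly more self-contained expansion of the published proof; the bookkeeping items you flag (tensorization of Jones projections, Pimsner--Popa bases, standard solutions) are all routine and go through as you describe.
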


\begin{proof}
The dual canonical endomorphism of the tensor product subfactor is $\theta_{\N_1\subset\M_1} \otimes \theta_{\N_2\subset\M_2}$ in $\End(\N_1\otimes\N_2)$, thus 
discreteness follows from the characterization \eqref{eq:thetadiscrete}. Being irreducible, braided and local clearly passes to tensor products. 
The last statement follows by observing that $K(\N_1\subset\M_1)\times K(\N_2\subset\M_2)$ acts faithfully on $\M_1\otimes\M_2$ in the sense of Definition \ref{def:action} via $(\phi_1,\phi_2) \mapsto \phi_1\otimes\phi_2$, with fixed points $\N_1\otimes\N_2$.
Thus the uniqueness part of Theorem \ref{thm:genorbi} applies.
\end{proof}

More examples come from composing subfactors as follows:

\begin{prop}
Let $\N\subset \cP\subset \M$ be a composition of type $\III$ subfactors with $[\cP:\N] < \infty$ and $\cP\subset \M$ discrete, then $\N\subset \M$ is discrete.
\end{prop}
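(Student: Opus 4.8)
The strategy is to characterize discreteness via the semifiniteness of the dual operator-valued weight on the relative commutant $\N'\cap\M_1$ (Definition \ref{def:semi-discr}), and to bootstrap this from the corresponding property for $\cP\subset\M$ using the finiteness of $[\cP:\N]$. First I would fix a normal faithful conditional expectation $E_\N^\cP:\cP\to\N$, which exists and has finite index since $[\cP:\N]<\infty$, and a normal faithful conditional expectation $E_\cP^\M:\M\to\cP$ witnessing semidiscreteness of $\cP\subset\M$; composing gives $E:=E_\N^\cP\circ E_\cP^\M:\M\to\N\subset\M$, so $\N\subset\M$ is at least semidiscrete. It then suffices to check discreteness for this particular $E$, since the condition is independent of the choice of expectation (Definition \ref{def:semi-discr}). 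Equivalently, by Proposition \ref{prop:discrete}, I would produce a Pimsner--Popa basis for $\N\subset\M$ with respect to $E$ whose elements lie in $\Hom(\iota_\N^\M,\iota_\N^\M\theta)$, where $\theta\in\End(\N)$ is the dual canonical endomorphism of $\N\subset\M$.

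The key structural observation is the multiplicativity of canonical and dual canonical endomorphisms under composition of subfactors. Writing $\gamma_\cP^\M\in\End(\M)$ and $\gamma_\N^\cP\in\End(\cP)$ for the canonical endomorphisms, and $\theta_\cP^\M=\gamma_\cP^\M|_\cP$, $\theta_\N^\cP=\gamma_\N^\cP|_\N$ for the dual ones, one has (on the level of sectors, and up to the standard identifications of \cite{Lo1987, Lo1994}) that the dual canonical endomorphism of $\N\subset\M$ decomposes through $\theta=\iotabar_\N^\cP\,\theta_\cP^\M|_\cP\,\iota_\N^\cP$ composed with $\theta_\N^\cP$; more concretely, $[\theta]=[\iotabar_\N^\M\,\iota_\N^\M]$ where $\iota_\N^\M=\iota_\cP^\M\iota_\N^\cP$ and $\iotabar_\N^\M=\iotabar_\N^\cP\iotabar_\cP^\M$, so $\theta=\iotabar_\N^\cP\,\theta_\cP^\M\,\iota_\N^\cP$ viewed inside $\End(\N)$ after restricting. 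Since $\theta_\cP^\M$ is a (possibly infinite) direct sum of irreducibles in $\End(\cP)$ with finite dimension by discreteness of $\cP\subset\M$ (characterization \eqref{eq:thetadiscrete}), and since $\iotabar_\N^\cP(\slot)\iota_\N^\cP$ is a finite-index operation (it multiplies dimensions by $d(\iota_\N^\cP)^2=[\cP:\N]<\infty$), each summand remains finite-dimensional in $\End(\N)$. Hence $\theta$ is again a direct sum of finite-dimensional endomorphisms of $\N$, which is exactly the Izumi--Longo--Popa characterization of discreteness (Section \ref{sec:genQsys}, \eqref{eq:thetadiscrete}).

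Concretely I would build the Pimsner--Popa basis by a two-step construction: take a Pimsner--Popa basis $\{P_j\}\subset\M$ for $\cP\subset\M$ with respect to $E_\cP^\M$ satisfying the intertwining property $P_j\in\Hom(\iota_\cP^\M,\iota_\cP^\M\theta_\cP^\M)$ (Proposition \ref{prop:discrete}, which applies as $\cP\subset\M$ is discrete), and a finite Pimsner--Popa basis $\{N_k\}\subset\cP$ for $\N\subset\cP$ with respect to $E_\N^\cP$ (finite, by finiteness of the index; one may even reduce to a single element, but a finite basis of intertwiners is what I want). Then the products $\{P_j\,\iota_\cP^\M(N_k)\}$, or rather the appropriately reassociated products reflecting $\iota_\N^\M=\iota_\cP^\M\iota_\N^\cP$, form a Pimsner--Popa basis for $\N\subset\M$ with respect to $E$: condition $(i)$ and $(ii)$ of Definition \ref{def:PiPobasis} follow from the tower relations between the Jones projections $e_\cP$ and $e_\N$ and the standard computation $E=E_\N^\cP\circ E_\cP^\M$, using $E(P_j\,\iota(N_k)\,\iota(N_{k'}^*)\,P_{j'}^*)=E_\N^\cP(N_k E_\cP^\M(\cdots)N_{k'}^*)$ and the orthogonality of the two bases. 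The intertwining property, i.e. membership in $\Hom(\iota_\N^\M,\iota_\N^\M\theta)$, follows by composing: $P_j$ intertwines $\iota_\cP^\M$ with $\iota_\cP^\M\theta_\cP^\M$, and the $N_k$ can be chosen in $\Hom(\iota_\N^\cP,\iota_\N^\cP\theta_\N^\cP)$ (finite index subfactors are discrete, so Proposition \ref{prop:discrete} applies again), whence the products land in $\Hom(\iota_\N^\M,\iota_\N^\M\theta)$ after tracking the identification $\theta=\iotabar_\N^\cP\theta_\cP^\M\iota_\N^\cP$ composed with $\theta_\N^\cP$. By Proposition \ref{prop:discrete} this establishes discreteness of $\N\subset\M$.

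The main obstacle I anticipate is bookkeeping rather than a genuine conceptual difficulty: carefully matching the various canonical and dual canonical endomorphisms across the two steps of the tower, and verifying that the intertwiner spaces compose in the claimed way once one passes from $\Hom$-spaces inside $\M$ and $\cP$ to the tensor-categorical picture in $\End(\N)$. In particular one must be careful that the dual canonical endomorphism $\theta$ of $\N\subset\M$ is not literally $\theta_\N^\cP\circ(\text{restriction of }\theta_\cP^\M)$ but is identified with it only up to the choices of conjugates $\iotabar$ and of standard solutions; tracking the convergence of the (possibly infinite) sum $\sum_i M_i^*e_\N M_i=1$ in the strong topology, given that only the $\{P_j\}$-sum is infinite while the $\{N_k\}$-sum is finite, requires a short approximation argument but no new idea. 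An alternative, cleaner route that avoids some of this is to argue directly at the level of operator-valued weights: $\hat E=\widehat{E_\N^\cP\circ E_\cP^\M}$, and on $\N'\cap\M_1$ one can relate $\hat E$ to $\widehat{E_\cP^\M}$ restricted to $\cP'\cap\M_1$ via the finite-index weight $\widehat{E_\N^\cP}$; semifiniteness of the former on $\cP'\cap\M_1$ (discreteness of $\cP\subset\M$) plus boundedness of the latter (finite index) yields semifiniteness on $\N'\cap\M_1$. I would present the Pimsner--Popa basis argument as the main proof and remark on the operator-valued-weight shortcut.
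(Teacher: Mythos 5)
Your second paragraph is exactly the paper's proof: the identity $\theta_{\N\subset\M}=\bar\iota_{\N\subset\cP}\,\theta_{\cP\subset\M}\,\iota_{\N\subset\cP}$ together with $d(\bar\iota_{\N\subset\cP}\,\rho\,\iota_{\N\subset\cP})=[\cP:\N]\,d(\rho)<\infty$ and the characterization \eqref{eq:thetadiscrete} of discreteness is all that is needed, so the explicit Pimsner--Popa basis construction and the operator-valued-weight argument that you propose to present as the ``main proof'' are superfluous. One small correction: the dual canonical endomorphism is $\bar\iota_{\N\subset\cP}\,\theta_{\cP\subset\M}\,\iota_{\N\subset\cP}$ on the nose (as you also write), not that endomorphism further ``composed with $\theta_{\N\subset\cP}$'' --- the latter would overcount, although $\theta_{\N\subset\cP}$ is of course contained in $\theta_{\N\subset\M}$ since $\id\prec\theta_{\cP\subset\M}$.
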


\begin{proof}
Let $\theta_{\cP\subset \M}=\bigoplus_{[\rho],r} \rho$ with $d(\rho) < \infty$, then $\theta_{\N\subset \M} = \bar\iota_{\N\subset \cP} \, \theta_{\cP\subset \M}\, \iota_{\N\subset \cP}$ and the statement follows from $d(\bar\iota_{\N\subset \cP}\,\rho\,\iota_{\N\subset \cP}) = d(\iota_{\N\subset \cP})^2 d(\rho) = [\cP:\N] d(\rho) < \infty$.
\end{proof}

We do not know whether a statement similar to the above with $\N\subset \cP$ discrete and $\cP\subset \M$ finite index holds, namely if $\N\subset \M$ is necessarily discrete or not at this level of generality. If $\N\subset \cP$ and $\cP\subset \M$ are both discrete (and neither of them with finite index) instead, we know that $\N\subset \M$ need not be discrete. Examples of this also arise from conformal inclusions, \cf \cite[\Thm 4.6]{Xu2005} and \cite[\Thm 3.5]{Ca2004}.

\bigskip
\noindent
{\bf Acknowledgements.}
We would like to thank Massoud Amini, Claire Anantharaman-Delaroche, Dietmar Bisch, Sebastiano Carpi, Corey Jones, Roberto Longo, David Penneys, Stefano Rossi, Reiji Tomatsu and Makoto Yamashita for discussions. We gratefully acknowledge support from the Simons Center for Geometry and Physics, Stony Brook University (during the program \lq\lq Operator Algebras and Quantum Physics\rq\rq) and the Mathematisches Forschungsinstitut Oberwolfach (in the occasion of the workshop 1944 \lq\lq Subfactors and Applications\rq\rq) where some of the research for this paper was performed and presented. We also acknowledge support from the NSF DMS grant 1641020 during the workshop 2018 AMS MRC \lq\lq Quantum Symmetries: Subfactors and Fusion Categories\rq\rq\ and from the \lq\lq MIUR Excellence Department Project awarded to the Department of Mathematics, University of Rome Tor Vergata, CUP E83C18000100006\rq\rq.

%\bibliography{cft.bib}

\def\cprime{$'$}\newcommand{\noopsort}[1]{}
% \bib, bibdiv, biblist are defined by the amsrefs package.
\begin{bibdiv}
\begin{biblist}

\bib{AcCe1982}{article}{
      author={Accardi, Luigi},
      author={Cecchini, Carlo},
       title={Conditional expectations in von {N}eumann algebras and a theorem
  of {T}akesaki},
        date={1982},
        ISSN={0022-1236},
     journal={J. Funct. Anal.},
      volume={45},
      number={2},
       pages={245\ndash 273},
         url={http://dx.doi.org/10.1016/0022-1236(82)90022-2},
}

\bib{AmMe2014}{article}{
      author={Amini, Massoud},
      author={Medghalchi, Ali~Reza},
       title={Amenability of compact hypergroup algebras},
        date={2014},
        ISSN={0025-584X},
     journal={Math. Nachr.},
      volume={287},
      number={14-15},
       pages={1609\ndash 1617},
         url={https://doi.org/10.1002/mana.201200284},
      %%review={\MR{3266126}},
}

\bib{De1995}{article}{
      author={Anantharaman-Delaroche, C.},
       title={Amenable correspondences and approximation properties for von
  {N}eumann algebras},
        date={1995},
        ISSN={0030-8730},
     journal={Pacific J. Math.},
      volume={171},
      number={2},
       pages={309\ndash 341},
         url={http://projecteuclid.org/euclid.pjm/1102368918},
      %%review={\MR{1372231}},
}

\bib{De2006}{article}{
      author={Anantharaman-Delaroche, Claire},
       title={On ergodic theorems for free group actions on noncommutative
  spaces},
        date={2006},
        ISSN={0178-8051},
     journal={Probab. Theory Related Fields},
      volume={135},
      number={4},
       pages={520\ndash 546},
         url={https://doi.org/10.1007/s00440-005-0456-1},
      %%review={\MR{2240699}},
}

\bib{Ar1969}{article}{
      author={Arveson, William~B.},
       title={Subalgebras of {$C^{\ast}$}-algebras},
        date={1969},
        ISSN={0001-5962},
     journal={Acta Math.},
      volume={123},
       pages={141\ndash 224},
         url={https://doi.org/10.1007/BF02392388},
      %%review={\MR{0253059}},
}

\bib{BeKa1998}{book}{
      author={Berezansky, Yu.~M.},
      author={Kalyuzhnyi, A.~A.},
       title={Harmonic analysis in hypercomplex systems},
      series={Mathematics and its Applications},
   publisher={Kluwer Academic Publishers, Dordrecht},
        date={1998},
      volume={434},
        ISBN={0-7923-5029-4},
         url={https://doi.org/10.1007/978-94-017-1758-8},
        note={Translated from the 1992 Russian original by P. V. Malyshev and
  revised by the authors},
      %%review={\MR{1627482}},
}

\bib{Bi2016}{article}{
      author={Bischoff, Marcel},
       title={Generalized orbifold construction for conformal nets},
        date={2017},
        ISSN={0129-055X},
     journal={Rev. Math. Phys.},
      volume={29},
      number={1},
       pages={1750002, 53},
         url={http://dx.doi.org/10.1142/S0129055X17500027},
      %%review={\MR{3595480}},
}

\bib{BiOWR}{article}{
      author={Bischoff, Marcel},
       title={Quantum Operations on Conformal Nets},
        date={2019},
     journal={In Subfactors and Applications. Oberwolfach Rep.},
      volume={16},
       pages={3080\ndash 3083},      
       note={DOI: \href{https://doi.org/10.4171/OWR/2019/49}{10.4171/OWR/2019/49}},
}

%\bib{BiDeGi2020-2}{misc}{
%      author={Bischoff, Marcel},
%      author={Del~Vecchio, Simone},
%      author={Giorgetti, Luca},
%      title={{A {G}alois correspondence for conformal nets}},
%        date={2020},
%        note={In Preparation},
%}

\bib{BiKaLoRe2014-2}{book}{
      author={Bischoff, Marcel},
      author={Kawahigashi, Yasuyuki},
      author={Longo, Roberto},
      author={Rehren, Karl-Henning},
       title={Tensor categories and endomorphisms of von {N}eumann
  algebras---with applications to quantum field theory},
      series={Springer Briefs in Mathematical Physics},
   publisher={Springer, Cham},
        date={2015},
      volume={3},
        ISBN={978-3-319-14300-2; 978-3-319-14301-9},
         url={http://dx.doi.org/10.1007/978-3-319-14301-9},
      %%review={\MR{3308880}},
}

\bib{BiKaLoRe2014}{article}{
      author={Bischoff, Marcel},
      author={Kawahigashi, Yasuyuki},
      author={Longo, Roberto},
      author={Rehren, Karl-Henning},
       title={Phase {B}oundaries in {A}lgebraic {C}onformal {QFT}},
        date={2016},
        ISSN={0010-3616},
     journal={Comm. Math. Phys.},
      volume={342},
      number={1},
       pages={1\ndash 45},
         url={http://dx.doi.org/10.1007/s00220-015-2560-0},
      %%review={\MR{3455144}},
}

\bib{BiRe2019}{article}{
      author={Bischoff, Marcel},
      author={Rehren, Karl-Henning},
       title={The hypergroupoid of boundary conditions for local quantum
  observables},
        date={2019},
     journal={Adv. Stud. Pure Math.},
      volume={80},
       pages={32\ndash 42},
}

\bib{BlaBook}{book}{
      author={Blackadar, B.},
       title={Operator algebras},
      series={Encyclopaedia of Mathematical Sciences},
   publisher={Springer-Verlag, Berlin},
        date={2006},
      volume={122},
        ISBN={978-3-540-28486-4; 3-540-28486-9},
         url={https://mathscinet.ams.org/mathscinet-getitem?mr=2188261},
        note={Theory of $C^*$-algebras and von Neumann algebras, Operator
  Algebras and Non-commutative Geometry, III},
      %%review={\MR{2188261}},
}

\bib{BlHe1995}{book}{
      author={Bloom, Walter~R.},
      author={Heyer, Herbert},
       title={Harmonic analysis of probability measures on hypergroups},
      series={de Gruyter Studies in Mathematics},
   publisher={Walter de Gruyter \& Co., Berlin},
        date={1995},
      volume={20},
        ISBN={3-11-012105-0},
         url={http://dx.doi.org/10.1515/9783110877595},
      %%review={\MR{1312826}},
}

\bib{BrGuLo1993}{article}{
      author={Brunetti, Romeo},
      author={Guido, Daniele},
      author={Longo, Roberto},
       title={{Modular structure and duality in conformal quantum field
  theory}},
        date={1993},
        ISSN={0010-3616},
     journal={Comm. Math. Phys.},
      volume={156},
       pages={201\ndash 219},
      %eprint={funct-an/9302008v1},
         url={http://dx.doi.org/10.1007/BF02096738},
}

\bib{BuMaTo1988}{article}{
      author={Buchholz, Detlev},
      author={Mack, G.},
      author={Todorov, Ivan},
       title={{The current algebra on the circle as a germ of local field
  theories}},
        date={1988},
     journal={Nucl. Phys., B, Proc. Suppl.},
      volume={5},
      number={2},
       pages={20\ndash 56},
}

\bib{Ca1998}{article}{
      author={Carpi, Sebastiano},
       title={Absence of subsystems for the {H}aag-{K}astler net generated by
  the energy-momentum tensor in two-dimensional conformal field theory},
        date={1998},
        ISSN={0377-9017},
     journal={Lett. Math. Phys.},
      volume={45},
      number={3},
       pages={259\ndash 267},
         url={http://dx.doi.org/10.1023/A:1007466420114},
      %%review={\MR{1641204}},
}

\bib{Ca1999}{article}{
      author={Carpi, Sebastiano},
       title={Classification of subsystems for the {H}aag-{K}astler nets
  generated by {$c=1$} chiral current algebras},
        date={1999},
        ISSN={0377-9017},
     journal={Lett. Math. Phys.},
      volume={47},
      number={4},
       pages={353\ndash 364},
         url={https://doi.org/10.1023/A:1007517131143},
      %%review={\MR{1693751}},
}

\bib{Ca2003}{article}{
      author={Carpi, Sebastiano},
       title={The {V}irasoro algebra and sectors with infinite statistical
  dimension},
        date={2003},
        ISSN={1424-0637},
     journal={Ann. Henri Poincar{\'e}},
      volume={4},
      number={3},
       pages={601\ndash 611},
         url={http://dx.doi.org/10.1007/s00023-003-0140-x},
      %%review={\MR{2007258 (2004k:81209)}},
}

\bib{Ca2004}{article}{
      author={Carpi, Sebastiano},
       title={{On the representation theory of {V}irasoro nets}},
        date={2004},
        ISSN={0010-3616},
     journal={Comm. Math. Phys.},
      volume={244},
      number={2},
       pages={261\ndash 284},
         url={http://dx.doi.org/10.1007/s00220-003-0988-0},
      %%review={\MR{2031030 (2005e:81128)}},
}

\bib{CaCo2001}{article}{
      author={Carpi, Sebastiano},
      author={Conti, Roberto},
       title={Classification of subsystems for local nets with trivial superselection structure},
        date={2001},
        ISSN={0010-3616},
     journal={Comm. Math. Phys.},
      volume={217},
      number={1},
       pages={89--106},
         url={http://dx.doi.org/10.1007/PL00005550},
      %%review={\MR{1815026}},
}

\bib{CaCo2005}{article}{
      author={Carpi, Sebastiano},
      author={Conti, Roberto},
       title={Classification of subsystems for graded-local nets with trivial superselection structure},
        date={2005},
        ISSN={0010-3616},
     journal={Comm. Math. Phys.},
      volume={253},
      number={2},
       pages={423--449},
         url={http://dx.doi.org/10.1007/s00220-004-1135-2},
      %%review={\MR{2140255}},
}

\bib{ChVa1999}{article}{
      author={Chapovsky, Yu.~A.},
      author={Vainerman, L.~I.},
       title={Compact quantum hypergroups},
        date={1999},
        ISSN={0379-4024},
     journal={J. Operator Theory},
      volume={41},
      number={2},
       pages={261\ndash 289},
      %%review={\MR{1681575}},
}

\bib{Ch1974}{article}{
      author={Choi, Man~Duen},
       title={A {S}chwarz inequality for positive linear maps on
  {$C^{\ast}$}-algebras},
        date={1974},
        ISSN={0019-2082},
     journal={Illinois J. Math.},
      volume={18},
       pages={565\ndash 574},
      %%review={\MR{0355615}},
}

\bib{Co1994}{book}{
      author={Connes, Alain},
       title={Non-commutative geometry},
   publisher={Academic Press, San Diego},
        date={1994},
}

\bib{CoDoRo2001}{article}{
      author={Conti, Roberto},
      author={Doplicher, Sergio},
      author={Roberts, John~E.},
       title={Superselection theory for subsystems},
        date={2001},
        ISSN={0010-3616},
     journal={Comm. Math. Phys.},
      volume={218},
      number={2},
       pages={263\ndash 281},
         url={http://dx.doi.org/10.1007/s002200100392},
      %%review={\MR{1828981}},
}

\bib{DeGi2018}{article}{
      author={Del~Vecchio, Simone},
      author={Giorgetti, Luca},
       title={Infinite index extensions of local nets and defects},
        date={2018},
        ISSN={0129-055X},
     journal={Rev. Math. Phys.},
      volume={30},
      number={2},
       pages={1850002, 58},
         url={https://doi.org/10.1142/S0129055X18500022},
      %%review={\MR{3757743}},
}

\bib{DoWa2018}{article}{
      author={Dong, Chongying},
      author={Wang, Hao},
       title={Hopf actions on vertex operator algebras},
        date={2018},
        ISSN={0021-8693},
     journal={J. Algebra},
      volume={514},
       pages={310\ndash 329},
         url={https://doi.org/10.1016/j.jalgebra.2018.08.012},
      %review={\MR{3853067}},
}

\bib{DoHaRo1971}{article}{
      author={Doplicher, Sergio},
      author={Haag, Rudolf},
      author={Roberts, John~E.},
       title={Local observables and particle statistics. {I}},
        date={1971},
        ISSN={0010-3616},
     journal={Comm. Math. Phys.},
      volume={23},
       pages={199\ndash 230},
      %review={\MR{0297259 (45 \#6316)}},
}

\bib{DoRo1972}{article}{
      author={Doplicher, Sergio},
      author={Roberts, John~E.},
       title={Fields, statistics and non-abelian gauge groups},
        date={1972},
        ISSN={0010-3616},
     journal={Comm. Math. Phys.},
      volume={28},
       pages={331\ndash 348},
      %review={\MR{0325053 (48 \#3402)}},
}

\bib{DoRo1990}{article}{
      author={Doplicher, Sergio},
      author={Roberts, John~E.},
       title={Why there is a field algebra with a compact gauge group
  describing the superselection structure in particle physics},
        date={1990},
        ISSN={0010-3616},
     journal={Comm. Math. Phys.},
      volume={131},
      number={1},
       pages={51\ndash 107},
         url={http://projecteuclid.org/euclid.cmp/1104200703},
      %review={\MR{1062748}},
}

\bib{En1998}{article}{
      author={Enock, Michel},
       title={Inclusions irr\'{e}ductibles de facteurs et unitaires
  multiplicatifs. {II}},
        date={1998},
        ISSN={0022-1236},
     journal={J. Funct. Anal.},
      volume={154},
      number={1},
       pages={67\ndash 109},
         url={https://doi.org/10.1006/jfan.1997.3206},
      %review={\MR{1616500}},
}

\bib{EnNe1996}{article}{
      author={Enock, Michel},
      author={Nest, Ryszard},
       title={Irreducible inclusions of factors, multiplicative unitaries, and
  {K}ac algebras},
        date={1996},
        ISSN={0022-1236},
     journal={J. Funct. Anal.},
      volume={137},
      number={2},
       pages={466\ndash 543},
         url={http://dx.doi.org/10.1006/jfan.1996.0053},
      %review={\MR{1387518}},
}

\bib{EnScBook}{book}{
      author={Enock, Michel},
      author={Schwartz, Jean-Marie},
       title={Kac algebras and duality of locally compact groups},
   publisher={Springer-Verlag, Berlin},
        date={1992},
        ISBN={3-540-54745-2},
         url={https://doi.org/10.1007/978-3-662-02813-1},
        note={With a preface by Alain Connes, With a postface by Adrian
  Ocneanu},
      %review={\MR{1215933}},
}

\bib{EGNO15}{book}{
      author={Etingof, Pavel},
      author={Gelaki, Shlomo},
      author={Nikshych, Dmitri},
      author={Ostrik, Victor},
       title={Tensor categories},
      series={Mathematical Surveys and Monographs},
   publisher={American Mathematical Society, Providence, RI},
        date={2015},
      volume={205},
        ISBN={978-1-4704-2024-6},
      %review={\MR{3242743}},
}

\bib{FiIs1999}{article}{
      author={Fidaleo, F.},
      author={Isola, T.},
       title={The canonical endomorphism for infinite index inclusions},
        date={1999},
        ISSN={0232-2064},
     journal={Z. Anal. Anwendungen},
      volume={18},
      number={1},
       pages={47\ndash 66},
         url={http://dx.doi.org/10.4171/ZAA/869},
      %review={\MR{1681843}},
}

\bib{FrJr1996}{article}{
      author={Fredenhagen, K.},
      author={J{\"o}r{\ss}, Martin},
       title={{Conformal Haag-Kastler nets, pointlike localized fields and the
  existence of operator product expansions}},
        date={1996},
     journal={Comm. Math. Phys.},
      volume={176},
      number={3},
       pages={541\ndash 554},
}

\bib{FrReSc1992}{article}{
      author={Fredenhagen, Klaus},
      author={Rehren, Karl-Henning},
      author={Schroer, Bert},
       title={Superselection sectors with braid group statistics and exchange
  algebras. {II}. {G}eometric aspects and conformal covariance},
        date={1992},
        ISSN={0129-055X},
     journal={Rev. Math. Phys.},
      number={Special Issue},
       pages={113\ndash 157},
         url={http://dx.doi.org/10.1142/S0129055X92000170},
        note={Special issue dedicated to R. Haag on the occasion of his 70th
  birthday},
      %review={\MR{1199171 (94g:81119)}},
}

\bib{GaFr1993}{article}{
      author={Gabbiani, Fabrizio},
      author={Fr{\"o}hlich, J{\"u}rg},
       title={{Operator algebras and conformal field theory}},
        date={1993},
        ISSN={0010-3616},
     journal={Comm. Math. Phys.},
      volume={155},
      number={3},
       pages={569\ndash 640},
}

\bib{GiOWR}{article}{
      author={Giorgetti, Luca},
       title={Compact hypergroups from discrete subfactors},
        date={2019},
     journal={In Subfactors and Applications. Oberwolfach Rep.},
      volume={16},
       pages={3087\ndash 3092},      
       note={DOI: \href{https://doi.org/10.4171/OWR/2019/49}{10.4171/OWR/2019/49}},
}

\bib{GiLo19}{article}{
      author={Giorgetti, Luca},
      author={Longo, Roberto},
       title={Minimal index and dimension for 2-{$C^*$}-categories with
  finite-dimensional centers},
        date={2019},
        ISSN={0010-3616},
     journal={Comm. Math. Phys.},
      volume={370},
      number={2},
       pages={719\ndash 757},
         url={https://doi.org/10.1007/s00220-018-3266-x},
      %review={\MR{3994584}},
}

\bib{GuLo1992}{article}{
      author={Guido, Daniele},
      author={Longo, Roberto},
       title={Relativistic invariance and charge conjugation in quantum field
  theory},
        date={1992},
        ISSN={0010-3616},
     journal={Comm. Math. Phys.},
      volume={148},
      number={3},
       pages={521\ndash 551},
         url={http://projecteuclid.org/euclid.cmp/1104251044},
      %review={\MR{1181069}},
}

\bib{GuLo1996}{article}{
      author={Guido, Daniele},
      author={Longo, Roberto},
       title={The conformal spin and statistics theorem},
        date={1996},
        ISSN={0010-3616},
     journal={Comm. Math. Phys.},
      volume={181},
      number={1},
       pages={11\ndash 35},
         url={http://projecteuclid.org/euclid.cmp/1104287623},
      %review={\MR{1410566 (98c:81121)}},
}

\bib{GuLoWi1998}{article}{
      author={Guido, Daniele},
      author={Longo, Roberto},
      author={Wiesbrock, Hans-Werner},
       title={{Extensions of Conformal Nets and Superselection Structures}},
        date={1998},
     journal={Comm. Math. Phys.},
      volume={192},
       pages={217\ndash 244},
      %eprint={hep-th/9703129},
}

\bib{Ha}{book}{
      author={Haag, Rudolf},
       title={{Local quantum physics}},
   publisher={Springer Berlin},
        date={1996},
}

\bib{HeOc1989}{article}{
      author={Herman, Richard~H.},
      author={Ocneanu, Adrian},
       title={Index theory and {G}alois theory for infinite index inclusions of
  factors},
        date={1989},
        ISSN={0764-4442},
     journal={C. R. Acad. Sci. Paris S{\'e}r. I Math.},
      volume={309},
      number={17},
       pages={923\ndash 927},
      %review={\MR{1055223}},
}

\bib{Iz2001II}{article}{
      author={Izumi, Masaki},
       title={The structure of sectors associated with {L}ongo-{R}ehren
  inclusions. {II}. {E}xamples},
        date={2001},
        ISSN={0129-055X},
     journal={Rev. Math. Phys.},
      volume={13},
      number={5},
       pages={603\ndash 674},
         url={http://dx.doi.org/10.1142/S0129055X01000818},
      %review={\MR{1832764 (2002k:46161)}},
}

\bib{IzLoPo1998}{article}{
      author={Izumi, Masaki},
      author={Longo, Roberto},
      author={Popa, Sorin},
       title={{A {G}alois correspondence for compact groups of automorphisms of
  von {N}eumann algebras with a generalization to {K}ac algebras}},
        date={1998},
        ISSN={0022-1236},
     journal={J. Funct. Anal.},
      volume={155},
      number={1},
       pages={25\ndash 63},
         url={http://dx.doi.org/10.1006/jfan.1997.3228},
      %review={\MR{1622812 (2000c:46117)}},
}

\bib{Je1975}{article}{
      author={Jewett, Robert~I.},
       title={Spaces with an abstract convolution of measures},
        date={1975},
        ISSN={0001-8708},
     journal={Advances in Math.},
      volume={18},
      number={1},
       pages={1\ndash 101},
         url={https://doi.org/10.1016/0001-8708(75)90002-X},
      %review={\MR{394034}},
}

\bib{JoPe2019}{article}{
      author={Jones, Corey},
      author={Penneys, David},
       title={Realizations of algebra objects and discrete subfactors},
        date={2019},
        ISSN={0001-8708},
     journal={Adv. Math.},
      volume={350},
       pages={588\ndash 661},
         url={https://doi.org/10.1016/j.aim.2019.04.039},
      %review={\MR{3948170}},
}

\bib{Jo1983}{article}{
      author={Jones, V. F.~R.},
       title={{Index for subfactors}},
        date={1983},
        ISSN={0020-9910},
     journal={Invent. Math.},
      volume={72},
      number={1},
       pages={1\ndash 25},
         url={http://dx.doi.org/10.1007/BF01389127},
      %review={\MR{696688 (84d:46097)}},
}

\bib{KaPa1966}{article}{
      author={Kac, G.~I.},
      author={Paljutkin, V.~G.},
       title={Finite ring groups},
        date={1966},
        ISSN={0134-8663},
     journal={Trudy Moskov. Mat. Ob\v{s}\v{c}.},
      volume={15},
       pages={224\ndash 261},
      %review={\MR{0208401}},
}

\bib{KaPoCh2010}{article}{
      author={Kalyuzhnyi, A.~A.},
      author={Podkolzin, G.~B.},
      author={Chapovsky, Yu.~A.},
       title={Harmonic analysis on a locally compact hypergroup},
        date={2010},
        ISSN={1029-3531},
     journal={Methods Funct. Anal. Topology},
      volume={16},
      number={4},
       pages={304\ndash 332},
      %review={\MR{2777191}},
}

\bib{KhMo15}{incollection}{
      author={Khavkine, Igor},
      author={Moretti, Valter},
       title={Algebraic {QFT} in curved spacetime and quasifree {H}adamard
  states: an introduction},
        date={2015},
   booktitle={Advances in algebraic quantum field theory},
      series={Math. Phys. Stud.},
   publisher={Springer, Cham},
       pages={191\ndash 251},
      %review={\MR{3409590}},
}

\bib{Ko1986}{article}{
      author={Kosaki, Hideki},
       title={{Extension of {J}ones' theory on index to arbitrary factors}},
        date={1986},
        ISSN={0022-1236},
     journal={J. Funct. Anal.},
      volume={66},
      number={1},
       pages={123\ndash 140},
         url={http://dx.doi.org/10.1016/0022-1236(86)90085-6},
      %review={\MR{829381 (87g:46093)}},
}

\bib{KuVa2003}{article}{
      author={Kustermans, Johan},
      author={Vaes, Stefaan},
       title={Locally compact quantum groups in the von {N}eumann algebraic
  setting},
        date={2003},
        ISSN={0025-5521},
     journal={Math. Scand.},
      volume={92},
      number={1},
       pages={68\ndash 92},
         url={https://doi.org/10.7146/math.scand.a-14394},
      %review={\MR{1951446}},
}

\bib{LiPaWu2019-arxiv}{article}{
      author={Liu, Zhengwei},
      author={Palcoux, Sebastien},
      author={Wu, Jinsong},
       title={Fusion bialgebras and fourier analysis},
        date={2019},
      note={Preprint arXiv:1910.12059}
      %eprint={https://arxiv.org/pdf/1910.12059.pdf},
         url={https://arxiv.org/pdf/1910.12059.pdf},
}

\bib{LoRo1997}{article}{
      author={Longo, R.},
      author={Roberts, J.~E.},
       title={{A theory of dimension}},
        date={1997},
        ISSN={0920-3036},
     journal={K-Theory},
      volume={11},
      number={2},
       pages={103\ndash 159},
      %eprint={arXiv:funct-an/9604008v1},
         url={http://dx.doi.org/10.1023/A:1007714415067},
      %review={\MR{1444286 (98i:46065)}},
}

\bib{Lo1987}{article}{
      author={Longo, Roberto},
       title={{Simple injective subfactors}},
        date={1987},
        ISSN={0001-8708},
     journal={Adv. Math.},
      volume={63},
      number={2},
       pages={152\ndash 171},
  url={http://www.sciencedirect.com/science/article/B6W9F-4CRY32C-NJ/2/ad65ab20fcbb926ca7c0c1d780ddc8fc},
}

\bib{Lo1989}{article}{
      author={Longo, Roberto},
       title={{Index of subfactors and statistics of quantum fields. I}},
        date={1989},
     journal={Comm. Math. Phys.},
      volume={126},
       pages={217\ndash 247},
}

\bib{Lo1990}{article}{
      author={Longo, Roberto},
       title={{Index of subfactors and statistics of quantum fields. II.
  Correspondences, Braid Group Statistics and Jones Polynomial}},
        date={1990},
     journal={Comm. Math. Phys.},
      volume={130},
       pages={285\ndash 309},
}

\bib{Lo1994}{article}{
      author={Longo, Roberto},
       title={{A duality for {H}opf algebras and for subfactors. {I}}},
        date={1994},
        ISSN={0010-3616},
     journal={Comm. Math. Phys.},
      volume={159},
      number={1},
       pages={133\ndash 150},
         url={http://projecteuclid.org/getRecord?id=euclid.cmp/1104254494},
      %review={\MR{1257245 (95h:46097)}},
}

\bib{Lo2003}{article}{
      author={Longo, Roberto},
       title={{Conformal Subnets and Intermediate Subfactors}},
        date={2003},
        ISSN={0010-3616},
     journal={Comm. Math. Phys.},
      volume={237},
       pages={7\ndash 30},
      %eprint={arXiv:math/0102196v2 [math.OA]},
         url={http://dx.doi.org/10.1007/s00220-003-0814-8},
}

\bib{Lo2}{article}{
      author={Longo, Roberto},
       title={{Lecture Notes on Conformal Nets}},
        date={2008},
      eprint={https://www.mat.uniroma2.it/longo/lecture-notes.html},
         url={https://www.mat.uniroma2.it/longo/lecture-notes.html},
        note={first part published in
  {\protect\NoHyper\cite{Lon08}\protect\endNoHyper}},
}

\bib{Lon08}{incollection}{
      author={Longo, Roberto},
       title={{Real {H}ilbert subspaces, modular theory, {${\rm SL}(2,\Bbb R)$}
  and {CFT}}},
        date={2008},
   booktitle={{Von {N}eumann algebras in {S}ibiu}},
      series={{Theta Ser. Adv. Math.}},
      volume={10},
   publisher={Theta, Bucharest},
       pages={33\ndash 91},
}

\bib{Lo2018}{article}{
      author={Longo, Roberto},
       title={On {L}andauer's principle and bound for infinite systems},
        date={2018},
        ISSN={0010-3616},
     journal={Comm. Math. Phys.},
      volume={363},
      number={2},
       pages={531\ndash 560},
  url={https://doi-org.proxy.library.ohio.edu/10.1007/s00220-018-3116-x},
      %review={\MR{3851822}},
}

\bib{LoRe1995}{article}{
      author={Longo, Roberto},
      author={Rehren, Karl-Henning},
       title={{Nets of Subfactors}},
        date={1995},
     journal={Rev. Math. Phys.},
      volume={7},
       pages={567\ndash 597},
      %eprint={arXiv:hep-th/9411077},
}

\bib{MaSc1990}{article}{
      author={Mack, Gerhard},
      author={Schomerus, Volker},
       title={{Conformal field algebras with quantum symmetry from the theory
  of superselection sectors}},
        date={1990},
        ISSN={0010-3616},
     journal={Comm. Math. Phys.},
      volume={134},
      number={1},
       pages={139\ndash 196},
         url={http://projecteuclid.org/getRecord?id=euclid.cmp/1104201617},
      %review={\MR{1079804 (92i:81298)}},
}

\bib{Na1964}{book}{
      author={Naimark, M.~A.},
       title={Linear representations of the {L}orentz group},
      series={Translated by Ann Swinfen and O. J. Marstrand; translation edited
  by H. K. Farahat. A Pergamon Press Book},
   publisher={The Macmillan Co., New York},
        date={1964},
      %review={\MR{0170977}},
}

\bib{NeYa2016}{article}{
      author={Neshveyev, Sergey},
      author={Yamashita, Makoto},
       title={Drinfeld center and representation theory for monoidal
  categories},
        date={2016},
        ISSN={0010-3616},
     journal={Comm. Math. Phys.},
      volume={345},
      number={1},
       pages={385\ndash 434},
         url={https://doi.org/10.1007/s00220-016-2642-7},
      %review={\MR{3509018}},
}

\bib{NiStZs2003}{article}{
      author={Niculescu, Constantin~P.},
      author={Str{\"o}h, Anton},
      author={Zsid{\'o}, L{\'a}szl{\'o}},
       title={Noncommutative extensions of classical and multiple recurrence
  theorems},
        date={2003},
        ISSN={0379-4024},
     journal={J. Operator Theory},
      volume={50},
      number={1},
       pages={3\ndash 52},
      %review={\MR{2015017}},
}

\bib{NiWi95}{article}{
      author={Nill, Florian},
      author={Wiesbrock, Hans-Werner},
       title={A comment on {J}ones inclusions with infinite index},
        date={1995},
        ISSN={0129-055X},
     journal={Rev. Math. Phys.},
      volume={7},
      number={4},
       pages={599\ndash 630},
         url={http://dx.doi.org/10.1142/S0129055X95000244},
        note={Workshop on Algebraic Quantum Field Theory and Jones Theory
  (Berlin, 1994)},
      %review={\MR{1332980}},
}

\bib{OhPe1993}{book}{
      author={Ohya, Masanori},
      author={Petz, D\'enes},
       title={Quantum entropy and its use},
      series={Texts and Monographs in Physics},
   publisher={Springer-Verlag, Berlin},
        date={1993},
        ISBN={3-540-54881-5},
         url={https://doi.org/10.1007/978-3-642-57997-4},
      %review={\MR{1230389}},
}

\bib{Pa1973}{article}{
      author={Paschke, William~L.},
       title={Inner product modules over {$B^{\ast} $}-algebras},
        date={1973},
        ISSN={0002-9947},
     journal={Trans. Amer. Math. Soc.},
      volume={182},
       pages={443\ndash 468},
      %review={\MR{0355613}},
}

\bib{Pa2002}{book}{
      author={Paulsen, Vern},
       title={Completely bounded maps and operator algebras},
      series={Cambridge Studies in Advanced Mathematics},
   publisher={Cambridge University Press, Cambridge},
        date={2002},
      volume={78},
        ISBN={0-521-81669-6},
         url={https://mathscinet.ams.org/mathscinet-getitem?mr=1976867},
      %review={\MR{1976867}},
}

\bib{Phe01}{book}{
      author={Phelps, Robert~R.},
       title={Lectures on {C}hoquet's theorem},
     edition={Second},
      series={Lecture Notes in Mathematics},
   publisher={Springer-Verlag, Berlin},
        date={2001},
      volume={1757},
        ISBN={3-540-41834-2},
         url={https://mathscinet.ams.org/mathscinet-getitem?mr=1835574},
      %review={\MR{1835574}},
}

\bib{PiPo1986}{article}{
      author={Pimsner, M.},
      author={Popa, S.},
       title={{Entropy and index for subfactors}},
        date={1986},
     journal={Ann. Sci. Ecole Norm. Sup},
      volume={19},
      number={4},
       pages={57\ndash 106},
}

\bib{Po1986}{unpublished}{
      author={Popa, Sorin},
       title={{Correspondences}},
        date={1986},
        note={INCREST Preprint},
}

\bib{Po1995a}{book}{
      author={Popa, Sorin},
       title={Classification of subfactors and their endomorphisms},
      series={CBMS Regional Conference Series in Mathematics},
   publisher={Published for the Conference Board of the Mathematical Sciences,
  Washington, DC; by the American Mathematical Society, Providence, RI},
        date={1995},
      volume={86},
        ISBN={0-8218-0321-2},
      %review={\MR{1339767 (96d:46085)}},
}

\bib{Po1999}{article}{
      author={Popa, Sorin},
       title={Some properties of the symmetric enveloping algebra of a
  subfactor, with applications to amenability and property {T}},
        date={1999},
        ISSN={1431-0635},
     journal={Doc. Math.},
      volume={4},
       pages={665\ndash 744},
      %review={\MR{1729488}},
}

\bib{PoShVa2018}{article}{
      author={Popa, Sorin},
      author={Shlyakhtenko, Dimitri},
      author={Vaes, Stefaan},
       title={Cohomology and {$L^2$}-{B}etti numbers for subfactors and
  quasi-regular inclusions},
        date={2018},
        ISSN={1073-7928},
     journal={Int. Math. Res. Not. IMRN},
      number={8},
       pages={2241\ndash 2331},
         url={https://doi.org/10.1093/imrn/rnw304},
      %review={\MR{3801484}},
}

\bib{PoVa2014}{article}{
      author={Popa, Sorin},
      author={Vaes, Stefaan},
       title={Representation theory for subfactors, {$\lambda$}-lattices and
  {$\rm C^*$}-tensor categories},
        date={2015},
        ISSN={0010-3616},
     journal={Comm. Math. Phys.},
      volume={340},
      number={3},
       pages={1239\ndash 1280},
         url={https://doi.org/10.1007/s00220-015-2442-5},
      %review={\MR{3406647}},
}

\bib{Re1994}{article}{
      author={Rehren, Karl-Henning},
       title={{A new view of the {V}irasoro algebra}},
        date={1994},
        ISSN={0377-9017},
     journal={Lett. Math. Phys.},
      volume={30},
      number={2},
       pages={125\ndash 130},
         url={http://dx.doi.org/10.1007/BF00939700},
      %review={\MR{1264993 (95b:81194)}},
}

\bib{Re1994coset}{incollection}{
      author={Rehren, Karl-Henning},
       title={Subfactors and coset models},
        date={1994},
   booktitle={Generalized symmetries in physics ({C}lausthal, 1993)},
   publisher={World Sci. Publ., River Edge, NJ},
       pages={338\ndash 356},
      %review={\MR{1473753}},
}

\bib{Str81}{book}{
      author={Str{\v a}til{\v a}, {\c S}erban},
       title={Modular theory in operator algebras},
   publisher={Editura Academiei Republicii Socialiste Rom\^ania, Bucharest;
  Abacus Press, Tunbridge Wells},
        date={1981},
        ISBN={0-85626-190-4},
        note={Translated from the Romanian by the author},
      %review={\MR{696172 (85g:46072)}},
}

\bib{SuWi2003}{article}{
      author={Sunder, V.~S.},
      author={Wildberger, N.~J.},
       title={Actions of finite hypergroups},
        date={2003},
        ISSN={0925-9899},
     journal={J. Algebraic Combin.},
      volume={18},
      number={2},
       pages={135\ndash 151},
         url={http://dx.doi.org/10.1023/A:1025107014451},
      %review={\MR{2002621}},
}

\bib{Ta1}{book}{
      author={Takesaki, M.},
       title={{Theory of operator algebras. {I}}},
      series={{Encyclopaedia of Mathematical Sciences}},
   publisher={Springer-Verlag},
     address={Berlin},
        date={2002},
      volume={124},
        ISBN={3-540-42248-X},
        note={Reprint of the first (1979) edition, Operator Algebras and
  Non-commutative Geometry, 5},
      %review={\MR{1873025 (2002m:46083)}},
}

\bib{To2009}{article}{
      author={Tomatsu, Reiji},
       title={A {G}alois correspondence for compact quantum group actions},
        date={2009},
        ISSN={0075-4102},
     journal={J. Reine Angew. Math.},
      volume={633},
       pages={165\ndash 182},
         url={https://doi.org/10.1515/CRELLE.2009.063},
      %review={\MR{2561199}},
}

\bib{Va2001}{article}{
      author={Vaes, Stefaan},
       title={The unitary implementation of a locally compact quantum group
  action},
        date={2001},
        ISSN={0022-1236},
     journal={J. Funct. Anal.},
      volume={180},
      number={2},
       pages={426\ndash 480},
         url={https://doi.org/10.1006/jfan.2000.3704},
      %review={\MR{1814995}},
}

\bib{Vr1979}{article}{
      author={Vrem, Richard~C.},
       title={Harmonic analysis on compact hypergroups},
        date={1979},
        ISSN={0030-8730},
     journal={Pacific J. Math.},
      volume={85},
      number={1},
       pages={239\ndash 251},
         url={http://projecteuclid.org/euclid.pjm/1102784093},
      %review={\MR{571638 (81g:43009)}},
}

\bib{Wa1998}{article}{
      author={Wassermann, Antony},
       title={{Operator algebras and conformal field theory III. Fusion of
  positive energy representations of LSU(N) using bounded operators}},
        date={1998},
     journal={Invent. Math.},
      volume={133},
      number={3},
       pages={467\ndash 538},
      %eprint={arXiv:math/9806031v1 [math.OA]},
}

\bib{Wo1998}{incollection}{
      author={Woronowicz, S.~L.},
       title={Compact quantum groups},
        date={1998},
   booktitle={Sym\'{e}tries quantiques ({L}es {H}ouches, 1995)},
   publisher={North-Holland, Amsterdam},
       pages={845\ndash 884},
      %review={\MR{1616348}},
}

\bib{Xu2000coset}{article}{
      author={Xu, Feng},
       title={Algebraic coset conformal field theories},
        date={2000},
        ISSN={0010-3616},
     journal={Comm. Math. Phys.},
      volume={211},
      number={1},
       pages={1\ndash 43},
         url={https://doi.org/10.1007/s002200050800},
      %review={\MR{1757004}},
}

\bib{Xu2005}{article}{
      author={Xu, Feng},
       title={{Strong additivity and conformal nets}},
        date={2005},
        ISSN={0030-8730},
     journal={Pacific J. Math.},
      volume={221},
      number={1},
       pages={167\ndash 199},
         url={http://dx.doi.org/10.2140/pjm.2005.221.167},
      %review={\MR{2194151 (2007b:81132)}},
}

\bib{Yam04}{article}{
      author={Yamagami, Shigeru},
       title={Frobenius duality in {$C^*$}-tensor categories},
        date={2004},
        ISSN={0379-4024},
     journal={J. Operator Theory},
      volume={52},
      number={1},
       pages={3\ndash 20},
      %review={\MR{2091457}},
}

\end{biblist}
\end{bibdiv}

\vspace{0mm}

\address

\end{document}